\theoremstyle{plain}
\newtheorem{theorem}{Theorem}[section]
\newtheorem{lemma}[theorem]{Lemma}
\newtheorem{proposition}[theorem]{Proposition}
\newtheorem{corollary}[theorem]{Corollary}
\newtheorem{definition}[theorem]{Definition}
\newtheorem{remark}[theorem]{Remark}
\newtheorem{assumption}[theorem]{Assumption}
\newcommand{\RRR}{\color{black}}
\newcommand{\BBB}{\color{black}}
\newcommand{\N}{\mathbb{N}}
\newcommand{\Z}{\mathbb{Z}}
\newcommand{\R}{\mathbb{R}}
\newcommand{\C}{\mathbb{C}}
\newcommand{\A}{\mathcal{A}}
\newcommand{\funcA}{\mathfrak{a}}
\newcommand{\funcB}{\mathfrak{b}}
\newcommand{\funcC}{\mathcal{C}}
\newcommand{\simgrad}{\sym\nabla}
\newcommand{\epsh}{{\varepsilon_h}}
\newcommand{\epshtwo}{{\varepsilon_h^2}}
\newcommand{\epshfour}{{\varepsilon_h^4}}
\newcommand{\eps}{{\varepsilon}}
\newcommand{\weak}{{\rightharpoonup}}
\newcommand{\weakL}{{\,\xrightharpoonup{L^2\,}\,}}
\newcommand{\drtwoscale}{{\,\xrightharpoonup{{\rm dr}-2\,}\,}}
\newcommand{\drtwoscalet}{{\,\xrightharpoonup{{\rm t,dr}-2\,}\,}}
\newcommand{\strongdrtwoscale}{{\,\xrightarrow{{\rm dr}-2\,}\,}}
\newcommand{\strongdrtwoscalet}{{\,\xrightarrow{{\rm t,dr}-2\,}\,}}
\DeclareMathOperator{\sym}{sym}
\DeclareMathOperator{\dist}{dist}
\DeclareMathOperator{\tr}{tr}
\DeclareMathOperator{\skeww}{skew}
\newcommand{\mat}[1]{\boldsymbol #1}
\newcommand{\vect}[1]{\boldsymbol #1}
\newcommand{\dd}{{\mathrm{d}}}
\let\oldsqrt\sqrt
\def\sqrt{\mathpalette\DHLhksqrt}
\def\DHLhksqrt#1#2{%
\setbox0=\hbox{$#1\oldsqrt{#2\,}$}\dimen0=\ht0
\advance\dimen0-0.2\ht0
\setbox2=\hbox{\vrule height\ht0 depth -\dimen0}%
{\box0\lower0.4pt\box2}}
  \let\div\relax
  \DeclareMathOperator{\div}{div}
\begin{document}
	
	\title{\sc Spectral and evolution analysis of composite elastic plates with high contrast}

\def\correspondingauthor{\footnote{Corresponding author: k.cherednichenko@bath.ac.uk}}

\author[1]{Marin Bu\v{z}an\v{c}i\'{c}} 
\author[2]{Kirill Cherednichenko\correspondingauthor{}}
\author[3]{Igor Vel\v{c}i\'{c}}
\author[3]{Josip \v{Z}ubrini\'{c}}
\affil[1]{Faculty of Chemical Engineering and Technology, University of Zagreb, \linebreak Maruli\'{c}ev trg 19, 10000 Zagreb, Croatia}
\affil[2]{Department of Mathematical Sciences, University of Bath, \linebreak Claverton Down, Bath, BA2 7AY, United Kingdom}
\affil[3]{Faculty of Electical Engineering and Computing, University of Zagreb, \linebreak Unska 3, 10000 Zagreb, Croatia}

\maketitle
\begin{abstract}
	We analyse the behaviour of thin composite plates whose material properties vary periodically in-plane and possess a high degree of contrast between the individual components. Starting from the equations of three-dimensional linear elasticity that describe soft inclusions embedded in a relatively stiff thin-plate matrix, we derive the corresponding asymptotically equivalent two-dimensional plate equations. Our approach is based on recent results concerning decomposition of deformations with bounded scaled symmetrised gradients. Using an operator-theoretic approach, we calculate the limit resolvent and analyse the associated limit spectrum and effective evolution equations. We obtain our results under various asymptotic relations between the size of the soft inclusions (equivalently, the period) and the plate thickness as well as under various scaling combinations between the contrast, spectrum, and time. In particular, we demonstrate significant qualitative differences between the asymptotic models obtained in different regimes.	
\end{abstract}
 	
\section{Introduction}
Derivation of limit models for thin structures in linear and non-linear elasticity is a well-established topic (for example, for the approach via formal asymptotics, see \cite{ciarlet2,ciarlet1} and references therein). 
As part of recent related activity, there appeared a number of works that derive models of (highly) heterogeneous thin structures by simultaneous homogenisation and dimension reduction,
see \cite{BukalVel,Gri20,Neukamm10,Neukamm13,Vel14a}; for the older work see also \cite{Caill}. The present paper aims at making a further contribution to this body of work, by addressing the derivation of effective models for thin plates with high-contrast inclusions in the context of spectral and evolution analysis (see also \cite{rohan2,rohan1}). Simultaneously with the above activity in relation to the analysis of thin structures, the past two decades have seen a growing interest to the analysis of materials with high-contrast inclusions (for early papers on this subject, see \cite{Zhikov2000, bouchitte,zhikov2005}) that exhibit frequency-dependent material properties (equivalently, time-nonlocal evolution), which is representative of what one may refer to as ``metamaterial" behaviour \cite{Phys_book}. Furthermore, as was recently discussed in \cite{chered1}, high contrast in material parameters corresponds to regimes of length-scale interactions, when parts of the medium exhibit resonant response to an external field. As a result, such composites possess macroscopic, or "effective", material properties not commonly found in nature, such as \RRR the time \BBB non-locality mentioned above (leading to "memory" effects) or negative refraction, which motivates their use in the context of electromagnetic or acoustic wave propagation for the development of novel devices with cloaking and superlensing properties. Due to the dependence of the effective parameters on frequency, the wave propagation spectrum of these materials has a characteristic band-gap structure (i.e., waves of some frequencies do not propagate through the material, see also \cite{avila08,smysh09}). 

There have been several works dealing with high-contrast inclusions in the context of elasticity: spectral analysis on bounded domains is given in \cite{avila08}, in the whole space in \cite{zhikov2012}, see also \cite{smysh09} for treating partial degeneracy (when ``directional localisation" takes place), for different models of high-contrast plates (where the starting equations are two-dimensional equations for an ``infinitely thin" elastic plate), see \cite{rohan2,rohan1}. In subsequent developments, \cite{cooper} deals with high-contrast inclusions with partial degeneracy, when only one of several material constants (namely, the shear modulus) is relatively small, \cite{chered3} discusses the limit spectrum of planar elastic frameworks made of rods and filled with a soft material, and \cite{chered4} derives an effective model for the case of high-contrast inclusions in the stiff matrix in the context of non-linear elasticity, under an assumption of small loads. In the more recent push towards a quantitative description of metamaterials, elliptic differential equations with high contrast have been analysed in the sense of approximating the associated resolvent with respect to the operator norm (see \cite{chered2,chered1}). In the related papers, using the Gelfand transform as a starting point, a new operator family was constructed that approximates the resolvent of the original one and that cannot be obtained directly from the standard limit operator inferred from the earlier qualitative analysis. However, these results are by now obtained only for the whole-space setting and for the particular case of the diffusion operator. In relation to quantifying the resolvent behaviour with respect to the operator norm, we should also mention \cite{krejcirik},  where the dimension reduction for a class of differential operators is carried out in the abstract setting (on a finite domain) and \cite{chered5}, where thin infinite elastic plates in moderate contrast are analysed. 

In terms of understanding the structure of two-scale limits of partial differential operators with high contrast, we refer to \cite{kamotski}, where an approach to spectral analysis and its consequences for materials with high-contrast inclusions (including partial degeneracies) on bounded domains is presented, via two-scale convergence. While addressing the description of the limit spectrum only partially, \cite{kamotski} provided a general framework for the analysis of the limit resolvent, on which new results concerning elasticity and other physically relevant setups could subsequently build. Finally, the subject of homogenisation of stochastic high-contrast media, which naturally follows the analysis of periodic setups, was recently initiated in \cite{cherd1} and further developed in \cite{cherd2}.

The main goal of the present paper is to study the behaviour of high-contrast finite plates from the qualitative point of view by dimension reduction from three-dimensional linearised elasticity (including an analysis of the limit spectrum and hyperbolic time evolution), identifying all possible asymptotic regimes, while putting the derivation of the corresponding limit models on the operator-theoretic footing and emphasising the peculiarities of individual scalings. In a future work we intend to carry out a quantitative analysis (in the sense of operator-norm resolvent approximation, in the whole-space setting) of the asymptotic regimes discussed here, using the techniques of \cite{chered1}. This will, in most cases, guarantee the convergence of spectra to the spectrum of the corresponding limit operator derived in the present paper.  However, for the regime when the period of material oscillations is much smaller than the thickness of the body, spectral pollution is shown to occur for the limit operator
suggested by the present work (i.e., the limiting spectrum has points outside the spectrum of the limit operator). This will make quantitative (i.e., operator-norm based) approaches ever more indispensable for sharp multiscale analysis.

In what follows, we assume that all elastic moduli of the soft component are of the same order (unlike in \cite{smysh09, cooper}). While we do not apply any additional scaling to either elastic moduli or the mass densities, we do discuss models obtained on different time scales in the context of hyperbolic evolution. Note that this kind of time scaling is indeed sometimes interpreted as a scaling of the mass density (see \cite{ciarlet1, Raoult}). 

Dimension reduction in elasticity always requires a special treatment, due to the degeneracy of the problem as a consequence of the fact that the constant in the Korn's inequality blows up as the domain thickness goes to zero. From the point of view of spectral analysis, the operator of the associated problem on a rescaled domain of finite thickness has spectrum of order $h^2,$ where $h$ is the thickness of the original thin body, with the eigenfunctions describing out-of-plane (``bending") deformations. This can be physically interpreted as follows: bending deformation 
can be finite but with small energy, while the magnitude of the (in-plane) stretching deformations is scaled as the magnitude of their energy. Thus for finite plates there are two orders of spectrum: the spectrum of order $h^2$ and the spectrum of order one. (On an infinite plate there is no natural way to scale the spectrum, see \cite{chered5}.) As a result, in the evolution analysis one scales time (or mass density) accordingly, in order to see the evolution of the bending deformation. As a consequence of small, slow in time in-plane forces, in-plane motion is partially quasistatic (see Remark \ref{remev-1} below).

Furthermore, we show that in order to see in-plane motion with inertia one has to allow in-plane forces acting in the real (i.e., unscaled) time --- to the best of our knowledge, such models have not been discussed in the literature, even in the simplest case of a homogeneous plate, see Remark \ref{remev1}. In a model of this type, when we admit \RRR an \BBB out-of-plane force (of the same order of magnitude as the in-plane force components), in the limit we obtain a degenerate equation for the out-of-plane motion, since there is no elastic resistance to it (see Remark \ref{remev3}). 

The behaviour of plates with high-contrast inclusions also has its peculiarities. As demonstrated in the present work, the effect of the inclusions is not seen on the long timescale under the standard scaling of the corresponding elastic coefficients (i.e., the case of coefficients of order $\varepsilon^2$, where $\varepsilon$ is the dimensionless characteristic cell size), see Remark \ref{remev-1}. In order to obtain standard effects of high-contrast in the long-time evolution, one needs to scale the coefficients differently, see Section \ref{sectionltbhc1}, Section \ref{sectionltbhc2}, Remark \ref{remev4} and Remark \ref{remev5}. 

Adopting the operator-theoretic perspective, we start by deriving the limit resolvent in different scaling regimes. 
To that end, we combine suitable decompositions of deformations that have bounded symmetrised gradients with some special properties of two-scale convergence (see Appendix and the references therein). The limit resolvent is always degenerate (this is caused by the different scaling of (out-of-plane) bending and (in-plane) stretching deformations, as explained above). Moreover, as is usual in derivation of asymptotic models for thin structures, one needs to take care of \RRR possible different \BBB scalings of the applied \RRR loads \BBB. These are not necessary for the analysis of the limit spectrum (see Section \ref{limitspectrum}), but are essential if we want to analyse evolution equations with appropriate loads (see Section \ref{limeveqsec}). Here we obtain different models depending on the effective parameter $\delta \in [0,\infty]$, which is the limit ratio between the thickness of the domain $h$ and the period $\eps$ (equivalently, between $h$ and the size of the inclusions), where $\varepsilon$ tends to zero simultaneously with $h,$ as well as on two characteristic orders of spectrum $h^2$ and one (equivalently, time-scales of order $1/h^2$ and one). In order to obtain high-contrast effects for ``small'' spectrum (i.e., on the long time scale in evolution), we also treat a non-standard scaling of the coefficients of high-contrast inclusions. We emphasize that \RRR such \BBB scaling is different for the case $\delta=0$ (small inclusions behave like small plates) and the case $\delta > 0$ (for the former the the scaling is $\varepsilon^4,$ while for the latter it is $\varepsilon^2h^2$).   

In order to derive the limit spectrum, we employ elements of the approach of \cite{Zhikov2000, zhikov2005}. Surprisingly, in the regime $\delta=\infty$ the limit spectrum does not coincide with the spectrum of the limit operator, which necessitates additional analysis (see Section \ref{sectionspsur} and Remark \ref{remivan101}). This, however, is not specific for elasticity and would also happen if one carried out simultaneous high-contrast homogenisation and dimension reduction for the diffusion equation. 

Suitably adapting the approach of \cite{pastukhova} to dimension reduction in linear elasticity (see Appendix for details), we use our results on resolvent convergence to derive appropriate limit evolution equations. To infer weak convergence of solutions from the weak convergence of initial conditions and loads, we use the fact that the resolvent is the Laplace transform of the evolution operator, while for deriving strong convergence of solutions for all times $t$ (from the strong convergence of initial conditions and loads), one needs to show the strong convergence of exponential functions on the basis of the strong convergence of resolvents. Both these implications are analysed in \cite{pastukhova} in an abstract form, which guides our study in the specific context of dimension reduction. 

Note that, due to the presence of the discrete spectrum of order $h^2,$ an order one scaling of the operator leads to the limit spectrum covering the whole positive real line (see \cite{castro1}). For this reason,  in the case when the coefficients on the soft inclusions are of order $\varepsilon^2$ (where $\varepsilon,$ as before, represents the dimensionless cell size) we separate the subspaces of bending and stretching deformations.
 This can be done only under additional symmetry assumptions on the elasticity tensor (see Section \ref{secmembsp}). The structure of the limit spectrum for the operator acting in the space of stretching deformations is similar to that of the limit spectrum in high-contrast three-dimensional elasticity (see \cite{avila08,zhikov2012}), where the ``Zhikov function" describing the limit frequency dispersion is matrix-valued (see Remark \ref{remlsm}) in the situations when $\delta<\infty$. For the case of coefficients of order $\varepsilon^4$ and $\varepsilon^2h^2$, which correspond to $\delta=0$ and $\delta \in (0,\infty)$ respectively, the structure of the limit spectrum of order $h^2$ is again similar to that of standard high-contrast materials (see Section \ref{sectionbgl}). However, the Zhikov function is scalar in these cases.  The spectral analysis of the case $\delta=\infty$, as already mentioned, requires special treatment (and uses some of ideas presented in \cite{AllaireConca98}).  

In what follows, we first present the results (effective tensors, limit resolvent,  limit spectrum, limit evolution equations in different regimes, see Sections \ref{notation}, \ref{main_results}) and then provide the proofs of all statements, see Section \ref{proofs}. In the Appendix we give some useful claims about decomposition of displacements with bounded scaled symmetric gradients, two-scale convergence, extension operators and operator theoretical approach to high-contrast.

\section{Notation and setup}
\label{notation} 
Let $\omega\subset\R^2$ be a bounded Lipschitz domain and consider the open interval $I =(-1/2, 1/2)\subset \R.$ Given a small positive number $h>0$, we define a three-dimensional plate
\[
\Omega^h := \omega \times (h I),
\]
whose boundary consists of the lateral surface $\Gamma^h:=\partial\omega \times (h I)$ and the transverse boundary 
$\omega \times \partial(h I)$.
We assume that the part of the boundary of $\Omega^h$ on which the Dirichlet (zero-displacement) boundary condition is set has the form $\Gamma_{\rm D}^h: = \gamma_{\rm D} \times (hI)\subset\Gamma^h,$
where $\gamma_{\rm D}\subset\omega$ has positive (one-dimensional) measure. 

For a vector $\vect a\in \R^k,$ we denote by $a_j,$ $j=1, \dots, k,$ its components, so 
$\vect a=(a_1,\dots, a_k).$ Similarly, the entries of a matrix $\vect A\in \R^{k\times k},$ are referred to as $A_{ij},$ $i,j=1, \dots, k.$  We denote by $x = (x_1, x_2, x_3) =: (\hat{x}, x_3)$ the standard Euclidean coordinates in $\mathbb{R}^3.$ (Note that we reserve the boldface for vectors and matrices representing elastic displacements and their gradients and regular type for coordinate vectors in the corresponding reference domains.) The unit basis vectors in $\R^k$ are denoted by $\vect e_i$, $i=1,\dots,k$. Furthermore, for $\vect a,\vect b \in \R^k$ we denote by $\vect a \otimes \vect b \in \R^{k \times k}$, the matrix whose $ij$-entry is $a_ib_j:$
$$ 
\vect a \otimes \vect b=\{a_i b_j\}_{ij=1}^k.
$$
For $\vect A \in \R^{k \times l }$, by $\vect A^\top$ we denote its transpose and for the case $k=l$ we denote by $\sym\vect A=(\vect A+{\vect A}^\top)/2$ the ``symmetrisation" of $\vect A.$

For an operator ${\mathcal A}$ (or a bilinear form $a$) the domain of ${\mathcal A}$ (respectively $a$)  is denoted by ${\mathcal D}({\mathcal A})$ (respectively ${\mathcal D}(a)$). 

Throughout the paper, we use the notation $\varepsilon_h$ interchangeably with $\varepsilon,$ to emphasize the fact that $\varepsilon$ goes to zero simultaneously with $h.$ 

Furthermore, when indicating a function space $X$ in the notation for a norm $\Vert\cdot\Vert_X,$ we omit the physical domain on which functions in $X$ are defined whenever it is clear from the context. For example, we often write $\Vert\cdot\Vert_{L^2},$ $\Vert\cdot\Vert_{H^1}$  instead of $\Vert\cdot\Vert_{L^2(\Omega;{\mathbb R}^k)},$ $\Vert\cdot\Vert_{H^1(\Omega;{\mathbb R}^k)},$ $k=2,3.$

Finally, we use the label $C$ for all constants present in estimates for functions in various sets. In such cases $C$ can be shown to admit some positive value independent of the function being estimated.  

\subsection{Differential operators of linear elasticity }

Consider the reference cell $Y:= [0, 1)^2.$ Let $Y_0 \subset Y$ be an open set with Lipschitz boundary (unless otherwise stated) such that its closure is a subset of the interior of $Y,$ and set $Y_1 = Y \setminus Y_0$.  We denote by $\chi_{Y_0}$ the characteristic function of $Y_0$ and by $\chi_{Y_1}$ the characteristic function of $Y_1$.  For any subset of $A \subset \R^k$, we denote by $\chi_A$ the characteristic function of the set $A$. 
The domain $\Omega^h$ is then divided into two subdomains $\Omega^{h,\epsh}_0$ and $\Omega^{h,\epsh}_1$:
\begin{eqnarray*}
\Omega^{h,\epsh}_0 :=  \bigcup_{z \in \Z^2:\epsh(Y+z)\subset \omega}\bigl\{\varepsilon_h(Y_0 + z)\times hI\bigr\}, \quad\qquad 
\Omega^{h,\epsh}_1 := \Omega^h\backslash \Omega_0^{h,\epsh}. 
\end{eqnarray*}
Furthermore, we denote 
\begin{equation*}
\Omega^\epsh_0 := \Omega^{1,\epsh}_0, \qquad \Omega^\epsh_1:= \Omega^{1,\epsh}_1.
\end{equation*}
By $\rho^{h, \epsh}$ we denote function representing the mass density of the medium. We then define 
$$\rho^{h,\epsh}(x)=\rho_0\left({\hat{x}}/{\epsh}\right)\chi_{\Omega_0^{h,\epsh}}+\rho_1\left({\hat{x}}/{\epsh}\right)\chi_{\Omega_1^{h,\epsh}}, \qquad x \in \Omega^h,$$
where $\rho_0,\rho_1$ are periodic positive bounded functions, defined on $Y_0$ and $Y_1$ respectively and extended via periodicity. Namely, there exist $c_1,c_2>0$ such that 
$$ c_1<\rho_0(y)<c_2\quad \forall y\in Y_0 , \quad\qquad c_1<\rho_1(y)<c_2\quad \forall y\in Y_1.$$
We also denote $\rho:=\rho_0 \chi_{Y_0}+\rho_1\chi_{Y_1}$,  $\rho^{\epsh}:=\rho^{1,\epsh}$. 
 We make use of the variational space with zero Dirichlet boundary conditions, defined as:
\[
H^1_{\Gamma_{\rm D}^h}(\Omega^h,\R^3) := \left\{\vect v \in H^1(\Omega^h;\R^3) :\vect v = 0 \ \mbox{on} \ \Gamma^h_{\rm D}\right\}.
\]
The elastic properties of periodically heterogeneous material are stored in the elasticity tensor $\C^{\mu_h}$, which is assumed to be of the form:
\begin{equation}
\C^{\mu_h}(y) =
\left\{\begin{array}{ll}
\C_1(y), & y \in Y_1, \\[0.4em]
\mu_h^2\, \C_0(y), & y \in Y_0.
\end{array} \right. 
\label{coeff_def}
\end{equation}
where $\mu_h$ is a parameter that goes to zero simultaneously with $h,\varepsilon_h$. The tensor $\C^{\mu_h}$ is then extended to $\R^2$ via $Y$-periodicity.  
The tensors $\C_0$ and $\C_1$ are assumed to be uniformly positive definite on symmetric matrices, namely there exists
$\nu>0$ such that
\begin{equation} \label{coercivity} 
\nu|\xi|^2 \leq \C_{0,1}(y)\xi : \xi \leq \nu^{-1}|\xi|^2 \qquad \forall \xi \in \R^{3\times 3},\ \xi^\top = \xi. 
\end{equation} 
It is well known that for a hyperelastic material the following symmetries hold, which we assume henceforth:
$$\C_{\alpha,ijkl}=\C_{\alpha,jikl}=\C_{\alpha,klij},\quad i,j,k,l\in\{1,2,3\},\qquad  \alpha \in \{0,1\}. $$

The focus of our analysis is the differential operator of linear elasticity $\A_\epsh^h$ corresponding to the differential expression
\[
-\bigl(\rho^{h,\epsh}\bigr)^{-1}\div\left(\C^{\mu_h}({\hat{x}}/{\epsh}) \simgrad\right), 
\] 
It is defined as an unbounded operator in $L^2(\Omega^h,\R^3)$ (where the inner product is weighted\footnote{$(\vect u, \vect v)_{\epsh}:=\int_{\Omega^h}\rho^{h,\epsh} \vect u \vect v $. } by the mass density function $\rho^{h,\epsh}$) with domain
\[
\mathcal{D}(\mathcal{A}^h_\epsh)\subset  H_{\Gamma_{\rm D}^h}^1(\Omega^h;\R^3),
\]
via the bilinear form
\[
a^h_\epsh( \vect U, \vect V):=\int\limits_{\Omega^h} \C^\epsh\biggl(\dfrac{\hat{x}}{\epsh}\biggr) \simgrad \vect U(x) : \simgrad \vect V(x) \,dx, \qquad  \vect U, \vect V\in\mathcal{D}(a_\epsh^h)=H^1_{\Gamma^h_{\rm D}}(\Omega^h;\R^3)=\mathcal{D}\bigl((\mathcal{A}^h_\epsh)^{1/2}\bigr),
\]
with zero Dirichlet boundary condition on the part $\Gamma_{\rm D}^h$ of the boundary, which corresponds to the partially clamped case. For a given pair $(h, \varepsilon_h)$ we denote by $\vect U^\epsh$ any ``deformation field" on \RRR$\Omega^h,$ \BBB i.e., the solution to the integral identity 
\[
a^h_\epsh({\vect U}^\epsh, \vect V)=\int_{\Omega^h}{\vect F}(x)\cdot{\vect V}(x)dx\qquad\forall {\vect V}\in H^1_{\Gamma^h_{\rm D}}(\Omega^h;\R^3),
\]
for some ${\vect F}\in L^2(\Omega^h;\R^3).$

We assume that the following limits for the ratio of the period $\epsh$ and the thickness $h$ exist:
\begin{equation}
\lim\limits_{h \to 0} \frac{h}{\epsh} =:\delta \in [0,\infty], \qquad \lim\limits_{h \to 0} \frac{h}{{\epshtwo}} =:\kappa \in [0,\infty]
\label{delta_kappa}
\end{equation}
and will discuss different asymptotic regimes in terms of the values of $\delta,$ $\kappa.$

The asymptotic regime $\mu_h = O(1)$ corresponds to the standard case of moderate-contrast (i.e., uniformly elliptic) homogenisation. However, in the present paper we are interested in the ``critical" case  $\mu_h = \epsh$, which corresponds to high contrast in material coefficients. In addition to this, due to the the dimension reduction in elasticity, higher orders of contrast will also be of interest, namely $\mu_h = \epsh h$  for $\delta>0$ and $\mu_h={\epshtwo}$ for $\delta=0,$ see the table in Section \ref{table_sec}. 

Parts of the following assumption will be used occasionally to \RRR showcase \BBB special situations.
\begin{assumption}\label{assumivan1}  
	\begin{enumerate}
		\renewcommand\labelenumi{(\theenumi)} 
		\item 
		The elasticity tensor is planar symmetric: 
		\begin{equation*}
		\C_{\alpha,ijk3} = 0, \C_{\alpha,i333} = 0, 
		\quad i,j,k \in \{1,2\},\qquad \alpha \in \{0,1\}.
		\end{equation*}
		\item The inclusion set $Y_0$ has a ``centre point" $y^0=(y^0_1,y^0_2)\in Y_0$, such that $Y_0$ is symmetric with respect to the lines $y_1=y^0_1$, $y_2=y^0_2$. We also assume that the elasticity tensor $y \mapsto \C_0(y)$ and density $y \mapsto \rho_0(y)$ are invariant under the corresponding symmetry transformations. 
		\item The inclusion set $Y_0$ is invariant under the rotations with respect to the angle $\pi/2$ around the point $(y^0_1,y^0_2)$. Additionally, assume that the following material symmetries hold:
		\begin{equation*}   
		\C_{0,11ij}=\C_{0,22ij}, \quad \C_{0,12kk}=0,\quad i,j,k\in\{1,2,3\},
		\end{equation*} 
		and that the function $y \mapsto \rho_0(y)$ is symmetric with respect to the rotation through $\pi/2$ around the point $(y^0_1,y^0_2)$. 	
\end{enumerate} 
\end{assumption}
We define the following subspaces of $L^2(\Omega^h;\R^3)$:
\begin{equation*}
\begin{aligned}
&L^{2, \rm bend}(\Omega^h;\R^3) := \left\{\vect V = (V_1,  V_2,  V_3) \in L^2(\Omega^h;\R^3); \quad  V_1,  V_2 \mbox{ are odd  w.r.t. } x_3,\quad  V_3 \mbox{ is even  w.r.t. } x_3 \right\}, \\[0.3em]
&L^{2, \rm memb}(\Omega^h;\R^3) := \left\{\vect V = (V_1, V_2, V_3) \in L^2(\Omega^h;\R^3); \quad  V_1,  V_2 \mbox{ are even  w.r.t. } x_3,\quad  V_3 \mbox{ is odd  w.r.t. } x_3 \right\}.
\end{aligned}
\end{equation*}
Similarly, we define $L^{2, \rm bend}(\Omega \times Y;\R^3)$, $L^{2, \rm memb}(\Omega \times Y;\R^3)$, $L^{2, \rm bend}(I \times Y_0;\R^3)$, $L^{2, \rm memb}(I \times Y_0;\R^3)$.  
\begin{remark} 
	\label{inv_remark}
	Part (1) of Assumption \ref{assumivan1} is needed to infer that the spaces $L^{2, \rm bend}(\Omega^h;\R^3)$, $L^{2, \rm memb}$ $(\Omega^h; \R^3)$ are invariant for the operator $\mathcal{A}^h_{\epsh}$.  
	Part (2) of the same assumption will additionally be used when we want to infer that the values of the Zhikov function $\beta,$ see (\ref{razgovor1}), are diagonal matrices, 
	and part (3) will be used in combination with parts (1) and (2) when we want to infer that the $(1,1)$ and $(2,2)$ entries of the Zhikov function are equal.
	Although we do not assume the dependence on the $x_3$ variable, our analysis can be easily extended to this case (at the expense of obtaining more complex limit equations in some cases). In the case of planar symmetries, a natural assumption would be that the elasticity tensor is even in the $x_3$ variable. 
\end{remark}

In order to work in a fixed domain $\Omega := \Omega^1$,  $\Gamma:=\Gamma^1,$ $\Gamma_{\rm D}:= \Gamma_{\rm D}^1$, we apply the change of variables $$(x_1,x_2,x_3) := (x_1^h,x_2^h, h^{-1}x^h_3), \quad (x_1^h,x_2^h,x^h_3)\in \Omega^h,$$ and define $\vect u^\epsh(x) :={\vect U}^\epsh(x^h)$. In the new variables, we will be dealing with a scaled symmetrized gradient and scaled divergence, given by
\[
\simgrad \vect U^\epsh(x^h) = \simgrad_h \vect u^\epsh(x),\qquad  \div \vect U^\epsh(x^h)=\tr \nabla_h \vect u^{\epsh} (x)=:\div_h \vect u^{\epsh}(x),   
\]
where for a given function ${\vect u}$ we use the notation $\nabla_h \vect u := \left(\nabla_{\hat{x}} \vect u |\,h^{-1}\partial_{x_3} \vect u\right)$ for the gradient scaled ``transversally", and $\tr$ denotes the trace of a matrix. 
Thus, we are dealing with an operator $\mathcal{A}_\epsh$ in $L^2(\Omega;\R^3)$ (where the inner product is defined with the weight function $\rho^\epsh$) whose differential expression and domain are given by 
\[
 -\bigl(\rho^{\epsh}\bigr)^{-1}\div_h\left(\C^\epsh({\hat{x}}/{\epsh}) \simgrad_h\right), \qquad\quad  \mathcal{D}(\mathcal{A}_\epsh) \subset H^1_{\Gamma_{\rm D}}(\Omega;\R^3),
 \]
respectively. The operator $\mathcal{A}_\epsh$ is defined by the form 
\[
a_\epsh( \vect u, \vect v):= 
\int\limits_{\Omega} \C^\epsh\biggl(\dfrac{\hat{x}}{\epsh}\biggr) \simgrad_h \vect u(x) : \simgrad_h \vect v(x) \,dx, \qquad\quad \vect u, \vect v\in\mathcal{D}(a_\epsh)=H^1_{\Gamma_{\rm D}}(\Omega;\R^3) = \mathcal{D}\bigl(\mathcal{A}_\epsh^{1/2}\bigr). 
\]

As in Remark \ref{inv_remark}, under Assumption \ref{assumivan1}\,(1) the spaces $L^{2, \rm bend}(\Omega;\R^3)$, $L^{2, \rm memb}(\Omega;\R^3)$ are invariant for the operator $\mathcal{A}_{\epsh}$. 
We will also say that the operator $\mathcal{A}_{\epsh}$ represents the bilinear form $a_{\epsh}$ (a symmetric bilinear form defines a self-adjoint densely defined unbounded operator, see, e.g., \cite{Schmudgen}). 
In connection with $\mathcal{A}_{\epsh}$ we define the operator $\tilde{\mathcal{A}}_{\epsh}$ as the restriction of $\mathcal{A}_{\epsh}$  onto the space $L^{2, \rm memb}(\Omega;\R^3)$.  Additionally, we define the  self-adjoint operators $\mathring{\mathcal{A}}_{\epsh}$  in $L^2(I \times Y_0;\R^3)$ whose differential expression and domain are given by
$$ 
-\rho_0^{-1}\div_{\frac{h}{\epsh}}\left(\C_0(y) \sym  \nabla_{\frac{h}{\epsh}}\right) ,\qquad\quad\mathcal{D} (\mathring{\mathcal{A}_{\epsh}})\subset H^1_{00}(I \times Y_0;\R^3), $$
as the operators represented by the respective bilinear forms 
$$ \mathring{a}_{\epsh} (\vect u, \vect v)= \int_{I \times Y_0}\C_0(y) \sym \nabla_{\frac{h}{\epsh}} \vect u:\sym \nabla_{\frac{h}{\epsh}} \vect v dx_3 dy, \qquad \vect u, \vect v\in{\mathcal D}(\mathring{a}_{\epsh})= \bigl(H_{00}^1(I \times Y_0;\R^3)\bigr)^2,
$$
where $H_{00}^{1} (I \times Y_0;\R^k)$ stands for the subspace of $H^1(I \times Y_0;\R^k)$ consisting of functions with zero trace on $I \times \partial Y_0$.
Finally, we define  $\mathring{\tilde{\mathcal{A}}}_{\epsh}$ as the operator corresponding to the same differential expression as $\mathring{\mathcal{A}}_{\epsh}$ but acting in the space $L^{2, \rm memb}(I \times Y_0;\R^3),$ hence representing an appropriate bilinear form  
\[
\mathring{\tilde{a}}_{\epsh}:  \left(H_{00}^1(I \times Y_0;\R^3)\right)^2 \cap \left( L^{2, \rm memb} (I \times Y_0;\R^3)\right)^2 \to \R.
\]

\subsection{Additional notation}

The inner product of $x,y \in \R^n$ is denoted by $(x, y):= \sum_{i=1}^n x_iy_i$. For a function $ f \in L^1(A)$ (and similarly for $\vect f \in L^1(A;\R^3)$), we denote by 
\begin{equation}
\fint_A  f:=\frac{1}{|A|}\int_A  f,
\label{meanf}
\end{equation}
 its mean over $A.$ We will also use the shorthand notation 
\begin{equation}
\overline{ f}:=\int_I  f(x_3) dx_3,\qquad \langle  f \rangle := \int\limits_{Y}  f(y) \,dy, \qquad \vect f_{\!*}  := 
\left(\begin{array}{c}
\vect f_1 \\
\vect f_2
\end{array}\right),
\label{aux_notation}
\end{equation}
where in the last expression it is assumed that $\vect f$ is a (three-component) vector-valued function. In line with (\ref{meanf}), the notation $\overline{f}$ and $\langle f\rangle$ is naturally extended to vector-valued functions. 

Next, denote by $\iota$ the ``embedding'' operator
\begin{equation}
\iota:\R^{2\times 2} \to \R^{3\times 3}, \qquad \iota
\begin{pmatrix}
 a_{11}  &  a_{12} \\[0.2em]  a_{21}  &  a_{22}
\end{pmatrix}:= 
\begin{pmatrix}
 a_{11} &  a_{12} & 0 \\[0.2em]  a_{21} &  a_{22}  & 0 \\[0.2em] 0 & 0 & 0
\end{pmatrix}.
\label{iota_one}
\end{equation}
Similarly, we define an operator $\iota:\R^{3\times 2}\to \R^{3 \times 3}.$ We use the same notation for this operator and the operator defined in (\ref{iota_one}),
as it will be clear from the context which of the two embeddings is used in each particular case. 
For $\vect a \in \R^3$ we denote by $\iota_1$ the mapping
$$ \iota_1:\R^3 \to \R^{3 \times 3},\qquad \iota_1(\vect a)= \begin{pmatrix}\begin{matrix}0 \end{matrix} & \begin{matrix}  a_1  \\  a_2 \end{matrix}  \\ \begin{matrix}  a_1 &  a_2 \end{matrix} &  a_3\end{pmatrix}. $$
Furthermore, for $l>0$ we define the ``scaling" matrix
\begin{equation*}
\pi_{l} :=  \begin{pmatrix}
l & 0 & 0 \\[0.2em] 0 & l & 0 \\[0.2em]0  & 0 & 1 \\
\end{pmatrix}.
\end{equation*}
We also define the space $H^1_{\gamma_{\rm D}}(\omega; \R^2)$ of ${\mathbb R}^2$-valued $H^1$ functions vanishing on $\gamma_{\rm D}$  and the space $H^2_{\gamma_{\rm D}}(\omega)$ of scalar $H^2$ functions vanishing on $\gamma_{\rm D}$ together with their first derivatives.

In what follows, we denote by $\mathcal{Y}$ the flat unit torus in $\R^2$, by $\mathcal{Y}_1$ the flat unit torus in $\R^2$ with a hole corresponding to the set $Y_1,$ 
by $\R^{n \times n}_{\sym}$ the space of symmetric matrices, $\R^{n \times n}_{\skeww}$ the space of skew-symmetric matrices,  by $\vect  I_{n \times n}$ the unit matrix in $\R^{n \times n}$, and by  
$\delta_{\alpha \beta}$ the Kronecker delta function. 
Furthermore, $H^1(\mathcal{Y})$, $H^2(\mathcal{Y}) $  denote the spaces of periodic functions in $H^1(Y)$, $H^2(Y).$ Similarly, we denote by  
$H^1(I \times  \mathcal{Y})$ the space of functions in $H^1(I \times  Y)$ that are periodic in $y\in Y.$ 
The spaces $\dot{H}^1(\mathcal{Y})$,  $\dot{H}^1(I \times \mathcal{Y} )$ are defined to consist of functions in $H^1(\mathcal{Y})$, $H^1(I \times \mathcal{Y})$ whose mean value is zero.    
Similarly, we define the spaces $H^k(\mathcal{Y}_1)$ for $k=1,2$. 
 Note that every function in $H_{00}^1(I \times Y_0;\mathbb{R}^k)$  can be naturally extended by zero to a function in $H^1(I \times \mathcal{Y};\R^k)$.

The space $C^{k}(\mathcal{Y})$
denotes the space of smooth functions on the torus $\mathcal{Y}$ that have continuous derivatives up to order $k.$  In a similar way we define the space $C^k(I \times \mathcal{Y})$. Furthermore, $C^k_{00}(I \times Y_0)$ denotes the space of $k$-differentiable functions on $I\times Y_0$ whose derivatives up to order $k$ are zero on $I \times \partial Y_0$. 
For $A \subset \R^n$, the space $C^k_{\rm c}(A)$ consists of functions with compact support in $A$ that have continuous derivatives up to order $k.$
%

For a function $\vect u \in H^1(I \times Y ;\R^3),$ we use the notation $\widetilde{\nabla}_{\delta}$ for the ``anisotropically scaled" gradient whose third column is obtained from the usual gradient by scaling with $\delta^{-1}:$
$$
 \widetilde{\nabla}_{\delta} \vect u := \left(\nabla_y \vect u |\,\delta^{-1}\partial_{x_3} \vect u\right).
$$ 
Next, for  $\vect{\varphi} \in L^2(\omega; H^1(I \times \mathcal{Y};\R^3))$, we denote 
$$ 
\funcC_{\delta}(\vect \varphi)= \sym \widetilde{\nabla}_{\delta} \vect \varphi, 
$$
and for $\vect{\varphi}_1 \in L^2(\omega;H^1(\mathcal{Y};\R^2))$, $\varphi_2 \in L^2(\omega;H^2(\mathcal{Y}))$, $\vect g \in L^2 (\Omega \times Y; \R^3)$, we use the notation  
\begin{equation} \funcC_0(\vect{\varphi}_1,\varphi_2,\vect g)(x,y):= \begin{pmatrix}\begin{matrix} \sym \nabla_y \vect{\varphi}_1(\hat{x},y)-x_3 \nabla^2_y \varphi_2  (\hat{x},y) \end{matrix} & \begin{matrix} g_1 (x,y) \\[0.3em] g_2(x,y) \end{matrix}  \\[1em] \begin{matrix} g_1 (x,y)&  g_2(x,y) \end{matrix} & g_3(x,y) \end{pmatrix}. 
	\label{C_0_def}
\end{equation}	
Furthermore, for $\vect{w} \in L^2(\Omega;\dot{H}^1(\mathcal{Y};\R^3))$, $\vect g \in L^2 (\Omega; \R^3)$, we define
$$ \funcC_{\infty}(\vect w,\vect g)(x,y):= \begin{pmatrix}\begin{matrix} \sym \nabla_y \vect{w}_*  (x,y) \end{matrix} & \begin{matrix}  g_1 (x)+\partial_{y_1}  w_3(x,y) \\[0.3em] g_2(x)+ \partial_{y_2} w_3(x,y) \end{matrix}  \\[1.0em] \begin{matrix}  g_1 (x)+\partial_{y_1} w_3(x,y)&  g_2(x)+\partial_{y_2} w_3(x,y) \end{matrix} &  g_3(x) \end{pmatrix}, 
$$
where $\vect{w}_*$ is defined via (\ref{aux_notation}).

For different values of $\delta,\kappa$, we introduce the spaces
\begin{align*}
{\mathfrak C}_{\delta}(\Omega \times Y)&:=\begin{cases} 
\left\{\funcC_{\delta} (\vect \varphi): \vect \varphi \in L^2(\omega; H^1(I \times \mathcal{Y};\R^3))\right\}, \qquad 
 \delta \in (0,\infty),\\[0.35em]
\left \{\funcC_0(\vect{\varphi}_1,\varphi_2,\vect g): \vect{\varphi}_1 \in L^2(\omega; H^1(\mathcal{Y};\R^2)), \varphi_2 \in L^2(\omega; H^2(\mathcal{Y})), \vect g \in L^2 (\Omega \times Y; \R^3)\right\},\ \ \ 
\delta=0, \\[0.35em]
\left\{\funcC_{\infty}(\vect w,\vect g): \vect{w} \in L^2(\Omega;\dot{H}^1(\mathcal{Y};\R^3)), \vect g \in L^2 (\Omega \times Y; \R^3)\right\}, \qquad 
\delta=\infty; 
\end{cases} 
\\[0.7em]
V_{1,\delta,\kappa} (\omega \times Y)&:=\begin{cases} 
H^1_{\gamma_{\rm D}} (\omega;\R^2)\times L^2(\omega), \qquad
\delta \in [0,\infty],\   
\kappa=\infty, \\[0.35em]
H^1_{\gamma_{\rm D}} (\omega;\R^2)\times  L^2(\omega;H^2(\mathcal{Y}_1)\times L^2(Y_0)),\qquad  
\delta=0,\ \ \kappa \in (0,\infty), \\[0.35em]
H^1_{\gamma_{\rm D}} (\omega;\R^2) \times  L^2(\omega \times Y), 
\qquad 
\delta=0,\ \ \kappa=0;
\end{cases}
\\[0.7em]
V_{2,\delta} (\Omega \times Y_0)&:=\begin{cases}
L^2(\omega;H^1_{00} (I \times Y_0;\R^3)),\qquad  
 \delta \in (0,\infty), \\[0.35em]
L^2(\Omega;H^1_{0} (Y_0;\R^3)),
\qquad 
\delta=\infty,\\[0.35em]
L^2(\omega;H^1_{0} (Y_0;\R^2)) \times L^2(\omega \times Y_0) ,
\qquad 
 \delta=0;
\end{cases}
 \\[0.7em]
H_{\delta,\kappa}(\Omega \times Y)&:=\begin{cases}
L^2(\omega;\R^3)+L^2(\Omega \times Y_0;\R^3), \qquad 
\delta \in (0,\infty],\ \ \kappa=\infty, \\[0.35em]
L^2(\omega;\R^3)+L^2(\omega \times Y_0;\R^3), \qquad
\delta =0,\ \ \kappa=\infty, \\[0.35em]
\left(L^2(\omega;\R^2)+L^2(\omega \times Y_0;\R^2)\right)\times L^2(\omega \times Y) , \qquad 
\delta =0,\ \ \kappa \in [0,\infty), 	
\end{cases} 			
\end{align*}
where functions defined on $\omega$ are assumed to be constant across the plate whenever they are considered in $\Omega.$ (In other words, $L^2(\omega \times Y)$ is treated as naturally embedded in $L^2(\Omega \times Y).$)
%
We denote by $P_{\delta,\kappa}$ and $P^0$ the orthogonal projections $P_{\delta,\kappa}:L^2(\Omega\times Y;\R^3)\to H_{\delta,\kappa}(\Omega \times Y)$ and $P^0:L^2(\Omega \times Y) \to L^2(\omega)+L^2(\omega \times Y_0),$ respectively.  
The mappings $$L^2(\Omega)+L^2(\Omega \times Y_0) \ni u(x)+\mathring{u} (x,y )\mapsto u(x) \in L^2(\Omega)$$ 
and 
$$L^2(\Omega)+L^2(\Omega \times Y_0) \ni u(x)+\mathring{u} (x,y )\mapsto \mathring{u}(x,y) \in L^2(\Omega\times Y_0)$$  are labelled by $S_1$ and $S_2,$ respectively. 
 For Hilbert spaces $V, W$ and a linear operator $\mathcal{A}:V\to W,$ we denote by $\mathcal{R}(\mathcal{A}) \subset W$ its range, and for a linear operator $\mathcal{A}:V \to V,$ we denote by $\sigma (\mathcal{A})$ its spectrum. Furthermore,   $\sigma_{\rm ess}(\mathcal{A})$ and $\sigma_{\rm disc}(\mathcal{A})$ denote the essential and discrete spectrum of $\mathcal{A},$ respectively.  Throughout, we denote by $\mathcal{I}$ the identity operator on the appropriate ambient space.

For the definition of two-scale convergence, the related notation and properties of importance for our analysis, we refer the reader to Appendix (for the basic properties and introduction, see also \cite{Allaire-92}). Finally, for a Hilbert space $V$, we denote by $V^*$ its dual, and $\rightharpoonup,$ $\to$ denote, respectively, the weak and strong convergence.

\subsection{Section guide for different scaling regimes}

\label{table_sec}

The table below shows the different scalings considered in this paper for the period of oscillations $\varepsilon_h$ with respect to the thickness $h$ as well as  appropriate scalings of the contrast, time, and spectrum.

\


\noindent
\begin{tabular}{|l|c|c|c|c|c|c|c|c|c|}
	\hline
	\ & Time
	&
	\parbox[t]{1.1cm}{$h\ll\varepsilon_h$ \\$(\delta=0)$ \\ \ }
	& Spec
	& Time &
	\parbox[t]{1.7cm}{\ \ \ $h\sim\varepsilon_h$ \\$\bigl(\delta\in(0,\infty)\bigr)$ \\ \ }
	& Spec & Time & 
	\parbox[t]{1.3cm}{$h\gg\varepsilon_h$ \\$(\delta=\infty)$ \\ \ }
	& Spec\\
	\hline
		\parbox[t]{1.3cm}{\ \\$\mu_h=\varepsilon_h$}& 
\parbox[t]{1cm}{Non-\\scaled:\\ \ref{realtimeevol}\\ \ }
	&\parbox[t]{1cm}{$\tau=0:$\\ \ref{hlleps}.A\\ \ }& \ref{secmembsp} & \parbox[t]{1cm}{Long:\\ \ref{section341}\\--------\\Non-\\scaled:\\ \ref{realtimeevol}\\ \ } &
	\parbox[t]{1cm}{$\tau=2:$\\ \ref{sectionheps}.A\\--------\\$\tau=0:$\\ \ref{sectionheps}.B\\ \ }
   & \parbox[t]{1cm}{$\tau=2:$\\ \ref{section332}\\--------\\$\tau=0:$\\ \ref{secmembsp}\\ \ } & \parbox[t]{1cm}{Non-\\scaled:\\ \ref{realtimeevol}\\ \ } & 
   \parbox[t]{1cm}{$\tau=0:$\\ \ref{epsllh}.A\\ \ }
   & \ref{sectionspsur}\\
	\hline
	\parbox[t]{1.3cm}{\ \\$\mu_h=\varepsilon_hh$}&***** & ***** & ****& 
	\parbox[t]{1cm}{Long:\\ \ref{sectionltbhc1}\\ \ }
	 & \parbox[t]{1cm}{$\tau=2:$\\ \ref{sectionheps}.C\\ \ }
	 & \ref{sectionbgl} &
	 \parbox[t]{1cm}{Long:\\ \ref{sectionltbhc1}\\ \ }
	  & 
	  \parbox[t]{1cm}{$\tau=2:$\\ \ref{epsllh}.B\\ \ }
	  &\ref{sectionspsur}\\
	\hline
	\parbox[t]{1.3cm}{\ \\$\mu_h=\varepsilon_h^2$}& 
	\parbox[t]{1cm}{Long:\\ \ref{sectionltbhc2}\\ \ }
	 &
	 \parbox[t]{1cm}{$\tau=2:$\\ \ref{hlleps}.B\\ \ }
	  & \ref{sectionbgl}& ***** & ***** & ***** & ***** & ***** & ****\\
	\hline
\end{tabular}



\section{Main results}
\label{main_results} 
\subsection{Effective elasticity tensors}
\label{effective}
In this section we will define limit elasticity tensors that will appear in various regimes. For $\delta \in (0,\infty),$ we define a symmetric tensor $ \C^{\rm hom}_{\delta}$ via
\begin{equation}
\label{tensorminimization}
\begin{split}
 \C^{\rm hom}_{\delta}(\vect A&,\vect B):(\vect A,\vect B):= \\[0.35em] 
& = \min_{\vect \varphi \in H^1(I\times \mathcal{Y}; \R^3)}  \int\limits_I \int\limits_{Y_1} \C_1(y) \left[\iota\left(\vect A -x_3 \vect B\right) + \sym\widetilde{\nabla}_{\delta}\, \vect \varphi\right] : \left[\iota\left(\vect A -x_3 \vect B\right) + \sym\widetilde{\nabla}_{\delta}\, \vect \varphi\right] \,dy \,dx_3,\\[0.35em]
&\qquad\qquad\qquad\vect A, \vect B \in \mathbb{R}^{2 \times 2}_{\sym},
\end{split}
\end{equation}
as well as tensors $\C^{\rm memb}_{\delta},$ $\C^{\rm bend}_{\delta}$ via
\begin{equation*}
\begin{split}
\C^{\rm memb}_{\delta} \vect A: \vect A := \C^{\rm hom}_{\delta} (\vect A,\vect 0):(\vect A,\vect 0),\quad \vect A\in \mathbb{R}^{2 \times 2}_{\sym},\quad\qquad
\C^{\rm bend}_{\delta} \vect B :\vect B := \C^{\rm hom}_{\delta}(\vect 0,\vect B):(\vect 0,\vect B),\quad \vect B \in \mathbb{R}^{2 \times 2}_{\sym}.
\end{split}
\end{equation*}
\begin{remark}\label{remigor1} 
	Under an additional assumption on the material symmetries, namely Assumption \ref{assumivan1}\,~(1), the tensor $\C_{\delta}^{\rm hom}$ can be written as the orthogonal direct sum 
	$$\C_{\delta}^{\rm hom} = \C_{\delta}^{\rm memb} \oplus \C_{\delta}^{\rm bend},$$
	in the sense that
	\begin{equation*}
	\C^{\rm hom}_{\delta} = \begin{bmatrix} \C_{\delta}^{\rm memb} & 0 \\[0.35em] 0 & \C_{\delta}^{\rm bend}\end{bmatrix},
	\end{equation*}
i.e.,
	\begin{equation*}
	\C^{\rm hom}_{\delta}(\vect A,\vect B):(\vect A,\vect B) =  \C_{\delta}^{\rm memb} \vect A: \vect A + \C_{\delta}^{\rm bend} \vect B:\vect B,\qquad \vect A, \vect B \in \mathbb{R}^{2 \times 2}_{\sym}.
	\end{equation*}
\end{remark}
For the case $\delta=0$ the following tensor $\C^{\rm hom,r}$ will be important (in this case we assume that $Y_0$ is of class $C^{1,1}$):
\begin{equation}
\label{tensorminimizationrrr}
\begin{split}
&\C^{\rm hom,r}(\vect A,\vect B):(\vect A,\vect B):= \\[0.35em] & = \min  \int\limits_I \int\limits_{Y_1} \C_1(y) \left[\iota\left(\vect A -x_3 \vect B\right) + \funcC_0(\vect{\varphi}_1,\varphi_2,\vect g)(x_3,y)\right] : \left[\iota\left(\vect A -x_3 \vect B\right) + \funcC_0(\vect{\varphi}_1,\varphi_2,\vect g)(x_3,y)\right] \,dy \,dx_3,\\[-0.25em]
&\qquad\qquad\qquad\qquad\vect A, \vect B \in \mathbb{R}^{2 \times 2}_{\sym},
\end{split}
\end{equation}
where the minimum is taken over  $\vect{\varphi}_1 \in \dot{H}^1(\mathcal{Y};\R^2)$, $\varphi_2 \in \dot{H}^2(\mathcal{Y})$, $\vect g \in L^2 ( I \times Y, \R^3).$ Note that in (\ref{tensorminimizationrrr}) the definition (\ref{C_0_def}) of $\mathcal{C}_0$ is used with $\vect{\varphi}_1,$ $\varphi_2,$ $\vect g$ independent of $\hat{x}$. Furthermore, we define a tensor function $\C_0^{\rm red}(y),$ $y\in Y_0,$ by the formula
\begin{equation}
\begin{split} \label{nada10000}
 \C_0^{\rm red}(y)(\vect A&,\vect B):(\vect A,\vect B):= \\[0.35em] & = \min_{\vect g \in L^2(I;\R^3)}  \int\limits_I  \C_0(y) \left[\iota\left(\vect A -x_3 \vect B\right) + \iota_1(\vect g(x_3))\right] : \left[\iota\left(\vect A -x_3 \vect B\right) + \iota_1 (\vect g(x_3))\right] \,dx_3,\\[0em]
&\qquad\qquad\qquad\qquad\vect A, \vect B \in \mathbb{R}^{2 \times 2}_{\sym}.
\end{split}
\end{equation}
In addition, for $\alpha=0,1$
  we define a tensor-valued function $\C_{\alpha}^{\rm r}(y)$, $y \in Y$, via the formula 
$$\C_{\alpha}^{\rm r}(y)\vect A:\vect A=\min_{\vect d \in \R^3}\C_{\alpha} (y)[\iota(\vect A)+\iota_1 (\vect d)]:[\iota(\vect A)+\iota_1 (\vect d)],\qquad \vect A \in \R^{2 \times 2}_{\sym},\quad y\in Y_\alpha.
$$
\begin{remark} \label{nada10001} 
	
	It is easily seen that for a $\vect{\varphi}_1,$ $\varphi_2,$ $\vect g$ on which the minimum in \eqref{tensorminimizationrrr} is attained, one has $\vect g(x_3, y)=\vect g_0(y)+x_3 \vect g_1(y),$ for some $\vect g_0,$ $\vect g_1\in L^2(Y, {\mathbb R}^3).$ It follows that 
	$$ \C^{\rm hom,r}(\vect A,\vect B):(\vect A,\vect B)=
	\C_{1}^{\rm memb,r}\vect A:\vect A+\C_{1}^{\rm bend,r} \vect B:\vect B, \qquad  \vect A, \vect B \in \R^{2 \times 2}_{\sym},$$
	where 
	\begin{eqnarray*}
		\C_{1}^{\rm memb,r}\vect A:\vect A&:=& \C^{\rm hom,r}(\vect A,\vect 0):(\vect A,\vect 0)\\[0.3em] &=&  \min_{\vect \varphi_1 \in \dot{H}^1(\mathcal{Y};\R^2)} \int_{Y_1}  \C_{1}^r(y)[\vect A+\nabla_y \vect \varphi_1(y)]:[\vect A+\nabla_y \vect \varphi_1(y)]\, dy, \qquad  \vect A \in \R^{2 \times 2}_{\sym},\\[1em]
		\C_{1}^{\rm bend,r}\vect B:\vect B&:=& \C^{\rm hom,r}(\vect 0,\vect B):(\vect 0,\vect B)\\[0.3em] &=& \min_{\varphi \in \dot{H}^2(\mathcal{Y})} \frac{1}{12} \int_{Y_1}  \C_{1}^r(y)[\vect B+\nabla_y^2 \varphi(y)]:[\vect B+\nabla_y^2 \varphi(y)]\,dy, \qquad   \vect B \in \R^{2 \times 2}_{\sym}. 
	\end{eqnarray*} 
	Similarly to the above, it is seen that the minimum in \eqref{nada10000} is attained on the vector fields of the form $\vect g(x_3)= \vect g_0+x_3 \vect g_1,$ where $\vect g_0,$ $\vect g_1\in{\mathbb R}^3.$ Furthermore, we have the following decomposition:
	$$ \C_0^{\rm red}(y)(\vect A,\vect B):(\vect A,\vect B)=
	\C_{0}^{\rm memb,r}(y)\vect A:\vect A+\C_{0}^{\rm bend,r}(y) \vect B:\vect B, \qquad  \vect A, \vect B \in \R^{2 \times 2}_{\sym},\qquad y\in Y_0,$$
	where 
	$$ \C_{0}^{\rm memb,r}(y)\vect A:\vect A:= \C_{ 0}^r(y)\vect A:\vect A, \quad \C_{0}^{\rm bend,r}(y)\vect B:\vect B:=\frac{1}{12}\C_{0}^{\rm r}(y)\vect B:\vect B, \qquad  \vect A, \vect B \in \R^{2 \times 2}_{\sym}, \qquad y\in Y_0.$$	

\end{remark}	

For the case $\delta=\infty$, a tensor $\C^{\rm hom,h}$ will be important, which is defined by 
\begin{equation}
\label{tensorminimizationrrrh}
\begin{split}
\C^{\rm hom,h}(\vect A&,\vect B):(\vect A,\vect B):= \\[0.3em] & = \min  \int\limits_I \int\limits_{Y_1} \C(y) \left[\iota\left(\vect A -x_3 \vect B\right) + \funcC_{\infty}(\vect w,\vect g)(x_3,y)\right] : \left[\iota\left(\vect A -x_3 \vect B\right) + \funcC_{\infty}(\vect w,\vect g)(x_3,y)\right] \,dy \,dx_3,\\[-0.3em]
&\qquad\qquad\qquad\qquad\vect A, \vect B \in \R^{2 \times 2}_{\sym},
\end{split}
\end{equation}
where the minimum is taken over  $\vect w \in L^2 (I;\dot{H}^1(\mathcal{Y};\R^3))$, $\vect g \in L^2 (I; \R^3).$ (As in the case of the expression ${\mathcal C}_0$ entering (\ref{tensorminimizationrrr}), for the expression $\mathcal{C}_{\infty}$ in (\ref{tensorminimizationrrrh})  we take the functions $\vect w,$ $\vect g$ to be independent of $\hat{x}.$)
\begin{remark} \label{nada10001h} 
	It is easily seen that the minimum in \eqref{tensorminimizationrrrh} is attained on $\vect g=\vect g_0+x_3 \vect g_1 $, $\vect w=\vect w_0(y)+x_3 \vect w_1(y),$ for some ${\vect g}_0,$ ${\vect g}_1\in{\mathbb R}^3,$ $\vect w_0,$ $\vect w_1\in L^2(Y; {\mathbb R}^3).$ It follows that 
	$$ \C^{\rm hom,h}(\vect A,\vect B):(\vect A,\vect B)=
	\C^{\rm memb,h}\vect A:\vect A+\C^{\rm bend,h} \vect B:\vect B,  \qquad \vect A, \vect B \in \R^{2 \times 2}_{\sym},$$
	where 
	\begin{eqnarray*}
		\C^{\rm memb,h}\vect A:\vect A&:=&
		\C^{\rm hom,h}(\vect A,\vect 0):(\vect A,\vect 0) \\[0.35em] &=&
		\min_{\vect w \in H^1(\mathcal{Y},\mathbb{R}^3),\vect g \in \mathbb{R}^3 }  \int_{Y_1}  \C(y)[\vect A+\mathcal{C}_{\RRR \infty\BBB}(\vect w, \vect g)]:[\vect A+\mathcal{C}_{\RRR \infty \BBB} (\vect w, \vect g)]\, dy, \qquad \vect A \in \R^{2 \times 2}_{\sym}, \\[1em]
		\C^{\rm bend,h}\vect B:\vect B&:=&
		\C^{\rm hom,h}(\vect 0,\vect B):(\vect 0,\vect B) \\[0.35em] &=&
		\min_{\vect w \in H^1(\mathcal{Y},\mathbb{R}^3),\vect g \in \mathbb{R}^3} \frac{1}{12} \int_{Y_1}  \C(y)[\vect B+\mathcal{C}_{\RRR \infty\BBB}(\vect w, \vect g)]:[\vect B+\mathcal{C}_{\RRR \infty \BBB}(\vect w, \vect g)]\, dy, \qquad  \vect B \in \R^{2 \times 2}_{\sym}. 
	\end{eqnarray*} 
\end{remark}		
The following proposition is proved in Section \ref{sectinproof3.1}. 
\begin{proposition} \label{propvecer} 
	The tensor $\C^{\rm hom}_{\delta}$ (and consequently the tensors $\C_{\delta}^{\rm memb}$, $\C_{\delta}^{\rm bend}$ as well) is bounded and coercive, i.e., there exists $\nu>0$ such that 
	$$
	\nu\bigl(|\vect A|^2+|\vect B|^2\bigr) \leq \C^{\rm hom}_{\delta} (\vect A,\vect B):(\vect A,\vect B)  \leq \nu^{-1}\bigl(|\vect A|^2+|\vect B|^2\bigr) \qquad \forall \vect A,\vect B \in \R^{2 \times 2}_{\sym}. 
	$$ 	
	Analogous claims are valid for tensors  $\C^{\rm hom,r}$, $\C^{\rm hom,h}$, $\C_0^{\rm red}$ (and consequently tensors  $\C_{1}^{\rm memb,r} $, $	\C_{1}^{\rm bend,r}$, $	\C^{\rm memb,h}$, $	\C^{\rm bend,h}$, $	\C_{0}^{\rm memb,r}$, $	\C_{0}^{\rm bend,r}$).
\end{proposition}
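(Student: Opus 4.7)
The proof splits cleanly into boundedness (trivial by choosing a null test function) and coercivity (the real content, which comes from a periodic Korn-type rigidity argument applied separately at each value of $x_3$). I treat $\C^{\rm hom}_\delta$ for $\delta\in(0,\infty)$ in detail; the other tensors follow the same template with only notational adjustments, which I indicate at the end.

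\textbf{Boundedness.} Plug $\vect\varphi=0$ into \eqref{tensorminimization}. Together with the upper bound $\C_1\le\nu^{-1}\mathcal I$ from \eqref{coercivity} and $\int_I x_3\,dx_3=0,$ this yields
\begin{equation*}
\C^{\rm hom}_\delta(\vect A,\vect B):(\vect A,\vect B)\;\le\;\nu^{-1}|Y_1|\bigl(|\vect A|^2+\tfrac{1}{12}|\vect B|^2\bigr).
\end{equation*}

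\textbf{Coercivity.} By the lower bound in \eqref{coercivity},
\begin{equation*}
\C^{\rm hom}_\delta(\vect A,\vect B):(\vect A,\vect B)\;\ge\;\nu\min_{\vect\varphi}\int_I\!\int_{Y_1}\bigl|\iota(\vect A-x_3\vect B)+\sym\widetilde\nabla_\delta\vect\varphi\bigr|^2\,dy\,dx_3.
\end{equation*}
Discarding all but the upper-left $2{\times}2$ block (which only involves $\vect\varphi_*=(\varphi_1,\varphi_2)$ and its $y$-derivatives) gives the further lower bound
\begin{equation*}
\min_{\vect\varphi_*\in H^1(I\times\mathcal Y;\R^2)}\int_I\!\int_{Y_1}\bigl|(\vect A-x_3\vect B)+\sym\nabla_y\vect\varphi_*\bigr|^2\,dy\,dx_3.
\end{equation*}
For each fixed $x_3\in I,$ set $\vect C(x_3):=\vect A-x_3\vect B$ and apply the periodic Korn inequality on the perforated torus: I claim there is $c>0,$ independent of $\vect C,$ such that
\begin{equation*}
\min_{\vect\psi\in H^1(\mathcal Y;\R^2)}\int_{Y_1}|\vect C+\sym\nabla_y\vect\psi|^2\,dy\;\ge\;c\,|\vect C|^2.
\end{equation*}
The key rigidity step underlying this is topological: the left-hand side vanishes only if $\sym\nabla_y\vect\psi\equiv-\vect C$ on the connected perforated torus $Y_1,$ in which case $\vect\psi(y)=-\vect Cy+\vect Ry+\vect w$ with $\vect R$ skew; periodicity along $\vect e_i$ then forces $(\vect C+\vect R)\vect e_i=0,$ and since $\vect C$ is symmetric and $\vect R$ skew, $\vect C=0.$ The quadratic form on the left is continuous in $\vect C,$ so by compactness on the unit sphere in $\R^{2\times 2}_{\sym}$ and by homogeneity of degree two, the inequality above holds with a uniform $c>0.$

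Integrating the pointwise bound in $x_3$ and using orthogonality of $\vect A$ and $x_3\vect B$ in $L^2(I;\R^{2\times 2}),$
\begin{equation*}
\int_I c|\vect C(x_3)|^2\,dx_3\;=\;c\Bigl(|\vect A|^2+\tfrac{1}{12}|\vect B|^2\Bigr)\;\ge\;\tfrac{c}{12}\bigl(|\vect A|^2+|\vect B|^2\bigr),
\end{equation*}
giving the desired coercivity. The splitting claim in Remark \ref{remigor1} then transfers coercivity to $\C^{\rm memb}_\delta$ and $\C^{\rm bend}_\delta.$

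\textbf{Other tensors.} For $\C_0^{\rm red}$ the addition $\iota_1(\vect g)$ does not affect the upper-left $2{\times}2$ block, so the bound $\int_I|\vect A-x_3\vect B|^2\,dx_3$ follows immediately using only the coercivity of $\C_0.$ For $\C^{\rm hom,r}$ ($\delta=0$) the upper-left block of $\mathcal C_0$ is $\sym\nabla_y\vect\varphi_1-x_3\nabla_y^2\varphi_2;$ after integrating in $x_3$ the cross term vanishes and the two pieces decouple, yielding membrane and bending contributions which are controlled, respectively, by the periodic Korn inequality on $\dot H^1(\mathcal Y;\R^2)$ (as above) and by an analogous rigidity statement for $\nabla_y^2$ on $\dot H^2(\mathcal Y)$: if $\nabla_y^2\varphi\equiv-\vect B$ on the connected $Y_1,$ then $\varphi(y)=-\tfrac12\vect By\cdot y+\vect a\cdot y+c,$ and single-valuedness on the torus forces $\vect B\vect e_i=0,$ hence $\vect B=0.$ For $\C^{\rm hom,h}$ ($\delta=\infty$) the upper-left $2{\times}2$ block of $\mathcal C_\infty$ is $\sym\nabla_y\vect w_*,$ and the $x_3$-independent shears $g_\alpha$ do not interfere; the same periodic Korn inequality for each fixed $x_3$ concludes the argument.

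\textbf{Main obstacle.} The only nontrivial ingredient is the perforated-torus periodic Korn inequality (and its second-order analogue for $\nabla_y^2$). Its proof is standard once the rigidity / topological argument sketched above is combined with the compactness-plus-homogeneity argument, but it is what makes coercivity hold; everything else is bookkeeping.
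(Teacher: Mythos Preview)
Your proof is correct and takes a genuinely different route from the paper. The paper, for each fixed $x_3$, builds an explicit extension operator $\hat E:H^1(Y_1;\R^2)\to H^1(Y;\R^2)$ that controls symmetrised gradients (by subtracting and re-adding the ``symmetric affine part'' of a field, then invoking the extension of \cite[Lemma~4.1]{OShY}); applying it to $\vect\psi(y)=(\vect A-x_3\vect B)y+\vect\varphi_*(y)$ and using that on the \emph{full} torus $Y$ constant matrices are $L^2$-orthogonal to periodic symmetrised gradients yields the pointwise estimate $|\vect A-x_3\vect B|^2\le C\|\vect A-x_3\vect B+\sym\nabla_y\vect\varphi_*\|_{L^2(Y_1)}^2$ with an explicit constant. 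You instead argue softly: the cell infimum is a nonnegative quadratic form in $\vect C=\vect A-x_3\vect B$, and positivity on nonzero $\vect C$ is forced by the periodicity/rigidity observation. Your argument is more conceptual and transfers verbatim to the $\nabla_y^2$ rigidity needed for $\C^{\rm hom,r}$; the paper's is constructive and makes the constant trackable.

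One point to tighten: your rigidity step (``the left-hand side vanishes only if $\sym\nabla_y\vect\psi\equiv-\vect C$ on $Y_1$'') tacitly assumes the infimum is attained, i.e.\ that the subspace $\{\sym\nabla_y\vect\psi|_{Y_1}:\vect\psi\in H^1(\mathcal Y;\R^2)\}$ is closed in $L^2(Y_1)$. This is precisely where the standard Korn inequality on the perforated torus $\mathcal Y_1$ enters (the only periodic infinitesimal rigid motions being constants), and it should be stated; the paper's extension estimate holds for \emph{every} $\vect\varphi_*$ and therefore bypasses the existence question entirely. A smaller remark: you do not need Remark~\ref{remigor1} to pass coercivity to $\C^{\rm memb}_\delta$ and $\C^{\rm bend}_\delta$---that remark requires Assumption~\ref{assumivan1}(1), whereas coercivity of the restrictions follows directly by setting $\vect B=\vect 0$ or $\vect A=\vect 0$ in the bound for $\C^{\rm hom}_\delta$.
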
	
\subsection{Limit resolvent equations} 
\label{limreseq} 
Our starting point is the following resolvent formulation. For $\tau, \lambda>0$ and a given $\vect f^{\epsh} \in L^2(\Omega;\R^3),$ find $\vect u^\epsh \in H^1_{\Gamma_{\rm D}}(\Omega;\R^3)$ such that the following variational formulation holds:
\begin{equation}
\label{resolventproblemstarting}
\frac{1}{h^{\tau}}\int\limits_{\Omega} \C^{\mu_h}\biggl(\dfrac{\hat{x}}{\epsh}\biggr) \simgrad \vect u^\epsh : \simgrad_h \vect v \,dx
+
\lambda \int\limits_{\Omega}\rho^{\epsh} \vect u^\epsh \cdot \vect v\,dx 
=
\int\limits_{\Omega} \vect  f^\epsh \cdot \vect v\,dx \qquad \forall \vect v \in H^1_{\Gamma_{\rm D}}(\Omega;\R^3).
\end{equation}

We derive the limit resolvent equation, as $h\to0,$ depending on various assumptions about the parameter  $\delta=h/\eps_h,$ the exponent $\tau,$ and the scaling of the load density $\vect f^\epsh$. In Section \ref{limitspectrum} we discuss implications of these results for the limit spectrum and evolution equations. Different scalings of the operator will, in particular, yield different scalings of the spectrum and the time variable (or mass density) in the evolution problems.
Note that the load density scaling will also depend on the asymptotic regime considered. 

It is standard in the theory of plates that one discusses limit equations (both static and dynamic) depending on an appropriate scaling of the external loads. Furthermore, we will see that the limit resolvent equation is always degenerate in some sense. From the mathematical point of view, this is a consequence of the fact that for thin domains the constant in Korn's inequality blows up and by further analysis one can see that this implies that the out-of-plane and in-plane components of the solution are scaled differently in the limit problem. From the physical point of view, it is much easier (i.e., energetically more convenient) for the plate to bend then to stretch. As a result, bending and membrane waves propagate  through the medium on different time scales. The effect of high-contrast is also non-negligible, yielding different behaviour depending on the asymptotic regime: the small elastic inclusions behave like three-dimensional objects (regime $h \sim \epsh$) or like small thin plates (regime $h \ll \epsh$). We next present our main results. 
\subsubsection{Asymptotic regime $h \sim \varepsilon_h$}
\label{sectionheps}
\vspace{+1.2ex}
\noindent\textbf{A. ``Bending" scaling: $\mu_h=\epsh,\tau=2$} 
\vspace{1.3ex}

The following proposition provides an appropriate compactness result, namely a bound on the sequence of energies for a fixed value of $\delta,$ see (\ref{delta_kappa}), and its consequences in terms of two-scale convergence.
\begin{proposition}\label{propdinsi1}
	Consider a sequence $\{(h, \eps_h)\}$ such that  $\delta=\lim_{h\to0} h/\eps_h\in (0,\infty)$, and suppose that $\mu_h=\epsh$, $\tau=2$. The following statements hold:
	\begin{enumerate} 
		\item  
		There exists $C>0$, independent of $h$, such that for any sequence
		  $(\vect f^{\varepsilon_h})_{h>0}\subset L^2(\Omega;\R^3)$ of load densities and the corresponding solutions $\vect u^\epsh$ to the resolvent problem \eqref{resolventproblemstarting} one has  
		  	$$
		  	h^{-2}a_{\epsh} (\vect u^{\epsh},\vect u^{\epsh})+\|\vect u^{\epsh}\|^2_{L^2}< C\bigl\|\pi_h \vect f^{\epsh}\bigr\|^2_{L^2}.
		  	$$
		\item 
	If $$\limsup_{h \to 0} \left(h^{-2}a_{\epsh} (\vect u^{\epsh},\vect u^{\epsh})+\|\vect u^{\epsh}\|^2_{L^2}\right)< \infty, \qquad   (\vect u^{\epsh})_{h>0}\subset H^1_{\Gamma_{\rm D}}(\Omega;\R^3),$$ 	
	then there exist functions $\vect \funcA \in H_{\gamma_{\rm D}}^1(\omega;\R^2)$, $\funcB \in H_{\gamma_{\rm D}}^2(\omega)$, $\funcC \in {\RRR {\mathfrak C}\BBB}_{\delta}(\Omega \times Y)$, $\mathring{\vect u}\in V_{2,\delta} (\Omega \times Y_0)$,  such that for a subsequence, which we continue labelling with $\epsh$, one has
	\begin{equation}
	\begin{split} \label{nakcomp1} 
	\vect u^\epsh & = \Tilde{\vect u}^\epsh + \mathring{\vect u}^\epsh, \quad \Tilde{\vect u}^{\epsh},\mathring{\vect u}  \in H^1_{\Gamma_{\rm D}}(\Omega;\R^3),\quad \mathring{\vect u}^\epsh |_{\Omega_1^{\varepsilon_h}} =0, \\
	\pi_{1/h}\Tilde{\vect u}^\epsh & {\,\xrightarrow{L^2\,}\,}\bigl(
	 \funcA_1(\hat{x}) - x_3 \partial_1 \funcB(\hat{x}), 
	\funcA_2(\hat{x}) - x_3 \partial_2 \funcB(\hat{x}),
	\funcB(\hat{x})\bigr)^\top, \\
	h^{-1}\mathring{\vect u}^\epsh(x) & \drtwoscale  \mathring{\vect u}(x,y),
	\\
	h^{-1}\simgrad_h \Tilde{\vect u}^\epsh(x) & \drtwoscale \, \iota\bigl(\simgrad_{\hat{x}} \vect \funcA(\hat{x})-x_3 \nabla_{\hat{x}}^2\funcB(\hat{x})\bigr) + \funcC(x,y),
	\\
	\epsh h^{-1}\simgrad_h \mathring{\vect u}^\epsh(x) & \drtwoscale \sym\widetilde{\nabla}_{\delta}\, \mathring{\vect u}(x,y),
	\end{split}
	\end{equation}
where $\drtwoscale$ stands for the ``dimension-reduction two-scale convergence" defined in Appendix \ref{twoscale_conv}.
	\item If, additionally to 2, one assumes that $$\lim_{h \to 0} \left(h^{-2}a_{\epsh} (\vect u^{\epsh}, \vect u^{\epsh})+ \|\vect u^{\epsh}\|^2_{L^2}\right)=a_{\delta}^{\funcB} (\funcB,\funcB)+\|\funcB\|^2_{L^2},$$ where the form $a_{\delta}^{\funcB}$ is defined in \eqref{missaglic1}, then one has the strong two-scale convergence ({\it cf.} Appendix \ref{twoscale_conv})
\begin{equation*} 
\pi_{1/h} \vect{u}^{\epsh} \strongdrtwoscale 
\bigl(\funcA_1(\hat{x}) - x_3 \partial_1 \funcB(\hat{x}), 
 \funcA_2(\hat{x}) - x_3 \partial_2 \funcB(\hat{x}),
\funcB(\hat{x})\bigr)^\top,
\end{equation*}	
with $\vect \funcA=\vect \funcA^{\funcB}$ (for the definition of $\vect \funcA^{\funcB}$ see \eqref{anketa1} below). 
	\end{enumerate} 
\end{proposition}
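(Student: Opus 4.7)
I would test \eqref{resolventproblemstarting} with $\vect v=\vect u^\epsh$ and rewrite the source term using the pairing identity $\vect f\cdot\vect u=(\pi_h\vect f)\cdot(\pi_{1/h}\vect u)$, obtaining
\[
h^{-2}a_\epsh(\vect u^\epsh,\vect u^\epsh)+\lambda\int_\Omega\rho^\epsh|\vect u^\epsh|^2\,dx=\int_\Omega(\pi_h\vect f^\epsh)\cdot(\pi_{1/h}\vect u^\epsh)\,dx.
\]
The core estimate is the thin-plate Korn bound $\|\pi_{1/h}\vect u^\epsh\|_{L^2}\le Ch^{-1}\sqrt{a_\epsh(\vect u^\epsh,\vect u^\epsh)}$. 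I would split $\vect u^\epsh=\tilde{\vect u}^\epsh+\mathring{\vect u}^\epsh$ via the decomposition from the Appendix: the partially clamped stiff extension $\tilde{\vect u}^\epsh$ satisfies the Griso-type bound $\|\pi_{1/h}\tilde{\vect u}^\epsh\|_{L^2}\le Ch^{-1}\|\sym\nabla_h\tilde{\vect u}^\epsh\|_{L^2}$, with the right-hand side controlled by the stiff contribution to $\sqrt{a_\epsh}$ via the extension-operator estimate; the soft part satisfies a cellwise Korn-Poincar\'e inequality with zero lateral trace, giving $\|\mathring{\vect u}^\epsh\|_{L^2}\le C\eps_h\|\sym\nabla_h\mathring{\vect u}^\epsh\|_{L^2}$, which combined with the $\eps_h^2$ prefactor of the soft energy yields $\|\mathring{\vect u}^\epsh\|_{L^2}\le C\sqrt{a_\epsh}$ and hence $\|\pi_{1/h}\mathring{\vect u}^\epsh\|_{L^2}\le Ch^{-1}\sqrt{a_\epsh}$. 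Cauchy-Schwarz and Young's inequality then absorb the source term into the energy, with the $\lambda>0$ term producing the required $L^2$-bound on $\vect u^\epsh$.

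\textbf{Part 2 (compactness and identification of limits).} With $h^{-2}a_\epsh+\|\vect u^\epsh\|_{L^2}^2$ uniformly bounded, the stiff extension satisfies $\|\sym\nabla_h\tilde{\vect u}^\epsh\|_{L^2}\le Ch$, placing us in the Kirchhoff-Love bending regime. Classical thin-plate compactness (Griso, Friesecke-James-M\"uller) produces $\vect\funcA\in H^1_{\gamma_{\rm D}}(\omega;\R^2)$ and $\funcB\in H^2_{\gamma_{\rm D}}(\omega)$ realising the $L^2$-limit of $\pi_{1/h}\tilde{\vect u}^\epsh$. Applying dr-two-scale compactness (Appendix) to the rescaled field $h^{-1}\sym\nabla_h\tilde{\vect u}^\epsh$ yields a limit of the form $\iota(\sym\nabla_{\hat x}\vect\funcA-x_3\nabla^2_{\hat x}\funcB)+\funcC$, and the structure theorem for two-scale limits of symmetrised scaled gradients in the critical regime $\delta\in(0,\infty)$ identifies $\funcC=\funcC_\delta(\vect\varphi)$ with $\vect\varphi\in L^2(\omega;H^1(I\times\mathcal{Y};\R^3))$, i.e.\ $\funcC\in\mathfrak{C}_\delta(\Omega\times Y)$. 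For the soft part, the cell Korn-Poincar\'e estimate gives $\|\mathring{\vect u}^\epsh\|_{L^2}\le Ch$ and $\|\sym\nabla_h\mathring{\vect u}^\epsh\|_{L^2}\le Ch/\eps_h$, so $h^{-1}\mathring{\vect u}^\epsh$ and $\eps_h h^{-1}\sym\nabla_h\mathring{\vect u}^\epsh$ are bounded in $L^2$; dr-two-scale compactness produces a limit $\mathring{\vect u}\in V_{2,\delta}(\Omega\times Y_0)$, and commuting $\sym\nabla_h$ with oscillating test functions identifies the limit of $\eps_h h^{-1}\sym\nabla_h\mathring{\vect u}^\epsh$ as $\sym\widetilde{\nabla}_\delta\mathring{\vect u}$.

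\textbf{Part 3 (strong two-scale convergence).} Under the energy-convergence hypothesis, lower semi-continuity of two-scale convergence identifies $\liminf h^{-2}a_\epsh$ with the quadratic functional
\[
\int_{\Omega\times Y_1}\!\!\C_1(y)\bigl[\iota(\sym\nabla_{\hat x}\vect\funcA-x_3\nabla^2_{\hat x}\funcB)+\funcC\bigr]{:}\bigl[\iota(\sym\nabla_{\hat x}\vect\funcA-x_3\nabla^2_{\hat x}\funcB)+\funcC\bigr]+\int_{\Omega\times Y_0}\!\!\C_0(y)\sym\widetilde{\nabla}_\delta\mathring{\vect u}{:}\sym\widetilde{\nabla}_\delta\mathring{\vect u},
\]
which is the value achieved when the correctors are minimised for fixed $(\vect\funcA,\funcB)$. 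Matching this with the prescribed limit $a_\delta^\funcB(\funcB,\funcB)$ forces $\vect\funcA$ to equal the unique minimiser $\vect\funcA^\funcB$ of \eqref{anketa1}, and strict convexity of the quadratic form determines $\funcC$ and $\mathring{\vect u}$ uniquely. The parallelogram identity, applied to the differences between $h^{-1}\sym\nabla_h\tilde{\vect u}^\epsh$ (resp.\ $\eps_h h^{-1}\sym\nabla_h\mathring{\vect u}^\epsh$) and their two-scale limits, then upgrades weak to strong dr-two-scale convergence of the symmetric gradients; a Korn-Poincar\'e argument on the residual propagates this to strong dr-two-scale convergence of $\pi_{1/h}\vect u^\epsh$ to the announced Kirchhoff-Love field.

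\textbf{Main obstacle.} The crux of the argument is the identification of the corrector $\funcC$ as an element of the restricted class $\mathfrak{C}_\delta(\Omega\times Y)$, i.e.\ as $\sym\widetilde{\nabla}_\delta\vect\varphi$ of a single periodic field on the thickened torus $I\times\mathcal{Y}$. In the critical regime $h\sim\eps_h$ the microscale and the plate thickness interact, so one cannot treat planar and transverse oscillations separately as in the degenerate regimes $\delta=0$ or $\delta=\infty$; this identification rests on a careful commutation between the symmetric-gradient operator, oscillating test functions of the form $\eta(\hat x)\vect\psi(\hat x/\eps_h,x_3)$, and the decomposition and extension operators developed in the Appendix.
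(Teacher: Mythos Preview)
Your proposal is correct and follows essentially the same route as the paper: energy testing combined with the stiff extension (Theorem~\ref{thmextension}), the cell Korn--Poincar\'e inequality (Lemma~\ref{poincarekornonholes}), and the thin-plate Korn bound (Corollary~\ref{kornthincor}) for Part~1; Griso-type Kirchhoff--Love decomposition (Lemma~\ref{app:lem.limsup}) plus Neukamm's two-scale structure result (Theorem~\ref{neukammresult}) for Part~2; and lower semicontinuity plus strict convexity for Part~3. The only point worth flagging is that the paper obtains $\funcC\in\mathfrak C_\delta$ by passing through a second decomposition of the remainder $\vect\psi^\epsh$ (Lemma~\ref{keydecompose}(3), which requires $C^{1,1}$ regularity of $\omega$ and is then extended to Lipschitz $\omega$ by exhaustion and weak closedness of $\mathfrak C_\delta$); your appeal to a ``structure theorem'' is correct in spirit but hides this extra step.
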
	
\begin{remark} \label{rucak11} 
	It can be seen from the proof of Proposition \ref{propdinsi1} that the assumption in its third statement is equivalent to the convergence 
	\begin{align*}
		h^{-1}\sym\nabla_h {\vect u}^{\epsh}\chi_{\Omega^1_{\epsh}} &\strongdrtwoscale  
		 \iota\bigl(\simgrad_{\hat{x}} \vect \funcA(\hat{x})-x_3 \nabla_{\hat{x}}^2\funcB(\hat{x})\bigr)\chi_{I \times Y_1} +\funcC(x,y) \chi_{I \times Y_1}, \\[0.25em]
		 \epsh{h}^{-1}\sym  \nabla_h \mathring {\vect u}^{\epsh} & \strongdrtwoscale  
		 0,\\[0.25em]
	\pi_{1/h}\vect{u}^{\epsh} & \strongdrtwoscale \bigl(
	\funcA_1(\hat{x}) - x_3 \partial_1 \funcB(\hat{x}), 
	 \funcA_2(\hat{x}) - x_3 \partial_2 \funcB(\hat{x}),\funcB(\hat{x})\bigr)^\top.
    \end{align*}
	Here $\vect \funcA=\vect \funcA^{\funcB}$ and $\funcC(x,\cdot)$ solves the minimization problem \eqref{tensorminimization} with $\vect A=\simgrad_{\hat{x}} \vect \funcA(\hat{x})$ and $\vect B=\nabla_{\hat{x}}^2\funcB(\hat{x})$.
	The analogous claim is valid in all other regimes. As we do not explicitly use it in what follows, we shall omit it.
\end{remark}	
The following theorem provides the limit resolvent equation. It can be seen that the limit equations do not couple $(\vect\funcA,\funcB)$ and $\mathring{\vect u}$. This is not usual in high-contrast analysis and is a consequence of setting $\tau=2$. This case is thus less interesting and we shall omit its analysis in other regimes ($\delta=0$ and $\delta=\infty$). However, we will study it here, as it resembles the standard model of a moderate-contrast plate (and so the corresponding evolution is obtained on a longer time scale).  
\begin{theorem} \label{thmivan111} 
	Under the notation of Proposition \ref{propdinsi1}, suppose that $\delta \in (0,\infty)$, $\mu_h=\epsh$,  $\tau=2$, and consider a sequence $(\vect  f^\epsh)_{h>0}$ of load densities  such that 
	\begin{equation} \label{rucak24} 
	\pi_{h}\vect  f^\epsh \drtwoscale \vect f(x, y) \in L^2(\Omega \times Y;\R^3).
	\end{equation} 
	Then the sequence of solutions  to the resolvent problem \eqref{resolventproblemstarting} converges in the sense of 
	\eqref{nakcomp1} to the unique solution of the following problem: Determine $\vect \funcA \in H_{\gamma_{\rm D}}^1(\omega;\R^2)$, $\funcB \in H_{\gamma_{\rm D}}^2(\omega)$, $\mathring{\vect u}\in V_{2,\delta}(\Omega \times Y_0)$, such that
	\begin{equation}
	\label{limitmodellongtime}
	\begin{split}
	&      \int\limits_{\omega}\C^{\rm hom}_{\delta}\left(\simgrad_{\hat{x}}\vect \funcA(\hat{x}),\nabla_{\hat{x}}^2\funcB(\hat{x})\right): \left(\simgrad_{\hat{x}}\vect \theta_* (\hat{x}),\nabla_{\hat{x}}^2{\theta_3}(\hat{x})\right) \,d\hat{x}
	+
	\lambda \int\limits_{\omega} \langle {\rho} \rangle
	\funcB(\hat{x})
	{\theta_3}(\hat{x})
	\,d\hat{x} \\[0.35em]
	& \hspace{+10pt}=
	\int\limits_\omega \langle \overline{ \vect f } \rangle(\hat{x}) \cdot \bigl(\vect\theta_*(\hat{x}), \theta_3(\hat{x})\bigr)\RRR^\top \BBB\,d\hat{x}
	-
	\int\limits_\omega \langle \overline{x_3 \vect f_{\!*}  }\rangle(\hat{x}) \cdot \nabla_{\hat{x}}{\theta_3}(\hat{x}) \,d\hat{x}\qquad\forall  \vect\theta_*\in  H^1_{\gamma_{\rm D}}(\omega,\R^2),\   {\theta_3} \in H^2_{\gamma_{\rm D}}(\omega), 
	\\[0.35em]
	& \int\limits_I \int\limits_{Y_0} \C_0(y) \sym\widetilde{\nabla}_{\delta}\, \mathring{\vect u}(x,y) : \sym\widetilde{\nabla}_{\delta}\, \mathring{\vect\xi}(x_3,y) \,dy\,dx_3\\[0.35em]
&	\hspace{+10pt}=\int\limits_I \int\limits_{Y_0} 
	\vect f(x,y)
	\cdot
	\bigl(\mathring{\xi}_1(x_3,y), 
	\mathring{\xi}_2(x_3,y),
	0\bigr)^\top	\,dy\,dx_3\qquad
	\forall\mathring{\vect\xi} \in H_{00}^1(I\times Y_0;\R^3),\ \  \textrm{a.e.\,} \hat{x} \in \omega.
	\end{split}
	\end{equation}

If additionally one assumes the strong two-scale convergence in \eqref{rucak24}, then one has
$$
\pi_{1/h} \vect{u}^{\epsh} \strongdrtwoscale 
\bigl(\funcA_1(\hat{x}) - x_3 \partial_1 \funcB(\hat{x})+\mathring{u}_1, 
\funcA_2(\hat{x}) - x_3 \partial_2 \funcB(\hat{x})+\mathring{u}_2,
\funcB(\hat{x})\bigr)^\top.
$$	
\end{theorem}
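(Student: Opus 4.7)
The plan is a standard two-scale / recovery-sequence argument: use Proposition \ref{propdinsi1} for compactness, pass to the limit in \eqref{resolventproblemstarting} against three carefully chosen families of oscillatory test functions, identify the limit strain $\funcC$ as the minimizer in \eqref{tensorminimization}, and invoke coercivity (Proposition \ref{propvecer}) to obtain uniqueness---hence full-sequence convergence---and the strong-convergence upgrade.

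First, the assumption \eqref{rucak24} gives $\|\pi_h\vect f^{\epsh}\|_{L^2}\le C$, so Proposition \ref{propdinsi1}(1)--(2) directly provides a uniform bound on $h^{-2}a_\epsh(\vect u^\epsh,\vect u^\epsh)+\|\vect u^\epsh\|^2_{L^2}$ and extracts, along a subsequence, limits $\vect\funcA\in H^1_{\gamma_{\rm D}}(\omega;\R^2)$, $\funcB\in H^2_{\gamma_{\rm D}}(\omega)$, $\funcC\in{\mathfrak C}_\delta(\Omega\times Y)$, and $\mathring{\vect u}\in V_{2,\delta}(\Omega\times Y_0)$. I would then pass to the limit in \eqref{resolventproblemstarting} against the following three families. (a) The Kirchhoff--Love family
$$
\vect v^{\epsh,1}(x)=\bigl(h\theta_1-hx_3\partial_1\theta_3,\,h\theta_2-hx_3\partial_2\theta_3,\,\theta_3\bigr)^\top+h\epsh\,\vect\phi(\hat x,x_3,\hat x/\epsh),
$$
with $\vect\phi$ a smooth approximation of the minimizer in \eqref{tensorminimization} for $(\vect A,\vect B)=(\sym\nabla\vect\theta_*,\nabla^2\theta_3)$; one checks $h^{-1}\simgrad_h\vect v^{\epsh,1}\strongdrtwoscale\iota(\sym\nabla\vect\theta_*-x_3\nabla^2\theta_3)+\sym\widetilde{\nabla}_\delta\vect\phi$ and $\pi_{1/h}\vect v^{\epsh,1}\to(\vect\theta_*-x_3\nabla\theta_3,\theta_3)^\top$ strongly in $L^2$. (b) The pure-corrector family $h\epsh\vect\phi(\hat x,x_3,\hat x/\epsh)$ with arbitrary $\vect\phi\in C^\infty_c(\omega;H^1(I\times\mathcal{Y};\R^3))$, which after passage to the limit produces an Euler--Lagrange equation identifying $\funcC$ as the minimizer in \eqref{tensorminimization} at $(\vect A,\vect B)=(\sym\nabla\vect\funcA(\hat x),\nabla^2\funcB(\hat x))$; this is what closes the stiff bilinear form against $\C^{\rm hom}_\delta$. (c) The soft-inclusion family
$$
\vect v^{\epsh,2}(x)=h\epsh\,\mathring{\vect\xi}(\hat x,x_3,\hat x/\epsh)\chi_{Y_0}(\hat x/\epsh),\qquad \mathring{\vect\xi}\in C^\infty_c(\omega;H^1_{00}(I\times Y_0;\R^3)),
$$
for which the stiff contribution vanishes identically, the soft contribution (with the $\epsh^2/h^2\to\delta^{-2}$ compensation) converges to $\int_\omega\!\int_I\!\int_{Y_0}\C_0\,\sym\widetilde{\nabla}_\delta\mathring{\vect u}:\sym\widetilde{\nabla}_\delta\mathring{\vect\xi}$, the $\lambda$-term is $O(h)\to 0$, and the load converges to $\int\vect f\cdot(\mathring{\xi}_1,\mathring{\xi}_2,0)^\top$ (the $\mathring{\xi}_3$-coupling carries an extra factor $h$, which is why only the in-plane load components appear in the second line of \eqref{limitmodellongtime}). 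Assembling (a)+(b) and splitting the load in $x_3$ yields the first line of \eqref{limitmodellongtime}; family (c), by arbitrariness of $\mathring{\vect\xi}$ and its $\hat x$-profile, produces the second line pointwise in $\hat x$.

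Coercivity of $\C^{\rm hom}_\delta$ and of $\C_0$ guarantees uniqueness of $(\vect\funcA,\funcB,\mathring{\vect u})$, upgrading the subsequential limit to full convergence. For the strong two-scale claim under strong dr-$2$ convergence of $\pi_h\vect f^{\epsh}$, I would take $\vect v=\vect u^{\epsh}$ in \eqref{resolventproblemstarting} and use the strong-times-weak pairing on the right-hand side to identify the limiting scaled energy with $a_\delta^\funcB(\funcB,\funcB)+\|\funcB\|^2_{L^2}$ via the already-derived limit equations; Proposition \ref{propdinsi1}(3) then closes the argument. The principal technical obstacle I foresee is the Euler--Lagrange identification of $\funcC$ together with the verification that the macro/soft-inclusion cross-terms vanish at scale $\epsh\sim h$; this decoupling is unusual in the high-contrast setting and is exactly what renders the limit system \eqref{limitmodellongtime} block-diagonal in $(\vect\funcA,\funcB)$ versus $\mathring{\vect u}$.
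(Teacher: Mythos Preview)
Your overall approach is the paper's: apply Proposition~\ref{propdinsi1} for compactness, pass to the limit against Kirchhoff--Love, corrector, and inclusion test functions, identify $\funcC$ via the Euler--Lagrange equation for \eqref{tensorminimization}, and conclude by coercivity. The paper in fact packages (a)--(c) into one test function
\[
\vect v^\epsh=\bigl(h\vect\theta_*-hx_3\nabla_{\hat x}\theta_3,\ \theta_3\bigr)^\top
+h\epsh\,\vect\zeta\bigl(x,\hat x/\epsh\bigr)
+h\,\mathring{\vect\xi}\bigl(x,\hat x/\epsh\bigr),
\]
but your separation is merely organizational.

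There are, however, two concrete errors. First, the scaling in family~(c) is off by a factor $\epsh$: it must be $h\,\mathring{\vect\xi}$, not $h\epsh\,\mathring{\vect\xi}$. With your choice one has $\simgrad_h\vect v^{\epsh,2}\sim h\,\sym\widetilde\nabla_\delta\mathring{\vect\xi}$, so the soft bilinear term is
$\tfrac{\epsh^2}{h^2}\int\C_0\simgrad_h\vect u^\epsh:\simgrad_h\vect v^{\epsh,2}
=\epsh\int\C_0\bigl(\tfrac{\epsh}{h}\simgrad_h\mathring{\vect u}^\epsh\bigr):\sym\widetilde\nabla_\delta\mathring{\vect\xi}+o(1)\to 0$,
and the load term likewise vanishes; you recover only $0=0$ and lose the second line of \eqref{limitmodellongtime}. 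With the correct prefactor $h$, one has $\simgrad_h\vect v\sim(h/\epsh)\sym\widetilde\nabla_\delta\mathring{\vect\xi}$, which pairs against $\tfrac{\epsh}{h}\simgrad_h\mathring{\vect u}^\epsh\drtwoscale\sym\widetilde\nabla_\delta\mathring{\vect u}$ to give the desired limit.
Second, for the strong-convergence upgrade you cannot invoke Proposition~\ref{propdinsi1}(3) as stated: its hypothesis (see Remark~\ref{rucak11}) forces $\sym\widetilde\nabla_\delta\mathring{\vect u}=0$ and $\vect\funcA=\vect\funcA^{\funcB}$, neither of which holds when $\vect f_{\!*}\not\equiv 0$. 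Testing \eqref{resolventproblemstarting} with $\vect u^\epsh$ and using strong two-scale convergence of $\pi_h\vect f^\epsh$ against weak two-scale convergence of $\pi_{1/h}\vect u^\epsh$ yields the limit of $h^{-2}a_\epsh(\vect u^\epsh,\vect u^\epsh)+\lambda\|\vect u^\epsh\|^2$ as the \emph{full} limit energy $\int_\omega\C^{\rm hom}_\delta(\sym\nabla\vect\funcA,\nabla^2\funcB)^{:2}+\int\C_0|\sym\widetilde\nabla_\delta\mathring{\vect u}|^2+\lambda\langle\rho\rangle\|\funcB\|^2$, not $a_\delta^\funcB(\funcB,\funcB)+\|\funcB\|^2$. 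You then need a lower-semicontinuity-plus-matching argument on each of the three summands (or the duality device the paper cites, \cite[Proposition~2.8]{zhikov2005}) to deduce strong two-scale convergence of $\pi_{1/h}\vect u^\epsh$; this is what produces the $\mathring{u}_1,\mathring{u}_2$ contributions in the theorem's conclusion.
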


\begin{remark} 
	Under Assumption \ref{assumivan1}~(1) the first identity in \eqref{limitmodellongtime} uncouples into two independent identities (see Remark \ref{remigor1})
	\begin{equation*}
	\begin{split}
	&      \int\limits_{\omega}\C^{\rm memb}_{\delta}\simgrad_{\hat{x}}\vect \funcA(\hat{x}): \simgrad_{\hat{x}}\vect \theta_* (\hat{x}) \,d\hat{x}
	=
	\int\limits_\omega \langle \overline{ \vect f_{\!*}  } \rangle(\hat{x}) \cdot \vect\theta_* (\hat{x}) \,d\hat{x}\qquad\forall  \vect\theta_*\in  H^1_{\gamma_{\rm D}}(\omega,\R^2),\\
	&      \int\limits_{\omega}\C^{\rm bend}_{\delta}\nabla_{\hat{x}}^2\funcB(\hat{x}): \nabla_{\hat{x}}^2{\theta_3}(\hat{x}) \,d\hat{x}
	+
	\lambda \int\limits_{\Omega} 
	\langle \rho \rangle \funcB(\hat{x})\,
	{\theta_3}(\hat{x})
	\,d\hat{x}\\
	&\hspace{+90pt}=\int\limits_\omega \langle \overline{ \vect f_3 } \rangle(\hat{x})\,{\theta_3}(\hat{x}) \,d\hat{x}
	-
	\int\limits_\omega \langle \overline{x_3 \vect f_{\!*}  }\rangle(\hat{x}) \cdot \nabla_{\hat{x}}{\theta_3}(\hat{x}) \,d\hat{x}\qquad   \forall{\theta_3} \in H^2_{\gamma_{\rm D}}(\omega).
	\end{split} 
	\end{equation*}	
\end{remark} 	
In connection with the limit problem, we consider a self-adjoint operator $\mathcal{A}^{\rm hom}_{\delta}$ defined on the $\langle \rho \rangle$-weighted space $L^2(\omega;\R^2)\times L^2(\omega)$ and corresponding to the differential expression\footnote{The repeated divergence $\div_{\hat{x}}\div_{\hat{x}}$ is applied to matrices and corresponds to the usual divergence applied row-wise and column-wise sequentially.}
\begin{equation*}
\langle \rho \rangle^{-1}\left(-\div_{\hat{x}}, \div_{\hat{x}}\div_{\hat{x}} \right)\C^{\rm hom}_{\delta}\left(\simgrad_{\hat{x}},\nabla_{\hat{x}}^2\right).
\end{equation*}
More precisely, the operator $\mathcal{A}^{\rm hom}_{\delta}$ is defined via the bilinear form
\begin{equation*}
\begin{split}
a^{\rm hom}_{\delta} \bigl((\vect u,  v ),(\vect \zeta,  \xi)\bigr)  := \int\limits_{\omega} \C^{\rm hom}_{\delta} \left(\simgrad_{\hat{x}}\vect u(\hat{x}),\nabla_{\hat{x}}^2  v(\hat{x}) \right) : \left(\simgrad_{\hat{x}}\vect \zeta(\hat{x}),\nabla_{\hat{x}}^2  \xi(\hat{x}) \right) \,d\hat{x}, \\
a^{\rm hom}_{\delta} : \left( H_{\gamma_{\rm D}}^1(\omega;\R^2) \times H_{\gamma_{\rm D}}^2(\omega)\right)^2 \to \mathbb{R}.
\end{split}
\end{equation*}
We also make use of the following observation.
 Plugging $\vect \theta_3 = 0$ into the first equation in  \eqref{limitmodellongtime} and using linearity, we decompose $\vect \funcA = \vect \funcA^\funcB + \vect \funcA^{\vect f_{\!*} }$, where $\vect \funcA^\funcB, \vect \funcA^{\vect f_{\!*} }  \in H^1_{\gamma_{\rm D}}(\omega ; \R^2)$ are solutions to the integral identities 
\begin{equation}
\label{anketa1} 
\begin{aligned}
&\int\limits_{\omega}\C^{\rm memb}_{\delta}\simgrad_{\hat{x}}\vect \funcA^\funcB(\hat{x}): \simgrad_{\hat{x}}\vect \theta_* (\hat{x}) \,d\hat{x}\\[0.4em] 
&\hspace{2cm}= - \int\limits_{\omega}\C^{\rm hom}_{\delta}\left(0 ,\nabla_{\hat{x}}^2\funcB(\hat{x})\right): \left(\simgrad_{\hat{x}}\vect \theta_* (\hat{x}),0 \right) \,d\hat{x}\qquad \forall \vect\theta_*  \in H^1_{\gamma_{\rm D}} (\omega;\R^2), \\[0.4em]
&\int\limits_{\omega}\C^{\rm memb}_{\delta}\simgrad_{\hat{x}}\vect \funcA^{\vect f_{\!*} }(\hat{x}): \simgrad_{\hat{x}}\vect \theta_* (\hat{x}) \,d\hat{x} = \int\limits_\omega\bigl\langle \overline{ \vect f_{\!*}  }\bigr\rangle(\hat{x}) \cdot \vect\theta_* (\hat{x}) \,d\hat{x} \qquad \forall \vect\theta_*  \in H^1_{\gamma_{\rm D}} (\omega;\R^2).
\end{aligned}
\end{equation}
Notice that the in-plane deformation $\vect \funcA^\funcB$ can be calculated from the out-of-plane deformation $\funcB$ by solving the first identity alone. It is easily seen that the solutions $\vect\funcA^\funcB$, $\vect\funcA^{\vect f_{\!*} }$ satisfy the estimates
\begin{equation*}
\bigl\Vert \simgrad_{\hat{x}} \vect \funcA^\funcB \bigr\Vert_{L^2(\omega;\R^2)} \leq C\bigl\Vert \nabla^2_{\hat{x}}\funcB\bigr\Vert_{L^2(\omega)}, \quad \bigl\Vert \simgrad_{\hat{x}} \vect \funcA^{\vect f_{\!*} } \bigr\Vert_{L^2(\omega;\R^2)} \leq C  \bigl\Vert \overline{\vect f_{\!*} }\bigr\Vert_{L^2(\omega)}. 
\end{equation*}
The first identity in (\ref{anketa1})
defines a bounded linear operator $\mathcal{A}^{ \vect \funcA; \funcB }_{\delta}  : H^2_{\gamma_{\rm D}}(\omega) \to H^1_{\gamma_{\rm D}}(\omega;\R^2)$ by the formula  
$
\mathcal{A}^{ \vect \funcA;\funcB }_{\delta} \funcB := \vect \funcA^\funcB
$.  
Furthermore, the bilinear form $a^\funcB_{\delta}: \bigl( H_{\gamma_{\rm D}}^2(\omega) \bigr)^2 \to \R$ given by
\begin{equation}
	\label{missaglic1}
\begin{aligned}
 a^{\funcB}_{\delta}(\funcB,\theta)&:=\int\limits_{\omega}\C^{\rm hom}_{\delta} \left(\simgrad_{\hat{x}}\mathcal{A}_{\delta}^{ \vect \funcA;\funcB } \funcB(\hat{x}),\nabla_{\hat{x}}^2\funcB(\hat{x}) \right): \left(0,\nabla_{\hat{x}}^2 \theta(\hat{x}) \right) \,d\hat{x}\\ 
&=\int\limits_{\omega}\C^{\rm hom}_{\delta} \left(\simgrad_{\hat{x}}\mathcal{A}_{\delta}^{ \vect \funcA;\funcB } \funcB(\hat{x}),\nabla_{\hat{x}}^2\funcB(\hat{x}) \right): \left(\simgrad_{\hat{x}}\mathcal{A}_{\delta}^{ \vect \funcA;\theta } \theta,\nabla_{\hat{x}}^2 \theta(\hat{x}) \right) \,d\hat{x},
\end{aligned}
\end{equation}
defines positive definite (as a consequence of Proposition \ref{propvecer}) self-adjoint operator on $L^2(\omega),$ which we denote by $\mathcal{A}^{\funcB, {\rm hom}}_{\delta}$.  
The first identity in \eqref{limitmodellongtime} can now be written as follows: 
\begin{equation}
	 \label{nadoknada11}
\begin{aligned}
 a^{\funcB}_{\delta}(\funcB,\theta)+\lambda  \int\limits_{\omega} 
\langle \rho \rangle \funcB(\hat{x})\theta(\hat{x})
\,d\hat{x} 
&= \int\limits_{\omega}\C^{\rm hom}_{\delta} \left(\simgrad_{\hat{x}}\vect{\funcA}^{\vect f_{\!*} },0 \right): \left(0,\nabla_{\hat{x}}^2 \theta(\hat{x}) \right) \,d\hat{x}
\\
\hspace{-10ex}&+
\int\limits_\omega \langle \overline{f_3} \rangle(\hat{x})\theta(\hat{x}) \,d\hat{x}
-
\int\limits_\omega \langle \overline{x_3 \vect f_{\!*}  }\rangle(\hat{x}) \cdot \nabla_{\hat{x}}\theta(\hat{x}) \,d\hat{x}\RRR=:\mathcal{F}_{\delta} (\vect f)(\theta), \BBB \qquad \forall \theta \in H^2_{\gamma_{\rm D}}(\omega). 
\end{aligned}
\end{equation} 
Notice that for $\vect f \in  L^2(\Omega;\R^3)$ the 
right-hand side of \eqref{nadoknada11}  can be interpreted as an element of $(H^2_{\gamma_{\rm D}}(\omega))^{*},$ which we \RRR denoted \BBB by $\mathcal{F}_{\delta} (\vect f).$  \RRR This reveals the resolvent structure of the limit problem \eqref{limitmodellongtime}. \BBB

\vspace{+2.5ex} 
\noindent\textbf{B. ``Membrane'' scaling: $\mu_h=\epsh,\tau=0$}
\vspace{1.3ex}

To formulate the convergence result for the present scaling, we consider a non-negative self-adjoint operator $\mathcal{A}_{\delta,\infty}$ defined by the bilinear form
\begin{equation} \label{formadelta}
	\begin{split} 
		a_{\delta,\infty} ((\vect{\funcA},\RRR \funcB \BBB)+\mathring{\vect u}, (\vect \theta, \varphi) + \mathring{\vect \xi}) &:= \int\limits_{\omega}\C^{\rm memb}_{\delta}\simgrad_{\hat{x}}\vect{\funcA}(\hat{x}): \simgrad_{\hat{x}}\vect\theta(\hat{x}) \,d\hat{x} \\ & + \int\limits_{\Omega}\int\limits_{Y_0} \C_0(y) \sym\widetilde{\nabla}_{\delta}\, \mathring{\vect u}(x,y) : \sym\widetilde{\nabla}_{\delta}\, \mathring{\vect \xi}(x,y) \,dy\,dx, \\[0.3em]
		a_{\delta,\infty}:&\ \left(V_{1,\delta,\infty}(\omega \times Y)+V_{2,\delta}(\Omega \times Y_0)\right)^2 \to  \R.
	\end{split} 
\end{equation}  
Notice that $\mathcal{A}_{\delta,\infty}$ is degenerate with an infinite-dimensional kernel: 
$$\mathcal{A}_{\delta,\infty}(0,0,v)=0 \quad\  \forall v \in L^2(\omega).  $$
However, the restriction of $\mathcal{A}_{\delta,\infty}$ on the space $H_{\delta,\infty}(\Omega \times Y) \cap L^{2, \rm memb}(\Omega \times Y;\R^3)$  does not exhibit such degeneracies (under Assumption \ref{assumivan1} (1)). 

  The following proposition gives a suitable compactness result, similar to Proposition \ref{propdinsi1}. 
\begin{proposition}
	\label{lemmaconvergencerealtime}
	Consider a sequence $\{(h, \eps_h)\}$ such that  $\delta=\lim_{h\to0} h/\eps_h\in (0,\infty),$
	and suppose that $\mu_h=\epsh$,  $\tau=0$. The following statements hold: 
	\begin{enumerate}  
		\item 
		There exists $C>0$, independent of $h,$ such that for any sequence
		$(\vect f^{\epsh})_{h>0}\subset L^2(\Omega;\R^3)$ 
	and the corresponding solutions $\vect u^{\epsh}$ to the problem \eqref{resolventproblemstarting} one has 
	$$ a_{\epsh}(\vect u^{\epsh},\vect u^{\epsh}) +\|\vect u^{\epsh}\|^2_{L^2} \leq C\|\vect f^{\epsh}\|^2_{L^2}.$$ 
	\item If a sequence $(\vect u^{\epsh})_{h>0}\subset H^1_{\Gamma_{\rm D}}(\Omega;\R^3)$ is such that
	$$\limsup_{h  \to 0} \left(a_{\epsh}(\vect u^{\epsh},\vect u^{\epsh})+\|\vect u^{\epsh}\|^2_{L^2}\right)< \infty,$$ then
	 there exist functions $(\vect \funcA,\funcB)^\top \in V_{1,\delta,\infty}(\omega \times Y)$, $\funcC \in {\mathfrak C}_{\delta} (\Omega \times Y)$, $\mathring{\vect u}\in V_{2,\delta}(\Omega \times Y_0)$ such that, up to extracting a subsequence, one has
	\begin{equation}
	\begin{split}\label{ds2}
	\vect u^\epsh & = \Tilde{\vect u}^\epsh + \mathring{\vect u}^\epsh, \quad  \Tilde{\vect u}^{\epsh},\mathring{\vect u}  \in H^1_{\Gamma_{\rm D}}(\Omega;\R^3),\quad \mathring{\vect u}^\epsh |_{\Omega_1^{\varepsilon_h}} = 0, \\	
	\Tilde{\vect u}^\epsh_*   & {\,\xrightarrow{L^2}\,} \vect \funcA, \\ \Tilde{u}_3^\epsh &\drtwoscale \funcB(\hat{x}),\\
	\mathring{\vect u}^\epsh(x) & \drtwoscale  \mathring{\vect u}(x,y),
	\\
	\simgrad_h \Tilde{\vect u}^\epsh(x) & \drtwoscale \, \iota\left(\simgrad_{\hat{x}}\vect \funcA(\hat{x})\right)+ \funcC(x,y),
	\\
	\epsh\simgrad_h \mathring{\vect u}^\epsh(x) & \drtwoscale \sym\widetilde{\nabla}_{\delta}\, \mathring{\vect u}(x,y).
	\end{split}
	\end{equation}
\item  If, additionally to 2, one has $$ \lim_{h  \to 0} \left(a_{\epsh}(\vect u^{\epsh},\vect u^{\epsh})+\|\vect u^{\epsh}\|^2_{L^2}\right)=a_{\delta,\infty}\bigl((\RRR \vect \funcA \BBB,0)^\top+\mathring{\vect u},(\RRR\vect \funcA \BBB,0)^\top+\mathring{\vect u}\bigr)+\bigl\|(\vect \funcA,\funcB)^\top+\mathring{\vect u}\bigr\|^2_{L^2},$$ where $a_{\delta,\infty}$ is defined in \eqref{formadelta}, then the strong two-scale convergence
$$ \vect{u}^{\epsh} \strongdrtwoscale (\RRR \vect \funcA \BBB,\funcB)^\top+\mathring{\vect u}$$
holds.
\end{enumerate}	
\end{proposition}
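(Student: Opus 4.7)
The plan is to follow the three-step recipe standard in high-contrast homogenisation: derive a uniform energy bound from the resolvent formulation, decompose the displacement into a stiff-matrix part and a soft-inclusion part and extract two-scale limits of each, and finally upgrade weak to strong two-scale convergence by matching norms.

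For statement 1, I would test \eqref{resolventproblemstarting} against $\vect v=\vect u^\epsh$ itself. By the coercivity assumption \eqref{coercivity}, the bilinear form on the left-hand side dominates
\[
\nu\bigl(\|\sym\nabla_h \vect u^\epsh\|^2_{L^2(\Omega_1^\epsh)}+\epsh^2\|\sym\nabla_h \vect u^\epsh\|^2_{L^2(\Omega_0^\epsh)}\bigr)+\lambda c_1\|\vect u^\epsh\|^2_{L^2},
\]
while on the right-hand side a Cauchy--Schwarz and Young's inequality absorb the $L^2$ norm of $\vect u^\epsh$, leaving $C\|\vect f^\epsh\|^2_{L^2}$. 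Since $\tau=0$, no $h$-powers and no $\pi_h$-scaling of the load enter the estimate, in contrast to Proposition \ref{propdinsi1}.

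For statement 2, I would first invoke the decomposition collected in the Appendix to write $\vect u^\epsh=\Tilde{\vect u}^\epsh+\mathring{\vect u}^\epsh$ with $\mathring{\vect u}^\epsh$ supported in $\Omega_0^\epsh$ and $\Tilde{\vect u}^\epsh$ an $H^1$-extension of $\vect u^\epsh|_{\Omega^\epsh_1}$ satisfying $\|\sym\nabla_h \Tilde{\vect u}^\epsh\|_{L^2(\Omega)}\leq C$. For the stiff part, the membrane ($\tau=0$) scaling together with Korn--Poincar\'e on the thin plate and the thin-domain decomposition from the Appendix yield the strong $L^2$ convergence of $\Tilde{\vect u}^\epsh_*$ to some $\vect\funcA\in H^1_{\gamma_{\rm D}}(\omega;\R^2)$ and the dimension-reduction two-scale convergence of $\Tilde u^\epsh_3$ to some $\funcB\in L^2(\omega)$ independent of $x_3$; the corrector $\funcC\in\mathfrak{C}_\delta(\Omega\times Y)$ arises from the usual characterisation of dimension-reduction two-scale limits of periodic symmetrised gradients on the matrix. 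For the soft part, the bound $\|\epsh\sym\nabla_h\mathring{\vect u}^\epsh\|_{L^2}\leq C$ and the vanishing trace on $I\times\partial Y_0$, combined with Korn--Poincar\'e rescaled to inclusions of size $\epsh$, yield via two-scale convergence the limit $\mathring{\vect u}\in V_{2,\delta}(\Omega\times Y_0)$ and the form of its scaled symmetrised gradient stated in \eqref{ds2}.

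For statement 3, weak two-scale convergence will be upgraded to strong two-scale convergence by norm matching. The hypothesis that $a_\epsh(\vect u^\epsh,\vect u^\epsh)+\|\vect u^\epsh\|^2_{L^2}$ converges to the stated limiting quantity, combined with lower semicontinuity of two-scale limits of quadratic forms applied separately to the stiff-matrix and soft-inclusion contributions, forces simultaneous convergence of $\|\sym\nabla_h \Tilde{\vect u}^\epsh\|_{L^2}$, $\|\epsh\sym\nabla_h\mathring{\vect u}^\epsh\|_{L^2}$ and the $\rho^\epsh$-weighted $L^2$ norm of $\vect u^\epsh$ to their respective limits, from which strong two-scale convergence follows in the usual Hilbert-space way (weak plus norms equals strong). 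The main obstacle will be the decomposition step itself: extending $\vect u^\epsh|_{\Omega^\epsh_1}$ to all of $\Omega$ with control on $\sym\nabla_h$ uniform in both $\epsh$ and $h$ requires careful simultaneous handling of the $\epsh$-periodic perforation and the $h$-thin-plate geometry, and it is this interaction, rather than the subsequent two-scale identification or the norm-matching argument, that carries the technical weight.
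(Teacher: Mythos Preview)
Your overall strategy matches the paper's: test with $\vect u^{\epsh}$ for the a priori bound, use the extension $\Tilde{\vect u}^{\epsh}$ of $\vect u^{\epsh}|_{\Omega_1^{\epsh}}$ (Theorem~\ref{thmextension}) plus two-scale compactness for part~2, and norm-matching for part~3. Two points, however, deserve correction.

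First, you misidentify the ``main obstacle.'' In the regime $\delta\in(0,\infty)$ the ratio $h/\epsh$ stays bounded away from $0$ and $\infty$, so the extension with uniform control on $\sym\nabla_h$ is the standard one from Theorem~\ref{thmextension}; there is no delicate interaction between the $\epsh$-periodic perforation and the $h$-thin geometry here. The genuinely new ingredient compared with Proposition~\ref{propdinsi1} lies elsewhere: in the $\tau=0$ scaling only $\|\sym\nabla_h\Tilde{\vect u}^{\epsh}\|_{L^2}$ is bounded (not $h^{-1}$ times it), so when you run Lemma~\ref{app:lem.limsup} the Kirchhoff--Love bending component $\Tilde{\funcB}\in H^2_{\gamma_{\rm D}}(\omega)$ necessarily vanishes (because $\Tilde{u}_3^{\epsh}$ itself is $L^2$-bounded, forcing $h^{-1}\Tilde{\funcB}$ to stay bounded and hence $\Tilde{\funcB}=0$). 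The limit $\funcB$ you are after is therefore \emph{not} the bending function from the Kirchhoff--Love ansatz; it has to be extracted directly as a two-scale limit of $\Tilde{u}_3^{\epsh}$.

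Second --- and this is the actual gap in your sketch --- you assert that this two-scale limit lands in $L^2(\omega)$, i.e.\ is independent of $y$, but nothing in ``the thin-domain decomposition from the Appendix'' gives this automatically. A priori, two-scale compactness only yields $\funcB(\hat{x},y)\in L^2(\omega\times Y)$; Griso's decomposition (Theorem~\ref{aux:thm.griso}) takes care of $x_3$-independence, but $y$-independence requires a separate argument. The paper observes, via Remark~\ref{remivan33}, that the $x_3$-average $\hat\psi_3^{\epsh}$ of $\Tilde{u}_3^{\epsh}$ satisfies $\epsh\hat\psi_3^{\epsh}\to 0$ strongly in $H^1(\omega)$, and then applies Lemma~\ref{lemmadependenceony}\,(1) to conclude $\funcB=\funcB(\hat{x})$. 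Without this step your compactness statement in part~2 is incomplete.
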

The following theorem provides the limit resolvent equation. 
\begin{theorem} \label{thmivan11111}
	Under the notation of Proposition \ref{lemmaconvergencerealtime}, suppose that $\delta \in (0,\infty)$, $\mu_h=\epsh$,  $\tau=0,$ and consider a sequence $(\vect  f^\epsh)_{h>0}$ of load densities such that
	\begin{equation} 
		\label{rucak23} 
	\vect  f^\epsh(x) \drtwoscale \vect f(x, y) \in L^2(\Omega \times Y;\R^3).\end{equation}  
	Then the sequence of solutions $(\vect u^\epsh)_{h>0}$ to the resolvent problems \eqref{resolventproblemstarting} converges in the sense of \eqref{ds2}
	to the unique solution of the following problem: Determine $(\vect \funcA,\funcB)^\top \in V_{1,\delta,\infty}(\omega \times Y)$, $\mathring{\vect u}\in V_{2,\delta}(\Omega \times Y_0)$, such that
	\begin{equation}
	\label{realtimesystem}
	\begin{aligned}
	&\int\limits_\omega  \C^{\rm memb}_{\delta}\simgrad_{\hat{x}}\vect \funcA(\hat{x}) : \simgrad_{\hat{x}}\vect \theta(\hat{x})
	+\lambda \int\limits_\omega \langle {\rho} \rangle \vect \funcA(\hat{x}) \cdot \vect \theta(\hat{x}) \,d\hat{x}
	+\lambda \int\limits_\omega \langle \rho_0 \overline{ \mathring{\vect u}_*  }\rangle(\hat{x}) \cdot \vect \theta(\hat{x}) \,d\hat{x}\\[0.35em]
	&\hspace{60pt}
	=\int\limits_\omega \langle \overline{\vect f_{\!*}  }\rangle(\hat{x}) \cdot \vect \theta(\hat{x}) \,d\hat{x}\qquad\forall \vect \theta \in H_{\gamma_{\rm D}}^1(\omega;\R^2), \\[0.35em]
	&  \langle \rho \rangle\funcB (\hat x) +  \langle \rho_0\overline{ \mathring{u}_{3} }\rangle (\hat x) =\lambda^{-1} \langle\overline{ \vect f_3 }\rangle (\hat x) \quad  \textrm{a.e.\,} \hat{x} \in \omega. \\[0.4em]
	&\int\limits_I \int\limits_{Y_0} \C_0(y) \sym\widetilde{\nabla}_{\delta}\, \mathring{\vect u}(x,y) : \sym\widetilde{\nabla}_{\delta}\, \mathring{\vect \xi}(x_3,y) \,dy\,dx_3\\
	&\hspace{60pt}+\lambda \int\limits_I\int\limits_{Y_0}
	\rho_0(y)\bigl(\funcA_1(\hat{x}), 
	\funcA_2(\hat{x}),
	\funcB(\hat{x})\bigr)^\top
	\cdot \mathring{\vect\xi}(x_3,y) \,dy\,dx_3 \\
	&\hspace{60pt}+\lambda \int\limits_I \int\limits_{Y_0} \rho_0 (y)\mathring{\vect u}(x,y) \cdot \mathring{\vect\xi}(x_3,y) \,dy\,dx_3\\[0.4em]
	&\hspace{60pt}=\int\limits_I \int\limits_{Y_0} 
	\vect f(x,y)
	\cdot \mathring{\vect \xi} (x_3,y) \,dy\,dx_3 
	\qquad\forall  \mathring{\vect \xi} \in H_{00}^1(I\times Y;\R^3),\ \  \textrm{a.e.\,} \hat{x} \in \omega.  
	\end{aligned}
	\end{equation}
If, additionally, one assumes the strong two-scale convergence in \eqref{rucak23}, then one has
$$  \vect u^\epsh(x)   \strongdrtwoscale  (\vect \funcA,\funcB)^\top+ \mathring{\vect u}(x,y).$$
\end{theorem}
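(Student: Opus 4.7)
The plan is to combine the compactness result of Proposition \ref{lemmaconvergencerealtime} with a family of specifically tailored test functions in the variational identity \eqref{resolventproblemstarting} to extract the three limit equations in \eqref{realtimesystem}. Since the sequence of loads is $L^2$-bounded (as its two-scale limit lies in $L^2(\Omega \times Y; \R^3)$), Proposition \ref{lemmaconvergencerealtime}(1) yields bounded energies, and Proposition \ref{lemmaconvergencerealtime}(2) then supplies, along a subsequence, limit objects $\vect{\funcA}, \funcB, \funcC, \mathring{\vect u}$ of the claimed form \eqref{ds2}. Uniqueness of the limit problem (argued at the end) will then upgrade convergence from the subsequence to the full sequence.

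To derive the first (membrane) equation, I would insert the admissible test function
\[
\vect v^{\epsh}(x) := \bigl(\vect\theta(\hat x), 0\bigr) + \epsh\, \vect\zeta\bigl(\hat x, \hat x/\epsh, x_3\bigr), \qquad \vect\theta \in C^1(\overline\omega;\R^2) \text{ vanishing on } \gamma_{\rm D}, \ \ \vect\zeta \in C^1\bigl(\overline\omega; C^1(I\times\mathcal{Y};\R^3)\bigr),
\]
for which a direct computation gives $\sym\nabla_h \vect v^{\epsh} \strongdrtwoscale \iota(\sym\nabla_{\hat x}\vect\theta) + \sym\widetilde\nabla_\delta\vect\zeta$ and $\vect v^{\epsh} \to (\vect\theta, 0)^\top$ in $L^2$. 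The soft-inclusion contribution to the elastic form carries an extra factor of $\epsh$ (after extracting $\epsh\sym\nabla_h \vect u^{\epsh}$) and vanishes, while the stiff-matrix part converges to $\int_\omega\int_I\int_{Y_1} \C_1[\iota(\sym\nabla_{\hat x}\vect\funcA) + \funcC]:[\iota(\sym\nabla_{\hat x}\vect\theta) + \sym\widetilde\nabla_\delta\vect\zeta]$ by \eqref{ds2}. Taking $\vect\theta = 0$ first yields the Euler--Lagrange equation characterising $\funcC$ as the unique minimiser in the cell problem defining $\C^{\rm hom}_\delta$ (with $\vect A = \sym\nabla_{\hat x}\vect\funcA$, $\vect B = 0$); selecting then $\vect\zeta$ to be this optimal corrector eliminates $\funcC$ and produces the reduced term $\C^{\rm memb}_\delta\sym\nabla_{\hat x}\vect\funcA:\sym\nabla_{\hat x}\vect\theta$. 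The mass and load terms split naturally into the matrix contribution (density $\langle\rho\rangle$) and the inclusion contribution (density $\rho_0$), by weak convergence of $\rho^{\epsh}$ and the strong $L^2$ convergence $\tilde{\vect u}^{\epsh}_* \to \vect\funcA$. For the third (inclusion) equation I would take $\vect v^{\epsh}(x) = \vect\xi(\hat x, \hat x/\epsh, x_3)$ with $\vect\xi(\hat x, \cdot, \cdot)\in H^1_{00}(I\times Y_0;\R^3)$ extended by zero outside $Y_0$; here $\epsh\sym\nabla_h\vect v^{\epsh} \strongdrtwoscale \sym\widetilde\nabla_\delta\vect\xi$, the support lies in $\Omega_0^{\epsh}$, only the soft-inclusion elastic contribution survives, and one arrives at the third identity in \eqref{realtimesystem}.

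The main obstacle is the derivation of the algebraic identity for $\funcB$, which carries no differential information on $\funcB$ at all. Testing with $\vect v(x) = (0, 0, \varphi(\hat x))^\top$ for $\varphi \in C^\infty_{\rm c}(\omega)$ vanishing on $\gamma_{\rm D}$, the soft-elastic contribution is again $O(\epsh)$, while the mass and load terms give $\lambda\int_\omega[\langle\rho\rangle\funcB + \langle\rho_0\overline{\mathring u_3}\rangle]\varphi$ and $\int_\omega\langle\overline{f_3}\rangle\varphi$ respectively; the stiff-matrix elastic term converges a priori to $\tfrac{1}{2}\int_\omega\int_I\int_{Y_1} \C_1[\iota(\sym\nabla_{\hat x}\vect\funcA) + \funcC]:\iota_1((\nabla_{\hat x}\varphi, 0))$, and the essential step is to show this quantity vanishes. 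The key observation is that the $y$-independent field
\[
\vect\zeta(\hat x, y, x_3) := \bigl(\delta\, x_3\,\partial_1\varphi(\hat x),\, \delta\, x_3\,\partial_2\varphi(\hat x),\, 0\bigr)
\]
satisfies $\sym\widetilde\nabla_\delta\vect\zeta = \tfrac12\iota_1((\nabla_{\hat x}\varphi, 0))$ identically in $y$; inserting this into the corrector Euler--Lagrange equation from the previous paragraph kills the stiff-elastic contribution and yields the desired pointwise identity. Uniqueness of $(\vect\funcA, \funcB, \mathring{\vect u})$ follows from the coercivity of $a_{\delta,\infty} + \lambda(\cdot, \cdot)_{H_{\delta,\infty}}$ on $V_{1,\delta,\infty} + V_{2,\delta}$, a consequence of Proposition \ref{propvecer} together with Korn's inequality (the $H^1_{\gamma_{\rm D}}$ and $H^1_{00}$ constraints fix the kernels of the symmetrised gradients), so the full sequence converges. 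Finally, for the strong two-scale convergence claim, inserting $\vect v = \vect u^{\epsh}$ in \eqref{resolventproblemstarting} and using the strong two-scale convergence of $\vect f^{\epsh}$ gives $a_{\epsh}(\vect u^{\epsh}, \vect u^{\epsh}) + \lambda\int_\Omega\rho^{\epsh}|\vect u^{\epsh}|^2 \to a_{\delta,\infty}((\vect\funcA, 0)^\top + \mathring{\vect u}, (\vect\funcA, 0)^\top + \mathring{\vect u}) + \lambda\|(\vect\funcA, \funcB)^\top + \mathring{\vect u}\|^2_{H_{\delta,\infty}}$, whereupon Proposition \ref{lemmaconvergencerealtime}(3) upgrades weak two-scale convergence to strong.
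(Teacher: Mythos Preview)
Your proposal is correct and follows the paper's overall strategy: invoke the compactness of Proposition \ref{lemmaconvergencerealtime}, insert tailored test functions into \eqref{resolventproblemstarting}, pass to the two-scale limit, derive the effective tensor from the corrector Euler--Lagrange equation, and conclude by density and uniqueness (the latter via the resolvent structure $(\mathcal{A}_{\delta,\infty}+\lambda\mathcal{I})\vect u = P_{\delta,\infty}\vect f$). The strong-convergence step via Proposition \ref{lemmaconvergencerealtime}(3) is also the paper's argument.

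The only organisational difference is in the treatment of the algebraic equation for $\funcB$. The paper does not test with a bare $(0,0,\varphi)^\top$; instead it uses the single combined test function
\[
\vect v^{\epsh}(x)=\bigl(\theta_1(\hat x)-hx_3\partial_1\theta_3(\hat x),\ \theta_2(\hat x)-hx_3\partial_2\theta_3(\hat x),\ \theta_3(\hat x)\bigr)^\top+\epsh\,\vect\zeta\bigl(x,\hat x/\epsh\bigr)+\mathring{\vect\xi}\bigl(x,\hat x/\epsh\bigr),
\]
whose Kirchhoff--Love part has symmetrised scaled gradient equal to $-hx_3\,\iota(\nabla^2_{\hat x}\theta_3)$, which tends to zero as $h\to 0$; the stiff-elastic contribution from $\theta_3$ therefore vanishes directly at the level of the test function. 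Your route---testing with $(0,0,\varphi)^\top$, obtaining the residual $\tfrac12\iota_1((\nabla_{\hat x}\varphi,0))$, and then eliminating it by inserting the $y$-independent field $\vect\zeta=(\delta x_3\partial_1\varphi,\,\delta x_3\partial_2\varphi,\,0)^\top$ into the corrector equation---arrives at the same conclusion through one extra step. Both arguments are valid; the paper's is slightly more streamlined, while yours makes the role of the corrector space $\mathfrak{C}_\delta$ in absorbing the $\iota_1$-term more explicit.
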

\begin{corollary} 
	 \label{remsim1} 
	Under Assumption \ref{assumivan1}~(1) and provided $(\vect f^{\epsh})_{h>0}\subset L^{2, \rm memb}(\Omega;\R^3)$, in addition to the convergences in Proposition \ref{lemmaconvergencerealtime} one has
	$$ \tilde{u}_3^{\epsh} \xrightharpoonup{H^1} 0,  $$
	and thus $\funcB=0$ in the limit equations \eqref{realtimesystem}.  
	
\end{corollary}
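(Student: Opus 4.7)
The plan is to exploit the invariance of the membrane subspace $L^{2,\rm memb}(\Omega;\R^3)$ under the operator $\mathcal{A}_{\epsh}$ (guaranteed by Assumption \ref{assumivan1}\,(1), see Remark \ref{inv_remark}) together with the oddness in $x_3$ of the third component of membrane fields. First, since $\vect f^{\epsh} \in L^{2,\rm memb}(\Omega;\R^3)$ and the invariant subspace decomposition commutes with $(\mathcal{A}_{\epsh}+\lambda\mathcal{I})^{-1},$ the solution $\vect u^{\epsh}$ of \eqref{resolventproblemstarting} lies in $L^{2,\rm memb}(\Omega;\R^3).$ Denoting by $S$ the symmetry defined by $S\vect v(\hat x, x_3) = (v_1(\hat x,-x_3), v_2(\hat x,-x_3), -v_3(\hat x,-x_3)),$ one has $S\vect u^{\epsh}=\vect u^{\epsh}.$

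Next I would symmetrise the decomposition $\vect u^{\epsh}=\tilde{\vect u}^{\epsh}+\mathring{\vect u}^{\epsh}.$ Because the inclusion set $\Omega_0^{h,\epsh}$ is a cylinder in $x_3,$ the operator $S$ preserves the support condition $\mathring{\vect v}|_{\Omega_1^{\epsh}}=0$ as well as the space $H^1_{\Gamma_{\rm D}}(\Omega;\R^3).$ Therefore the averaged decomposition
\[
\tilde{\vect u}^{\epsh}_{\rm new}:=\tfrac{1}{2}\bigl(\tilde{\vect u}^{\epsh}+S\tilde{\vect u}^{\epsh}\bigr),\qquad \mathring{\vect u}^{\epsh}_{\rm new}:=\tfrac{1}{2}\bigl(\mathring{\vect u}^{\epsh}+S\mathring{\vect u}^{\epsh}\bigr)
\]
still satisfies all the hypotheses (and consequently all the conclusions) of Proposition \ref{lemmaconvergencerealtime}, while now $\tilde{\vect u}^{\epsh}_{\rm new}, \mathring{\vect u}^{\epsh}_{\rm new}\in L^{2,\rm memb}.$ In particular $\tilde{u}^{\epsh}_{3}$ is odd in $x_3$.

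The main conclusion then follows by combining this oddness with the two-scale convergence $\tilde{u}^{\epsh}_3 \drtwoscale \funcB(\hat x).$ Integrating the dr-two-scale limit over $y$ yields, as a weak $L^2(\Omega)$ limit, the function $\funcB(\hat x),$ which is independent of $x_3.$ Since weak $L^2$ convergence preserves the $x_3$-oddness, $\funcB(\hat x)$ must equal its own $x_3$-reflection-with-sign-change, forcing $\funcB\equiv0.$ The $H^1$-boundedness of $\tilde{u}_3^{\epsh},$ which is a standard byproduct of the extension/decomposition construction recalled in the Appendix, together with the identification of every weak $H^1$ cluster point with the weak $L^2$ limit $\funcB=0,$ then yields $\tilde{u}_3^{\epsh}\weakH 0.$ Substituting $\funcB=0$ into \eqref{realtimesystem} finishes the proof (the second identity there becomes trivial under $\vect f\in L^{2,\rm memb},$ since $\overline{\vect f_3}=0$).

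The only step requiring care is verifying that the symmetrised decomposition inherits all convergences of Proposition \ref{lemmaconvergencerealtime}; this is immediate because $S$ is a bounded isometry on $L^2$ that commutes with the scaled symmetrised gradient up to obvious sign changes on individual matrix entries, so applying $S$ to each convergence in \eqref{ds2} and averaging produces convergences of the same form (with limits in the correct membrane subspaces of $V_{1,\delta,\infty},$ ${\mathfrak C}_\delta,$ $V_{2,\delta}$).
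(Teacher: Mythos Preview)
Your argument is correct. The route differs from the paper's in one structural choice: the paper invokes Remark~\ref{rucak30}, which records that the extension operator of Theorem~\ref{thmextension} already preserves $L^{2,\rm memb}$, so $\tilde{\vect u}^{\epsh}\in L^{2,\rm memb}$ without any modification; you instead symmetrise the decomposition afterward. Both yield $\tilde u_3^{\epsh}$ odd in $x_3$. From there the paper appeals directly to Remark~\ref{remivan} (Griso for membrane fields): oddness forces $\hat\psi_3^{\epsh}=\int_I\tilde u_3^{\epsh}=0$, so $\tilde u_3^{\epsh}=\bar\psi_3^{\epsh}$, and the Griso bound \eqref{prvaKorn} gives $\|\tilde u_3^{\epsh}\|_{L^2}\le Ch$ together with $\|\nabla_h\tilde u_3^{\epsh}\|_{L^2}\le C$, whence the weak $H^1$ limit is zero and $\funcB=0$ follows automatically. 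Your detour through the two-scale limit to conclude $\funcB=0$ is valid but redundant, since the $H^1$-boundedness you need anyway already delivers strong $L^2$ convergence to zero.

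One point to tighten: the $H^1$-boundedness of $\tilde u_3^{\epsh}$ is \emph{not} a general byproduct of the extension in the $\tau=0$ regime (Korn for thin domains only gives $\|\tilde u_3^{\epsh}\|_{H^1}\le C h^{-1}$); it is precisely the oddness in $x_3$---eliminating $\hat\psi_3^{\epsh}$ in Griso's decomposition---that yields the uniform $H^1$ bound via \eqref{prvaKorn}. So your ``standard byproduct'' is exactly the content of Remark~\ref{remivan}, and you should cite it. With that made explicit, your symmetrisation approach has the modest advantage of not relying on the specific structure of the extension operator.
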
	

\begin{remark} 
	The limit system \eqref{realtimesystem} can be written as a resolvent problem on the space  $H_{\delta,\infty}(\Omega \times Y)$, as follows:\footnote{Notice that this requires to take the inner product with the weight function $\langle \rho \rangle \chi_{Y_1}+\rho_0 \chi_{Y_0}.$} 
\begin{equation*}
\left( \mathcal{A}_{\delta,\infty} + \lambda \mathcal{I}\right)\vect u = \RRR P_{\delta,\infty} \BBB \vect f,\quad \RRR \vect u=(\vect \funcA,\funcB)^\top+ \mathring{\vect u}, \BBB
\end{equation*}
\RRR which is the usual resolvent structure for the limit problem in the high-contrast setting (see \cite{pastukhova} and Section \ref{apsechyp}). \BBB
\end{remark}

Next, the operator $\tilde{\mathcal{A}}_{\delta}$ on the space $H_{\delta,\infty}(\Omega \times Y) \cap L^{2, \rm memb}(\Omega \times Y;\R^3)$ is defined via the form $\tilde a_{\delta}$ given by the expression in \eqref{formadelta} with a different domain:
$$ \tilde a_{\delta}: \left(H^1_{\gamma_{\rm D}}(\omega;\R^2)\times \{0\} + \RRR V_{2,\delta} \BBB (\Omega \times Y_0)\right)^2 \cap \left( L^{2, \rm memb}(\Omega \times Y;\R^3) \right)^2 \to \R.                      $$  
This operator can only be defined under Assumption \ref{assumivan1}\,(1). 

In relation to the limit problem, we also define the following operators. The operator $\mathcal{A}_{00,\delta},$ referred to as 
the Bloch operator, corresponds to the differential expression\footnote{The differential expression $\widetilde{\div}_{2,\delta}$ stands for $\widetilde{\nabla}_{\delta}\cdot$ (applied row-wise), i.e., it is in the same relation to the gradient $\widetilde{\nabla}_{\delta}$ as the standard divergence is in the relation to the standard gradient.}
\begin{equation*}
-(\rho_0)^{-1}\widetilde{\div}_{2,\delta}\bigl(\sym \widetilde{\nabla}_{\delta}\bigr), 
\end{equation*}
and is defined via the bilinear form
\begin{equation*}
\begin{aligned}
a_{00,\delta}( \vect u, \vect v)&:=  \int\limits_{I \times Y_0} \C_0(y) \sym\widetilde{\nabla}_{\delta}\, \vect u(x_3,y) : \sym\widetilde{\nabla}_{\delta}\, \vect v(x_3,y)\,dx_3 \,dy,\qquad
a_{00,\delta} : \left( H_{00}^1(I \times Y_0;\R^3)\right)^2  \to \R.
\end{aligned}
\end{equation*}
Similarly to the way the form $\tilde{a}_{\delta}$ and the associated operator $\tilde{\mathcal{A}}_{\delta}$ were defined by restricting the form $a_{\delta,\infty}$, we define a form 
\[
\tilde{a}_{00,\delta}:\left( H_{00}^1(I \times Y_0;\R^3)\right)^2 \cap \left(L^{2, \rm memb}(I \times Y_0;\R^3)\right)^2  \to \R,
\]
and the associated operator $\tilde{\mathcal{A}}_{00,\delta}$ by restricting the form $a_{00,\delta}.$
We also  define a positive self-adjoint
operator $\mathcal{A}^{\rm memb}_{\delta}$ on $L^2(\omega;\R^2)$ corresponding to the differential expression
\begin{equation*}
-\langle \rho \rangle^{-1}\div_{\hat{x}}\bigl(\mathbb{C}^{\rm memb}_{\delta} \simgrad_{\hat{x}}\bigr),
\end{equation*}
as the  one defined (on an appropriate weighted $L^2$ space) by the bilinear form
\begin{equation}
	\label{resf}
\begin{split}
a^{\rm memb}_{\delta}(\vect u, \vect v) &:= \int\limits_\omega \C^{\rm memb}_{\delta}\simgrad_{\hat{x}}\vect u(\hat{x}) : \simgrad_{\hat{x}} \vect v(\hat{x}) d\hat{x}, \qquad a^{\rm memb}_{\delta}: \bigl( H_{\gamma_{\rm D}}^1(\omega;\R^2) \bigr)^2\to \R.
\end{split}
\end{equation}
In order to simplify the system \eqref{realtimesystem}, one is led to first solve the equation (where we replace $\lambda$ with $-\lambda$) 
$$  (\mathcal{A}_{00,\delta}-\lambda \mathcal{I}) \mathring{\vect u}=\lambda \bigl(\vect \funcA(\hat x), \funcB(\hat x)\bigr)^\top
+(\rho_0)^{-1}\vect f (\hat x,\cdot), $$ 
where the variable $\hat x$ is treated as a parameter \RRR(see, e.g., \cite{zhikov2012}).\BBB When $\vect f\vert_{Y_0}=0$ and $\lambda\in{\mathbb C}\setminus{\mathbb R}_+,$ the equation (\ref{resf}) can be solved via the more basic problems  
 $$  (\mathcal{A}_{00,\delta}-\lambda \mathcal{I}) \vect b^{\lambda}_{i}=\vect e_{i}, \qquad
 i=1,2,3.$$
The following matrix-valued function $\beta_{\delta}$ taking values in $\R^{3 \times 3}$ will be important for characterizing the spectrum of the limit operator:
\begin{equation*}
\bigl(\beta_{\delta}(\lambda)\bigr)_{ij} = \lambda \langle  \rho \rangle  \delta_{ij}  + \lambda^2 \bigl\langle\rho_0\overline{(b^\lambda_i)_j}\bigr\rangle, 
\qquad i,j=1,2,3.
\end{equation*}
We refer to $\beta_\delta$ as ``Zhikov function", to acknowledge its scalar version appearing in \cite{Zhikov2000}. Its significance will be clear in the next section.
We can obtain an alternative representation of the Zhikov function as follows. 

First, separate the spectrum of ${\mathcal{A}}_{00,\delta}$ into two parts:
\begin{equation}
\sigma({\mathcal{A}}_{00,\delta}) = \{{\eta}_1, {\eta}_2, ...\}\cup\{{\alpha}_1, {\alpha}_2, ...\},
\label{sig_spec}
\end{equation}
where the second subset corresponds to eigenvalues with all associated eigenfunctions having zero $\rho_0$-weighted mean 
in all components. In each of the two subsets the eigenvalues are assumed to be arranged in the ascending order.
Next, denote by $({\vect\varphi}_n)_{n\in \N}$ the set of orthonormal eigenfunctions corresponding to the eigenvalues from the set $\{\eta_1, \eta_2, ...\}$ in (\ref{sig_spec}), repeating every eigenvalue according to its multiplicity. Using the spectral decomposition, one obtains 
 \begin{equation} 
\label{razgovor1} 
	\bigl(\beta_{\delta}(\lambda)\bigr)_{ij}=\lambda \langle  \rho \rangle  \delta_{ij} + \sum_{n=1}^{\infty}\frac{\lambda^2}{{\eta}_n-\lambda}\bigl\langle\rho_0 \overline{({\varphi}_n)_i} \bigr\rangle \cdot \bigl\langle\rho_0 \overline{({\varphi}_n)_j}\bigr\rangle,\qquad i,j=1,2,3. 
\end{equation}
 
 Under Assumption \ref{assumivan1}~(1), one is actually only interested in the operator $\tilde{\mathcal{A}}_{00,\delta}$. We can then define a version of the Zhikov function, denoted by $\tilde{\beta}^{\rm memb}_{\delta}$  and taking values in $\R^{2 \times 2}$ (dropping the third row and the third column of $\beta_\delta,$ which necessarily vanish as a consequence of symmetries) as the one associated with this operator. Eliminating those values $\eta_n$ and $\alpha_n$ in (\ref{sig_spec}) whose eigenfunctions belong to the subspace $L^{2, \rm bend}(I \times Y_0,\R^3),$   we write
$$\sigma(\tilde{\mathcal{A}}_{00,\delta})=\{\tilde{\eta}_1,\tilde{\eta}_2,\dots \}\cup\{\tilde\alpha_1,\tilde\alpha_2,\dots\}.$$
Here, similarly to the above, the eigenvalues in the second set 
are those whose all eigenfunctions have zero weighted mean in all of their components. (Note that due to symmetry the third component has zero weighted mean automatically.) 
We use the notation $\sigma(\tilde{\mathcal{A}}_{00,\delta})'$ for the set of such eigenvalues:
\begin{equation*}
\sigma(\tilde{\mathcal{A}}_{00,\delta})'=\{\tilde{\alpha}_1,\tilde{\alpha}_2,\dots\}.
\end{equation*}
Analogously to \eqref{razgovor1}, we can write a formula for the function $\tilde{\beta}^{\rm memb}_{\delta}$. Notice, in particular, that 
\begin{equation}
\bigl(\tilde{\beta}^{\rm memb}_{\delta}\bigr)_{\alpha \beta}=\bigl(\beta_{\delta}\bigr)_{\alpha \beta}, \quad \alpha,\beta=1,2.
\label{betamemb1} 
\end{equation}

\vspace{2.1ex} 
\noindent\textbf{C. ``Strong high-contrast bending" scaling: $\mu_h=\epsh h,$\  $\tau=2$}
\vspace{1.4ex}

As was shown above, in the case $\mu_h=\epsh,$ $\tau=2$ one does not see effects of high-contrast inclusions in the limit equations (i.e the limit equations are not coupled). Here we consider an asymptotic regime of higher contrast, where the limit equations are coupled. Proposition \ref{compactnessanotherscaling} below provides the relevant compactness result. Before proceeding to its statement, we introduce some auxiliary objects.

In order to analyse the spectral problem, we will require a positive self-adjoint operator $\hat{\mathcal{A}}_{\delta}$ on the Hilbert space $\{0\}^2 \times L^2(\omega)+L^2(\Omega \times Y_0,\R^3),$ as the one defined by the bilinear form 
\begin{align} 
	\nonumber
	\RRR \hat{a}_{\delta}\bigl((0,0,\funcB)^\top+\mathring{\vect{u}}, (0,0,\theta)^\top+\mathring{\vect {\xi}}\bigr)&= a_{\delta}^{\funcB}(\RRR \funcB \BBB,\theta)+\int_{\omega} a_{00,\delta} (\mathring{\vect{u}},\mathring{\vect {\xi}})\,d \hat{x},\\  \label{missaglic2} & 
	\hat{a}_{\delta}:\left(\{0\}^2 \times H^2_{\gamma_{\rm D}}(\omega)+V_{2,\delta}(\omega \times Y_0)\right)^2 \to \R.  	
\end{align} 

We also define a scalar Zhikov function $\hat{\beta}_{\delta}$ associated with this problem. Namely, we eliminate the eigenvalues of $\mathcal{A}_{00,\delta}$ all of whose eigenfunctions have zero weighted mean in the third component and set
\begin{equation} 
\hat{\beta}_{\delta}:= \beta_{\delta,33}.
\label{beta_deriv}
\end{equation}
We also define $\hat{\sigma}(\mathcal{A}_{00,\delta})$ as the set of the eigenvalues of $\mathcal{A}_{00,\delta}$ all of whose eigenfunctions have zero weighted mean in the third component.
 
\begin{proposition}
	\label{compactnessanotherscaling}
	Consider a sequence $\{(h, \eps_h)\}$ such that  $\delta=\lim_{h\to0} h/\eps_h\in (0,\infty),$
	and suppose that $\mu_h=\epsh h$,  $\tau=2$. 
	The following statements hold: 
	\begin{enumerate} 
		\item 
		There exists $C>0$, independent of $h$, 
		such that for any sequence $(\vect f^{\epsh})_{h>0}\subset L^2(\Omega;\R^3)$ of load densities and the corresponding solutions $\vect u^{\epsh}$ to the problem \eqref{resolventproblemstarting}  one has  
		$$
		h^{-2}a_{\epsh}(\vect u^{\epsh},\vect u^{\epsh})+\|\vect{u}^{\epsh}\|^2_{L^2}\leq C \|\vect f^{\epsh}\|^2_{L^2}.$$
		\item If $$\limsup_{h  \to 0}\left(h^{-2}a_{\epsh}(\vect u^{\epsh},\vect u^{\epsh})+\|\vect{u}^{\epsh}\|^2_{L^2}\right)< \infty, \qquad (\vect u^{\epsh})_{h>0}\subset H^1_{\Gamma_{\rm D}}(\Omega;\R^3),
		$$  then
	 there exist functions $\vect{\funcA} \in H_{\gamma_D}(\omega;\mathbb{R}^2)$, $\funcB \in H^2_{\gamma_{\rm D}}(\omega)$, $\funcC \in {\mathfrak C}_{\delta} (\Omega \times Y)$, $\mathring{\vect u}\in V_{2,\delta}(\Omega \times Y_0)$,  such that (up to a subsequence):
	\begin{equation}
	\begin{split} \label{ds1}
	\vect u^\epsh  &= \Tilde{\vect u}^\epsh + \mathring{\vect u}^\epsh,  \qquad \Tilde{\vect u}^{\epsh},\mathring{\vect u}  \in H^1_{\Gamma_{\rm D}}(\Omega;\R^3), \qquad \mathring{\vect u}^\epsh |_{\Omega_1^{\varepsilon_h}} = 0, \\[0.15em]
	\pi_{1/h} \Tilde{\vect u}^\epsh &{\,\xrightarrow{L^2}\,}\bigl(
	\funcA_1(\hat{x}) - x_3 \partial_1 \funcB(\hat{x}), 
	\funcA_2(\hat{x}) - x_3 \partial_2 \funcB(\hat{x}),
	\funcB(\hat{x})\bigr)^\top,\\[0.3em]
	\mathring{\vect u}^\epsh(x) & \drtwoscale  \mathring{\vect u}(x,y),
	\\[0.3em]
	h^{-1}\simgrad_h \Tilde{\vect u}^\epsh(x)  &\drtwoscale \, \iota\left(\simgrad_{\hat{x}}\vect \funcA(\hat{x})-x_3 \nabla_{\hat{x}}^2\funcB(\hat{x})\right) + \funcC(x,y),
	\\[0.3em]
	\epsh\simgrad_h \mathring{\vect u}^\epsh(x)  &\drtwoscale \sym\widetilde{\nabla}_{\delta}\, \mathring{\vect u}(x,y).
	\end{split}
	\end{equation}
	\item  If, additionally to 2, one assumes that 
	\[\lim_{h \to 0} \left(h^{-2}a_{\epsh} (\vect u^{\epsh}, \vect u^{\epsh})+\|\vect u^{\epsh} \|^2_{L^2}\right)=\hat{a}_{\delta} ((0,0,\funcB)^\top+\mathring{\vect u},(0,0,\funcB)^\top+\mathring{\vect u})+\|(0,0,\funcB)^\top+\mathring{\vect u}\|^2_{L^2},
	\] 
	where $\hat{a}_{\delta}$ is defined in \eqref{missaglic2}, then one has the strong two-scale convergence 
	$$
	\vect{u}^{\epsh} \strongdrtwoscale (0,0,\funcB)^\top+\mathring{\vect u}.  
	$$
\end{enumerate} 
\end{proposition}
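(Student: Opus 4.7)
The overall strategy mirrors Propositions \ref{propdinsi1} and \ref{lemmaconvergencerealtime}, adapted to the fact that here $h^{-2}\mu_h^2 = \varepsilon_h^2$ on the soft part. For statement 1, I would test \eqref{resolventproblemstarting} with $\vect v = \vect u^{\varepsilon_h}$, which gives
\begin{equation*}
h^{-2} a_{\varepsilon_h}(\vect u^{\varepsilon_h}, \vect u^{\varepsilon_h}) + \lambda \int_{\Omega} \rho^{\varepsilon_h} |\vect u^{\varepsilon_h}|^2\, dx = \int_\Omega \vect f^{\varepsilon_h} \cdot \vect u^{\varepsilon_h}\, dx;
\end{equation*}
combining the uniform positivity of $\rho^{\varepsilon_h}$ with Cauchy-Schwarz on the right-hand side yields first $\|\vect u^{\varepsilon_h}\|_{L^2} \leq C \|\vect f^{\varepsilon_h}\|_{L^2}$ and then, reinserting, the claimed energy estimate.

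For statement 2, the energy bound together with \eqref{coeff_def} and the coercivity \eqref{coercivity} of $\C_{0,1}$ yields $\|h^{-1}\sym\nabla_h \vect u^{\varepsilon_h}\|_{L^2(\Omega_1^{\varepsilon_h})} + \|\varepsilon_h \sym\nabla_h \vect u^{\varepsilon_h}\|_{L^2(\Omega_0^{\varepsilon_h})} \leq C$. I would then invoke the decomposition from the Appendix to split $\vect u^{\varepsilon_h} = \tilde{\vect u}^{\varepsilon_h} + \mathring{\vect u}^{\varepsilon_h}$ with $\mathring{\vect u}^{\varepsilon_h}|_{\Omega_1^{\varepsilon_h}} = 0$ and an extension of $\tilde{\vect u}^{\varepsilon_h}$ across the soft inclusions that preserves $\|h^{-1}\sym\nabla_h \tilde{\vect u}^{\varepsilon_h}\|_{L^2(\Omega)} \leq C$. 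A Griso-type Kirchhoff-Love decomposition applied to $\tilde{\vect u}^{\varepsilon_h}$ produces the macroscopic fields $\vect\funcA \in H^1_{\gamma_{\rm D}}(\omega;\R^2)$, $\funcB \in H^2_{\gamma_{\rm D}}(\omega)$ and a corrector $\funcC \in {\mathfrak C}_\delta(\Omega \times Y)$, delivering the strong convergence of $\pi_{1/h}\tilde{\vect u}^{\varepsilon_h}$ and the two-scale convergence of $h^{-1}\sym\nabla_h \tilde{\vect u}^{\varepsilon_h}$. For $\mathring{\vect u}^{\varepsilon_h}$, the vanishing trace on $I \times \partial Y_0^{\varepsilon_h}$ permits cell-wise Korn and Poincar\'e inequalities to bound $\|\mathring{\vect u}^{\varepsilon_h}\|_{L^2} + \|\varepsilon_h \nabla_h \mathring{\vect u}^{\varepsilon_h}\|_{L^2}$ uniformly; after the fast-variable rescaling $y = \hat x/\varepsilon_h$ and using $h/\varepsilon_h \to \delta \in (0,\infty)$, the scaled gradient $\varepsilon_h \sym\nabla_h \mathring{\vect u}^{\varepsilon_h}$ becomes $\sym\widetilde\nabla_\delta$ in the $(y,x_3)$ variables, from which the dimension-reduction two-scale convergence to $\mathring{\vect u} \in V_{2,\delta}(\Omega \times Y_0)$ follows.

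For statement 3, the key tool is lower semicontinuity of the quadratic energy combined with the assumed convergence of norms. The weak two-scale convergences from Step 2, together with the minimization \eqref{tensorminimization} defining $\C^{\rm hom}_\delta$, give
\begin{align*}
&\liminf_{h \to 0}\bigl(h^{-2}a_{\varepsilon_h}(\vect u^{\varepsilon_h},\vect u^{\varepsilon_h}) + \|\vect u^{\varepsilon_h}\|^2_{L^2}\bigr) \\
&\qquad\geq \int_\omega \C^{\rm hom}_\delta\bigl(\simgrad_{\hat x} \vect\funcA, \nabla^2_{\hat x} \funcB\bigr):\bigl(\simgrad_{\hat x} \vect\funcA, \nabla^2_{\hat x} \funcB\bigr)\,d\hat x + \int_\omega a_{00,\delta}(\mathring{\vect u},\mathring{\vect u})\,d\hat x + \bigl\|(0,0,\funcB)^\top + \mathring{\vect u}\bigr\|^2_{L^2},
\end{align*}
while the definition \eqref{missaglic1} of $a_\delta^\funcB$ together with the minimization property \eqref{anketa1} shows that the first integral is bounded below by $a_\delta^\funcB(\funcB,\funcB)$, with equality iff $\vect\funcA = \vect\funcA^\funcB$. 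Equating the above lower bound with the assumed limit $\hat a_\delta((0,0,\funcB)^\top + \mathring{\vect u},(0,0,\funcB)^\top + \mathring{\vect u}) + \|(0,0,\funcB)^\top + \mathring{\vect u}\|^2_{L^2}$ (see \eqref{missaglic2}) simultaneously forces $\vect\funcA = \vect\funcA^\funcB$, identifies $\funcC$ as the minimizer in \eqref{tensorminimization}, and produces $\|\vect u^{\varepsilon_h}\|_{L^2} \to \|(0,0,\funcB)^\top + \mathring{\vect u}\|_{L^2}$; combined with the weak two-scale limit from Step 2 this last norm convergence upgrades the convergence to strong dimension-reduction two-scale.

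The hardest step will be the decomposition and the identification of the corrector space ${\mathfrak C}_\delta$ against the Kirchhoff-Love ansatz $\iota\bigl(\simgrad_{\hat x}\vect\funcA - x_3 \nabla^2_{\hat x}\funcB\bigr)$: one must ensure that the extension $\tilde{\vect u}^{\varepsilon_h}$ across soft inclusions does not generate spurious oscillations, and that the zero trace on the fast boundary $\partial Y_0$ transfers cleanly to a zero-trace condition on $\mathring{\vect u}$. The feature specific to the present strong-contrast regime $\mu_h = \varepsilon_h h$ --- in contrast with Proposition \ref{propdinsi1}, where $\mathring{\vect u}^{\varepsilon_h}$ is $O(h)$ --- is that the soft-part contribution survives at the unscaled level, and its nontrivial coupling with the bending deformation $\funcB$ via the limit form $\hat a_\delta$ in \eqref{missaglic2} (and, ultimately, via the scalar Zhikov function $\hat\beta_\delta$ defined by \eqref{beta_deriv}) is the structural point that any careful implementation of the plan above must respect.
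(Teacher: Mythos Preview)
Your plan is correct and matches the paper's approach: the paper's proof consists of the single sentence ``plug $\vect u^{\epsh}$ in \eqref{resolventproblemstarting}; the rest follows the steps of the proofs of Proposition \ref{propdinsi1} and Theorem \ref{thmivan111}'', and your outline is precisely a spelled-out version of those steps adapted to the present scaling $\mu_h=\epsh h$. In particular, your observation that here $\mathring{\vect u}^{\epsh}$ survives at the unscaled level (rather than being $O(h)$ as in Proposition \ref{propdinsi1}) and hence couples nontrivially with $\funcB$ through $\hat a_\delta$ is exactly the structural difference the proof must track.
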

The following theorem describes the limit resolvent equation. 
\begin{theorem}
	\label{thmresolventhighercontrast}
	Suppose that $\delta \in (0,\infty)$, $\mu_h=\epsh h,$ and $\tau=2,$ and consider a sequence $(\vect f^\epsh)_{h>0}$ of load densities such that 
	\begin{equation} 
		\label{konvsila00} 
	\vect  f^\epsh \drtwoscale \vect f \in L^2(\Omega \times Y;\R^3).
	\end{equation}
	Then the sequence of solutions to the resolvent problem \eqref{resolventproblemstarting} converges in the sense of \eqref{ds1} to the unique solution of the following problem: Determine $\vect \funcA \in H^1_{\gamma_{\rm D}}(\omega;\R^2)$, $\funcB \in H^2_{\gamma_{\rm D}}(\omega)$, $\mathring{\vect u}\in V_{2,\delta} (\Omega \times Y_0)$ such that
	\begin{equation}
	\label{realtimehigherordersystem}
	\begin{split}
	& \int\limits_{\omega}\C^{\rm hom}_{\delta}\left(\simgrad_{\hat{x}}\vect \funcA(\hat{x}),\nabla_{\hat{x}}^2\funcB(\hat{x})\right): \left(\simgrad_{\hat{x}}\vect \theta_* (\hat{x}),\nabla_{\hat{x}}^2{\theta_3}(\hat{x})\right) \,d\hat{x}
	+ 
	\lambda \int\limits_\omega\bigl(\langle \rho \rangle \funcB(\hat{x})+\langle \rho_0\overline{\mathring{u}_3} \rangle(\hat{x})\bigr) {\theta_3}(\hat{x}) \,d\hat{x}
	\\[0.35em]
	& 
	\hspace{+150pt}=\int\limits_\omega \langle\overline{\vect f_3} \rangle(\hat{x}) {\theta_3}(\hat{x}) \,d\hat{x}\quad\qquad \forall{\theta_3} \in H_{\gamma_{\rm D}}^2(\omega), \\[0.35em]
	& \int\limits_{I}\int\limits_{Y_0} \C_0(y) \sym\widetilde{\nabla}_{\delta}\, \mathring{\vect u}(x,y) : \sym\widetilde{\nabla}_{\delta}\, \vect\xi(x_3,y) \,dy\,dx_3
	+\lambda \int\limits_{I}\int\limits_{Y_0}
	\rho_0(y)\funcB(\hat{x})
	\cdot \mathring{\vect\xi}_3(x_3,y) \,dy\,dx_3 \\[0.35em]
	&\hspace{2cm} +\lambda \int\limits_{I}\int\limits_{Y_0}\rho_0(y) \mathring{\vect u}(x,y) \cdot \mathring{\vect \xi}(x_3,y) \,dy\,dx_3\\[0.4em]
	&\hspace{2cm}=
	\int\limits_{I}\int\limits_{Y_0} 
	\vect f(x,y)
	\cdot \mathring{\vect \xi}(x_3,y) \,dy\,dx_3\quad\qquad 
	 \forall\mathring{\vect \xi} \in H^1_{00}(I \times Y_0;\R^3),\ \  \textrm{a.e.\,} \hat{x} \in \omega.
	\end{split}
	\end{equation}
	 In the case when the strong two-scale convergence holds in \eqref{konvsila00}, one has 
	 \[
	 \vect{u}^{\epsh} \strongdrtwoscale (0,0,\funcB)^\top+\mathring{\vect u}.
	 \] 
	
\end{theorem}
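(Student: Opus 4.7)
The plan is to combine the compactness provided by Proposition~\ref{compactnessanotherscaling} with a passage to the limit in (\ref{resolventproblemstarting}) using two adapted families of oscillating test functions, one capturing the Kirchhoff--Love macroscopic kinematics together with the matrix corrector, and the other localising on the soft inclusions. Since \eqref{konvsila00} implies in particular that $(\vect f^\epsh)_{h>0}$ is bounded in $L^2(\Omega;\R^3)$, Proposition~\ref{compactnessanotherscaling}(1) yields a uniform bound on $h^{-2}a_\epsh(\vect u^\epsh,\vect u^\epsh)+\|\vect u^\epsh\|^2_{L^2}$, and Proposition~\ref{compactnessanotherscaling}(2) then produces, along a subsequence, limits $\vect\funcA\in H^1_{\gamma_{\rm D}}(\omega;\R^2)$, $\funcB\in H^2_{\gamma_{\rm D}}(\omega)$, $\funcC\in{\mathfrak C}_\delta(\Omega\times Y)$, $\mathring{\vect u}\in V_{2,\delta}(\Omega\times Y_0)$ for which the convergences (\ref{ds1}) hold.

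For the macroscopic equation I would test (\ref{resolventproblemstarting}) with
\[
\vect v^{(1)}:=\bigl(h\theta_{*1}-hx_3\partial_1\theta_3,\,h\theta_{*2}-hx_3\partial_2\theta_3,\,\theta_3\bigr)^{\!\top}+h\epsh\,\vect\varphi(\hat x,x_3,\hat x/\epsh),
\]
where $\theta_*\in H^1_{\gamma_{\rm D}}(\omega;\R^2)$, $\theta_3\in H^2_{\gamma_{\rm D}}(\omega)$ and $\vect\varphi$ is smooth, $Y$-periodic, and compactly supported in $\omega$. A direct computation gives $h^{-1}\simgrad_h\vect v^{(1)}\to\iota(\simgrad_{\hat x}\theta_*-x_3\nabla_{\hat x}^2\theta_3)+\sym\widetilde{\nabla}_\delta\vect\varphi$ in the dimension-reduction two-scale sense, while $\simgrad_h\vect v^{(1)}=O(h)$ pointwise; since $h^{-2}\mu_h^2=\epsh^2$ and $\|\epsh\simgrad_h\mathring{\vect u}^\epsh\|_{L^2}\le C$, the high-contrast part of the bilinear form contributes only $o(1)$ against this test, and the in-plane components of $\vect v^{(1)}$ enter the inertial and load terms with an extra factor $h$ so that only $\theta_3$ survives there. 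Passing to the limit yields
\begin{align*}
&\int_\Omega\!\int_{Y_1}\C_1\bigl[\iota(\simgrad_{\hat x}\vect\funcA-x_3\nabla_{\hat x}^2\funcB)+\funcC\bigr]:\bigl[\iota(\simgrad_{\hat x}\theta_*-x_3\nabla_{\hat x}^2\theta_3)+\sym\widetilde{\nabla}_\delta\vect\varphi\bigr]\,dy\,dx\\
&\hspace{3cm}+\lambda\!\int_\omega\bigl(\langle\rho\rangle\funcB+\langle\rho_0\overline{\mathring{u}_3}\rangle\bigr)\theta_3\,d\hat x=\int_\omega\langle\overline{f_3}\rangle\theta_3\,d\hat x,
\end{align*}
so that setting $\theta_*=\theta_3=0$ and varying $\vect\varphi$ identifies $\funcC(\hat x,\cdot,\cdot)$ as the unique minimiser in \eqref{tensorminimization} with $\vect A=\simgrad_{\hat x}\vect\funcA(\hat x)$, $\vect B=\nabla_{\hat x}^2\funcB(\hat x)$, whereupon the polarisation of $\C^{\rm hom}_\delta$ recovers the first identity in (\ref{realtimehigherordersystem}); the additional branch $\theta_3=0$ determines $\vect\funcA=\vect\funcA^{\funcB}$.

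For the inclusion equation I would test with $\vect v^{(2)}:=\mathring{\vect\xi}(\hat x,x_3,\hat x/\epsh)\phi(\hat x)$, where $\mathring{\vect\xi}\in C^\infty_{00}(I\times Y_0;\R^3)$ and $\phi\in C^\infty_c(\omega)$; extended by zero this is an admissible element of $H^1_{\Gamma_{\rm D}}(\Omega;\R^3)$, and $\simgrad_h\vect v^{(2)}$ is supported on the inclusions with $\epsh\simgrad_h\vect v^{(2)}\to\phi\,\sym\widetilde{\nabla}_\delta\mathring{\vect\xi}$ two-scale. The elastic term then limits to $\int_\omega\phi\!\int_I\!\int_{Y_0}\C_0\sym\widetilde{\nabla}_\delta\mathring{\vect u}:\sym\widetilde{\nabla}_\delta\mathring{\vect\xi}$, the $\tilde{\vect u}^\epsh$-part of the inertial term reduces (via $\tilde u_\alpha^\epsh\to0$ strongly in $L^2$ for $\alpha=1,2$ and $\tilde u_3^\epsh\to\funcB$ strongly in $L^2$) to $\lambda\int_\omega\phi\!\int_I\!\int_{Y_0}\rho_0\funcB\mathring{\xi}_3$, the $\mathring{\vect u}^\epsh$-part yields $\lambda\int_\omega\phi\!\int_I\!\int_{Y_0}\rho_0\mathring{\vect u}\cdot\mathring{\vect\xi}$, and the load contributes $\int_\omega\phi\!\int_I\!\int_{Y_0}\vect f\cdot\mathring{\vect\xi}$ by \eqref{konvsila00}. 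Arbitrariness of $\phi$ delivers the second identity in (\ref{realtimehigherordersystem}) pointwise almost everywhere in $\hat x$. The coupled limit system is a resolvent equation for the positive self-adjoint operator $\hat{\mathcal A}_\delta$ associated with the form \eqref{missaglic2} (after $\vect\funcA$ is eliminated in favour of $\funcB$), hence uniquely solvable, so the full sequence---not merely the extracted subsequence---converges to its solution.

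For the final assertion, under the strong two-scale convergence of $\vect f^\epsh$, I would test (\ref{resolventproblemstarting}) with $\vect v=\vect u^\epsh$ to obtain $h^{-2}a_\epsh(\vect u^\epsh,\vect u^\epsh)+\lambda(\rho^\epsh\vect u^\epsh,\vect u^\epsh)=(\vect f^\epsh,\vect u^\epsh)$, pass to the limit in the right-hand side using strong two-scale convergence of $\vect f^\epsh$ paired with weak two-scale convergence of $\vect u^\epsh$, and match the result with the limit resolvent system tested against $(0,0,\funcB)^{\!\top}+\mathring{\vect u}$; this supplies the energy-convergence hypothesis of Proposition~\ref{compactnessanotherscaling}(3), and hence the claimed strong two-scale convergence $\vect u^\epsh\strongdrtwoscale(0,0,\funcB)^{\!\top}+\mathring{\vect u}$. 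The main technical obstacle throughout is the simultaneous bookkeeping of the three competing small parameters $h$, $\epsh$ and $h/\epsh\to\delta$ inside $\simgrad_h$, together with the contrast factor $\mu_h^2=\epsh^2h^2$, so that each term in the expanded bilinear form is correctly classified as either a leading-order contribution to the limit equations or a vanishing remainder---in particular one must verify that the high-contrast elastic contribution on inclusions does not feed into the macroscopic identity at this scaling, so that the coupling between $\funcB$ and $\mathring{\vect u}$ in (\ref{realtimehigherordersystem}) is driven entirely by the inertial term.
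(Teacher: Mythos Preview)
Your proposal is correct and follows essentially the same approach as the paper. The paper's own proof is a one-line pointer back to the argument for Theorem~\ref{thmivan111}, and you have correctly adapted that argument to the present scaling $\mu_h=\epsh h$, $\tau=2$: the same Kirchhoff--Love test field with an $h\epsh$-scaled oscillating corrector, the same inclusion-supported test field, the same identification of $\funcC$ via the cell problem \eqref{tensorminimization}, and the same energy-matching route to strong two-scale convergence via Proposition~\ref{compactnessanotherscaling}(3). The only cosmetic difference is that the paper bundles the macroscopic, corrector, and inclusion test functions into a single family, whereas you separate them into $\vect v^{(1)}$ and $\vect v^{(2)}$; this changes nothing of substance. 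One small slip: in your definition of $\vect v^{(2)}$ you write $\mathring{\vect\xi}(\hat x,x_3,\hat x/\epsh)$ while declaring $\mathring{\vect\xi}\in C^\infty_{00}(I\times Y_0;\R^3)$, so the first argument is spurious---you mean $\mathring{\vect\xi}(x_3,\hat x/\epsh)$.
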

\RRR 
\begin{remark} 
The limit problem \eqref{realtimehigherordersystem} can again be written as the resolvent problem on $\{0\}^2 \times L^2(\omega)+L^2(\Omega \times Y_0;\R^3)$: 
$$  (\hat{\mathcal{A}}_{\delta}+\lambda \mathcal{I})\vect u=\bigl( S_2 (P_{\delta,\infty} \vect f)_1,S_2 (P_{\delta,\infty} \vect f)_2, (P_{\delta,\infty} \vect f )_3  \bigr)^\top, \quad \vect u= (0,0,\funcB)^\top+\mathring{\vect u}$$
which is again the general desired structure.
\end{remark} 
\BBB
\begin{remark} 
	Under Assumption \ref{assumivan1}~(1), the first equation in \eqref{realtimehigherordersystem} decouples from the second (see Remark \ref{remigor1}) and one has 
	\begin{equation*} 
	\begin{aligned}	& 
	\vect \funcA=0,\\[0.2em]
	&
	\int\limits_{\omega}\C^{\rm bend}_{\delta}\nabla_{\hat{x}}^2\funcB(\hat{x}):\nabla_{\hat{x}}^2{\theta_3}(\hat{x}) \,d\hat{x}
	+ 
	\lambda \int\limits_\omega\bigl(\langle \rho \rangle \funcB(\hat{x})+\langle \rho_0\overline{\mathring{u}_3} \rangle(\hat{x})\bigr) {\theta_3}(\hat{x}) \,d\hat{x}=
	\int\limits_\omega \langle\overline{\vect f_3} \rangle(\hat{x}) {\theta_3}(\hat{x}) \,d\hat{x}\qquad \forall{\theta_3} \in H_{\gamma_{\rm D}}^2(\omega). 
	\end{aligned} 
	\end{equation*}	
\end{remark} 		
	
In the following sections we will analyse only those two cases for each regime when there is a coupling between the deformations inside and outside the inclusions. 

\subsubsection{Asymptotic regime $h \ll \varepsilon_h:$ ``very thin" plate}
\label{hlleps}

Throughout this section, we additionally assume that the set $Y_0$ has $C^{1,1}$ boundary, to ensure the validity of  
some auxiliary extension results, see Appendix \ref{extension_app}.

\vspace{+2.4ex}
\noindent\textbf{A. ``Membrane" scaling: $\mu_h=\epsh,\tau=0$}
\vspace{1.4ex}

Similarly to Part B of Section \ref{sectionheps}, where the membrane scaling is discussed for the regime $h\sim\varepsilon_h,$ we define the following objects \RRR using the limit resolvent in Theorem \ref{rucak35} below  (expression  \eqref{jdbaprva} for the limit resolvent)   \BBB:
\begin{itemize}
	\item
	For each $\kappa \in [0,\infty]$, a form $a_{0,\kappa}:\bigl(V_{1,0,\kappa}(\omega\times Y)+V_{2,0}(\Omega \times Y_0)\bigr)^2 \to \R$ and the associated operator $\mathcal{A}_{0,\kappa}$ on the space $V_{0,\kappa}(\Omega \times Y),$ analogous to  $a_{\delta,\infty}$ and $\mathcal{A}_{\delta,\infty}$ of Part B, Section \ref{sectionheps}. 	\RRR In this way the limit problem \eqref{jdbaprva} can be written in the form
	$$(\mathcal{A}_{0,\kappa} +\lambda \mathcal{I})\vect u= P_{\delta,\kappa}\vect f,\quad \vect u=((\vect \funcA,\funcB)^\top+\mathring{\vect u}); $$
	\BBB
	\item
	A form 
	$$\tilde{a}_0: \bigl(H^1_{\gamma_{\rm D}} (\omega;\R^2)+L^2(\omega; H_0^1(Y_0;\R^2))\bigr)^2 \to \R$$ 
	and the associated operator $\tilde{\mathcal{A}}_{0}$ on  $L^2(\omega;\R^2)+L^2(\omega \times Y_0;\R^2)$
	(analogous to $\tilde{a}_{\delta}$ and  $\tilde{\mathcal{A}}_{\delta}$) --- these are correctly defined under Assumption \ref{assumivan1}(1);
	
	\item A bilinear form
	$a_0^{\rm memb}: \bigl(H_{\gamma_{\rm D}}^1(\omega;\R^2)\bigr)^2\to \R$ and the associated
	operator $\mathcal{A}_0^{\rm memb}$ on $L^2(\omega;\R^2)$ 
	(analogous to $a_{\delta}^{\rm memb}$ and $\mathcal{A}_{\delta}^{\rm memb}$);
	
	\item
	A bilinear form 
	$\tilde{a}_{00,0}: \bigl(H^1_0(Y_0,;\R^2) \bigr)^2 \to \R$ and the associated operator $\tilde{\mathcal{A}}_{00,0}$ on $L^2(Y_0;\R^2)$
	(analogous to $\tilde{a}_{00,\delta}$ and $\tilde{\mathcal{A}}_{00,\delta}$);
	
	\item
	Functions $\beta_0$, $\tilde{\beta}^{\rm memb}_0,$ by analogy with $\beta_{\delta}$, $\tilde{\beta}^{\rm memb}_{\delta};$
	\item
	A set $\sigma(\tilde{\mathcal{A}}_{00,0})',$ by analogy with $\sigma(\tilde{\mathcal{A}}_{00,\delta})'.$ 
\end{itemize}
\RRR We do not write these definitions explicitly, since we assume their definition is natural. \BBB
The following proposition provides a compactness result for solutions to (\ref{resolventproblemstarting}). 
\begin{proposition} \label{propcompactregime11}
	Suppose that $\delta=0$, $\mu_h=\epsh$,  $\tau=0$. The following statements hold: 
	\begin{enumerate} 
		\item 
		There exists $C>0$, independent of $h$,
		such that for a sequence $(\vect f^{\epsh})_{h>0}\subset L^2(\Omega;\R^3)$ of load densities and the corresponding solutions $\vect u^{\epsh}$ to the problem \eqref{resolventproblemstarting} one has
		  $$a_{\epsh}(\vect u^{\epsh},\vect u^{\epsh})+\|\vect u^{\epsh}\|^2_{L^2}\leq C \|\vect f^{\epsh}\|^2_{L^2}.$$ 
		\item If $$ \limsup_{h  \to 0} \left(a_{\epsh}(\vect u^{\epsh},\vect u^{\epsh})+\|\vect u^{\epsh}\|^2_{L^2}\right)<\infty, \quad (\vect u^{\epsh})_{h>0} \subset H^1_{\Gamma_{\rm D}}(\Omega;\R^3),$$  then
		there exist  $({\vect \funcA},\funcB)^{\top}\in V_{1,0,\kappa}(\omega\times Y)$, $\vect{\varphi}_1 \in L^2(\omega;\dot{H}^1(\mathcal{Y};\R^2))$, $\varphi_2 \in L^2(\omega;\dot{H}^2(\mathcal{Y}))$, $\mathring{\vect u} \in V_{2,0}(\Omega \times Y_0)$, $\mathring{\vect g} \in L^2(\Omega \times Y;\R^3)$,  $\mathring{\vect g}_{|\Omega \times Y_1}=0$  such that  (up to a subsequence)
	\begin{equation}\label{nada100000}
	\begin{split}  
	\vect u^\epsh & = \Tilde{\vect u}^\epsh + \mathring{\vect u}^\epsh, \quad  \Tilde{\vect u}^{\epsh},\mathring{\vect u}  \in H^1_{\Gamma_{\rm D}}(\Omega;\R^3),\quad\mathring{\vect u}^\epsh |_{\Omega_1^{\varepsilon_h}} = 0, \\
	\tilde{\vect u}^\epsh_*   &{\,\xrightarrow{L^2}\,} 
	\vect{\funcA},\\
	\tilde{\vect u}^\epsh_3  &\drtwoscale  \begin{cases} \funcB(\hat{x}),&  
	\quad\kappa=\infty,\\[0.25em]
	\funcB(\hat{x},y), &
	\quad \kappa \in [0,\infty), 
	\end{cases} 
	\end{split} 
	\end{equation} 
	\begin{equation*} 
	\begin{split}
	\simgrad_h \tilde{\vect u}^\epsh(x) & \drtwoscale  \begin{cases} \iota\left(\simgrad_{\hat{x}}\vect{\funcA}(\hat{x})\right)+\funcC_0(\vect{\varphi_1}, \varphi_2, \vect g)(x,y),&\quad
		 \kappa=\infty, \\[0.25em]
	\iota\left(\simgrad_{\hat{x}}\vect{\funcA}(\hat{x})\right)+\funcC_0(\vect{\varphi_1}, \kappa \funcB, \vect g)(x,y),&\quad
	\kappa \in (0,\infty),\\[0.25em]
	\iota\left(\simgrad_{\hat{x}}\vect{\funcA}(\hat{x})\right)+
	\funcC_0(\vect{\varphi_1}, 0, \vect g)(x,y), &\quad
	 \kappa=0,
	\end{cases} \\[0.2em]
	\mathring{\vect u}^{\epsh} &\drtwoscale 
	\mathring{\vect u}(\hat x,y), 
	\\[0.2em]
	\epsh\simgrad_h \mathring{\vect u}^\epsh & \drtwoscale  
	\funcC_0(\mathring{\vect u}_* , 0, \mathring{\mat g})(x,y).
	\end{split} 
	\end{equation*}
\item 	If, additionally to 2, one assumes that $$ \lim_{h  \to 0} \left(a_{\epsh}(\vect u^{\epsh},\vect u^{\epsh})+\|\vect u^{\epsh}\|^2_{L^2}\right)=a_{0,\kappa} ((\RRR \vect \funcA\BBB,\funcB)^\top+\mathring{\vect u},(\RRR \vect \funcA \BBB,\funcB)^\top+\mathring{\vect u})+\|(\vect \funcA,\funcB)^\top+\mathring{\vect u}\|^2_{L^2}, $$ where the form $a_{0,\kappa}$ is defined above, then one has
$$\vect u^{\epsh} \strongdrtwoscale (\RRR\vect \funcA\BBB,\funcB)^\top +\mathring{\vect{u}}.$$
\end{enumerate}
\end{proposition}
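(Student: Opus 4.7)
\textbf{Plan for Part 1 (a priori estimate).} The strategy is the standard test-with-solution argument combined with the key decomposition of displacements with bounded scaled symmetrised gradients from the Appendix. Testing \eqref{resolventproblemstarting} with $\vect v=\vect u^{\epsh}$, using the coercivity \eqref{coercivity} and the definition $\mu_h=\epsh$, one gets separate control of $\|\sym\nabla_h\vect u^{\epsh}\|^2_{L^2(\Omega_1^{\epsh})}$ and $\epsh^2\|\sym\nabla_h\vect u^{\epsh}\|^2_{L^2(\Omega_0^{\epsh})}$ plus $\lambda\|\vect u^{\epsh}\|^2_{L^2}$, bounded by $\|\vect f^{\epsh}\|_{L^2}\|\vect u^{\epsh}\|_{L^2}$. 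The decomposition $\vect u^{\epsh}=\tilde{\vect u}^{\epsh}+\mathring{\vect u}^{\epsh}$ with $\mathring{\vect u}^{\epsh}$ supported in the soft inclusions (see Appendix) yields a scaled Korn--Poincar\'e inequality on $\Omega$ for $\tilde{\vect u}^{\epsh}$ and a (rescaled) Korn--Poincar\'e inequality on each inclusion for $\mathring{\vect u}^{\epsh}$, giving $\|\vect u^{\epsh}\|^2_{L^2}\leq C\,a_{\epsh}(\vect u^{\epsh},\vect u^{\epsh})$. Absorbing into the left-hand side closes the estimate.

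\textbf{Plan for Part 2 (compactness and two-scale limits).} Using the bound from Part 1, we apply the Appendix decomposition to write $\vect u^{\epsh}=\tilde{\vect u}^{\epsh}+\mathring{\vect u}^{\epsh}$. For the stiff part, because $\tau=0$ (membrane scaling), the classical Griso/Le~Dret--Raoult-type dimension reduction compactness yields $\tilde{\vect u}^{\epsh}_*\to\vect\funcA$ in $L^2$ with $\vect\funcA\in H^1_{\gamma_{\rm D}}(\omega;\R^2)$, while the transverse component is only bounded. The subtlety is that now two small parameters $h$ and $\epsh$ interact: since $h\ll\epsh$, a Taylor expansion of $\tilde{u}_3^{\epsh}$ in $x_3$ combined with the scaled symmetrised gradient bound yields the $\kappa$-dependence of the limit of $\tilde u_3^{\epsh}$ --- when $\kappa=\infty$ the transverse limit loses its $y$ dependence, when $\kappa\in(0,\infty)$ the curvature term $\kappa\funcB$ appears inside $\funcC_0$, and when $\kappa=0$ it drops out. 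This is the dimension-reduction two-scale convergence machinery of Appendix \ref{twoscale_conv} applied with care to the distinct scales $h,\epsh,\epshtwo$.

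For the soft part, the bound $\epsh\|\sym\nabla_h\mathring{\vect u}^{\epsh}\|_{L^2}\leq C$ together with $\mathring{\vect u}^{\epsh}\vert_{\Omega_1^{\epsh}}=0$ implies, via a Korn inequality on $Y_0$ and the change of variables $y=\hat x/\epsh$, that $\mathring{\vect u}^{\epsh}\drtwoscale\mathring{\vect u}(\hat x,y)$ with $\mathring{\vect u}\in V_{2,0}(\Omega\times Y_0)$; the fact that within each inclusion the ``thickness-to-in-plane-period" ratio $h/\epsh$ tends to zero forces the limit to be $x_3$-independent in the in-plane components and to carry only an $L^2$ transverse component $\mathring u_3$. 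The corresponding limit of $\epsh\sym\nabla_h\mathring{\vect u}^{\epsh}$ has the bending-type structure encoded by $\funcC_0(\mathring{\vect u}_*,0,\mathring{\vect g})$, with $\mathring{\vect g}$ the genuinely free part supported in $Y_0$. Finally, the identification of the limit of $\sym\nabla_h\tilde{\vect u}^{\epsh}$ in the form $\iota(\sym\nabla_{\hat x}\vect\funcA)+\funcC_0(\vect\varphi_1,\kappa\funcB,\vect g)$ follows by testing against smooth oscillatory test functions of the form prescribed by the structure of $\funcC_0$, and passing to the limit using the two-scale machinery.

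\textbf{Plan for Part 3 (strong two-scale convergence).} This is the softer step. Having identified weak two-scale limits in Part 2, together with the norm convergence hypothesis
\[
\lim_{h\to 0}\bigl(a_{\epsh}(\vect u^{\epsh},\vect u^{\epsh})+\|\vect u^{\epsh}\|^2_{L^2}\bigr)=a_{0,\kappa}\bigl((\vect\funcA,\funcB)^\top+\mathring{\vect u},(\vect\funcA,\funcB)^\top+\mathring{\vect u}\bigr)+\bigl\|(\vect\funcA,\funcB)^\top+\mathring{\vect u}\bigr\|^2_{L^2},
\]
we apply the Hilbert-space characterisation of strong two-scale convergence (see Appendix \ref{twoscale_conv}): weak two-scale convergence together with convergence of the $L^2$ norms of all the pieces is equivalent to strong two-scale convergence. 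The coercivity from Proposition \ref{propvecer} (applied to $\C^{\rm hom,r}$, $\C_0^{\rm red}$) ensures that weak two-scale limits of the strain components and weak $L^2$ limits of the displacement components are jointly coercive, so the asserted energy equality upgrades each weak convergence of Part 2 to the corresponding strong one, delivering $\vect u^{\epsh}\strongdrtwoscale(\vect\funcA,\funcB)^\top+\mathring{\vect u}$.

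\textbf{Main obstacle.} The principal difficulty is Part 2, specifically the simultaneous identification of the limits of $\sym\nabla_h\tilde{\vect u}^{\epsh}$ and $\epsh\sym\nabla_h\mathring{\vect u}^{\epsh}$ across the three subregimes $\kappa=\infty$, $\kappa\in(0,\infty)$, $\kappa=0$. One must carefully separate the transverse ($x_3$-dependent) contributions from the in-plane oscillatory correctors, keep track of the coupling between $\funcB$ and the curvature mode through $\kappa$, and show that the free function $\vect g$ (respectively $\mathring{\vect g}$) is the appropriate piece that absorbs components of $\sym\nabla_h$ that cannot be reconstructed from in-plane gradients, all without an $x_3$-derivative control on the corrector. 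The analogous bending-regime results in the Appendix provide the template, but the membrane scaling here requires reworking the test-function class in $\funcC_0$.
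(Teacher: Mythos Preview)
Your Part~1 contains a false step: the inequality $\|\vect u^{\epsh}\|^2_{L^2}\le C\,a_{\epsh}(\vect u^{\epsh},\vect u^{\epsh})$ does not hold in this regime, because bending deformations $(-hx_3\partial_1 v,\,-hx_3\partial_2 v,\,v)^\top$ have $L^2$-norm of order one but energy of order $h^2$. Fortunately Part~1 is trivial once you drop this: testing \eqref{resolventproblemstarting} with $\vect u^{\epsh}$ gives $a_{\epsh}+\lambda c_1\|\vect u^{\epsh}\|^2_{L^2}\le \|\vect f^{\epsh}\|_{L^2}\|\vect u^{\epsh}\|_{L^2}$, and the $\lambda$-term alone absorbs the right-hand side. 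The paper records the decomposition estimates of Theorem~\ref{thmextreg1} and Remark~\ref{remivan20} here only because they are needed for Part~2.

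In Part~2 your mechanism for the $\kappa$-trichotomy (``Taylor expansion of $\tilde u_3^{\epsh}$ in $x_3$'') is not the one that works; the trichotomy is an \emph{in-plane} phenomenon, not a transverse one. The paper uses the Griso-type decomposition (Lemma~\ref{keydecompose}(3)) to write $\tilde u_3^{\epsh}=h^{-1}\varphi^{\epsh}+w^{\epsh}+\tilde\psi_3^{\epsh}$ with $(\varphi^{\epsh})$ bounded in $H^2(\omega)$, $(w^{\epsh})$ bounded in $H^1(\omega)$, and $\tilde\psi_3^{\epsh}\to 0$ in $L^2$. Since $h^{-1}\varphi^{\epsh}$ is bounded in $L^2$ while $\varphi^{\epsh}$ is bounded in $H^2$, the ratio $h/\epsh^2$ governs the two-scale limit of $\nabla^2_{\hat x}\varphi^{\epsh}$: when $\kappa=\infty$, Lemma~\ref{lemmadependenceony}(2) forces the limit of $h^{-1}\varphi^{\epsh}$ to be $y$-independent; when $\kappa\in[0,\infty)$, Lemma~\ref{lemmatwoscalecompact}(2) gives $\nabla^2_{\hat x}\varphi^{\epsh}\drtwoscale\kappa\nabla_y^2\funcB$, which is exactly how $\kappa\funcB$ enters $\funcC_0$. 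For the soft part you also need more than ``Korn on $Y_0$'': the regime $h\ll\epsh$ requires the dedicated extension Theorem~\ref{thmextreg1} (not the $h\sim\epsh$ one), which further splits $\mathring{\vect u}^{\epsh}$ into a Kirchhoff--Love term built from $\mathring v^{\epsh}\in H^2(\omega)$ plus a remainder $\mathring{\vect\psi}^{\epsh}$; one shows $\mathring v^{\epsh}\to 0$ in $L^2$ and then applies Lemma~\ref{lemmatwoscalecompact}(3) and Lemma~\ref{nada2}(1) to identify $\epsh\sym\nabla_h\mathring{\vect u}^{\epsh}\drtwoscale\funcC_0(\mathring{\vect u}_*,0,\mathring{\vect g})$. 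Part~3 is fine and matches the paper.
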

\begin{remark} \label{remzzzz1} 
	In the regimes $h \sim {\epshtwo}$ and $h \ll {\epshtwo}$ we are not able to identify the functions  $\funcB(\hat{x},y)$ and $\mathring{u}_3$ separately on $\omega \times Y_0$ (in the following theorem). However, we are able to identify their sum,  which is the only relevant object, since the third component of solution converges to their sum. Thus we artificially set $\funcB(\hat{x},y)=0$ on $\omega \times Y_0$, to have uniqueness of the solution of the limit problem.  In the case when $\funcB$ is a function of $\hat{x}$ only, the decomposition $\funcB(\hat x)+\mathring{u}_3(\hat{x},y)$ is unique in $L^2(\omega \times Y)$, since we know that $\mathring{u}_{3|\omega \times Y_1}=0$. 
\end{remark} 	
The limit resolvent problem for the model of homogenized plate is given by the following theorem. 
\begin{theorem}\label{rucak35}
	Let $\delta=0$, $\mu_h=\epsh$,  $\tau=0$ and suppose that the sequence of load densities converge as follows:
	\begin{equation}\label{rucak22} 
	\vect  f^\epsh \drtwoscale \vect f \in L^2(\Omega \times Y;\R^3).
	\end{equation}
Then the sequence of solutions   to the resolvent problem \eqref{resolventproblemstarting}  converges in the sense of 
\eqref{nada100000} to the  unique solution (see also Remark \ref{remzzzz1}) of the following problems:
 Find $(\vect \funcA,\funcB)^\top \in V_{1,0,\kappa}(\omega \times Y)$, $\mathring{\vect u} \in V_{2,0} (\Omega \times Y_0)$ such that
\begin{equation}
	\label{jdbaprva}
	\begin{aligned} 
		&\int\limits_{\omega}\C_{1}^{\rm memb,r}\simgrad_{\hat{x}} {\vect \funcA}(\hat{x}): \simgrad_{\hat{x}}{\vect \theta}_*(\hat{x}) \,d\hat{x}
		+ 
		\lambda \int\limits_\omega \bigl(\langle \overline{\rho} \rangle{\vect \funcA}(\hat{x})+\langle\rho_0 \mathring{\vect u}_*  \rangle\bigr) \cdot {\vect \theta}_*(\hat{x}) \,d\hat{x} \\		&\hspace{+120pt}=\int\limits_{\omega} 
		\langle\overline{ \vect{f}_* } \rangle (\hat{x}) 
		\cdot  {\vect\theta}_*(\hat{x})\,d\hat{x}\qquad \forall {\vect \theta}_*\in H_{\gamma_{\rm D}}^1(\omega;\R^2),\\ 
		&\int\limits_{Y_0} \C_{0}^{\rm memb,r}(y) \sym \nabla_y { \mathring{ \vect u}}_* (\hat{x},y): \sym \nabla_y{\mathring{\vect \xi}}_*(y)\,dy  +
		\lambda\int_{Y_0}\rho_0(y)\RRR \left(\vect \funcA(\hat{x})+\mathring{\vect u}_{*}(\hat x,y) \right)
		\cdot \mathring{\vect{\xi}}_*(y)\,dy,\\
		&\hspace{+120pt}=\int\limits_{Y_0} 
		\overline{ \vect f_{\!*} } (\hat{x},y) 
		\cdot  \mathring{\vect\xi}_*  (y) \,dy\qquad\forall\mathring{\vect \xi}_* \in H_0^1(Y_0;\R^2), \  \textrm{a.e.\,} \hat{x} \in \omega,  \\
			&\langle \rho \rangle\funcB (\hat{x}) +  \rho_0(y)\mathring{u}_{3} (\hat{x},y) = \lambda^{-1}P^0\overline {f}_3 (\hat{x},y),
			\qquad 
			 \kappa=\infty, 
			 \\[0.4em]
		&\left.\!\!\!\!\!\begin{array}{lll}	\dfrac{\kappa^2}{12} \int\limits_{Y_1}\C_{1}^r(y)\nabla^2_y \funcB(\hat x,y): \nabla_y^2 v(y)  \,dy 
			+ 
			\lambda \int\limits_{Y_1}\rho_1(y) \funcB(\hat{x},y) v(y) \,dy\\[1.2em]
			\hspace{+130pt}
			=\int\limits_{Y_1} 
			\overline{f}_3(\hat{x},y) v(y)  \, dy\qquad \forall v \in H^2(\mathcal{Y}),\  \textrm{a.e.\,} \hat{x} \in \omega. 
			\\[1.2em]
				\rho_0(y)\mathring{u}_3(\hat x,y)= \lambda^{-1}\overline{f}_3 (\hat x,y), \quad \funcB(\hat x,y)=0,\qquad 
				 y\in Y_0\end{array}\right\}\qquad 				
				\kappa \in (0,\infty),
			\\[0.6em] 
			&\rho_1(y)\funcB(\hat{x},y)+\rho_0(y)\mathring{u}_3(\hat x, y)=\lambda^{-1}\overline{f}_3 (\hat{x},y);\quad \funcB(\hat x,y)=0,\quad 
			y\in Y_0,\qquad
			\kappa=0.
		\end{aligned} 
		\end{equation} 	
If additionally we assume the strong two-scale convergence in \eqref{rucak22}, then we additionally have the strong two-scale convergence 
$$  \vect u^\epsh(x)   \strongdrtwoscale  (\vect \funcA,\funcB)^\top+ \mathring{\vect u}(x,y).$$
\end{theorem}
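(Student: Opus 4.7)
The plan is to combine the compactness result of Proposition \ref{propcompactregime11} with the construction of suitable recovery/test sequences, then pass to the limit in the variational identity \eqref{resolventproblemstarting} to derive each of the equations in \eqref{jdbaprva}. First, I would apply Proposition \ref{propcompactregime11} (parts 1 and 2) to a subsequence of solutions $\vect u^\epsh$, obtaining limits $\vect\funcA, \funcB, \vect\varphi_1, \varphi_2, \mathring{\vect u}, \mathring{\vect g}$ with the listed two-scale convergences. Coercivity of $\C_1$ on $\Omega^\epsh_1$ together with the uniform $L^2$ bound on $\vect f^\epsh$ gives the bound required in part 1; part 2 extracts the weak limits.

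Next, for the passage to the limit, I would construct test functions $\vect v^\epsh \in H^1_{\Gamma_{\rm D}}(\Omega;\R^3)$ as follows. To recover the first identity in \eqref{jdbaprva}, I take $\vect v^\epsh = (\vect\theta_*(\hat x) + \epsh \vect\psi_1(\hat x, \hat x/\epsh) + \epsh x_3\vect\psi_2(\hat x, \hat x/\epsh), 0)^\top$ for smooth $\vect\theta_*$ vanishing on $\gamma_{\rm D}$ and smooth $\hat x$-dependent correctors $\vect\psi_1 \in \dot H^1(\mathcal Y;\R^2)$, $\vect\psi_2 \in L^2(Y;\R^3)$ (which encode the reduction leading to $\C_1^{\rm memb,r}$); substituting into \eqref{resolventproblemstarting}, using the two-scale convergence of $\simgrad_h \tilde{\vect u}^\epsh$ and $\vect f^\epsh$, and minimizing over the correctors (i.e.\ choosing them to realize the $\C_{1}^{\rm memb,r}$ cell problem) yields the first equation. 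The second equation (inclusion membrane) is obtained by testing with $\vect v^\epsh = \chi_{Y_0}(\hat x/\epsh)(\mathring{\vect\xi}_*(\hat x, \hat x/\epsh), 0)^\top$ extended smoothly by zero; the factor $\epsh$ in the scaled gradient of $\mathring{\vect u}^\epsh$ together with the reduction in the $x_3$ variable produces $\C_0^{\rm memb,r}$.

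The out-of-plane equation requires a case distinction governed by $\kappa$. For $\kappa=\infty$, one tests with $\vect v^\epsh = (0,0, v(\hat x))^\top$ with $v$ supported away from $\gamma_{\rm D}$-issues; the bending contribution from $\Omega^\epsh_1$ vanishes in the limit because $\sym\nabla_h v^\epsh$ has a vanishing $(3,3)$ component and the plate bending scales like $h/\epsh^2 \to 0$, leaving the purely algebraic relation. For $\kappa \in (0,\infty)$, I take $\vect v^\epsh = (0,0,\chi_{Y_1}(\hat x/\epsh)v(\hat x/\epsh))^\top + (\text{correctors})$, extended via the $H^2$-extension operator described in the Appendix (this is where the $C^{1,1}$ regularity of $\partial Y_0$ is essential); the scaled second derivatives $\epsh^{-2}\nabla_y^2 v$ combine with $h^2$ from the bending reduction to give the factor $\kappa^2/12$ and the tensor $\C_1^r$ on $Y_1$. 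For $\kappa=0$ the bending term drops and only the algebraic relation remains. In each sub-case the inclusion part gives the algebraic identity for $\mathring{u}_3$ by testing with $(0,0,\mathring{\xi}_3(\hat x,\hat x/\epsh))^\top \chi_{Y_0}(\hat x/\epsh)$ and using that $\epsh^2\sym\nabla_h\mathring{\vect u}^\epsh$-type quadratic terms vanish in the limit relative to the mass term.

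Finally, uniqueness of the limit system follows from the coercivity of $\C_1^{\rm memb,r}$, $\C_0^{\rm memb,r}$ and $\C_1^r$ (Proposition \ref{propvecer}) together with $\lambda>0$, which makes the limit bilinear form strictly positive on $V_{1,0,\kappa}(\omega\times Y)+V_{2,0}(\Omega\times Y_0)$ (modulo the non-identifiable part addressed in Remark \ref{remzzzz1}); uniqueness pins down the whole-sequence convergence. Upgrading to strong two-scale convergence under strong convergence of $\vect f^\epsh$ is handled by part 3 of Proposition \ref{propcompactregime11}: testing the limit equation with the limit itself, using the two-scale convergence of loads and the already-proven weak convergence of $\vect u^\epsh$, one shows that the energy $a_\epsh(\vect u^\epsh,\vect u^\epsh)+\|\vect u^\epsh\|^2_{L^2}$ converges to $a_{0,\kappa}(\cdot,\cdot)+\|\cdot\|^2_{L^2}$ of the limit, which is precisely the hypothesis of part 3. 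The main obstacle is the case $\kappa\in(0,\infty)$: constructing the $H^2$-type recovery for the out-of-plane test function across the matrix--inclusion interface, reconciling the non-trivial $\nabla_y^2\funcB$ dependence on $Y_1$ with the forced vanishing $\funcB|_{Y_0}=0$, and rigorously justifying the limit of the bending term, which requires the $H^2$ extension technique from the Appendix and careful bookkeeping of the $\epsh, h$ scalings.
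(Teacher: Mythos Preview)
Your overall strategy---apply Proposition~\ref{propcompactregime11} for compactness, insert carefully chosen test functions into \eqref{resolventproblemstarting}, pass to the two-scale limit, invoke Proposition~\ref{propvecer} for uniqueness, and upgrade to strong convergence via part~3 of Proposition~\ref{propcompactregime11}---is exactly the paper's route. The gaps are in the test functions themselves, and they are not cosmetic.

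\textbf{Out-of-plane tests need the Kirchhoff--Love structure.} A test function of the form $(0,0,v(\hat x))^\top$ has $(\sym\nabla_h\vect v)_{\alpha 3}=\tfrac12\partial_\alpha v$, which is $O(1)$; paired with $\sym\nabla_h\tilde{\vect u}^\epsh$ on $\Omega_1^\epsh$ this produces a term involving the third-column correctors $g_\alpha$ that does \emph{not} vanish by any scaling argument. Worse, for an oscillating test $(0,0,\mathring\xi_3(\hat x,\hat x/\epsh))\chi_{Y_0}$ one gets $(\epsh\sym\nabla_h\vect v)_{\alpha 3}\to\tfrac12\partial_{y_\alpha}\mathring\xi_3$, so on the inclusion the elastic term has a non-trivial limit coupling to $\mathring{\vect g}$. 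The paper avoids all this by using Kirchhoff--Love-type ans\"atze: $(-hx_3\partial_1\theta,-hx_3\partial_2\theta,\theta)^\top$ for the macroscopic part and $\bigl(-\tfrac{h}{\epsh}x_3\partial_{y_1}v,-\tfrac{h}{\epsh}x_3\partial_{y_2}v,v\bigr)^\top$ for the oscillating part, which exactly cancel the $(\alpha,3)$ shear entries and leave an $(\alpha,\beta)$ block of order $h/\epsh^2\to\kappa$; this is what generates the $\tfrac{\kappa^2}{12}\int_{Y_1}\C_1^r\nabla_y^2\funcB:\nabla_y^2 v$ term when $\kappa\in(0,\infty)$ and makes it disappear otherwise. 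Also note that your sentence ``plate bending scales like $h/\epsh^2\to0$'' for the case $\kappa=\infty$ is backwards: $\kappa=\infty$ means $h/\epsh^2\to\infty$.

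\textbf{Third-column corrector tests need the $h$-scaling, not $\epsh$.} Your term $\epsh x_3\vect\psi_2$ yields $(\sym\nabla_h\vect v)_{\alpha3}=\tfrac{\epsh}{2h}\psi_{2,\alpha}$, which blows up since $\epsh/h\to\infty$ in this regime. The right recovery for the correctors $\vect g$ (stiff part) and $\mathring{\vect g}$ (inclusion) is $h\int_0^{x_3}\vect r(x,\hat x/\epsh)\,dx_3$ and $\tfrac{h}{\epsh}\int_0^{x_3}\mathring{\vect r}(x,\hat x/\epsh)\,dx_3$ respectively; these give $\sym\nabla_h$-entries of order one in the third column and $o(1)$ elsewhere, and varying $\vect r,\mathring{\vect r}$ yields the cell equations needed to reduce $\C_1,\C_0$ to $\C_1^{\rm memb,r},\C_0^{\rm memb,r}$ via Remark~\ref{nada10001}. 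You also omit the $H^2$-type corrector $\psi\in C^1_{\rm c}(\omega;C^1(\mathcal Y))$ (appearing in the paper as $\epsh(-x_3\partial_{y_1}\psi,-x_3\partial_{y_2}\psi,\tfrac1h\psi)$) needed to span the $-x_3\nabla_y^2\varphi_2$ component of $\mathfrak C_0$.
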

\begin{corollary}  
	\label{rucak45} 
	Under the Assumption \ref{assumivan1}~(1) and provided $(\vect f^{\epsh})_{h>0}\subset L^{2, \rm memb}(\Omega^h,\R^3)$, in addition to the convergences stated in Proposition \ref{propcompactregime11}, we have
	$$ \tilde{u}_3^{\epsh} \xrightharpoonup{H^1} 0, \quad \mathring{u}_3^{\epsh} \xrightarrow{L^2} 0,$$ 
	and thus we also have that $\funcB=\mathring{u}_3=0$ in the limit equations \eqref{jdbaprva}.
\end{corollary}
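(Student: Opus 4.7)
The plan is to exploit the $x_3$-parity of $\vect u^\epsh$ inherited from the load. Under Assumption \ref{assumivan1}(1) the subspace $L^{2,\rm memb}(\Omega;\R^3)$ is invariant for $\mathcal{A}_\epsh$, so $\vect f^\epsh\in L^{2,\rm memb}$ forces the solution $\vect u^\epsh$ of \eqref{resolventproblemstarting} into $L^{2,\rm memb}$ as well: $u_1^\epsh,u_2^\epsh$ are even and $u_3^\epsh$ is odd in $x_3$. I would then replace the decomposition $\vect u^\epsh=\tilde{\vect u}^\epsh+\mathring{\vect u}^\epsh$ from Proposition \ref{propcompactregime11} by its ``membrane symmetrisation'' $\tilde{\vect u}^\epsh\mapsto \tfrac12(\tilde{\vect u}^\epsh+S\tilde{\vect u}^\epsh)$, $\mathring{\vect u}^\epsh\mapsto\tfrac12(\mathring{\vect u}^\epsh+S\mathring{\vect u}^\epsh)$, where $S$ is the involution $S(v_1,v_2,v_3)(\hat x,x_3):=(v_1(\hat x,-x_3),v_2(\hat x,-x_3),-v_3(\hat x,-x_3))$. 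Because $\vect u^\epsh=S\vect u^\epsh$ the sum still reconstructs $\vect u^\epsh$; the support condition $\mathring{\vect u}^\epsh|_{\Omega_1^\epsh}=0$ is preserved since $\Omega_0^\epsh$ is invariant under $x_3\mapsto -x_3$; the energy bound carries over; and by uniqueness of two-scale limits every convergence in \eqref{nada100000} remains valid for the symmetrised decomposition, whose third components are now odd in $x_3$.

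This parity is the crux of the corollary: the two-scale limits $\funcB(\hat x)$ (or $\funcB(\hat x,y)$) and $\mathring u_3(\hat x,y)$ appearing in \eqref{nada100000} are all independent of $x_3$, and a two-scale limit of functions odd in $x_3$ must itself be odd in $x_3$; hence these limits vanish, so $\funcB\equiv 0$ and $\mathring u_3\equiv 0$ in \eqref{jdbaprva} for every admissible value of $\kappa$.

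Upgrading the convergence modes is then a matter of exploiting the available bounds. By Proposition \ref{propcompactregime11}(1) and Korn's inequality on $\Omega$ with partial Dirichlet conditions on $\Gamma_{\rm D}$, $\tilde{\vect u}^\epsh$ is bounded in $H^1_{\Gamma_{\rm D}}(\Omega;\R^3)$, so $\tilde u_3^\epsh\weak 0$ in $H^1$ along a subsequence and, by uniqueness of the limit $\funcB=0$, along the whole sequence. For the strong $L^2$ convergence of $\mathring u_3^\epsh$, observe that the $(3,3)$-entry of the $L^2$-bounded quantity $\epsh\sym \nabla_h \mathring{\vect u}^\epsh$ equals $(\epsh/h)\partial_3\mathring u_3^\epsh$; since $\delta=0$ this forces $\|\partial_3\mathring u_3^\epsh\|_{L^2}=O(h/\epsh)\to 0$. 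As $\mathring u_3^\epsh$ is odd in $x_3\in I=(-1/2,1/2)$ it has zero $x_3$-mean, so the one-dimensional Poincar\'e--Wirtinger inequality applied pointwise in $\hat x$ and integrated in $\hat x$ yields $\|\mathring u_3^\epsh\|_{L^2(\Omega)}\le C\|\partial_3\mathring u_3^\epsh\|_{L^2(\Omega)}\to 0$.

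The main obstacle I anticipate is the book-keeping in the symmetrisation step: one must check that each of the (five) convergences in \eqref{nada100000}, including those of the scaled symmetric gradients of $\tilde{\vect u}^\epsh$ and of $\mathring{\vect u}^\epsh$, is preserved under averaging with $S$, and that this is compatible with all three regimes $\kappa=\infty,$ $\kappa\in(0,\infty)$, $\kappa=0$ simultaneously. This is essentially automatic because $S$ intertwines $\sym \nabla_h$ with itself up to sign changes that are precisely consistent with the membrane subspace, and it acts on the correctors $\vect\varphi_1,\varphi_2,\vect g,\mathring{\vect u},\mathring{\vect g}$ by sending them to their even/odd parts — but identifying the resulting (necessarily membrane-type) correctors with the ones appearing in the conclusion of Proposition \ref{propcompactregime11} requires a careful but routine check.
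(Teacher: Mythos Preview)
Your proof is correct and follows essentially the same route as the paper: membrane symmetry forces the third components to be odd in $x_3$, whence the $x_3$-independent two-scale limits $\funcB$ and $\mathring{u}_3$ vanish, and the odd-in-$x_3$ Poincar\'e bound $\|\mathring u_3^\epsh\|_{L^2}\le C\|\partial_3\mathring u_3^\epsh\|_{L^2}=Ch\|(\sym\nabla_h\mathring{\vect u}^\epsh)_{33}\|_{L^2}\le C h/\epsh\to 0$ gives the strong convergence. The only difference is in how you obtain the parity of the two pieces: you symmetrise the decomposition \emph{a posteriori} with the involution $S$, whereas the paper observes (Remarks~\ref{remivan}, \ref{remivan10}, \ref{remivan11}) that the particular extension $E^{\epsh}$ of Theorem~\ref{thmextreg1} already preserves $L^{2,\rm memb}$, so $\tilde{\vect u}^\epsh$ and $\mathring{\vect u}^\epsh$ inherit the symmetry directly and your ``main obstacle'' (re-checking \eqref{nada100000} for the symmetrised pair) simply does not arise.
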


\vspace{+2.5ex}
\noindent\textbf{B. ``Bending" scaling: $\mu_h={\epshtwo},\tau=2$}
\vspace{+1.5ex}

In the regime $h\ll \epsh$ the band gap structure of the spectrum for the spectrum of order $h^2$ appears when we scale the coefficients in  inclusions with ${{\epshfour}}$. This can be seen from the a priori estimates obtained in Appendix. 

We define, for every $\vect f \in L^2(\Omega;\R^3),$ 
\begin{align}\label{pomoc5} \nonumber
	\mathcal{F}_0(\vect f)\bigl((0,0,\theta)^\top+\mathring{\vect \xi}\bigr)&:= \int_{\omega} \langle \overline{f}_3 \rangle\theta\, d \hat{x}+\int\limits_{\omega} \int\limits_{Y_0} 
	\overline{ \vect f_{\!*} } (\hat{x},y) 
	\cdot  \mathring{\vect\xi}_*   (x,y) \,dy \,d \hat{x},\\
	& + \int\limits_{\omega}\int\limits_{Y_0} \overline{f}_3 (\hat{x},y)\,\mathring{{\xi}}_3 (\hat{x},y) \, dy\,d\hat{x}
	-\int\limits_{\omega}\int\limits_{Y_0} \overline{ x_3 \vect f}_*  (\hat{x},y) \cdot \nabla_y{\mathring{\xi}}_3(\hat{x},y) \,dy\, d\hat{x},\\ \nonumber  & \theta \in  L^2(\omega),\quad \mathring{\vect \xi}=(\mathring{\vect \xi}_*,\xi_3) \in L^2\bigl(\omega; H_0^1(Y_0;\R^2)\times H_0^2(Y_0)\bigr).
\end{align}
 Furthermore, in connection with the limit problem described in Theorem \ref{thmivan113} (\RRR expression \eqref{nada1000} below)\BBB, we introduce several objects:
 \begin{itemize}
 	\item 
 A bilinear form 
 $$  
 a^{\rm hom}_0: \bigl(H^2_{\gamma_{\rm D}}(\omega)\bigr)^2 \to \R
 $$ 
 and the associated operator $\mathcal{A}_0^{\rm hom}$ on $L^2(\omega),$ analogous to $a_{\delta}^{\rm hom}$ and $\mathcal{A}_{\delta}^{\rm hom}$ of Part A, Section \ref{sectionheps} \RRR (notice that here the situation is simpler since necessarily $\vect \funcA=0$)\BBB;
\item 
The bilinear form
\begin{equation*}
	\hat{a}_{00,0}( \mathring{u}, \mathring{\xi}) :=  \frac{1}{12}\int\limits_{Y_0} \C_{0}^{\rm bend,r}(y) \sym\nabla_y^2 \mathring{u} : \sym\nabla^2 \mathring{\xi}\,dy,  \qquad \hat{a}_{00,0} : \bigl( H_{0}^2(Y_0)\bigr)^2  \to \R
\end{equation*} 
and the associated ``Bloch operator" $\hat{\mathcal{A}}_{00,0}$ on $L^2(Y_0).$   
\item
A scalar Zhikov function $\hat{\beta}_0$ defined via the operator $\hat{\mathcal{A}}_{00,0}$  (analogous to $\hat{\beta}_{\delta}$ defined via the operator $\mathcal{A}_{00,\delta},$ see Part C, Section \ref{sectionheps});
\item
A set $\hat{\sigma}(\hat{\mathcal{A}}_{00,0})$ (analogous to $\hat{\sigma}(\mathcal{A}_{00,\delta}));$ 
\item 
The bilinear form
\begin{equation*} 
	\hat{a}_0\bigl(\funcB+\mathring{u},\theta+\mathring{\xi}\bigr)=a_0^{\rm hom}(\funcB,\theta)+\int_{\omega}\hat{a}_{00,0}(\mathring{u},\mathring{\xi}), \quad\qquad \hat{a}_{0}:\Bigl(H^2_{\gamma_{\rm D}}(\omega)+L^2(\omega;H_0^2(Y_0))\Bigr)^2 \to \R.   
\end{equation*}
and the corresponding operator $\hat{\mathcal{A}}_0$ on $L^2(\omega)+L^2(\omega;H_0^2(Y_0)).$ 
\end{itemize}
 
The following proposition gives a suitable compactness result for the regime $h\ll\varepsilon_h.$ 
\begin{proposition} \label{propcompactregime}
	Let   $\delta=0$, $\mu_h={\epshtwo}$, $\tau=2$. The following statements hold: 
	\begin{enumerate} 
		
		\item  There exists $C>0$, independent of $h$, such that for any sequence  $(\vect f^{\eps_h})_{h>0} \subset  L^2(\Omega;\R^3)$ of load densities and the corresponding solutions $\vect u^{\epsh}$ to the problem \eqref{resolventproblemstarting}  one has $$h^{-2}a_{\epsh}(\vect u^{\epsh},\vect u^{\epsh})+\|\vect{u}^{\epsh}\|^2_{L^2}\leq C \bigl\|\pi_{h/\epsh}\vect f^{\epsh}\bigr\|^2_{L^2(\Omega;\R^3)}.$$

	\item If 
	$$
	\limsup_{h  \to 0}\left( h^{-2}a_{\epsh}(\vect u^{\epsh},\vect u^{\epsh})+\|\vect{u}^{\epsh}\|^2_{L^2}\right)< \infty, \quad (\vect u^{\epsh})\subset H^1_{\Gamma_{\rm D}}(\Omega;\R^3),
	$$  
	then there exist ${\vect \funcA} \in H_{\gamma_{\rm D}}^1(\omega;\R^2)$, $\funcB \in H_{\gamma_{\rm D}}^2(\omega)$, $\funcC \in {\mathfrak C}_0(\Omega \times Y)$, $\mathring{u}_{\alpha} \in L^2(\omega;H_0^1(Y_0)),$ $\alpha=1,2,$  $\mathring{u}_3 \in L^2(\omega; H_0^2(Y_0)) $,   $\mathring{\vect g} \in L^2(\Omega \times Y;\R^3)$, $\mathring{\vect g}_{|\Omega \times Y_1}=0$  such that (up to subsequence)
	\begin{equation}\label{nada100} 
	\begin{split}
	\vect u^\epsh & = \Tilde{\vect u}^\epsh + \mathring{\vect u}^\epsh, \quad  \Tilde{\vect u}^{\epsh},\mathring{\vect u}^{\epsh}  \in H^1_{\Gamma_{\rm D}}(\Omega;\R^3),\quad \mathring{\vect u}^\epsh |_{\Omega_1^{\varepsilon_h}} = 0, \\[0.3em]
	\pi_{1/h} \Tilde{\vect u}^\epsh & {\,\xrightarrow{L^2}\,}\bigl(
	{\funcA}_1(\hat{x})-x_3\partial_1 \funcB(\hat{x}), 
	{\funcA}_2(\hat{x})-x_3\partial_2 \funcB(\hat{x}),
	\funcB(\hat{x})\bigr)^\top, 
	\\[0.3em]
		\pi_{\epsh/h} \mathring{\vect u}^\epsh &\drtwoscale \bigl(
	\mathring{u}_1(\hat x,y)-x_3 \partial_{y_1} \mathring{u}_3 (\hat{x},y),  \mathring{u}_2(\hat{x},y)-x_3 \partial_{y_2} \mathring{u}_3(\hat{x},y), 
	\mathring{u}_3(\hat{x},y)\bigr)^\top
	\\[0.3em]
	h^{-1}\simgrad_h \Tilde{\vect u}^\epsh(x) & \drtwoscale \, \iota\bigl(\simgrad_{\hat{x}}\vect{\funcA}(\hat{x})-x_3 \nabla^2_{\hat{x}}\funcB(\hat{x})\bigr) + \funcC(x,y),
	\\[0.3em]
	\epshtwo{h}^{-1}\simgrad_h \mathring{\vect u}^\epsh & \drtwoscale  
	\funcC_0(\mathring{\vect u}_* , \mathring{u}_3, \mathring{\mat g})(x,y).
	\end{split}
	\end{equation} 
\item  If additionally to 2, one has  
\[
\lim_{h \to 0} \left(h^{-2} a_{\epsh} (\vect u^{\epsh}, \vect u^{\epsh})+\|\vect u^{\epsh} \|^2_{L^2}\right)=\hat{a}_{0} (\funcB+\mathring{u}_3, \funcB+\mathring{u}_3)+\|\funcB+\mathring{u}_3\|^2_{L^2},
\]
where $\hat{a}_{0}$ is defined below, then the strong two-scale convergence holds: 
$$\pi_{\epsh/h}\vect{u}^{\epsh} \strongdrtwoscale (0,0,\funcB+\mathring{u}_3)^\top.  $$
\end{enumerate} 
\end{proposition}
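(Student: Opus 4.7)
My plan is to test the identity \eqref{resolventproblemstarting} with $\vect v=\vect u^{\epsh}$ and control the right-hand side via
$$\int_{\Omega} \vect f^{\epsh}\cdot\vect u^{\epsh}=\int_{\Omega} \pi_{h/\epsh}\vect f^{\epsh}\cdot \pi_{\epsh/h}\vect u^{\epsh}\le \|\pi_{h/\epsh}\vect f^{\epsh}\|_{L^2}\,\|\pi_{\epsh/h}\vect u^{\epsh}\|_{L^2},$$
so that the whole estimate reduces to proving
$$\|\pi_{\epsh/h}\vect u^{\epsh}\|_{L^2}^2\le C\bigl(h^{-2}a_{\epsh}(\vect u^{\epsh},\vect u^{\epsh})+\|\vect u^{\epsh}\|_{L^2}^2\bigr).$$
I would split $\vect u^{\epsh}=\Tilde{\vect u}^{\epsh}+\mathring{\vect u}^{\epsh}$ using the extension operator from the Appendix, with $\Tilde{\vect u}^{\epsh}\in H^1_{\Gamma_{\rm D}}(\Omega;\R^3)$ the extension of $\vect u^{\epsh}|_{\Omega_1^{\epsh}}$ and $\mathring{\vect u}^{\epsh}$ supported in $\overline{\Omega_0^{\epsh}}$. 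For the stiff part, the scaled Korn/Griso inequality for thin plates yields $\|\pi_{1/h}\Tilde{\vect u}^{\epsh}\|_{L^2}\le C h^{-1}\|\simgrad_h \Tilde{\vect u}^{\epsh}\|_{L^2}$; combined with the extension estimate and $\epsh/h<1/h$ this gives the desired bound on $\|\pi_{\epsh/h}\Tilde{\vect u}^{\epsh}\|_{L^2}$. For the soft part, I would rescale each inclusion cell $\epsh(Y_0+z)\times I$ to the fixed domain $Y_0\times I$ by $y=\hat x/\epsh$; then $\simgrad_h \mathring{\vect u}^{\epsh}$ becomes $\epsh^{-1}\sym\widetilde{\nabla}_{h/\epsh}\mathring{\vect v}$ with $\mathring{\vect v}$ clamped on $\partial Y_0\times I$, and the Korn--Poincar\'e inequality for plates of thickness $\gamma=h/\epsh\ll 1$ gives $\|\pi_{\epsh/h}\mathring{\vect v}\|_{L^2}\le C(\epsh/h)\|\sym\widetilde{\nabla}_{h/\epsh}\mathring{\vect v}\|_{L^2}$. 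Summing over cells and using the coefficient scaling $\mu_h^2=\epshfour$ produces $\|\pi_{\epsh/h}\mathring{\vect u}^{\epsh}\|_{L^2}^2\le C\,h^{-2}a_{\epsh}(\vect u^{\epsh},\vect u^{\epsh})$, and Young's inequality finishes the estimate.

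\textbf{Part 2 (compactness).} Given the bound from Part 1, subsequential weak limits of $\pi_{1/h}\Tilde{\vect u}^{\epsh}$ and $\pi_{\epsh/h}\mathring{\vect u}^{\epsh}$ exist. For the stiff part, the standard plate decomposition (as in Proposition \ref{propdinsi1}) identifies the limit of $\pi_{1/h}\Tilde{\vect u}^{\epsh}$ with the Kirchhoff--Love ansatz $(\funcA_1-x_3\partial_1\funcB,\funcA_2-x_3\partial_2\funcB,\funcB)^\top$, and passage to the dimension-reduction two-scale limit of $h^{-1}\simgrad_h \Tilde{\vect u}^{\epsh}$ yields the corrector $\funcC\in \mathfrak{C}_0(\Omega\times Y)$: since $\delta=0$, the in-plane periodic oscillations see the cell at a much coarser scale than $x_3$, forcing the corrector itself to be of Kirchhoff--Love type in $y$, which gives exactly the form $\funcC_0(\vect \varphi_1,\varphi_2,\vect g)$. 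For the soft part I would apply the same cell-wise decomposition to $\mathring{\vect v}$: viewed as a plate of thickness $h/\epsh$, it produces $\mathring u_\alpha\in L^2(\omega;H_0^1(Y_0))$ and $\mathring u_3\in L^2(\omega;H_0^2(Y_0))$ for the limit of $\pi_{\epsh/h}\mathring{\vect u}^{\epsh}$, together with the analogous form $\funcC_0(\mathring{\vect u}_*,\mathring u_3,\mathring{\vect g})$ for the dimension-reduction two-scale limit of $\epshtwo h^{-1}\simgrad_h \mathring{\vect u}^{\epsh}$, extended by zero outside $Y_0$.

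\textbf{Part 3 (strong convergence) and main obstacle.} Once the weak two-scale convergences are in place, Part 3 follows by the familiar ``convergence of energies implies strong two-scale convergence'' argument: lower semicontinuity of the energy separated into matrix and inclusion contributions, combined with the hypothesis that the total energy converges to $\hat a_0(\funcB+\mathring u_3,\funcB+\mathring u_3)+\|\funcB+\mathring u_3\|_{L^2}^2$, pins the weak limits down as minimizers of the limit quadratic form and hence upgrades all the two-scale convergences to strong ones; in particular, the first two components of $\pi_{\epsh/h}\vect u^{\epsh}$ converge strongly to zero, since they contribute nothing to $\hat a_0$. The main technical difficulty is concentrated in Part 1, specifically in the Korn--Poincar\'e estimate on a single rescaled inclusion: the Korn constant for a plate of thickness $h/\epsh$ with clamped lateral data blows up at rate $\epsh/h$, and the whole scheme works only because this blow-up is matched exactly by the coefficient scaling $\mu_h^2/h^2=\epshfour/h^2$. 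For any ``softer'' choice (e.g.\ $\mu_h=\epsh h$) the quantity $\pi_{\epsh/h}\mathring{\vect u}^{\epsh}$ would be unbounded, explaining why $\mu_h=\epshtwo$ is precisely the critical scaling for the bending regime in $h\ll\epsh$.
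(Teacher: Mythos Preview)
Your proposal is correct and follows essentially the same route as the paper. The only difference is packaging: the paper encodes your ``cell-wise Korn--Poincar\'e for a plate of thickness $h/\epsh$'' directly in the extension theorem for the regime $h\ll\epsh$ (Theorem~\ref{thmextreg1}), which produces a \emph{global} Kirchhoff--Love decomposition $\mathring{\vect u}^{\epsh}=(-\epsh x_3\nabla\mathring v^{\epsh},\,h^{-1}\epsh\mathring v^{\epsh})^\top+\mathring{\vect\psi}^{\epsh}$ with $\mathring v^{\epsh}\in H^2(\omega)$ and the estimate \eqref{est1000}; the two-scale limits in \eqref{nada100} are then read off via Lemma~\ref{lemmatwoscalecompact}(3) and Lemma~\ref{nada2}(1) rather than by arguing on each cell separately. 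Your reasoning --- rescale each inclusion to $Y_0\times I$, apply Corollary~\ref{kornthincor} with thickness $h/\epsh$, and match the resulting $(\epsh/h)$-blow-up against $\mu_h^2/h^2=\epshfour/h^2$ --- is exactly the computation underlying Theorem~\ref{thmextreg1} (via Lemma~\ref{keydecompose}, which is where the $C^{1,1}$ assumption on $Y_0$ enters to get $\mathring u_3\in H_0^2(Y_0)$). Part~3 is handled in the paper precisely by the lower-semicontinuity-plus-energy-convergence argument you describe, by reference to Proposition~\ref{propdinsi1}(3).
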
 
The following theorem provides the limit resolvent equation. 
\begin{theorem}
	 \label{thmivan113}
	Let $\delta=0$, $\mu_h={\epshtwo}$, $\tau=2$ and let the sequence of load densities satisfy
	\begin{equation} \label{konvsila11} 
	\pi_{h/\epsh}\vect  f^\epsh \drtwoscale \vect f \in L^2(\Omega \times Y;\R^3).
	\end{equation}
	Then the sequence of solutions to the resolvent problem \eqref{resolventproblemstarting} converges in the sense of 
	\eqref{nada100}  to the unique solution of the following problem:
	Determine ${\vect \funcA} \in H^1_{\gamma_{\rm D}}(\omega;\R^2)$, $\funcB \in H^2_{\gamma_{\rm D}}(\omega)$, ${\mathring{u}}_{\alpha}\in L^2(\omega,H_0^1(Y_0)),$ $\alpha=1,2$, ${\mathring{u}}_3\in L^2(\omega,H_0^2(Y_0))$ such that
	\begin{equation}
		\label{nada1000}  
	\begin{aligned} 
	& \vect \funcA=0, \\
	& \frac{1}{12}\int\limits_{\omega}\C^{\rm bend,r}_{1}\nabla_{\hat{x}}^2{ \funcB}(\hat{x}): \nabla_{\hat{x}}^2{\theta}_3(\hat{x}) \,d\hat{x}
	+ 
	\lambda \int\limits_\omega \bigl(\langle \rho \rangle {\funcB}(\hat{x})
	 +\langle\rho_0 \mathring{u}_3 \rangle(\hat{x})\bigr) \,{\theta_3}(\hat{x}) \,d\hat{x}\\
	&\hspace{+80pt}=\int\limits_{\omega} \langle  \overline{{f}}_3 \rangle(\hat{x}){{\theta}}_3 (\hat{x}) \,d\hat{x}\qquad \forall\theta_3 \in H_{\gamma_{\rm D}}^2(\omega), \\
	& \int\limits_{Y_0} \C_{0}^{\rm memb,r}(y) \sym \nabla_y { \mathring{ \vect u}_* }(\hat{x},y): \nabla_y{\mathring{\vect \xi}_* }(y)\,dy  
	=
	\int\limits_{Y_0} 
	\overline{ \vect f_{\!*} } (\hat{x},y) 
	\cdot  \mathring{\vect\xi}_*   (y) \,dy\qquad
	\forall\mathring{\vect\xi}_* \in H_0^1(Y_0;{\mathbb R}^2), \ \textrm{a.e.\,} \hat{x} \in \omega.
	\\
	& \frac{1}{12}\int\limits_{Y_0} \C_{0}^{\rm bend,r}(y) \nabla_y^2 {\mathring{u}}_3(\hat{x},y): \nabla_y^2{\mathring{{\xi}}}_3(y)\,dy  
	+
	\lambda \int\limits_{Y_0}\rho_0(y)
	\bigl(\funcB(\hat{x})+{\mathring{u}_3}(\hat{x},y)\bigr)\mathring{{\xi}_3}(y) \,dy\qquad
	\\ &
	\hspace{+80pt}=\int\limits_{Y_0} \overline{f}_3 (\hat{x},y)\mathring{{\xi}}_3 (y) \, dy
	-\int\limits_{Y_0} \overline{x_3 \vect f_*}  (\hat{x},y) \cdot \nabla_y{\mathring{\xi}}_3(y) \,dy
  \qquad\forall\mathring{\xi}_3 \in H_0^2(Y_0), \  \textrm{a.e.\,} \hat{x} \in \omega.
	\end{aligned}
	\end{equation}
	 If the strong two-scale convergence in \eqref{konvsila11} holds, then additionally one has
	 $$ 
	 \pi_{\epsh/h} \mathring{\vect u}^\epsh \strongdrtwoscale\bigl(
	 \mathring{u}_1(\hat x,y)-x_3 \partial_{y_1} \mathring{u}_3 (\hat{x},y),  \mathring{u}_2(\hat{x},y)-x_3 \partial_{y_2} \mathring{u}_3(\hat{x},y), 
	 \mathring{u}_3(\hat{x},y)\bigr)^\top.
	 $$
\end{theorem}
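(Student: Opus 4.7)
The strategy is to insert well-chosen test functions into the variational identity \eqref{resolventproblemstarting}, pass to the limit using the compactness and two-scale convergences supplied by Proposition \ref{propcompactregime}, and identify each term in \eqref{nada1000}; uniqueness follows from coercivity of the limiting tensors (Proposition \ref{propvecer}), while the strong two-scale convergence is obtained from an energy identity.

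\textbf{Step 1 (identifying $\vect{\funcA}=0$).} I would first test with $\vect{v}^{\epsh}(x):=h(\theta_1(\hat{x}),\theta_2(\hat{x}),0)^{\top}$ for arbitrary $\vect{\theta}\in H^{1}_{\gamma_{\rm D}}(\omega;\R^{2})$. Since $\simgrad_{h}\vect{v}^{\epsh}=h\,\iota(\sym\nabla_{\hat{x}}\vect{\theta})$ is supported entirely in $\Omega^{\epsh}_{1}\cup\Omega^{\epsh}_{0}$ at order $h$, the stiffness term $h^{-2}a_{\epsh}(\vect{u}^{\epsh},\vect{v}^{\epsh})$ becomes $\int\C^{\epsh}(h^{-1}\simgrad_{h}\vect{u}^{\epsh}):\iota(\sym\nabla_{\hat{x}}\vect{\theta})$; the soft part is controlled by $(\epsh^{2}/h)\cdot h\to0$, and passing to the limit on the stiff part gives (after minimizing in the corrector \`{a} la Remark~\ref{nada10001}) $\int_{\omega}\C^{\rm memb,r}_{1}\sym\nabla_{\hat{x}}\vect{\funcA}:\sym\nabla_{\hat{x}}\vect{\theta}$. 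The mass term is $O(h)$ and the force term is $O(\epsh)$ after using $\pi_{h/\epsh}\vect{f}^{\epsh}\drtwoscale\vect{f}$, so both vanish; coercivity of $\C^{\rm memb,r}_{1}$ then forces $\vect{\funcA}=0$.

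\textbf{Step 2 (equations for $\funcB$, $\mathring{\vect{u}}_{*}$, $\mathring{u}_{3}$).} For the equation on $\funcB$, I would use the Kirchhoff-Love ansatz $\vect{v}^{\epsh}(x):=(-hx_{3}\partial_{1}\theta_{3}(\hat{x}),-hx_{3}\partial_{2}\theta_{3}(\hat{x}),\theta_{3}(\hat{x}))^{\top}+\epsh^{2}\vect{\varphi}^{*}(\hat{x},\hat{x}/\epsh,x_{3})$, where $\vect{\varphi}^{*}$ is the $\funcC_{0}$-corrector achieving the minimum in \eqref{tensorminimizationrrr} associated with $\vect{B}=\nabla^{2}_{\hat{x}}\theta_{3}$. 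Dividing by $h^{2}$ and passing to the limit, the stiff part of $h^{-1}\simgrad_{h}\vect{v}^{\epsh}$ two-scale converges to $-\iota(x_{3}\nabla^{2}_{\hat{x}}\theta_{3})+\funcC_{0}(\vect{\varphi}_{1}^{*},\theta_{3},\vect{g}^{*})$, and optimality of the corrector collapses the cross terms; the $x_3$ integration yields the factor $1/12$ and produces exactly $\int_{\omega}\C^{\rm bend,r}_{1}\nabla^{2}_{\hat{x}}\funcB:\nabla^{2}_{\hat{x}}\theta_{3}$. The mass term, via the strong $L^{2}$-convergence of the third component of $\pi_{1/h}\tilde{\vect{u}}^{\epsh}$ to $\funcB$ and the two-scale convergence of $\mathring{u}_{3}^{\epsh}$ to $\mathring{u}_{3}$, gives $\lambda\int_{\omega}(\langle\rho\rangle\funcB+\langle\rho_{0}\mathring{u}_{3}\rangle)\theta_{3}$. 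For the inclusion equations I would test with $\vect{v}^{\epsh}$ supported in $\Omega^{\epsh}_{0}$ and built on $\mathring{\vect{\xi}}_{*}(\hat{x},\hat{x}/\epsh)$ and $\mathring{\xi}_{3}(\hat{x},\hat{x}/\epsh)$ via the $\funcC_{0}$-template (with the $h$- or $\epsh$-scalings matching $\pi_{\epsh/h}\mathring{\vect{u}}^{\epsh}$); using $\epsh^{2}h^{-1}\simgrad_{h}\mathring{\vect{u}}^{\epsh}\drtwoscale\funcC_{0}(\mathring{\vect{u}}_{*},\mathring{u}_{3},\mathring{\vect{g}})$, minimizing out $\mathring{\vect{g}}$ yields the split $\C^{\rm memb,r}_{0}\oplus\C^{\rm bend,r}_{0}$ of Remark~\ref{nada10001}. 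The mass term on the in-plane part is $O(h^{2}/\epsh^{2})=o(1)$, explaining the absence of $\lambda$ in the $\mathring{\vect{u}}_{*}$ equation; the bending component retains $\lambda\int_{Y_{0}}\rho_{0}(\funcB+\mathring{u}_{3})\mathring{\xi}_{3}$. The right-hand sides follow from $\pi_{h/\epsh}\vect{f}^{\epsh}\drtwoscale\vect{f}$, where the $x_{3}\vect{f}_{*}$ contribution arises from the $-x_{3}\partial_{y_{\alpha}}\mathring{u}_{3}$ structure of the first two components of $\pi_{\epsh/h}\mathring{\vect{u}}^{\epsh}$.

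\textbf{Step 3 (uniqueness and strong convergence).} Uniqueness of the solution to \eqref{nada1000} follows because the associated bilinear form is coercive on $H^{2}_{\gamma_{\rm D}}(\omega)\times L^{2}(\omega;H^{1}_{0}(Y_{0};\R^{2}))\times L^{2}(\omega;H^{2}_{0}(Y_{0}))$ by Proposition~\ref{propvecer}. In particular the whole sequence (not only a subsequence) converges. For the strong two-scale convergence claim, testing \eqref{resolventproblemstarting} with $\vect{u}^{\epsh}$ itself gives $h^{-2}a_{\epsh}(\vect{u}^{\epsh},\vect{u}^{\epsh})+\lambda\int\rho^{\epsh}|\vect{u}^{\epsh}|^{2}=\int\vect{f}^{\epsh}\cdot\vect{u}^{\epsh}$; under the strong two-scale convergence of $\pi_{h/\epsh}\vect{f}^{\epsh}$, the right-hand side converges to the corresponding inner product in the limit. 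Combining with the lower-semicontinuity of the two-scale limits of the two terms on the left, I obtain convergence of each of the two energies to its limit value, which is exactly the hypothesis in part~3 of Proposition~\ref{propcompactregime}, so the strong convergence $\pi_{\epsh/h}\vect{u}^{\epsh}\strongdrtwoscale(0,0,\funcB+\mathring{u}_{3})^{\top}$ follows.

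\textbf{Main obstacle.} The delicate point is constructing the test functions with the correct two-scale profile and multi-scale scaling (in-plane scale $\epsh$, transverse scale $h$, and distinct $h$-orders for the three components) so that the various two-scale limits align; in particular, verifying that the oscillating correctors can be chosen to realize simultaneously the minimization defining $\C^{\rm hom,r}$ and the orthogonality needed to separate the membrane/bending contributions across $Y_{0}$ and $Y_{1}$, and that boundary conditions on $\Gamma_{\rm D}$ and $I\times\partial Y_{0}$ are preserved at each scale.
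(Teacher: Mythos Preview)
Your strategy matches the paper's: insert multi-scale test functions, pass to the limit via Proposition~\ref{propcompactregime} and Remark~\ref{nada10001}, obtain uniqueness from Proposition~\ref{propvecer}, and upgrade to strong convergence via the energy identity. The paper simply packages everything into a single combined test function containing the macroscopic Kirchhoff--Love part, \emph{arbitrary} matrix correctors at orders $h\epsh$ (in-plane $\vect\zeta$), $\epsh$ (the KL-type corrector $\psi$, with third component at order $\epsh$ and first two at order $h\epsh$), and $h^{2}$ (transverse antiderivative $\int_0^{x_3}\vect r$), together with the inclusion test functions at order $h/\epsh$ in-plane and order one out-of-plane, plus an $(h/\epsh)\int_0^{x_3}\mathring{\vect r}$ term.

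Two points to tighten. First, your corrector scaling ``$\epsh^{2}\vect\varphi^{*}$'' is off (the matrix correctors live at the three distinct orders just listed, not at $\epsh^{2}$), and rather than inserting the \emph{optimal} corrector for $\theta_3$ the clean route---and the one the paper follows by allowing arbitrary $\vect\zeta,\psi,\vect r,\mathring{\vect r}$---is to first derive the Euler--Lagrange condition identifying $\funcC$ and $\mathring{\vect g}$ as the minimisers; only then does the remaining identity collapse to $\C_{1}^{\rm bend,r}$ and $\C_{0}^{\rm red}$. Second, Proposition~\ref{propcompactregime}(3) gives $\pi_{\epsh/h}\vect u^{\epsh}\strongdrtwoscale(0,0,\funcB+\mathring{u}_3)^{\top}$, not the theorem's stronger claim on all three components of $\pi_{\epsh/h}\mathring{\vect u}^{\epsh}$; for the in-plane components you also need the strong two-scale convergence of $\epsh^{2}h^{-1}\simgrad_h\mathring{\vect u}^{\epsh}$ (which the same energy argument delivers, cf.\ Remark~\ref{rucak11}) combined with the Kirchhoff--Love decomposition in~\eqref{nada1}.
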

The right-hand side of \eqref{nada1000} can be interpreted as the element of dual of 
\[
\{0\}^2 \times L^2(\omega)+L^2(\omega; H_0^1(Y_0;\R^2)\times H_0^2(Y_0)).
\] 

Notice that the second equation in \eqref{nada1000} is completely separated from the rest of the system.

\subsubsection{Asymptotic regime $\epsh \ll h:$ ``moderately thin" plate}
\label{epsllh}

\vspace{+2ex}
\noindent\textbf{A. ``Membrane'' scaling: $\mu_h=\epsh,$\ $\tau=0$}
\vspace{1.4ex}

Similarly to Section \ref{sectionheps}, we define the following objects \RRR using Theorem \ref{rucak50} (the expression for the limit resolvent \eqref{limjdba11}) \BBB:
\begin{itemize}
	\item 
	A bilinear form 
	$$
	a_{\infty,\infty}:\left(V_{1,\infty,\infty}(\omega \times Y)+ V_{2,\infty}(\Omega \times Y_0)\right)^2 \to \R 
	$$
	and the associated operator $\mathcal{A}_{\infty,\infty}$ on the space $H_{\infty,\infty}(\Omega \times Y)$, analogous to $a_{\delta,\infty}$ and  $\mathcal{A}_{\delta,\infty}$ of Part B, Section \ref{sectionheps}. \RRR In this way the limit problem \eqref{limjdba11} can be written in the form 
	$$ (\mathcal{A}_{\infty,\infty}+\lambda \mathcal{I}) \vect u=P_{\infty,\infty} \vect f, \quad \vect u=(\vect \funcA,\funcB)^\top+\mathring{\vect u};   $$
	\BBB
	\item   
	A bilinear form 
	$$
	\tilde{a}_{\infty}: \left(H^1_{\gamma_{\rm D}} (\omega;\R^2)\times \{0\}+V_{2,\infty}(\Omega \times Y_0)\right)^2 \cap  \left(L^{2, \rm memb}(\Omega\times Y,\R^3)\right)^2 \to \R,
	$$
 and the associated operator $\tilde{\mathcal{A}}_{\infty}$ on the space $$\left(L^2(\omega;\R^2)\times\{0\}+L^2(\Omega \times Y_0;\R^3)\right) \cap L^{2, \rm memb}(\Omega\times Y,\R^3)$$ (analogous to $\tilde{a}_{\delta}$ and $\tilde{\mathcal{A}}_{\delta}$) --- these are correctly defined under Assumption \ref{assumivan1}\,(1);
 \item
  A bilinear form 
  $a_{\infty}^{\rm memb}: (H_{\gamma_{\rm D}}^1(\omega;\R^2))^2\to \R$ and the associated operator $\mathcal{A}_{\infty}^{\rm memb}$ on $L^2(\omega;\R^2)$
(analogous to $a_{\delta}^{\rm memb}$ and $\mathcal{A}_{\delta}^{\rm memb}$);
\item 
A bilinear form  $\tilde{a}_{00,\infty}: (H^1_0(Y_0;\R^3))^2 \to \R$ and the associated operator $\tilde{\mathcal{A}}_{00,\infty}$ on $L^2(Y_0;\R^3)$
(analogous to $\tilde{a}_{00,\delta}$ and $\tilde{\mathcal{A}}_{00,\delta}$);
\item 
Functions $\beta_{\infty}$, $\tilde{\beta}^{\rm memb}_{\infty},$ by analogy with $\beta_{\delta},$ $\tilde{\beta}^{\rm memb}_{\delta};$
\item
  A set $\sigma(\tilde{\mathcal{A}}_{00,\infty})',$ by analogy with $\sigma(\tilde{\mathcal{A}}_{00,\delta})'$. 
\end{itemize}

As in the case of other regimes, we first prove an appropriate compactness result, as follows. 
\begin{proposition} \label{propcompactregime11111}
	Let $\delta=\infty$, $\mu_h=\epsh,\tau=0$. The following statements hold:
	\begin{enumerate}  
		\item 
		There exists $C>0$, independent of $h$, such that for any sequence
		 $(\vect f^{\epsh})_{h>0} \subset L^2(\Omega;\R^3)$ of load densities and the corresponding 
		solutions $\vect u^{\epsh}$ to the problem \eqref{resolventproblemstarting}  one has 
		$$ 
		a_{\epsh}(\vect u^{\epsh},\vect u^{\epsh})+\|\vect u^{\epsh}\|^2_{L^2}\leq C  \|\vect f^{\epsh}\|^2_{L^2}.$$ 
		\item If  $$\limsup_{h  \to 0} \left(a_{\epsh}(\vect u^{\epsh},\vect u^{\epsh})+\|\vect u^{\epsh}\|^2_{L^2}\right)< \infty, \quad (\vect u^{\epsh})_{h>0}\subset H^1_{\Gamma_{\rm D}}(\Omega;\R^3),
		$$ 
		there exist $(\vect \funcA,\funcB)^\top \in V_{1,\infty,\infty}(\omega \times Y)$, $\mathring{\vect u} \in H_{\infty,\infty}(\Omega \times Y)$, 
		$\funcC \in {\mathfrak C}_{\infty}(\Omega \times Y)$ such that (up to subsequence)  
		\begin{equation} 
		\begin{split} \label{ds100} 
		\vect u^\epsh & = \Tilde{\vect u}^\epsh + \mathring{\vect u}^\epsh, \quad  \Tilde{\vect u}^{\epsh},\mathring{\vect u}  \in H^1_{\Gamma_{\rm D}}(\Omega;\R^3),\quad\mathring{\vect u}^\epsh |_{\Omega_1^{\varepsilon_h}} = 0, \\
		\tilde{\vect u}^\epsh_*   &{\,\xrightarrow{L^2}\,} 
		\vect{\funcA}, 
		\\ 
		\tilde{\vect u}^\epsh_3  &\drtwoscale   \funcB(\hat{x}),
		\\ 
		\mathring{\vect u}^{\epsh}  &\drtwoscale   \mathring{\vect u},
				\\ 
		\simgrad_h \tilde{\vect u}^\epsh(x)  &\drtwoscale  
		\iota\left(\simgrad_{\hat{x}}\vect{\funcA}(\hat{x})\right)+\funcC(x,y),
		\\
		\epsh\simgrad_h \mathring{\vect u}^\epsh  &\drtwoscale  
		\sym \iota (\nabla_y \mathring{\vect u}).
		\end{split}
		\end{equation} 	
	\item If, additionally to 2, one has: 
	$$ 
	\lim_{h  \to 0} \left(a_{\epsh}(\vect u^{\epsh},\vect u^{\epsh})+\|\vect u^{\epsh}\|^2_{L^2}\right)=a_{\infty,\infty}\bigl((\RRR\vect \funcA \BBB,0)^\top+\mathring{\vect u},(\RRR \vect \funcA \BBB,0)^\top+\mathring{\vect u}\bigr)+\bigl\|(\vect \funcA,\funcB)^\top+\mathring{\vect u}\bigr\|^2_{L^2},
	$$ 
	where the form $a_{\infty,\infty}$ is defined above, then we have strong two-scale convergence
	$$ \vect{u}^{\epsh} \strongdrtwoscale (\RRR \vect \funcA \BBB,\funcB)^\top+\mathring{\vect u}.$$
	\end{enumerate} 
	\end{proposition}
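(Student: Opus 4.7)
\textbf{Proof plan for Proposition \ref{propcompactregime11111}.}

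For the a priori bound in part (1), the plan is to test the variational identity \eqref{resolventproblemstarting} against $\vect v = \vect u^\epsh$. Using the coercivity \eqref{coercivity} (which gives a uniform lower bound on $\C^{\mu_h}$ on the stiff component $\Omega_1^{\epsh}$, and on $\Omega_0^{\epsh}$ after inclusion of the factor $\mu_h^2 = \epsh^2$), together with the two-sided bound $c_1 \le \rho^{\epsh} \le c_2$, yields
$$a_{\epsh}(\vect u^{\epsh}, \vect u^{\epsh}) + \lambda\, c_1 \|\vect u^{\epsh}\|_{L^2}^2 \leq \|\vect f^{\epsh}\|_{L^2} \|\vect u^{\epsh}\|_{L^2}.$$
A Young inequality then produces the required bound with a constant independent of $h$.

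For the compactness in part (2), I will use the extension result of the Appendix (for the case $\delta=\infty$) to decompose $\vect u^{\epsh} = \tilde{\vect u}^{\epsh} + \mathring{\vect u}^{\epsh}$, where $\tilde{\vect u}^{\epsh}$ coincides with $\vect u^{\epsh}$ on $\Omega_1^{\epsh}$ and satisfies $\|\sym \nabla_h \tilde{\vect u}^{\epsh}\|_{L^2(\Omega)} \le C \|\sym \nabla_h \vect u^{\epsh}\|_{L^2(\Omega_1^{\epsh})}$, while $\mathring{\vect u}^{\epsh}$ vanishes on $\Omega_1^{\epsh}$. The energy bound from (1) translates to $\|\sym \nabla_h \tilde{\vect u}^{\epsh}\|_{L^2} \le C$ and $\|\epsh \sym \nabla_h \mathring{\vect u}^{\epsh}\|_{L^2} \le C$. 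For $\tilde{\vect u}^{\epsh}$ I will invoke the Griso-type plate decomposition together with the thin-plate Korn inequality (see Appendix) to extract the membrane limit $\vect \funcA \in H^1_{\gamma_{\rm D}}(\omega;\R^2)$ and an $L^2$ out-of-plane limit $\funcB$ on $\omega$; the absence of the $1/h$ rescaling (since $\tau=0$) forces the third component to converge without any $x_3$-dependence and prevents the bending structure in $\vect \funcA$. For $\mathring{\vect u}^{\epsh}$ I will use a Poincar\'e inequality on $Y_0$ (valid because of the vanishing trace on $\partial Y_0$) together with the uniform bound on $\epsh \sym \nabla_h \mathring{\vect u}^{\epsh}$ to extract a dr-two-scale limit $\mathring{\vect u}(\hat x, y)$ with $\nabla_y \mathring{\vect u} \in L^2(\Omega \times Y_0; \R^{3\times 2})$.

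The main obstacle is the identification of the corrector structure $\funcC \in \mathfrak{C}_\infty(\Omega \times Y)$. I will proceed by testing the variational identity with oscillating test functions of the form $\vect v^{\epsh}(x) = \vect \phi(x) + \epsh \vect \psi(x, \hat x/\epsh)$ with $\vect \psi$ smooth and $Y$-periodic, then passing to the dr-two-scale limit. The in-plane components of the scaled gradient produce $\nabla_y \vect w_*(x,y)$ contributions from the corrector $\vect w \in L^2(\Omega; \dot H^1(\mathcal{Y};\R^3))$. The transverse column requires more care: because $\delta = \infty$, the scaled derivative $h^{-1}\partial_{x_3}$ is asymptotically of a lower order than $\epsh^{-1}\partial_y$, so the limit of $h^{-1}\partial_{x_3}\tilde{\vect u}^\epsh$ and $\partial_y w_3$ combine independently and create a free transverse vector $\vect g \in L^2(\Omega \times Y;\R^3)$. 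Combining the div-free test argument with the standard two-scale machinery (see Appendix) will show that the limit of $\sym \nabla_h \tilde{\vect u}^{\epsh}$ has precisely the structure $\iota(\sym \nabla_{\hat x} \vect \funcA) + \funcC_\infty(\vect w, \vect g)$, and the limit of $\epsh \sym \nabla_h \mathring{\vect u}^{\epsh}$ reduces (since the $\epsh h^{-1}\partial_{x_3}$ factor vanishes) to $\sym \iota(\nabla_y \mathring{\vect u})$.

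For part (3), I will follow the standard energy-convergence argument for strong two-scale convergence. Given the dr-two-scale limits from part (2) and the assumed convergence of the energies $a_{\epsh}(\vect u^\epsh,\vect u^\epsh) + \|\vect u^\epsh\|_{L^2}^2 \to a_{\infty,\infty}(\dots) + \|\dots\|_{L^2}^2$, the lower semicontinuity of the quadratic two-scale energy forces all intermediate weak inequalities to be equalities. Since $\C_1, \C_0$ are uniformly positive definite and $\rho_0, \rho_1$ are bounded below, this upgrades weak dr-two-scale convergence of both $\vect u^\epsh$ and the scaled gradients to strong dr-two-scale convergence, yielding the claim $\vect u^\epsh \strongdrtwoscale (\vect \funcA,\funcB)^\top + \mathring{\vect u}$.
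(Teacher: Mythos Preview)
Your plan for part (1) and the extension/decomposition setup in part (2) match the paper, and your outline for part (3) is also the standard lower-semicontinuity plus strict-convexity argument the paper uses.

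There is, however, a genuine gap in how you propose to identify the corrector $\funcC \in \mathfrak{C}_\infty(\Omega \times Y)$. You write that you will ``test the variational identity with oscillating test functions'' and invoke a ``div-free test argument''. But part (2) of the proposition is a \emph{pure compactness statement}: the hypothesis is only that $a_{\epsh}(\vect u^{\epsh},\vect u^{\epsh})+\|\vect u^{\epsh}\|^2_{L^2}$ stays bounded, not that $\vect u^{\epsh}$ solves \eqref{resolventproblemstarting}. There is no variational identity available to test. The structure of $\funcC$ must therefore be read off directly from the sequence $\sym\nabla_h\tilde{\vect u}^{\epsh}$ using the decomposition machinery in the Appendix. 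Concretely, the paper combines Lemma~\ref{app:lem.limsup} and Lemma~\ref{keydecompose}\,(3) (which split off a bending corrector $\varphi^h\in H^2(\omega)$ and a residual $\tilde{\vect\psi}^h$ with bounded $\nabla_h$) with Theorem~\ref{neukammresult}\,(3) (the $\delta=\infty$ two-scale structure of bounded scaled gradients). A key algebraic observation, pointed out explicitly in the paper's proof, is that the bending-type contribution is absorbed into the $\mathfrak{C}_\infty$ class via
\[
x_3\,\nabla_y^2\varphi(\hat x,y)=\nabla_y\bigl(x_3\partial_{y_1}\varphi,\,x_3\partial_{y_2}\varphi,\,0\bigr)^\top,
\]
so that $-x_3\iota(\nabla_y^2\varphi)$ is itself of the form $\funcC_\infty(\vect w,0)$ for a suitable $\vect w$. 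Without this step the $H^2$ corrector coming out of Lemma~\ref{keydecompose}\,(3) would appear to sit outside $\mathfrak{C}_\infty$.

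A smaller point: you write the limit of $\mathring{\vect u}^{\epsh}$ as $\mathring{\vect u}(\hat x,y)$, but in the regime $\delta=\infty$ the limit retains $x_3$-dependence (this is precisely Lemma~\ref{nada2}\,(2): since $\epsh/h\to 0$ the transverse scaled derivative $\epsh h^{-1}\partial_{x_3}\mathring{\vect u}^{\epsh}$ vanishes in the limit, so the third column of the limit gradient is zero, but nothing forces $\mathring{\vect u}$ itself to be independent of $x_3$). The correct target space is $V_{2,\infty}(\Omega\times Y_0)=L^2(\Omega;H_0^1(Y_0;\R^3))$.
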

The following theorem provides the limit resolvent equation. 
\begin{theorem} \label{rucak50} 
	Let $\delta=\infty$, $\mu_h=\epsh,\tau=0$ and let the sequence of load densities satisfy the following convergence: 
	\begin{equation} \label{rucak20} 
	\vect  f^\epsh \drtwoscale \vect f \in L^2(\Omega \times Y;\R^3).
	\end{equation}		
	The sequence of solutions  to the resolvent problem \eqref{resolventproblemstarting}  converges in the sense of 
	\eqref{ds100} to the unique  solution of the following problem: Find $(\vect \funcA,\funcB)^\top \in V_{1,\infty,\infty}(\omega \times Y)$, $\mathring{\vect u} \in V_{2,\infty}(\Omega \times Y_0)$ such that
	\begin{equation}
		\label{limjdba11} 
	\begin{aligned} 
	&\int\limits_{\omega}\C^{\rm memb,h}\simgrad_{\hat{x}} {\vect \funcA}(\hat{x}): \simgrad_{\hat{x}}{\vect \theta}_*(\hat{x}) \,d\hat{x}
	+ \lambda \int\limits_\omega \Bigl(\langle \rho \rangle{\vect \funcA}(\hat{x})+\langle \rho_0 \overline{\mathring{\vect u}_* }\rangle (\hat{x}) \Bigr) \cdot {\vect \theta}_*(\hat{x}) \,d\hat{x} 
	\\
	&\hspace{+180pt}=\int\limits_{\omega} 
	\langle\overline{ \vect{f}_* }  \rangle (\hat{x}) 
	\cdot  {\vect\theta}_*  (\hat{x})\,d\hat{x}\qquad \forall{\vect \theta}_*\in H_{\gamma_{\rm D}}^1(\omega;\R^2),
	\\ 
	&\langle \rho \rangle\funcB (\hat{x})+\bigl\langle \rho_0 \overline{\mathring{u}}_3\bigr\rangle (\hat{x})
	=\lambda^{-1}\langle \overline{f}_3\rangle(\hat{x}), 
	\\[0.6em]
	& \int\limits_{Y_0} \C_0 (y) \sym \iota \bigl(\nabla_y { \mathring{ \vect u}} (x,y)\bigr):\sym \iota \bigl( \nabla_y{\mathring{\vect \xi}(y)}\bigr) \,dy
	\\[0.4em]  
	&\hspace{+90pt}+\lambda\int_{Y_0}\rho_0(y)\bigl\{(\funcA_1(\hat{x}), \funcA_2(\hat{x}), \funcB(\hat{x}))^\top 
	+{\mathring{\vect u}}(x,y)\bigr\}\cdot \mathring{\vect{\xi}}(y) \,dy\\[0.4em]
	&\hspace{+180pt}=\int\limits_{Y_0}{\vect f} (x,y) 
	\cdot  \mathring{\vect\xi}  (y) \,dy \qquad\mathring{\vect \xi} \in H_0^1(Y_0;\R^3),\   \textrm{a.e.\,} x \in \Omega.
	\end{aligned}
	\end{equation} 
If we assume the strong two-scale convergence in \eqref{rucak20}, then the strong two-scale convergence 
\[
\vect u^\epsh(x)   \strongdrtwoscale  (\vect \funcA,\funcB)^\top+ \mathring{\vect u}(x,y)
\] 
holds.
	
\end{theorem}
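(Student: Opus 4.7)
\medskip

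\noindent\textbf{Proof proposal.}

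The plan is to follow the standard three-step strategy used for the analogous theorems in this paper (Theorems~\ref{thmivan111}, \ref{thmivan11111}, \ref{thmresolventhighercontrast}, \ref{rucak35}, \ref{thmivan113}): first invoke Proposition~\ref{propcompactregime11111} to extract a subsequence with the convergences \eqref{ds100}; next identify the limit objects $(\vect\funcA,\funcB)$, $\mathring{\vect u}$, $\funcC$ as a solution of the system \eqref{limjdba11}; and finally upgrade subsequential convergence to full convergence (and weak to strong two-scale) by uniqueness for the limit problem together with the energy-based criterion in Proposition~\ref{propcompactregime11111}(3). The a priori bound from Proposition~\ref{propcompactregime11111}(1), applied to $\vect f^{\epsh}$, ensures the hypothesis of (2) is met.

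To identify the membrane equation (first line of \eqref{limjdba11}), I would test the resolvent identity \eqref{resolventproblemstarting} against recovery sequences of the form
\[
\vect v^{\epsh}(x)=\bigl(\vect\theta_*(\hat{x}),0\bigr)^\top+\epsh\,\vect w\bigl(\hat{x}/\epsh\bigr)+\epsh\,\bigl(0,0,g_3(\hat{x})\bigr)^\top\chi_{\rm corr},
\]
with $\vect\theta_*\in H^1_{\gamma_{\rm D}}(\omega;\R^2)$ and $(\vect w,\vect g)$ chosen so that $\funcC_{\infty}(\vect w,\vect g)$ realises the cell minimiser in the definition \eqref{tensorminimizationrrrh} of $\C^{\rm memb,h}$ (applied with $\vect A=\simgrad_{\hat{x}}\vect\theta_*$). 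The scaled symmetrised gradient of $\vect v^{\epsh}$ then two-scale converges to $\iota(\simgrad_{\hat{x}}\vect\theta_*)+\funcC_{\infty}(\vect w,\vect g)$, and the $\C_1$-energy pairing with $\funcC$ collapses, by orthogonality of the cell minimiser, to the desired $\C^{\rm memb,h}$-form. The mass term produces $\lambda\langle\rho\rangle\vect\funcA+\lambda\langle\rho_0\overline{\mathring{\vect u}_*}\rangle$ from the third and fourth lines of \eqref{ds100}, while the right-hand side yields $\langle\overline{\vect f_*}\rangle$ by \eqref{rucak20}. The scalar ``quasi-static'' identity for $\funcB$ is obtained by testing with $(0,0,\theta_3(\hat{x}))^\top$, $\theta_3\in C^\infty_{\rm c}(\omega)$: the elastic term vanishes (no bending occurs at scale $\tau=0$) and only the mass and load terms remain. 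The inclusion equation is derived by testing with localised fields $\vect v^{\epsh}(x)=\mathring{\vect\xi}(\hat{x}/\epsh)\varphi(x)$ supported in $\Omega^{\epsh}_0$, with $\mathring{\vect\xi}\in H^1_0(Y_0;\R^3)$ and $\varphi\in C^\infty_{\rm c}(\Omega)$: the scaling $\mu_h=\epsh$ exactly balances the factor $\epsh^{-1}$ arising from $\nabla_y$, producing in the limit the stiff form $\int_{Y_0}\C_0(y)\sym\iota(\nabla_y\mathring{\vect u}):\sym\iota(\nabla_y\mathring{\vect\xi})$, while the $h^{-1}\partial_{x_3}$ piece of $\simgrad_h$ disappears because $\delta=\infty$ forces $h^{-1}\epsh\to 0$. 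The coupling $\lambda(\funcA_1,\funcA_2,\funcB)^\top\cdot\mathring{\vect\xi}$ comes from the mass pairing once one recognises that the macroscopic modes are ``frozen'' relative to the microscale via the $\drtwoscale$ convergence of $\tilde{\vect u}^{\epsh}$.

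Uniqueness of the solution of \eqref{limjdba11} follows by taking $(\vect\theta_*,\theta_3)=(\vect\funcA,\funcB)$ and $\mathring{\vect\xi}=\mathring{\vect u}$ with $\vect f=0$: the sum is the bilinear form $a_{\infty,\infty}((\vect\funcA,\funcB)^\top+\mathring{\vect u},\cdot)+\lambda\|\cdot\|^2$, which is strictly positive by coercivity of $\C^{\rm memb,h}$ (Proposition~\ref{propvecer}), coercivity of $\C_0$ on $H^1_0(Y_0;\R^3)$ via Korn's inequality, and $\lambda>0$. Because the limit is unique, the full (not just subsequential) convergence in \eqref{ds100} holds. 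The strong two-scale statement under strong convergence of $\vect f^{\epsh}$ is obtained by verifying the energy-equality hypothesis of Proposition~\ref{propcompactregime11111}(3): test \eqref{resolventproblemstarting} with $\vect v=\vect u^{\epsh}$ and pass to the limit of the right-hand side using $\vect f^{\epsh}\strongdrtwoscale\vect f$ and the already established weak two-scale convergence of $\vect u^{\epsh}$; on the other hand, testing \eqref{limjdba11} with the limit triple yields exactly the same scalar value, so the energies converge and Proposition~\ref{propcompactregime11111}(3) delivers strong two-scale convergence.

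The main obstacle I anticipate is the correct handling of the corrector space $\mathfrak{C}_\infty$, specifically the fact that the ``transverse shear'' components $\vect g=(g_1,g_2,g_3)$ in $\funcC_{\infty}(\vect w,\vect g)$ depend only on $x$ and not on $y$. This is the structural signature of the regime $\delta=\infty$: the $h^{-1}\partial_{x_3}$ contribution in $\simgrad_h$ is not available to relax through $y$-oscillations because $\epsh\ll h$. Consequently, when constructing recovery sequences for the membrane equation one must build a genuine two-variable ansatz $\vect w(\hat{x}/\epsh)+\epsh\,(\cdot)$ whose transverse derivative produces, after scaling by $h^{-1}$, a nontrivial limit $\vect g(x)$; obtaining the orthogonality between the stiff corrector and the macroscopic strain, which justifies reduction from the full $\C^{\rm hom,h}$ to $\C^{\rm memb,h}$ in the membrane equation, is the delicate point. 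A second minor subtlety is the decomposition $\vect u^{\epsh}=\tilde{\vect u}^{\epsh}+\mathring{\vect u}^{\epsh}$ with $\mathring{\vect u}^{\epsh}|_{\Omega_1^{\epsh}}=0$: ensuring that test fields used for the stiff part do not interfere with the inclusion test fields requires either a careful cut-off argument or the extension operators of Appendix~\ref{extension_app}, as already exploited in the earlier theorems.
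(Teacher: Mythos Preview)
Your overall strategy---compactness via Proposition~\ref{propcompactregime11111}, identification of the limit via test functions, uniqueness, then the energy argument for strong convergence---matches the paper exactly, and your treatment of the inclusion equation and the scalar $\funcB$-equation is correct.

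The genuine gap is in your recovery sequence for the membrane equation. You want the scaled symmetrised gradient of the test function to converge to $\iota(\simgrad_{\hat x}\vect\theta_*)+\funcC_\infty(\vect w,\vect g)$ with a \emph{nonzero} $\vect g$, but your ansatz
\[
\bigl(\vect\theta_*(\hat x),0\bigr)^\top+\epsh\,\vect w(\hat x/\epsh)+\epsh\bigl(0,0,g_3(\hat x)\bigr)^\top\chi_{\rm corr}
\]
does not do this: the third column of $\nabla_h$ of the last two terms is $h^{-1}\partial_{x_3}(\cdot)=0$, so in the limit you only reach $\funcC_\infty(\vect w,0)$. You correctly flag this as the ``main obstacle'' but do not resolve it. The paper's fix (see the test function displayed in its proof) is to insert the additional term
\[
h\int_0^{x_3}\vect r(x)\,dx_3,\qquad \vect r\in C_{\rm c}^1(\Omega;\R^3),
\]
whose $h^{-1}\partial_{x_3}$-derivative is exactly $\vect r(x)$ while its $\hat x$-derivatives are $O(h)$; this is precisely what generates the $\vect g$-column of $\funcC_\infty(\vect w,\vect g)$ with $\vect g=\vect r$ independent of $y$, the structural signature of $\delta=\infty$ that you yourself identify.

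A second, smaller point: the paper does \emph{not} choose $(\vect w,\vect g)$ to be the cell minimiser for $\simgrad_{\hat x}\vect\theta_*$. Instead it leaves $\vect\zeta$ and $\vect r$ arbitrary, passes to the limit in the full identity, and then specialises $\vect\theta_*=\theta_3=\mathring{\vect\xi}=0$ to read off the Euler--Lagrange equation for the solution's corrector $\funcC$ (cf.\ the proof of Theorem~\ref{thmivan111}). This step is what shows $\funcC(x,\cdot)$ is the cell minimiser for $\simgrad_{\hat x}\vect\funcA(\hat x)$, and hence that the stiff term reduces to $\C^{\rm memb,h}$ (via Remark~\ref{nada10001h}). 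Your short-cut ``by orthogonality of the cell minimiser'' presupposes this identification of $\funcC$, which you have not yet established; you would still need to test with arbitrary $(\vect w,\vect g)$ first.
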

\begin{corollary} 
	\label{corivan11} 
	Under Assumption \ref{assumivan1}~(1) and  provided $(\vect f^{\epsh})_{h>0}\subset L^{2, \rm memb}(\Omega;\R^3),$  in addition to the convergences in Proposition \eqref{propcompactregime11111} we have  
	\[
	\tilde{ u}^{\epsh}_3 \xrightarrow{L^2} 0, 
	\]	
	and thus $\funcB=0$ in the limit equations \eqref{limjdba11}.
\end{corollary}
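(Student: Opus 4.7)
The plan is to combine the membrane symmetry of $\vect u^{\epsh}$ with a transverse Poincar\'e inequality. Under Assumption \ref{assumivan1}(1) the operator $\mathcal{A}_{\epsh}$ preserves $L^{2,\rm memb}(\Omega;\R^3)$ (see Remark \ref{inv_remark}), so the hypothesis $\vect f^{\epsh}\in L^{2,\rm memb}$ forces $\vect u^{\epsh}\in L^{2,\rm memb}$; in particular $u^{\epsh}_3$ is odd with respect to $x_3$.

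I would first arrange the decomposition $\vect u^{\epsh}=\tilde{\vect u}^{\epsh}+\mathring{\vect u}^{\epsh}$ of Proposition \ref{propcompactregime11111} to respect this symmetry. Since $\Omega_0^{\epsh}$ is itself invariant under $x_3\mapsto -x_3,$ the orthogonal projection $P_{\rm memb}$ onto $L^{2,\rm memb}$ preserves both the function space $H^1_{\Gamma_{\rm D}}(\Omega;\R^3)$ and the support condition $\mathring{\vect u}^{\epsh}|_{\Omega_1^{\epsh}}=0.$ Replacing $\tilde{\vect u}^{\epsh}$ and $\mathring{\vect u}^{\epsh}$ by $P_{\rm memb}\tilde{\vect u}^{\epsh}$ and $P_{\rm memb}\mathring{\vect u}^{\epsh}$ thus yields a valid decomposition (since $\vect u^{\epsh}$ itself is fixed by $P_{\rm memb}$) in which $\tilde u^{\epsh}_3$ is odd in $x_3;$ all the two-scale limits listed in Proposition \ref{propcompactregime11111} continue to hold, the limits themselves replaced by their $L^{2,\rm memb}$-projections.

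The second step exploits the oddness. From the two-scale convergence $\simgrad_h\tilde{\vect u}^{\epsh}\drtwoscale \iota(\simgrad_{\hat x}\vect\funcA)+\funcC$ in Proposition \ref{propcompactregime11111}, the sequence $\simgrad_h\tilde{\vect u}^{\epsh}$ is uniformly bounded in $L^2(\Omega);$ reading off the $(3,3)$-entry, $h^{-1}\partial_{x_3}\tilde u^{\epsh}_3$ is uniformly $L^2$-bounded, so $\|\partial_{x_3}\tilde u^{\epsh}_3\|_{L^2(\Omega)}=O(h).$ Since $\tilde u^{\epsh}_3(\hat x,\cdot)$ is odd on $I=(-1/2,1/2)$ it vanishes at $x_3=0$ for almost every $\hat x,$ so the fibrewise Poincar\'e inequality
\[
\|f\|_{L^2(I)}\leq \tfrac{1}{2}\|f'\|_{L^2(I)},\qquad f\in H^1(I)\ \text{odd},
\]
integrated over $\hat x\in\omega$ gives $\|\tilde u^{\epsh}_3\|_{L^2(\Omega)}\leq \tfrac{1}{2}\|\partial_{x_3}\tilde u^{\epsh}_3\|_{L^2(\Omega)}=O(h)\to 0.$

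Finally, strong $L^2$-convergence of $\tilde u^{\epsh}_3$ to zero forces the $y$-independent two-scale limit $\funcB(\hat x)$ to vanish, and inserting $\funcB=0$ into the limit system \eqref{limjdba11} delivers the second assertion. The principal obstacle will be the symmetrization step, which relies on the invariance of $\Omega_0^{\epsh}$ under $x_3\mapsto -x_3;$ once this is noted, the conclusion follows by Poincar\'e and the a priori energy estimate without further ingredients.
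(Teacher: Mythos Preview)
Your argument is correct and, at its core, follows the same mechanism as the paper's: once $\tilde u^{\epsh}_3$ is known to be odd in $x_3$, the bound $\|h^{-1}\partial_{x_3}\tilde u^{\epsh}_3\|_{L^2}\le \|\simgrad_h\tilde{\vect u}^{\epsh}\|_{L^2}\le C$ together with a transverse Poincar\'e inequality yields $\|\tilde u^{\epsh}_3\|_{L^2}=O(h)\to 0$, whence $\funcB=0$.

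The packaging differs. The paper does not symmetrize post hoc: it invokes Remark~\ref{rucak31}, which states that the extension operator of Theorem~\ref{thmivann111} already maps $L^{2,\rm memb}$ to itself, so the original $\tilde{\vect u}^{\epsh}$ is membrane-symmetric without modification. The vanishing of $\tilde u^{\epsh}_3$ is then read off from Griso's decomposition via Remark~\ref{remivan} (odd third component forces $\hat\psi_3=0$, and the warping part $\bar\psi_3$ is $O(h)$ by \eqref{prvaKorn}), which is the same Poincar\'e mechanism you spell out directly. Your projection-based symmetrization is a legitimate alternative that avoids inspecting the extension construction; the price is that you must check the projected decomposition still satisfies the structural hypotheses of Proposition~\ref{propcompactregime11111} (which it does, since $P_{\rm memb}$ intertwines with $\simgrad_h$ up to conjugation by $\mathrm{diag}(1,1,-1)$ and preserves both $H^1_{\Gamma_{\rm D}}$ and the vanishing on $\Omega_1^{\epsh}$). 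One small wrinkle: your route gives $\funcB'=0$ for the symmetrized decomposition, and you should note---as you implicitly do in the last paragraph---that this identifies the $\funcB$ entering the limit system \eqref{limjdba11} because that system has a unique solution (Theorem~\ref{rucak50}).
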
 
Notice that the variable $x_3$ is also just the parameter in the last equation in \eqref{limjdba11}.

\vspace{+2.7ex}
\noindent\textbf{B. ``Strong high-contrast bending" scaling $\mu_h=\epsh h,$\ $\tau=2$}
\vspace{1.4ex}

Here we define the following objects \RRR using Theorem \ref{rucak60} (the expression \eqref{pomoc3} for the limit resolvent): \BBB
\begin{itemize}
	\item 
A bilinear form $a^{\rm hom}_{\infty}: (H^2_{\gamma_{\rm D}}(\omega))^2 \to \R$ and the associated operator $\mathcal{A}_{\infty}^{\rm hom}$ on $L^2(\omega),$ analogous to $a_{\delta}^{\rm hom}$ and $\mathcal{A}_{\delta}^{\rm hom}$ of Part A, Section \ref{sectionheps} \RRR (notice that here the situation is simpler since necessarily $\vect \funcA=0$)\BBB;
\item 
A scalar Zhikov function $\hat{\beta}_{\infty},$ analogous to $\hat{\beta}_{\delta}$ of Part C, Section \ref{sectionheps}, so similarly to (\ref{beta_deriv}) we have 
\[
\hat{\beta}_{\infty}=\beta_{\infty,33};
\]
\item
A set  $\hat{\sigma}(\mathcal{A}_{00,\infty}),$ analogous to $\hat{\sigma}(\mathcal{A}_{00,\delta});$ 
\item 
The operator $\hat{\mathcal{A}}_{\infty}$ on $\{0\}^2 \times L^2(\omega)+L^2(\Omega \times Y_0;\R^3)$ defined via the bilinear form 
\begin{equation}
\begin{aligned} 
	\hat{a}_{\infty}\bigl((0,0,\funcB)^\top+\mathring{\vect u},(0,0,\theta)^\top+\mathring{\vect \xi}\bigr)&=\RRR a_{\infty}^{\rm hom} \BBB(\funcB,\theta)+\int_{\Omega}\tilde{a}_{00,\infty}(\mathring{\vect u},\mathring{\vect \xi}), \\[0.4em] &\RRR \hat{a}_{\infty}\BBB:\left( \{0\}^2\times  H^2_{\gamma_{\rm D}}(\omega)+\RRR V_{2,\infty}\BBB(\Omega \times Y_0)\right)^2 \to \R.   
\end{aligned}
\label{a_infty_hat}
\end{equation}
\end{itemize}

Similarly to the regimes discussed above, a suitable compactness result is proved. 
\begin{proposition} \label{ds102}
	Let  $\delta=\infty$, $\mu_h=\epsh h$, $\tau=2$.
 The following statements hold: 
	\begin{enumerate} 
		\item 	There exists $C>0$, independent of $h$, such that for a sequence $(\vect f^{\epsh})_{h>0}\subset L^2(\Omega;\R^3)$ of load densities and 
		solutions $\vect u^{\epsh}$ to the problem \eqref{resolventproblemstarting} one has 
		$$
		h^{-2} a_{\epsh}(\vect u^{\epsh},\vect u^{\epsh})+\|\vect{u}^{\epsh}\|^2_{L^2} \leq C \|\vect f^{\epsh}\|^2_{L^2}.
		$$
		\item If 
		$$
		\limsup_{h  \to 0}\left( h^{-2} a_{\epsh}(\vect u^{\epsh},\vect u^{\epsh})+\|\vect{u}^{\epsh}\|^2_{L^2}\right)< \infty, \quad (\vect u^{\epsh})_{h>0}\subset H^1_{\Gamma_{\rm D}}(\Omega;\R^3),
		$$  
		then
		there exist functions  $\vect{\funcA}\in H^1_{\gamma_{\rm D}}(\omega;\R^2)$, $\funcB \in H^2_{\gamma_{\rm D}}(\omega)$, $\mathring{\vect u} \in V_{2,\infty}(\Omega \times Y_0)$, $\funcC \in \funcC_{\infty} (\Omega \times Y)$ such that (up to subsequence)  	
		\begin{equation}
		\begin{split}\label{ds101} 
		\vect u^\epsh & = \Tilde{\vect u}^\epsh + \mathring{\vect u}^\epsh, \quad  \Tilde{\vect u}^{\epsh},\mathring{\vect u}^{\epsh}  \in H^1_{\Gamma_{\rm D}}(\Omega;\R^3),\quad\mathring{\vect u}^\epsh |_{\Omega_1^{\varepsilon_h}} = 0, \\[0.1em]
		\pi_{1/h}\tilde{\vect u}^\epsh &{\,\xrightarrow{L^2}\,}\bigl(
		{\funcA}_1(\hat{x}) - x_3 \partial_1 \funcB(\hat{x}),
		{\funcA}_2(\hat{x}) - x_3 \partial_2 \funcB(\hat{x}), 
		\funcB(\hat{x})\bigr)^\top, \\[0.3em]
		\mathring{\vect u}^\epsh(x)  & \drtwoscale  \mathring{\vect u}(x,y),
		\\[0.3em]
		h^{-1}\simgrad_h \tilde{\vect u}^\epsh(x)  & \drtwoscale \, \iota\left(\simgrad_{\hat{x}}\vect{\funcA}(\hat{x})-x_3 \nabla_{\hat{x}}^2\vect{\funcB}(\hat{x})\right) + \funcC(x,y),
		\\[0.3em]
		\epsh\simgrad_h \mathring{\vect u}^\epsh(x)  & \drtwoscale \sym \iota\bigl(\nabla_y \mathring{\vect u}(x,y)\bigr).
		\end{split}
		\end{equation}	
		 \item  If, additionally to 2, one has $$\lim_{h \to 0} \left(h^{-2}a_{\epsh} (\vect u^{\epsh}, \vect u^{\epsh})+\|\vect u^{\epsh} \|^2_{L^2}\right)=\RRR \hat{a}_{\infty} ((0,0,\funcB)^\top+\mathring{\vect u}, (0,0,\funcB)^\top+\mathring{\vect u})\BBB+\|(0,0,\funcB)^\top+\mathring{\vect u}\|^2_{L^2},$$ where $\hat{a}_{\infty}$ is defined in (\ref{a_infty_hat}), then the strong two-scale convergence 
		$$
		\vect{u}^{\epsh} \strongdrtwoscale (0,0,\funcB)^\top+\mathring{\vect u}  
		$$
		holds.
		\end{enumerate} 	
\end{proposition}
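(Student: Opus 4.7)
The argument will closely mirror that of Proposition \ref{compactnessanotherscaling}, with the modifications needed to handle the regime $\epsh \ll h$, and follows the three-stage pattern of all compactness propositions in the paper.

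For the a priori bound in (1), the plan is to test the resolvent identity \eqref{resolventproblemstarting} against $\vect v = \vect u^\epsh$. Since $\rho^\epsh \ge c_1 > 0$, positivity of the energy term on the left and Cauchy--Schwarz on the right yield immediately $\|\vect u^\epsh\|_{L^2} \le C\|\vect f^\epsh\|_{L^2}$, after which the same identity gives $h^{-2}a_\epsh(\vect u^\epsh,\vect u^\epsh) \le C\|\vect f^\epsh\|^2_{L^2}$.

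For the compactness in (2), I would invoke the decomposition result of the Appendix, writing $\vect u^\epsh = \tilde{\vect u}^\epsh + \mathring{\vect u}^\epsh$ with $\mathring{\vect u}^\epsh$ supported in $\Omega_0^\epsh$ and $\tilde{\vect u}^\epsh \in H^1_{\Gamma_{\rm D}}(\Omega;\R^3)$. Coercivity of $\C_1$, positive-definiteness of $\C_0$ and the scaling $\mu_h = \epsh h$ then translate the bound from (1) into
\[
h^{-1}\|\sym\nabla_h \tilde{\vect u}^\epsh\|_{L^2(\Omega)} + \epsh\,\|\sym\nabla_h \mathring{\vect u}^\epsh\|_{L^2(\Omega_0^\epsh)} \le C\|\vect f^\epsh\|_{L^2}.
\]
The first bound is the standard input to a Griso-type decomposition for thin plates, producing the Kirchhoff--Love limit $\pi_{1/h}\tilde{\vect u}^\epsh \to (\funcA_1-x_3\partial_1\funcB, \funcA_2-x_3\partial_2\funcB, \funcB)^\top$ in $L^2(\Omega;\R^3)$ with $\vect\funcA \in H^1_{\gamma_{\rm D}}(\omega;\R^2)$, $\funcB \in H^2_{\gamma_{\rm D}}(\omega)$, together with the dimension-reduction two-scale limit $h^{-1}\sym\nabla_h\tilde{\vect u}^\epsh \drtwoscale \iota(\sym\nabla_{\hat x}\vect\funcA - x_3\nabla^2_{\hat x}\funcB) + \funcC(x,y)$ for some $\funcC \in \mathfrak{C}_\infty(\Omega\times Y)$. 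It is precisely the condition $\epsh/h \to 0$ that forces the correctors to live in $\mathfrak{C}_\infty$ (and not in $\mathfrak{C}_\delta$ for finite $\delta$), since any transverse $x_3$-oscillation on the microscale is killed by the large aspect ratio $h/\epsh$. For the ring part, the bound $\epsh\|\sym\nabla_h\mathring{\vect u}^\epsh\|_{L^2} \le C$ together with the homogeneous trace $\mathring{\vect u}^\epsh = 0$ on the matrix--inclusion interface and a Poincar\'e inequality on $Y_0$ yields $\mathring{\vect u}^\epsh \drtwoscale \mathring{\vect u}(x,y) \in V_{2,\infty}(\Omega\times Y_0)$.

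The step I expect to be the main obstacle is the identification of the two-scale limit of $\epsh\sym\nabla_h\mathring{\vect u}^\epsh$ in the form $\sym\iota(\nabla_y\mathring{\vect u})$. The in-plane components $\epsh\partial_{x_\alpha}\mathring{u}^\epsh_\beta$ pass to $\partial_{y_\alpha}\mathring{u}_\beta$ by the standard two-scale calculus. The transverse components $\epsh h^{-1}\partial_{x_3}\mathring{u}^\epsh_\beta$ carry the small factor $\epsh/h\to 0$ characterising $\delta=\infty,$ and must be shown to vanish in the two-scale sense; this I would establish by testing against smooth oscillating test functions of the form $\vect\psi(\hat x/\epsh)\phi(x,x_3)$ with $\vect\psi$ supported in $Y_0$, transferring the $x_3$-derivative onto the test function via integration by parts (which produces no additional small factor), and exploiting the resulting $\epsh/h$ prefactor to drive the limit to zero. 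This step parallels the identification carried out in Proposition \ref{propcompactregime11111}.

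For the strong convergence claim (3), the standard Visintin-type argument applies: the weak two-scale limits identified in (2), when inserted into the energy functional on the right-hand side of (\ref{a_infty_hat}), realise the value $\hat a_\infty((0,0,\funcB)^\top+\mathring{\vect u},(0,0,\funcB)^\top+\mathring{\vect u})$, which by assumption is also the limit of the pre-limit energies $h^{-2}a_\epsh(\vect u^\epsh,\vect u^\epsh) + \|\vect u^\epsh\|^2_{L^2}$. Uniform (in $h$) convexity of the density $\xi\mapsto \C^{\mu_h}\xi:\xi$ then upgrades the weak two-scale convergences of $h^{-1}\sym\nabla_h\tilde{\vect u}^\epsh$ and $\epsh\sym\nabla_h\mathring{\vect u}^\epsh$ to strong two-scale convergences, and combined with the $L^2$-convergence of $\pi_{1/h}\tilde{\vect u}^\epsh$ and the strong two-scale convergence of $\mathring{\vect u}^\epsh$ this delivers $\vect u^\epsh \strongdrtwoscale (0,0,\funcB)^\top + \mathring{\vect u}$, as required.
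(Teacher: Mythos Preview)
Your proposal is correct and follows essentially the same route as the paper. The paper's own proof is a one-line reference to Propositions \ref{lemmaconvergencerealtime} and \ref{propdinsi1} together with the $\delta=\infty$ tools Theorem \ref{thmivann111} (extension), Theorem \ref{neukammresult}\,(3) (corrector structure $\mathfrak{C}_\infty$), and Lemma \ref{nada2}\,(2) (vanishing of the $\epsh h^{-1}\partial_{x_3}$ column); your write-up simply unpacks these ingredients, in particular reproducing the integration-by-parts argument behind Lemma \ref{nada2}\,(2) inline rather than citing it.
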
 	
The following theorem provides the limit resolvent equation.
\begin{theorem}\label{rucak60} 
	Let  $\delta=\infty$, $\mu_h=\epsh h$, $\tau=2$ 
	and let the sequence of load densities satisfy the following convergence: 
	\begin{equation} \label{konvsila22} 
	\vect  f^\epsh \drtwoscale \vect f \in L^2(\Omega \times Y;\R^3).
	\end{equation}
	Then the sequence of solutions to the resolvent problem \eqref{resolventproblemstarting} converges in the sense of
	\eqref{ds101} to the unique solution of the following problem:
	Determine ${\vect \funcA} \in H^1_{\gamma_{\rm D}}(\omega;\R^2)$, $\funcB \in H^2_{\gamma_{\rm D}}(\omega)$, ${\mathring{\vect u}}\in V_{2,\infty}(\Omega \times Y_0)$ such that
	\begin{equation}\label{pomoc3}
	\begin{split} 
	&\vect \funcA=0, \\
	& \frac{1}{12}\int\limits_{\omega}\C^{\rm bend,h}\nabla_{\hat{x}}^2{ \funcB}(\hat{x}): \nabla_{\hat{x}}^2{\theta}_3(\hat{x}) \,d\hat{x}
	+ 
	\lambda \int\limits_\omega \bigl(\rho_0(y) {\funcB}(\hat{x})+
	 \langle \rho_0 \overline{\mathring{u}}_3 \rangle(\hat{x})\bigr)\,{\theta_3}(\hat{x}) \,d\hat{x} \\&\,
	\hspace{+100pt}=\int\limits_{\omega} \langle  \overline{ {f}}_3  \rangle(\hat{x})\,{{\theta}}_3 (\hat{x}) \,d\hat{x}\qquad \forall\theta_3 \in H_{\gamma_{\rm D}}^2(\omega),  
	\\
	& \int\limits_{Y_0} \C_0(y) \sym \iota \bigl( \nabla_y { \mathring{ \vect u}}(x,y)\bigr): \sym \iota \bigl(\nabla_y{\mathring{\vect \xi}}(y)\bigr)\,dy  
	+
	\lambda \int\limits_{Y_0}
	\rho_0(y)\funcB(\hat{x})\,\mathring{{\xi}}_3(y) \,dy + 
	\lambda \int\limits_{Y_0} \rho_0(y){\mathring{\vect u}}(x,y) \cdot \mathring{\vect{\xi}}(y) \,dy
	 \\ &
	\hspace{+100pt}=\int\limits_{Y_0} 
	\vect f (x,y) 
	\cdot  \mathring{\vect\xi}  (y) \,dy\qquad	
    \forall \mathring{\vect \xi} \in \RRR H_0^1(Y_0;\mathbb{R}^3)\BBB,  \  \textrm{a.e.\,} x \in \Omega.
	\end{split}
	\end{equation}
	If strong two-scale convergence takes place in \eqref{konvsila22}, then one additionally has
	$$  
	\vect u^\epsh(x)   \strongdrtwoscale  (0,0,\funcB)^\top+ \mathring{\vect u}(x,y).
	$$
\end{theorem}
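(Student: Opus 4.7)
The plan follows the three-step scheme used throughout Section~\ref{limreseq}: extract two-scale limits via Proposition~\ref{ds102}, insert tailored test functions into \eqref{resolventproblemstarting}, and eliminate the microscopic corrector via the cell-problem characterisation of $\C^{\rm bend,h}$, before concluding with uniqueness.

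The two-scale convergence \eqref{konvsila22} bounds $\|\vect f^\epsh\|_{L^2}$ uniformly, so Proposition~\ref{ds102} yields, along a subsequence, functions $\vect \funcA \in H^1_{\gamma_{\rm D}}(\omega;\R^2)$, $\funcB \in H^2_{\gamma_{\rm D}}(\omega)$, $\mathring{\vect u} \in V_{2,\infty}(\Omega\times Y_0)$, and $\funcC = \funcC_\infty(\vect w,\vect g)$ with convergences \eqref{ds101}. Arguing as in Theorem~\ref{thmresolventhighercontrast}, the triviality $\vect \funcA = 0$ follows by testing \eqref{resolventproblemstarting} divided by $h^2$ against $\vect v^\epsh(x) = h(\theta_1(\hat x),\theta_2(\hat x),0)^\top + h\vect\varphi^\epsh$, with $\vect\theta \in C^\infty_c(\omega\setminus\gamma_{\rm D};\R^2)$ and $\vect\varphi^\epsh$ the standard membrane corrector: the limit identity reduces to $\int_\omega\C^{\rm memb,h}\simgrad\vect\funcA:\simgrad\vect\theta = 0$, the mass and load terms being of lower order under the chosen scaling, so coercivity of $\C^{\rm memb,h}$ (Proposition~\ref{propvecer}) forces $\vect\funcA = 0$.

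Next, to derive the bending equation I would test \eqref{resolventproblemstarting} against
\[
\vect v^\epsh(x) = \bigl(-hx_3\partial_1\theta_3(\hat x),\,-hx_3\partial_2\theta_3(\hat x),\,\theta_3(\hat x)\bigr)^\top + h\vect\varphi^\epsh(x),
\]
where $\theta_3 \in C^\infty(\overline\omega)$ vanishes with its gradient near $\gamma_{\rm D}$ and $\vect\varphi^\epsh$ is built from the periodic corrector $(\vect w^{\theta_3}, \vect g^{\theta_3})$ realising the minimum in \eqref{tensorminimizationrrrh} with $\vect A = 0$ and $\vect B = \nabla^2_{\hat x}\theta_3(\hat x)$, multiplied by a smooth cut-off near $\gamma_{\rm D}$. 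For the inclusion equation I would test against $\vect v^\epsh(x) = \mathring{\vect\xi}(\hat x, \hat x/\epsh)$ with $\mathring{\vect\xi} \in C^\infty_c(\omega; H^1_0(Y_0;\R^3))$, whose support lies in $\Omega^\epsh_0$ and is thus automatically disjoint from $\Gamma_{\rm D}$. Dividing by $h^2$ and invoking the convergences \eqref{ds101} and \eqref{konvsila22}, together with $\epsh\simgrad_h\mathring{\vect u}^\epsh\drtwoscale\sym\iota(\nabla_y\mathring{\vect u})$ and the strong $L^2$ convergence $\tilde u_3^\epsh\to\funcB$, every bilinear and source term passes to the limit; the corrector $\funcC$ is eliminated against $\funcC_\infty(\vect w^{\theta_3},\vect g^{\theta_3})$ via the minimisation in Remark~\ref{nada10001h}, yielding precisely the two identities of \eqref{pomoc3}.

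The main subtlety is the cross term $\lambda\int_{Y_0}\rho_0\funcB\mathring{\xi}_3$ in the inclusion equation: it appears because the limit of $\tilde u_3^\epsh$ is the macroscopic profile $\funcB(\hat x)$ on all of $\omega\times Y$, so pairing with an inclusion test function in the mass term $\lambda\int\rho^\epsh\vect u^\epsh\cdot\vect v^\epsh$ produces this coupling; its correct passage to the limit relies on the strong $L^2$ convergence $\tilde u_3^\epsh\to\funcB$ supplied by Proposition~\ref{ds102}. Uniqueness of the limit system then follows from coercivity of $\C^{\rm bend,h}$ (Proposition~\ref{propvecer}), positive definiteness of the soft-phase form, and $\lambda > 0$, which promotes subsequential convergence to full-sequence convergence. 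Finally, strong two-scale convergence under the strong convergence in \eqref{konvsila22} is obtained by testing \eqref{resolventproblemstarting} against $\vect u^\epsh$ itself, identifying the limit of $h^{-2}a_\epsh(\vect u^\epsh,\vect u^\epsh) + \|\vect u^\epsh\|^2_{L^2}$, and invoking Part~3 of Proposition~\ref{ds102}.
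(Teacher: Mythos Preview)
Your overall scheme is the same as the paper's and the argument goes through, but there is one concrete gap in the inclusion step. You test with $\mathring{\vect\xi}(\hat x,\hat x/\epsh)$ for $\mathring{\vect\xi}\in C^\infty_c(\omega;H^1_0(Y_0;\R^3))$, i.e.\ a function independent of $x_3$. But $\mathring{\vect u}\in V_{2,\infty}(\Omega\times Y_0)=L^2(\Omega;H^1_0(Y_0;\R^3))$ genuinely depends on $x_3$, and the target identity in \eqref{pomoc3} is stated a.e.\ $x\in\Omega$, not a.e.\ $\hat x\in\omega$. With your test functions you only recover the $x_3$-average of the inclusion equation, which is not enough to pin down $\mathring{\vect u}(x,\cdot)$ for each $x_3$. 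The fix is exactly what the paper does: take $\mathring{\vect\xi}\in C_c^1(\omega;C_{00}^1(I\times Y_0;\R^3))$ and plug in $\mathring{\vect\xi}(x,\hat x/\epsh)$. Since $\epsh/h\to 0$, the $\partial_{x_3}$-column of $\epsh\simgrad_h\mathring{\vect\xi}$ still vanishes in the limit, so the elastic term is unchanged, while you now separate all of $\Omega$ in the mass and load terms.

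On the structural side, the paper takes a single composite test function
\[
\vect v^\epsh(x)=\bigl(h\theta_1-hx_3\partial_1\theta_3,\ h\theta_2-hx_3\partial_2\theta_3,\ \theta_3\bigr)^\top
+h\epsh\,\vect\zeta\Bigl(x,\tfrac{\hat x}{\epsh}\Bigr)+h^2\!\int_0^{x_3}\!\vect r(x)\,dx_3+\mathring{\vect\xi}\Bigl(x,\tfrac{\hat x}{\epsh}\Bigr),
\]
with \emph{arbitrary} $\vect\zeta,\vect r$; varying $\vect\zeta,\vect r$ first characterises $\funcC$ as the minimiser, then setting them to zero gives the macroscopic equation, and Remark~\ref{nada10001h} (automatic decoupling of $\C^{\rm hom,h}$) yields $\vect\funcA=0$. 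You instead use the Tartar-type route, plugging in the \emph{specific} corrector $(\vect w^{\theta_3},\vect g^{\theta_3})$ and eliminating $\funcC$ via the Euler--Lagrange equation of the cell problem. Both are standard and equivalent here; note only that your recovery sequence $h\vect\varphi^\epsh$ must contain both pieces---an $h\epsh\,\vect w^{\theta_3}(\hat x/\epsh)$ term and an $h^2\int_0^{x_3}\vect g^{\theta_3}$ term---to produce all of $\funcC_\infty(\vect w^{\theta_3},\vect g^{\theta_3})$ in the limit.
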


\begin{remark} 
	The limit problem \eqref{pomoc3} can be written as a resolvent problem on $\{0\}^2 \times L^2(\omega)+L^2(\Omega \times Y_0;\R^3)$: 
	$$  (\hat{\mathcal{A}}_{\infty}+\lambda \mathcal{I})\vect u=\bigl( S_2 (P_{\infty,\infty} \vect f)_1,S_2 (P_{\infty,\infty} \vect f)_2, (P_{\infty,\infty} \vect f )_3  \bigr)^\top, \quad \vect u= (0,0,\funcB)^\top+\mathring{\vect u}.$$
	
\end{remark}

\begin{remark} 
	The limit resolvent equations exhibit several differences between the regimes discussed: beside different effective tensors (this also happens in the moderate-contrast setting, see, e.g., \cite{Neukamm13} in the case of nonlinear von K\'arm\'an plate theory), one has different kinds of behaviour on the inclusions: in the regime $h\sim \epsh$ the inclusions behave like three-dimensional objects, while for $\delta=0$ they can be seen as small plates. Furthermore, different scalings of load densities are required in different regimes, which does not happen in the case  moderate contrast. Finally, the case $\delta=0$ has an additional effective parameter $\kappa$; when $\kappa \in (0,\infty)$ the elastic energy only resists 
	to oscillatory (spatial) motion (i.e., oscillations on the level of periodicity cells) in the out-of-plane direction.   
\end{remark}	
	
\subsection{Limit spectrum} \label{limitspectrum} 
In this section we will use the above resolvent convergence results to infer convergence of spectra of the operators ${\mathcal A}_{\varepsilon_h}.$ 
As we shall see below in the proofs of the spectral convergence, one does not need to apply different scalings to different components of external loads, and thus only simplified versions of the limit resolvent equations will be necessary. Also, the presence of a spectrum of order $h^2$ implies that any other scaling will cause the limit set to be the whole positive real line (see \cite{castro1}). Thus, for the case when $\mu_h=\epsh$ in (\ref{coeff_def}),
in order for the limit spectrum to have a \RRR ``band-gap" structure \BBB we are forced to restrict ourselves to the ``membrane" subspace $L^{2, \rm memb},$ which is possible under Assumption \ref{assumivan1}\,(1) concerning material symmetries. Otherwise, for the same case, the limit resolvent captures only the order-one part of the limit spectrum.
 This is consistent with the standard result that the strong resolvent convergence only implies that the spectrum of the limit operator is contained in limit spectrum for ${\mathcal A}_{\varepsilon_h},$ while an additional compactness argument  
is necessary for the opposite inclusion (see, e.g, \cite{Zhikov2000}). In our setting, compactness of eigenfunctions is lost when passing from the spectrum of order $h^2$ (or order-one spectrum for the restriction to $L^{2, \rm memb}$) to the order-one spectrum for the full operator,
as the transversal component of an eigenfunction would converge only weakly two-scale.

Under Assumption \ref{assumivan1}, for the membrane scalings of Part B of Section \ref{sectionheps} and Parts A of Sections \ref{hlleps}, \ref{epsllh}, the resolvent equation can be restricted to the invariant subspace $L^{2, \rm memb},$ where the solutions happen to be compact in the strong topology, see Corollaries \ref{remsim1}, \ref{rucak45}, \ref{corivan11}. This compactness property enables one to prove the convergence of spectra of order one for this restriction. Notice that in the regime $h \ll \epsh$ there are different types of limit resolvents (distinguished by different values of the parameter $\kappa$) when Assumption \ref{assumivan1} is not satisfied. In this regime,  the convergence of the third component of the displacements is only weak two-scale, which is the reason why we do not invoke different resolvent limits in the analysis of the convergence of spectra in the mentioned regime. However, we will use this information in our study of the limit evolution equations. 

\begin{remark}
For the case of spectra of order $h^2,$ in order to be able to obtain limit spectra with band gaps, one needs to consider different scalings of the coefficients on high-contrast inclusions. This motivated us for the analysis of this situation in Part C of \ref{sectionheps} and Parts B of Sections \ref{hlleps}, \ref{epsllh}.
\end{remark}

\subsubsection{Preliminaries on spectral convergence}

The Lax-Milgram theorem (see \cite{OShY}) implies that for each $\vect f\in L^2(\Omega;\R^3)$ the equation 
$$
\mathcal{A}_{\epsh} \vect u = \vect f
$$ 
has a unique solution $\vect u\in H^1_{\Gamma_{\rm D}}(\Omega;\R^3)$ understood in the weak sense. The operator 
\begin{equation*}
\mathcal{T}_\epsh : L^2(\Omega;\R^3) \to H^1_{\Gamma_{\rm D}}(\Omega;\R^3), \quad \mathcal{T}_\epsh \vect f :=\vect u,
\end{equation*}
is compact due to the compact embedding $H^1_{\Gamma_{\rm D}}(\Omega;\R^3) \hookrightarrow L^2(\Omega;\R^3)$ (this compactness will be lost in the limit problem, except for the first case analysed). Therefore, $\mathcal{T}_\epsh$ has countably many eigenvalues forming a non-increasing sequence of positive numbers converging to zero, the only remaining element of the spectrum $\mathcal{T}_\epsh.$ 
Therefore, the spectrum of $\mathcal{A}_\epsh$ consists of eigenvalues ordered in a non-decreasing positive sequence $\lambda_n^\epsh$ that tends to infinity. 

In what follows, we are interested in understanding the relationship between the spectra of ${\mathcal A}_\epsh$ as $h\to0$ and eigenvalues of the limit operators discussed in Section \ref{limreseq}. To this end, the following standard notion of convergence will be referred to throughout.
\begin{definition}
	\label{rucak85}
	We say that a sequence of sets $S_h$ (e.g., $S_h=\sigma(\mathcal{A}_{\epsh})$) converges in the Hausdorff sense to the set $S$ if:
	\begin{itemize}
		\item ($H_1$) For any  $\lambda \in S$, there exists a sequence of $\lambda^h\in S_h$ convergent to $\lambda$ (as $h\to0.$)
		\item ($H_2$) The limit of any convergent sequence of $\lambda^h\in S_h$ is an element of $S.$ 
	\end{itemize}
\end{definition}
For various scalings of Section \ref{limreseq}, we will discuss the convergence in the Hausdorff sense of $\sigma(\mathcal{A}_\epsh)$ to the spectrum of the corresponding limit operator.

The first property of Hausdorff convergence of spectra is normally a direct consequence of the strong resolvent convergence, while the second property requires the compactness of the sequence of eigenfunctions in an appropriate topology.

\subsubsection{Asymptotic regime $\delta \in (0,\infty)$, scaling $\tau=2$}
\label{section332}
In this section we will analyse the limit spectrum of order $h^2$ for ${\epshtwo}$-scaling of the coefficients in the inclusions. We will show that the high-contrast has no effect on the limit, in that the (scaled) limit spectrum is of the same type as for the ordinary plate (i.e., homogeneous or with moderate contrast), in particular the limit operator has compact resolvent. This is precisely the reason why we analyse this combination of scalings of the spectrum and the coefficients only for the asymptotic regime $h\sim\epsh$ (i.e., $\delta \in (0,\infty)$). 

On the one hand we would like to show that in the case of an ordinary plate the resolvent approach can also provide information about the convergence of spectra (alternatively to, say, using Rayleigh quotients), and on the other hand we aim at demonstrating that in the mentioned case the limit problem does not exhibit spectral gaps and thus a
 different scaling of the coefficients is required for them to appear. 
 
The following theorem provides the relevant result concerning spectral convergence. 
\begin{theorem}\label{rucak80} 
	Let $\lim_{h \to 0} h/\epsh=\delta  \in (0,\infty)$, $\mu_h=\epsh$. 
	The spectra $\sigma\bigl(h^{-2}\mathcal{A}_\epsh\bigr)$ converge in the Hausdorff sense to the spectrum of $\mathcal{A}_{\delta}^{\funcB, {\rm hom}},$ as $h \to 0$,
	 which is an increasing sequence of positive eigenvalues $(\lambda_{\delta,n})_{n \in \N}$ that tend to infinity, each of finite multiplicity. More precisely, if by $\lambda_n^\epsh$ we denote the $n$-th eigenvalue of $\mathcal{A}_{\epsh}$  (by repeating each eigenvalue according to its multiplicity), then 
	\begin{equation*}
	h^{-2}\lambda_n^\epsh \to \lambda_{\delta,n}, \quad h \to 0,
	\end{equation*}  
where $\lambda_{\delta,n}$ is $n$-th eigenvalue of $\mathcal{A}_{\delta}^{\funcB, {\rm hom}}$ (again repeated in accordance with multiplicity).  Furthermore, for any fixed $n$ and any choice of normalised eigenfunctions with eigenvalues $\lambda^{\epsh}_n,$ there is 
a ($h$-indexed) subsequence such that the corresponding eigenfunctions converge, as $h\to0,$ to an eigenfunction with the eigenvalue $\lambda_{\delta,n}$ of the limit problem. 
\end{theorem}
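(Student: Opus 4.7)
The form $a_\delta^{\funcB}$ defined in \eqref{missaglic1} is symmetric, continuous, and coercive on $H^2_{\gamma_{\rm D}}(\omega)$ by Proposition \ref{propvecer}, so $\mathcal{A}_\delta^{\funcB,\rm hom}$ has compact resolvent on $L^2(\omega;\langle\rho\rangle d\hat x)$ and its spectrum is a nondecreasing sequence $(\lambda_{\delta,n})$ of positive eigenvalues diverging to infinity. I would prove the Hausdorff convergence of Definition \ref{rucak85} by verifying $(H_2)$ and $(H_1)$ separately, and then promote it to the enumerated convergence $h^{-2}\lambda_n^\epsh\to\lambda_{\delta,n}$ via the Rayleigh min--max principle.

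\textbf{Verification of $(H_2)$.} Take $h^{-2}\lambda_n^\epsh\to\lambda\ge 0$ with $\rho^\epsh$-normalised eigenfunctions $\vect u^\epsh$. The identity $h^{-2}a_\epsh(\vect u^\epsh,\vect u^\epsh)=h^{-2}\lambda_n^\epsh$ furnishes the bound required by Proposition \ref{propdinsi1}, giving (along a subsequence) the convergences \eqref{nakcomp1} with limits $\vect\funcA,\funcB,\funcC,\mathring{\vect u}$. Passing to the limit in the eigenrelation $h^{-2}a_\epsh(\vect u^\epsh,\vect v)=h^{-2}\lambda_n^\epsh\int\rho^\epsh\vect u^\epsh\cdot\vect v$, tested against membrane, bending and inclusion recovery fields constructed exactly as in the proof of Theorem \ref{thmivan111}, produces the homogeneous version of \eqref{limitmodellongtime} (i.e.\ $\vect f\equiv 0$). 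The second identity there, together with the coercivity of $\C_0$ on $Y_0$, forces $\mathring{\vect u}=0$; the first identity with $\theta_3=0$ recovers \eqref{anketa1} and hence $\vect\funcA=\vect\funcA^{\funcB}$; substituting back and invoking \eqref{missaglic1} reduces it to
\[
a_\delta^{\funcB}(\funcB,\theta_3)=\lambda\int_\omega\langle\rho\rangle\funcB\,\theta_3\,d\hat x\qquad\forall\theta_3\in H^2_{\gamma_{\rm D}}(\omega),
\]
so $\mathcal{A}_\delta^{\funcB,\rm hom}\funcB=\lambda\funcB$. It remains to rule out $\funcB\equiv 0$: from \eqref{nakcomp1} we have $\tilde u_3^\epsh\to\funcB$ strongly in $L^2(\Omega)$, while $\tilde{\vect u}_*^\epsh$ and $\mathring{\vect u}^\epsh$ are of order $h$ in $L^2$, and combining this with the weak-$*$ convergence of $\rho^\epsh$ to $\langle\rho\rangle$ yields $\|\funcB\|^2_{L^2(\omega;\langle\rho\rangle d\hat x)}=\lim_{h\to 0}\|\vect u^\epsh\|^2_{L^2(\rho^\epsh dx)}=1$, so $\lambda\in\sigma(\mathcal{A}_\delta^{\funcB,\rm hom})$.

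\textbf{Verification of $(H_1)$ and enumeration.} For each eigenfunction $\funcB_j$ ($j=1,\ldots,n$) of $\mathcal{A}_\delta^{\funcB,\rm hom}$, chosen $L^2(\omega;\langle\rho\rangle d\hat x)$-orthonormal, I would build a Kirchhoff--Love recovery sequence
\[
\vect u_j^\epsh(x):=\bigl(h\funcA_1^{\funcB_j}(\hat x)-x_3\partial_1\funcB_j(\hat x),\ h\funcA_2^{\funcB_j}(\hat x)-x_3\partial_2\funcB_j(\hat x),\ \funcB_j(\hat x)\bigr)^\top+h\,\vect\varphi_j^\epsh(x),
\]
where the two-scale corrector $\vect\varphi_j^\epsh$ realises the minimiser in \eqref{tensorminimization} at $(\vect A,\vect B)=(\simgrad_{\hat x}\vect\funcA^{\funcB_j},\nabla^2_{\hat x}\funcB_j)$, smoothly extended across $\Omega_0^{h,\epsh}$ so that its restriction to the inclusions does not contribute to the leading-order energy (consistent with $\mathring{\vect u}=0$ in Step 1). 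By Remark \ref{rucak11} and the definition of $\C^{\rm hom}_\delta$ one verifies $h^{-2}a_\epsh(\vect u_j^\epsh,\vect u_j^\epsh)\to a_\delta^{\funcB}(\funcB_j,\funcB_j)=\lambda_{\delta,j}$, $\|\vect u_j^\epsh\|^2_{L^2(\rho^\epsh)}\to 1$, and the Gram matrix of $(\vect u_j^\epsh)_{j=1}^n$ tends to the identity. The min-max principle applied to $\operatorname{span}(\vect u_1^\epsh,\ldots,\vect u_n^\epsh)$ yields $\limsup_{h\to 0}h^{-2}\lambda_n^\epsh\le\lambda_{\delta,n}$; the reverse inequality and the subsequential convergence of the eigenfunctions follow by induction on $n$ from Step 1, by enforcing orthogonality to the limits already identified for $\lambda_{\delta,1},\ldots,\lambda_{\delta,n-1}$.

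\textbf{Main obstacle.} The most delicate step is the identification $\mathring{\vect u}=0$ in Step 1. While a direct coercivity argument on $Y_0$ suffices since the inclusion equation is homogeneous, its implementation requires testing against fields that interact correctly with the extensions from $\Omega_0^{h,\epsh}$ to $\Omega^h$ and with the scaled two-scale gradient $\epsh h^{-1}\simgrad_h\mathring{\vect u}^\epsh\drtwoscale\sym\widetilde\nabla_\delta\mathring{\vect u}$ of \eqref{nakcomp1}, both of which are controlled through the decomposition and extension results collected in the Appendix. A secondary subtlety is the non-degeneracy of $\funcB$: here the precise scalings in \eqref{nakcomp1} are essential, as they are exactly what prevents the $L^2$ mass of the eigenfunction from being absorbed either into the small in-plane components or into the soft inclusions.
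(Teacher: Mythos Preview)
Your argument for $(H_2)$ is essentially the same as the paper's: compactness from Proposition~\ref{propdinsi1}, passage to the limit via the test functions of Theorem~\ref{thmivan111}, and the observation that the inclusion equation is homogeneous (so $\mathring{\vect u}=0$) while the in-plane equation forces $\vect\funcA=\vect\funcA^{\funcB}$. The non-degeneracy of $\funcB$ via strong $L^2$ convergence of $\vect u^{\epsh}$ to $(0,0,\funcB)^\top$ is also how the paper handles it.

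Where you diverge is in $(H_1)$ and the enumerated convergence $h^{-2}\lambda_n^{\epsh}\to\lambda_{\delta,n}$. The paper does \emph{not} use recovery sequences or the Rayleigh min--max principle. Instead, it invokes the strong resolvent convergence already established in Theorem~\ref{thmivan111} (combined with \cite[Proposition~2.2]{zhikov2005}) for $(H_1)$, and for the refinement with multiplicities it uses Riesz spectral projections: for a contour $\Gamma$ enclosing a single limit eigenvalue $\lambda_{\delta,n}$, it shows that $P_\Gamma^{\epsh}\vect f^{\epsh}\to P_\Gamma\vect f$ strongly whenever $\vect f^{\epsh}\rightharpoonup\vect f$, and concludes that $\dim\mathcal R(P_\Gamma^{\epsh})=\dim\mathcal R(P_\Gamma)$ for small $h$. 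The paper even flags this choice explicitly in Section~\ref{section332}, remarking that the resolvent approach is pursued ``alternatively to, say, using Rayleigh quotients''. Your route is therefore a genuine alternative: more hands-on and self-contained (no appeal to \cite{zhikov2005}, no contour integrals), at the cost of having to construct recovery sequences and run the orthogonality/induction argument. The paper's route is cleaner operator-theoretically and reuses the resolvent machinery built for the evolution analysis.

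One slip to fix: in your recovery sequence the in-plane components should read $h\funcA_\alpha^{\funcB_j}-h x_3\partial_\alpha\funcB_j$ (compare the test functions in the proof of Theorem~\ref{thmivan111} and the convergence $\pi_{1/h}\tilde{\vect u}^{\epsh}\to(\funcA_\alpha-x_3\partial_\alpha\funcB,\funcB)^\top$ in \eqref{nakcomp1}); without the factor $h$ on $x_3\partial_\alpha\funcB_j$ the $(1,3)$ and $(2,3)$ entries of $\simgrad_h\vect u_j^{\epsh}$ blow up and the energy does not converge. With that correction your min--max argument goes through.
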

\subsubsection{Asymptotic regime $\delta \in [0,\infty),$ scaling $\mu_h=\epsh$, $\tau=0$} \label{secmembsp} 
In this section we analyse the operator $\mathcal{A}_{\epsh}$ in the space $L^{2, \rm memb}(\Omega;\R^3)$. In the regime $\delta=0$ we require that the component $Y_0$ has $C^{1,1}$ boundary.
We define the following generalized eigenvalue problem: Find $\lambda>0$ and $\vect{\funcA} \in H^1_{\gamma_{\rm D}}(\omega,\R^2)$ such that  
\begin{equation}
\label{generalizedeigenvaluemembrane}
\begin{split} 
\int_\omega \C_{\delta}^{\rm memb}\simgrad_{\hat{x}}\vect\funcA(\hat{x}): \simgrad_{\hat{x}}\vect \varphi (\hat{x}) d\hat{x} &=  \int_\omega\tilde{\beta}^{\rm memb}_{\delta}(\lambda) \vect{\funcA}(\hat{x})\cdot \vect \varphi(\hat{x}) d\hat{x}, \quad \forall \vect \varphi \in H^1_{\gamma_{\rm D}}(\omega;\R^2).  
\end{split}
\end{equation} 
(In the case $\delta=0$ we put $\C_{1}^{\rm memb,r}$ instead of $\C_{\delta}^{\rm memb}$, in the case when $\delta=\infty$ we put 
$\C^{{\rm memb},h}$ instead of $\C_{\delta}^{\rm memb},$ see Section \ref{effective} for the relevant definitions.) 
The following theorem contains the spectral convergence result for the regime considered here.
\begin{theorem} \label{rucak71}
Suppose $\lim_{h \to 0}h/\epsh=\delta  \in [0,\infty)$, $\mu_h =\epsh$ and let Assumption \ref{assumivan1}\,(1) be valid. 
The set of all $\lambda>0$ for which the problem \eqref{generalizedeigenvaluemembrane} has a non-trivial solution $\vect{\funcA} \in H^1_{\gamma_{\rm D}}(\omega;\R^2)$ is at most countable.  
  The spectra of the operators $\tilde{\mathcal{A}}_{\epsh}$  converge in the Hausdorff sense to the spectrum of $\tilde{\mathcal{A}}_{\delta},$ and one has
\begin{equation}\label{rucak96} 
\sigma(\tilde{\mathcal{A}}_{\delta}) = \sigma(\tilde{\mathcal{A}}_{00,\delta})' \cup \overline{\left\{ \lambda > 0: \mbox{ The generalized eigenvalue problem \eqref{generalizedeigenvaluemembrane} is solvable.} \right\}}.
\end{equation}
Additionally, under Assumption \ref{assumivan1}\,(2,3), the matrix  $\tilde{\beta}^{\rm memb}_{\delta}(\lambda)$ is \RRR scalar \BBB and 
\[
\sigma(\tilde{\mathcal{A}}_{\delta}) = \sigma(\tilde{\mathcal{A}}_{00,\delta})' \cup \overline{\left\{ \lambda > 0: \tilde{\beta}^{\rm memb}_{\delta,11}(\lambda)\RRR=\tilde{\beta}^{\rm memb}_{\delta,22}(\lambda)\BBB \in \sigma(\mathcal{A}^{\rm memb}_{\delta}) \right\}}.
\]	
\end{theorem}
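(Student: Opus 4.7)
I will prove the theorem in three stages: (i) identify $\sigma(\tilde{\mathcal{A}}_{\delta})$ with the right-hand side of \eqref{rucak96} by analyzing the limit eigenvalue problem obtained from the membrane restriction of \eqref{realtimesystem} (respectively \eqref{jdbaprva} when $\delta=0$); (ii) verify the Hausdorff condition $(H_2)$ via strong resolvent convergence combined with the strong $L^2$-compactness of eigenfunctions on the membrane subspace, supplied by Corollaries \ref{remsim1} and \ref{rucak45}; (iii) verify $(H_1)$ via Weyl-sequence constructions carried out separately for the microscopic and the macroscopic parts of the limit spectrum.

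\textbf{Stage (i): structure of the limit spectrum.} Setting $\vect f = 0$ and replacing $\lambda$ by $-\lambda$ in the membrane restriction of the limit system, the cell equation reads
\[
(\mathcal{A}_{00,\delta} - \lambda\mathcal{I})\,\mathring{\vect u}(\hat x,\cdot) \;=\; \lambda\,(\vect\funcA(\hat x),0)^{\top},
\]
with $\hat x$ a parameter. For $\lambda \not\in \sigma(\tilde{\mathcal{A}}_{00,\delta})$, inversion yields $\mathring{\vect u}$ as a linear function of $\vect\funcA$; substitution into the macroscopic equation reduces it to \eqref{generalizedeigenvaluemembrane}, with $\tilde{\beta}^{\rm memb}_{\delta}(\lambda)$ appearing exactly through formula \eqref{razgovor1} via \eqref{betamemb1}. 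For $\lambda \in \sigma(\tilde{\mathcal{A}}_{00,\delta})'$, every associated eigenfunction has zero weighted mean in each component, so taking $\vect\funcA = 0$ and $\mathring{\vect u}$ equal to such an eigenfunction solves the macroscopic equation automatically and produces a nontrivial eigenfunction of $\tilde{\mathcal{A}}_{\delta}$. The remaining values in $\sigma(\tilde{\mathcal{A}}_{00,\delta})\setminus\sigma(\tilde{\mathcal{A}}_{00,\delta})'$ are precisely the poles of $\tilde{\beta}^{\rm memb}_\delta$ in \eqref{razgovor1}; they enter $\sigma(\tilde{\mathcal{A}}_{\delta})$ only as limits of solvability values, hence the closure on the right-hand side of \eqref{rucak96}.

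\textbf{Stage (ii): $(H_2)$.} Given $\lambda^h \in \sigma(\tilde{\mathcal{A}}_{\epsh})$ with $\lambda^h \to \lambda$, choose normalized membrane eigenfunctions $\vect u^{\epsh}\in L^{2,\rm memb}$ satisfying $\tilde{\mathcal{A}}_{\epsh}\vect u^{\epsh} = \lambda^h\vect u^{\epsh}$. The identity $a_{\epsh}(\vect u^{\epsh},\vect u^{\epsh}) = \lambda^h$ yields uniform energy bounds; the compactness assertions in Propositions \ref{lemmaconvergencerealtime}, \ref{propcompactregime11}, \ref{propcompactregime11111} then extract two-scale limits $(\vect\funcA,\mathring{\vect u})$. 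Corollary \ref{remsim1} (or \ref{rucak45}, \ref{corivan11}) promotes the membrane convergence to strong $L^2$, so the normalization passes to the limit and the pair is nontrivial. Rewriting the eigenrelation as a resolvent identity with forcing $(\lambda^h+1)\vect u^{\epsh}$ and invoking Theorem \ref{thmivan11111} (respectively \ref{rucak35}, \ref{rucak50}) shows that $(\vect\funcA,\mathring{\vect u})$ solves the limit eigenvalue problem, hence $\lambda \in \sigma(\tilde{\mathcal{A}}_{\delta})$.

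\textbf{Stage (iii) and main obstacle.} For $\lambda \in \sigma(\tilde{\mathcal{A}}_{00,\delta})'$, I take a membrane cell eigenfunction $\mathring{\vect u}^{0}$ with vanishing weighted mean and define $\vect u^{\epsh}(x) := \mathring{\vect u}^{0}(x_3,\hat x/\epsh)$ on a single cell compactly contained in $\omega$, extended by zero; the zero trace on $I\times\partial Y_0$ and periodicity make this a genuine eigenfunction of $\tilde{\mathcal{A}}_{\epsh}$ with eigenvalue $\lambda$ for all small $\epsh$. For $\lambda$ in the solvability set of \eqref{generalizedeigenvaluemembrane}, by density it is enough to assume $\lambda \not\in \sigma(\tilde{\mathcal{A}}_{00,\delta})$; I then form $\vect u^{\epsh} \approx \chi^{\epsh}(\hat x)\bigl((\vect\funcA(\hat x),0)^{\top} + \mathring{\vect u}(\hat x,\hat x/\epsh,x_3)\bigr)$, where $\mathring{\vect u}(\hat x,\cdot,\cdot)$ solves the cell problem with forcing $\lambda\vect\funcA(\hat x)$ and $\chi^{\epsh}$ is a cell-wise cutoff supported away from $\partial\omega$. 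A direct two-scale calculation gives $\|(\tilde{\mathcal{A}}_{\epsh}-\lambda)\vect u^{\epsh}\|_{L^2} = o(\|\vect u^{\epsh}\|_{L^2})$, and self-adjointness implies $\operatorname{dist}(\lambda,\sigma(\tilde{\mathcal{A}}_{\epsh})) \to 0$. The scalar character of $\tilde{\beta}^{\rm memb}_\delta$ under Assumption \ref{assumivan1}\,(2,3) follows because $\tilde{\mathcal{A}}_{00,\delta}$ commutes with the $\pi/2$-rotation and the reflections of $Y_0$, forcing the $2\times 2$ matrix $\tilde{\beta}^{\rm memb}_\delta(\lambda)$ to commute with rotation by $\pi/2$ and hence to be a scalar multiple of the identity. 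The main technical obstacle is the Weyl construction near values of $\lambda$ accumulating on $\sigma(\tilde{\mathcal{A}}_{00,\delta})$, where $(\mathcal{A}_{00,\delta}-\lambda\mathcal{I})^{-1}$ degenerates; this is circumvented by the density-closure argument, at the cost of careful uniform estimates on the boundary layer produced by $\chi^{\epsh}$ near $\partial\omega$.
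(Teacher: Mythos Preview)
Your Stage~(ii) has a genuine gap. Corollaries~\ref{remsim1} and~\ref{rucak45} only yield $\tilde u_3^{\epsh}\to 0$ (and, for $\delta=0$, $\mathring u_3^{\epsh}\to 0$); combined with Proposition~\ref{lemmaconvergencerealtime}(2) this gives strong $L^2$ convergence of the \emph{extension} $\tilde{\vect u}^{\epsh}$, but the inclusion part $\mathring{\vect u}^{\epsh}$ still converges only weakly two-scale, so the normalisation $\|\vect u^{\epsh}\|_{L^2}=1$ does not pass to the limit. Rewriting the eigenrelation as a resolvent identity with forcing $(\lambda^h+1)\vect u^{\epsh}$ is circular: the forcing itself is only weakly two-scale convergent, and Theorem~\ref{thmivan11111} then returns only weak convergence of the solution. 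The paper closes this in two steps: first, Lemma~\ref{lemmaivan100} together with Remark~\ref{remivan101} shows $\sigma(\tilde{\mathcal A}_{00,\delta})=\lim_{h\to0}\sigma(\mathring{\tilde{\mathcal A}}_{\epsh})\subset\lim_{h\to0}\sigma(\tilde{\mathcal A}_{\epsh})$, so one may assume $\lambda\notin\sigma(\tilde{\mathcal A}_{00,\delta})$; second, under that assumption a Zhikov-type duality argument (\cite[Lemma~8.2]{Zhikov2000}, spelled out in the proof of Theorem~\ref{rucak72}) uses the \emph{strong} convergence of $\tilde{\vect u}^{\epsh}$ as input and Lemma~\ref{nenad100} to upgrade $\mathring{\vect u}^{\epsh}$ to strong two-scale convergence. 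The membrane corollaries are essential precisely because they supply that input, but they are not the whole argument.

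Your Stage~(iii) takes a different, more laborious route than the paper. The paper obtains $(H_1)$ in one line from the strong resolvent convergence already established (Theorems~\ref{thmivan11111}, \ref{rucak35}), via the general fact \cite[Proposition~2.2]{zhikov2005} that strong resolvent convergence forces $\sigma(\tilde{\mathcal A}_\delta)\subset\liminf\sigma(\tilde{\mathcal A}_{\epsh})$. Your explicit quasi-mode constructions can be repaired, but note that a single-cell eigenfunction extended by zero is \emph{not} a genuine eigenfunction of $\tilde{\mathcal A}_{\epsh}$: the conormal stress on the inclusion side of $I\times\partial Y_0$ need not vanish, and furthermore the cell operator at finite $h$ is $\mathring{\tilde{\mathcal A}}_{\epsh}$ (depending on $h/\epsh$), not $\tilde{\mathcal A}_{00,\delta}$. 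One only obtains a quasi-mode, and an argument of the type in Lemma~\ref{lemmaivan100} is required. Finally, you do not address the countability assertion; the paper proves it separately in Proposition~\ref{generalizedbetatheorem}, using the strict monotonicity of $\tilde\beta_\delta^{\rm memb}$ established in Lemma~\ref{lmivanocjena1}.
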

\begin{remark} \label{remlsm}  
It was shown in \cite{Gri05} that each non-empty interval of the form $(\tilde{\omega}_n,\tilde{\omega}_{n+1}),$ $n\in \N,$ contains a subinterval $(\tilde{\omega}_n,\alpha)$, $\tilde{\omega}_n\leq \alpha < \tilde{\omega}_{n+1}$ in which both eigenvalues of the matrix $\tilde{\beta}^{\rm memb}_{\delta}$ are negative, a subinterval $(\alpha,\beta)$, $\alpha<\beta \leq \tilde{\omega}_{n+1}$ in which one of its eigenvalues 
is negative while the other is positive, and the interval $(\beta,\tilde{\omega}_{n+1})$
in which both its eigenvalues 
are positive. It follows, as is explained in \cite{Gri05}, that in the interval  $(\tilde{\omega}_n,\alpha)$ there is no wave propagation in any direction, while in the interval  $(\alpha,\beta)$ one has evanescent solutions in the direction of the negative eigenvectors, and finally
in the intervals $(\beta,\tilde{\omega}_{n+1})$ all directions allow wave propagation. 

Under Assumption \ref{assumivan1}\,\RRR (1--3)\BBB, the above spectral structure can be quantified in a straightforward way and $\tilde{\omega}_n <\alpha=\beta<\tilde{\omega}_{n+1}$, see \cite{zhikov2005,Zhikov2000}.
In this case  the matrix $\tilde{\beta}^{\rm memb}_{\delta}$ is scalar and 
$$
\lim_{\lambda\to \tilde{\omega}_n^{+}}\tilde{\beta}^{\rm memb}_{\delta,11}(\lambda)=-\infty,\qquad  \lim_{\lambda\to \tilde{\omega}_{n+1}^{-}}\tilde{\beta}^{\rm memb}_{\delta,11}(\lambda)=+\infty,
$$
where $\tilde{\beta}^{\rm memb}_{\delta,11}$ is one of the two equal diagonal elements, and the limits are taken as $\lambda$ approaches $\tilde{\omega}_n$ on the right and on the left, respectively.
 
The above properties of the limit spectrum are relevant in a variety of applications, such as noise suppression. Being peculiar to wave propagation in high-contrast media, they are often referred to as ``high-contrast effects".

\end{remark} 
\subsubsection{Asymptotic regime $\delta \in (0,\infty),$ scaling $\mu_h=\epsh h$, $\tau=2$ and asymptotic regime $\delta=0,$ scaling $\mu_h={\epshtwo}$, $\tau=2$}
\label{sectionbgl}
For the regimes considered here, we show that high-contrast effects occur in the limit as $h\to0.$ As before, when $\delta=0$ we assume that $Y_0$ has $C^{1,1}$ boundary. We have the following theorem.
\begin{theorem} \label{rucak72} 
\RRR Let $\lim_{h \to 0}h/\epsh=\delta  \in [0,\infty)$. \BBB In the cases $\delta=0,$ $\delta>0$ we assume that $\mu_h={\epshtwo}$ and $\mu_h=\epsh h,$ respectively. The spectrum of the operator $h^{-2}\mathcal{A}_{\epsh}$ converges in the Hausdorff sense to the spectrum of the operator $\hat{\mathcal{A}}_{\delta},$ given by
\begin{eqnarray*} 
\sigma(\hat{\mathcal{A}}_{\delta}) =\left\{
\begin{array}{ll}\hat{\sigma}(\mathcal{A}_{00,\delta}) \cup \overline{\left\{ \lambda > 0: \hat{\beta}_{\delta}(\lambda) \in \sigma(\mathcal{A}^{\funcB, {\rm hom}}_{\delta}) \right\}},\qquad \delta\in(0,\infty), 
	\\[0.45em]
\hat{\sigma}(\hat{\mathcal{A}}_{00,0}) \cup \overline{\left\{ \lambda > 0: \hat{\beta}_{0}(\lambda) \in \sigma(\mathcal{A}^{\rm hom}_{0}) \right\}},\qquad \delta=0.\end{array}\right.
\end{eqnarray*} 
\begin{remark} 
The operator $\mathcal{A}_{\delta}^{\funcB, {\rm hom}}$ is non-local when Assumption \ref{assumivan1} is not satisfied. It is not known to us whether this has been commented on in the existing literature, even in the case of a homogeneous plate.	
\end{remark} 	
\end{theorem}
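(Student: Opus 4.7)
The plan is to establish Hausdorff convergence through the two conditions $(H_1)$ and $(H_2)$ of Definition~\ref{rucak85}, with the resolvent convergence results of Theorems~\ref{thmresolventhighercontrast} and~\ref{thmivan113} providing the main analytical tool. Before addressing convergence, I would first justify the explicit characterization of $\sigma(\hat{\mathcal{A}}_{\delta})$ given in the statement. Taking the homogeneous eigenvalue equation $\hat{\mathcal{A}}_{\delta}u = \lambda u$ with $u = (0,0,\funcB)^\top + \mathring{\vect u}$ and rewriting the cell equation (the analogue of the last identity in \eqref{realtimehigherordersystem} with $\vect f = 0$ and $\lambda$ replaced by $-\lambda$) as $(\mathcal{A}_{00,\delta} - \lambda\mathcal{I})\mathring{\vect u} = \lambda(0,0,\funcB)^\top$, one sees that either $\lambda \in \hat{\sigma}(\mathcal{A}_{00,\delta})$ and the equation admits an eigenfunction whose average in the third component is zero (so $\funcB$ can be trivial), or $\lambda \notin \hat{\sigma}(\mathcal{A}_{00,\delta})$ and $\mathring{\vect u} = \lambda(\mathcal{A}_{00,\delta} - \lambda\mathcal{I})^{-1}(0,0,\funcB)^\top$ is uniquely determined. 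Substituting this into the macroscopic equation and using the spectral representation \eqref{razgovor1} of $\beta_\delta$ (together with definition \eqref{beta_deriv} of $\hat{\beta}_\delta$), the equation for $\funcB$ reduces to $\mathcal{A}^{\funcB,\rm hom}_{\delta}\funcB = \hat{\beta}_\delta(\lambda)\funcB$, giving the second term in the union. The case $\delta=0$ is analogous, with $\hat{\mathcal{A}}_{00,0}$ and $\mathcal{A}^{\rm hom}_0$ replacing their $\delta\in(0,\infty)$ counterparts.

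For condition $(H_1)$, I would construct explicit Weyl sequences separately for the two types of limit points. For $\lambda\in\{\mu>0:\hat{\beta}_\delta(\mu)\in\sigma(\mathcal{A}^{\funcB,\rm hom}_\delta)\}$, I would take an eigenfunction $\funcB$ of $\mathcal{A}^{\funcB,\rm hom}_\delta$ at eigenvalue $\hat{\beta}_\delta(\lambda)$, solve the cell problem on $Y_0$ to obtain $\mathring{\vect u}$, and use a recovery sequence construction analogous to the one encoded in \eqref{ds1} (respectively \eqref{nada100}) to produce a sequence $\vect{u}^{\varepsilon_h} \in H^1_{\Gamma_{\rm D}}(\Omega;\mathbb{R}^3)$ whose scaled Rayleigh quotient $h^{-2}a_{\varepsilon_h}(\vect{u}^{\varepsilon_h},\vect{u}^{\varepsilon_h})/\|\vect{u}^{\varepsilon_h}\|^2_{L^2}$ tends to $\lambda$, then invoke a standard spectral-gap/$\min$–$\max$ argument to produce $\lambda^h\in\sigma(h^{-2}\mathcal{A}_{\varepsilon_h})$ with $\lambda^h\to\lambda$. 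For $\lambda\in\hat{\sigma}(\mathcal{A}_{00,\delta})$ (or the $\delta=0$ analogue), I would instead build the Weyl sequence by localizing a corresponding Bloch eigenfunction into a single inclusion: letting $\vect\varphi$ denote such an eigenfunction on $I\times Y_0$ (extended by zero) and $\chi_h$ a smooth cut-off selecting one cell $\varepsilon_h(z_0+Y)\times hI$ well inside $\Omega^h,$ one scales $\vect\varphi(\hat x/\varepsilon_h, x_3/h)\chi_h$ appropriately and passes through the change of variables to obtain a test field in $H^1_{\Gamma_{\rm D}}(\Omega;\mathbb{R}^3)$ realizing $h^{-2}a_{\varepsilon_h}\approx\lambda\|\cdot\|^2_{L^2}$.

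For condition $(H_2)$, let $\lambda^h\in\sigma(h^{-2}\mathcal{A}_{\varepsilon_h})$ with $\lambda^h\to\lambda\ge 0$, and take corresponding normalized eigenfunctions $\vect{u}^{\varepsilon_h}$. The bound $h^{-2}a_{\varepsilon_h}(\vect u^{\varepsilon_h}, \vect u^{\varepsilon_h}) = \lambda^h$ is uniformly controlled, so Proposition~\ref{compactnessanotherscaling} (resp.\ Proposition~\ref{propcompactregime}) applies and yields, up to a subsequence, limits $\vect\funcA,\funcB,\funcC,\mathring{\vect u}$ as in \eqref{ds1} (resp.\ \eqref{nada100}). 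Passing to the limit in the weak formulation of the eigenvalue equation with test functions of the form prescribed by the two-scale ansatz, and using Remark~\ref{remigor1} to isolate the bending component (so that $\vect\funcA = 0$), one obtains that $(0,0,\funcB)^\top + \mathring{\vect u}$ solves $\hat{\mathcal{A}}_{\delta}u = \lambda u$, provided this limit is non-zero. If the limit vanishes, then strong two-scale convergence (via statement~3 of the compactness propositions, once one verifies the energy identity) contradicts $\|\vect u^{\varepsilon_h}\|_{L^2}=1$; hence $\lambda\in\sigma(\hat{\mathcal{A}}_{\delta})$.

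The main obstacle I anticipate is in $(H_2)$, specifically in guaranteeing that the limit $(0,0,\funcB)^\top + \mathring{\vect u}$ is non-trivial. When $\lambda$ lies near a Bloch eigenvalue in $\hat\sigma(\mathcal{A}_{00,\delta})$, the eigenfunction mass can fully concentrate in the soft inclusions at scale $\varepsilon_h$, and it is delicate to ensure the dimension-reduction two-scale limit $\mathring{\vect u}$ is not lost. Addressing this will require combining the energy identity with the strong convergence statements embedded in the compactness propositions and arguing that if $\mathring{\vect u}\equiv 0$ then the sequence strongly two-scale converges to zero, contradicting the normalization. A secondary technical issue is that in the case $\delta=0$ the component $\funcB$ in the third slot is only defined modulo the contribution from $\mathring{u}_3$ (see Remark~\ref{remzzzz1}), so the identification of the limit equation requires the convention stated there; this is routine but must be handled carefully to avoid spurious degeneracy in the limit operator.
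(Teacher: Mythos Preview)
Your characterisation of $\sigma(\hat{\mathcal{A}}_\delta)$ is essentially the paper's (which defers to \cite[Section~8]{Zhikov2000}), and your $(H_1)$ via explicit Weyl sequences would work, though the paper obtains $(H_1)$ more directly as a consequence of strong resolvent convergence (the last statements of Theorems~\ref{thmresolventhighercontrast} and~\ref{thmivan113}), which is the standard route and saves the recovery-sequence construction.

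The genuine gap is in your handling of $(H_2)$. You correctly identify the obstacle --- showing the two-scale limit of the normalised eigenfunctions is non-zero --- but your proposed resolution is circular. Statement~3 of Propositions~\ref{compactnessanotherscaling} and~\ref{propcompactregime} gives strong two-scale convergence \emph{only under} the hypothesis that the energy converges to $\hat a_\delta(u,u)+\|u\|^2$ for the putative limit $u$. If $u=0$ this would require $h^{-2}a_{\epsh}(\vect u^{\epsh},\vect u^{\epsh})+\|\vect u^{\epsh}\|^2\to 0$, whereas in fact this quantity equals $\lambda^{\epsh}+1\to\lambda+1>0$. So you cannot invoke statement~3 to derive a contradiction; the compactness propositions alone do not rule out the limit being zero.

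The paper's resolution splits into two cases. First, by Lemma~\ref{lemmaivan100} (and Remark~\ref{remivan101}), the limit of the Bloch spectra $\lim_{h\to0}\sigma(\mathring{\mathcal{A}}_{\epsh})$ (suitably scaled) is already contained in the limit spectrum and equals $\sigma(\mathcal{A}_{00,\delta})$, so one may assume without loss of generality that $\liminf_{h\to 0}\dist(\lambda^{\epsh},\,\text{scaled Bloch spectrum})>0$. Second, under this gap assumption, strong two-scale compactness of the eigenfunctions is obtained via a duality argument in the spirit of \cite[Lemma~8.2]{Zhikov2000}: one decomposes $\vect u^{\epsh}=\tilde{\vect u}^{\epsh}+\mathring{\vect u}^{\epsh}$, writes the equation satisfied by $\mathring{\vect u}^{\epsh}$ on the inclusions with right-hand side involving the (strongly compact) $\tilde{\vect u}^{\epsh}$, and uses the spectral gap via Lemma~\ref{nenad100} to compare $\mathring{\vect u}^{\epsh}$ with an auxiliary solution $\mathring{\vect u}^{\epsh}_c$. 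Testing against the solution of a dual problem with arbitrary weakly two-scale convergent right-hand side then yields strong two-scale convergence of $\mathring{\vect u}^{\epsh}$. This is the missing mechanism in your plan.

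A minor point: your appeal to Remark~\ref{remigor1} to force $\vect\funcA=0$ is not available, since that remark requires Assumption~\ref{assumivan1}(1), which Theorem~\ref{rucak72} does not assume. The correct handling is that $\vect\funcA=\vect\funcA^{\funcB}$ is absorbed into the non-local operator $\mathcal{A}^{\funcB,\rm hom}_\delta$ via \eqref{anketa1} and \eqref{missaglic1}, which is precisely why $\hat{\mathcal{A}}_\delta$ is defined on $\{0\}^2\times L^2(\omega)+L^2(\Omega\times Y_0;\R^3)$.
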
 	   
\subsubsection{Asymptotic regime $\delta=\infty$} \label{sectionspsur}

As we see below, in the case $\delta=\infty,$ the limit spectrum has points outside spectrum of the limit operator. From the intuitive point of view, the effective behaviour is similar to that of a cuboid with disjoint soft inclusions in the shape of long thin rods arranged parallel to each other and connecting two opposite sides of the body.
 
In order to formulate the result of this section, we define:
\begin{itemize}
	\item 
An operator as the one defined via the bilinear form 
$$ 
\mathring{a}_{\rm strip}=\int_{\R \times Y_0}\C_0(y) \sym \nabla \vect u:\sym \nabla \vect v dx_3 dy, \qquad \mathring{a}_{\rm strip}:\left( H_{00}^1(\R \times Y_0;\R^3)\right)^2\to \R;  
$$
\item
An operator $\mathring{\mathcal{A}}_{\rm strip}^+$  on $L^2(\R^{+}_0 \times Y_0;\R^3)$ as the one defined via the form 
$$ 
\mathring{a}_{\rm strip}^+=\int_{\R^{+}_0 \times Y_0}\C_0(y) \sym \nabla \vect u:\sym \nabla \vect v dx_3 dy, \qquad \mathring{a}_{\rm strip}^+:\left( H_{00}^1(\R^{+}_0 \times Y_0;\R^3) \right)^2\to \R;   
$$
\item
An operator $\mathring{\mathcal{A}}_{\rm strip}^-$  on $L^2(\R^{-}_0 \times Y_0;\R^3)$ as the one defined via the form 
$$ \mathring{a}_{\rm strip}^-=\int_{\R^{-}_0 \times Y_0}\C_0(y) \sym \nabla \vect u:\sym \nabla \vect v dx_3 dy, \qquad \mathring{a}_{\rm strip}^-: \left(H_{00}^1(\R^{-}_0 \times Y_0;\R^3)\right)^2 \to \R.   
$$
\item 
The restriction $\mathring{\tilde{\mathcal{A}}}_{\rm strip}$ of the operator $\mathring{\mathcal{A}}_{\rm strip}$ to the membrane subspace $L^{2,{\rm memb}}(\R \times Y_0;\R^3),$ whenever Assumption \ref{assumivan1}\,(1) holds. 
\end{itemize}

 First, we give characterisations of the limit spectra of $\mathring{\mathcal{A}}_{\epsh}$ and $\mathring{\tilde{\mathcal{A}}}_{\epsh}$, which in this regime play significant roles. 
\begin{theorem} 
	\label{thmivan301}  
	Suppose that $\epsh \ll h$. Then one has 
\begin{equation}
\lim_{h \to 0} \sigma(\mathring{\mathcal{A}}_{\epsh})= \lim_{h \to 0} \sigma(\mathring{\tilde{\mathcal{A}}}_{\epsh})=\sigma(\mathring{\mathcal{A}}_{\rm strip})\cup \sigma(\mathring{\mathcal{A}}_{\rm strip}^+) \cup  \sigma(\mathring{\mathcal{A}}_{\rm strip}^-). 
\label{characterise}
\end{equation}
Moreover, one has 
\begin{equation}
	\sigma_{\rm ess}(\mathring{\mathcal{A}}_{\rm strip}^{\pm})=\sigma(\mathring{\mathcal{A}}_{\rm strip}),
	\label{pmess}
\end{equation}
and there exists $m_0>0$ such that 
\begin{equation}
\sigma(\mathring{\mathcal{A}}_{\rm strip})=\sigma_{\rm ess}(\mathring{\mathcal{A}}_{\rm strip})= [m_0,+\infty). 
\label{sigma_ess}
\end{equation}
Under Assumption \ref{assumivan1}\,(1), one additionally has 
\begin{equation}
\sigma(\mathring{\mathcal{A}}^+_{\rm strip})=\sigma(\mathring{\mathcal{A}}^-_{\rm strip}) \supset \sigma(\mathring{\tilde{\mathcal{A}}}_{\rm strip})=\sigma_{\rm ess}(\mathring{\tilde{\mathcal{A}}}_{\rm strip})= \sigma\bigl(\mathring{{\mathcal{A}}}_{\rm strip}\bigr).
\label{assincl}
\end{equation} 	
\end{theorem}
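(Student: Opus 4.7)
The strategy is to unfold the thin-inclusion problem onto an expanding strip by the unitary rescaling $\xi_3 := (h/\epsh)\,x_3$, which transforms $\mathring{\mathcal{A}}_\epsh$ isospectrally into an operator $\hat{\mathcal{A}}_\epsh$ on $L^2(J_\epsh \times Y_0;\R^3)$, $J_\epsh := (-h/(2\epsh),\,h/(2\epsh))$, associated with the \emph{unscaled} elasticity form
\[
\hat{a}_\epsh(\vect u, \vect v) = \int_{J_\epsh \times Y_0} \C_0(y)\,\sym \nabla \vect u : \sym \nabla \vect v\,d\xi_3\,dy,
\]
with Dirichlet conditions on $J_\epsh \times \partial Y_0$, free (Neumann) ends at $\partial J_\epsh \times Y_0$, and the $\rho_0$-weighted inner product. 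Since $h/\epsh \to \infty$, $J_\epsh$ blows up to $\R$, and the membrane version proceeds identically under Assumption \ref{assumivan1}\,(1). The problem thereby becomes a standard spectral perturbation by an expanding domain for an elliptic system with two free ends.

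Identity \eqref{characterise} follows from the two inclusions of Definition \ref{rucak85}. For $H_1$, given $\lambda \in \sigma(\mathring{\mathcal{A}}_{\rm strip})$ (resp.\ $\sigma(\mathring{\mathcal{A}}^\pm_{\rm strip})$), take a Weyl sequence, cut it off to be compactly supported in $\xi_3$, and for small $\epsh$ transplant it into $J_\epsh \times Y_0$ centred at $0$ (resp.\ aligned flush with the upper or lower Neumann end); the transplant is an approximate eigenfunction of $\hat{\mathcal{A}}_\epsh$ with eigenvalue $\lambda$. For $H_2$, let $\lambda^\epsh \to \lambda$ with normalised eigenfunctions $\vect u^\epsh$; an IMS-type localisation combined with the positive gap $m_0$ established in the next step rules out splitting of mass and isolates one of three alternatives: concentration in the interior of $J_\epsh$, near its top end, or near its bottom end. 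A suitable translation in $\xi_3$ (trivial in the interior case, keeping the relevant end at $0$ in the boundary cases), together with uniform Korn--Dirichlet regularity in $Y_0$ and local compact embedding, extracts a non-trivial $H^1_{\rm loc}$ limit that solves the eigenvalue problem for $\mathring{\mathcal{A}}_{\rm strip}$, $\mathring{\mathcal{A}}^+_{\rm strip}$ or $\mathring{\mathcal{A}}^-_{\rm strip}$, respectively.

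The structural identities \eqref{pmess} and \eqref{sigma_ess} exploit translation invariance of $\mathring{\mathcal{A}}_{\rm strip}$ in $\xi_3$: the Fourier transform yields $\mathring{\mathcal{A}}_{\rm strip} = \int^\oplus_\R A(\xi)\,d\xi$, where $A(\xi)$ is the self-adjoint operator on $L^2(Y_0; \C^3)$ with Dirichlet conditions on $\partial Y_0$ associated with
\[
q_\xi(\vect u) := \int_{Y_0} \C_0(y)\,\sym(\nabla_y \vect u + i\xi\,\vect e_3 \otimes \vect u) : \overline{\sym(\nabla_y \vect u + i\xi\,\vect e_3 \otimes \vect u)}\,dy.
\]
By Korn's inequality on $Y_0$, each $A(\xi)$ has compact resolvent and a discrete spectrum $0 < \lambda_1(\xi) \le \lambda_2(\xi) \le \cdots \to +\infty$ with $\lambda_1(\xi) \ge c > 0$ uniformly; continuity of $\lambda_1(\cdot)$ is a standard perturbation fact, and $\lambda_1(\xi) \to +\infty$ as $|\xi| \to \infty$ because the $\xi$-quadratic contribution to $q_\xi$ dominates the linear cross term, again by Korn. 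Hence $m_0 := \min_\xi \lambda_1(\xi) > 0$ is attained and the first branch alone covers $[m_0, +\infty)$ by the intermediate value theorem; translation invariance rules out eigenvalues, proving \eqref{sigma_ess}. For \eqref{pmess}, singular Weyl sequences for $\mathring{\mathcal{A}}^\pm_{\rm strip}$ are built by translating compactly supported approximate eigenfunctions of $\mathring{\mathcal{A}}_{\rm strip}$ away from the Neumann end, while local $H^1$-compactness forces any singular sequence for $\mathring{\mathcal{A}}^\pm_{\rm strip}$ to concentrate at $\pm\infty$, yielding the reverse inclusion.

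Under Assumption \ref{assumivan1}\,(1), the condition $\C_{0,ijk3}=0$ decouples $q_\xi$ on the ``complex membrane'' subspace of $L^2(Y_0; \C^3)$ (fields whose in-plane components are real and whose transverse component is purely imaginary) and its ``complex bending'' complement, so each $A(\xi)$ and each of $\mathring{\mathcal{A}}^\pm_{\rm strip}$ splits accordingly. The reflection $(\xi_3, u_3) \mapsto (-\xi_3, -u_3)$ provides a unitary equivalence $\mathring{\mathcal{A}}^+_{\rm strip} \cong \mathring{\mathcal{A}}^-_{\rm strip}$ that preserves the membrane subspace, and repeating the Fourier analysis inside that subspace gives $\sigma(\mathring{\tilde{\mathcal{A}}}_{\rm strip}) = \sigma_{\rm ess}(\mathring{\tilde{\mathcal{A}}}_{\rm strip}) = [\tilde m_0, +\infty)$; the identification $\tilde m_0 = m_0$ follows because a minimiser of $q_\xi$ over $L^2(Y_0; \C^3)$ may be chosen in the membrane subspace by the block-diagonal structure combined with the reflection symmetry. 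The Hausdorff convergence of $\sigma(\mathring{\tilde{\mathcal{A}}}_\epsh)$ required in \eqref{characterise} is then obtained by repeating the arguments of the second and third paragraphs inside the membrane invariant subspace. The \textbf{main obstacle} is the concentration-compactness step in the $H_2$ argument: ruling out mass \emph{splitting} across the interior and the two boundary zones of $J_\epsh$ requires the positive gap $m_0$ together with a quantitative IMS-type localisation whose commutator errors with the form must be controlled uniformly in $\epsh$.
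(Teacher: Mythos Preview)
Your treatment of \eqref{sigma_ess} and \eqref{pmess} via the Fourier decomposition $\mathring{\mathcal A}_{\rm strip}=\int^\oplus A(\xi)\,d\xi$ and translated Weyl sequences matches the paper's Steps~1 and~2.

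The $H_2$ direction of \eqref{characterise}, however, does not work as you describe it. You propose to rule out mass splitting via IMS localisation and the gap $m_0$, isolate one of three concentration alternatives, and then extract a nontrivial $H^1_{\rm loc}$ limit solving the eigenvalue equation. This breaks down in the interior alternative: $\sigma(\mathring{\mathcal A}_{\rm strip})=[m_0,\infty)$ is purely essential, so the full strip has \emph{no} $L^2$ eigenfunctions. But your limit, being a local limit of functions with unit $L^2(J_\epsh\times Y_0)$ norm, lies in $L^2(\R\times Y_0)$ by lower semicontinuity and would therefore be a genuine eigenfunction---a contradiction. Consequently, whenever $\lambda\in[m_0,\infty)\setminus\sigma_{\rm disc}(\mathring{\mathcal A}^\pm_{\rm strip})$, \emph{none} of your three alternatives can occur: the approximate eigenfunctions necessarily spread out, and no limit extraction is possible. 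The positive gap $m_0$ is merely the bottom of the spectrum and does not prevent this; your ``main obstacle'' (ruling out splitting) is a red herring, because spreading is exactly what happens for generic $\lambda\ge m_0$ and the argument must accommodate it, not exclude it.

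The paper bypasses this entirely by working with quasimodes rather than limit functions. One fixes a smooth partition $\psi_1+\psi_2+\psi_3=1$ on the \emph{unrescaled} interval $I$, with $\psi_2$ supported in the middle and $\psi_1,\psi_3$ near the ends; pigeonhole gives some $\psi_i\vect u^\epsh$ with mass $\ge 1/3$. After the change $\xi_3=(h/\epsh)x_3$ the cutoff gradient is $O(\epsh/h)\to 0$, so the localised piece satisfies the strip (or half-strip) equation up to form-dual error tending to zero, and Lemma~\ref{nenad100} yields $\dist(\lambda^\epsh,\sigma(\cdot))\to 0$ for the corresponding limit operator. No concentration-compactness, no limit function.

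For \eqref{assincl} your route is genuinely different from the paper's Step~3 (which symmetrises the generalised eigenfunctions $e^{i\eta x_3}\vect u^\eta$ in physical space). Your fibrewise decomposition is correct---under Assumption~\ref{assumivan1}\,(1), $A(\xi)$ does preserve the real-linear ``complex membrane'' subspace, and multiplication by $i$ swaps it with its complement, forcing equal fibre spectra---but the sentence ``repeating the Fourier analysis inside that subspace'' needs justification: the physical membrane subspace $L^{2,\rm memb}(\R\times Y_0;\R^3)$ is \emph{not} translation-invariant, so the ordinary Fourier transform does not diagonalise $\mathring{\tilde{\mathcal A}}_{\rm strip}$. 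One has to use cosine/sine transforms componentwise and then identify the resulting fibre operator with $A(\xi)|_{H_m}$, which you have not spelled out.
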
 
Next we provide a characterisation of the limit spectrum for ${\mathcal A}_\epsh.$ 
\begin{theorem} \label{thmivan302} 
Let	$\epsh \ll h$, $\mu_h=\epsh$ and $\tau=0$. 
The set of all $\lambda>0$ for which the problem \eqref{generalizedeigenvaluemembrane} obtains a nontrivial solution $\vect{\funcA} \in H^1_{\gamma_{\rm D}}(\omega;\R^2)$ is at most countable.  
The spectra of $\tilde{\mathcal{A}}_{\epsh}$  converge in the Hausdorff sense to 
$\sigma(\mathring{\mathcal{A}}^+_{\rm strip}) \cup \sigma(\tilde{\mathcal{A}}_{\infty})$, where
\begin{equation} \label{ispravak3}
\sigma(\tilde{\mathcal{A}}_{\infty}) = \sigma(\tilde{\mathcal{A}}_{00,\infty})' \cup \overline{\left\{ \lambda > 0: \mbox{ The generalized eigenvalue problem \eqref{generalizedeigenvaluemembrane} is solvable.} \right\}}.
\end{equation}
Under Assumption \ref{assumivan1}\,(2, 3), the matrix \RRR $\tilde{\beta}^{\rm memb}_{\infty}(\lambda)$ \BBB is \RRR scalar \BBB and 
\begin{equation} \label{ispravak4}
\sigma(\tilde{\mathcal{A}}_{\infty}) = \sigma(\tilde{\mathcal{A}}_{00,\infty})' \cup \overline{\left\{ \lambda > 0: \tilde{\beta}^{\rm memb}_{\infty,11}(\lambda)\RRR=\tilde{\beta}^{\rm memb}_{\infty,22}(\lambda) \BBB\in \sigma(\mathcal{A}^{\rm memb}_{\delta}) \right\}}.
\end{equation} 
Furthermore, one has $\sigma(\tilde{\mathcal{A}}_{00,\infty})\subset \sigma(\mathring{\mathcal{A}}^+_{\rm strip})$.  
\end{theorem}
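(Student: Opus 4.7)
The overall plan is to mirror the strategy of Theorem \ref{rucak71} while accounting for the extra ``spectral pollution'' contribution $\sigma(\mathring{\mathcal{A}}^+_{\rm strip})$. I will establish Hausdorff convergence in the two directions $(H_1)$ and $(H_2)$ of Definition \ref{rucak85} separately and, in parallel, obtain the representation \eqref{ispravak3}--\eqref{ispravak4} for $\sigma(\tilde{\mathcal{A}}_{\infty})$ as well as the inclusion $\sigma(\tilde{\mathcal{A}}_{00,\infty}) \subset \sigma(\mathring{\mathcal{A}}^+_{\rm strip})$. The countability of the solvability set for \eqref{generalizedeigenvaluemembrane} follows from the analytic (meromorphic) dependence on $\lambda$ of the Zhikov function $\tilde{\beta}^{\rm memb}_{\infty}$, exactly as in the $\delta\in[0,\infty)$ case of Theorem \ref{rucak71}: on each interval between consecutive eigenvalues of $\tilde{\mathcal{A}}_{00,\infty}$ the condition that \eqref{generalizedeigenvaluemembrane} be solvable is an analytic compatibility equation in $\lambda$ with isolated zeros. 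The formula for $\sigma(\tilde{\mathcal{A}}_{\infty})$ in \eqref{ispravak3}--\eqref{ispravak4} is then obtained verbatim from the derivation in Theorem \ref{rucak71} applied to the limit resolvent of Theorem \ref{rucak50} restricted to $L^{2,\mathrm{memb}}$ (using Corollary \ref{corivan11}), which in particular forces $\funcB=0$.

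For $(H_1)$, I would split the task. If $\lambda\in\sigma(\tilde{\mathcal{A}}_{\infty})$, the strong two-scale resolvent convergence of Theorem \ref{rucak50} (on the membrane subspace) together with the compactness provided by Corollary \ref{corivan11} gives an approximating sequence $\lambda^\epsh\in\sigma(\tilde{\mathcal{A}}_\epsh)$ by the standard Vi\v{s}ik--Lyusternik argument used in \cite{Zhikov2000}. If $\lambda\in\sigma(\mathring{\mathcal{A}}^+_{\rm strip})$, I would construct a Weyl-type quasimode $\vect U^\epsh$ in $H^1_{\Gamma_\mathrm{D}}(\Omega;\R^3)\cap L^{2,\mathrm{memb}}$ as follows. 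Choose a Weyl singular sequence $\mathring{\vect u}_n\in H^1_{00}(\R^+_0\times Y_0;\R^3)$ for $\lambda\in\sigma(\mathring{\mathcal{A}}^+_{\rm strip})$, truncate it to live in $(0,M_n)\times Y_0$, transplant the profile into a single inclusion adjacent to the top face $x_3=1/2$ of $\Omega$ by rescaling $x_3\mapsto (1/2-x_3)/\epsh$ (this is allowed because $h/\epsh\to\infty$, so there is room for $M_n$ periods in the thickness), and finally symmetrise across $x_3=0$ to land in $L^{2,\mathrm{memb}}$. A direct check on the scaled bilinear form $\epsh^2 a_\epsh$ shows that the Rayleigh quotient of the resulting test displacement converges to $\lambda$; the min--max principle then supplies eigenvalues $\lambda^\epsh\in\sigma(\tilde{\mathcal{A}}_\epsh)$ tending to $\lambda$.

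For $(H_2)$, consider a sequence $\lambda^\epsh\in\sigma(\tilde{\mathcal{A}}_\epsh)$ with $\lambda^\epsh\to\lambda$ and $L^2$-normalised eigenfunctions $\vect u^\epsh$. Energy bounds from Proposition \ref{propcompactregime11111} together with the decomposition $\vect u^\epsh=\tilde{\vect u}^\epsh+\mathring{\vect u}^\epsh$ yield, up to a subsequence, two-scale limits $(\vect\funcA,\funcB)$ (with $\funcB=0$ by the membrane constraint) and $\mathring{\vect u}$. Two cases arise depending on whether $\|\mathring{\vect u}^\epsh\|_{L^2(\Omega)}$ is asymptotically concentrated in an $o(1)$-thin neighbourhood of $\{x_3=\pm 1/2\}$ or not. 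In the non-concentrating case, passing to the two-scale limit in the eigenvalue identity (and invoking the representation \eqref{ispravak3}--\eqref{ispravak4}) shows $\lambda\in\sigma(\tilde{\mathcal{A}}_{\infty})$. In the boundary-concentrating case, rescaling around a soft inclusion abutting the top or bottom face by $\epsh$ in the in-plane direction and using the same $(1/2-x_3)/\epsh$ dilation converts $\vect u^\epsh$ into a sequence on $\R^+_0\times Y_0$ with bounded $H^1$-seminorm and unit $L^2$-norm, and the localisation forbids mass escape to infinity in a quantitative sense (via a slicing/tilting argument in $x_3$, as in \cite{AllaireConca98}). The resulting limit is a Weyl sequence of $\mathring{\mathcal{A}}^+_{\rm strip}$ at $\lambda$, so $\lambda\in\sigma(\mathring{\mathcal{A}}^+_{\rm strip})$.

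The final claim $\sigma(\tilde{\mathcal{A}}_{00,\infty})\subset\sigma(\mathring{\mathcal{A}}^+_{\rm strip})$ follows by extending eigenfunctions of $\tilde{\mathcal{A}}_{00,\infty}$ on $Y_0$ (viewed as $x_3$-independent fields) by a smooth compactly supported cutoff in $x_3\in[0,\infty)$ and computing; alternatively, one invokes \eqref{pmess} together with \eqref{sigma_ess} of Theorem \ref{thmivan301}, since $\sigma(\tilde{\mathcal{A}}_{00,\infty})$ lies in $[m_0,\infty)$. The main obstacle in the whole argument is the boundary-concentrating case of $(H_2)$: one must rule out that the mass of $\mathring{\vect u}^\epsh$ is simultaneously spread over many inclusions near the boundary in a way that decouples the $x_3$-profile from a single half-strip limit, and one must show that the rescaled limit genuinely satisfies the half-strip eigenvalue equation rather than a full-strip one. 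This is handled by a careful two-scale decomposition adapted to the anisotropic rescaling $(\hat x/\epsh, (1/2-x_3)/\epsh)$, together with an argument that excludes nontrivial escape of mass to $x_3\to+\infty$ using the $L^2$-normalisation and the coercivity of $\C_0$.
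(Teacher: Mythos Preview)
Your overall structure mirrors the paper's, but two of your key technical steps diverge from the paper's route and one of them contains a genuine gap.

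\textbf{The $(H_1)$ direction for $\lambda\in\sigma(\mathring{\mathcal{A}}^+_{\rm strip})$.} The min--max argument you invoke is not sufficient: a single test function with Rayleigh quotient tending to $\lambda$ only gives an eigenvalue \emph{below} $\lambda$, not near it. To get $\dist(\lambda,\sigma(\tilde{\mathcal{A}}_{\epsh}))\to 0$ you need a genuine quasimode, i.e.\ the residual $a_{\epsh}(\vect u^{\epsh},\vect\xi)-\lambda(\rho\vect u^{\epsh},\vect\xi)$ must be small against the full norm of $\vect\xi$. For a test function planted in a single inclusion this fails: decomposing $\vect\xi=\tilde{\vect\xi}+\mathring{\vect\xi}$, the cross term $\lambda(\rho_0\vect u^{\epsh},\tilde{\vect\xi})$ over the inclusion is of order $\|\tilde{\vect\xi}\|_{L^2}$ and does not vanish. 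The paper circumvents this by the two-step route Theorem~\ref{thmivan301} $+$ Lemma~\ref{lemmaivan100}: the latter spreads the Bloch eigenfunction over many neighbouring cells with \emph{alternating signs}, so that its mean over each pair of cells vanishes and a Poincar\'e inequality produces the missing factor $\epsh$ in the cross terms (cf.\ \eqref{ivan401} and the discussion after it). This trick is the essential ingredient you are missing.

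\textbf{The $(H_2)$ direction.} Your concentration dichotomy (boundary layer versus bulk) is morally close to Step~4 in the proof of Theorem~\ref{thmivan301}, but that argument lives on the Bloch operator $\mathring{\tilde{\mathcal{A}}}_{\epsh}$, not on the full $\tilde{\mathcal{A}}_{\epsh}$. The paper's dichotomy for the full operator is cleaner and avoids the difficulty you flag as ``the main obstacle'': one assumes without loss of generality that $\liminf_{h\to 0}\dist(\lambda^{\epsh},\sigma(\mathring{\tilde{\mathcal{A}}}_{\epsh}))>0$ (the complementary case being covered by Lemma~\ref{lemmaivan100} and Theorem~\ref{thmivan301}), and then this spectral gap, fed into Lemma~\ref{nenad100}, yields directly the strong two-scale compactness of $\mathring{\vect u}^{\epsh}$ via the duality argument used in the proof of Theorem~\ref{rucak72}. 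No concentration analysis or anisotropic rescaling of the full eigenfunction is needed.

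Your treatment of countability (analyticity of $\tilde{\beta}^{\rm memb}_\infty$), of \eqref{ispravak3}--\eqref{ispravak4} (via Theorem~\ref{rucak50} and Corollary~\ref{corivan11}), and of the inclusion $\sigma(\tilde{\mathcal{A}}_{00,\infty})\subset\sigma(\mathring{\mathcal{A}}^+_{\rm strip})$ (recognising $\tilde{\mathcal{A}}_{00,\infty}$ as the $\eta=0$ fibre and invoking \eqref{pmess}--\eqref{sigma_ess}) is fine, though the paper obtains countability instead from the monotonicity of $(\tilde{\beta}^{\rm memb}_\infty)'$ in Proposition~\ref{generalizedbetatheorem}.
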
 	
\begin{theorem} \label{thmivan303} 
Suppose that $\epsh\ll h$, $\mu_h =\epsh h$, $\tau=2$.  
 The spectra of $h^{-2}\mathcal{A}_{\epsh}$ converge in the Hausdorff sense to $\sigma(\mathring{\mathcal{A}}^+_{\rm strip}) \cup \sigma(\mathring{\mathcal{A}}^-_{\rm strip}) \cup \sigma( \hat{\mathcal{A}}_{\RRR \infty \BBB})$ and 
 $$
\sigma(\hat{\mathcal{A}}_{\infty}) = \hat{\sigma}({\mathcal{A}}_{00,\infty}) \cup \overline{\left\{ \lambda > 0: \hat{\beta}_{\infty}(\lambda) \in \sigma(\mathcal{A}^{\rm hom}_{\infty}) \right\}}. 
$$
\end{theorem}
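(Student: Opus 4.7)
The plan is to follow the proof scheme employed for Theorem \ref{thmivan302} in the membrane scaling, adapted to the bending scaling $\tau=2,$ $\mu_h=\epsh h$ in the very-thin-plate regime $\delta=\infty.$ Two inclusions must be established (\emph{cf.} Definition \ref{rucak85}): (H1) every element of $\sigma(\mathring{\mathcal{A}}^+_{\rm strip})\cup \sigma(\mathring{\mathcal{A}}^-_{\rm strip})\cup \sigma(\hat{\mathcal{A}}_{\infty})$ is a Hausdorff-limit of a sequence from $\sigma(h^{-2}\mathcal{A}_{\epsh});$ (H2) every Hausdorff-limit of a sequence from $\sigma(h^{-2}\mathcal{A}_{\epsh})$ lies in the above set. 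Separately, the Zhikov-type characterisation of $\sigma(\hat{\mathcal{A}}_{\infty})$ has to be proved.

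First I would address the characterisation of $\sigma(\hat{\mathcal{A}}_{\infty})$ following the argument for Theorem \ref{rucak72}. Under Assumption \ref{assumivan1}(1), the operator $\hat{\mathcal{A}}_{\infty}$ couples the effective bending operator $\mathcal{A}_{\infty}^{\rm hom}$ (acting on $\funcB$) with the Bloch operator $\mathcal{A}_{00,\infty}$ through the $\rho_0$-weighted out-of-plane mean of the corrector $\mathring{\vect u}.$ Solving the $\hat x$-parametrised microscopic problem $(\mathcal{A}_{00,\infty}-\lambda\mathcal{I})\mathring{\vect u}=\lambda(0,0,\funcB)^\top+\bigl(\rho_0\bigr)^{-1}\vect f(\hat x,\cdot),$ substituting back into the macroscopic equation in \eqref{pomoc3}, and taking the $\rho_0$-weighted mean of the third component yields the reduction $(\mathcal{A}^{\rm hom}_{\infty}-\hat{\beta}_{\infty}(\lambda))\funcB = G,$ from which the decomposition of $\sigma(\hat{\mathcal{A}}_{\infty})$ in terms of $\hat\sigma(\mathcal{A}_{00,\infty})$ and the level sets of $\hat\beta_\infty$ follows \emph{verbatim} as for $\hat{\mathcal{A}}_{\delta}$ with $\delta\in(0,\infty).$

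For the forward direction (H1), points $\lambda\in\sigma(\hat{\mathcal{A}}_{\infty})$ are captured by strong resolvent convergence: Theorem \ref{rucak60} together with the standard implication that strong resolvent convergence transports the spectrum of the limit operator into the Hausdorff-limit of the spectra (see \cite{Zhikov2000}). For $\lambda\in\sigma(\mathring{\mathcal{A}}^+_{\rm strip}),$ an approximate eigenfunction of $h^{-2}\mathcal{A}_{\epsh}$ is constructed by localising an approximate eigenfunction $\vect v$ of $\mathring{\mathcal{A}}^+_{\rm strip}$ on a bounded cylinder $(0,R)\times Y_0,$ planting it inside one periodicity cell $\epsh(Y_0+z_0)\times(h/2-\epsh R, h/2)$ adjacent to the top face $x_3^h=h/2,$ cutting off by $\eta\in C_{\mathrm c}^\infty((0,R)\times Y_0),$ and extending by zero elsewhere. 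Since $h/\epsh\to\infty,$ such a cell fits inside $\Omega^h;$ the scaling choice $\mu_h=\epsh h$ together with $\tau=2$ matches, after the change of variables $x_3^h\mapsto h x_3$ and $\hat x^h\mapsto\hat x,$ the scaling of the strip form $\mathring a^+_{\rm strip}$ so that the normalised test function realises $\lambda$ as a limit spectral value. Reflection across $x_3^h=0$ handles $\sigma(\mathring{\mathcal{A}}^-_{\rm strip}).$

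For (H2), take $\lambda^{\epsh}\in\sigma(h^{-2}\mathcal{A}_{\epsh})$ with $\lambda^{\epsh}\to\lambda$ and a unit-$L^2$ eigenfunction $\vect u^{\epsh}.$ Proposition \ref{ds102}(1) yields $h^{-2}a_{\epsh}(\vect u^{\epsh},\vect u^{\epsh})=\lambda^{\epsh}\le C(\lambda+1),$ and hence the two-scale compactness of Proposition \ref{ds102}(2) produces limit profiles $(\vect\funcA,\funcB,\mathring{\vect u},\funcC).$ A dichotomy argument is then invoked: either the limit $(0,0,\funcB)^\top+\mathring{\vect u}$ is non-trivial, in which case passing to the limit in the eigenvalue equation identifies it as an eigenfunction of $\hat{\mathcal{A}}_{\infty}$ with eigenvalue $\lambda;$ or this limit vanishes, in which case the $L^2$-mass of $\vect u^{\epsh}$ must concentrate in a boundary layer of thickness $O(\epsh)$ near one of the faces $x_3^h=\pm h/2.$ In the latter case, selecting such a cell, translating and dilating to obtain functions on $\R^{\pm}_0\times Y_0,$ and extracting a further weak limit gives a non-trivial eigenfunction of $\mathring{\mathcal{A}}^{\pm}_{\rm strip}$ at eigenvalue $\lambda,$ exactly as in the proof of Theorem \ref{thmivan302}.

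The principal obstacle is the dichotomy in (H2): ruling out loss of mass to infinity in the rescaled strip variable, and correctly identifying the face to which the mass concentrates, requires a concentration-compactness style argument combined with the boundary-layer Poincar\'e estimates for the $H^1_{00}$-spaces on semi-infinite strips used to define $\mathring{\mathcal{A}}^{\pm}_{\rm strip}.$ A subtlety specific to the bending scaling is that the exterior corrector $\funcC$ in the decomposition of Proposition \ref{ds102}(2) involves the second gradient $\nabla_{\hat x}^2\funcB,$ so that the non-triviality of the two-scale limit has to be tested on the correct combination of $\funcB$ and $\mathring{\vect u},$ and consistency with the Bloch reduction underlying $\hat\beta_\infty$ must be verified; once these obstacles are cleared, the argument assembles along the same lines as Theorems \ref{rucak72} and \ref{thmivan302}.
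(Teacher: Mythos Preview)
Your overall architecture is right, and your treatment of the characterisation of $\sigma(\hat{\mathcal{A}}_\infty)$ as well as of $(H_1)$ for points of $\sigma(\hat{\mathcal{A}}_\infty)$ (via the strong resolvent convergence of Theorem~\ref{rucak60}) matches the paper. The substantive differences are in how you handle the strip spectra.

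\textbf{$(H_1)$ for the strip spectra.} You propose planting a truncated half-strip eigenfunction in a \emph{single} inclusion and extending by zero. The difficulty is that the resulting approximate eigenfunction must be tested against \emph{all} $\vect\xi\in H^1_{\Gamma_{\rm D}}(\Omega;\R^3)$, in particular against the extension part $\tilde{\vect\xi}$, which does not vanish on the soft inclusion. The interaction term $h^{-2}\int_{\text{incl.}}\C^{\mu_h}\sym\nabla_h\vect u^{\epsh}:\sym\nabla_h\tilde{\vect\xi}-\lambda\int\rho_0\vect u^{\epsh}\cdot\tilde{\vect\xi}$ is not obviously small. The paper avoids this by the indirect route Theorem~\ref{thmivan301} $+$ Lemma~\ref{lemmaivan100}\,(2): first $\sigma(\mathring{\mathcal A}^\pm_{\rm strip})\subset\lim_{h\to 0}\sigma(\mathring{\mathcal A}_{\epsh})$, then $\lim_{h\to 0}\sigma(\mathring{\mathcal A}_{\epsh})\subset\lim_{h\to 0}h^{-2}\sigma(\mathcal A_{\epsh})$. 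The second step uses the alternating-sign trick over \emph{many} cells (so that neighbouring contributions cancel in mean), followed by Poincar\'e, to force the residual against $\tilde{\vect\xi}$ to be $O(\epsh)$.

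\textbf{$(H_2)$.} Your dichotomy is where the real gap lies. If the weak two-scale limit $(0,0,\funcB)^\top+\mathring{\vect u}$ vanishes, it does \emph{not} follow that the $L^2$-mass concentrates in an $O(\epsh)$ boundary layer near $x_3=\pm 1/2$: the eigenfunction may instead oscillate in $x_3$ throughout the thickness (this is exactly the mechanism behind $\sigma(\mathring{\mathcal A}_{\rm strip})$ in Theorem~\ref{thmivan301}), and even in the boundary-layer scenario, extracting a nontrivial half-strip eigenfunction from a rescaled sequence requires a concentration--compactness analysis you have not supplied. The paper's argument bypasses all of this. Since by Theorem~\ref{thmivan301} one has $\sigma(\mathring{\mathcal A}^+_{\rm strip})\cup\sigma(\mathring{\mathcal A}^-_{\rm strip})=\lim_{h\to 0}\sigma(\mathring{\mathcal A}_{\epsh})$ and this set is already in the limit spectrum by $(H_1)$, it suffices to treat the case $\liminf_{h\to 0}\dist(\lambda^{\epsh},\sigma(\mathring{\mathcal A}_{\epsh}))>0$. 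Under this assumption the inclusion part $\mathring{\vect u}^{\epsh}$ satisfies an equation to which the resolvent bound of Lemma~\ref{nenad100} applies uniformly in $h$; the duality argument from the proof of Theorem~\ref{rucak72} then upgrades weak to strong two-scale convergence of $\vect u^{\epsh}$, and passing to the limit in the eigenvalue equation yields $\lambda\in\sigma(\hat{\mathcal A}_\infty)$. This replaces your concentration--compactness step entirely with a clean a~priori estimate.
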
 	
\begin{remark} 
As is shown in Lemma \ref{lemmaivan100}, the set $\lim_{h \to 0} \sigma(\mathring{\mathcal{A}}_{\epsh})$ (appropriately scaled) is always a subset of the limit spectrum. In the regime $\delta=\infty$, the operator has a scaling factor $\epsh/h$ in front of the derivative in $x_3.$ This allows eigenfunctions to oscillate in the out-of-plane direction (and thus weakly converge to zero). This is the reason for so-called ``spectral pollution'' (see, e.g., \cite{AllaireConca98}).	
	
\end{remark}	

\subsection{Limit evolution equations} \label{limeveqsec}

It is expected from the results of Section \ref{limreseq} concerning the resolvent convergence for the operators ${\mathcal A}_\epsh$ that the limit evolution equations have the form of a system that links the behaviour on the stiff matrix and the soft inclusions by means of coupled solution components, which can be viewed as macroscopic and microscopic variables. Representing the system in terms of the macroscopic component only leads to a non-trivial effective description exhibiting memory effects.  This is one of the reasons what makes high-contrast materials interesting in applications.

The present section aims at providing a detailed study of the consequences of the form of the limit resolvent equations obtained for different asymptotic regimes in Section \ref{limreseq} on the limit evolution equations in the corresponding regimes.  On the abstract level, this connection has been analysed in \cite{pastukhova}. A key fact used in that paper is that the resolvent is the Laplace transform of the exponential function of the operator of the wave equation, obtained from an equivalent 
 system of equations of first order in time. In what follows, we adjust our analysis to these general results, in order to account for the particular features of our setup due to dimension reduction in linear elasticity. As we see below, in this context different scalings of spectra imply different scalings of time  (i.e., bending waves propagate on a slower time scale than in-plane, ``membrane", waves). As far as we know, the effect of considering different time scalings has not been addressed in the literature; see \cite{Raoult} for the analysis of limit evolution of isotropic homogeneous plates for the commonly considered ``long" time scaling of order $h^{-1}$.   
 
 
It should also be noted that some load density scalings prevent us from using the results of \cite{pastukhova}, in which case separate analysis is necessary to show weak convergence of solutions (see, e.g., the proof of Theorem \ref{connn1}). This happens for the case $\tau=2$ (i.e., for long times of order $h^{-1}$) in the regimes $\delta \in (0,\infty),$ $\mu_h=\epsh$ and $\delta=0,$ $\mu_h={\epshtwo}$. For these, to prove weak convergence we use the Laplace transform directly, following the same overall strategy as the one adopted in \cite{pastukhova} in the abstract setting, see Apprendix. However, due to the said load density scalings, a modification of the results of \cite{pastukhova} is required, in order to account for the specific structure of the right-hand side of the limit problem; this is also discussed in Appendix, see in particular Theorems \ref{existenceV*}, \ref{thmivan553}.

The starting point of this section is the family of Cauchy problems $(h>0)$ 
\begin{equation}
\label{evolutionepsilonproblem}
\left\{ \begin{array}{ll}
\partial_{tt}\vect u^\epsh (t) + {h^{-\tau}}\mathcal{A}_\epsh \vect u^\epsh(t) = \vect f^\epsh(t),\\[0.55em]
\vect u^\epsh(0)= \vect u_0^\epsh, \quad \partial_{t}\vect u^\epsh(0) = \vect u_1^\epsh
,
\end{array} \right.
\end{equation}
understood in the weak sense.   The term $\vect f^\epsh(t)$ represents the load density at time $t>0.$ For each $h,$ we suppose that $\vect f^\epsh$ is provided on the time interval $[0,T_h],$ $T_h>0.$ 
The functions $\vect u_0^\epsh, \vect u_1^\epsh$ are the initial data for the displacement and velocity fields, respectively.  We make the following assumptions: 
\begin{equation*}
\vect u_0^\epsh \in \mathcal{D}(\mathcal{A}_\epsh^{1/2})=H^1_{\Gamma_{\rm D}}(\Omega;\R^3), \quad  \vect u_1^\epsh \in L^2(\Omega;\R^3),\quad  
\vect f^\epsh \in \RRR L^2(0,T;L^2(\Omega;\R^3)), \BBB 
\end{equation*}
In what follows, we shall analyse the ``critical" cases $\tau=2$ and $\tau=0$ for the time scaling. Conditions for well-posedness of the problem (\ref{evolutionepsilonproblem}) can be found in Appendix, see Section \ref{apsechyp}.

 In conclusion of this section, we reiterate that there are two ways to interpret the scaling $h^{-\tau}$ of the differential expression in (\ref{evolutionepsilonproblem}): by scaling the density of the material (with $h^{\tau}$) or by introducing the new time scale $\tilde{t}=t/h^{\tau/2}$. We adopt the latter interpretation throughout.  Multiplying \eqref{evolutionepsilonproblem} by $h^{\tau}$ and replacing $t$ by $\tilde{t},$  we obtain the family of problems $(h>0)$
\begin{equation} \label{evscaled1} 
\left\{ \begin{array}{ll}
\partial_{\tilde{t} \tilde{t}}\vect u^\epsh (\tilde{t}) + \mathcal{A}_\epsh \vect u^\epsh(\tilde{t}) =\tilde{\vect f}^\epsh(\tilde{t}),\\[0.55em]
\vect u^\epsh(0)= \vect u_0^\epsh, \quad \partial_{\tilde{t}}\vect u^\epsh(0) = \tilde{\vect u}_1^\epsh,
\end{array} \right. 
\end{equation} 
where $\tilde {\vect f}^\epsh(\tilde{t}):=h^{\tau} {\vect f}^{\epsh} (h^{\tau}\tilde{t})$, $\tilde{\vect u}_1^{\epsh}:=h^{\tau/2}\vect{u}_1^{\epsh}$. Thus discussing the solution of \eqref{evolutionepsilonproblem} on a time interval $[0,T]$ (with an appropriate scaling of the load density) is equivalent to discussing the solution of \eqref{evscaled1} on the time interval $[0, T/h^{\tau/2}]$ (with the corresponding scaling of the loads). While from now on we shall work in the framework of the equation \eqref{evolutionepsilonproblem}, which is convenient from the mathematical point of view, it is the equation \eqref{evscaled1} that represents the actual physical wave motion, which thereby takes place on an appropriate time scale of order $h^{-\tau/2}.$ 

\subsubsection{Long-time behaviour for the regime $\delta \in (0,\infty)$, $\mu_h=\epsh$, $\tau=2$  }
\label{section341}
The case analysed here resembles the standard (moderate-contrast) plate model. The following convergence statement holds for the evolution problem.  
\begin{theorem} 
	\label{connn1}
Suppose that $\delta \in (0,\infty)$, $\mu_h=\eps_h$, $\tau=2$.
Let $(\vect{u}^{\epsh})_{h>0}$ be a sequence of solutions 
to \eqref{evolutionepsilonproblem} and assume that 
\begin{align}
\left(h\partial_{t}\vect{f}^{\epsh}_{\alpha}\right)_{h>0} &\subset L^2([0,T];L^2(\Omega\times Y)) \textrm{\ \ bounded},\ \ \alpha=1,2, \label{loadstimebdd1} \\ 
\pi_{h}\vect  f^\epsh & \drtwoscalet  \vect f  \in L^2([0,T];L^2(\Omega \times Y;\R^3)),\label{ivan970}\\ 
\vect u_0^\epsh & \drtwoscale \vect u_0(\hat{x}) \in \{0\}^2 \times H^2_{\gamma_{\rm D}} (\omega), \label{loadsinitialpos1}\\
\vect u_1^\epsh & \drtwoscale \vect u_1(x,y) \in L^2(\Omega\times Y;\R^3),\label{loadsinitialvel1} 
\end{align}
 and assume additionally that 
 \[
 \limsup_{h \to 0}\left(h^{-2}a_{\epsh}(\vect u_0^{\epsh},\vect u_0^{\epsh})+\|\vect u^{\epsh}_0\|^2_{L^2} \right)<\infty.
 \] 
 Then one has 
\begin{align} 
\pi_{1/h}{\vect u}^\epsh & \drtwoscalet\left(\begin{array}{c}
	\funcA_1(t,\hat{x}) - x_3 \partial_1 \funcB(t,\hat{x})+\mathring{u}_1 (t,x,y) \\[0.3em]
	\funcA_2(t,\hat{x}) - x_3 \partial_2 \funcB(t,\hat{x})+\mathring{u}_2(t,x,y) \\[0.3em]
	\funcB(t,\hat{x})
\end{array}\right), \label{addu1}\\[0.3em] 
\partial_t {\vect u}^{\epsh}  & \drtwoscalet\bigl(0,0,
\partial_t \funcB(t,\hat{x})\bigr)^\top,\label{addu2}
\end{align}   
where $\vect{\funcA} \in C([0,T];H^1_{\gamma_{\rm D}}(\omega;\R^2))$, $\funcB \in C([0,T];H^2_{\gamma_{\rm D}}(\omega))$, $\mathring{\vect{u}}\in C([0,T];L^2(\omega;H^1_{00}(I \times Y_0;\R^3)))$ are determined uniquely by solving the problem 
\begin{align}
 	\label{limeq11}\partial_{tt}  \funcB(t)+\mathcal{A}^{\funcB, {\rm hom}}_{\delta} \funcB (t)&=\mathcal{F}_{\delta}\bigl({\vect f}(t) \bigr), \RRR \quad \textrm{(see \eqref{anketa1})} \BBB\\[0.3em]
 	\nonumber
 	\funcB(0)= u_{0,3} \in H^2_{\gamma_{\rm D}}(\omega),&\qquad \partial_t \funcB(0)= S_1 \RRR (P_{\delta,\infty} {\vect u}_{1})_3 \BBB \in L^2(\omega), \\[0.2em]
\label{limeq12}
 \vect{\funcA}(t)&= \vect{\funcA}^{\funcB(t)}+\vect{\funcA}^{\vect f_{\!*}  (t)},\quad  \RRR \textrm{(see \eqref{nadoknada11})} \BBB \\[0.3em]
\label{limeq13}
\mathcal{A}_{00,\delta} \mathring{\vect u}(t,\hat{x}, \cdot)&=(\vect{f}_* (t,\hat{x},\cdot),0)^\top,  
\end{align}	
so that $\partial_t \funcB \in C([0,T];L^2(\omega))$. 
One also has $$\limsup_{h \to 0} \int_0^T\left(h^{-2} a_{\epsh}(\vect{u}^{\epsh}(t),\vect{u}^{\epsh}(t))+\|\vect{u}^{\epsh}(t)\|^2_{L^2}\right)\, dt< \infty.$$  

If one assumes strong two-scale convergence of load densities 
	\begin{equation}
			\label{ivan1011} 
\begin{aligned}
\pi_{h}\vect{f}^{\epsh}&\strongdrtwoscalet (0,0,\vect{f})^\top \in L^2\left([0,T];L^2(\omega;\R^3)\right),\\[0.3em]  
h\partial_t \vect f_{\alpha}^{\epsh}&\longrightarrow
 0\ \ \textrm{strongly in}\ L^2\left([0,T];L^2(\Omega)\right),\ \   \alpha=1,2,
\end{aligned} 
\end{equation}
strong two-scale convergence of the initial data in 
\eqref{loadsinitialpos1}, \eqref{loadsinitialvel1}, where $({\vect u}_1)_*=0$, ${u}_{1,3}\in L^2(\omega),$ and the condition $$\lim_{h \to 0} \left(h^{-2}a_{\epsh}(\vect u_0^{\epsh},\vect u_0^{\epsh})+\|\vect u_0^{\epsh} \|^2\right)=a_{\delta}^{\funcB}\bigl(\funcB(0),\funcB(0)\bigr)+\|\funcB(0)\|^2_{L^2},$$ then one has 
\begin{equation} 
\label{ivan1012} 
\pi_{1/h} \vect u^{\epsh} \strongdrtwoscalet (\funcA_1^{\funcB}-x_3 \partial_1 \funcB,\funcA_2^{\funcB}-x_3 \partial_2 \funcB, \funcB)^\top, \qquad \partial_t \vect u^{\epsh} \strongdrtwoscalet (0,0, \partial_t\funcB)^\top. 
\end{equation} 
Moreover, the following convergence of energy sequences holds for all $t\in[0, T]:$
$$
\lim_{h \to 0}\left(h^{-2}a_{\epsh} (\vect u^{\epsh}(t),\vect u^{\epsh}(t))+\|\vect  u^{\epsh}(t)\|^2_{L^2}\right)=a_{\delta}^{\funcB}\bigl(\funcB(t),\funcB(t)\bigr)+\|\funcB(t)\|^2_{L^2}. 
$$
\end{theorem}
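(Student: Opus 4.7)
The plan is to follow the Laplace-transform based abstract scheme of \cite{pastukhova}, adapted to dimension reduction via the modifications laid out in Appendix (cf.\ Theorems \ref{existenceV*}, \ref{thmivan553}), combining it with the resolvent convergence of Theorem \ref{thmivan111} and the compactness of Proposition \ref{propdinsi1}. The non-standard feature is that the scaling $\pi_h\vect f^\epsh$ blows up the in-plane components of the load at the rate $h^{-1}$, so the usual energy identity has to be rewritten in a form that brings in $\partial_t\vect f^\epsh_\alpha$, which is the precise reason for hypothesis \eqref{loadstimebdd1}.

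First I would establish the a priori bound
\[
\sup_{t\in[0,T]}\Bigl(h^{-2}a_\epsh(\vect u^\epsh(t),\vect u^\epsh(t))+\|\partial_t\vect u^\epsh(t)\|_{L^2}^2+\|\vect u^\epsh(t)\|_{L^2}^2\Bigr)\leq C.
\]
Testing \eqref{evolutionepsilonproblem} against $\partial_t\vect u^\epsh$ and integrating in time produces $\int_0^t(\vect f^\epsh,\partial_t\vect u^\epsh)$, whose $\vect f^\epsh_3$ part is controlled directly via Cauchy--Schwarz and Gr\"onwall from \eqref{ivan970}, while its in-plane part is handled by time integration by parts: $\int_0^t(\vect f_*^\epsh,\partial_t\vect u_*^\epsh)=(\vect f_*^\epsh(t),\vect u_*^\epsh(t))-(\vect f_*^\epsh(0),\vect u_*^\epsh(0))-\int_0^t(\partial_t\vect f_*^\epsh,\vect u_*^\epsh)$. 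Because $\vect u_*^\epsh$ is of order $h$ (Proposition \ref{propdinsi1}(1) applied to the resolvent with right-hand side $\partial_{tt}\vect u^\epsh-\vect f^\epsh$), the pairing $(h\partial_t\vect f_*^\epsh,h^{-1}\vect u_*^\epsh)$ is bounded by \eqref{loadstimebdd1}, and the boundary terms are controlled using \eqref{loadsinitialpos1}, \eqref{loadsinitialvel1} together with the assumed boundedness of the initial energy. Closing the Gr\"onwall inequality then gives the estimate above.

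Next, Proposition \ref{propdinsi1}(2), applied pointwise in $t$ (or, equivalently, after extracting in $L^2(0,T)$ by a diagonal/Bochner argument), supplies limits $(\vect\funcA(t),\funcB(t),\mathring{\vect u}(t),\funcC(t))$ with the convergences in \eqref{nakcomp1}. The bound on $\|\partial_t\vect u^\epsh\|_{L^2}$ combined with the fact that $\pi_{1/h}\vect u^\epsh$ is $L^2$-bounded yields \eqref{addu1}--\eqref{addu2}: only the transverse component survives in the time derivative since the in-plane components are of size $h$. To identify the limit equations I would take the Laplace transform of \eqref{evolutionepsilonproblem} in $t$, obtaining for each $\mathrm{Re}\,\lambda>0$ the resolvent-type equation
\begin{equation*}
\bigl(h^{-2}\mathcal{A}_\epsh+\lambda^2\mathcal{I}\bigr)\widehat{\vect u}^\epsh(\lambda)=\widehat{\vect f}^\epsh(\lambda)+\lambda\vect u_0^\epsh+\vect u_1^\epsh,
\end{equation*}
and apply Theorem \ref{thmivan111} to pass to the limit. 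Matching the two-scale Laplace transforms (and using the injectivity of the Laplace transform in the $L^2_t$ class together with the Hilbert space structure, as in \cite{pastukhova}) produces exactly the system \eqref{limeq11}--\eqref{limeq13}: the out-of-plane variable inherits the second-order-in-time structure (giving the wave equation with operator $\mathcal{A}^{\funcB,\mathrm{hom}}_\delta$), whereas the in-plane macroscopic field $\vect\funcA$ and the microscopic corrector $\mathring{\vect u}$ lose their inertia in the limit because of the $\pi_h$-scaling of $\vect f^\epsh_*$, and thus satisfy the quasi-static problems \eqref{limeq12}, \eqref{limeq13}. Uniqueness of the limit system (standard for hyperbolic equation plus two elliptic ones) then promotes subsequential convergence to convergence of the full family.

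For the strong convergence under the additional hypotheses \eqref{ivan1011} and the energy convergence of initial data, the strategy is the classical convergence-of-energy argument. I would pass to the limit in the energy identity for \eqref{evolutionepsilonproblem}, again using time integration by parts for the in-plane load term (now with $h\partial_t\vect f_\alpha^\epsh\to 0$ strongly by \eqref{ivan1011}), to obtain $\lim_{h\to 0}\bigl(h^{-2}a_\epsh(\vect u^\epsh(t),\vect u^\epsh(t))+\|\vect u^\epsh(t)\|^2_{L^2}\bigr)=a_\delta^\funcB(\funcB(t),\funcB(t))+\|\funcB(t)\|^2_{L^2}$ for every $t\in[0,T]$, and then invoke Proposition \ref{propdinsi1}(3) (and the analogous strong-convergence statement for the time derivative, derived from strong convergence of $\vect u_1^\epsh$ and of the load) to upgrade weak two-scale convergence to strong two-scale convergence, yielding \eqref{ivan1012}. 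The main obstacle in this whole program is the mismatch between the standard Pastukhova framework, which requires the right-hand side to be bounded in the base Hilbert space, and our loads $\vect f^\epsh_*$ that are only bounded after the $\pi_h$-scaling: the time-regularity hypothesis \eqref{loadstimebdd1} and the modified abstract statements in Appendix are precisely what is needed to circumvent this.
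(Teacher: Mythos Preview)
Your proposal is correct and shares the paper's core mechanism: an a priori bound followed by the Laplace transform, with the limit identified via Theorem~\ref{thmivan111}. The packaging differs in two places. For the a priori estimate the paper does not run a direct energy identity; instead it splits $\vect f^\epsh$ into vertical and horizontal parts, applies the abstract semigroup bounds \eqref{ivan55} (for $\vect f_v^\epsh$) and \eqref{ivan992} (for $\vect f_h^\epsh$, where the time integration by parts is already built into the representation \eqref{exponentialsemigroupsolution}), and then invokes the key dual estimate \eqref{ivan967}, namely $\|(h^{-2}\mathcal{A}_\epsh+\mathcal{I})^{-1/2}\vect l\|\le C\|\pi_h\vect l\|$, which is precisely the thin-domain Korn inequality read in dual form. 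Your direct energy argument with time integration by parts is the concrete unpackaged version of the same idea; one small correction is that the bound $\|\vect u_*^\epsh\|=O(h)$ that you feed into the Gr\"onwall loop should be justified by Corollary~\ref{kornthincor} (thin-domain Korn) rather than by Proposition~\ref{propdinsi1}(1), which is a resolvent statement and would be circular at that point.

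For the strong-convergence part the paper again splits by load component and relies on the abstract Theorems~\ref{thmivan552} (vertical load, initial data) and~\ref{thmivan553}, second part (horizontal load, zero initial data), verifying \eqref{ivan561} via \eqref{ivan967}. Your direct energy-convergence route leading to Proposition~\ref{propdinsi1}(3) is a legitimate alternative, but note that the physical energy carries $\|\partial_t\vect u^\epsh\|^2$ rather than $\|\vect u^\epsh\|^2$, so you must first separate the kinetic and potential contributions (using lower semicontinuity for each and the already-established strong convergence of $\vect u_1^\epsh$) before Proposition~\ref{propdinsi1}(3) can be invoked.
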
	
\begin{corollary} \label{kordinsi1} 
Suppose that for each $h>0,$ a surface load density 
$\mathcal{G}^{\epsh}\in L^2([0,T]; (H^1_{\Gamma_{\rm D}}(\Omega;\R^3)^*)$ is added to the right-hand side of \eqref{evolutionepsilonproblem}. We assume that $\mathcal{G}^{\epsh}$ is generated by an $L^2$-function $\vect g^{\epsh}$ (representing the ``true" surface load)
so that
$$\mathcal{G}^{\epsh}(\vect g^{\epsh})(\vect \theta)= \int_{\omega \times\{-1/2,1/2\}} \vect g^{\epsh} \vect \theta\, d\hat{x},\qquad \vect \theta \in H^1_{\Gamma_{\rm D}}(\Omega;\R^3),$$ where an obvious shorthand for a sum of two integrals over $\omega$ is used, 
and make the following additional assumptions on $\vect g^{\epsh}:$ 
\begin{align*}
&\pi_h \partial_{t}\vect{g}^{\epsh}_{\alpha} \subset  L^2\Bigl([0,T]; L^2\bigl(\omega\times \bigl\{-1/2, 1/2\bigr\}\times Y;\R^3\bigr)\Bigr) \textrm{ is bounded },   \\
&\pi_{h}\vect  g^\epsh  \drtwoscalet  \vect g  \in L^2\Bigl([0,T];L^2\bigl(\omega\times\bigl\{-1/2, 1/2\bigr\} \times Y;\R^3\bigr)\Bigr).
 \end{align*} 
Then 
a variant of Theorem \ref{connn1} holds, where in the limit equations \eqref{limeq11} the right-hand side has an additional term
$\mathcal{G}^{1}_{\delta}(\vect g) \in L^2([0,T]; (H^2_{\gamma_{\rm D}}(\omega))^*)$,  represented by a limiting surface load $L^2$ vector function $\vect g=(g_1, g_2, g_3)$ so that 
\begin{align*}
 \mathcal{G}_{\delta}^1(\vect g)(t) (\theta)&=\int_{\omega} \left(\bigl\langle g_3(t,\hat{x},-1/2,\cdot)+g_3(t,\hat{x},1/2, \cdot)\bigr\rangle\right) \theta(\hat{x}) \,d\hat{x} \\[0.45em]
 &+
   \frac{1}{2}\int\limits_\omega \left(\bigl\langle g_1(t,\hat{x},-1/2,\cdot)-g_1(t,\hat{x},1/2,\cdot)\bigr\rangle \right)  \partial_1 \theta(\hat{x}) \,d\hat{x}\\[0.35em]
   &+\frac{1}{2}\int\limits_\omega  \left(\bigl\langle g_2(t,\hat{x},-1/2,\cdot)-g_2(t,\hat{x},1/2,\cdot) \bigr\rangle\right) \partial_2 \theta(\hat{x})\,d\hat{x}\\[0.3em]
   &+\int\limits_{\omega}\C^{\rm hom}_{\delta} \bigl(\simgrad_{\hat{x}}\vect{\funcA}^{\vect g_{*}(t)},0 \bigr):\bigl(0,\nabla_{\hat{x}}^2 \theta(\hat{x})\bigr) \,d\hat{x}, \qquad \theta \in  \bigl(H^2_{\gamma_{\rm D}}(\omega)\bigr)^*.
\end{align*}
In the above formula, for every $t \in [0,T],$ the function $\vect \funcA^{\vect g_*(t)}\in H^1_{\gamma_{\rm D}}(\omega; {\mathbb R}^2)$ is the solution to the problem  
\[
\int\limits_{\omega}\C^{\rm memb}_{\delta}\simgrad_{\hat{x}}\vect \funcA^{\vect g_*(t)}(\hat{x}): \simgrad_{\hat{x}}\vect \theta_* (\hat{x}) \,d\hat{x} = \mathcal{G}_{\delta}^2(\vect g_*)(t)(\vect \theta_* )\qquad \forall \vect \theta_*  \in H^1_{\gamma_{\rm D}}(\omega;\R^2),
\]
where the functional $\mathcal{G}_{\delta}^2(\vect g_*)(t)$ is defined by the formula 
\begin{align*}
\mathcal{G}_{\delta}^2(\vect g_*)(t)(\vect \theta_* )&=
\int\limits_\omega \left(\bigl\langle g_1(t,\hat{x},-1/2, \cdot)+g_1(t,\hat{x},1/2, \cdot)\bigr\rangle \right)  \vect \theta_1 \,d\hat{x}\\[0.3em]
&+\int\limits_\omega  \left(\bigl\langle g_2(t,\hat{x},-1/2, \cdot)+ g_2(t,\hat{x},1/2, \cdot)\bigr\rangle\right) \vect \theta_2 \,d\hat{x},\qquad \vect \theta_*  \in H^1_{\gamma_{\rm D}}(\omega;\R^2).
\end{align*}
Also, the right-hand side of \eqref{limeq12} contains
$\vect\funcA^{\vect g_*(t)}\in L^2([0,T]; H^1_{\gamma_{\rm D}}(\omega,\R^2))$ as an additional term, 
while on the right-hand side of \eqref{limeq13} one additionally has $\mathcal{G}^3 (\vect g_* )\in L^2([0,T];(H^1_{00}(I \times Y_0;\R^3)^*)$ defined by
$$ \mathcal{G}^3 (\vect g_* ) (t,\hat{x})(\vect \xi)=\int_{\{-1/2, 1/2\} \times Y_0} \vect g_* (t,\hat{x},\cdot)\cdot \vect \xi_*(\cdot) ,\qquad\vect \xi \in H^1_{00}(I \times Y_0;\R^3),\qquad  t\in [0,T],\quad \hat{x} \in \omega.    
$$

\end{corollary}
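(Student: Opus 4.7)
The plan is to retrace the argument of Theorem~\ref{connn1} while carrying the surface-load functional $\mathcal{G}^\epsh(\vect g^\epsh)\in L^2([0,T];(H^1_{\Gamma_{\rm D}}(\Omega;\R^3))^*)$ as an additional contribution to the right-hand side at every step: the a~priori energy bound, the Laplace-transformed resolvent identity used in the proof of Theorem~\ref{connn1}, and the identification of the three coupled limit equations \eqref{limeq11}--\eqref{limeq13}. First I would establish the uniform energy bound: testing the perturbed evolution problem with $\partial_t \vect u^\epsh$ and integrating by parts in $t$ converts the new term $\int_0^T\mathcal{G}^\epsh(\vect g^\epsh(t))(\partial_t \vect u^\epsh(t))\,dt$ into boundary values at $t=0,T$ plus $-\int_0^T\int_{\omega\times\{\pm 1/2\}}\partial_t \vect g^\epsh\cdot\vect u^\epsh$. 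Since $\pi_{1/h}\vect u^\epsh$ is bounded, the in-plane boundary traces of $\vect u^\epsh$ carry a factor $h$, exactly compensating the factor $h$ in the assumed $L^2([0,T];L^2)$ bound on $\pi_h\partial_t\vect g^\epsh_\alpha$, while no such factor is imposed on $\partial_t g_3^\epsh$, mirroring the scalings in \eqref{loadstimebdd1}--\eqref{ivan970}.

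The central calculation is the passage to the limit of $\mathcal{G}^\epsh(\vect g^\epsh)(\vect v^\epsh)$ for the recovery sequences produced by Proposition~\ref{propdinsi1}, which can be refined so that $\pi_{1/h}\tilde{\vect v}^\epsh$ and $h^{-1}\mathring{\vect v}^\epsh$ converge in the dr-two-scale sense on the faces $\omega\times\{\pm 1/2\}$ and not only on $\Omega$. Using $\pi_h\vect g^\epsh\drtwoscalet \vect g$ together with these refined boundary traces, the in-plane contributions read
\begin{equation*}
\sum_{\pm}\int_\omega \bigl\langle h g^\epsh_\alpha(\pm 1/2,\cdot)\bigr\rangle \bigl(\theta_\alpha\mp \tfrac12\partial_\alpha\theta_3+\mathring{\xi}_\alpha(\pm 1/2,\cdot)\bigr)\,d\hat x,
\end{equation*}
whose limit produces precisely the $\theta_\alpha$ piece of $\mathcal{G}^2_\delta(\vect g_*)$, the $\partial_\alpha\theta_3$ piece of $\mathcal{G}^1_\delta(\vect g)$, and the $\mathring{\xi}_\alpha$ piece of $\mathcal{G}^3(\vect g_*)$. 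The transverse contribution $\int g^\epsh_3\tilde v^\epsh_3(\pm 1/2)$ yields the $\int_\omega\langle g_3(1/2,\cdot)+g_3(-1/2,\cdot)\rangle\theta\,d\hat x$ term of $\mathcal{G}^1_\delta$, whereas $\int g^\epsh_3\mathring{\xi}^\epsh_3(\pm 1/2)\to 0$ since $\mathring{\vect \xi}^\epsh\sim h\mathring{\vect\xi}$, accounting for the absence of $g_3$ from $\mathcal{G}^3$.

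The coupling term $\int_\omega\C^{\rm hom}_\delta(\simgrad_{\hat x}\vect\funcA^{\vect g_*(t)},0):(0,\nabla^2_{\hat x}\theta)\,d\hat x$ appearing in $\mathcal{G}^1_\delta$ then arises exactly as in the derivation of \eqref{nadoknada11}: by linearity of the in-plane Lax--Milgram problem the membrane corrector decomposes as $\vect\funcA=\vect\funcA^{\funcB}+\vect\funcA^{\vect f_*}+\vect\funcA^{\vect g_*(t)}$, and substituting into the coupled bending identity yields this extra functional on $(H^2_{\gamma_{\rm D}}(\omega))^*$; likewise $\vect\funcA^{\vect g_*(t)}$ enters the limit membrane identity \eqref{limeq12} as the additional load-induced term. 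The hard part will be the boundary-trace refinement of the recovery sequences from Proposition~\ref{propdinsi1}: the two-scale correctors are defined intrinsically in $\Omega$, and one must verify that their traces on the horizontal faces $\omega\times\{\pm 1/2\}$ are well-defined and dr-two-scale convergent. I expect this to be handled either by a density argument using smooth compactly supported correctors or via a surface two-scale compactness argument in the spirit of those of Appendix; once it is in place, the remaining passage to the limit mirrors Theorem~\ref{connn1}, and the strong convergences under strengthened hypotheses on $\vect g^\epsh$ follow from the corresponding energy identifications.
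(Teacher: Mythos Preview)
Your proposal is correct in outline and would lead to the same conclusion, but it takes a more hands-on route than the paper and you have mislocated the main technical difficulty. The paper does not pass to the limit in $\mathcal{G}^{\epsh}(\vect g^{\epsh})(\vect v^{\epsh})$ for recovery sequences of Proposition~\ref{propdinsi1}, nor does it need any ``boundary-trace refinement'' of those sequences. Instead, it invokes the abstract $V^*$-load machinery of Theorem~\ref{thmivan553}: the a~priori bound comes directly from the operator estimate \eqref{ivan992}, and the required control of $\|(h^{-2}\mathcal{A}_{\epsh}+\mathcal{I})^{-1/2}\mathcal{G}^{\epsh}(\vect g^{\epsh})\|_{H_{\epsh}}$ is obtained from the trace inequality of Remark~\ref{remivan102} combined with Corollary~\ref{kornthincor}, yielding $\|(h^{-2}\mathcal{A}_{\epsh}+\mathcal{I})^{-1/2}\vect l^{\epsh}\|_{H_{\epsh}}\le C\|\pi_h\vect l^{\epsh}\|_{L^2(\omega\times\{\pm 1/2\})}$. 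The identification of the limit is then reduced, via the Laplace transform as in Theorem~\ref{connn1}, to the \emph{resolvent} limit with surface loads, which is already recorded in Remark~\ref{remivan13}; there the test functions are the explicit smooth ones from the proof of Theorem~\ref{thmivan111}, so no subtle trace convergence for two-scale correctors is needed. Your direct approach---energy estimate by testing with $\partial_t\vect u^{\epsh}$ and explicit boundary calculation---is a legitimate alternative and makes the origin of each term in $\mathcal{G}^1_\delta,\mathcal{G}^2_\delta,\mathcal{G}^3$ transparent, but the trace issue you flag as ``hard'' dissolves once one realises that only smooth test functions are required at the resolvent level.
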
	
\begin{remark} 
For each of the other regimes studied, a statement analogous to Corollary \ref{kordinsi1} is valid.
\end{remark}	
\begin{remark} 
The statement of Theorem \ref{connn1} can be strengthened as follows. The boundedness and convergence conditions (\ref{loadstimebdd1}) and (\ref{ivan970}) can be replaced by the requirement of boundedness and convergence, respectively, of the sequences $(\pi_h \vect f^{\epsh})_{h>0}$ and $(h\partial_t \vect f^{\epsh}_{\alpha} )_{h>0}$ in the corresponding spaces of $L^1$ functions on $[0, T].$ Under this weaker assumption, a still stronger version of (\ref{addu1}), (\ref{addu2}) holds, where the weak convergence in $L^2$ spaces on $[0, T]$ is replaced by a weak* convergence in the corresponding $L^{\infty}$ spaces on $[0, T],$ see the comment following Definition \ref{two_scale_def}. 

Similarly, the $L^2$ convergences \eqref{ivan1011} can be replaced by the weaker conditions 
\begin{equation*}
\begin{aligned}
\pi_h \vect f^{\epsh}&\xrightarrow{t,1,{\rm dr}-2} (0,0,\vect f)^\top \in L^1\left([0,T];L^2(\omega;\R^3)\right)\\[0.3em]  
h\partial_t \vect f_{\alpha}^{\epsh}&\longrightarrow
0\ \ \textrm{strongly in}\ L^1\left([0,T];L^2(\Omega)\right),\ \   \alpha=1,2,
\end{aligned}
\end{equation*}
to obtain a strong two-scale convergence $\xrightarrow{t,\infty,{\rm dr}-2}$ for both sequences in \eqref{ivan1012}; see the same comment at the end of Section \ref{twoscale_conv} for the definition of $\xrightarrow{t,\infty,{\rm dr}-2}$. 

These stronger versions of the claims in Theorem \ref{connn1} 
follow immediately from a priori estimates, see also Remark \ref{ivan1013}, Remark \ref{ivan71}, however we choose to remain in the $L^2$ setting.

A version of the discussion within this remark applies also to Theorem \ref{connn11}, Theorem \ref{thmivan71}, and Theorem \ref{thmivan72}. 
\end{remark} 		
\begin{remark} 	
	\label{remev-1} 
The limit equations (\ref{limeq11})--(\ref{limeq13}) are obtained on a long time scale. The stiff component behaves like a perforated domain, and there is no coupling between its deformation and the deformation of the inclusions. The deformation of the inclusions and the even part of the in-plane deformation of the stiff component behave quasi-statically (i.e., without an inertia term), as a consequence of small forces slowly varying in time. (Recall that the physical equation is \eqref{evscaled1} with the right-hand side $\tilde{\vect f}^\epsh$ subject to an appropriate version of the condition \eqref{loadstimebdd1}.) Since there is no coupling in the limit between the inclusions and the stiff component, there are no memory effects in the time evolution. However, it is expected that high-contrast effects would be seen in higher-order terms (``correctors") of the deformation, which we do not pursue here.

Without making additional symmetry assumptions about the material properties, the limit operator for the evolution of the out-of-plane component is spatially non-local, due the coupling between the in-plane and out-of-plane components.

\end{remark} 	
\begin{remark}
We are not able to obtain pointwise in time convergence without additional assumptions on the load density. This is expected  (replacing weak two-scale convergence with strong two-scale convergence) also as \RRR a consequence \BBB of the analysis presented in \cite{pastukhova}. 	
\end{remark} 

\begin{remark} 
	The  influence of in-plane forces on the limit model is seen through their mean value across the plate, represented above by an integral over the interval $I=[-1/2, 1/2]$, as well as through the mean value of their moments over the same  interval $I$. In the case of planar symmetries, see Assumption 	\ref{assumivan1}\,(1), moments of in-plane forces have the same effect on the limit deformation as out-of-plane forces, i.e., they produce out-of-plane displacements. This is expected from the physical point of view and is standard for plate theories (see, e.g., \cite{ciarlet2}). 
\end{remark}

\begin{remark}
	\label{scalingrem}
Considering whether different components of the load density should be scaled differently is important from the modelling perspective. Indeed, if its in-plane and out-of-plane components had the same magnitude, one would not see the effects of the in-plane components in the (leading order of the) deformation. On the other hand, it is expected that sufficiently large in-plane loads do influence the limit deformation. 
However, for some of the asymptotic regimes analysed here the effects on the in-plane and out-of-plane loads on the limit deformation are similar, in which case these loads are set to have the same magnitude in the equations. This kind of situation also occurs in the context of linear elastic shells, see \cite{ciarlet1} for shells as compared to the case of linear elastic 
plates \cite{ciarlet2}.
\end{remark}

\subsubsection{Real-time behaviour for $\mu_h=\epsh$, $\tau=0$ in different regimes}
Here we discuss a class of evolution problems with ``non-standard" effective behaviour, which manifests itself, in particular, through time non-locality.   
\label{realtimeevol}
\begin{theorem} \label{connn11}
	Suppose that $\mu_h=\eps_h$, $\tau=0$, $\delta,\kappa \in [0,\infty],$ and
consider the sequence $(\vect{u}^{\epsh})_{h>0}$ of solutions to the problem \eqref{evolutionepsilonproblem}, assuming that 
	\begin{align}
\vect  f^\epsh & \drtwoscalet  \vect f  \in L^2([0,T];L^2(\Omega \times Y;\R^3)) \label{loadstimef21},\\ 
\nonumber
	\vect u_0^\epsh & \drtwoscale  \vect u_0(x,y) \in V_{1,\delta,\kappa} (\omega \times Y)+V_{2,\delta} (\Omega \times Y_0), \\ 
	\label{loadstimef23}
	\vect u_1^\epsh & \drtwoscale \vect u_1(x,y) \in L^2(\Omega \times Y;\R^3).
\end{align}
	Assume also that 
	\[
	\limsup_{h \to 0}\left(a_{\epsh}(\vect u_0^{\epsh},\vect u_0^{\epsh} )+\|\vect u_0^{\epsh}\|_{L^2}\right)<\infty. 
	\]
	Then one has 
\begin{align}
	\label{convergencefinal21}
	\vect u^\epsh & \drtwoscalet (\vect \funcA,\funcB)^\top+\mathring{\vect u}, \\ 
	\label{convergencefinal22}
\partial_t \vect u^\epsh & \drtwoscalet\partial_t\bigl( (\vect \funcA,\funcB)^\top+\mathring{\vect u} \bigr), 
\end{align}
	where $(\vect{\funcA},\funcB)^\top +\mathring{\vect u} \in C([0,T];V_{1,\delta,\kappa}(\omega \times Y)+V_{2,\delta}(\Omega \times Y_0))$
	   is the unique weak solution of the problem
	\begin{align*}
	&\partial_{tt} \bigl((\vect \funcA,\funcB)^\top+\mathring{\vect u} \bigr) (t)+\mathcal{A}_{\delta,\kappa} \bigl((\vect \funcA,\funcB)^\top+\mathring{\vect u} \bigr) (t) = P_{\delta,\kappa} \vect f (t), \\[0.4em]
 &\bigl((\vect \funcA,\funcB)^\top+\mathring{\vect u}\bigr)(0)=\vect u_0(x,y),\qquad \partial_t \bigl((\vect \funcA,\funcB)^\top+\mathring{\vect u}\bigr)(0)=P_{\delta,\kappa}\vect u_1(x,y),
	\end{align*}	
such that $\partial_t \left((\vect \funcA,\funcB)^\top+\mathring{\vect u}\right) \in C([0,T];H_{\delta,\kappa}(\Omega \times Y))$. 
	Furthermore, the following limit energy bound holds:  
	$$\limsup_{h \to 0} \int_0^T \left(a_{\epsh}\bigl(\vect{u}^{\epsh}(t),\vect{u}^{\epsh}(t)\bigr)+\|\vect{u}^{\epsh}(t)\|^2_{L^2}\right)< \infty. $$  
	
	If strong two-scale convergence holds in \eqref{loadstimef21}--\eqref{loadstimef23} with $\vect f \in L^2([0,T];H_{\delta,\kappa}(\Omega \times Y))$, $\vect u_1 \in H_{\delta,\kappa}(\Omega \times Y),$ and 
	$$
	\lim_{h \to 0}\left(a_{\epsh} \bigl(\vect u_0^{\epsh}, \vect u_0^{\epsh}\bigr)+\|\vect u_0^{\epsh}\|^2_{L^2}\right) = a_{\delta,\kappa} \bigl(((\vect \funcA,0)^\top+\mathring{\vect u})(0), ((\vect \funcA,0)^\top+\mathring{\vect u})(0)\bigr)+\bigl\|\bigl((\vect \funcA,\funcB)^\top+\mathring{\vect u}\bigr)(0)\bigr\|^2_{L^2},
	$$ then strong two-scale convergence holds in \eqref{convergencefinal21}--\eqref{convergencefinal22}. Moreover, one has 
	$$\lim_{h \to 0}  \Bigl(a_{\epsh}\bigl(\vect u^{\epsh}(t), \vect u^{\epsh}(t)\bigr)+\|\vect u^{\epsh}(t)\|_{L^2}\Bigr) = a_{\delta,\kappa} \bigl(((\vect \funcA,0)^\top+\mathring{\vect u})(t), ((\vect \funcA,0)^\top+\mathring{\vect u})(t)\bigr)+\bigl\|\bigl((\vect \funcA,\funcB)^\top+\mathring{\vect u}\bigr)(t)\bigr\|^2_{L^2},
	$$
	for every $t \in[0,T]$.
\end{theorem}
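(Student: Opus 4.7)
The plan is to combine the standard a priori energy estimates for hyperbolic problems with the resolvent convergence results of Theorem \ref{thmivan11111}, Theorem \ref{rucak35} and Theorem \ref{rucak50} (depending on the value of $\delta$), and then feed these into the abstract Laplace-transform machinery of \cite{pastukhova} as adapted in the Appendix. First, testing \eqref{evolutionepsilonproblem} with $\partial_t \vect u^\epsh$ yields the standard energy identity
\begin{equation*}
\tfrac{1}{2}\tfrac{d}{dt}\bigl(\|\partial_t \vect u^\epsh(t)\|_{L^2}^2 + a_\epsh(\vect u^\epsh(t),\vect u^\epsh(t))\bigr) = \int_\Omega \vect f^\epsh(t)\cdot\partial_t\vect u^\epsh(t)\,dx,
\end{equation*}
so Gronwall together with the hypotheses on $\vect f^\epsh$, $\vect u_0^\epsh$, $\vect u_1^\epsh$ gives uniform bounds on $a_\epsh(\vect u^\epsh(t),\vect u^\epsh(t))$ and $\|\partial_t \vect u^\epsh(t)\|_{L^2}$, which in turn yield the limit energy bound claimed in the statement.

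Next, by the compactness part of Proposition \ref{lemmaconvergencerealtime} (and its counterparts for $\delta=0$, $\delta=\infty$) applied pointwise in $t$, we extract, along a subsequence, two-scale limits as in \eqref{convergencefinal21}--\eqref{convergencefinal22}; the limit fields live in the correct spaces $V_{1,\delta,\kappa}(\omega\times Y)+V_{2,\delta}(\Omega\times Y_0)$ and $H_{\delta,\kappa}(\Omega\times Y)$ respectively, and they are bounded in $L^\infty(0,T)$. To identify the limit, one uses the Laplace-transform approach: for $\lambda>0$ the function $\hat{\vect u}^\epsh(\lambda):=\int_0^\infty e^{-\lambda t}\vect u^\epsh(t)\,dt$ (after a suitable cut-off or suitable extension by zero) satisfies a resolvent-type equation whose right-hand side involves $\hat{\vect f}^\epsh(\lambda)$, $\vect u_0^\epsh$, $\vect u_1^\epsh$. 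The resolvent convergence results of Section \ref{limreseq} identify the two-scale limit of $\hat{\vect u}^\epsh(\lambda)$ as the solution of the corresponding limit resolvent problem on $H_{\delta,\kappa}(\Omega\times Y)$, which is exactly the Laplace transform of the abstract wave equation
\begin{equation*}
\partial_{tt}\vect u + \mathcal{A}_{\delta,\kappa}\vect u = P_{\delta,\kappa}\vect f,\qquad \vect u(0)=\vect u_0,\ \ \partial_t\vect u(0)=P_{\delta,\kappa}\vect u_1.
\end{equation*}
By uniqueness of the Laplace transform and uniqueness for this abstract Cauchy problem (the latter following from the Hille--Yosida theorem applied to the nonnegative self-adjoint operator $\mathcal{A}_{\delta,\kappa}$), the whole sequence converges and the limit is characterised as claimed, with the required time-regularity $\vect u\in C([0,T];V_{1,\delta,\kappa}+V_{2,\delta})$, $\partial_t\vect u\in C([0,T];H_{\delta,\kappa})$ obtained from standard semigroup theory.

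For the strong-convergence statement under the additional convergence of initial energies and strong two-scale convergence of data, I would follow the argument of \cite{pastukhova} as in the Appendix: the limit wave equation provides, for every $t\in[0,T]$, the identity
\begin{equation*}
\tfrac{1}{2}\bigl(\|\partial_t\vect u(t)\|_{L^2}^2 + a_{\delta,\kappa}(\vect u(t),\vect u(t))\bigr) = \tfrac{1}{2}\bigl(\|P_{\delta,\kappa}\vect u_1\|_{L^2}^2 + a_{\delta,\kappa}(\vect u_0,\vect u_0)\bigr) + \int_0^t\!\!\int \vect f\cdot\partial_t\vect u,
\end{equation*}
and the pre-limit version holds for $\vect u^\epsh$. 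Lower semicontinuity of the form part under two-scale weak convergence, together with the strong-in-time convergence of the right-hand side (guaranteed by strong two-scale convergence of $\vect f$ and $\vect u_1$ and convergence of initial energies), forces the two-scale weak convergences of the energy densities to be in fact strong two-scale convergences pointwise in $t$. The main obstacle I anticipate is precisely this step, namely the passage from weak two-scale convergence in time of $\partial_t\vect u^\epsh$ and $\simgrad_h\vect u^\epsh$ to strong two-scale convergence pointwise in $t$; this needs the correct interpretation of $P_{\delta,\kappa}$ on the cross terms (so that, e.g., the microscopic bending/inclusion components of $\vect u_1$ are correctly absorbed into the limit kinetic energy) and justifies why Assumption \ref{assumivan1} is not required here, the orthogonal projection $P_{\delta,\kappa}$ taking the place of the invariance-under-symmetry argument used in Section \ref{limitspectrum}.
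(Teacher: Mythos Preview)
Your proposal is essentially correct and, for the weak-convergence half, coincides with the paper's proof: the paper simply invokes the abstract Theorem~\ref{thmivan551} (together with the resolvent results of Section~\ref{limreseq}), whose proof is exactly the a~priori estimate \eqref{ivan55} plus compactness (Lemma~\ref{ivan56}) plus the Laplace-transform identification you describe. One small imprecision: do not apply Proposition~\ref{lemmaconvergencerealtime} ``pointwise in $t$'' to extract limits (you cannot choose a common subsequence that way); use compactness in $L^2([0,T];H_\epsh)$ as in Lemma~\ref{ivan56} instead.

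For the strong-convergence half, your route differs from the paper's. The paper does \emph{not} argue via the energy identity and lower semicontinuity; it applies the abstract Theorem~\ref{thmivan552}, which passes through Duhamel's formula \eqref{ivan903} and the Trotter--Kato type semigroup convergence of Theorem~\ref{thmivan104}: strong resolvent convergence implies $e^{-t\mathbb{A}_\epsh}\Vec{\vect g}^\epsh\xrightarrow{E_\epsh}e^{-t\mathbb{A}}\Vec{\vect g}$ pointwise in $t$ whenever $\Vec{\vect g}^\epsh\xrightarrow{E_\epsh}\Vec{\vect g}\in E_0$, and then dominated convergence handles the forcing integral. This sidesteps the obstacle you flag, namely going from convergence of the total energy $E_\epsh(t)\to E(t)$ to the \emph{separate} strong two-scale convergences of $\vect u^\epsh(t)$ in $V_\epsh$ and $\partial_t\vect u^\epsh(t)$ in $H_\epsh$. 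Your energy method can be made to work (use that $\vect f^\epsh\chi_{[0,t]}$ still strongly two-scale converges, so the work term converges for each $t$; then combine with lower semicontinuity of each of the two summands), but the semigroup route is cleaner and is what the paper packages.

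Finally, your remark that ``$P_{\delta,\kappa}$ takes the place of the invariance-under-symmetry argument'' is off target: the projection appears simply because the limit operator $\mathcal{A}_{\delta,\kappa}$ lives on the closed subspace $H_{\delta,\kappa}=H_0\le H$ in the abstract framework of Definition~\ref{ivan933}; Assumption~\ref{assumivan1} is irrelevant here because no decoupling of bending and membrane modes is needed for the evolution statement.
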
	
\begin{remark} \label{remev3} 
	The models obtained here are degenerate with respect to the out-of-plane component of the displacement. 
	Indeed, in the static case it is substantially easier for the plate to bend than to extend in-plane; however, in the dynamic case in real time, for the forces of magnitude one, there is no elastic resistance to out-of-plane motions, which are therefore entirely due to external loads. 
	
	It is also worthwhile noting that in the \RRR high-contrast \BBB    setting out-of-plane loads ${\vect f}=(0, 0, f_3)$ for which $\overline{f}_3=0$  do produce some in-plane motion in the case when $\delta \in (0,\infty]$, as a consequence of the coupling between the deformations on the stiff component and on the inclusions, which is not possible in the setting of homogenisation with moderate contrast. (In the regime $\delta=0,$ inclusions behave like small plates and thus only the effects of the average loads $\overline {\vect f}$ in the variable $x_3$ are seen in the limit.) On a related note, from the point of view of quantitative analysis, it is not expected that the effect elastic resistance to out-of-plane motions disappears entirely, as it may manifest itself 
	in  lower-order terms, see \cite{chered5} for a quantitative analysis of the resolvent equation for a thin infinite plate in moderate contrast.
\end{remark} 
\begin{remark} \label{remev1} 
To the best of our knowledge, dynamic models representing ``real time behaviour" have not been discussed in the literature, even in the case of an ordinary plate. Certainly, these models are not as physically relevant as those in which elastic resistance to out-of-plane motions is observed. This might be due to the fact that for most materials mass density is much smaller than Lam\'{e} constants (in dimensionless terms). However, since these models exhibit high-contrast effects, which does not happen when the time is scaled (unless the coefficients on the inclusions are scaled in a non-standard way in relation to the coefficients on the stiff component), we find it is important to discuss them also. 	
\end{remark} 	
\begin{remark}
In the limit problem, due to the coupling of the deformation on the stiff component, given by $(\vect \funcA,\funcB)^\top,$ and the oscillatory part of the deformation on the soft component, given by $\mathring{\vect u},$ there are memory effects (under the assumption that the micro-variable $\mathring{\vect u}$ is unknown). The emergence of these memory effects can be seen as follows. If one would like to know the deformation on the stiff component at time $T,$ given by $(\vect \funcA,\funcB)^\top(T)$, one would not only require the initial data (deformation and speed) on the stiff component at an ``initial'' time $t_0<T$ and loads $\vect f$ on the time interval $[t_0,T],$ but also the value of the micro-variable $\mathring{\vect u}$ and its speed at time $t_0.$ It one cannot measure this micro-variable (which is a physically meaningful scenario), then the corresponding degree of freedom becomes ``hidden" internally, which results in a non-local time dependence macroscopically.
\end{remark} 	

\subsubsection{Long-time behaviour for $\delta \in (0,\infty]$, $\mu_h=\epsh h$, $\tau=2$} \label{sectionltbhc1}
Here we demonstrate that by varying the contrast between material properties of the two components (``stiff" and ``soft"), the evolution problem may be shown to exhibit time non-locality also in the regime of long times.  
\begin{theorem} \label{thmivan71} 
	Suppose that $\delta \in (0,\infty]$, $\mu_h=\eps_h h$, $\tau=2$,  and
	let $(\vect{u}^{\epsh})_{h>0}$ be the sequence of solutions of the problem \eqref{evolutionepsilonproblem}, assuming that 
	\begin{align}
	\vect  f^\epsh & \drtwoscalet\vect f  \in L^2\bigl([0,T];L^2(\Omega \times Y;\R^3)\bigr), 
	\label{loadstimef31}
	\\ 
	\vect u_0^\epsh & \drtwoscale \vect u_0(\hat{x})+\RRR\mathring{\vect u}_0(x,y) \BBB \in \{0\}^2 \times H^2_{\gamma_{\rm D}} (\omega)+V_{2,\delta} (\Omega \times Y_0), \nonumber\\
	\vect u_1^\epsh & \drtwoscale \vect u_1(x,y) \in L^2(\Omega \times Y;\R^3) 
	\label{loadstimef33}. 
\end{align}
	Assume also that 
	\[
	\limsup_{h \to 0}\left(a_{\epsh}(\vect u_0^{\epsh},\vect u_0^{\epsh} )+\|\vect u_0^{\epsh}\|_{L^2}\right)<\infty. 
	\]
	Then one has 
	\begin{align}
	\label{convergencefinal31}\vect u^\epsh & \drtwoscalet (0,0,\funcB)^\top+\mathring{\vect u}, \\ 
	\label{convergencefinal32}
	\partial_t \vect u^\epsh & \drtwoscalet \partial_t\bigl( (0,0,\funcB)^\top+\mathring{\vect u} \bigr), 
	\end{align}
	where $(0,\funcB)^\top +\mathring{\vect u} \in C([0,T];\{0\}^2 \times H^2_{\gamma_{\rm D}} (\omega)+V_{2,\delta}(\Omega \times Y_0))$
	is the unique weak solution to the problem 
	\begin{align*}
		\partial_{tt} 
		&\bigl( (0,0,\funcB)^\top+\mathring{\vect u} \bigr) (t)+\hat{\mathcal{A}}_{\delta} \bigl((0,0,\funcB)^\top+\mathring{\vect u} \bigr) (t) = \RRR \bigl( S_2 (P_{\delta,\infty} \vect f (t))_1,S_2 (P_{\delta,\infty} \vect f (t))_2, (P_{\delta,\infty} \vect f (t))_3  \bigr)^\top \BBB, \\[0.35em]
	&\bigl((0,0,\funcB)^\top+\mathring{\vect u}\bigr)(0)=\vect u_0(x,y),\qquad  \partial_t \bigl( \RRR(0,0,\funcB)^\top\BBB+\mathring{\vect u}\bigr)(0)=\RRR \bigl( S_2 (P_{\delta,\infty} \vect u_1)_1,S_2 (P_{\delta,\infty} \vect u_1 )_2, (P_{\delta,\infty} \vect u_1)_3  \bigr)^\top \BBB(x,y),
    \end{align*}
	such that $\partial_t \left((0,0,\funcB)^\top+\mathring{\vect u}\right) \in C([0,T]; H_{\delta,\infty}(\Omega \times Y))$. 
	Furthermore, the following limit energy bound holds: 	
	$$
	\limsup_{h \to 0} \int_0^T \left(h^{-2}a_{\epsh}(\vect{u}^{\epsh}(t),\vect{u}^{\epsh}(t))+\|\vect{u}^{\epsh}(t)\|^2_{L^2}\right)< \infty. 
	$$    
	
	If strong two-scale convergence holds in \eqref{loadstimef31}--\eqref{loadstimef33} with $\vect f \in L^2([0,T];H_{\delta,\infty}(\Omega \times Y))$, $\vect u_1 \in H_{\delta,\infty}(\Omega \times Y),$ 
	and 
	$$
	\lim_{h \to 0}  \left(h^{-2}a_{\epsh} (\vect u_0^{\epsh}, \vect u_0^{\epsh})+\|\vect u_0^{\epsh}\|^2_{L^2}\right) = \hat{a}_{\delta} \bigl(((0,0,\funcB)^\top+\mathring{\vect u})(0), ((0,0,\funcB)^\top+\mathring{\vect u})(0)\bigr)+\bigl\|\bigl((0,0,\funcB)^\top+\mathring{\vect u})(0)\bigr\|^2_{L^2},
	$$ 
	then strong two-scale convergence holds in \eqref{convergencefinal31}--\eqref{convergencefinal32}. Moreover, for every $t \in[0,T]$ one has
	 $$
	 \lim_{h \to 0}  \left(h^{-2}a_{\epsh} (\vect u^{\epsh}(t), \vect u^{\epsh}(t))+\|\vect u^{\epsh}(t)\|^2_{L^2}\right) = \hat{a}_{\delta} \bigl(((0,0,\funcB)^\top+\mathring{\vect u})(t), ((0,0,\funcB)^\top+\mathring{\vect u})(t)\bigr)+\bigl\|\bigl((0,0,\funcB)^\top+\mathring{\vect u})(t)\bigr\|^2_{L^2}.
	 $$	
	 	\end{theorem}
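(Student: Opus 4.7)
The plan is to mirror the strategy used for Theorem \ref{connn11} (the real-time case), adapted to the bending scaling with $h^{-2}$ in front of $\mathcal{A}_\epsh$ and to the higher contrast $\mu_h=\epsh h$. The crucial input is Theorem \ref{thmresolventhighercontrast}, which gives the resolvent convergence (in the appropriately scaled sense) of $h^{-2}\mathcal{A}_\epsh$ to $\hat{\mathcal{A}}_\delta$ on $\{0\}^2\times L^2(\omega)+L^2(\Omega\times Y_0;\R^3)$. Combined with the abstract framework discussed in Appendix \ref{apsechyp} (following \cite{pastukhova}), resolvent convergence translates into convergence of evolution operators via the Laplace transform.

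First, I would establish a priori estimates. Multiplying the equation $\partial_{tt}\vect u^\epsh+h^{-2}\mathcal{A}_\epsh\vect u^\epsh=\vect f^\epsh$ by $\partial_t\vect u^\epsh$ (with respect to the $\rho^\epsh$-weighted inner product) and integrating in time yields the energy identity controlling
\[
\tfrac12\|\partial_t\vect u^\epsh(t)\|_{L^2}^2+\tfrac12 h^{-2}a_\epsh\bigl(\vect u^\epsh(t),\vect u^\epsh(t)\bigr)
\]
by the initial energy plus $\int_0^t\langle \vect f^\epsh(s),\partial_s\vect u^\epsh(s)\rangle ds$, from which Gr\"onwall's inequality gives the uniform bound used to extract limits. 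The assumption $\limsup_h\left(a_\epsh(\vect u_0^\epsh,\vect u_0^\epsh)+\|\vect u_0^\epsh\|^2\right)<\infty$, the two-scale boundedness of $\vect u_1^\epsh$, and the $L^2$-in-time boundedness of $\vect f^\epsh$ guarantee the right-hand side is controlled uniformly in $h$, so that both $h^{-2}a_\epsh(\vect u^\epsh(t),\vect u^\epsh(t))$ and $\|\partial_t\vect u^\epsh(t)\|^2_{L^2}$ are uniformly bounded.

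Next, I would apply Proposition \ref{compactnessanotherscaling} pointwise in $t$ (after passing to suitable subsequences along countable dense sets and using the Arzel\`a--Ascoli type argument enabled by equicontinuity in $t$ of the $L^2$ norms, controlled by $\|\partial_t\vect u^\epsh\|_{L^2}$). This produces a limit of the form $(0,0,\funcB(t))^\top+\mathring{\vect u}(t,\cdot,\cdot)$ in $\{0\}^2\times H^2_{\gamma_{\rm D}}(\omega)+V_{2,\delta}(\Omega\times Y_0)$, with the corresponding convergence of derivatives in $t$. To identify the limit dynamics, I would take the Laplace transform of the equation: for $\Re\zeta$ sufficiently large, $\widehat{\vect u^\epsh}(\zeta)=(\zeta^2\mathcal{I}+h^{-2}\mathcal{A}_\epsh)^{-1}\bigl(\widehat{\vect f^\epsh}(\zeta)+\zeta \vect u_0^\epsh+\vect u_1^\epsh\bigr)$, and apply the resolvent convergence of Theorem \ref{thmresolventhighercontrast} with $\lambda=\zeta^2$ to identify, after projection through $P_{\delta,\infty}$, the Laplace transform of the limit. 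Inverting the Laplace transform and using uniqueness of solutions to the limit Cauchy problem (well-posed because $\hat{\mathcal{A}}_\delta$ is self-adjoint and non-negative) yields that the whole sequence converges and satisfies the claimed limit equations.

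For the strong convergence and pointwise energy convergence under strong assumptions on data, I would invoke the abstract result (Section \ref{apsechyp} / \cite{pastukhova}) which upgrades strong resolvent convergence to strong convergence of the associated exponential propagators. Given strong two-scale convergence of $\vect f^\epsh$, $\vect u_0^\epsh$, $\vect u_1^\epsh$ together with convergence of initial energies $h^{-2}a_\epsh(\vect u_0^\epsh,\vect u_0^\epsh)+\|\vect u_0^\epsh\|^2\to\hat{a}_\delta(\vect u_0,\vect u_0)+\|\vect u_0\|^2$ (with $\vect u_0=(0,0,\funcB(0))^\top+\mathring{\vect u}(0)$), the strong two-scale convergence \eqref{convergencefinal31}--\eqref{convergencefinal32} follows, and the energy identity in both the prelimit and limit problems gives pointwise convergence of energies for every $t\in[0,T]$.

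The main obstacle is the projection onto $H_{\delta,\infty}(\Omega\times Y)$: the in-plane part $\vect\funcA$ of the displacement drops out of the limit due to $\C^{\rm hom}_\delta$ decoupling (under no further symmetry assumption this decoupling is implicit through the variational structure of $\hat{a}_\delta$), so one must check that the components of $\vect u_1^\epsh$ and $\vect f^\epsh$ that fall in the kernel of $\hat{\mathcal{A}}_\delta$ do not contaminate the limit; this is where the projection $P_{\delta,\infty}$ (and the $S_2$-trimming appearing in the statement) enters and where careful bookkeeping of which components survive in the macroscopic versus microscopic variables is required.
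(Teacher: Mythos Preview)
Your approach is essentially the same as the paper's: both reduce Theorem \ref{thmivan71} to the abstract convergence results of Appendix \ref{apsechyp} (Theorems \ref{thmivan551} and \ref{thmivan552}, following \cite{pastukhova}) combined with the resolvent convergence established in Section \ref{limreseq}. The paper's proof is in fact a one-line invocation of exactly these ingredients, with the identification $H_{\epsh}=L^2(\Omega;\R^3)$, $\mathcal{A}_\eps=h^{-2}\mathcal{A}_{\epsh}$, $\mathcal{A}=\hat{\mathcal{A}}_\delta$, $H=L^2(\Omega\times Y;\R^3)$, $H_0=\{0\}^2\times L^2(\omega)+L^2(\Omega\times Y_0;\R^3)$, and two-scale convergence playing the role of $\xrightharpoonup{H_\epsh}$.

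Two minor remarks. First, you cite only Theorem \ref{thmresolventhighercontrast}, which covers $\delta\in(0,\infty)$; for $\delta=\infty$ you need Theorem \ref{rucak60} (with Proposition \ref{ds102}) instead. Second, your suggested ``pointwise-in-$t$ application of Proposition \ref{compactnessanotherscaling} plus Arzel\`a--Ascoli'' step is an unnecessary detour: the abstract framework (Lemma \ref{ivan56} and Theorem \ref{thmivan551}) already provides weak compactness directly in $L^2([0,T];V_\eps)$ and $L^2([0,T];H_\eps)$, and the identification of the limit is done entirely through the Laplace transform and \eqref{ivan103}. Likewise, the projection $P_{\delta,\infty}$ and the $S_2$-trimming are not an obstacle to be separately handled --- they are precisely the projection $P:H\to H_0$ built into Definition \ref{ivan933} and Theorem \ref{thmivan551}, so the ``bookkeeping'' you worry about is absorbed automatically once the resolvent limit of Theorem \ref{thmresolventhighercontrast} (or \ref{rucak60}) is written in the form $(\hat{\mathcal{A}}_\delta+\lambda\mathcal{I})^{-1}$ acting on the appropriately projected right-hand side.
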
			
\begin{remark} \label{remev4} 
The above limit model exhibits memory effects, due to the coupling of the deformations on the stiff component and on the inclusions, similarly to what happened in Section \ref{realtimeevol}. As before, see Remark \ref{remev-1},  in the case when $\delta \in (0,\infty)$ and no additional symmetries are imposed on the material properties, the limit macro-operator $\hat{\mathcal{A}}_{\delta}$ is spatially non-local. 
\end{remark} 	
\subsubsection{Long-time behaviour for $\delta=0$, $\mu_h={\epshtwo}$, $\tau=2$} \label{sectionltbhc2} 

Here we discuss an analogue of the result of the previous section for the case $\delta=0,$ in which we need to apply different scalings to the in-plane and out-of-plane loads. As already emphasized in Sections \ref{hlleps} (resolvent convergence), \ref{sectionbgl} (limit spectrum), in this regime we require that $Y_0$ have $C^{1,1}$ boundary. 
\begin{theorem} \label{thmivan72}
	Suppose that $\delta=0$, $\mu_h=\eps_h^2$, $\tau=2,$ and let $(\vect{u}^{\epsh})_{h>0}$ be the sequence of solutions to the problem \eqref{evolutionepsilonproblem}, assuming that 
\begin{align}
	\left((h/\epsh)\partial_{t}\vect{f}^{\epsh}_{\alpha}\right)_{h>0} &\subset L^2([0,T];L^2(\Omega\times Y)) \textrm{\ \ is\ bounded},\qquad \alpha=1,2, 
	\label{fbound} \\
	\pi_{h/\epsh}\vect  f^\epsh &\drtwoscalet \vect f  \in L^2([0,T];L^2(\Omega \times Y;\R^3)),
	\nonumber
	\\ 
	\vect u_0^\epsh & \drtwoscale  \vect u_0 (\hat{x},y) \in \{0\}^2 \times H^2_{\gamma_{\rm D}} (\omega)+\{0\}^2 \times L^2(\omega \times Y_0), \label{loadsinitialpos4}\\
	\vect u_1^\epsh(x) & \drtwoscale  \vect u_1(x,y) \in L^2(\Omega\times Y;\R^3),\label{loadsinitialvel4} 
	\end{align}
	and assume additionally that 
	\[\limsup_{h \to 0}\left(h^{-2}a_{\epsh}(\vect u_0^{\epsh},\vect u_0^{\epsh})+\|\vect u^{\epsh}_0\|_{L^2} \right)<\infty.
	\] 
	Then one has 
\begin{align*}
	\pi_{\epsh/h}{\vect u}^\epsh & \drtwoscalet\left(\begin{array}{c}
	\mathring{u}_1 (t,\hat{x},y) -x_3\partial_{y_1}\mathring{u}_3 (t,\hat{x},y)  \\[0.3em]
\mathring{u}_2(t,\hat{x},y)-x_3\partial_{y_2}\mathring{u}_3 (t,\hat{x},y) \\[0.4em]
\funcB(t,\hat{x})+\mathring{u}_3 (t,\hat{x},y)
	\end{array}\right) \\[0.3em] 
	\partial_t {\vect u}^{\epsh}  & \drtwoscalet\bigl(0,0,
	\partial_t \left( \funcB(t,\hat{x})+\mathring{u}_3(t,\hat{x},y) \right) \bigr)^\top, 
	\end{align*}
	where the pair $\funcB \in C([0,T];H^2_{\gamma_{\rm D}}(\omega))$, $\mathring{\vect{u}}\in C([0,T];L^2(\omega;H^1_{00}(I \times Y_0;\R^3))) $ form the unique weak solution of the problem 
	\begin{align}
&	\partial_{tt}\left(  \funcB+\mathring{u}_3\right)(t)+\hat{\mathcal{A}}_0\left(\funcB+\mathring{u}_3\right)(t) =\mathcal{F}_0(\vect f),\quad \RRR \textrm{(see \eqref{pomoc5})}\BBB\nonumber
	\\[0.25em]
	&\left(\funcB+\mathring{u}_3\right)(0)= u_{0,3} \in H^2_{\gamma_{\rm D}}(\omega)+L^2(\omega \times Y_0),  \hspace{5ex}\partial_t (\funcB+\mathring{u}_3)(0)= P^0 u_{1,3} \in L^2(\omega)+L^2(\omega \times Y_0), 
	\nonumber\\[0.25em]
	\label{limeq43}
	&\RRR\tilde{\mathcal{A}}_{00,0} \BBB\mathring{\vect u}_* (t,\hat{x}, \cdot)=\vect{f}_* (t,\hat{x},\cdot),  
\end{align}
	such that $\partial_t (\funcB(t,\hat{x}) +\mathring{u}_3(t,\hat{x},y))$ $\in C([0,T]; L^2(\omega)+L^2(\omega \times Y_0))$.
	Furthermore, the following limit energy bound holds:
	$$
	\limsup_{h \to 0} \int_0^T\left(h^{-2} a_{\epsh}(\vect{u}^{\epsh}(t),\vect{u}^{\epsh}(t))+\|\vect{u}^{\epsh}(t)\|^2_{L^2}\right)\, dt< \infty.
	$$ 
	 
	If one additionally assumes that 
	\begin{align*}
	&\pi_{h/\epsh}\vect{f}^{\epsh} \strongdrtwoscalet (0,0,\vect{f})^\top \in L^2\left([0,T];L^2(\omega;\R^3)+L^2(\omega\times Y_0;\R^3)\right),\\[0.4em] 
	&(h/\epsh)\partial_{t}\vect{f}^{\epsh}_{\alpha}\longrightarrow 
	 0\ \ \textrm{strongly in}\ L^2\left([0,T];L^2(\Omega)\right),\qquad \alpha=1,2,
	\end{align*}
	the two-scale convergence in \eqref{loadsinitialpos4} and \eqref{loadsinitialvel4} holds in the strong sense with $(\vect u_{1})_*=0$, $\vect{u}_{1,3}\in L^2(\omega)+L^2(\omega \times Y_0),$ and that 
	\[
	\lim_{h \to 0} \left(h^{-2}a_{\epsh}(\vect u_0^{\epsh},\vect u_0^{\epsh})+\|\vect u_0^{\epsh} \|^2\right)=a_{\delta}^{\funcB}\bigl((\funcB+\mathring{u}_3)(0),(\funcB+\mathring{u}_3)(0) \bigr)+\bigl\|(\funcB+\mathring{u}_3)(0)\bigr\|^2_{L^2},
	\] 
then one has 
	\begin{equation*} 
	\vect u^{\epsh} \strongdrtwoscalet (0,0, \funcB(t,\hat{x})+\mathring{u}_3(t,\hat{x},y))^\top, \quad \partial_t \vect u^{\epsh} \strongdrtwoscalet (0,0, \partial_t(\funcB(t,\hat{x})++\mathring{u}_3(t,\hat{x},y) ))^\top. 
	\end{equation*} 
	Moreover, for every $t \in[0,T]$ the convergence
	 \[
	 \lim_{h \to 0}\left(h^{-2} a_{\epsh} (\vect u^{\epsh}(t),\vect u^{\epsh}(t))+\|\vect u^{\epsh}(t)\|^2_{L^2}\right)=a_{\delta}^{\funcB}\bigl((\funcB+\mathring{u}_3)(t),(\funcB+\mathring{u}_3)(t)\bigr)+\bigl\|(\funcB+\mathring{u}_3)(t)\bigr\|^2_{L^2}
	 \] 
	 holds.
	
\end{theorem}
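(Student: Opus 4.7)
\textbf{Proof proposal for Theorem \ref{thmivan72}.} The strategy mirrors that of Theorem \ref{connn1}, with modifications to accommodate the extreme contrast $\mu_h = \varepsilon_h^2$ and the $\delta=0$ regime, where inclusions behave as small thin plates and require the anisotropic load scaling $\pi_{h/\varepsilon_h}$. The starting point is the energy identity obtained by formally testing \eqref{evolutionepsilonproblem} with $\partial_t\vect u^{\varepsilon_h}$:
\begin{equation*}
\tfrac{1}{2}\|\partial_t\vect u^{\varepsilon_h}(t)\|_{L^2}^2+\tfrac{1}{2}h^{-2}a_{\varepsilon_h}(\vect u^{\varepsilon_h}(t),\vect u^{\varepsilon_h}(t)) = \tfrac{1}{2}\|\vect u_1^{\varepsilon_h}\|_{L^2}^2+\tfrac{1}{2}h^{-2}a_{\varepsilon_h}(\vect u_0^{\varepsilon_h},\vect u_0^{\varepsilon_h})+\int_0^t\!\int_\Omega \vect f^{\varepsilon_h}\cdot\partial_t\vect u^{\varepsilon_h}.
\end{equation*}
The critical issue is that the in-plane components of $\vect f^{\varepsilon_h}$ are not bounded in $L^2$ at the natural scale: only $\pi_{h/\varepsilon_h}\vect f^{\varepsilon_h}$ is. To estimate $\int_0^t\int_\Omega\vect f_{\!*}^{\varepsilon_h}\cdot\partial_t\vect u^{\varepsilon_h}_*$, I would integrate by parts in time, using assumption \eqref{fbound} to bound $(h/\varepsilon_h)\partial_t\vect f^{\varepsilon_h}_\alpha$ and the compactness estimate in Proposition \ref{propcompactregime}(1) to control $\pi_{\varepsilon_h/h}\vect u^{\varepsilon_h}_*$ by the scaled energy. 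A Gr\"onwall argument then yields the uniform bound $h^{-2}a_{\varepsilon_h}(\vect u^{\varepsilon_h}(t),\vect u^{\varepsilon_h}(t))+\|\partial_t\vect u^{\varepsilon_h}(t)\|_{L^2}^2\leq C$, uniformly in $t\in[0,T]$.

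Next, applying Proposition \ref{propcompactregime} pointwise in $t$ (and on products $\Omega\times[0,T]$, replacing $x$-dependence by $(t,x)$-dependence via the Bochner-space version of two-scale convergence introduced in the Appendix), I would extract two-scale limits $\funcB(t,\hat x)\in H^2_{\gamma_{\mathrm D}}(\omega)$, $\mathring{u}_3(t,\hat x,y)\in L^2(\omega;H^2_0(Y_0))$, $\mathring{u}_\alpha(t,\hat x,y)\in L^2(\omega;H^1_0(Y_0))$, together with the corrector $\funcC$, along a subsequence. Identification of the limit equation proceeds by choosing time-dependent test fields $\vect v^{\varepsilon_h}$ of the form dictated by Proposition \ref{propcompactregime}(2), constructed from smooth $(\theta_3,\mathring{\xi}_3,\mathring{\vect\xi}_*)$, and passing to the limit in the weak formulation. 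Because the in-plane test functions must be scaled by $h/\varepsilon_h$ to remain admissible, the terms involving $\vect f_{\!*}^{\varepsilon_h}$ combine with the gradient terms to produce, in the limit, the quasi-static equation \eqref{limeq43} for $\mathring{\vect u}_*$, while the terms involving $f_3^{\varepsilon_h}$ and moments $x_3\vect f_{\!*}^{\varepsilon_h}$ give rise to the functional $\mathcal{F}_0$ from \eqref{pomoc5} on the right-hand side of the wave equation for $\funcB+\mathring{u}_3$.

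The main obstacle is that the limit equation \eqref{limeq43} for $\mathring{\vect u}_*$ lacks an inertia term, so the abstract hyperbolic framework of \cite{pastukhova} (Appendix \ref{apsechyp}) does not apply directly; moreover, the scaled load forces the right-hand side of the $\funcB$-equation into $(H^2_{\gamma_{\mathrm D}}(\omega)+L^2(\omega;H^2_0(Y_0)))^*$ rather than $L^2$, so one must invoke the dual-space variant Theorem \ref{thmivan553}. To deal with this, I would split the problem by solving \eqref{limeq43} algebraically at each $(t,\hat x)$ (it is coercive and non-degenerate for fixed $\hat x$) and substituting $\mathring{\vect u}_*$ back, reducing the system to a single hyperbolic equation for $\funcB+\mathring{u}_3$ on $L^2(\omega)+L^2(\omega\times Y_0)$ to which Theorem \ref{thmivan553} applies. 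Resolvent convergence from Theorem \ref{thmivan113}, combined with the Laplace-transform representation of the semigroup generated by $\hat{\mathcal{A}}_0$, then identifies the limit semigroup, yielding the weak convergences \eqref{convergencefinal31}--\eqref{convergencefinal32} and uniqueness, which promotes the extraction of a subsequence into convergence of the whole sequence.

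Finally, for the strong convergence and pointwise energy convergence under the additional hypotheses, I would use the standard argument: the assumed energy convergence at $t=0$, together with weak two-scale convergence and the $L^2$-convergence of loads, gives $\limsup$ of the energy at time $t$ by conservation, while the weak two-scale convergence of $\vect u^{\varepsilon_h}(t)$ gives the matching $\liminf$. The equality forces strong two-scale convergence pointwise in $t$, along the lines of the abstract result in Appendix \ref{apsechyp}. The principal technical difficulty throughout is the consistent treatment of the anisotropic scaling of $\vect f^{\varepsilon_h}$ across dual spaces and its compatibility with the quasi-static reduction of the in-plane micro-displacement.
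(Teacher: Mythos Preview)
Your proposal is correct and follows essentially the same route as the paper: separate in-plane and out-of-plane forces, use Proposition~\ref{propcompactregime} together with the resolvent convergence of Theorem~\ref{thmivan113} and the Laplace transform to identify the weak limit, then invoke Theorem~\ref{thmivan552} and the second part of Theorem~\ref{thmivan553} for the strong-convergence statement. Your direct energy-identity derivation of the a~priori bound (integration by parts in $t$ plus Gr\"onwall) is equivalent to, but slightly more hands-on than, the paper's use of the abstract semigroup estimates \eqref{ivan55} and \eqref{ivan992} inherited from the proof of Theorem~\ref{connn1}; likewise your closing ``energy-conservation'' argument for pointwise strong convergence is the concrete counterpart of what Theorems~\ref{thmivan552}--\ref{thmivan553} encode abstractly.
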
	
\begin{remark} \label{remev5}  
In the regime $\delta=0$ inclusions behave like small plates and thus the corresponding deformation satisfies a version of the classical Kirchhoff-Love ansatz. Using the rationale discussed in Remark \ref{scalingrem}, we argue that in order to see the effects of both in-plane and out-of-plane components of loads in the limit model, we should scale them differently to one another. 

Similarly to the regime $\delta\in(0,\infty)$, $\mu_h=\epsh$, $\tau=2,$ we impose a restriction on the time derivatives of in-plane forces, see (\ref{fbound}), which in terms of the ``physical" time corresponds to slowly acting loads. This results in a (partial) quasi-static evolution in the limit, see \eqref{limeq43}. 
Furthermore, in order to obtain strong two-scale convergence of solutions, akin to (\ref{ivan1012}),  we impose a further restriction that properly scaled in-plane forces together with their time derivatives, in the spirit to (\ref{ivan1011}),  go to zero as $h\to0.$ 	
\end{remark} 	
\section{Proofs} 
\label{proofs}
\subsection{Proof of Proposition \ref{propvecer}}
 \label{sectinproof3.1}  


\begin{proof}
	We provide the proof for the case $\delta \in (0,\infty)$; the other cases are dealt in a similar fashion, bearing in mind Remark \ref{nada10001} and Remark \ref{nada10001h}. 
	
	Consider the minimiser $\vect \varphi \in H^1(I \times \mathcal{Y};\R^3)$ in the variational formulation \eqref{tensorminimization}. Then for arbitrary symmetric matrices $\vect A, \vect B \in \R^{2\times 2}_{\rm sym}$ one has a lower bound for elastic stored energy density, as follows:
	\begin{equation}
	\label{coerc}
	\begin{split}
	\C^{\rm hom}_{\delta}(\vect A,\vect B):(\vect A,\vect B) & \geq C \int\limits_I \left\Vert \iota( \vect A- x_3 \vect B) + \sym\widetilde{\nabla}_{2,\gamma}\, \vect \varphi(x_3, \cdot)\right\Vert_{L^2( Y_1;\R^{3\times 3})}^2 dx_3 \\
	& \geq C \int\limits_I \left\Vert  \vect A- x_3 \vect B + \simgrad_{y}\, \vect \varphi_*(x_3, \cdot) \right\Vert_{L^2( Y_1;\R^{2\times 2})}^2 dx_3,
	\end{split}
	\end{equation}
	due to the coercivity of the tensor $\C_1$ representing the elastic properties on the stiff component. In order to eliminate the corrector $\vect \varphi_*$ from the bound (\ref{coerc}), we first construct an extension for it from $Y_1$ to the whole cell $Y$ for each $x_3\in I$. To this end, we first define the symmetric affine part of an arbitrary $H^1$ function, as follows. For $\vect \xi = (\vect \xi_1, \vect \xi_2 )^\top \in H^1(Y_1;\R^2),$ we consider the function $\hat{\vect \xi} \in H^1(Y; \R^2)$ defined by
	\begin{equation*}
	\begin{split}
	\hat{\vect \xi}(y) & := 
	 \fint\limits_{Y_1} \vect \xi(y) dy  + \fint\limits_{Y_1} \simgrad_y \vect \xi(y) dy\, \biggl( y - \fint\limits_{Y_1} y dy\biggr).
	\end{split}
	\end{equation*}
	Notice that the operator $\hat{\cdot}$ is linear and satisfies the following properties:
	\begin{equation*}
	\begin{split}
	\nabla_y \hat{\vect \xi} = \simgrad_y \hat{\vect \xi} = \fint\limits_{Y_1} \simgrad_y \vect \xi(y)dy, \qquad \fint\limits_{Y} \hat{\vect \xi}(y) dy = \fint\limits_{Y_1} \vect \xi(y) dy, \\
	\left\Vert \simgrad_y \hat{\vect \xi } \right\Vert_{L^2(Y;\R^{2\times 2})} \leq |Y|/|Y_1|\left\Vert \simgrad_y \vect \xi \right\Vert_{L^2(Y_1;\R^{2\times 2})}.
	\end{split}
	\end{equation*}
	Now we define the extension operator $\hat{E} : H^1(Y_1 ; \R^2) \to H^1(Y;\R^2),$ via 
	\begin{equation*}
	\hat{E} \vect \xi := E (\vect \xi - \hat{\vect \xi}) + \hat{\vect \xi },
	\end{equation*}
	where $E$ is the extension operator from \cite[Lemma 4.1]{OShY}, which satisfies the estimate
	 $$\left\Vert \simgrad_y(E \vect \xi) \right\Vert_{L^2(Y;\R^{2\times 2})} \leq C \left\Vert \simgrad_y \vect \xi \right\Vert_{L^2(Y_1;\R^{2\times 2})}.$$
	It is easy to see that 
	\begin{equation} \label{josipispravak1} 
	 \left\Vert \simgrad_y(\hat{E} \vect \xi) \right\Vert_{L^2(Y;\R^{2\times 2})}^2 \leq C\left\Vert \simgrad_y \vect \xi \right\Vert_{L^2(Y_1;\R^{2\times 2})}^2. 
	 \end{equation} 
	
	Next, consider the function $$\vect \psi(y) := \left(\vect A - x_3 \vect B \right)y + \vect \varphi_* (y).$$ 
Clearly, one has 
	\begin{equation*}
	\hat{E}\vect \psi(y) = E(\vect \varphi_*  - \hat{\vect \varphi}_* )(y) + (\vect A-x_3 \vect B )y + \hat{\vect \varphi}_* (y) = \hat{E}\vect \varphi_* (y) + (\vect A-x_3 \vect B )y.
	\end{equation*}
	Furthermore, from \eqref{josipispravak1} one has
	\begin{equation} \label{josipispravak2} 
	\left\Vert \simgrad_y(\hat{E} \vect \psi) \right\Vert_{L^2(Y;\R^{2\times 2})}^2 \leq C\left\Vert \simgrad_y \vect \psi \right\Vert_{L^2(Y_1;\R^{2\times 2})}^2 = C\left\Vert\vect A - x_3 \vect B  + \simgrad_y \vect \varphi_*  \right\Vert_{L^2(Y_1;\R^{2\times 2})}^2.
	\end{equation}
	At the same time, the following bound holds:
	\begin{equation}\label{josipispravak3} 
	\begin{split}
	\left\Vert \simgrad_y(\hat{E} \vect \psi) \right\Vert_{L^2(Y;\R^{2\times 2})}^2 & = \left\Vert \simgrad_y(\hat{E}\vect \varphi_*)  + (\vect A-x_3 \vect B ) \right\Vert_{L^2(Y;\R^{2\times 2})}^2 \\[0.3em]
	& = \left\Vert \simgrad_y(\hat{E}\vect \varphi_*)  \right\Vert_{L^2(Y;\R^{2\times 2})}^2  + \left\Vert \vect A-x_3 \vect B   \right\Vert_{L^2(Y;\R^{2\times 2})}^2
	\geq |\vect A - x_3 \vect B |^2.
	\end{split}
	\end{equation}
	Integrating \eqref{josipispravak3} over $I$ and taking into account  \eqref{josipispravak2} and then \eqref{coerc}, the claim follows.  
\end{proof}
\subsection{Proofs for Section \ref{limreseq}} 
\vskip 0.3cm
\noindent{\bf A. Proof of Proposition \ref{propdinsi1}}
\begin{proof} 
Notice first that using $\vect u^\epsh$ as a test function in \eqref{resolventproblemstarting} immediately yields 
\begin{equation}\label{rucak2} 
h^{-2}a_{\epsh}(\vect u^{\epsh},\vect u^{\epsh}) + \|\vect u^\epsh \|_{L^2(\Omega;\R^3)}^2 \leq C\, \|\pi_h \vect  f^\epsh\|_{L^2(\Omega;\R^3)} \|\pi_{1/h} \vect u^\epsh  \|_{L^2(\Omega;\R^3)}.
\end{equation}	 
Next, we define $\Tilde{ \vect u}^\epsh$ by applying Theorem \ref{thmextension} to extend $\vect u^\epsh|_{\Omega_1^\epsh}$ to the whole domain $\Omega$ and set
$$
\mathring{\vect u}^\epsh: = \Tilde{\vect u}^\epsh - \vect u^\epsh.
$$ 
Furthermore, Theorem \ref{thmextension} and Lemma \ref{poincarekornonholes} imply
\begin{equation} \label{rucak1} 
\|\sym \nabla_h \Tilde { \vect u}^\epsh  \|^2_{L^2} \leq C a_{\epsh}(\vect u^{\epsh},\vect u^{\epsh}),\qquad h^{-2}\|\mathring{\vect u}^\epsh\|^2_{L^2}+\|\nabla_h \mathring{\vect u}^\epsh\|^2_{L^2} \leq Ch^{-2} a_{\epsh}(\vect u^{\epsh},\vect u^{\epsh}). 
\end{equation} 
Combining Corollary \ref{kornthincor} with \eqref{rucak1}, we obtain
\begin{equation} \label{rucak3} 
 \|\pi_{1/h} \vect u^{\epsh}\|^2_{L^2}\leq 2\|\pi_{1/h} \Tilde{\vect u}^{\epsh}\|^2_{L^2}+2\|\pi_{1/h}\mathring {\vect u}^{\epsh}\|^2_{L^2}\leq h^{-2} a_{\epsh}(\vect u^{\epsh},\vect u^{\epsh}). 
 \end{equation} 
 The claim in part 1 now follows directly from \eqref{rucak2} and \eqref{rucak3}. 
 
 Proceeding to the proof of part 2, we notice that the fourth convergence in \eqref{nakcomp1} is a direct consequence of \eqref{rucak1} and Theorem \ref{neukammresult}\,(1b).
 To prove the first and second convergence in \eqref{nakcomp1},  we use Lemma \ref{app:lem.limsup} and \eqref{rucak1}.  
  Lemma \ref{app:lem.limsup}\,(3) now yields the following  decomposition of the sequence $\Tilde{\vect u}^\epsh:$
 \begin{equation*}
 \frac{1}{h}\Tilde{\vect u}^\epsh(x)=\left(\begin{array}{c} -x_3 \partial_1 \funcB \\[0.3em] -x_3 \partial_2 \funcB \\[0.25em] h^{-1}\funcB \end{array} \right)+      \left(\begin{array}{c} \funcA_1  \\[0.2em] \funcA_2 \\[0.2em] 0\end{array} \right) + \vect \psi^\epsh, \qquad\qquad
 \frac{1}{h}\simgrad_h \Tilde{\vect u}^\epsh = \iota\left(\simgrad_{\hat{x}}\vect \funcA-x_3\nabla_{\hat{x}}^2\funcB  \right) + \simgrad_h \vect\psi^\epsh,
 \end{equation*}
 where $\funcB \in H^2_{\gamma_{\rm D}}(\omega)$, $\vect \funcA \in H^1_{\gamma_{\rm D}}(\omega;\R^2)$, and  $(\vect\psi^\epsh)_{h>0} \subset H^1_{\Gamma_{\rm D}}(\Omega;\R^3)$ is such that $h\pi_{1/h} \vect\psi^\epsh\longrightarrow 0$ in $L^2.$ 
 
 To prove the third convergence in \eqref{nakcomp1},  we first assume that $\omega$ has $C^{1,1}$ boundary. By virtue of Lemma \ref{keydecompose}\,(3), there are sequences $(\varphi^\epsh)_{h>0} \subset H^2_{\gamma_{\rm D}}(\omega)$,  $(\tilde{\vect\psi}^\epsh)_{h>0} \subset H^1_{\Gamma_{\rm D}}(\Omega;\R^3)$, $({\vect o}^\epsh)_{h>0} \subset L^2(\Omega; \R^{3\times 3})$ such that
 \begin{equation*}
 \simgrad_h \vect\psi^\epsh =-x_3\iota\bigl(\nabla_{\hat{x}}^2 \varphi^\epsh \bigr) + \simgrad_h \tilde{\vect\psi}^\epsh + {\vect o}^\epsh,
 \end{equation*}
 where
 \begin{align*} 
 \varphi^\epsh &\xrightarrow{L^2} 0, \quad \nabla_{\hat{x}} \varphi^\epsh \xrightarrow{L^2} 0, \quad \lVert \nabla_{\hat{x}}^2\varphi^\epsh  \rVert_{L^2} \leq C, \\[0.1em]
 \tilde{\vect \psi}^\epsh &\xrightarrow{L^2} 0, \quad \lVert \nabla_h \tilde{\vect \psi}^\epsh  \rVert_{L^2} \leq C, \\[0.1em]
 {\vect o}^\epsh &\xrightarrow{L^2} 0.
 \end{align*}
 In view of Lemma \ref{lemmatwoscalecompact}\,(1) and Theorem \ref{neukammresult}\,(1a), 
 there exist
 $z \in L^2(\omega;H^2(\mathcal{Y}))$ and  $\tilde{\vect \psi} \in L^2(\omega;H^1(I \times \mathcal{Y};\R^3))$
 such that (up to extracting a subsequence)  
 \begin{align*}
 \nabla_{\hat{x}}^2 \varphi^\epsh(\hat{x}) &\drtwoscale  \nabla_y^2 z(\hat{x},y), \\[0.3em]
 \simgrad_h \tilde{\vect \psi}^\epsh(x) &\drtwoscale\sym\widetilde{\nabla}_{\delta}\, \tilde{\vect \psi}(x,y).
 \end{align*}
 Introducing the function 
 \begin{equation*}
 \vect \varphi(x,y):= \left(\begin{array}{c} -x_3 \partial_{y_1} z(\hat{x},y)   \\[0.3em] -x_3 \partial_{y_2} z(\hat{x},y) \\[0.3em] \delta^{-1}z(\hat{x},y) \end{array} \right) + \tilde{\vect \psi}(x,y),
 \end{equation*}
 we have 
 \begin{equation*}
 \sym\widetilde{\nabla}_{\delta}\, \vect \varphi(x,y) = -x_3 \iota\bigl( \nabla_y^2 z (\hat{x},y) \bigr) +  \sym\widetilde{\nabla}_{\delta}\, \tilde{\vect \psi}(x,y),
 \end{equation*}
 from which the third convergence in \eqref{nakcomp1} follows. 
 
 Next we can extend this result to the case of an arbitrary Lipschitz domain.
 In the general case we can only conclude that since $h^{-1}\simgrad_h \Tilde{\vect u}(x)$ is bounded in $L^2(\Omega;\R^3)$ there exists $\funcC \in L^2(\Omega \times Y;\R^{3\times 3})$ such that 
 \begin{equation*} 
h^{-1}\simgrad_h \Tilde{\vect u}^\epsh(x) \drtwoscale \, \iota\bigl(\simgrad_{\hat{x}} \vect \funcA(\hat{x})-x_3 \nabla_{\hat{x}}^2\funcB(\hat{x})\bigr) + \funcC(x,y).
\end{equation*}  
 Take a sequence $(\omega_n)_{n\in \N}$ of increasing domains with $C^{1,1}$ boundary such that $\omega_n \subset \omega$, $\cup_{n\in \N} \omega_n = \omega.$ By the preceding analysis we conclude that for every $n \in \N$ there exists $\vect \varphi^n \in  L^2(\omega_n;H^1(I \times \mathcal{Y};\R^3))$ such that
 \begin{equation*}
 \funcC(x,y) = \sym\widetilde{\nabla}_{2,\delta}\, \vect \varphi^n(x,y) \quad \mbox{a.e.}\  \hat{x}\in\omega_n,\ (x_3, y)\in I\times\mathcal{Y}.
 \end{equation*}
 Furthermore, notice that
 \begin{equation*}
 \bigl\Vert \sym\widetilde{\nabla}_{2,\delta}\, \vect \varphi^n \bigr\Vert_{L^2(\omega^n\times I \times Y;\R^{3\times 3})} \leq \lVert \funcC \rVert_{L^2(\Omega \times Y ;\R^{3\times 3})}, \quad \forall n\in \N.
 \end{equation*}
 Finally, we extend $\vect \varphi^n$ by zero outside $\omega_n \times I$. 
 The claim follows from the fact that $\mathcal{C}_{\delta} (\Omega \times I)$ is weakly closed, which in turn is a consequence of Korn's inequality for functions in $\dot{H}^1(I \times \mathcal{Y};\R^3)$ (see \cite[Theorem 6.3.8]{Neukamm10}). 

 To prove part 3, we first notice that
 $$
 \lim_{h \to 0} h^{-2}a_{\epsh}(\vect u^{\epsh},\vect u^{\epsh})= a_{\delta}^{\funcB}(\funcB,\funcB). 
 $$
 Using lower semicontinuity of convex functionals with respect to weak two-scale convergence and the definition of $a_{\delta}^{\funcB},$ we conclude that $\sym\widetilde{\nabla}_{\delta}\, \mathring{\vect u}(x,y)=0$, $\vect \funcA=\vect \funcA^{\funcB}$ and that $\funcC(x,\cdot)$ solves the minimisation problem \eqref{tensorminimization} with $\vect A=\simgrad_{\hat{x}} \vect \funcA(\hat{x})$ and $\vect B=\nabla_{\hat{x}}^2\funcB(\hat{x})$. 
 
 The strong two-scale convergence claim of part 3 as well as Remark \ref{rucak11} follow from the strict convexity of the tensors $\mathbb{C}_{\alpha}$, $\alpha=1,2,$  viewed as quadratic forms on symmetric matrices. 
 \end{proof}

\vskip 0.3cm

\noindent{\bf B. Proof of Theorem \ref{thmivan111}}  

\begin{proof} 
We choose the test function $\vect v$ in \eqref{resolventproblemstarting} to be of the form
\begin{equation*}
\vect v^\epsh(x) = 
\left(\begin{array}{c}
h \theta_1(\hat{x})-hx_3 \partial_1 \theta_3(\hat x)\\[0.3em]
h \theta_2(\hat{x})-hx_3 \partial_2 \theta_3(\hat x) \\[0.3em]
 \theta_3(\hat{x})
\end{array}\right)
+
h\epsh\, \vect\zeta\left(x,\frac{\hat{x}}{\epsh}\right)
+
h\mathring{\vect\xi}\left(x,\frac{\hat{x}}{\epsh}\right),
\end{equation*}
where $\vect \theta_*  \in C_{\rm c}^{1}(\omega;\R^2)$, $ \theta_3 \in C_{\rm c}^{2}(\omega)$,  $\vect\zeta \in C_{\rm c}^1(\Omega; C^{1} (I \times \mathcal{Y};\R^3))$, $\mathring{\vect\xi} \in C_{\rm c}^1(\omega;C_{00}^1(I \times Y_0;\R^3)).$
The arbitrary choice of $\vect \zeta$ and a density argument imply 
$$ \int\limits_I \int\limits_{Y_1} \C_1(y) \left[\iota\left( \nabla_{\hat{x}} \vect \funcA-x_3 \nabla^2_{\hat{x}}\funcB \right) + C(\hat{x},\cdot)\right] :  \sym\widetilde{\nabla}_{\delta}\, \vect \zeta \,dydx_3=0\qquad {\rm a.e.\hat{x}\in\omega}, 
$$
from which the effective tensor $\mathbb{C}_{\delta}^{\rm hom}$ is then obtained. Another density argument and Proposition \ref{propdinsi1} now provide the validity of the equations \eqref{limitmodellongtime}.  
The uniqueness of the solution to \eqref{limitmodellongtime} follows from Lax-Milgram and Proposition \ref{propvecer}, while the last claim follows by energy considerations or by duality arguments \cite[Proposition 2.8]{zhikov2005}, see also the proof of Theorem \ref{rucak72}. 
\end{proof}
\begin{remark} \label{remivan13}
It is not difficult to incorporate surface loads into the statement of Theorem \ref{thmivan111}. Namely, if one adds to the right-hand side of  \eqref{resolventproblemstarting} the term
$$ \int_{\omega \times\{-1/2,1/2\}} \vect g^{\epsh} \vect \theta\, d\hat{x}, \qquad \vect \theta \in H^1_{\Gamma_{\rm D}}(\Omega;\R^3),$$ 
where $\vect g^{\epsh} \in L^2(\omega \times \{-1/2, 1/2\};\R^3)$ and the integral over $\omega \times\{-1/2,1/2\}$ represents a sum of two integrals over $\omega,$ and assumes that
$$
	\pi_{h}\vect  g^\epsh  \drtwoscalet  \vect g  \in L^2(\omega \times \{-1/2, 1/2\} \times Y;\R^3),
$$
then using the proof of Theorem \ref{thmivan111} and Remark \ref{remivan102}, one concludes that the limit equations \eqref{limitmodellongtime} have an additional term 
$$ \int_{\omega\times \{-1/2,1/2\}} \langle\vect g\rangle(\hat x) \cdot \vect \theta (\hat x)\, d\hat{x}-\int_{\omega}\left(\RRR\langle\vect g_* (\hat x,1/2, \cdot)\rangle-\langle\vect g_* (\hat x,-1/2,\cdot)\BBB\rangle \right)\cdot\nabla_{\hat{x}} \vect \theta_3 (\hat x)\, d\hat{x}, $$
in the first equation and 
$$ \int_{Y_0}  \vect g(\hat x,-1/2,y)\cdot
\bigl(\mathring{\xi}_1(-1/2,y), 
\mathring{\xi}_2(-1/2,y),
0\bigr)^\top\RRR\,dy\BBB+\int_{Y_0}  \vect g(\hat x,1/2,y)\cdot\bigl(
\mathring{\xi}_1(1/2,y), 
\mathring{\xi}_2(1/2,y),
0\bigr)^\top \,dy,  
$$
in the second equation. 
\end{remark}	

\vskip 0.2cm

\noindent{\bf C. Proof of Proposition \ref{lemmaconvergencerealtime} and Corollary \ref{remsim1}}

\begin{proof}
	The proof partially follows the proof of Proposition \ref{propdinsi1}. Part 1 is obtained immediately by plugging ${ \vect v}^{\epsh}={\vect u}^{\eps_h}$ in \eqref{resolventproblemstarting}.

	 Proceeding to part 2, we perform an extension procedure similar to that undertaken in the proof of Proposition \ref{propdinsi1}. Using Theorem \ref{thmextension}, we define $\Tilde{ \vect u}^\epsh$ as the extension of $\vect u^\epsh|_{\Omega_1^\epsh}$ to the whole domain $\Omega$ and then set
	$$\mathring{\vect u}^\epsh:= \Tilde{\vect u}^\epsh - \vect u^\epsh.$$ Theorem \ref{thmextension} and Lemma \ref{poincarekornonholes} now imply the estimates \eqref{rucak1}.
	  
	Next, we characterise the behaviour of the sequence $\Tilde{\vect u}^\epsh.$  To this end, notice that Lemma \ref{app:lem.limsup} yields the following decomposition of the sequence $\Tilde{\vect u}^\epsh:$
	\begin{equation*}
	\Tilde{\vect u}^\epsh(x)=\left(\begin{array}{c} -x_3 \partial_1 \Tilde{\funcB} \\[0.3em] -x_3 \partial_2 \Tilde{\funcB} \\[0.3em] h^{-1}\Tilde{\funcB} \end{array} \right)+      \left(\begin{array}{c} \funcA_1  \\[0.3em] \funcA_2 \\[0.3em] 0\end{array} \right) + \vect \psi^\epsh,\qquad\quad 
	\simgrad_h \Tilde{\vect u}^\epsh = \iota\bigl(-x_3 \nabla_{\hat{x}} \Tilde{\funcB} + \simgrad_{\hat{x}} \vect \funcA \bigr) + \simgrad_h \vect\psi^\epsh,
	\end{equation*}
	where $\Tilde{\funcB} \in H^2_{\gamma_{\rm D}}(\omega)$, $\funcA \in H^1_{\gamma_{\rm D}}(\omega;\R^2)$, $(\vect\psi^\epsh)_{h>0} \subset H^1_{\Gamma_{\rm D}}(\Omega;\R^3),$ and  $h\pi_{1/h}\vect\psi^{\epsh}\longrightarrow 0$ in $L^2$. Since ${ u}_3^\epsh,$ and hence $\Tilde{ u}_3^\epsh$ as well, is bounded in $L^2(\Omega;\R^3)$  (see Lemma \ref{poincarekornonholes}), we infer that
	\begin{equation*}
	\Tilde{\funcB} = h \Tilde{u}_3^\epsh - h \psi_3^\epsh \xrightarrow{L^2} 0,
	\end{equation*}
	so consequently $\Tilde{\funcB}= 0$. By  Theorem \ref{aux:thm.griso}, we can decompose the third component as $\Tilde{u}_3^\epsh = \hat{\psi}^\epsh_3 + \bar{\psi}^\epsh_3$, where $ \hat{\psi}^\epsh_3=\int_I \Tilde{u}_3^\epsh$ and  $\lVert \bar{\psi}_3^\epsh \rVert_{L^2(\Omega)} \leq Ch$. Thus, by two-scale compactness, we conclude that there exists $\funcB \in L^2(\omega \times Y;\R^3)$ such that 
	$$ \Tilde{u}_3^\epsh(x) = \psi_3^\epsh \drtwoscale \funcB(\hat{x},y).$$  Furthermore, by invoking Remark \ref{remivan33} and applying Lemma \ref{lemmadependenceony}\,(1), we note that $\funcB(\hat{x},y) = \funcB(\hat{x})$. The rest of the proof is analogous to that of Proposition \ref{propdinsi1}. 
	
	To prove Corollary \ref{remsim1}, 
	we invoke Remark \ref{remivan}, Remark \ref{rucak30}, as well as the symmetries of the solution due to the assumption concerning the symmetries of the elasticity tensor. 
\end{proof}

\vskip 0.2cm
\noindent{\bf D. Proof of Theorem \ref{thmivan11111}}
\begin{proof} 
	We begin by plugging in \eqref{resolventproblemstarting} test functions of the form
	\begin{equation*}
	\vect v^\epsh(x) = 
	\left(\begin{array}{c}
	 \theta_1(\hat{x})-hx_3 \partial_1  \theta_3(\hat{x}))\\[0.3em]
	 \theta_2(\hat{x})-hx_3 \partial_2  \theta_3(\hat{x}) \\[0.3em]
	\theta_3(\hat{x})
	\end{array}\right)
	+
	\epsh\, \vect\zeta\left(x,\frac{\hat{x}}{\epsh}\right)
	+
	\mathring{\vect\xi}\left(x,\frac{\hat{x}}{\epsh}\right),
	\end{equation*}
	where $\vect \theta_*  \in C_{\rm c}^{1}(\omega;\R^2)$, $ \theta_3 \in C_{\rm c}^{2}(\omega)$,  $\vect\zeta \in C_{\rm c}^1(\Omega; C^{1} (I \times \mathcal{Y};\R^3))$, $\mathring{\vect\xi} \in C_{\rm c}^1(\omega;C_{00}^1(I \times Y_0;\R^3)),$
	and using the compactness result from Proposition \ref{lemmaconvergencerealtime}. The rest of the argument follows the proof of Theorem \ref{thmivan111}. 
	\end{proof} 

\vskip 0.2cm	
\noindent{\bf E. Proof of Proposition \ref{compactnessanotherscaling} and Theorem \ref{thmresolventhighercontrast} }
\begin{proof} 
To obtain part 1 of Proposition \ref{compactnessanotherscaling}, we plug $\vect u^{\epsh}$ in \eqref{resolventproblemstarting}. The rest of the proof of Proposition \ref{compactnessanotherscaling} and the proof of Theorem \ref{thmresolventhighercontrast} follow the steps of the proofs of Proposition \ref{propdinsi1} and Theorem \ref{thmivan111}, respectively. 
\end{proof} 

\vskip 0.2cm
\noindent{\bf F. Proof of Proposition \ref{propcompactregime11} }
\begin{proof} 
	
To prove part 1, we first plug in ${ \vect v}^{\epsh}={\vect u}^{\eps_h}$ in \eqref{resolventproblemstarting}. Next, using Theorem \ref{thmextreg1}, Corollary \ref{kornthincor}, and Remark \ref{remivan20}, we obtain the following a priori bounds:
\begin{align*}
\nonumber  
& \vect u^{\eps_h}= \tilde{\vect u}^{\eps_h}+\mathring{\vect u}^{\eps_h}, \quad \tilde{\vect u}^{\eps_h}=E^{\epsh} \vect u^{\epsh},\\[0.3em]
 & 
\nonumber	\bigl\|\sym \nabla_h\tilde{\vect u}^{\eps_h}\bigr\|_{L^2}+ h^2\bigl\| \pi_{1/h} \tilde{\vect u}^{\eps_h}\bigr\|^2_{H^1}+\| \tilde{\vect u}^{\eps_h}\|^2_{L^2}\leq  C \left(a_{\epsh} (\vect u^{\epsh},\vect u^{\epsh})+\|{\vect u}^{\eps_h}\|^2_{L^2}\right), \\[0.3em] 
& 
\nonumber	\mathring{\vect u}^{\eps_h} =\left(\begin{array}{c} -\epsh x_3  \partial_1 \mathring{v}^{\eps_h}  \\[0.25em] 
	-\epsh x_3  \partial_2 \mathring{v}^{\eps_h} \\[0.45em] 
	h^{-1}\epsh \mathring{v}^{\eps_h} \end{array} \right)+\mathring{\mat \psi}^{\eps_h}, \\[0.3em]
& 
\nonumber	\| \mathring{v}^{\eps_h} \|^2_{L^2}+{\epshtwo}\bigl\|\nabla \mathring{v}^{\eps_h}\bigr\|^2_{L^2}+ {{\epshfour}} 
\bigl\| \nabla^2 \mathring{v}^{\eps_h}\bigr\|^2_{L^2}+
\bigl\| \mathring{\vect \psi}^{\eps_h}\bigr\|^2_{L^2}
+{\epshtwo}\| \nabla_h \mathring{\mat \psi}^{\eps_h}\|^2_{L^2} \leq   C {\epshtwo} \| \sym \nabla_h \mathring{\vect u}^{\eps_h}\|^2_{L^2} \leq  Ca_{\epsh} (\vect u^{\epsh}, \vect u^{\epsh}),\\[0.4em]
\nonumber 
&  
h^{-1}\epsh\|\mathring{v}^{\epsh}\|^2_{L^2}  \leq  C\left( a_{\epsh} (\vect u^{\epsh}, \vect u^{\epsh})+\|\vect u^{\epsh}\|_{L^2} \right),
\end{align*}
where $\mathring{v}^{\epsh} \in H^2(\omega)$, $\mathring{\vect \psi} \in H^1(\Omega;\R^3)$, $\mathring{v}^{\epsh}=\mathring{\vect \psi}=0$ on $\Omega_1^{\epsh}$. 

Proceeding to part 2, we note that the first convergence in \eqref{nada100000} follows directly from Theorem \ref{aux:thm.griso} and Remark \ref{remivan33}.
To prove the remaining convergence statements, by analogy with the argument of Proposition \ref{propdinsi1} we first assume that $\omega$ has $C^{1,1}$ boundary. Using Lemma \ref{app:lem.limsup} and Lemma \ref{keydecompose},\,3 we have 
$$\tilde{u}_3^{\epsh}=h^{-1}\varphi^{\epsh}+w^{\epsh}+\tilde{\psi}_3^{\epsh},$$
where $(\varphi^{\epsh})_{h>0}$ is bounded in $H^2(\omega)$, $(w^{\epsh})_{h>0}$ is bounded in $H^1(\omega)$ and $\tilde{\psi}_3^{\epsh}\xrightarrow{L^2} 0$. Since $h^{-1}\varphi^{\epsh}$ is bounded in $L^2,$ the first part of the second convergence in \eqref{nada100000} follows from Lemma \ref{lemmadependenceony}\,(2). Furthermore, the first part of the third convergence in \eqref{nada100000} follows from Remark \ref{remivan33}, Remark \ref{remivan1111}, Theorem \ref{neukammresult}\,(2) and Lemma \ref{lemmatwoscalecompact}\,(1) (in addition to the more standard Lemma \ref{app:lem.limsup} and Lemma \ref{keydecompose}\,(3).) The second and third parts of the third convergence statement in \eqref{nada100000} need to be additionally combined with the second convergence statement in  \eqref{nada100000} through Lemma \ref{lemmatwoscalecompact}\,(2). Finally, the fourth and fifth convergence in \eqref{nada100000}   follow from Lemma \ref{lemmatwoscalecompact}\,(3) and Lemma \ref{nada2}\,(1) by noticing that $\mathring{v}^{\epsh}\longrightarrow 0$ in $L^2$ as a consequence of the fact $h \ll \epsh$. This concludes the proof of part 2 for the case when $\omega$ 
has $C^{1,1}$ boundary. For the general case of  $\omega$ with Lipschitz boundary, we now use Lemma \ref{keydecompose}\,(3), Theorem \ref{neukammresult}\,(2) and Lemma \ref{lemmatwoscalecompact}\,(1) in combination with the approach of the proof of Proposition \ref{propdinsi1}. 

The argument for part 3 is analogous to that for  Proposition \ref{propdinsi1}. 
\end{proof} 

\vskip 0.2cm
\noindent{\bf G. Proof of Theorem \ref{rucak35} }
\begin{proof} 
	The proof is carried out by taking appropriate vest functions $\vect v={\vect v}^\epsh$ in \ref{resolventproblemstarting} and then passing to the limit as $h\to0,$ for which we invoke a combination of Proposition \ref{propcompactregime11}, Remark \ref{nada10001}, and a density argument.
	
	Different equations in (\ref{jdbaprva}) are obtained by using different kinds of test functions. For the first equation, we use test functions of the form
\begin{align*}
	\vect v^{\epsh}(\hat{x})&=\bigl(\theta_1(\hat{x}), \theta_2(\hat{x}),0\bigr)^\top+\epsh\,\left( \zeta_1\biggl(\hat{x},\frac{\hat{x}}{\epsh}\biggr), \zeta_2\biggl(\hat{x},\frac{\hat{x}}{\epsh}\biggr),0\right)^\top+\epsh\left(-x_3\partial_{y_1}\psi\biggl(\hat{x},\frac{\hat{x}}{\epsh}\biggr),-x_3\partial_{y_2}\psi\biggl(\hat{x},\frac{\hat{x}}{\epsh}\biggr),\frac{1}{h}\psi\biggl(\hat{x},\frac{\hat{x}}{\epsh}\biggr)\right)\\[0.1em] 
	&+h\int_0^{x_3}{\vect r}\biggl(x,\frac{\hat{x}}{\epsh}\biggr)\, dx_3,
	\end{align*}
	where $\vect \theta \in C_{\rm c}^1(\omega;\R^2)$, $\vect \zeta \in C_{\rm c}^1(\omega;C^2(\mathcal{Y};\R^2))$, $\psi \in C_{\rm c}^1(\omega;C^1(\mathcal{Y}))$, ${\vect r} \in C_{\rm c}^1(\Omega;C^1(\mathcal{Y};\R^3))$. 
	Next, for the second  equation we use test function of the form $$v^{\epsh}(x)=\left( \mathring{\vect \xi}_1\biggl(\hat{x},\frac{\hat{x}}{\epsh}\biggr),  \mathring{\vect \xi}_2\biggl(\hat{x},\frac{\hat{x}}{\epsh}\biggr),0 \right)^\top+\frac{h}{\epsh}\int_0^{x_3} \mathring{\vect r}\biggl(x,\frac{\hat{x}}{\epsh}\biggr)\,dx_3,$$ where  $\mathring{\vect \xi} \in C_{\rm c}^1(\omega;C_{\rm c}^1(Y_0;\R^2))$, $\mathring{\vect r} \in C_{\rm c}^1(\Omega;C_{\rm c}^1(Y_0;\R^3))$ .
	Further, for the third equation ($\kappa=\infty$) we use test equation of the form 
	$$\vect v^{\epsh}(x)=(-hx_3 \partial_1 \theta(\hat{x}),-hx_3 \partial_2\theta(\hat{x}), \theta(\hat{x}))^\top+  \left(-\frac{h}{\epsh}x_3 \partial_{y_1} \mathring{ \xi}\biggl(\hat{x},\frac{\hat{x}}{\epsh}\biggr) ,-\frac{h}{\epsh}x_3 \partial_{y_1} \mathring{ \xi}\biggl(\hat{x},\frac{\hat{x}}{\epsh}\biggr) ,\mathring{ \xi}\biggl(\hat{x},\frac{\hat{x}}{\epsh}\biggr)\right)^\top,$$ where    $\theta \in C_{\rm c}^2(\omega)$, $\mathring{\xi} \in C_{\rm c}^1(\omega;C_{\rm c}^2(Y_0))$. For the fourth equation ($\kappa\in(0,\infty)$) we use test functions of the form $$\vect v^{\epsh}(x)=\left(-\frac{h}{\epsh}x_3\partial_{y_1}v\biggl(\hat{x},\frac{\hat{x}}{\epsh}\biggr), -\frac{h}{\epsh}x_3\partial_{y_2}v\biggl(\hat{x},\frac{\hat{x}}{\epsh}\biggr),v\biggl(\hat{x},\frac{\hat{x}}{\epsh}\biggr) \right)^\top,$$ where  $v \in C_{\rm c}^1(\omega,C^2(\mathcal{Y}))$. For the fifth equation ($\kappa\in(0,\infty)$) we use test functions of the form 
	$$\vect v^{\epsh}(x)= \left(-\frac{h}{\epsh}x_3 \partial_{y_1} \mathring{\xi}\biggl(\hat{x},\frac{\hat{x}}{\epsh}\biggr) ,-\frac{h}{\epsh}x_3 \partial_{y_1} \mathring{ \xi}\biggl(\hat{x},\frac{\hat{x}}{\epsh}\biggr), \mathring{ \xi}\biggl(\hat{x},\frac{\hat{x}}{\epsh}\biggr)\right)^\top,$$ $\mathring{\xi} \in C_{\rm c}^1(\omega,C_{\rm c}^2(Y_0))$. Finally, for the sixth equation ($\kappa=0$) we use test functions of the form
	 \begin{align*} 
	 \vect v^{\epsh}(x)=\Biggl(-\frac{h}{\epsh}x_3\partial_{y_1}v\biggl(\hat{x},\frac{\hat{x}}{\epsh}\biggr),&-\frac{h}{\epsh}x_3 \partial_{y_1}  v\biggl(\hat{x},\frac{\hat{x}}{\epsh}\biggr), v\biggl(\hat{x},\frac{\hat{x}}{\epsh}\biggr) \Biggr)^\top
	 \\[0.4em] 
	 & +\left(-\frac{h}{\epsh}x_3 \partial_{y_1} \mathring{\xi}\biggl(\hat{x},\frac{\hat{x}}{\epsh}\biggr) ,-\frac{h}{\epsh}x_3 \partial_{y_1} \mathring{ \xi}\biggl(\hat{x},\frac{\hat{x}}{\epsh}\biggr), \mathring{ \xi}\biggl(\hat{x},\frac{\hat{x}}{\epsh}\biggr)\right)^\top,
	 \end{align*} 
	  where  $v \in C_{\rm c}^1(\omega;C^2(\mathcal{Y}))$, $\mathring{\xi}\in C_{\rm c}^1(\omega;C^2_{\rm c}(Y_0))$. 
	 The proof of the remaining claims follow an analogous part of the proof of Theorem \ref{thmivan111}. 
\end{proof}	 

\vskip 0.2cm	
\noindent{\bf H. Proof of Corollary \ref{rucak45}}
\begin{proof} 
The proof follows easily from Remark \ref{remivan}, Remark \ref{remivan10}, and Remark \ref{remivan11}. 		
\end{proof}	

\vskip 0.2cm
\noindent{\bf I. Proof of Proposition \ref{propcompactregime}}
\begin{proof} 
	Part 1 follows easily from Theorem \ref{thmextreg1} (in particular, \eqref{rucak36}--\eqref{est1000}) and Corollary \ref{kornthincor}, after plugging ${ \vect v}^{\epsh}={\vect u}^{\eps_h}$ into \eqref{resolventproblemstarting}. 
To justify the scaling, notice that as a consequence of the above mentioned statements we have  
$$ 
\|\pi_{\epsh/h}\vect u^{\epsh}\|_{L^2} \leq Ch^{-2}a_{\epsh} (\vect u^{\epsh},\vect u^{\epsh}), 
$$
see also the last expression in \eqref{nada1} below.  
To prove part 2 we use Theorem \ref{thmextreg1} and Corollary \ref{kornthincor} again and obtain
\begin{equation}
	\label{nada1}
	\begin{aligned}
&\vect u^{\eps_h}= \tilde{\vect u}^{\eps_h}+\mathring{\vect u}^{\eps_h}, \quad \tilde{\vect u}^{\epsh}=E^{\epsh} \vect u^{\epsh}, \\[0.3em]
&\left\| \pi_{1/h} \tilde{\vect u}^{\eps_h} \right\|^2_{H^1(\omega;\R^3)}\leq {C}h^{-2}\bigl\|\sym \tilde{\vect u}^{\eps_h}\bigr\|^2_{L^2(\Omega;\R^3)} \leq {C}h^{-2} a_{\epsh} (\vect u^{\epsh},\vect u^{\epsh}), \\[0.35em]
& \mathring{\vect u}^{\eps_h} =\left(\begin{array}{c} -\epsh x_3  \partial_1 \mathring{v}^{\eps_h}  \\[0.25em] -\epsh x_3  \partial_2 \mathring{v}^{\eps_h} \\[0.25em] h^{-1}\epsh \mathring{v}^{\eps_h} \end{array} \right)+\mathring{\mat \psi}^{\eps_h}, \\[0.35em]
& 
\bigl\|h^{-1}\epsh\mathring{v}^{\eps_h} \bigr\|^2_{L^2(\omega)}+{\epshtwo}\bigl\|h^{-1}\epsh \nabla \mathring{v}^{\eps_h}\bigr\|^2_{L^2(\omega;\R^2)}+ \eps_h^4 \bigl\|h^{-1}\epsh\nabla^2 \mathring{v}^{\eps_h}\bigr\|^2_{L^2(\omega;\R^2)}+\bigl\|h^{-1}\epsh\mathring{\vect \psi}^{\eps_h}\bigr\|^2_{L^2(\Omega;\R^3)}\\[0.35em] 
& \hspace{+60pt}+ {\epshtwo}\bigl\|h^{-1}\epsh\nabla_h \mathring{\mat \psi}^{\eps_h}\bigr\|^2_{L^2(\Omega;\R^{3\times 3})} \leq   Ch^{-2}{{\epshfour}}\bigl\| \sym \nabla_h \mathring{\vect u}^{\eps_h}\bigr\|^2_{L^2(\Omega;\R^{3\times 3})} \leq Ch^{-2}a_{\epsh} (\vect u^{\epsh},\vect u^{\epsh}),  
\end{aligned}
\end{equation}
where $\mathring{v}^{\epsh} \in H^2(\omega)$, $\mathring{\vect \psi} \in H^1(\Omega;\R^3)$, $\mathring{v}^{\epsh}=\mathring{\vect \psi}=0$ on $\Omega_1^{\epsh}$. 
Assuming first that $\omega$ has $C^{1,1}$ boundary, part 2 follows by using Lemma \ref{app:lem.limsup}, Lemma \ref{keydecompose}\,(3),  Theorem \ref{neukammresult}\,(2), Lemma \ref{lemmatwoscalecompact}\,(1,3), Lemma \ref{nada2}\,(1), and Theorem \ref{thmextreg1}. 
For general Lipschitz domains we follow the approach of Proposition \ref{propdinsi1} and Proposition \ref{propcompactregime11}. Finally, part 3 is obtained  the same way as part 3 of Proposition \ref{propdinsi1}. 	
\end{proof} 

\vskip 0.2cm	
\noindent{\bf J. Proof of Theorem \ref{thmivan113}}
\begin{proof} 
	Proof follows the approach of the proof of Theorem \ref{thmivan111}, by using Proposition \ref{propcompactregime}, Remark \ref{nada10001}, and test functions of the form 
	\begin{align*} 
	\vect v^{\epsh}(x)&=\bigl(h \theta_1(\hat{x})-hx_3 \partial_1 \theta_3 (\hat{x}),h \theta_2(\hat{x})-hx_3\partial_2  \theta_3(\hat{x}),\vect \theta_3(\hat{x})\bigr)^\top+ h\epsh\, \left( \zeta_1\biggl(\hat{x},\frac{\hat{x}}{\epsh}\biggr),\zeta_2\biggl(\hat{x},\frac{\hat{x}}{\epsh}\biggr),0 \right)^\top\\[0.4em] 
	&+\epsh\left(-hx_3\partial_{y_1}\psi\biggl(\hat{x},\frac{\hat{x}}{\epsh}\biggr),-hx_3\partial_{y_2}\psi\biggl(\hat{x},\frac{\hat{x}}{\epsh}\biggr),\psi\biggl(\hat{x},\frac{\hat{x}}{\epsh}\biggr)\right)^\top+h^2\int_0^{x_3}{\vect r}\biggl(x,\frac{\hat{x}}{\epsh}\biggr)\, dx_3\\[0.3em]
	&+ \left(\frac{h}{\epsh}\left(\mathring{ \xi}_1 -x_3 \partial_{y_1} \mathring{ \xi}_3\biggl(\hat{x},\frac{\hat{x}}{\epsh}\biggr)\right) , \frac{h}{\epsh}\left(\mathring{ \xi}_2-x_3 \partial_{y_2} \mathring{ \xi}_3\biggl(\hat{x},\frac{\hat{x}}{\epsh}\biggr)\right), \mathring{\vect \xi}_3\biggl(\hat{x},\frac{\hat{x}}{\epsh}\biggr)\right)^\top\\[0.3em] 
	&+\frac{h}{\epsh}\int_0^{x_3} \mathring{\vect r}\biggl(x,\frac{\hat{x}}{\epsh}\biggr)\,dx_3,
	\end{align*} 
where $\vect \theta_*  \in C_{\rm c}^1(\omega;\R^2)$, $\vect \theta_3 \in C_{\rm c}^2(\omega)$, $\vect \zeta \in C_{\rm c}^1(\omega;C^1(\mathcal{Y};\R^2))$, $\psi \in C_{\rm c}^1(\omega;C^1(\mathcal{Y}))$, $\vect r \in C_{\rm c}^1(\omega;C^1(\mathcal{Y};\R^3))$, $\mathring{\vect \xi}_*  \in C_{\rm c}^1(\omega;C_{\rm c}^1(Y_0;\R^2))$, $\mathring{\vect \xi}_{3}\in C_{\rm c}^1(\omega;C_{\rm c}^2(Y_0))$, $\mathring{\vect r} \in C_{\rm c}^1(\Omega;C_{\rm c}^1(Y_0;\R^3))$. 
\end{proof} 

\vskip 0.2cm	
\noindent{\bf K. Proof of Proposition \ref{propcompactregime11111} and Corollary \ref{corivan11}}
\begin{proof} 
	The proof proceeds in the same way as the proofs of Proposition \ref{lemmaconvergencerealtime} or Proposition \ref{propdinsi1} by invoking additionally Theorem \ref{neukammresult}\,(3), Theorem \ref{thmivann111}, and Lemma \ref{nada2}\,(2). In order to conclude the form of $\mathcal{C}_{\infty}$ from Lemma \ref{app:lem.limsup}, Lemma \ref{keydecompose}\,(3), and Theorem \ref{neukammresult}\,(3),
	it is also important to see that the following simple identity holds:	
	$$ 
	x_3 \nabla^2_y \varphi(\hat{x},y)=\nabla_y (x_3 \partial_{y_1}\varphi,x_3 \partial_{y_2}\varphi,0)^\top, \quad \forall \varphi \in L^2(\omega; H^2(\mathcal{Y})).
	$$
	The proof of Corollary \ref{corivan11}  uses Remark \ref{remivan}, Remark \ref{rucak31}, and symmetries of the solution, as a consequence of the assumption on symmetries of the elasticity tensor. 
\end{proof} 

\vskip 0.2cm
\noindent{\bf L. Proof of Theorem \ref{rucak50}}
\begin{proof} 
The proof is similar to the proof of Theorem \ref{thmivan11111}, by invoking Remark \ref{nada10001h} and plugging in \eqref{resolventproblemstarting} test functions of the form 
\begin{eqnarray*}
\vect v^\epsh(x) &=& 
\left(\begin{array}{c}
 \theta_1(\hat{x})-hx_3 \partial_1  \theta_3 (\hat{x}) \\[0.3em]
 \theta_2(\hat{x})-hx_3 \partial_1  \theta_3 (\hat{x}) \\[0.3em]
 \theta_3(\hat{x})
\end{array}\right)
+
\epsh\, \vect\zeta\biggl(x,\frac{\hat{x}}{\epsh}\biggr)
+ h\int_0^{x_3} \vect r(x) \,dx_3 
+\mathring{\vect\xi}\biggl(x,\frac{\hat{x}}{\epsh}\biggr),
\end{eqnarray*}
where $\vect \theta \in C_{\rm c}^{1}(\omega;\R^2)$, $ \theta_3 \in C_{\rm c}^{2}(\omega)$,  $\vect\zeta \in C_{\rm c}^1(\Omega; C^{1} (I \times \mathcal{Y};\R^3))$, $ \vect r \in C_{\rm c}^1(\Omega)$, $\mathring{\vect\xi} \in C_{\rm c}^1(\omega;C^1_{00}(I \times Y_0;\R^3)).$
\end{proof} 

\vskip 0.2cm	
\noindent{\bf M. Proof of Proposition \ref{ds102}}
\begin{proof} 
The proof is carried out similarly to the proof of  Proposition \ref{lemmaconvergencerealtime} and Proposition \ref{propdinsi1}, where we additionally use Theorem \ref{neukammresult}\,(3), Theorem \ref{thmivann111}, and Lemma \ref{nada2}\,(2).  	
\end{proof} 

\vskip 0.2cm
\noindent{\bf N. Proof of Theorem \ref{rucak60}}
\begin{proof} 
The proof follows the proof of Theorem \ref{thmivan11111}, by using Remark \ref{nada10001h} and by plugging in \eqref{resolventproblemstarting} test functions of the form 
\begin{eqnarray*}
	\vect v^\epsh(x) &=& 
	\left(\begin{array}{c}
		h \theta_1(\hat{x})-hx_3 \partial_1  \theta_3 (\hat{x}) \\[0.3em]
		h \theta_2(\hat{x})-hx_3 \partial_1  \theta_3 (\hat{x}) \\[0.3em]
		\theta_3(\hat{x})
	\end{array}\right)
	+
	h\epsh\, \vect\zeta\biggl(x,\frac{\hat{x}}{\epsh}\biggr)
	+ h^2\int_0^{x_3} \vect r(x) \,dx_3 
	+\mathring{\vect\xi}\biggl(x,\frac{\hat{x}}{\epsh}\biggr),
\end{eqnarray*}
where $\vect \theta \in C_{\rm c}^{1}(\omega;\R^2)$, $\vect \theta_3 \in C_{\rm c}^{2}(\omega)$,  $\vect\zeta \in C_{\rm c}^1(\Omega; C^{1} (I \times \mathcal{Y};\R^3))$, $ \vect r \in C_{\rm c}^1(\Omega)$, $\mathring{\vect\xi} \in C_{\rm c}^1(\omega;C_{00}^1(I \times Y_0;\R^3)).$
\end{proof} 		
 \subsection{Proofs for Section \ref{limitspectrum}} 
 \label{proofs_sec}
 
 \vskip 0.1cm
 
\noindent{\bf A. Proof of Theorem \ref{rucak80}}
 \begin{proof} 
It is easy to see from Proposition \ref{propvecer} that the operator $\mathcal{A}_{\delta}^{\funcB, {\rm hom}}$ is positive definite, coercive, and has compact inverse. This, in particular, allows one to obtain immediately a characterization of its spectrum, which we omit.  

From Proposition \ref{propdinsi1} and Theorem \ref{thmivan111} we infer that if $\vect f^{\epsh}\longrightarrow \RRR \vect f \BBB$ in $L^2,$ then the sequence of solutions $(\vect u^{\epsh})_{h>0}$ of \eqref{resolventproblemstarting} for $\lambda=1$ satisfies $\vect u^{\epsh}\to (0,0,\funcB)^\top$, where $\funcB \in H_{\gamma_{\rm D}}^2(\omega)$ solves 
$$
\bigl(\mathcal{A}_{\delta}^{\funcB, {\rm hom}}+\mathcal{I}\bigr)\funcB=\langle \rho \rangle^{-1}\vect f_3.
$$ 
Using the proof of \cite[Proposition 2.2]{zhikov2005}, we show that the property $(H_1)$ in Definition \ref{rucak85} holds. To prove the property $(H_2),$ we take a sequence $\lambda^{\epsh}$ of eigenvalues of the operator $h^{-2}\mathcal{A}_{\epsh}$ converging to $\lambda>0$. Next, consider the sequence  $(\vect u^{\epsh})_{h>0}$ of the corresponding eigenfunctions
$$
h^{-2}\mathcal{A}_{\epsh} \vect u^{\epsh}=\lambda^{\epsh} \vect u^{\epsh}, \quad \|\vect u^{\epsh}\|_{L^2}=1.  
$$
Multiplying the above equation by $\vect u^{\epsh},$  using the compactness result from Proposition \ref{propdinsi1}, and invoking an argument similar to that of Theorem \ref{thmivan111},  we conclude that $\vect u^{\epsh}\longrightarrow(0,0,\funcB)^\top$ in $L^2$, where $\funcB \in H^2_{\gamma_{\rm D}}(\omega)$ solves 
$$ \mathcal{A}_{\delta}^{\funcB, {\rm hom}}\funcB=\lambda \funcB, \quad \| \funcB\|_{L^2}=1,$$
which completes the proof of $(H_2)$. This also proves the convergence of eigenfunctions.  

To prove a refined version of the Hausdorff convergence concerning the convergence of eigenvalues ordered in the increasing order, we take an arbitrary closed curve $\Gamma \subset \C$, intersecting an interval in $( 0, \infty )$ and not passing through any of the eigenvalues $\lambda_{\delta,n}$ and define the following projection operators: 
\begin{equation*}
P_\Gamma^\epsh =-\frac{1}{2\pi{\rm i}}\oint_{\Gamma }\left(\frac{1}{h^2}\mathcal{A}_\epsh - z \RRR \mathcal{I} \BBB \right)^{-1} dz, \quad P_\Gamma = -\frac{1}{2\pi{\rm i}}\oint_{\Gamma }\left(\mathcal{A}_{\delta}^{\funcB, {\rm hom}} - z \RRR \mathcal{I} \BBB \right)^{-1} dz.
\end{equation*}
We claim that for small enough $\epsh> 0$ the dimensions of the ranges $\mathcal{R}(P_\Gamma^\epsh)$ and $\mathcal{R}(P_\Gamma)$  coincide. (Note that they are finite by the compactness of the resolvent.) Indeed, from the compactness result in Proposition \ref{propdinsi1} and Lebesgue theorem on dominated convergence it follows that if $(\vect f^{\epsh})_{h>0} \subset L^2(\Omega;\R^3)$, $(\vect v^{\epsh})_{h>0} \subset L^2(\Omega;\R^3)$ are such that $\vect f^{\epsh}\rightharpoonup \vect f$, $\vect v^{\epsh}\rightharpoonup \vect v$ weakly in $L^2,$ then one has
$$
\bigl(P_{\Gamma}^{\epsh} \vect f^{\epsh}, \vect v^{\epsh}\bigr) \to (P_{\Gamma} \vect f, \vect v). 
$$
It follows that $P_{\Gamma}^{\epsh} \vect f^{\epsh}\longrightarrow P_{\Gamma} \vect f$ in $L^2.$ This immediately implies that the dimensions of $\mathcal{R}(P_\Gamma^\epsh)$ and $\mathcal{R}(P_\Gamma)$  coincide for sufficiently small $\epsh.$

Next,  fix a closed curve $\Gamma_{\delta,n} \subset \C$ containing in its interior the eigenvalue $\lambda_{\delta,n}$ and no other eigenvalues, intersecting the real line at $w_1$ and $w_2$ such that $\lambda_{\delta,n-1} < w_1 < \lambda_{\delta,n}< w_2 < \lambda_{\delta,n+1}$, where we set $\lambda_{\delta,0}=0$. The multiplicity $k_{\delta,n}$ of this eigenvalue equals $\dim \mathcal{R}(P_{\Gamma_{\delta,n}})$. By using the above claim, we know that for small enough $\epsh$ exactly $k_{\delta,n}$ eigenvalues  of $h^{-2}\mathcal{A}_\epsh$ (including their multiplicities) are contained in the interval $( w_1, w_2).$ \end{proof} 	
Before giving the rest of the proofs we will state and prove one helpful lemma: 
\begin{lemma} \label{lemmaivan100}
	\begin{enumerate}
	\item If $\mu_h=\epsh,$ one has  
	$$ \lim_{h \to 0} \sigma(\mathring{\tilde{\mathcal{A}}}_{\epsh}) \subset \lim_{h \to 0} \sigma(\tilde{\mathcal{A}}_{\epsh}).$$
	\item If $\delta \in (0,\infty]$, $\mu_h=\epsh h,$ one has
	$$\lim_{h \to 0} \sigma(\mathring{\mathcal{A}}_{\epsh}) \subset \lim_{h \to 0}h^{-2}\sigma({\mathcal{A}}_{\epsh}).$$ 
	\item If $\delta=0$, $\mu_h={\epshtwo},$ one has 
	 	$$\lim_{h \to 0} h^{-2}{\epshtwo}\sigma(\mathring{\mathcal{A}}_{\epsh}) \subset \lim_{h \to 0} h^{-2}\sigma({\mathcal{A}}_{\epsh}).$$ 
\end{enumerate} 
\end{lemma}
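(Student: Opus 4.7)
The plan is to prove each of the three inclusions via an approximate-eigenvector (Weyl-sequence) construction. Given $\lambda$ in the appropriately scaled limit of $\sigma(\mathring{\mathcal{A}}_{\epsh})$, I will select a sequence $\lambda^{\epsh}\in\sigma(\mathring{\mathcal{A}}_{\epsh})$ whose scaled version tends to $\lambda$ together with $L^2(\rho_0)$-normalised eigenfunctions (or Weyl sequences, in the essential-spectrum case) $\mathring{\vect w}^{\epsh}\in H^1_{00}(I\times Y_0;\R^3)$, and extend each $\mathring{\vect w}^{\epsh}$ to a $Y$-periodic field on $\R^2\times I$ by setting it to zero outside $Y_0$ in every periodicity cell; this extension is $H^1_{\mathrm{loc}}$ because of the $H^1_{00}$ trace condition on $I\times\partial Y_0$. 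The candidate Weyl sequence is then
\[
\vect v^{\epsh}(\hat x, x_3) := \phi(\hat x)\,\mathring{\vect w}^{\epsh}(\hat x/\epsh, x_3),
\]
with $\phi\in C_c^{\infty}(\omega)$ a cutoff supported away from $\gamma_{\rm D}$. Then $\vect v^{\epsh}\in H^1_{\Gamma_{\rm D}}(\Omega;\R^3)$, and for Part~1 under Assumption~\ref{assumivan1}(1) the multiplication by $\phi(\hat x)$ preserves $x_3$-parity, placing $\vect v^{\epsh}$ in $L^{2,\mathrm{memb}}(\Omega;\R^3)$ whenever $\mathring{\vect w}^{\epsh}$ lies in the membrane subspace of $L^2(I\times Y_0;\R^3)$; this accommodates the restriction to $\tilde{\mathcal{A}}_{\epsh}$ and to $\mathring{\tilde{\mathcal{A}}}_{\epsh}$.

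The core of the argument is the chain-rule identity
\[
\sym\nabla_h\vect v^{\epsh}(x) = \phi(\hat x)\,\epsh^{-1}\sym\nabla_{h/\epsh}\mathring{\vect w}^{\epsh}(\hat x/\epsh, x_3) + \bigl(\text{terms involving }\nabla\phi\bigr),
\]
combined with the cell-level equation $-\div_{h/\epsh}(\C_0\sym\nabla_{h/\epsh}\mathring{\vect w}^{\epsh})=\rho_0\lambda^{\epsh}\mathring{\vect w}^{\epsh}$. Applying $-(\rho^{\epsh})^{-1}\div_h$ cell-by-cell and exploiting periodicity then gives
\[
\mathcal{A}_{\epsh}\vect v^{\epsh} = (\mu_h^2/\epsh^2)\,\lambda^{\epsh}\vect v^{\epsh} + R^{\epsh}, \qquad \|R^{\epsh}\|_{L^2(\rho^{\epsh})} \leq C\mu_h^2\epsh^{-1}\bigl(\|\nabla\phi\|_{L^2}\sqrt{\lambda^{\epsh}} + \|\nabla^2\phi\|_{L^2}\bigr),
\]
the remainder bound coming from two-scale averaging. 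Since $\|\vect v^{\epsh}\|_{L^2(\rho^{\epsh})}\to\|\phi\|_{L^2(\omega)}\|\mathring{\vect w}^{\epsh}\|_{L^2(I\times Y_0;\rho_0)}>0$, interpreting this identity through the bilinear form (and mollifying $\vect v^{\epsh}$ into $\mathcal{D}(\mathcal{A}_{\epsh})$ when necessary) lets the spectral calculus of self-adjoint operators yield
\[
\mathrm{dist}\bigl((\mu_h^2/\epsh^2)\lambda^{\epsh},\sigma(\mathcal{A}_{\epsh})\bigr) \leq \|R^{\epsh}\|_{L^2(\rho^{\epsh})}\big/\|\vect v^{\epsh}\|_{L^2(\rho^{\epsh})},
\]
and analogously for $\tilde{\mathcal{A}}_{\epsh}$ in Part~1 by membrane invariance of the construction.

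It then remains to verify the vanishing of the relative error under the overall scaling relevant to each part (a factor $1$ in Part~1 and $h^{-2}$ in Parts~2--3). For Parts~1 and~2 the scaled bound reduces to $C\epsh(\sqrt{\lambda^{\epsh}}+1)$, which tends to $0$ since $\lambda^{\epsh}$ stays bounded along the chosen subsequence. Part~3 is the delicate case: the only sequences of eigenvalues of $\mathring{\mathcal{A}}_{\epsh}$ with a finite limit after multiplication by $\epsh^2 h^{-2}$ are those whose normalised eigenfunctions acquire a Kirchhoff--Love-type asymptotic profile $\mathring{\vect w}^{\epsh}\approx\bigl((h/\epsh)x_3\nabla_y\eta,\,-\eta\bigr)^{\top}$ with $\eta\in H_0^2(Y_0)$, for which the $(\alpha,3)$ and $(3,3)$ entries of $\sym\nabla_{h/\epsh}\mathring{\vect w}^{\epsh}$ vanish at leading order; exploiting this finer cancellation in the pointwise estimate of $R^{\epsh}$ (or, alternatively, replacing the smooth $\phi$ by a sharp characteristic cutoff of a union of whole inclusion cells, which localises the remainder to a boundary strip of relative measure $O(\epsh)$) is what allows the relative error $h^{-2}\|R^{\epsh}\|/\|\vect v^{\epsh}\|$ to vanish. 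The principal obstacle is precisely this bookkeeping for Part~3, where the small parameters $\epsh,\,h,\,h/\epsh$ interact delicately and the naive bound $|\sym\nabla_{h/\epsh}\mathring{\vect w}^{\epsh}|\lesssim\sqrt{\lambda^{\epsh}}$ is insufficient without accounting for the Kirchhoff--Love structure.
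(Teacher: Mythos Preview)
Your Weyl-sequence strategy is natural, but the central identity
\[
\mathcal{A}_{\epsh}\vect v^{\epsh} = (\mu_h^2/\epsh^2)\,\lambda^{\epsh}\vect v^{\epsh} + R^{\epsh},\qquad R^{\epsh}\in L^2,
\]
is false as stated, and this is not a matter of mollification. The function $\vect v^{\epsh}=\phi(\hat x)\,\mathring{\vect w}^{\epsh}(\hat x/\epsh,x_3)$ does \emph{not} lie in $\mathcal D(\mathcal{A}_{\epsh})$: since $\mathring{\vect w}^{\epsh}$ vanishes on $I\times\partial Y_0$ but its conormal trace $(\C_0\sym\nabla_{h/\epsh}\mathring{\vect w}^{\epsh})\nu$ does not, the traction of $\vect v^{\epsh}$ jumps across every lateral inclusion interface $I\times\partial(\epsh(Y_0+z))$. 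Computing $\div_h$ ``cell-by-cell'' therefore misses a singular surface contribution, and your $L^2$ bound on $R^{\epsh}$ cannot hold. Working instead at the level of the bilinear form (as in Lemma~\ref{nenad100}) one must decompose an arbitrary test function as $\vect\xi=\tilde{\vect\xi}+\mathring{\vect\xi}$ via the extension theorem; the eigenvalue equation disposes of the $\mathring{\vect\xi}$-pairing, but the residual
\[
\Lambda^{\epsh}\int_{\Omega_0^{\epsh}}\rho_0\,\vect v^{\epsh}\cdot\tilde{\vect\xi}\,dx,\qquad \Lambda^{\epsh}=(\mu_h/\epsh)^2\lambda^{\epsh},
\]
is of order $\Lambda^{\epsh}\|\tilde{\vect\xi}\|_{L^2}$, which is $O(1)\|\tilde{\vect\xi}\|_{L^2}$ in Part~1 and does not vanish. (Take $\tilde{\vect\xi}$ constant: the integral picks up the nonzero $\rho_0$-weighted mean of $\mathring{\vect w}^{\epsh}$ on every inclusion.) Your alternative of a sharp characteristic cutoff over whole cells suffers from exactly the same defect.

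The missing idea, which is the crux of the paper's proof, is a \emph{sign alternation}: one chooses a cube $Q^h\subset\omega\times I$ aligned with the lattice and places $+\mathring{\vect w}^{\epsh}_r$ on inclusions with even first index and $-\mathring{\vect w}^{\epsh}_r$ on those with odd first index. This forces $\int_{\text{pair}}\rho_0\vect u^{\epsh}(\hat x,x_3)\,d\hat x=0$ on every pair of adjacent cells in the $x_1$-direction, and a Poincar\'e inequality on each pair then yields the crucial gain
\[
\Bigl|\int\rho_0\vect u^{\epsh}\cdot\tilde{\vect\xi}\Bigr|\le C\epsh\,\|\vect u^{\epsh}\|_{L^2}\,\|\nabla_{\hat x}\tilde{\vect\xi}\|_{L^2},
\]
after which the right-hand side can be rewritten (via Riesz representation and Korn on the cube) in the form required by Lemma~\ref{nenad100} with vanishing data. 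The same device handles Parts~2 and~3; in Part~3 the additional input is the thin-domain Korn inequality for $\mathring{\vect u}^{\epsh}_r$ (yielding $\|(\tfrac{\epsh}{h}\mathring u_{r,1},\tfrac{\epsh}{h}\mathring u_{r,2},\mathring u_{r,3})\|_{H^1}\lesssim\tfrac{\epsh}{h}\|\sym\nabla_{h/\epsh}\mathring{\vect u}^{\epsh}_r\|$), not the asymptotic Kirchhoff--Love profile you invoke.
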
 	
\begin{proof} 
We prove part 1 for the case $\delta \in (0,\infty)$ only, as the cases $\delta=0$ and $\delta=\infty$ are dealt with by similar arguments. We take $\lambda^{\epsh} \in \sigma(\mathring{\tilde{\mathcal{A}}}_{\epsh})$ such that $\lambda^{\epsh} \to \lambda$  and $\mathring{\vect u}^{\epsh}_r \in H^1_{00}(I \times Y_0;\R^3)$ such that $\|\mathring{\vect u}^{\epsh}_r\|_{L^2}=1$ and  $\mathring{\tilde{\mathcal{A}}}_{\epsh} \mathring{\vect u}^{\epsh}_r=\lambda^{\epsh} \mathring{\vect u}^{\epsh}_r$. 
The convergence properties of $\lambda^{\epsh}$ and $\mathring{\vect u}^{\epsh}_r$ immediately imply that the sequence 
\[
\Bigl(\|\sym \nabla_{\frac{h}{\epsh}} \mathring{\vect u}^{\epsh}_r\|_{L^2}\Bigr)_{h>0}
\] 
is bounded.\footnote{Actually it can be concluded that the sequence 
$
(\| \mathring{\vect u}^{\epsh}_r\|_{H^1})_{h>0}
$
is bounded, see \cite[Section 7]{chered5}.}  
For each $h$ we take a cube $Q^h=q^h \times I$ such that $q^h \subset \omega$ has vertices in $\epsh \Z^2$ and side length $2n^h\epsh$, where $n^h$ is an integer. Furthermore, we assume that $n^h\epsh$ converge to some positive number as $h\to0$. We define $\vect u^{\epsh}$ as follows. Consider $z \in \Z^2$ such that the cube of size $\epsh$ whose left corner is at $\epsh z$ is contained in $q^h.$ On the inclusion $\epsh (Y_0+z) \times I,$ we set $\vect u^{\epsh}$ to be equal to $\mathring{\vect u}^{\epsh}_r(\hat{x}/\epsh-z,x_3)$ if $z_1$ (the first coordinate of $z$) is even and to $- \mathring{\vect u}^{\epsh}_r(\hat{x}/\epsh-z,x_3)$ if $z_1$ is odd. We then extend  $\vect u^{\epsh}$ by zero outside $\epsh (Y_0+z) \times I.$ This procedure is repeated for all $z\in\Z^2$ with the above property, and finally $\vect u^{\epsh}$ is set to zero on $\Omega\setminus Q^h.$ 

 It can be easily checked that for $\vect \xi \in H_{\gamma_{\rm D}}^1(\Omega;\R^3)\cap L^{2, \rm memb}(\Omega;\R^3)$  one has
\begin{align}
	\label{ivan401} 
& \int\limits_{\Omega} \C^{\mu_h}\biggl(\frac{\hat{x}}{\epsh}\biggr) \simgrad_{h} \vect u^{\epsh}(x) : \simgrad_{h} \vect \xi(x) \,dx-\lambda^{\epsh}\int_{\Omega} \rho \vect u^{\epsh}\cdot\vect \xi\\ &=\nonumber \int_{Q^h\cap\Omega_0^{\epsh}} \C^{\mu_h}\biggl(\frac{\hat{x}}{\epsh}\biggr)   \simgrad_{h} \vect u^{\epsh}(x): \simgrad_{h} \tilde{\vect \xi}(x)\, dx-\lambda^{\epsh}
\int_{Q^h\cap\Omega_0^{\epsh}} \rho_0 \vect u^{\epsh}\cdot\tilde{ \vect \xi}\,dx, 
\end{align}
where $\vect \xi=\tilde{\vect \xi}+\mathring{\vect \xi}$, with $\tilde{\vect \xi}$ being the extension provided by Theorem \ref{thmextension}. Recall that, as a consequence of Corollary \ref{kornthincor}, 
\[
\bigl\|\tilde{\vect \xi}\bigr\|_{H^1} \leq C\bigl\|\sym \nabla_h \tilde{\vect \xi}\bigr\|_{L^2},
\] 
where $C>0$ does not depend on $h.$ Using this fact and the definition of $\vect u^{\epsh}$ (noting that the mean value of $\vect u^{\epsh}$ is zero on each two neighbouring small cubes of size $\epsh$ in the $x_1$ direction) it can be easily seen that the right hand side of \eqref{ivan401} can be written in the form  
\begin{equation} 
\int_{\Omega} \vect f_1^{\epsh}: \sym \nabla_h \tilde{\vect \xi}\, dx+ \int_{\Omega} \vect f_2^{\epsh}\cdot\tilde{\vect \xi} \, dx,
\label{fexpr}
\end{equation}	 
where $\vect f_1^{\epsh} \in L^2(Q^h;\R^{3 \times 3})$, $\vect f_2^{\epsh} \in L^2(Q^h;\R^3),$ and $\|\vect f_1^{\epsh}\|_{L^2} \to 0$, 	$\|\vect f_2^{\epsh}\|_{L^2} \to 0$ as $h \to 0$. 
To see this, we divide the domain into small rectangles  containing two neighbouring cubes, where the first coordinate of the left corner is even and odd respectively, and apply the Poincar\'{e} inequality. This yields an estimate for the right-hand side of \eqref{ivan401} by the expression $C \epsh (\|\nabla_h\tilde {\vect \xi}\|_{L^2}+ \|\tilde {\vect \xi}\|_{L^2} ),$ where $C>0$ is $h$-independent. 
By using the Riesz representation theorem (applied first on the physical domain and then moved on the canonical domain) and the fact that on $Q^h$ the norm 
$\|\cdot\|_{L^2}+\|\nabla_h(\cdot)\|_{L^2}$ is equivalent to the norm  
$
\| \cdot\|_{L^2}+\|\sym\nabla_h(\cdot)\|_{L^2},   
$
we conclude that the right-hand side of \eqref{ivan401} can be written in the form
$$\epsh \left(\int_{Q^h} \simgrad_h \vect r^{\epsh}(x) :\simgrad_{h} \tilde{\vect \xi}(x)\, dx+\int_{Q^h}  \vect r^{\epsh} \cdot \tilde{\vect \xi}\, dx \right),   $$
where  $ \|\simgrad_h \vect r^{\epsh}\|_{L^2}+\|\vect r^{\epsh}\|_{L^2}$ is bounded independently of $h$. The claim follows by taking $\vect f_1^{\epsh}= \epsh \simgrad_h \vect r^{\epsh}$ and $\vect f_2^{\epsh}= \epsh \vect r^{\epsh}$ in (\ref{fexpr}).

To conclude the proof of part 1, we note that there exists $C>0$ such that $\|\vect u^{\epsh}\|_{L^2} \geq C$ and hence, by applying a suitable version of Lemma  \ref{nenad100}  (see also Remark \ref{ivan501}), one has 
\begin{equation}
\dist\bigl(\lambda^{\epsh}, \sigma(\tilde{\mathcal{A}}_{\epsh})\bigr)\to 0\ \ {\rm as}\ \ h \to 0.
\label{ref1}
\end{equation}

The proof of part 2 proceeds in a similar way. Part 3 requires an additional explanation while following the same kind of argument. We again take 
  $\lambda^{\epsh} \in \sigma({\epshtwo}h^{-2}\mathring{\tilde{\mathcal{A}}}_{\epsh})$ such that $\lambda^{\epsh} \to \lambda$  and $\mathring{\vect u}^{\epsh}_r \in H^1_{00}(I \times Y_0;\R^3)$ such that $\|\mathring{\vect u}^{\epsh}_r\|_{L^2}=1$ and  ${\epshtwo}h^{-2}\mathring{\tilde{\mathcal{A}}}_{\epsh} \mathring{\vect u}^{\epsh}_r=\lambda^{\epsh} \mathring{\vect u}^{\epsh}_r$. 
Using the same argument as in the proof of Theorem \ref{rucak80} (notice that here the $n$-th eigenvalue is of order $h^{-2}\epshtwo$),
 we infer immediately that $\bigl(\epsh h^{-1}\|\sym \nabla_{\frac{h}{\epsh}} \mathring{\vect u}^{\epsh}_r\|_{L^2}\bigr)_{h>0}$ is bounded. Furthermore, invoking Corollary 
\ref{kornthincor}, we obtain 
\[
\biggl\|\biggl(\frac{\epsh}{h}\mathring{ u}^{\epsh}_{r,1}, \frac{\epsh}{h}\mathring{ u}^{\epsh}_{r,2},\mathring{ u}^{\epsh}_{r,3}\biggr)\biggr\|_{H^1}\leq C\frac{\epsh}{h}\Bigl\|\sym \nabla_{\frac{h}{\epsh}} \mathring{\vect u}^{\epsh}_r\Bigr\|_{L^2}
\] 
for some $C>0$ independent of $h$. The rest of the proof follows the proof of part 1. \end{proof} 
\begin{remark} \label{remivan101} 
Using a standard approach (resolvent convergence and compactness of eigenfunctions), it can be easily shown that when $\delta \in (0,\infty)$ one  has $\lim_{h \to 0}\sigma (\mathring{\tilde{\mathcal{A}}}_{\epsh}) =\sigma(\tilde {\mathcal{A}}_{00,\delta})$ and   $\lim_{h \to 0}\sigma (\mathring{{\mathcal{A}}}_{\epsh}) =\sigma({\mathcal{A}}_{00,\delta})$. To obtain this result one needs to use uniform (in $\delta$) Korn inequality (see, e.g., \cite[Section 7]{chered5}). 

In the case $\delta=0$ one can prove (similarly to the proof of Theorem \ref{rucak80}) that $\lim_{h \to 0}\sigma (\mathring{\tilde{\mathcal{A}}}_{\epsh}) =\sigma(\tilde {\mathcal{A}}_{00,0})$ and 	$\lim_{h \to 0}{\epshtwo}h^{-2}\sigma (\mathring{{\mathcal{A}}}_{\epsh}) =\sigma( {\mathcal{A}}_{00,0})$. 

The analogous claim is not valid for $\delta=\infty.$ This is the main reason why in this regime the limiting spectrum is different than the spectrum of the limit operator.  Here, due to the fact that only resolvent convergence (as in Theorem \ref{rucak50}) holds and no compactness of eigenfunctions is available, one only has $\sigma(\tilde{\mathcal{A}}_{00,\infty}) \subset \lim_{h \to 0}\sigma (\mathring{\tilde{\mathcal{A}}}_{\epsh})$, $\sigma({\mathcal{A}}_{00,\infty}) \subset \lim_{h \to 0}\sigma (\mathring{{\mathcal{A}}}_{\epsh})$.
\end{remark}

\noindent{\bf B. Proof of Theorem \ref{rucak71}}
\begin{proof} 
The countability of the solutions of \eqref{generalizedeigenvaluemembrane} is proved in Proposition \ref{generalizedbetatheorem}. 	

The equality \eqref{rucak96} is  proved in the same way as in \cite[Section 8]{Zhikov2000}, by analysing the resolvent equation for the limit operator. 

The proof of the Hausdorff convergence consists of two parts: the statement $(H_1)$ is the direct consequence of the strong resolvent convergence established in Theorem \ref{thmivan11111} and Theorem \ref{rucak35}.  The statement $(H_2)$ is proved by following the strategy of Theorem \ref{rucak80}: taking the sequence of the solutions to
\begin{equation} 
\tilde{\mathcal{A}}_{\epsh} \vect u^{\epsh}=\lambda^{\epsh} \vect u^{\epsh},\quad \| \vect u^{\epsh}\|_{L^2}=1, 
\label{apeigeq}
\end{equation}
where $\lambda^{\epsh} \to \lambda$. One only needs to establish that the $L^2$ weak limit of $\vect u^{\epsh}$ is not zero, so $(H_2)$ then follows by letting $\epsh \to 0$ in (\ref{apeigeq}). This claim is verified by proving that the sequence $(\vect u^{\epsh})_{h>0}$ converges strongly two-scale to the limit $\vect u$, i.e., 
\begin{equation}
\vect u^{\epsh} \strongdrtwoscale \vect u.
\label{uuconv}
\end{equation}
 Note that, due to Lemma \ref{lemmaivan100}, one can assume without loss of generality that $\lambda \notin \lim_{h \to 0} \sigma (\mathring{\tilde{\mathcal{A}}}_{\epsh})= \sigma(\tilde{\mathcal{A}}_{00,\delta})$.  
One can then prove (\ref{uuconv}) in the same way as in \cite[Lemma 8.2]{Zhikov2000}, see also \cite[Theorem 6.2]{cherd1} for an analogous proof in the stochastic setting as well as the proof of Theorem \ref{rucak72} below. It is important to emphasize that the proof requires strong convergence in $L^2$ of the sequence of extensions $(\tilde{\vect u}^{\epsh})_{h>0}$, which can be ensured by imposing Assumption \ref{assumivan1}\,(1) and using Corollary \ref{remsim1} and Corollary \ref{rucak45}. 

The claim about the symmetry of $\tilde \beta^{\rm memb}_{\delta}$ is a direct consequence of Assumption \ref{assumivan1}.
\end{proof} 
\noindent{\bf C. Proof of Theorem \ref{rucak72}}
\begin{proof} 
The proof follows the lines of the proof of Theorem \ref{rucak71}. The analysis of the spectrum of limit operator is carried out as in \cite[Section 8]{Zhikov2000}, by studying the limit resolvent equations in Theorem \ref{thmresolventhighercontrast} and Theorem \ref{thmivan113}. Furthermore, in Theorem \ref{thmivan113} we take $\vect f_{\!*} =0$, which implies $\mathring{\vect u}_* =0$.  Strong resolvent convergence is then obtained as the last statement in the mentioned theorem, and compactness of an appropriate sequence of eigenfunctions can be proved by invoking \cite[Lemma 8.2]{Zhikov2000}. The only fact we will additionally comment on is the strong two-scale convergence of the eigenfunctions in the regime $\delta=0.$ We take $\lambda^{\epsh} \in \sigma(h^{-2}\mathcal{A}_{\epsh})$ such that $\liminf_{h \to 0}\dist(\lambda^{\epsh}, h^{-2}{\epshtwo}\sigma(\mathring{\mathcal{A}}_{\epsh}))>0$ (this is again the only situation that requires special analysis, due to Lemma \ref{lemmaivan100}) and $\lambda^{\epsh} \to \lambda$. Next, we take $\vect u^{\epsh}\in \mathcal{D}(\mathcal{A}_{\epsh})$, such that $\|\vect u^{\epsh}\|_{L^2}=1$ and $h^{-2}\mathcal{A}_{\epsh} \vect u^{\epsh}=\lambda^{\epsh} \vect u^{\epsh}$. 
In order to prove that $\lambda$ is in the spectrum of the limit operator, we show that the sequence $\vect u^{\epsh}$ is compact in the sense of strong two-scale convergence. 
We decompose $\vect u^{\epsh}=\tilde{\vect u}^{\epsh}+\mathring{\vect u}^{\epsh}$, where $\tilde{\vect u}^{\epsh}=E^{\epsh} \vect u^{\epsh} $, where $E^{\epsh}$ is an extension given in Theorem \ref{thmextreg1}. In the same way as in Proposition \ref{propcompactregime}, we infer that \eqref{nada1} holds. Taking test function $\mathring{\vect \xi} \in H^1_{\Gamma_{\rm D}}(\Omega;\R^3)$ that vanish on $\Omega_1^{\epsh},$ we conclude that 
\begin{equation} 
	\label{ivan402} 
	\begin{aligned}
\frac{1}{h^2}\int\limits_{\Omega_0^{\epsh}} \C^{\mu_h}&\biggl(\dfrac{\hat{x}}{\epsh}\biggr) \simgrad_h \mathring{\vect u}^{\epsh}(x) : \simgrad_h \mathring{\vect \xi}^{\epsh}(x) \,dx-\lambda^{\epsh}\int_{\Omega_0^{\epsh}} \rho \mathring{\vect u}^{\epsh} \cdot\mathring{\vect \xi}\,dx =\\[0.2em]
&\frac{1}{h^2}\int_{\Omega_0^{\epsh}} \C^{\mu_h}\biggl(\dfrac{\hat{x}}{\epsh}\biggr)   \simgrad_h \tilde{\vect u}^{\epsh}(x): \simgrad_h \mathring{\vect \xi}^{\epsh}(x)\, dx-\lambda^{\epsh}
\int_{\Omega_0^{\epsh}} \rho \tilde{\vect u}^{\epsh}\cdot\mathring{ \vect \xi}\,dx.
\end{aligned}
\end{equation}
To prove the strong two-scale convergence, we shall use a duality argument. To this end, consider the identity 
\begin{equation} 
	\label{ivan601}
	\begin{aligned} 
\frac{1}{h^2}\int\limits_{\Omega_0^{\epsh}} \C^{\mu_h}\biggl(\dfrac{\hat{x}}{\epsh}\biggr) \simgrad_h \mathring{\vect z}^{\epsh}(x) : \simgrad_h \mathring{\vect \xi}(x) \,dx&-\lambda^{\epsh}\int_{\Omega_0^{\epsh}} \rho \mathring{\vect z}^{\epsh} \cdot\mathring{\vect \xi}\,dx = 
\int_{\Omega_0^{\epsh}} \mathring{\vect f}^{\epsh}\cdot\mathring{ \vect \xi}\,dx,\\[0.2em] 
&\forall \mathring{\vect \xi} \in H^1_{\Gamma_{\rm D}}(\Omega;\R^3),\quad \mathring{\vect \xi}=0 \textrm{ on } \Omega_1^{\epsh}, 
\end{aligned} 
\end{equation}
where $\mathring{\vect f}^{\epsh}\in L^2(\Omega;\R^3)$ and $\mathring{\vect z}^{\epsh} \in H^1_{\Gamma_{\rm D}}(\Omega;\R^3)$, $\mathring{\vect z}^{\epsh}=0$ on  $\Omega_1^{\epsh}.$
Denoting by $\mathring{\vect u}^{\epsh}_{\rm c}$ the solution of \eqref{ivan601} with $\mathring{\vect f}^{\epsh}=-\lambda^{\epsh}\rho\tilde{\vect u}^{\epsh},$ subtracting \eqref{ivan601} from \eqref{ivan402}, and using an appropriate version of Lemma  \ref{nenad100} (see Remark \ref{ivan501}), we obtain that
\begin{equation}
	\eps_h\bigl\|\sym \nabla_h(\mathring{\vect u}^{\epsh}-\mathring{\vect u}^{\epsh}_c)\bigr\|_{L^2}  +\bigl\|\mathring{\vect u}^{\epsh}-\mathring{\vect u}^{\epsh}_{\rm c}\bigr\|_{L^2} \to 0\ \ \textrm{ as }\  h \to 0. 
\label{ref2}
\end{equation}
Notice also that $\mathring{\vect u}_* ^{\epsh} \stackrel{L^2}{\to} 0$ as the consequence of apriori estimates, see also \eqref{nada1}. 
We now take $\mathring{ \vect g}^{\epsh} \in L^2(\Omega;\R^3)$ such that $\mathring{\vect g}^{\epsh} \drtwoscale \mathring{\vect g} \in L^2(\Omega \times Y;\R^3)$.  Furthermore, we take $\mathring{\vect s}^{\epsh}$ as the solution of \eqref{ivan601} with $\mathring{\vect f}^{\epsh}=\mathring{\vect g}^{\epsh}$. Substituting  $\mathring{\vect s}^{\epsh}$ as a test function in the equation for $\mathring{\vect u}^{\epsh}_{\rm c}$ and $\mathring{\vect u}^{\epsh}_{\rm c}$ as a test function in the equation for $\vect s^{\epsh},$ we obtain by the same argument as in the proof of Theorem \ref{thmivan113} that 
\begin{eqnarray*}
 & &\frac{1}{12}\int\limits_{\omega \times Y_0} \C_{0}^{\rm bend,r}(y) \nabla_y^2 {\mathring{u}}_3(\hat{x},y): \nabla_y^2{\mathring{{s}}}_3(\hat{x},y)\,d\hat{x}dy  -
\lambda \int\limits_{\omega \times Y_0} \rho_0(y) {\mathring{u}}_3(\hat{x},y) \cdot \mathring{{s}}_3(\hat{x},y) \,d\hat{x}dy
\\ & & \hspace{+2ex} =
-\lambda \int\limits_{\omega \times Y_0}\rho_0(y) \tilde{u}_3 (\hat{x},y) \cdot  \mathring{ {s}}_3 (\hat{x},y) \, d\hat{x} dy
=\int\limits_{\omega \times Y_0} \overline{\mathring{  g}}_3 (\hat{x},y) \cdot  \mathring{ {u}}_3 (\hat{x},y) \, d\hat{x} dy,
 \end{eqnarray*} 
where $\mathring{u}_3$, $\mathring{ s}_3 \in L^2(\omega;H_0^2(Y_0))$ are weak two-scale limits of $\mathring{ u}^{\epsh}_3,$ $\mathring{ s}^{\epsh}_3$ while $\mathring{\vect s}_* ^{\epsh}\longrightarrow 0$ in $L^2,$ and $\tilde{u}_3\in H^2_{\gamma_D}(\omega)$ is the strong limit of $\tilde{ u}^{\epsh}_3$ while $\tilde{\vect u}^{\epsh}_*  \longrightarrow 0$ in $L^2$. It follows that $$
\lim_{h \to 0} \int_{\Omega} \mathring{\vect g}^{\epsh}\cdot \mathring{\vect u}^{\epsh}\,dx=-\lambda \lim_{h \to 0} \int_{\Omega} \rho\tilde{\vect u}^{\epsh}\cdot \mathring{\vect s}^{\epsh}\,dx=-\lambda \int\limits_{\Omega \times Y}\rho \tilde{u}_3 (\hat{x},y) \cdot  \mathring{ {s}}_3 =\int_{\Omega \times Y} g_3(x,y) \mathring{u}_3(\hat{x},y)d \hat{x} dy.
$$ 
Therefore, the sequence $\mathring{\vect u}^{\epsh},$ and consequently ${\vect u}^{\epsh},$ converges strongly two-scale. Passing to the limit in the (weak formulation of the) equation $h^{-2}\mathcal{A}_{\epsh} \vect u^{\epsh}=\lambda^{\epsh} \vect u^{\epsh},$ we immediately obtain $\hat{\mathcal{A}_0} \vect   u=\lambda \vect u$, where $\vect u \neq 0$ is the two-scale limit of $\vect u^{\epsh}$. 
\end{proof} 
\noindent{\bf D. Proof of Theorem \ref{thmivan301}}
\begin{proof} 
	The proof uses some ideas given in \cite{AllaireConca98} adapted to the present, simpler, setup. \\
\RRR	{\bf Step 1.} 	We   prove (\ref{sigma_ess}). \BBB\\

	By applying the Fourier transform, it is easily seen that the generalised eigenfunctions of $\mathring{\mathcal{A}}_{\rm strip}$ are of the form 
	\begin{equation}
	\vect u^{\eta}_{\rm strip}(y_1,y_2,x_3)=e^{i\eta x_3} \vect u^{\eta}(y_1,y_2), \quad \eta \in \R,  
	\label{eta_gen}
\end{equation}
	where $\vect u^{\eta} \in H^1_{0}(Y_0;\C^3)$
	is an eigenfunction of the self-adjoint operator $\mathring{\mathcal{A}}_{\rm strip}^{\eta}$ on $L^2(Y_0;\C^3)$ defined via the bilinear form 
	\begin{eqnarray*} 
		& &\mathring{a}_{\rm strip}^{\eta}(\vect u, \vect v)=\int_{Y_0} \C_0(y) \sym\bigl(\partial_{y_1} \vect u\,|\,\partial_{y_2} \vect u\,|\,{\rm i}\eta\vect u\bigr): \sym \bigl(\partial_{y_1} \overline{\vect v}\,|\,\partial_{y_2}\overline{\vect v}\,|\,\overline{ {\rm i}\eta\vect v}\bigr) dy, \\ & & \hspace{+10ex} \mathring{a}_{\rm strip}^{\eta}: H_{0}^1(Y_0;\C^3) \times H_{0}^1(Y_0;\C^3)\to \C.  
	\end{eqnarray*} 
	It is easily seen that for each $\eta \in \R$ the operator $\mathring{\mathcal{A}}^{\eta}_{\rm strip}$ is positive definite and has compact resolvent, and thus it has an increasing sequence of eigenvalues $\{\alpha_1^{\eta},\alpha_2^{\eta}, \dots\}$ diverging to $+\infty$. It follows that
	$$ \sigma(\mathring{\mathcal{A}}_{\rm strip})=\bigcup_{\eta \in \R}\bigl\{\alpha_1^{\eta}, \alpha_2^{\eta}, \dots\bigr\}. $$ 
	By using a suitable Korn's inequality  on the on $I \times Y_0$ (applied to the function $(x_3,y_1,y_2) \mapsto e^{i\eta x_3} \vect u(y_1,y_2)$) and \eqref{coercivity}, we obtain that there exists a constant $C>0$, which is independent of $\eta$, such that
	\begin{equation}	
		\|\vect u\|_{L^2}^2+\left\|\bigl(\partial_{y_1}\vect u\,|\,\partial_{y_2}\vect u\,|\,{\rm i}\eta\vect u \bigr)\right\|^2_{L^2} \leq C \mathring{a}_{\rm strip}^{\eta} (\vect u, \vect u) \qquad  \forall \vect u \in H^1_{0}(Y_0;\C^3).  
		\label{Korns}
	\end{equation}
	Furthermore, using the characterisation of eigenvalues through a Rayleigh quotient, we obtain 
	\begin{equation}
		\alpha_1^{\eta}=\min_{\vect u \in H^1_{0}(Y_0;\C^3)} \frac{\mathring{a}^{\eta}_{\rm strip}(\vect u,\vect u)}{\|\vect u\|^2_{L^2}}.  
		\label{uRayleigh}
	\end{equation}
	Combining this with (\ref{Korns}), we infer that there exists $c>0$, independent of $\eta$, such that  
	$$ 
	\alpha_1^{\eta} \geq c \min_{\vect u \in H^1_{0}(Y_0;\C^3)}\frac{\left\|\bigl(\partial_{y_1}\vect u\,|\,\partial_{y_2}\vect u\,|\,{\rm i}\eta\vect u \bigr)\right\|^2_{L^2}}{\| \vect u \|^2_{L^2}}.  
	$$
	
	Finally, using Poincar\'{e}'s inequality on $Y_0,$ we obtain the existence of $c>0$ such that $\alpha_1^{\eta} \geq c+\eta^2$. The continuity of $\alpha_1^{\eta}$ with respect to $\eta$ (which can also be inferred from (\ref{uRayleigh})) implies that the range of the mapping $\eta \mapsto \alpha_1^{\eta}$ is $[m_0,+\infty)$ for some $m_0>0.$ This concludes the characterisation of the set $\sigma(\mathring{\mathcal{A}}_{\rm strip}),$ provided by (\ref{sigma_ess}). 
	\\
		\RRR {\bf Step 2.} \BBB 	We   prove \eqref{pmess}.
		\\
	Proceeding to the discussion of the sets $\sigma_{\rm ess}(\mathring{\mathcal{A}}_{\rm strip}^{\pm}),$ we show that they in fact coincide 
	 with $\sigma(\mathring{\mathcal{A}}_{\rm strip}).$ The proof of this claim, for which we just provide a sketch, is similar to the argument of \cite[Proposition 7.5]{AllaireConca98}. Consider a Weyl sequence associated to $\lambda \in \sigma_{\rm ess}(\mathring{\mathcal{A}}_{\rm strip}^{+})$, i.e.,  
	$(\vect u^{+,n})_{n \in \N} \in \mathcal{D} (\mathring{\mathcal{A}}_{\rm strip}^{+})$ such that
	\begin{equation} \label{ispravak1} 
		\|\vect u^{+,n}\|_{L^2}=1, \quad \vect u^{+,n} \weakL 0, \quad \bigl\|\mathring{\mathcal{A}}_{\rm strip}^+\vect u^{+,n}-\lambda \vect u^{+,n}\bigr\|_{L^2}\to 0.
	\end{equation}  
	The properties (\ref{ispravak1}) imply that $(\vect u^{+,n})_{n \in \N}$ is bounded in $H^1$. Next, take a smooth positive function $\psi:\R^{+}_0 \to \R$ that takes zero values on $(-\infty,1]$ and is equal to unity on $[2,+\infty)$ and show that 
	for all $v \in H^1_{00}(\R^+_0 \times Y_0;\R)$ one has 
	\begin{equation}
		\label{ivan201} 
		\begin{aligned}
			\int_{\R^+_0 \times Y_0} \C_0(y) \nabla (\psi\vect u^{+,n}):\nabla \vect v dx_3 dy&-\lambda\int_{\R^+_0 \times Y_0}   \rho_0 (\psi \vect u^{+,n})\cdot \vect v dx_3 dy\\[0.35em]
			& 
			=\int_{\R \times Y_0} \C_0(y) \nabla ( \psi \vect u^{+,n}):\nabla \vect v dx_3 dy -\lambda \int_{\R \times Y_0} \rho_0 (\psi\vect u^{+,n})\cdot\vect v dx_3 dy\\[0.35em]
			& 
			=\int_{[1,2] \times Y_0} \C_0(y) \sym\bigl(0\,|\,0\,|\partial_{x_3} \psi \vect u^{+,n}\bigr)  :\sym \nabla \vect v dx_3 dy\\[0.25em]
			&\qquad\qquad\qquad+\int_{[1,2] \times Y_0} \C_0(y) \sym \nabla \vect u^{+,n} : \sym\bigl(0\,|\,0\,|  \partial_{x_3} \psi \vect v\bigr)\,dx_3dy.    		
		\end{aligned}
	\end{equation}	
	Combining \eqref{ispravak1} with compact embedding of $H^1$ into $L^2$ on bounded domains, we conclude that for all bounded sets $A$ one has $\|\vect u^{+,n}\|_{L^2(A)}\to 0$.
	Furthermore, considering a smooth non-negative compactly supported function $\psi_A$ that is equal to one on $A$ and noting that by virtue of (\ref{ispravak1}) one has  
	$$
	\int_{{\mathbb R}_0^+\times Y_0}\bigl(
	\mathring{\mathcal{A}}_{\rm strip}^+\vect u^{+,n}-\lambda \vect u^{+,n}\bigr)\psi_A \vect u^{+,n}\to0,
$$	
 we obtain that actually $\|\vect u^{+,n}\|_{H^1(A)}\to 0$.  
	
	Thus we conclude that the right-hand side of \eqref{ivan201} can be written in the form
	$$
	\int_{[1,2] \times Y_0} \vect f_1^n:\sym \nabla \vect v dx_3dy+ \int_{[1,2] \times Y_0} \vect f_2^n\cdot\vect v dx_3dy, 
	$$
	where $\|\vect f_1^n\|_{L^2}\to 0$ and $\|\vect f_2^n\|_{L^2}\to 0$ as $n \to \infty$. By combining a suitable version of Lemma \ref{nenad100} with \eqref{ivan201}, we conclude that $\lambda \in \sigma(\mathring{\mathcal{A}}_{\rm strip})$. In a similar fashion, starting from the generalised eigenfunction (\ref{eta_gen}), we conclude that $\sigma(\mathring{\mathcal{A}}_{\rm strip}) \subset \sigma(\mathring{\mathcal{A}}_{\rm strip}^+) $. 
	
By repeating the above argument for ${\mathcal{A}}_{\rm strip}^{-},$ we also obtain
	\[
	\sigma_{\rm ess}\bigl(\mathring{\mathcal{A}}_{\rm strip}^{-}\bigr)=\sigma\bigl(\mathring{\mathcal{A}}_{\rm strip}\bigr).
	\] 
This establishes the property \eqref{pmess}. 
\\
\RRR {\bf Step 3.} 	We   prove  \eqref{assincl}. \BBB
\\

		First, by virtue of the symmetries of the elastic tensor (and considering appropriate Weyl sequences), we easily obtain the equality $\sigma(\mathring{\mathcal{A}}_{\rm strip}^+)=\sigma(\mathring{\mathcal{A}}_{\rm strip}^-)$. 
	Next we show that
	\begin{equation} \label{ispravak222} 
		\sigma_{\rm ess}\bigl(\mathring{\mathcal{A}}_{\rm strip}^+\bigr) \subset \sigma\bigl(\mathring{\tilde{\mathcal{A}}}_{\rm strip}\bigr), \qquad \sigma\bigl(\mathring{\tilde{\mathcal{A}}}_{\rm strip}\bigr)=\sigma\bigl(\mathring{{\mathcal{A}}}_{\rm strip}\bigr) 
	\end{equation} 
	To show the first inclusion in \eqref{ispravak222}, we take a Weyl sequence associated to the $\lambda \in \sigma_{\rm ess}(\mathring{\mathcal{A}}_{\rm strip}^{+})$, i.e.,
	$(\vect u^{+,n})_{n \in \N} \subset \mathcal{D} (\mathring{\mathcal{A}}_{\rm strip}^{+})$ such that 
	$$
	\|\vect u^{+,n}\|_{L^2}=1,\qquad\vect u^{+,n} \weakL 0,\qquad\bigl\|\mathring{\mathcal{A}}_{\rm strip}^+\vect u^{+,n}-\lambda\vect u^{+,n}\bigr\|_{L^2}\to 0.
	$$ 
	Using the elastic symmetries once again, we infer that for the functions
	\[
	\vect u_* ^{-,n}(x_3, y):=\vect u^{+,n}_* (-x_3, y),\qquad u_{3}^{-,n}(x_3, y):=-u^{+,n}_3(-x_3, y), \qquad (x_3, y)\in{\mathbb R}_0^+\times Y_0,
	\] 
	one has
	\[
	\|\vect u^{-,n}\|_{L^2}=1,\qquad \vect u^{-,n} \weakL 0,\qquad \bigl\|\mathring{\mathcal{A}}_{\rm strip}^-\vect u^{-,n}-\lambda \vect u^{-,n}\bigr\|_{L^2}\to 0.
	\] 
	We also note that the sequences $(\vect u^{\pm,n})_{n \in \N}$ are bounded in $H^1$. We now define 
	\[
	\vect u^{n}(x_3, y):=\psi(x_3)\vect u^{+,n}(x_3,y)+\psi(-x_3)\vect u^{-,n}(x_3,y),\qquad (x_3, y)\in{\mathbb R}\times Y_0.
	\]
	  In the same way as in \eqref{ivan201}, we conclude that for every $v \in H^1_{00}(\R^+_0 \times Y_0;\R)$ one has
	\begin{equation*}
		\begin{aligned}
			\int_{\R \times Y_0} \C_0(y) \nabla \vect u^{n}:\nabla \vect v dx_3 dy&-\lambda\int_{\R^ \times Y_0} \rho_0 \vect  u^{n} \vect v dx_3 dy\\&=
			\int_{([1,2]\cup[-2,-1]) \times Y_0} \vect f_1^n:\sym \nabla \vect v dx_3dy+ \int_{([1,2]\cup[-2,-1]) \times Y_0} \vect f_2^n \vect v dx_3dy,
		\end{aligned}
	\end{equation*}
	where $\|\vect f_1^n\|_{L^2}\to 0$ and $\|\vect f_2^n\|_{L^2}\to 0$ as $n \to \infty,$ from which it follows that $\lambda \in \sigma(\mathring{\tilde{\mathcal{A}}}_{\rm strip})$.

	 For the last equality in \eqref{ispravak222} it suffices to argue that  
	$\sigma(\mathring{{\mathcal{A}}}_{\rm strip}) \subset \sigma(\mathring{\tilde{\mathcal{A}}}_{\rm strip})$. To this end, we apply the Fourier transform and for $\eta \in \R$ we consider  generalised eigenfunctions of the operator $\mathring{{\mathcal{A}}}_{\rm strip}$ of the form	
	\[
	\vect u^{\eta}_{\rm strip}(x_3, y)=e^{i\eta x_3} \vect u^{\eta}(y),\qquad (x_3, y)\in {\mathbb R}\times Y_0,
	\] 
	where  $\vect u^{\eta} \in H^1_{00}(Y_0;\C^3)$
	is an eigenfunction of the operator $\mathring{\mathcal{A}}_{\rm strip}^{\eta}$, i.e., $\mathring{\mathcal{A}}_{\rm strip}^{\eta}\vect u^{\eta}(y_1,y_2)=\alpha_i^{\eta} \vect u^{\eta}(y_1,y_2)$, $\vect u^{\eta} \neq 0$, for some $i \in \N$. Invoking the symmetries, we infer that for each $\eta\in{\mathbb R}$
	\[
	\vect (\vect u^{-\eta}_{\rm strip})_*(x_3,y):=(\vect u^{\eta}_{\rm strip})_* (-x_3, y),\qquad u_{\rm strip,3}^{-\eta}(x_3, y):=-u^{\eta}_{\rm strip,3}(-x_3,y), \qquad (x_3, y)\in{\mathbb R}\times Y_0,
	\] 	
	is also a generalised eigenfunction of the operator $\mathring{{\mathcal{A}}}_{\rm strip}$ associated with the same eigenvalue $\alpha_i^{\eta}$. Therefore, the function  $\bigl( \vect u^{\eta}_{\rm strip}+ \vect u^{-\eta}_{\rm strip}\bigr)/2$ 
	is a generalised eigenfunction of the operator $\mathring{\tilde{\mathcal{A}}}_{\rm strip}$ (and hence the operator $\mathring{{\mathcal{A}}}_{\rm strip}$) associated with the same eigenvalue. 
	Since every element of the spectrum of the operator $\mathring{{\mathcal{A}}}_{\rm strip}$ coincides with $\alpha_i^{\eta}$ for some $\eta \in \R$ and $i \in \mathbb{N}$, we conclude that $\sigma(\mathring{{\mathcal{A}}}_{\rm strip}) \subset \sigma(\mathring{\tilde{\mathcal{A}}}_{\rm strip})$. 
	This construction also proves that  $\sigma_{\rm ess}(\mathring{\tilde{\mathcal{A}}}_{\rm strip})= \sigma\bigl(\mathring{{\mathcal{A}}}_{\rm strip}\bigr)$. 
	Since we have already established that $\sigma\bigl(\mathring{{\mathcal{A}}}_{\rm strip}\bigr) \subset \sigma\bigl(\mathring{\mathcal{A}}_{\rm strip}^+\bigr),$ the property \eqref{assincl} follows.
\\
\RRR {\bf Step 4.} 	We   prove   \eqref{characterise}. \BBB
\\
 We start by proving the inclusion 
	\begin{equation}
	\lim_{h \to 0} \sigma(\mathring{\mathcal{A}}_{\epsh}) \subset \sigma(\mathring{\mathcal{A}}_{\rm strip})\cup \sigma(\mathcal{A}_{\rm strip}^+) \cup \sigma(\mathcal{A}_{\rm strip}^-).
	\label{sigma_incl}
    \end{equation}
	Let us take $\lambda^{\epsh} \in \sigma (\mathring{\mathcal{A}}_{\epsh})$ and $\vect u^{\epsh} \in \mathcal{D} (\mathring{\mathcal{A}}_{\epsh}) $ such that $\lambda^{\epsh} \to \lambda$ and 
	$$\mathring{\mathcal{A}}^{\epsh} \vect u^{\epsh} =\lambda^{\epsh} \vect u^{\epsh}, \quad \|\vect u^{\epsh}\|_{L^2}=1. $$  
	Consider smooth positive functions  $\psi_i,$ $i=1,2,3$ on ${\mathbb R}$ such that 
	$\psi_1+\psi_2+\psi_3=1$, $\textrm{supp}\ \psi_2\subset [-1/4,1/4]$, $\textrm{supp}\ \psi_1\subset (-\infty,-1/8]$, $\textrm{supp}\ \psi_3\subset [1/8, \infty),$ and $\psi_3(x_3)=\psi_1(-x_3)$. 
	Then  there exists $i \in \{1,2,3\}$ such that (up to a subsequence) 
	$$
	\bigl\|\psi_i \vect u^{\epsh}\bigr\|_{L^2} \geq \frac{1}{3} \quad \forall h. 
	$$
	If $i=2$, we extend $\psi_2 \vect u^{\epsh}$ 
	by zero on $\R \times Y_0$ and, by scaling the variable $x_3,$ define  
	$$
	\vect u_{\rm strip}^{\epsh}(x_3, y)= \sqrt{\frac{h}{\epsh}}\psi_2\biggl(\frac{\epsh}{h}x_3\biggr) \vect u^{\epsh}\biggl(\frac{\epsh}{h}x_3, y\biggr), \qquad (x_3, y)\in{\mathbb R}\times Y_0.
	$$
	It is straightforward to see that
	\[
	\bigl\|\vect u_{\rm strip}^{\epsh}\bigr\|_{L^2}\geq \frac{1}{3}
	\] 
	and that for all $\vect v \in H^1_{00} (\R \times Y_0;\R^3)$ one has 
	$$ \int_{\R \times Y_0} \C_0(y) \nabla \vect u_{\rm strip}^\epsh:\nabla \vect v dx_3 dy -\lambda^{\epsh} \int_{\R \times Y_0} \rho_0  \vect u_{\rm strip}^\epsh \vect v dx_3 dy= \int_{\R \times Y_0} \vect f_1^{\epsh}:\sym \nabla \vect v dx_3 dy+ \int_{\R \times Y_0} \vect f_2^{\epsh} \vect v,    $$ 
	where $\|\vect f_1^{\epsh}\|_{L^2}\to 0$, $\| \vect f_2^{\epsh}\|_{L^2} \to 0$ as $ h \to 0$.  
	By using an appropriate analogue of Lemma \ref{nenad100} (see also Remark \ref{ivan501}) adapted to the operator $\mathring{\mathcal{A}}_{\epsh},$  we conclude  that 
	\begin{equation}
	\lambda \in \sigma\bigl(\mathring{\mathcal{A}}_{\rm strip}\bigr).
	\label{ref3}
	\end{equation}  
If $i=1$ or $i=3$ we argue similarly that $\lambda \in \mathring{\mathcal{A}}_{\rm strip}^+$, i.e., $\lambda \in \mathring{\mathcal{A}}_{\rm strip}^-$ respectively.

	Next, we prove that $\sigma (\mathring{\mathcal{A}}_{\rm strip}) \subset \lim_{h \to 0} \sigma(\mathring{\mathcal{A}}_{\epsh})$. Considering $\alpha_i^{\eta}$ and $\vect u^{\eta}\in \mathcal{D}(\mathring{\mathcal{A}}^{\eta}_{\rm strip})$ such that 
	$$
	\mathring{\mathcal{A}}^{\eta}_{\rm strip}\vect u^{\eta}=\alpha_i^{\eta} \vect u^{\eta}, \qquad \|\vect u^{\eta}\|_{L^2}=1
	$$ 
	we set 
	\[
	\vect u^{\eta}_{\rm strip}(x_3, y)=e^{i\eta x_3}\vect u^{\eta}(y),\qquad (x_3, y)\in {\mathbb R}\times Y_0.
	\] 
	It is easily seen that $\mathring{\mathcal{A}}_{\rm strip} \vect u^{\eta}_{\rm strip}=\alpha_i^{\eta}\vect u^{\eta}_{\rm strip}$. 
	We define 
	$$\vect u^{\epsh}(x_3, y)={\biggl\|\psi_2(x_3) \vect u^{\eta}_{\rm strip}\biggl(\dfrac{h}{\epsh}x_3, y\biggr) \biggr\|_{L^2}}^{-1}{\psi_2(x_3) \vect u^{\eta}_{\rm strip}\biggl(\dfrac{h}{\epsh}x_3, y\biggr)}, \qquad (x_3, y)\in I\times Y_0.
	 $$
	It then follows easily that for every $h>0$ and $\vect v\in H^1_{00}(I \times Y_0;\C^3)$ one has
	\begin{equation} 
		\label{ivan202} 
		\begin{aligned}
	\int_{I \times Y_0} \C_0(y) \nabla_{\frac{h}{\epsh}} \vect u^\epsh:\nabla_{\frac{h}{\epsh}} \vect v dx_3 dy&-\alpha_i^{\eta}\int_{I \times Y_0} \rho_0 \vect u^\epsh\cdot\vect v dx_3 dy\\[0.4em]
	&=\int_{I \times Y_0} \vect f_1^{\epsh}:\sym \nabla_{\frac{h}{\epsh}} \vect v dx_3 dy+ \int_{I \times Y_0} \vect f_2^{\epsh}\cdot\vect v,  
	\end{aligned}  
	\end{equation} 
	where $\|\vect f_1^{\epsh}\|_{L^2}\to 0$, $\| \vect f_2^{\epsh}\|_{L^2} \to 0$ as $ h \to 0$.  By using a result analogous to Lemma \ref{nenad100} (see also Remark \ref{ivan501}) we obtain  
	\begin{equation}
	\dist\bigl(\alpha_i^{\eta}, \sigma(\mathring{\mathcal{A}}_{\epsh})\bigr)\to 0\quad{\rm as}\ \ \ h \to 0.
	\label{ref4}
	\end{equation}

	It can be also easily deduced that $\sigma_{\rm disc}(\mathring{\mathcal{A}}_{\rm strip}^+)\subset \lim_{h \to 0} \sigma(\mathring{\mathcal{A}}_{\epsh})$. Namely, for an eigenvalue $\alpha_{\rm strip}^+$ of $\mathring{\mathcal{A}}_{\rm strip}^+$ and associated eigenfunction $\vect u^{\alpha^+}_{\rm strip} \in \mathcal{D}(\mathring{\mathcal{A}}_{\rm strip}^+)$, $\|\vect u^{\alpha^+}_{\rm strip}\|_{L^2}=1,$ i.e.,  
	$\mathring{\mathcal{A}}_{\rm strip}^+ \vect u^{\alpha^+}_{\rm strip}=\alpha_{\rm strip}^+ \vect u^{\alpha^+}_{\rm strip},$ it can be easily shown that the sequence 
	$$
	\vect u^{\epsh}(x_3, y)={\biggl\|\psi_1(x_3) \vect u^{\alpha^+}_{\rm strip}\biggl(\dfrac{h}{\epsh}\biggl(x_3+\dfrac{1}{2}\biggr), y\biggr)\biggr\|_{L^2}}^{-1}{\psi_1(x_3)\,\vect u^{\alpha^+}_{\rm strip}\biggl(\dfrac{h}{\epsh}\biggl(x_3+\dfrac{1}{2}\biggr), y\biggr)}, \qquad (x_3,y)\in I\times Y_0,
	$$
	satisfies \eqref{ivan202} with $\|\vect f_1^{\epsh}\|_{L^2}\to 0$, $\| \vect f_2^{\epsh}\|_{L^2} \to 0$ as $ h \to 0$ and with $\alpha_i^{\eta}$ replaced by $\lambda$. It follows that $\lambda \in \lim_{h \to 0} \sigma(\mathring{\mathcal{A}}_{\epsh})$. In view of \eqref{pmess}, we obtain the opposite inclusion in \eqref{sigma_incl}.

It remains to prove, under the Assumption \ref{assumivan1}~(1), the characterisation of $\lim_{h \to 0} \sigma(\mathring{\tilde{\mathcal{A}}}_{\epsh})$ provided by (\ref{characterise}). 
By the same argument as in the case without planar symmetries, we obtain  
\begin{equation} \label{ispravak2222} 
\lim_{h \to 0} \sigma\bigl(\mathring{\tilde{\mathcal{A}}}_{\epsh}\bigr)\subset \sigma\bigl(\mathring{\tilde{\mathcal{A}}}_{\rm strip}\bigr)\cup \sigma\bigl(\mathring{\mathcal{A}}_{\rm strip}^+\bigr) \cup  \sigma\bigl(\mathring{\mathcal{A}}_{\rm strip}^-\bigr), 
\end{equation} 
and $\sigma(\mathring{\tilde{\mathcal{A}}}_{\rm strip}) \subset  \lim_{h \to 0} \sigma(\mathring{\tilde{\mathcal{A}}}_{\epsh})$. By virtue of \eqref{pmess} and \eqref{assincl}, it remains to prove the inclusion
$$
\sigma_{\rm disc}\bigl(\mathring{\mathcal{A}}_{\rm strip}^\pm\bigr) \subset \lim_{h \to 0} \sigma\bigl(\mathring{\tilde{\mathcal{A}}}_{\epsh}\bigr).
$$
This will be done by a slightly different argument, as follows. 
For $\alpha\in \sigma_{\rm disc}(\mathring{\mathcal{A}}_{\rm strip}^+)$ we take the associated eigenfunction  $\vect u^{\alpha,+}_{\rm strip} \in \mathcal{D}(\mathring{\mathcal{A}}_{\rm strip}^+)$, $\|\vect u^{\alpha,+}_{\rm strip}\|_{L^2}=1,$ of the operator $\mathring{\mathcal{A}}_{\rm strip}^+,$ i.e., $\mathring{\mathcal{A}}_{\rm strip}^+ \vect u^{\alpha,+}_{\rm strip}=\alpha\vect u^{\alpha,+}_{\rm strip}$. Using the elastic symmetries, we infer that the functions $\vect u^{\alpha,-}_{\rm strip}$ defined by 
\[
\vect u_* ^{\alpha,-}(x_3, y):=\vect u^{\alpha,+}_* (-x_3, y),\qquad\vect u_{3}^{\alpha,-}(x_3, y):=-\vect u^{\alpha,+}_3(-x_3, y), \qquad (x_3, y)\in {\mathbb R}_0^+\times Y_0,
\] 
satisfy  $\vect u^{\alpha,-}_{\rm strip} \in \mathcal{D}(\mathring{\mathcal{A}}_{\rm strip}^-)$, $\|\vect u^{\alpha,-}_{\rm strip}\|_{L^2}=1$ and $\mathring{\mathcal{A}}_{\rm strip}^- \vect u^{\alpha,-}_{\rm strip}=\alpha\vect u^{\alpha,-}_{\rm strip}$. 
Finally, we define
$$
\vect u^{\epsh}(x_3, y)=\frac{\psi_1(x_3) \vect u^{\alpha,+}_{\rm strip}\biggl(\dfrac{h}{\epsh}\biggl(x_3+\dfrac{1}{2}\biggr), y\biggr)+\psi_3(x_3) \vect u^{\alpha,-}_{\rm strip}\biggl(\dfrac{h}{\epsh}\biggl(x_3-\dfrac{1}{2}\biggr), y\biggr)}{\biggl\|\psi_1(x_3) \vect u^{\alpha,+}_{\rm strip}\biggl(\dfrac{h}{\epsh}\biggl(x_3+\dfrac{1}{2}\biggr), y\biggr)+\psi_3(x_3) \vect u^{\alpha,-}_{\rm strip}\biggl(\dfrac{h}{\epsh}\biggl(x_3-\dfrac{1}{2}\biggr), y\biggr)\biggr\|_{L^2}},\qquad (x_3, y)\in I\times Y_0,
$$
and use an argument similar to that employed for showing that $\alpha\in\lim_{h \to 0} \sigma(\mathring{\tilde{\mathcal{A}}}_{\epsh})$ under no symmetry assumptions. 

Similarly, we demonstrate that 
\[
\sigma_{\rm disc}(\mathring{\mathcal{A}}_{\rm strip}^-)\subset\lim_{h \to 0} \sigma(\mathring{\tilde{\mathcal{A}}}_{\epsh}),
\] 
which concludes the proof of the opposite inclusion in \eqref{ispravak2222}. 	
\end{proof} 		
\begin{remark} 
	In the same way as in \cite[Proposition 7.5]{AllaireConca98}, it can be shown that eigenfunctions associated with eigenvalues in $\sigma(\mathring{\mathcal{A}}_{\rm strip}^{\pm})$ have exponential decay at infinity. 	
\end{remark} 	

\noindent{\bf E.  Proof of Theorem \ref{thmivan302} and Theorem \ref{thmivan303}}
\begin{proof} 
The equality \eqref{ispravak3} is proved in the same way as in \cite[Section 8]{Zhikov2000}. The inclusion $\sigma(\tilde{\mathcal{A}}_{\infty}) \subset \lim_{h\to 0} \sigma(\tilde{\mathcal{A}}_{\epsh})$ follows from resolvent convergence provided by Theorem \ref{rucak50} and Corollary \ref{corivan11}, while the inclusion $\sigma(\mathring{\mathcal{A}}^+_{\rm strip})=\lim_{h \to 0} \sigma(\mathring{\tilde{\mathcal{A}}}_{\epsh}) \subset \lim_{h\to 0} \sigma(\tilde{\mathcal{A}}_{\epsh})$ follows from Theorem \ref{thmivan301} and Lemma \ref{lemmaivan100}. 

It remains to show that $\lim_{h \to 0} \sigma({\tilde{\mathcal{A}}}_{\epsh}) \subset \sigma(\mathring{\mathcal{A}}^+_{\rm strip}) \cup \sigma(\tilde{\mathcal{A}}_{\infty})$.  To this end, consider $\lambda^{\epsh} \in \sigma(\tilde{\mathcal{A}}_{\epsh})$ such that 
\begin{equation}
\liminf_{h \to 0}\dist\bigl(\lambda^{\epsh}, \sigma(\mathring{\tilde{\mathcal{A}}}_{\epsh})\bigr)>0
\label{liminfin}
\end{equation}
(which is the only case that requires analysis, due to Lemma \ref{lemmaivan100}) and $\lambda^{\epsh} \to \lambda$. Furthermore, consider $\vect u^{\epsh}\in \mathcal{D}(\tilde{\mathcal{A}}_{\epsh})$ such that
$\|\vect u^{\epsh}\|_{L^2}=1$ and $\tilde{\mathcal{A}}_{\epsh} \vect u^{\epsh}=\lambda^{\epsh} \vect u^{\epsh}$. 
The strong two-scale compactness of $\vect u^{\epsh}$ is proved in the same way as in the proof of Theorem \ref{rucak72} by combining (\ref{liminfin}) with
Lemma \ref{nenad100}, see also Remark \ref{ivan501}.   
The equation \eqref{ispravak4} is a direct consequence of the symmetry assumptions.

 The proof of Theorem \ref{thmivan303} is carried out in a similar fashion. 
\end{proof} 
 \subsection{Proofs for Section \ref{limeveqsec}} 

\vskip 0.2cm

\noindent{\bf A. Proof of Theorem \ref{connn1}}
\begin{proof} 
It is not possible to put the first claim in the framework of Theorem \ref{thmivan551} or Theorem \ref{thmivan553} directly (i.e., using Proposition \ref{propdinsi1} and Theorem \ref{thmivan111}) and we will provide a direct proof instead, using Laplace transform similarly to how it was done in the proofs of these theorems. 
\RRR The reason why we cannot put the first claim  in the framework of Theorem \ref{thmivan551} or Theorem \ref{thmivan553} directly comes from the fact that $\vect f_{*}\neq 0$ and they influence the (quasistatic) behavior of the part of in-plane deformation. \BBB

 For every $\epsh>0,$ we write the system \eqref{evolutionepsilonproblem} for $\mu_h=\epsh$, $\tau=2$, using the formula \eqref{abstractevolutionproblemrephrased}, where $\mathbb{A}=\mathbb{A}_{\epsh}$ 
is given by formula \eqref{matrixoperator} and the associated operator $\mathcal{A}$ is given by $h^{-2}\mathcal{A}_{\epsh}$. Furthermore, we set $H_{\epsh}=L^2(\Omega;\R^3)$, $V_{\epsh}=\mathcal{D}(\mathcal{A}_{\epsh}^{1/2})=H^1_{\Gamma_{\rm D}}(\Omega;\R^3)$,  $H=L^2(\Omega \times Y;\R^3)$, $H_0=\{0\}^2 \times  L^2(\omega)$, $V=\{0\}^2 \times \mathcal{D}((\mathcal{A}_{\delta}^{\funcB, {\rm hom}})^{1/2})=\{0 \}^2 \times H_{\gamma_{\rm D}}^2(\omega)$. The space $H_{\epsh}$ is equipped with the $L^2$ inner product with weight $\rho^h$, while the space $H$ is equipped with the $L^2$ inner product with weight $\rho.$ 

In accordance with the abstract approach of Section \ref{apsechyp}, for $\vect v \in V_{\epsh}$ we set $\|\vect v\|_{V_{\epsh}}:=\|(h^{-2}\mathcal{A}_{\epsh}+\RRR \mathcal{I} \BBB)^{1/2} \vect v\|_{L^2}$ and, similarly, for  $\vect v \in V$ we set $\|\vect v\|_V:=\|(\mathcal{A}_{\delta}^{\funcB, {\rm hom}}+\RRR \mathcal{I} \BBB)^{1/2} \vect v\|_{L^2}$. Furthermore, the  convergence $\xrightharpoonup{H_{\epsh}}$ is given by two-scale convergence. Next, for $\vect f \in \R^3$, we define the vectors $\vect f_v:=(0,0, f_3)^\top$, $\vect f_h:=( \vect f_*,0)^\top$. 
We apply the estimate \eqref{ivan55} to the case of the loads $\vect f^{\epsh}_v$ and initial conditions $\vect u_0^{\epsh}$, $\vect u_1^{\epsh}$ and the estimate \eqref{ivan992} to the case of the loads $\vect f^{\epsh}_h$ and zero initial conditions. This yields
\begin{equation}
\begin{aligned} 
&\left\|(h^{-2}\mathcal{A}_{\epsh} +\mathcal{I} )^{1/2}\vect u^{\epsh}\right\|_{L^\infty([0,T];H_{\epsh})}+\|\partial_t \vect u^{\epsh} \|_{L^{\infty} ([0,T];H_{\epsh})}\leq
\\[0.3em]
&\hspace{+5ex}Ce^\top\Big(\left\|(h^{-2}\mathcal{A}_{\epsh}+\mathcal{I} )^{1/2}  \vect u_0^{\epsh}\right\|_{H_{\epsh}}+\|\vect u^{\epsh}_1 \|_{H_{\epsh}} +\|\vect f_v^{\epsh}\|_{L^1([0,T];H_{\epsh})} \\[0.3em]
&\hspace{+8ex}+\left\|(h^{-2}\mathcal{A}_{\epsh} +\mathcal{I})^{-1/2} \vect f_h^{\epsh}(0)\right\|_{H_{\epsh}} +  
 \left\|(h^{-2}\mathcal{A}_{\epsh} +\mathcal{I})^{-1/2} \partial_t \vect  f_h^{\epsh}\right\|_{L^1([0,T];H_{\epsh})}
\Big). 
\end{aligned}
\label{ivan968}
\end{equation} 
In order to obtain the boundedness of the last two terms in \eqref{ivan968}, notice that for $\vect l^{\epsh} \in V_{\epsh}^*$ one has 
\begin{equation} \label{ivan966} 
 \left\|(h^{-2}\mathcal{A}_{\epsh} +\mathcal{I})^{-1/2} \vect l^{\epsh}\right\|^2_{H_{\epsh}}=h^{-2}a_{\epsh}(\vect s^{\epsh}, \vect s^{\epsh})+(\vect s^{\epsh}, \vect s^{\epsh}), 
 \end{equation} 
where $\vect s^{\epsh} \in \mathcal{D}(\mathcal{A}^{1/2}_{\epsh})$ is the solution of the problem 
\begin{equation} \label{ivan9667} 
 h^{-2}a_{\epsh}({\vect s}^{\epsh}, \vect v)+({\vect s}^{\epsh}, \vect v)_{H_{\epsh}}= {\vect l}^{\epsh}(\vect v), \quad \forall \vect v \in V_{\epsh}.  
  \end{equation} 
Combining the result of Proposition \ref{propdinsi1}\,(1) with \eqref{ivan966} and \eqref{ivan9667}, we obtain the existence of $C>0$, independent of $h,$ such that
 \begin{equation} \label{ivan967} 
 \left\|(h^{-2}\mathcal{A}_{\epsh} +\RRR \mathcal{I} \BBB)^{-1/2} {\vect l}^{\epsh}\right\|^2_{H_{\epsh}} \leq C\|\pi_h {\vect l}^{\epsh}\|^2_{H_{\eps}},\qquad  \vect{l}^{\epsh} \in H_{\epsh}.
\end{equation} 
 Taking into account \eqref{loadstimebdd1} and \eqref{ivan970}, this implies the stated boundedness property.  Also, a consequence of \eqref{loadstimebdd1} and \eqref{ivan970}, we have
\begin{equation}
\pi_h \vect f_h^{\epsh} \xrightarrow{t,\infty,{\rm dr}-2} \vect f_h,\quad \pi_h \partial_t \vect f^{\epsh}\xrightharpoonup{t,{\rm dr}-2}\partial_t \vect f_h. 
\label{f_convergence}
\end{equation}  
From \eqref{ivan968} and Corollary \ref{kornthincor} we conclude  that $\pi_{1/h}\vect u^{\epsh}$ is bounded in $L^{\infty}([0,T];V_{\epsh})$ and $\partial_t \vect u^{\epsh}$ is bounded in $L^{\infty}([0,T];H_{\epsh}),$ and hence there exists $\vect u_l \in L^{\infty}([0,T];V)$, $\partial_t \vect u_l \in L^{\infty}([0,T];H)$ such that
\[
\pi_{1/h} \vect u^{\epsh} \xrightharpoonup{t,\infty,{\rm dr}-2} \vect u_l,\qquad\partial_t \vect u^{\epsh} \xrightharpoonup{t,\infty,{\rm dr}-2} \partial_t \vect u_l.
\] 
As in Section \ref{apsechyp}, we use the notation $\Vec{\vect u}^{\epsh}:=(\vect u^{\epsh},\partial_t \vect u^{\epsh})$. Similarly, we introduce $\Vec{\vect u}_0^{\epsh}$, $\Vec{\vect u}_0$, $\Vec{\vect u}_l$, $\Vec{\vect u}$, as well as 
\[
\pi_{1/h}\Vec{\vect u}^{\epsh}:=\bigl(\pi_{1/h} \vect u^{\epsh},\partial_t \vect u^{\epsh}\bigr)^\top,\quad \Vec{\vect f}^{\epsh}:=\bigl(0,0,0,(\vect f^{\epsh})^\top\bigr)^\top,\quad \Vec{\vect f}_v^{\epsh}:=\bigl(0,0,0, (\vect f_v^{\epsh})^\top\bigr)^\top,\quad \Vec{\vect f}_h^{\epsh}:=\bigl(0,0,0,\vect f_h^{\epsh}\bigr)^\top.
\] 
We then follow the proof of Theorem \ref{thmivan551} or Theorem \ref{thmivan553}. On the one hand, for every $\lambda>1,$ we have 
 $$
 \pi_{1/h}\mathcal{L}(\Vec{\vect u}^{\epsh})(\lambda)=\mathcal{L}(\pi_{1/h}\Vec{\vect u}^{\epsh})(\lambda) \RRR \drtwoscale \BBB \mathcal{L}(\Vec{\vect u}_l)(\lambda)\quad {\rm as}\quad h\to0,
 $$ 
 where $\mathcal{L}$ denotes the Laplace transform. On the other hand, by combining
 \begin{equation*} 
 \mathcal{L}(\Vec{\vect u}^{\epsh})(\lambda)=(\mathbb{A}_{\epsh}+\lambda\mathbb{I})^{-1}\mathcal{L}(\Vec{\vect f}^{\epsh})(\lambda)+(\mathbb{A}_{\epsh}+\lambda\mathbb{I})^{-1} \Vec{\vect u}^{\eps}_0 \qquad \forall\lambda>1,	
 \end{equation*} 
the representation \eqref{ivan103}, and Theorem \ref{thmivan111}, we obtain
$$
 \pi_{1/h}\mathcal{L}(\Vec{\vect u}^{\epsh})(\lambda) =  \mathcal{L}(\pi_{1/h}\Vec{\vect u}^{\epsh})(\lambda)\RRR \drtwoscale \BBB\mathcal{L} (\Vec{\vect u})(\lambda) \qquad \forall \lambda>1,  
$$
where $\Vec{\vect u}=({\funcA}_1-x_3 \partial_1 \funcB+\mathring{u}_1,{\funcA}_2-x_3 \partial_2 \funcB+\mathring{u}_2, \funcB,0,0,\partial_t \funcB )^\top$, with the functions 
$\vect{\funcA}$, $\funcB$, $\mathring{\vect u}$ being the solutions of the equations \eqref{limeq11}--\eqref{limeq13} for the loads $\vect f$. It follows that $\Vec{\vect u}_l=\Vec{\vect u}$. 

The existence and uniqueness of the solution of the limit problem follows from Theorem \ref{existenceH} and Theorem \ref{existenceV*}. Note that one can split the limit problem into two: the one with initial conditions $ u_{0,3},$ $S_1P_{\delta,\infty} u_{1,3}$ and out-of-plane loads, given by the part of $\mathcal{F}_{\delta}(\vect f)$ depending on $\vect f_3$ (where we apply Theorem \ref{existenceH}), and the one with zero initial conditions and in-plane loads, given by the part of $\mathcal{F}_{\delta}(\vect f)$ depending on $\vect f_{\!*} $ (where we apply Theorem \ref{existenceV*}.) The last claim of the theorem follows by combining Theorem \ref{thmivan552} applied to initial conditions $\vect u_0^{\epsh}$, $\vect u_1^{\epsh}$ and loads $\vect f^{\epsh}_v$ and the second claim of Theorem \ref{thmivan553} applied to initial conditions equal to zero and loads $\vect f^{\epsh}_h$ (using the resolvent compactness and convergence proved in Proposition \ref{propdinsi1} and Theorem \ref{thmivan111}). The conditions \eqref{ivan561} follow by applying \eqref{ivan967} to $\vect{l}^{\epsh}=\vect f_h^{\epsh}(0)$ and $\vect{l}^{\epsh}(t)=\partial_t\vect f_h^{\epsh}(t)$  and integrating over the interval $[0,T]$. 

\end{proof} 

\vskip 0.2cm

\noindent{\bf B. Proof of Corollary \ref{kordinsi1}}
\begin{proof} 
The proof follows from the first part of Theorem \ref{thmivan553} for the weak convergence and from the second part of the same theorem for the strong two-scale convergence. We will just briefly outline the proof of the weak convergence.  From \eqref{ivan992} we obtain the estimate
\begin{eqnarray*} 
& &\left\|(h^{-2}\mathcal{A}_{\epsh} +\RRR \mathcal{I} \BBB)^{1/2}  \vect u^{\epsh}\right\|_{L^\infty([0,T];H_{\epsh})}+\|\partial_t \vect u^{\epsh} \|_{L^{\infty} ([0,T];H_{\epsh})}  \nonumber \\ & & \nonumber \hspace{+5ex}  \leq Ce^\top\Big(\left\|(h^{-2}\mathcal{A}_{\epsh} +\RRR \mathcal{I} \BBB)^{-1/2} \mathcal{G}^{\epsh}( \vect g^{\epsh})(0)\right\|_{H_{\epsh}}+  
\left\|(h^{-2}\mathcal{A}_{\epsh} +\RRR \mathcal{I} \BBB)^{-1/2}  \partial_t \mathcal{G}(\vect g^{\epsh})\right\|_{L^1([0,T];H_{\epsh})}
\Big). 
\end{eqnarray*} 
Similarly to the argument of Section A above (see (\ref{f_convergence})), we have 
\begin{equation*}
\pi_h \vect g^{\epsh} \xrightarrow{t,\infty,{\rm dr}-2} \vect g,\quad \pi_h \partial_t \vect g^{\epsh}\xrightharpoonup{t,{\rm dr}-2}\partial_t \vect g. 
\end{equation*}
Furthermore, using \eqref{ivan966} and \eqref{ivan9667}
 we infer by Theorem \ref{thmextension}, Remark \ref{remivan102}, and  Corollary \ref{kornthincor} that for $\vect l^{\epsh} \in L^2(\omega \times \{-1/2, 1/2\};\R^3)$ one has
 \begin{equation*}
\left\|(h^{-2}\mathcal{A}_{\epsh} +\RRR \mathcal{I} \BBB)^{-1/2} {\vect l}^{\epsh}\right\|^2_{H_{\epsh}} \leq \|\pi_h {\vect l}^{\epsh}\|^2_{L^2(\omega \times \{-1/2, 1/2\};\R^3)}.
\end{equation*} 
The remainder of the argument follows the proof of Theorem \ref{connn1}, using Remark \ref{remivan13}.  
\end{proof} 	

\vskip 0.2cm
\noindent{\bf C. Proof of Theorem \ref{connn11} and Theorem \ref{thmivan71}}

\begin{proof} 
The claims are established directly by applying  Theorem \ref{thmivan551}, Theorem \ref{thmivan552}, and the results of Section \ref{limreseq} concerning resolvent convergence. For example, in the case $\delta \in (0,\infty)$, $\mu_h=\epsh,$ $\tau=0$ we set $H_{\epsh}=L^2(\Omega;\R^3)$, $\mathcal{A}_{\eps}=\mathcal{A}_{\epsh}$, $\mathcal{A}=\mathcal{A}_{\delta,\infty}$, $H=L^2(\Omega \times Y;\R^3)$, $H_0=V_{\delta,\infty}(\Omega \times Y)$, and the convergence $\xrightharpoonup{H_{\epsh}}$ is the two-scale convergence.    
\end{proof} 		

\vskip 0.2cm

\noindent{\bf D. Proof of Theorem \ref{thmivan72}}
\begin{proof}
The argument follows the proof of Theorem \ref{connn1}. The first part of the statement, which concerns weak two-scale convergence, is proved separately, by using the Laplace transform, Proposition \ref{propcompactregime}, and Theorem \ref{thmivan113} while separating out-of-plane and horizontal forces. The proof of the second part is carried out using Theorem \ref{thmivan552} and the second part of Theorem \ref{thmivan553}. We leave the details to the interested reader.  
\end{proof}

\section*{Acknowledgements} M.~Bu\v{z}an\v{c}i\'{c}, I.~Vel\v{c}i\'{c}, and J.~\v{Z}ubrini\'{c} were supported by Croatian Science Foundation under
Grant Agreement no. IP-2018-01-8904 (Homdirestroptcm). 
K.~Cherednichenko is grateful for the financial support of the Engineering and Physical Sciences Research Council: Grant
EP/L018802/2 "Mathematical foundations of metamaterials: homogenisation, dissipation and operator theory".
We are grateful to M.~Cherdantsev for the idea in the proof of Lemma \ref{lemmaivan100}.

\appendix

\section{Appendix}

\subsection{Decompositions of symmetrised scaled gradients}
\begin{theorem}[Korn inequalities]  \cite{Hor95}
	Let $p>1$, $\Omega\subset\R^n$ and suppose that $\Gamma\subset\partial\Omega$ has a positive measure. There exist positive constants $C_{\rm K}^1$, $C_{\rm K}^2$ and $C_{\rm K}^{\Gamma},$ which depend on $p$, $\Omega,$ and $\Gamma$ only, such that then the following inequalities hold for all $\mat{\psi}\in W^{1,p}(\Omega;\R^n):$
	\begin{align}
		\label{prvaaKorn}
		\|\mat{\psi}\|_{W^{1,p}}^p &\leq C_{\rm K}^1\left(\|\mat{\psi}\|_{L^p}^p + \|\sym\nabla \mat\psi\|_{L^p}^p\right),
		\\[0.5em] 
		\label{Kornwithantisym} 
		\inf_{\vect A \in \R^{n\times n}_{\skeww},\vect b\in \R^n} \|\vect \psi-\vect Ax-\vect b\|^p_{W^{1,p}} &\leq  C_{\rm K}^2 \|\sym \nabla \vect \psi\|^p_{L^p},
		\\[-0.1em]
		\label{Kornwithbc}
		\|\mat{\psi}\|_{W^{1,p}(\Omega;\R^n)}^p &\leq C_{\rm K}^{\Gamma}\left(\|\vect\psi\|^p_{L^p(\Gamma)}+\|\sym\nabla \mat\psi\|_{L^p}^p\right).
	\end{align}
\end{theorem}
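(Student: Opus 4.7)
The approach will follow the classical Nečas–Lions strategy for the first inequality, then proceed by a compactness–contradiction argument for the remaining two. Since the inequalities are standard and Horgan's survey \cite{Hor95} contains detailed accounts, I would treat the three inequalities in the order stated, bootstrapping each from the previous where possible.

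First I would establish \eqref{prvaaKorn}. The core distributional identity is
\[
\partial_{ij}\psi_k = \partial_j(\sym\nabla\psi)_{ik}+\partial_i(\sym\nabla\psi)_{jk}-\partial_k(\sym\nabla\psi)_{ij},
\]
which expresses every second-order partial derivative of $\psi$ as a first-order derivative of the symmetric gradient. Combined with the Nečas inequality — the $L^p$ statement that $\|v\|_{L^p(\Omega)}\le C(\|v\|_{W^{-1,p}(\Omega)}+\|\nabla v\|_{W^{-1,p}(\Omega)})$ for a bounded Lipschitz domain $\Omega$ and $p\in(1,\infty)$ — applied to $v=\partial_i\psi_k$, the identity forces $\partial_i\psi_k\in L^p$ with norm controlled by $\|\psi\|_{L^p}+\|\sym\nabla\psi\|_{L^p}$. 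This yields \eqref{prvaaKorn}.

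For \eqref{Kornwithantisym} I would argue by contradiction. Suppose no such constant exists: then there is a sequence $(\psi_n)\subset W^{1,p}(\Omega;\R^n)$ with $\|\sym\nabla\psi_n\|_{L^p}\to 0$ while $\inf_{\vect A,\vect b}\|\psi_n-\vect A x-\vect b\|_{W^{1,p}}=1$. Choose $\vect A_n x+\vect b_n$ nearly achieving the infimum and set $\tilde\psi_n:=\psi_n-\vect A_n x-\vect b_n$; the sequence $(\tilde\psi_n)$ is bounded in $W^{1,p}(\Omega;\R^n)$ and normalised so as to be transverse (via a fixed finite-dimensional projection) to the space of infinitesimal rigid motions. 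By the Rellich–Kondrachov embedding $W^{1,p}(\Omega)\hookrightarrow\hookrightarrow L^p(\Omega)$, a subsequence converges in $L^p$ to some $\tilde\psi_\infty$, whose symmetric gradient vanishes by weak lower semicontinuity; hence $\tilde\psi_\infty$ is itself an infinitesimal rigid motion, and the transversality forces $\tilde\psi_\infty=0$ in $L^p$. Applying \eqref{prvaaKorn} to $\tilde\psi_n$ then upgrades the $L^p$ convergence to $W^{1,p}$ convergence to zero, contradicting $\|\tilde\psi_n\|_{W^{1,p}}=1$.

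Inequality \eqref{Kornwithbc} admits an entirely parallel contradiction scheme: one extracts a normalised sequence $(\psi_n)$ with $\|\psi_n\|_{W^{1,p}}=1$ and $\|\psi_n\|^p_{L^p(\Gamma)}+\|\sym\nabla\psi_n\|^p_{L^p}\to 0$, uses \eqref{prvaaKorn} and compactness to pass to an $L^p$-limit $\psi_\infty$ that is an infinitesimal rigid motion with vanishing trace on $\Gamma$, and concludes $\psi_\infty=0$ from the positive measure of $\Gamma$ together with the fact that a non-trivial affine rigid motion cannot vanish on a set of positive surface measure. Then \eqref{prvaaKorn} once more produces $W^{1,p}$-convergence to zero, the desired contradiction. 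The principal analytic obstacle is the Nečas inequality used in the first step: for $p=2$ it follows from straightforward Fourier-analytic arguments, but for general $p\in(1,\infty)$ it requires the $L^p$ boundedness of singular integral operators of Riesz type, i.e.\ Calderón–Zygmund theory. Once that is in hand, the remaining compactness-contradiction steps are essentially routine.
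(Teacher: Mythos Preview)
The paper does not prove this theorem at all: it is stated in the Appendix with the citation \cite{Hor95} and no proof is given, as it is a classical result quoted from Horgan's survey. Your argument is the standard Ne\v{c}as--Lions route for \eqref{prvaaKorn} followed by compactness--contradiction for \eqref{Kornwithantisym} and \eqref{Kornwithbc}, and it is correct; there is simply nothing in the paper to compare it against.
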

Within this appendix, we will also use  the following version of Korn's inequality.
\begin{proposition}\label{propgriso} 
	Suppose that $\omega \subset \R^2$ has Lipschitz boundary. 
	Then for every $\vect \psi \in H^1(\omega;\R^2)$ one has
	$$ 
	\biggl\|\vect \psi-\fint_{\omega} \vect \psi\biggr\|_{L^2}\leq C\bigl( \|\sym \nabla \vect \psi\|_{L^2}+\dist (\psi,\mathcal{G})\bigr), $$
	where $C>0$ depends only on $\omega,$
	$$
	\mathcal{G}:=\bigl\{\nabla \phi: \phi \in H^1(\omega)\bigr\},
	$$
	and the distance is understood in the sense of the $L^2$ metric.
\end{proposition}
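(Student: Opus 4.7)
The plan is to argue by contradiction and compactness, using the classical Korn inequality \eqref{Kornwithantisym} as the main tool and exploiting the rigidity of skew-symmetric linear maps versus gradients.

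Suppose the claimed inequality fails. Then there exists a sequence $(\vect\psi_n)\subset H^1(\omega;\R^2)$ such that, after replacing $\vect\psi_n$ by $\vect\psi_n-\fint_\omega\vect\psi_n$ and normalising, one has $\fint_\omega\vect\psi_n=0$, $\|\vect\psi_n\|_{L^2}=1$, and
\[
\|\sym\nabla\vect\psi_n\|_{L^2}+\dist(\vect\psi_n,\mathcal{G})\longrightarrow 0.
\]
First, I would apply \eqref{Kornwithantisym} with $p=2$ to extract, for each $n$, a skew-symmetric $\vect A_n\in\R^{2\times2}_{\skeww}$ and $\vect b_n\in\R^2$ with
\[
\|\vect\psi_n-\vect A_n x-\vect b_n\|_{H^1}\leq C\|\sym\nabla\vect\psi_n\|_{L^2}\longrightarrow 0.
\]
Taking means and using $\fint_\omega\vect\psi_n=0$ gives $\vect A_n\fint_\omega x+\vect b_n\to 0$. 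Combined with $\|\vect\psi_n\|_{L^2}=1$, this yields uniform bounds on $\vect A_n$ and $\vect b_n$, and hence, along a subsequence, $\vect A_n\to\vect A\in\R^{2\times2}_{\skeww}$ and $\vect b_n\to\vect b:=-\vect A\fint_\omega x$, so that $\vect\psi_n\to\vect Ax+\vect b$ strongly in $L^2(\omega;\R^2)$.

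Next, I would combine the assumption $\dist(\vect\psi_n,\mathcal{G})\to 0$ with the above $L^2$-convergence to exhibit a sequence $\phi_n\in H^1(\omega)$ (normalised by $\fint_\omega\phi_n=0$, which by Poincar\'e--Wirtinger keeps $\phi_n$ bounded in $H^1$) with $\nabla\phi_n\to\vect Ax+\vect b$ in $L^2$. Testing the distributional curl against arbitrary $\eta\in C_{\rm c}^\infty(\omega)$ and passing to the limit yields
\[
\partial_2(\vect Ax+\vect b)_1-\partial_1(\vect Ax+\vect b)_2=A_{12}-A_{21}=2A_{12}=0,
\]
independently of the topology of $\omega$. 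Since $\vect A$ is skew-symmetric, this forces $\vect A=0$ and consequently $\vect b=0$. Hence $\vect\psi_n\to 0$ in $L^2$, contradicting $\|\vect\psi_n\|_{L^2}=1$.

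The main obstacle, in my view, is the coupling step: one must extract the affine corrections $\vect A_nx+\vect b_n$ provided by the standard Korn inequality, show that they have genuine limits (this relies crucially on the normalisation $\fint_\omega\vect\psi_n=0$ preventing the translations from escaping to infinity), and then exploit the clash between skew-symmetry and the irrotationality inherited from the distance condition. The remaining ingredients -- compact embedding, Poincar\'e--Wirtinger for the scalar potentials, and passing to the limit in the distributional curl -- are routine, but the argument only closes because of the purely algebraic fact that a nonzero skew-symmetric linear map on $\R^2$ is never a gradient.
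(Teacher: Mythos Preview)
Your proof is correct and follows essentially the same contradiction--compactness strategy as the paper: normalise to mean zero and unit $L^2$ norm, use Korn to extract a rigid-motion limit, and then exploit that a nonzero infinitesimal rotation in $\R^2$ cannot be a gradient. The only cosmetic difference is that the paper invokes the first Korn inequality \eqref{prvaaKorn} to get weak $H^1$ compactness and then the compact embedding into $L^2$, whereas you invoke \eqref{Kornwithantisym} directly to isolate the affine correctors $\vect A_n x+\vect b_n$ and use finite-dimensional compactness; the Poincar\'e--Wirtinger bound on $\phi_n$ is in fact unnecessary, since $\nabla\phi_n\to\vect Ax+\vect b$ in $L^2$ already suffices to pass to the limit in the distributional curl.
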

\begin{proof}
	The proof follows a standard contradiction argument. Suppose the claim is false, i.e., there exists a sequence $(\vect \psi^n)_{n \in \N} \subset H^1(\omega;\R^2)$ such that
	\begin{align}
		\int_{\omega}\vect \psi^n&=0,
		\label{meanpsi}\\[0.4em]
		\|\vect \psi^n\|_{L^2}&\geq n \bigl(\|\sym \nabla \vect \psi^n\|_{L^2}+\dist(\vect \psi^n,\mathcal{G}) \bigr)\qquad \forall n\in{\mathbb N}. 
		\label{griso100}
	\end{align}
	Without loss of generality, 
	\begin{equation}
	\|\vect \psi^n\|_{L^2}=1.
	\label{norm_ass}
    \end{equation}
 and (\ref{griso100}) can be written as
	$$ 
	\|\sym \nabla \vect \psi^n\|_{L^2}+\dist(\vect \psi^n,\mathcal{G}) \leq n^{-1} \qquad \forall n\in{\mathbb N}. 
	$$ 
	By Korn's inequality, it follows that $\vect\psi^n \weak \vect \psi$, weakly in $H^1(\omega;\R^2)$. Combining this with \eqref{griso100}, we infer that $\sym \nabla \vect\psi=0$ and $\vect \psi \in \mathcal{G}$. From $\sym \nabla \vect\psi=0$ we obtain 
	$$ 
	\vect \psi=  (-ax_2,ax_1)\RRR^\top \BBB+\vect b, \quad a \in \R,\vect b \in \R^2.
	$$
	Together with (\ref{meanpsi}) 
	 and $\vect \psi \in \mathcal{G}$ this implies $\vect \psi=0$, which contradicts (\ref{norm_ass}).	 
\end{proof}
\begin{theorem}[Griso's decomposition, \cite{Gri05}]\label{aux:thm.griso}
	Let $ \omega\subset\R^2$ with Lipschitz boundary and $\mat{\psi}\in H^1(\Omega;\R^3).$ Then one has
	\begin{equation}\label{griso1}
		\mat{\psi} = \hat{\mat{\psi}}(x') + \vect{r}(x')\wedge x_3\vect{e}_3 + \bar{\mat{\psi}}(x)
		= \left\{   \begin{array}{l}
			\hat{ \psi}_1(x') + \ r_2(x')x_3 + \bar{ \psi}_1(x),\\[0.35em]
			\hat{ \psi}_2(x') -  r_1(x')x_3 + \bar{ \psi}_2(x),\\[0.35em]
			\hat{ \psi}_3(x') + \bar{ \psi}_3(x),
		\end{array}
		\right.
	\end{equation}
	where
	\begin{equation} \label{griso2}
		\hat{\mat\psi}(x') = \int_I \mat\psi(x',x_3)\dd x_3\,,\quad \vect{r}(x') 
		= \frac32\int_Ix_3\vect{e}_3\wedge\mat\psi(x',x_3)\dd x_3,
	\end{equation}
	 the following inequality holds  for arbitrary $h>0,$ with a constant $C>0$ that depends on $\omega$ only:
	\begin{equation}\label{prvaKorn}
		\bigl\|\sym\nabla_h(\hat{\mat\psi} + \vect{r}\wedge x_3\vect{e}_3)\bigr\|_{L^2}^2 
		+\bigl\|\nabla_h\bar{\mat\psi}\bigr\|_{L^2(\Omega;\R^{3\times 3})}^2 + h^{-2}\|\RRR \bar{\mat\psi}\BBB\|_{L^2}^2
		\leq C\bigl\|\sym\nabla_h\mat\psi\bigr\|_{L^2}^2.
	\end{equation}
\end{theorem}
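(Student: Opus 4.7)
The plan is to prove (\ref{prvaKorn}) by combining moment bounds for the Kirchhoff-Love part $\hat{\mat\psi} + \vect{r}\wedge x_3\vect{e}_3$ with a transverse Poincaré inequality and an $h$-uniform Korn-type estimate for the remainder $\bar{\mat\psi}$. From the formula (\ref{griso2}) one immediately reads off the vanishing-mean property $\int_I \bar\psi_\alpha(x',\cdot)\,dx_3 = 0$ for $\alpha = 1,2,3$, inherited from $\hat{\mat\psi}$ being the transverse average; the rotational correction $\vect{r}\wedge x_3\vect{e}_3$ is the specific affine-in-$x_3$ term that optimally matches the coupling between the in-plane and out-of-plane components of $\sym\nabla_h\mat\psi$. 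A one-dimensional Poincaré inequality applied pointwise in $x'$ then gives $\|\bar{\mat\psi}\|_{L^2(\Omega)}^2 \leq C \|\partial_{x_3}\bar{\mat\psi}\|_{L^2(\Omega)}^2,$ and consequently, using the scaling $h^{-1}\partial_{x_3}$ built into $\nabla_h,$ the bound $h^{-2}\|\bar{\mat\psi}\|_{L^2}^2 \leq C \|\nabla_h \bar{\mat\psi}\|_{L^2}^2.$

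Next I would estimate the Kirchhoff-Love part. A direct computation of $\sym\nabla_h(\hat{\mat\psi} + \vect{r}\wedge x_3\vect{e}_3)$ expresses its entries as affine functions of $x_3$ with coefficients involving the $x'$-derivatives of $\hat{\mat\psi}$ and $\vect{r},$ together with algebraic terms $\pm h^{-1}r_\alpha$ in the mixed transverse components. Via the integral representations (\ref{griso2}), the quantities $\nabla\hat{\mat\psi},\,\vect{r},\,\nabla\vect{r},$ and the crucial combinations $h^{-1}r_\alpha\pm\partial_\alpha\hat\psi_3$ all arise as suitable transversal moments of $\mat\psi$ and of $\nabla\mat\psi.$ Jensen's inequality then controls the $L^2(\omega)$-norms of these coefficients by moments of $\nabla_h\mat\psi,$ and a slice-wise Korn inequality on $\omega$ converts $\nabla_h\mat\psi$ bounds into $\sym\nabla_h\mat\psi$ bounds.

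The main obstacle is to establish the $h$-uniform Korn inequality
$$
\|\nabla_h\bar{\mat\psi}\|_{L^2}^2 \leq C \|\sym\nabla_h\bar{\mat\psi}\|_{L^2}^2
$$
for the remainder $\bar{\mat\psi},$ with $C$ independent of $h.$ The bare inequalities (\ref{prvaaKorn}) or (\ref{Kornwithantisym}) do not suffice because of the transverse rescaling $h^{-1}\partial_{x_3}$ in $\nabla_h$ and the degeneracy of Korn's constant for the scaled gradient as $h\to 0$. The key observation is that the vanishing-moment conditions on $\bar{\mat\psi}$ eliminate precisely those infinitesimal rigid motions $\vect{a}+\vect{A}x$ (with $\vect{A}\in\R^{3\times 3}_{\skeww}$) whose $\sym\nabla_h$-energy is anomalously small in the thin-plate regime, namely the translation in $\vect{e}_3$ and the rotations about $\vect{e}_1,\vect{e}_2.$ Transferring $\bar{\mat\psi}$ to the unscaled plate $\Omega^h = \omega\times(hI)$ via $x_3\mapsto hx_3,$ invoking the classical boundary-free Korn inequality (\ref{Kornwithantisym}) on $\Omega^h,$ and using the moment conditions to subtract the optimal rigid motion, one verifies after scaling back that the resulting constant is indeed $h$-independent.

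Combining these three ingredients --- the transverse Poincaré bound on $\bar{\mat\psi},$ the moment-based control of the Kirchhoff-Love part, and the uniform Korn estimate on $\bar{\mat\psi}$ --- through the identity $\sym\nabla_h\bar{\mat\psi} = \sym\nabla_h\mat\psi - \sym\nabla_h(\hat{\mat\psi}+\vect{r}\wedge x_3\vect{e}_3)$ and a standard Minkowski-type argument closes the estimate and yields (\ref{prvaKorn}).
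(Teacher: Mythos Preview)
The paper does not prove this theorem; it is quoted from \cite{Gri05} and used as an input throughout the appendix. There is therefore no proof in the paper to compare your attempt against.

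On the merits of your outline: the architecture (transverse Poincar\'e for $\bar{\mat\psi}$, a uniform Korn step for the remainder, control of the Kirchhoff--Love part) is the right shape, but two of the three steps contain genuine gaps. In Step~2, the mixed entries $\bigl(\sym\nabla_h(\hat{\mat\psi}+\vect r\wedge x_3\vect e_3)\bigr)_{\alpha 3} = \tfrac12(\partial_\alpha\hat\psi_3 \pm h^{-1}r_\beta)$ are \emph{not} transversal moments of entries of $\sym\nabla_h\mat\psi$: integrating $(\sym\nabla_h\mat\psi)_{\alpha 3}=\tfrac12(\partial_\alpha\psi_3+h^{-1}\partial_3\psi_\alpha)$ against any weight $w(x_3)$ produces boundary traces of $\psi_\alpha$ at $x_3=\pm\tfrac12$ rather than the quantity $h^{-1}\int_I x_3\psi_\alpha$ defining $\vect r$, and a slice-wise Korn on $\omega$ does not repair this since it says nothing about the transverse coupling. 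In Step~3, your rationale that the moment conditions ``eliminate precisely those infinitesimal rigid motions whose $\sym\nabla_h$-energy is anomalously small'' misidentifies the obstruction. Rigid motions always have zero symmetric gradient; the dangerous modes on $\Omega^h$ are \emph{non-rigid} bending displacements $(-X_3\partial_1 g,-X_3\partial_2 g,g)$, for which $\|\sym\nabla\,\cdot\,\|^2_{L^2(\Omega^h)}\sim h^3\|\nabla^2 g\|^2$ while $\inf_{\vect A,\vect b}\|\nabla\,\cdot - \vect A\|^2_{L^2(\Omega^h)}\sim h\|\nabla g-\overline{\nabla g}\|^2$. Hence the constant in (\ref{Kornwithantisym}) on $\Omega^h$ blows up like $h^{-2}$, and subtracting the optimal rigid motion does not yield an $h$-independent bound. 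What actually excludes these modes is the \emph{pointwise} condition $\int_I\bar\psi_3(x',\cdot)\,dx_3=0$, which is a stronger constraint than orthogonality to a finite-dimensional rigid-motion space. Griso's argument in \cite{Gri05} bypasses your Step~2 altogether: the estimate $\|\nabla_h\bar{\mat\psi}\|^2+h^{-2}\|\bar{\mat\psi}\|^2\le C\|\sym\nabla_h\mat\psi\|^2$ is established directly, and the Kirchhoff--Love bound then follows from the triangle inequality via $\sym\nabla_h(\hat{\mat\psi}+\vect r\wedge x_3\vect e_3)=\sym\nabla_h\mat\psi-\sym\nabla_h\bar{\mat\psi}$.
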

\begin{remark}
	Notice that
	\begin{equation}
		\begin{aligned}
			\bigl\| \sym \nabla_h (\hat{\vect \psi}+\vect r\wedge x_3 e_3)\bigr\|^2_{L^2}&=
			\bigl\|\sym \nabla_{\hat{x}} (\hat{ \psi}_1, \hat{ \psi}_2)\RRR^\top \BBB\bigr\|^2_{L^2}+\frac{1}{12}\bigl\|\sym \nabla_{\hat{x}} ( r_2,-  r_1)\RRR^\top \BBB\bigr\|^2_{L^2}\\[0.4em]
			&+h^{-2}\bigl\|\partial_1 (h\hat{ \psi}_3)+ r_2\bigr\|^2_{L^2}+h^{-2}\bigl\|\partial_2 (h\hat{ \psi}_3)- r_1\bigr\|^2_{L^2}.
		\end{aligned}
		\label{teichmann}
	\end{equation}
	Thus from Korn's inequality it follows
	
	\begin{equation}
		\label{lukas1}
		\begin{aligned}
			h^2\bigl\|\pi_{1/h}\hat{\vect \psi} \bigr\|^2_{H^1}&+\bigl\|( r_1, r_2)\RRR^\top \BBB\bigr\|^2_{H^1}+h^{-2}\bigl\|\partial_1 (h\hat{ \psi}_3)+ r_2\bigr\|^2_{L^2}+h^{-2}\bigl\|\partial_2 (h\hat{ \psi}_3)- r_1\bigr\|^2_{L^2}\\[0.35em] 
			&\leq C \left(\bigl\| \sym \nabla_h (\hat{\vect \psi}+\vect r\wedge x_3 \vect e_3)\bigr\|^2_{L^2}+\|\vect r\|^2_{L^2}+h^2\|\pi_{1/h}\hat{\vect \psi} \|^2_{L^2}\right)\\[0.35em]
			&\leq C \left(\bigl\| \sym \nabla_h (\hat{\vect \psi}+\vect r\wedge x_3 \vect e_3)\bigr\|^2_{L^2}+h^2\|\pi_{1/h}{\vect \psi} \|^2_{L^2}\right).
		\end{aligned}
	\end{equation}
	
\end{remark}

The following corollary is the direct consequence of \eqref{prvaKorn}, \eqref{lukas1}, and Korn inequalities.
\begin{corollary}[Korn's inequality for thin domains] 
	\label{kornthincor}
 Suppose that $\omega \subset \R^2$ is such that $\gamma \subset \partial \omega$ has positive measure.
	Then there exist constants  $C_{\rm T}, C_{\rm T}^{\gamma}>0$ that depend on  $\omega$ and $\gamma$ only, such that the following inequalities hold for all $\mat{\psi}\in H^1(\Omega;\R^3):$
	\begin{align}
		\|\pi_{1/h}{\vect \psi} \|^2_{H^1} &\leq C_{\rm T}\left(\|\pi_{1/h}{\vect \psi} \|^2_{L^2} + h^{-2}\|\sym\nabla_h \mat\psi\|_{L^2}^2\right),\nonumber
		\\[0.35em]
		\|\pi_{1/h}{\vect \psi} \|^2_{H^1} &\leq C_{\rm T}^{\gamma}\RRR \left(\|\pi_{1/h}{\vect \psi} \|^2_{L^2(\Gamma;\R^3)}+h^{-2}\left\|\sym\nabla_h \mat\psi\right\|_{L^2}^2\right).\BBB\label{CTgamma}
	\end{align}
\end{corollary}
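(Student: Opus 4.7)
The cleanest route is to apply the standard Korn inequalities \eqref{prvaaKorn} and \eqref{Kornwithbc} on the fixed domain $\Omega=\omega\times I$ directly to the anisotropically rescaled field $\vect v := \pi_{1/h}\vect\psi\in H^1(\Omega;\R^3)$. The only nontrivial arithmetic step is to compare $\sym\nabla\vect v$ with $\sym\nabla_h\vect\psi$ in $L^2.$

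\textbf{Step 1 (entry-wise comparison).} Using $\pi_{1/h}\vect\psi=(\psi_1/h,\psi_2/h,\psi_3)^\top,$ I would compute $\sym\nabla(\pi_{1/h}\vect\psi)$ component by component. For the in-plane block $\alpha,\beta\in\{1,2\}$ one gets $[\sym\nabla\vect v]_{\alpha\beta}=h^{-1}[\sym\nabla_h\vect\psi]_{\alpha\beta}.$ For the mixed entries with $\alpha\in\{1,2\}$ one gets the identity $[\sym\nabla\vect v]_{\alpha 3}=\tfrac12\bigl(h^{-1}\partial_3\psi_\alpha+\partial_\alpha\psi_3\bigr)=[\sym\nabla_h\vect\psi]_{\alpha 3},$ while $[\sym\nabla\vect v]_{33}=\partial_3\psi_3=h\,[\sym\nabla_h\vect\psi]_{33}.$ Assuming (without loss of generality in the thin-plate regime) that $h\in(0,1],$ one has both $1\leq h^{-1}$ and $h\leq h^{-1},$ and hence
\begin{equation*}
\bigl\|\sym\nabla(\pi_{1/h}\vect\psi)\bigr\|^2_{L^2(\Omega;\R^{3\times 3})}\leq h^{-2}\bigl\|\sym\nabla_h\vect\psi\bigr\|^2_{L^2(\Omega;\R^{3\times 3})}.
\end{equation*}

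\textbf{Step 2 (conclusion).} Applying \eqref{prvaaKorn} with $p=2$ on $\Omega$ to the field $\pi_{1/h}\vect\psi$ and inserting the bound from Step 1 yields the first inequality with $C_{\rm T}$ depending only on $C_{\rm K}^{1},$ hence only on $\omega.$ Applying \eqref{Kornwithbc} with $\Gamma=\gamma\times I$ to the same field, and noting that its trace on $\Gamma$ has $L^2$ norm exactly $\|\pi_{1/h}\vect\psi\|_{L^2(\Gamma;\R^3)},$ yields the second inequality with $C_{\rm T}^{\gamma}$ depending only on $C_{\rm K}^{\Gamma},$ hence only on $\omega$ and $\gamma.$

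\textbf{Alternative via Griso.} As hinted by the paper, one may instead start from the decomposition of Theorem~\ref{aux:thm.griso}, $\vect\psi=\hat{\vect\psi}(\hat x)+\vect r(\hat x)\wedge x_3\vect e_3+\bar{\vect\psi}(x),$ bound $\|\pi_{1/h}\bar{\vect\psi}\|^2_{H^1}$ by $h^{-2}\|\nabla_h\bar{\vect\psi}\|^2_{L^2}+h^{-2}\|\bar{\vect\psi}\|^2_{L^2}$ and then by \eqref{prvaKorn}, control $\|\pi_{1/h}(\vect r\wedge x_3\vect e_3)\|^2_{H^1}$ by $Ch^{-2}\|(r_1,r_2)\|^2_{H^1}$ and $\|\pi_{1/h}\hat{\vect\psi}\|^2_{H^1}$ directly, and combine via \eqref{lukas1} to reach the same conclusion. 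Either route is essentially bookkeeping. I do not anticipate a genuine obstacle; the only place to be careful is in correctly tracking the anisotropic scalings so as to keep the constants $h$-independent.
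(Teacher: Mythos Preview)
Your primary route (Steps 1--2) is correct and is genuinely different from the paper's. The paper derives the corollary from Griso's decomposition, citing \eqref{prvaKorn} and \eqref{lukas1} together with the standard Korn inequalities; your ``Alternative via Griso'' is precisely that argument. Your main argument, by contrast, bypasses Griso entirely: the entrywise identities you record for $\sym\nabla(\pi_{1/h}\vect\psi)$ versus $\sym\nabla_h\vect\psi$ are correct, and for $h\le 1$ they immediately give $\|\sym\nabla(\pi_{1/h}\vect\psi)\|_{L^2}\le h^{-1}\|\sym\nabla_h\vect\psi\|_{L^2}$, after which \eqref{prvaaKorn} and \eqref{Kornwithbc} on the fixed domain $\Omega$ finish the job with $h$-independent constants. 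This is more elementary for the corollary at hand; the paper's route has the advantage of reusing the Griso machinery that is needed anyway for the finer decompositions (Lemmas \ref{app:lem.limsup} and \ref{keydecompose}). The only caveat is your tacit assumption $h\le 1$, which is harmless here since $h$ is the thin-plate parameter, but you should state it explicitly.
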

\begin{remark} \label{remivan} 
	If it is known that the components $\psi_{\alpha},$ $\alpha=1,2,$ are even in $x_3$ and $ \psi_3$ is odd in $x_3$, then additionally $\vect r=0$, $\hat { \psi}_3=0$. 
\end{remark}
\begin{remark}\label{remivan33} 
	If a sequence of deformations $(\vect \psi^h)_{h>0}$ is such that $(\vect \psi^h)_{h>0}$ and $(\sym \nabla_h \vect \psi^h)_{h>0}$ are bounded in $L^2$  and if $\vect r^h$, $\hat{\vect{\psi}}^h$ and $\bar{\vect{\psi}}^h$ are the terms in the decomposition  \eqref{griso1}, then the relations \eqref{prvaKorn}--\eqref{lukas1} imply that $\vect r^h \,\overset{H^1}\weak\, 0$  and $h \hat{\vect \psi}^h \, \overset{H^1}\to\, 0$.  
\end{remark} 

The following lemma provides additional information on the weak limit of sequences with bounded symmetrised scaled gradients and is proved in \RRR \cite[Lemma A.4]{BukalVel}\BBB as a direct consequence of Griso's decomposition. 
\begin{lemma}\label{app:lem.limsup}
	Consider a bounded set $\omega\subset \R^2$  with Lipschitz boundary. Suppose that  a sequence $(\mat{\psi}^h)_{h > 0}\subset H_{\gamma_{\rm D}}^1(\Omega;\R^3)$  is such that
	\begin{equation*}
		\limsup_{n\to\infty}\bigl\|\sym \nabla_h\mat{\psi}^h\bigr\|_{L^2} < \infty.
	\end{equation*}
	Then there exists a subsequence (still labelled by $h > 0$) for which
	\begin{equation*}
		\mat \psi^h = \left(\begin{array}{c} \funcA_1 -x_3 \partial_1 \funcB  \\[0.25em] \funcA_2 -x_3 \partial_2 \funcB \\[0.25em] h^{-1}\funcB \end{array} \right)+\tilde{\mat \psi}^h,
	\end{equation*}
in particular
	\begin{equation*}
		\sym\nabla_h\mat\psi^h = \imath\bigl(-x_3\nabla_{\hat{x}}^2 \funcB + \sym\nabla_{\hat{x}} \vect{\funcA}\bigr) + \sym\nabla_h\tilde{\mat\psi}^h,
	\end{equation*}
	for some 
	$\funcB\in H_{\gamma_{\rm D}}^2(\omega)$, $\vect{\funcA}\in H_{\gamma_{\rm D}}^1(\omega;\R^2)$, and the sequence $(\tilde{\mat\psi}^h)_{h>0}\subset H_{\gamma_{\rm D}}^1(\Omega;\R^3)$
	satisfies $h\pi_{1/h}\tilde{\vect \psi}^h \overset{L^2}\to\, 0$. 
\end{lemma}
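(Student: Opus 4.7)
The plan is to apply Griso's decomposition (Theorem \ref{aux:thm.griso}) to each $\vect{\psi}^h$, writing $\vect{\psi}^h = \hat{\vect{\psi}}^h(\hat{x}) + \vect{r}^h(\hat{x})\wedge x_3\vect{e}_3 + \bar{\vect{\psi}}^h(x)$, and then combine the a priori bound on $\|\sym\nabla_h\vect{\psi}^h\|_{L^2}$ with \eqref{prvaKorn}, \eqref{lukas1}, and Corollary \ref{kornthincor}. The boundary condition $\vect{\psi}^h|_{\Gamma_{\rm D}}=0$ together with \eqref{CTgamma} gives a uniform bound on $\|\pi_{1/h}\vect{\psi}^h\|_{L^2}$, so the right-hand side of \eqref{lukas1} is uniformly bounded. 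This in turn yields uniform $H^1(\omega)$ bounds on $\hat{\psi}_\alpha^h$ and $h\hat{\psi}_3^h$, a uniform $H^1(\omega;\R^2)$ bound on $\vect{r}^h$, and strong $L^2$ convergences
\[
\partial_1(h\hat{\psi}_3^h) + r_2^h \to 0, \qquad \partial_2(h\hat{\psi}_3^h) - r_1^h \to 0,
\]
while \eqref{prvaKorn} gives $\|\bar{\vect{\psi}}^h\|_{L^2}=O(h)$. Extracting a diagonal subsequence, I obtain weak limits $\hat{\psi}_\alpha^h \weak \funcA_\alpha$, $h\hat{\psi}_3^h \weak \funcB$, and $\vect{r}^h \weak \vect{r}$ in the respective $H^1$ spaces.

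Passing to the weak $L^2$ limit in the two near-identities above identifies $r_2 = -\partial_1\funcB$ and $r_1 = \partial_2\funcB$; since $\vect{r}\in H^1(\omega;\R^2)$, this upgrades $\funcB$ to $H^2(\omega)$. The boundary condition $\vect{\psi}^h|_{\Gamma_{\rm D}}=0$ transfers under the averaging \eqref{griso2} to $\hat{\vect{\psi}}^h|_{\gamma_{\rm D}}=0$ and $\vect{r}^h|_{\gamma_{\rm D}}=0$, giving in the limit $\vect{\funcA}\in H^1_{\gamma_{\rm D}}(\omega;\R^2)$, $\funcB|_{\gamma_{\rm D}}=0$ and $\nabla\funcB|_{\gamma_{\rm D}}=0$, hence $\funcB\in H^2_{\gamma_{\rm D}}(\omega)$. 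I would then set
\[
\tilde{\vect{\psi}}^h := \vect{\psi}^h - \bigl(\funcA_1 - x_3\partial_1\funcB,\ \funcA_2 - x_3\partial_2\funcB,\ h^{-1}\funcB\bigr)^\top \in H^1_{\Gamma_{\rm D}}(\Omega;\R^3).
\]
Writing the components via \eqref{griso1},
\[
\tilde{\psi}_1^h = (\hat{\psi}_1^h - \funcA_1) + (r_2^h + \partial_1\funcB)\,x_3 + \bar{\psi}_1^h,
\]
and symmetrically for $\tilde{\psi}_2^h$, each of the three terms goes to $0$ in $L^2(\Omega)$: the first and second by Rellich compactness from the $H^1$ bounds on $\hat{\psi}_\alpha^h$ and $\vect{r}^h$ combined with the identification of their weak limits, the third by \eqref{prvaKorn}. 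For the transverse component, $h\tilde{\psi}_3^h = (h\hat{\psi}_3^h - \funcB) + h\bar{\psi}_3^h \to 0$ in $L^2$ by the same two ingredients. This yields exactly $h\pi_{1/h}\tilde{\vect{\psi}}^h \to 0$ in $L^2$. The stated identity for $\sym\nabla_h\vect{\psi}^h$ is then immediate from a direct algebraic check that $\sym\nabla_h(\funcA_1 - x_3\partial_1\funcB,\ \funcA_2 - x_3\partial_2\funcB,\ h^{-1}\funcB)^\top = \iota(\sym\nabla_{\hat{x}}\vect{\funcA} - x_3\nabla_{\hat{x}}^2\funcB)$, the transversal entries $h^{-1}\partial_{x_3}(\funcA_\alpha - x_3\partial_\alpha\funcB) + \partial_\alpha(h^{-1}\funcB)$ cancelling identically.

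The only nontrivial step is the identification of $\vect{r}$, that is, the promotion of $\funcB$ from $H^1$ to $H^2_{\gamma_{\rm D}}(\omega)$. This relies crucially on the strong (rather than merely weak) $L^2$ convergence to zero of $\partial_\alpha(h\hat{\psi}_3^h) \mp r_{3-\alpha}^h$ from \eqref{lukas1}, combined with Rellich compactness that turns the weak $H^1$-convergence of $\vect{r}^h$ into strong $L^2$-convergence to an element of $H^1$ with the correct boundary trace. Once this regularity and the boundary inheritance are in hand, the remaining verifications are routine applications of \eqref{prvaKorn} and Rellich's theorem.
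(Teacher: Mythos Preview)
Your proof is correct and follows essentially the same route the paper indicates: the lemma is stated as ``a direct consequence of Griso's decomposition'' (with the full argument deferred to \cite[Lemma A.4]{BukalVel}), and Remark \ref{remivan1111} records precisely the identifications $\vect{\funcA}=\lim \hat{\vect{\psi}}^h_*$, $\funcB=\lim h\hat{\psi}_3^h$, $(-r_2^h,r_1^h)\weak\nabla\funcB$ that you carry out. Your use of \eqref{CTgamma} to feed the boundary condition into \eqref{lukas1}, the Rellich step, and the algebraic cancellation in $\sym\nabla_h$ of the Kirchhoff--Love ansatz are exactly what is needed.
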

\begin{remark}\label{remivan1111}
	It can be easily seen that $\vect \funcA$ is the weak limit of $(\hat{ \psi}_1^h,\hat{ \psi}_2^h)\RRR^\top \BBB$ in $H^1(\omega;\R^2)$ 
	and $\funcB$ is the weak limit on $h\hat{ \psi}^h_3$ in $H^1(\omega).$ (More precisely, in this case $(-r^h_2,  r^h_1)\RRR^\top \BBB \,\overset{H^1}\weak\, x_3 \nabla \funcB$, where $\vect r^h$ and $\hat{\vect \psi}^h$ come from the decomposition \eqref{griso1} of $\vect \psi^h$.)
\end{remark}
We will now establish a decomposition that can be viewed as a consequence of Griso's decomposition. Note that another proof of the first part of Lemma \ref{keydecompose} is given in \cite{casado}. The proof provided here uses the strategy of \cite{Vel14a}. 
\begin{lemma} \label{keydecompose}
	Suppose that $\omega \subset \R^2$ is a connected set with $C^{1,1}$ boundary and  $\vect\psi \in H^1(\Omega;\R^3)$. 
	\begin{enumerate}
		\item  There exist $\vect a\in \R^3$, $\vect B\in \R^{3\times 3}_{\skeww}$,  $v \in H^2(\omega)$, $\tilde{\mat\psi} \in H^1(\Omega;\R^3)$ such that 
		\begin{equation} \label{moddek} 
			\vect \psi=\vect a+\vect B \left(\begin{array}{c}x_1\\[0.25em]x_2\\[0.25em]hx_3\end{array}\right)+\left(\begin{array}{c} -x_3 \partial_1 v  \\[0.25em] 
				-x_3 \partial_2 v \\[0.25em] 
				h^{-1}v \end{array} \right)+\tilde{\mat \psi},
		\end{equation}
		and the estimate 
		\begin{equation*}
			\| v\|^2_{H^2}+\|\tilde{\vect \psi}\|_{L^2}^2+ \|\nabla_h \tilde{\mat \psi}\|^2_{L^2} \leq C(\omega) \| \sym \nabla_h \vect \psi\|^2_{L^2}      
		\end{equation*}
	holds for some $C(\omega)>0.$
		\item If $\vect \psi \in H^1 (\Omega, \R^3)$, $\vect \psi=0$ on $\partial \omega \times I$, then in (\ref{moddek}) one can take $\vect a=\vect B=0$. In addition, $v,$ $\tilde{\vect\psi}$ can be chosen so that $v=\nabla v=0$ on $\partial \omega$ and $\tilde {\vect \psi}=0$ on $\partial \omega \times I$. 
		\item 	If a sequence $(\vect\psi^h)_{h>0} \subset H^1(\Omega;\R^3)$ is such that
		\begin{equation*}
			h\pi_{1/h}\vect \psi^{h} \,\overset{L^2}\to\, 0,\qquad\qquad 
		\limsup_{n \to \infty}\bigl\|\sym \nabla_h \vect \psi^h\bigr\|_{L^2}<\infty,
		\end{equation*}
		then there exist sequences $(\varphi^h)_{h>0} \subset H^2(\omega)$, $(\tilde{\vect \psi}^h)_{h>0} \subset H^1(\Omega ;\R^3)$
		such that
		$$ \sym \nabla_h\vect \psi^h=-x_3\iota( \nabla_{\hat{x}}^2 \varphi^h)+\sym \nabla_h \tilde{\vect \psi}^h+o^h,$$
		where $(o^h)_{h>0}\subset L^2(\Omega;\R^{3 \times 3})$ is such that $o^h\, \overset{L^2} \to\, 0$, and the following properties hold:
		\begin{equation*}
			\lim_{h \to 0} \left(\|\varphi^h\|_{H^1}+\|\tilde{\vect \psi}^h\|_{L^2} \right)=0,\qquad\qquad
			\limsup_{h \to 0} \left( \| \varphi^h \|_{H^2}+\|\nabla_h \tilde{\vect \psi}^h\|_{L^2} \right)\leq C \limsup_{n \to \infty}\bigl\|\sym \nabla_h \vect \psi^h\bigr\|_{L^2},
		\end{equation*}
		where $C>0$ depends on $\omega$ only. 
		Moreover, one has
		$$ \psi^h_3 =h^{-1}\varphi^h+w^h+\tilde{ \psi}_3^h, $$
		where $w^h \in  H^1(\omega)$ with
		$$ \|w^h\|_{H^1} \leq C\left(\bigl\|\sym \nabla_h\vect \psi^h\bigr\|_{L^2}+\bigl\|h\pi_{1/h}\vect \psi^h\bigr\|_{L^2}\right), $$
		for some $C>0$ that depends on $\omega$ only. 
	\end{enumerate}
\end{lemma}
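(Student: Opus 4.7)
My approach will rest on Griso's decomposition (Theorem \ref{aux:thm.griso}) combined with the key observation supplied by estimate (\ref{lukas1}): the two-dimensional vector field $(-r_2, r_1)^\top$ is $L^2$-close to $\nabla_{\hat{x}}(h\hat{\psi}_3)$, with error of order $h(\|\sym\nabla_h\vect{\psi}\|_{L^2} + h\|\pi_{1/h}\vect{\psi}\|_{L^2})$. I will therefore construct $v \in H^2(\omega)$ whose gradient approximates $(-r_2, r_1)^\top$, after which the Kirchhoff--Love ansatz $(-x_3\partial_1 v, -x_3\partial_2 v, h^{-1}v)^\top$ emerges naturally; its scaled symmetric gradient being exactly $-x_3\iota(\nabla_{\hat{x}}^2 v)$ is what delivers the $\varphi^h$-term in Part 3.

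\textbf{Part 1.} First I would apply Korn's inequality (\ref{Kornwithantisym}) on $\Omega$: there exist $\vect{A}\in\R^{3\times 3}_{\skeww}$ and $\vect{b}\in\R^3$ with $\|\vect{\psi} - \vect{A}x - \vect{b}\|_{H^1} \leq C(\omega)\|\sym\nabla\vect{\psi}\|_{L^2} \leq C(\omega)\|\sym\nabla_h\vect{\psi}\|_{L^2}$, and rewrite $\vect{A}x + \vect{b}$ in the form $\vect{a} + \vect{B}(x_1, x_2, hx_3)^\top$ with $\vect{B}$ skew (possible since $\vect{A}$ is skew, by absorbing a factor $h$ into the third column). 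After subtracting this motion, I may assume $\|\vect{\psi}\|_{H^1} \leq C\|\sym\nabla_h\vect{\psi}\|_{L^2}$, so the $h^2\|\pi_{1/h}\vect{\psi}\|_{L^2}^2$ term on the right of (\ref{lukas1}) is absorbed, giving in particular $\|(r_1, r_2)^\top\|_{H^1(\omega)} \leq C\|\sym\nabla_h\vect{\psi}\|_{L^2}$. I would then define $v \in H^1(\omega)$ as the unique (up to additive constant) solution of the Neumann problem
\[
\int_\omega \nabla v \cdot \nabla w \, d\hat{x} = \int_\omega (-r_2, r_1)^\top \cdot \nabla w \, d\hat{x}, \qquad \forall w \in H^1(\omega),
\]
normalized so that $\int_\omega v = \int_\omega h\hat{\psi}_3$. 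With $C^{1,1}$ boundary and $H^1$ data, elliptic regularity for the Neumann Laplacian yields $v \in H^2(\omega)$ with $\|v\|_{H^2} \leq C\|\sym\nabla_h\vect{\psi}\|_{L^2}$. By the $L^2$-projection property of $\nabla v$ onto gradients combined with (\ref{lukas1}), I obtain $\|\nabla v + (r_2, -r_1)^\top\|_{L^2} \leq Ch\|\sym\nabla_h\vect{\psi}\|_{L^2}$, and Poincar\'e's inequality then gives $\|v - h\hat{\psi}_3\|_{L^2} \leq Ch\|\sym\nabla_h\vect{\psi}\|_{L^2}$. Setting $\tilde{\vect{\psi}} := \vect{\psi} - \vect{a} - \vect{B}(x_1, x_2, hx_3)^\top - (-x_3\partial_1 v, -x_3\partial_2 v, h^{-1}v)^\top$ and substituting Griso's formula yields $\tilde{\psi}_1 = \hat{\psi}_1 + \bar{\psi}_1 + x_3(r_2 + \partial_1 v)$, $\tilde{\psi}_2 = \hat{\psi}_2 + \bar{\psi}_2 + x_3(-r_1 + \partial_2 v)$, and $\tilde{\psi}_3 = (\hat{\psi}_3 - h^{-1}v) + \bar{\psi}_3$. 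All estimates then follow from (\ref{prvaKorn}), (\ref{lukas1}), the $H^2$-control on $v$, and the two $O(h)$-estimates above; in particular the $h^{-1}\partial_{x_3}$ factor in $\nabla_h\tilde{\vect{\psi}}$ is compensated by $\|r_2 + \partial_1 v\|_{L^2} = O(h)\|\sym\nabla_h\vect{\psi}\|_{L^2}$.

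\textbf{Parts 2 and 3.} For Part 2 I would use the Dirichlet Korn estimate (\ref{Kornwithbc}) to eliminate the rigid-motion contribution entirely, observing that $\vect{\psi}|_{\partial\omega\times I} = 0$ forces the Griso terms $\hat{\vect{\psi}}, \vect{r}, \bar{\vect{\psi}}$ all to vanish on $\partial\omega$, so $(-r_2, r_1)^\top \in H^1_0(\omega;\R^2)$; the function $v \in H^2(\omega)$ with both $v$ and $\partial_\nu v$ vanishing on $\partial\omega$ would then be constructed via a boundary-adapted Helmholtz-type decomposition of $(-r_2, r_1)^\top$, exploiting the $C^{1,1}$ regularity of $\partial\omega$. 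For Part 3 I set $\varphi^h := v^h$ produced by Part 1 and use the identity $\sym\nabla_h[(-x_3\partial_1 v^h, -x_3\partial_2 v^h, h^{-1}v^h)^\top] = -x_3\iota(\nabla_{\hat{x}}^2 v^h)$ to obtain the desired decomposition with $o^h \equiv 0$. The bound $\limsup_h\|\varphi^h\|_{H^2} \leq C\limsup_h\|\sym\nabla_h\vect{\psi}^h\|_{L^2}$ is immediate from Part 1. The crucial convergence $\varphi^h \to 0$ in $H^1(\omega)$ will follow by chaining: (a) the hypothesis $h\pi_{1/h}\vect{\psi}^h \to 0$ in $L^2(\Omega)$ gives $h\hat{\psi}_3^h \to 0$ in $L^2(\omega)$ after integrating in $x_3$; (b) Part 1's estimate $\|v^h - h\hat{\psi}_3^h\|_{L^2} \leq Ch\|\sym\nabla_h\vect{\psi}^h\|_{L^2} \to 0$ then forces $\varphi^h \to 0$ in $L^2(\omega)$; (c) the $H^2$-boundedness of $\varphi^h$ together with the compact Sobolev embedding $H^2(\omega) \hookrightarrow H^1(\omega)$ (Rellich--Kondrachov on the bounded Lipschitz $\omega$) upgrades this to strong $H^1$-convergence. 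The auxiliary splitting $\psi_3^h = h^{-1}\varphi^h + w^h + \tilde{\psi}_3^h$ with $w^h := \hat{\psi}_3^h - h^{-1}\varphi^h$ and $\tilde{\psi}_3^h := \bar{\psi}_3^h$ reads off Griso's decomposition directly, and $\|w^h\|_{H^1}$ is bounded by noting that $h\|w^h\|_{H^1} = \|h\hat{\psi}_3^h - \varphi^h\|_{H^1} \leq Ch(\|\sym\nabla_h\vect{\psi}^h\|_{L^2} + \|h\pi_{1/h}\vect{\psi}^h\|_{L^2})$.

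\textbf{Main obstacle.} The hard part will be the construction of $v$ with the correct boundary behaviour in Part 2: the simultaneous vanishing of both $v$ and $\partial_\nu v$ on $\partial\omega$ (required so that the in-plane Kirchhoff--Love terms $-x_3\partial_\alpha v$ vanish on the lateral boundary) is considerably more delicate than the Neumann-projection construction of Part 1 and depends essentially on the $C^{1,1}$ regularity of $\partial\omega$. A secondary difficulty will be controlling the rigid-motion coefficients $\vect{a}^h, \vect{B}^h$ in Part 3 so that the fine convergence $\|\tilde{\vect{\psi}}^h\|_{L^2} \to 0$ is preserved under the weak hypothesis $h\pi_{1/h}\vect{\psi}^h \to 0$.
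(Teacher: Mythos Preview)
Your overall strategy---Griso's decomposition followed by constructing $v$ via a Neumann-type projection of $(-r_2,r_1)^\top$ onto gradients, with $C^{1,1}$ elliptic regularity supplying the $H^2$ bound---is exactly the paper's approach, and your treatment of Part~3 (including the Rellich--Kondrachov upgrade $L^2\to H^1$) matches what the paper does.

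However, your very first step in Part~1 contains a genuine gap. You assert $\|\sym\nabla\vect\psi\|_{L^2}\le C\|\sym\nabla_h\vect\psi\|_{L^2}$, but this is false uniformly in $h$: take $\vect\psi=(hx_3,0,-x_1)^\top$, for which $\sym\nabla_h\vect\psi=0$ identically (it is an $h$-scaled rigid motion) while the $(1,3)$-entry of $\sym\nabla\vect\psi$ equals $(h-1)/2\neq 0$. Relatedly, your claim that a standard rigid motion $\vect A x$ with $\vect A$ skew can be rewritten as $\vect B(x_1,x_2,hx_3)^\top$ with $\vect B$ skew is incorrect: matching the third column forces $B_{i3}=h^{-1}A_{i3}$, matching the third row forces $B_{3j}=A_{3j}$, and these are incompatible with skew-symmetry of $\vect B$ unless $A_{13}=A_{23}=0$. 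Consequently you cannot reduce to $\|\vect\psi\|_{H^1}\le C\|\sym\nabla_h\vect\psi\|_{L^2}$, and the $h^2\|\pi_{1/h}\vect\psi\|_{L^2}^2$ term in \eqref{lukas1} is not absorbed.

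The paper circumvents this by applying Griso's decomposition \emph{first}, so that the $h$-scaling is handled entirely by \eqref{prvaKorn} and \eqref{teichmann}, and only then invokes \emph{two-dimensional} Korn-type estimates on $\omega$: inequality \eqref{Kornwithantisym} applied to $(\hat\psi_1,\hat\psi_2)^\top$ extracts the in-plane rigid part $(-cx_2,cx_1)^\top+\vect d$, while Proposition~\ref{propgriso} (a Korn variant measuring distance to gradients) applied to $(-r_2,r_1)^\top$ controls $\|\vect r-\fint_\omega\vect r\|$. The three-dimensional rigid motion $\vect a+\vect B(x_1,x_2,hx_3)^\top$ is then assembled explicitly from $c$, $\vect d$, $\fint_\omega\vect r$, and $\fint_\omega\hat\psi_3$; after that, your Neumann construction of $v$ is precisely the paper's variational problem \eqref{elias1}. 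For Part~2 the paper simply replaces the Neumann minimisation by its Dirichlet analogue over $H^1_0(\omega)$ (since $\vect r=\hat{\vect\psi}=0$ on $\partial\omega$), which is simpler than the Helmholtz-type construction you anticipate.
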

\begin{proof}
	We decompose $\vect \psi$ as in \eqref{griso1}. From \eqref{Kornwithantisym}, \eqref{prvaKorn} and Proposition \ref{propgriso} we conclude that there exist $c\in \R$, $\vect d \in \R^2$ such that
	\begin{align*}
		\biggl\|\vect r-\fint_{\omega} \vect r\biggr\|^2_{H^1}&+\bigl\|(\hat { \psi}_1, \hat { \psi}_2)\RRR^\top \BBB-(-cx_2,cx_1)^\top-\vect d\bigr\|_{H^1}+h^{-2}\bigl\|\bar{\mat\psi}^h\bigr\|_{L^2}^2+\bigl\|\nabla_h\bar{\mat\psi}\bigr\|_{L^2}^2\\ 
		&
		+h^{-2}\bigl\|\partial_1 (h\hat{ \psi}_3)+ r_2\bigr\|^2_{L^2}+h^{-2}\bigl\|\partial_2 (h\hat{ \psi}_3)- r_1\bigr\|^2_{L^2}
		\leq C\bigl\|\sym\nabla_h\mat\psi\bigr\|_{L^2}^2. 
	\end{align*}
	We do the regularization of $\vect r$, i.e.,
	we  look for the solution of the problem
	\begin{equation} \label{elias1}
		\min_{\varphi\in H^1(\omega) , \ \int_{\omega} \varphi=h\int_{\omega} \hat{\vect \psi_3} }\int_{\omega}\bigl|\nabla_{\hat{x}} \varphi+( r_2,- r_1)\RRR^\top \BBB\bigr|^2\,dx'.
	\end{equation}
	The Euler-Lagrange equation and the associated boundary conditions for the problem (\ref{elias1}) read
	\begin{equation*}
\begin{aligned}
			-\Delta'\varphi=\nabla_{\hat{x}}\cdot ( r_2,- r_1)\RRR^\top \BBB\ \ \text{in}\  \omega,\quad\qquad
			\partial_{\nu}\varphi=-(r_2,-r_1)\RRR^\top \BBB\cdot \nu\ \ \text{on}\ \partial \omega.
		\end{aligned} 
	\end{equation*}
	Since $\nabla_{\hat{x}}\cdot ( r_2,- r_1)\in L^2$, by
	standard  regularity estimates we obtain the inclusion  $\varphi\in H^2(\omega)$ and the estimate
	\begin{equation}
	\biggl\|\varphi-h\fint_{\Omega} \hat{ \psi}_3\biggr\|_{H^2(\omega)}\leq C(\omega)  \|\vect r\|_{H^1(\omega;\R^2)},
	\label{phir}
	\end{equation}
	for which we require the $C^{1,1}$ regularity of $\partial \omega$. In particular, one has
	\begin{equation*}
		\left\|\varphi+\fint_{\omega}  r_2 x_1-\fint_{\omega} r_1 x_2-h\fint_{\omega} \hat{ \psi}_3\right\|_{H^2} \leq C \left\| \vect r-\fint_{\omega} \vect r\right\|_{H^1}. 
	\end{equation*} 
	Furthermore, from (\ref{elias1}) we have the following inequalities:
	\begin{align*}
		\|\partial_1 \varphi+  r_2 \|^2_{L^2}+\|\partial_2 \varphi-  r_1 \|^2_{L^2(\omega)}  &\leq \|\partial_1 (h\hat{\psi}_3)+ r_2 \|^2_{L^2}+\|\partial_2 (h\hat{ \psi}_3)- r_1 \|^2_{L^2}  
		\\[0.2em]
		\left\| \nabla_{\hat{x}} \bigl(\hat{ \psi}_3-h^{-1}\varphi\bigr)\right\|^2_{L^2} &\leq h^{-2}\|\partial_1 (h\hat{ \psi}_3)+ r_2\|^2_{L^2}+h^{-2}\|\partial_1 \varphi+  r_2\|^2_{L^2} 
		\\[0.3em]
		&\nonumber
		+h^{-2}\|\partial_2 (h\hat{  \psi}_3)- r_1\|^2_{L^2}+h^{-2}\|\partial_2 \varphi-  r_1\|^2_{L^2} 
			\\[0.3em]
		&\nonumber
		\leq 2(  \|\partial_1 (h\hat{\psi}_3)+ r_2 \|^2_{L^2}+h^{-2}\|\partial_2 (h\hat{ \psi}_3)-  r_1\|^2_{L^2}).
	\end{align*}
	The claim follows by taking 
	\begin{align*}
		\vect a&=\biggl(d_1,d_2, \fint_{\omega} \hat{ \psi}_3\biggr)^\top, \quad \vect B=\left(\begin{array}{ccc} 0 &-c &h^{-1}
			{\fint  r_2} \\[0.3em]
			c & 0 & -h^{-1}{\fint  r_1}
			\\[0.3em]
			-h^{-1}{\fint  r_2}& h^{-1}\fint  r_1 & 0\end{array} \right), \\[0.4em] 
	&v=\varphi+\fint_{\omega}  r_2 x_1-\fint_{\omega}  r_1 x_2-h\fint_{\omega} \hat{\vect \psi}_3,\\[0.4em]
	&\tilde{\vect \psi}=\bar{ \psi}+\left(\hat { \psi}_1,\hat { \psi}_2,\hat{ \psi}_3-h^{-1}\varphi\right)^\top+\left(x_3( r_2+\partial_1 \varphi),x_3(- r_1+\partial_2 \varphi),0\right)^\top- (-cx_2,cx_1,0)^\top-(\vect d,0)^\top.  
	\end{align*}
	This finishes the proof of part 1 of the lemma. To prove part 2, we only need to note  that if $\vect \psi=0$ on $\partial \omega$, then $\vect r=0$, $\hat {\vect \psi}=0$ on $\partial \omega$. 
	Combining this \eqref{teichmann}, we infer
	\begin{align*}
	 \|\vect r\|^2_{H^1}+\|(\hat { \psi}_1, \hat { \psi}_2)\RRR^\top \BBB\|_{H^1}&+ h^{-2}\|\bar{\mat\psi}^h\|_{L^2(\Omega;\R^3)}^2+ \|\nabla_h\bar{\mat\psi}\|_{L^2}^2\\[0.3em] 
		&\hspace{+10ex}+h^{-2}\|\partial_1 (h\hat{ \psi}_3)+ r_2\|^2_{L^2}+h^{-2}\|\partial_2 (h\hat{\psi}_3)- r_1\|^2_{L^2}
		\leq C\|\sym\nabla_h\mat\psi\|_{L^2}^2. 
	\end{align*}
	Furthermore, due to the condition $\vect\psi=0$ on $\partial\omega,$ in the ``regularisation'' of $({ r_2}, -{ r}_1)\RRR^\top \BBB$ provided by the variational problem \eqref{elias1} we can minimise over $\varphi \in H^1_0(\omega)$ and we immediately obtain that 
	\[
	\|\varphi\|_{H^2}\leq C(\omega)  \|\vect r\|_{H^1},
	\]
	which replaces (\ref{phir}).
	
	 Part 3 is proved in \cite{Vel14a}; alternatively, one can follow the argument used for part 1, taking into account \eqref{lukas1} and noting that $h \pi_{1/h}\vect \psi^h \to 0 $ in $L^2(\Omega;\R^3)$ implies the convergence $\vect r^h \weak 0$ in $H^1(\omega;\R^2)$, $h \pi_{1/h}\hat{\vect \psi}^h \weak 0 $ in $H^1(\omega;\R^3),$ where $r^h$ and $\hat{\vect \psi}^h$ are from the decomposition \eqref{griso1} applied to ${\vect \psi}^h.$ (Note that within the described argument here one can set to zero
	 the vectors $\vect a^h,$ $\vect B^h$ in the decomposition (\ref{moddek}) for $\vect\psi^h.$)

\end{proof}
\begin{remark} \label{remivan10} 
	Following Remark \ref{remivan}, we note that if $ \psi_{\alpha},$ $\alpha=1,2$, are even in the variable and $ \psi_3$ is odd in $x_3$ variable, then on has $v=0$, $ a_3=0$, $ B_{13}= B_{23}=0$ in \eqref{moddek}.  Moreover, the estimate 
	$$
	\| \psi_3\|_{L^2}=\|\tilde{\psi}_3\|_{L^2}=\biggl\| \tilde { \psi}_3-\fint_I \tilde { \psi}_3\biggr\|_{L^2} \leq C\| \partial_3\tilde { \psi}_3\|_{L^2} \leq C h\|\sym \nabla_h \vect \psi\|_{L^2}.   
	$$
	holds with $C>0$ that depends on $\omega$ only. 
\end{remark}
\subsection{Two-scale convergence}
\label{twoscale_conv} 
In this chapter we assume that $\Omega \subset \R^n$, if not otherwise stated, is a bounded open set with Lipschitz boundary. 
As before, we use the notation $I=(-{1/2}, {1/2})$. 
For $x\in\R^n,$ we denote by $\hat{x}$ the first $n-1$ coordinates, thus $x=(\hat{x},x_n)$. Depending on the context, the unit cell is $Y=[0,1)^n$ or $Y=[0,1)^{n-1},$ while $\mathcal{Y}$ denotes the unit flat torus in $\R^n$ or $\R^{n-1},$ respectively.

\begin{definition}\RRR(Dimension-reduction two-scale convergence). \BBB
	\label{withouttdef}
	Let $ (u^\varepsilon)_{\varepsilon>0}$ be a bounded sequence in $L^2(\Omega)$. We say that $u^\varepsilon$ weakly two-scale converges to $u \in L^2(\Omega \times Y)$ with respect to $Y$ if (in settings where $Y=[0,1)^{n-1}$)
	$$
	\int\limits_{\Omega}  u^\varepsilon (x)  \phi\biggl(x,\frac{\hat{x}}{\varepsilon}\biggr)\,dx
	\,\xrightarrow\,
	\int\limits_{\Omega}\int\limits_Y  u(x, y) \phi\left(x,y\right)\,dydx\qquad \forall\phi \in C_{\rm c}^\infty\bigl(\Omega;C(\mathcal{Y})\bigr),
	$$ or (in settings where $Y=[0,1)^{n}$) 
	$$
	\int\limits_{\Omega}  u^\varepsilon (x)  \phi\biggl(x,\frac{x}{\varepsilon}\biggr)\,dx
	\,\xrightarrow\,
	\int\limits_{\Omega}\int\limits_Y  u(x, y) \phi\left(x,y\right)\,dydx\qquad \forall\phi \in C_{\rm c}^\infty\bigl(\Omega;C(\mathcal{Y})\bigr).
	$$ 
	We write
	\[
	u^\varepsilon \drtwoscale  u(x,y).
	\]
	Furthermore, we say that $(u^\varepsilon)_{\eps>0}$ strongly two-scale converges to $ u \in L^2(\Omega \times Y)$ if
	\[
	\int\limits_{\Omega}  u^\varepsilon(x)  \phi^\varepsilon(x)\,dx
	\to
	\int\limits_{\Omega}\int\limits_Y  u(x, y) \phi\left(x,y\right)\,dydx,
	\]
	for every weakly two-scale convergent sequence $ \phi^\varepsilon(x) \drtwoscale  \phi(x,y)$. We write  	
	\[\RRR
	u^\varepsilon \strongdrtwoscale  u(x,y).
	\BBB\]
\end{definition}
The following theorem is given in \cite[Theorem 6.3.3]{Neukamm10}. 
\begin{theorem}
	\label{neukammresult}
	Let $\Omega=\omega\times I$, where $\omega \subset \R^2$ is bounded and has Lipschitz boundary, and
	let $\epsh>0$ be a sequence such that $\epsh \to 0$ as $h\to 0$ so that $\lim_{h\to 0}h/\epsh=\delta \in [0,\infty]$.  Let $(\vect u^\epsh)_{h>0}$ be a weakly convergent sequence in $H^1(\Omega;\R^3)$ with limit $\vect u$ and suppose that
	\begin{equation} \label{neukammuvjet}
		\limsup\limits_{h \rightarrow 0} ||\nabla_h \vect u^\epsh ||_{L^2(\Omega;\R^3)} < \infty.
	\end{equation}
	\begin{enumerate} 
		\item \begin{enumerate} \item  If $\delta \in ( 0, \infty)$ then there exists a function $\vect w \in L^2(\omega;H^1(I \times \mathcal{Y};\R^3))$ and a subsequence (not relabelled) such that
			\[
			\nabla_h \vect u^\epsh(x) \drtwoscale \bigl(\nabla_{\hat{x}} \vect u(\hat{x})\,|\, 0\bigr) + \widetilde{\nabla}_{\delta}\vect w(x,y).
			\]
			\item If $\delta \in (0,\infty)$ and in addition \eqref{neukammuvjet} we assume that
			$$ 
			\limsup\limits_{h \rightarrow 0} h^{-1}\bigl\|\vect u^\epsh \bigr\|_{L^2(\Omega;\R^3)} < \infty, 
			$$  
			then there exists a function $\vect w \in L^2(\omega;H^1(I \times \mathcal{Y};\R^3))$ and a subsequence (not relabeled) such that
			\[ 
			h^{-1}\vect u^\epsh(x) \drtwoscale \vect w(x,y), \quad
			\nabla_h \vect u^\epsh(x) \drtwoscale  \widetilde{\nabla}_{\delta}\vect w(x,y).
			\]
		\end{enumerate} 
		\item If $\delta=0$ then there exits 
		$\vect w \in L^2(\omega; H^1(\mathcal{Y};\R^3))$ and $\vect g \in L^2(\Omega \times Y;\R^3)$ such that
		$$\nabla_h \vect u^\epsh(x) \drtwoscale \bigl(\nabla_{\hat{x}}\vect u(\hat{x})\, |\,0\bigr) +\bigl(\nabla_y \vect w\,|\,\vect g\bigr). $$
		\item If $\delta=\infty$ then there exists 
		$\vect w \in L^2(\Omega; H^1(\mathcal{Y};\R^3))$, $\vect g \in L^2(\Omega;\R^3)$ such that
		$$
		\nabla_h \vect u^\epsh(x) \drtwoscale \bigl(\nabla_{\hat{x}} \vect u(\hat{x})\,|\, 0\bigr) +\bigl(\nabla_y \vect w\,|\,\vect g\bigr). 
		$$
	\end{enumerate}
\end{theorem}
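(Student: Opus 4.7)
The plan is to combine standard two-scale compactness with a dimension-reduction adapted identification of the limit, handling each of the three regimes $\delta\in(0,\infty)$, $\delta=0$, $\delta=\infty$ by choosing test fields tailored to the relevant scaling of $\widetilde\nabla_\delta$. The bound \eqref{neukammuvjet} ensures that $(\nabla_h\vect u^\epsh)_{h>0}$ is bounded in $L^2(\Omega;\R^{3\times 3})$, so the Allaire--Nguetseng compactness theorem yields, up to a subsequence, a weak two-scale limit $\vect\Xi\in L^2(\Omega\times Y;\R^{3\times 3})$ with $\nabla_h\vect u^\epsh\drtwoscale\vect\Xi$. Testing against $y$-independent smooth functions and using $\vect u^\epsh\weakH\vect u$ identifies the mean $\int_Y\vect\Xi(\cdot,y)\,dy=(\nabla_{\hat x}\vect u(\hat x)\,|\,0)$, so everything reduces to characterising the mean-zero residual $\vect R(x,y):=\vect\Xi(x,y)-(\nabla_{\hat x}\vect u(\hat x)\,|\,0)$.

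For part 1(a), with $\delta\in(0,\infty)$, the scales $\eps_h^{-1}\partial_{y_j}$ and $h^{-1}\partial_{x_3}$ are comparable (with ratio $\delta$), so I would test the two-scale convergence against compactly supported smooth matrix fields $\vect\Psi(x,y)$ that are $Y$-periodic in $y$ and satisfy the combined divergence-free condition corresponding to $\widetilde{\nabla}_\delta$, namely $\sum_{j=1}^2\partial_{y_j}\vect\Psi_j+\delta^{-1}\partial_{x_3}\vect\Psi_3=0$. Integrating by parts in $\int_\Omega\nabla_h\vect u^\epsh:\vect\Psi(x,\hat x/\eps_h)\,dx$ and passing to the limit, the boundary terms from $\partial_{y_j}$ and from $\partial_{x_3}$ combine consistently, producing $\int_{\Omega\times Y}\vect R:\vect\Psi=0$ for every such $\vect\Psi$. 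A density argument, together with the orthogonal decomposition of $L^2(\omega;L^2_{\rm per}(I\times\mathcal{Y};\R^{3\times 3}))$ into fields of the form $\widetilde\nabla_\delta\vect w$ and the $\widetilde{\mathrm{div}}_{2,\delta}$-free fields (a consequence of Korn's inequality on $\dot H^1(I\times\mathcal{Y};\R^3)$, already invoked in Section~3.1), yields $\vect R=\widetilde\nabla_\delta\vect w$ for some $\vect w\in L^2(\omega;H^1(I\times\mathcal{Y};\R^3))$. Part 1(b) follows by additionally extracting a two-scale limit of the now-bounded sequence $h^{-1}\vect u^\epsh$, testing with $\phi(x)\partial_{x_3}\zeta(y)/\delta$ to match it with the third-column corrector obtained in 1(a), thereby identifying the two $\vect w$'s up to an $\hat x$-dependent additive constant that can be absorbed into the normalisation.

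For parts 2 and 3, one has to keep track of the fact that one of the two scales $\eps_h^{-1}$ and $h^{-1}$ is asymptotically dominant, so the corresponding column (or variable-dependence) of the corrector becomes unconstrained. For $\delta=0$ ($h\ll\eps_h$), cells of period $\eps_h$ do not resolve the thickness: testing against $\vect\Psi$ that is compactly supported in $x_3$ and periodic in $y$, with $\mathrm{div}_y\vect\Psi_*=0$ and $\vect\Psi_3$ arbitrary, yields $\vect R=(\nabla_y\vect w\,|\,\vect g)$ with $\vect w\in L^2(\omega;H^1(\mathcal{Y};\R^3))$ and an $L^2$-field $\vect g\in L^2(\Omega\times Y;\R^3)$ that is unconstrained because the third column of the rescaled gradient, $h^{-1}\partial_{x_3}\vect u^\epsh$, is bounded without any associated closedness relation with the $y$-derivatives. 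For $\delta=\infty$ ($\eps_h\ll h$), the rôles of $\hat x$ and $y$ partially swap: now the $y$-oscillations are on a scale much finer than the thickness, so $\vect w$ may depend on the full $x$ (including $x_3$), while the third column again produces a free function $\vect g\in L^2(\Omega;\R^3)$ independent of $y$, obtained from the weak $L^2$-limit of $h^{-1}\partial_{x_3}\vect u^\epsh$.

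The main obstacle is not two-scale compactness per se but rather the construction, in each regime, of a sufficiently rich space of admissible test matrix fields together with the matching density/orthogonality argument that identifies $\vect R$ completely. In the regime $\delta=\infty$ an additional mollification in $\hat x$ is needed because the corrector $\vect w$ is only in $L^2$ with respect to $\hat x$, so one cannot directly integrate by parts in $\hat x$; this is handled by first testing against $\vect\Psi\in C_c^\infty(\Omega;C^\infty(\mathcal{Y}))$ with smooth $\hat x$-dependence and then passing to the full $L^2(\Omega;\cdot)$ class by approximation. Once $\vect R$ is identified in each regime, the claimed representation of $\nabla_h\vect u^\epsh$ as the two-scale limit $(\nabla_{\hat x}\vect u\,|\,0)$ plus the regime-dependent corrector follows immediately.
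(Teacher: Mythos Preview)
The paper does not supply its own proof of this theorem: it is stated in the Appendix with the sentence ``The following theorem is given in \cite[Theorem~6.3.3]{Neukamm10}'' and then used as a black box. There is therefore nothing in the present paper to compare against.

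Your outline is broadly the standard route to such two-scale corrector results (compactness for $\nabla_h\vect u^{\epsh}$, identification of the $y$-mean, then orthogonality against a regime-dependent class of ``divergence-free'' test fields to recognise the residual as a scaled gradient). A few points deserve care if you flesh it out. First, in part~1(a) the relevant orthogonal complement of $\{\widetilde\nabla_\delta\vect w\}$ in $L^2(I\times\mathcal{Y};\R^{3\times 3})$ is characterised via the closed-range/Korn argument you allude to, but the test fields actually live on $I\times\mathcal{Y}$ with periodicity only in $y$ and Neumann-type structure in $x_3$; the density step must therefore be carried out on that mixed cylinder, not on a pure torus. Second, in part~1(b) the link between the two-scale limit of $h^{-1}\vect u^{\epsh}$ and the corrector $\vect w$ from 1(a) is not obtained by ``matching'' after the fact: one typically shows directly that if $h^{-1}\vect u^{\epsh}\drtwoscale\vect w$ and $\nabla_h\vect u^{\epsh}$ is bounded, then $\vect w\in L^2(\omega;H^1(I\times\mathcal{Y}))$ and $\nabla_h\vect u^{\epsh}\drtwoscale\widetilde\nabla_\delta\vect w$ via integration by parts with oscillating test functions --- the limit $\vect u$ is then necessarily zero, which is consistent with the assumption. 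Third, in part~3 ($\delta=\infty$) your claim that $\vect g$ is independent of $y$ is the nontrivial part; it comes from the fact that $\eps_h/h\to 0$ kills the $y$-oscillation of $h^{-1}\partial_{x_3}\vect u^{\epsh}$ when tested against $\phi(x)\psi(y)$ (Lemma~\ref{lemmadependenceony} in the paper is the relevant tool here), and this step should be made explicit rather than asserted.
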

We will need the following helpful lemma. 
\begin{lemma}
	\label{lemmadependenceony}
	\begin{enumerate} 
		\item Suppose that $(\varphi^{\varepsilon})_{\eps>0} \subset H^1(\Omega)$ be a bounded sequence in $L^2(\Omega)$ such that $\varphi^{\eps} \drtwoscale \varphi(x,y) \in L^2(\Omega\times \mathcal{Y})$. Suppose additionally that $\eps \varphi^{\eps} \to 0 $ strongly in $H^1(\Omega)$. Then $\varphi(x,y)$ depends on $x$ only. 
		\item Suppose $(\varphi^{\eps} )_{\eps>0} \subset H^2(\Omega)$ be a bounded sequence in $L^2(\Omega)$ such that $\varphi^{\eps} \drtwoscale \varphi(x,y) \in L^2(\Omega\times \mathcal{Y})$. Suppose additionally that $\eps^2 \varphi^{\eps} \to 0$ strongly  in $H^2(\Omega)$. Then $\varphi(x,y)$ depends on $x$ only. 
	\end{enumerate} 
\end{lemma}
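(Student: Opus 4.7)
My plan is, in both parts, to exploit the smallness of $\eps^k \varphi^\eps$ (with $k=1$ for part~1 and $k=2$ for part~2) to kill the $y$-variation of the two-scale limit. I will test against oscillatory functions that are $y$-derivatives (of order $k$) of a smooth $\mathcal{Y}$-periodic profile, use the chain-rule identity $(\partial_{y_i}\eta)(\hat{x}/\eps) = \eps\,\partial_{x_i}[\eta(\hat{x}/\eps)]$ to pull out powers of $\eps$, and then integrate by parts in $x_i$ to redistribute the derivatives. Two-scale convergence determines the limit of the left-hand side; the right-hand side splits into terms each of which is controlled by the smallness hypothesis.

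For part~1 I fix $\eta\in C^\infty(\mathcal{Y})$, $\gamma\in C_{\rm c}^\infty(\Omega)$ and rewrite
\[
\int_\Omega \varphi^\eps(x)\,(\partial_{y_i}\eta)(\hat{x}/\eps)\,\gamma(x)\,dx \;=\; \eps \int_\Omega \varphi^\eps(x)\,\partial_{x_i}[\eta(\hat{x}/\eps)]\,\gamma(x)\,dx.
\]
Integrating by parts in $x_i$ on the right produces two residuals, one carrying $\eps\,\partial_{x_i}\varphi^\eps$ (which is $o(1)$ in $L^2$ by the assumption $\eps\varphi^\eps\to 0$ in $H^1$), the other carrying $\eps\varphi^\eps$ (which is $o(1)$ in $L^2$ since $\varphi^\eps$ is bounded in $L^2$); both are paired with $L^2$-bounded oscillatory factors, so both vanish in the limit. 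On the other hand, the left-hand side weakly two-scale converges to $\int_\Omega\int_\mathcal{Y}\varphi(x,y)\,\partial_{y_i}\eta(y)\,\gamma(x)\,dy\,dx$, which must therefore vanish for every $\eta,\gamma$, yielding $\partial_{y_i}\varphi(x,\cdot)=0$ in the sense of distributions on $\mathcal{Y}$ for every $i$ and a.e.\ $x$, whence $\varphi=\varphi(x)$. For part~2 the same scheme is applied with the test profile $\rho=\partial_{y_i}^2\eta$, so that $\rho(\hat{x}/\eps)=\eps^2\partial_{x_i}^2[\eta(\hat{x}/\eps)]$; two integrations by parts in $x_i$ yield three residual terms whose prefactors, after simplification by the chain rule, are $\eps^2\partial_{x_i}^2\varphi^\eps$, $\eps^2\partial_{x_i}\varphi^\eps$, and $\eps\varphi^\eps$, all $o(1)$ in $L^2$ by $\eps^2\varphi^\eps\to 0$ in $H^2$ (for the first two) and the $L^2$-boundedness of $\varphi^\eps$ (for the third). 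Passing to the two-scale limit gives $\int_\Omega\int_\mathcal{Y}\varphi(x,y)\,\partial_{y_i}^2\eta(y)\,\gamma(x)\,dy\,dx=0$, i.e., $\partial_{y_i}^2\varphi(x,\cdot)=0$ in the sense of distributions on $\mathcal{Y}$; on the flat torus this forces $\varphi(x,\cdot)$ to be $y_i$-independent (by periodicity, since affine solutions in $y_i$ cannot be periodic unless constant), and iterating over $i$ yields the claim.

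The only delicate point I anticipate is the bookkeeping in part~2: after the two integrations by parts and the chain-rule simplifications $\partial_{x_i}[\eta(\hat{x}/\eps)]=\eps^{-1}(\partial_{y_i}\eta)(\hat{x}/\eps)$, one must verify that no residual of the form $\eps\partial_{x_i}\varphi^\eps$ — for which the hypothesis provides no $L^2$-control — survives, leaving only the three favourable prefactors listed above. This is ensured by first moving both $x_i$-derivatives onto $\varphi^\eps$ via iterated integration by parts and applying the chain rule to the residual oscillatory factor only afterwards; once this ordering is fixed, each residual is easily checked to enjoy the correct power of $\eps$, and the argument is routine.
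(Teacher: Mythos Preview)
Your proof is correct and follows essentially the same route as the paper: exploit the chain-rule identity relating $y$-derivatives of periodic profiles to $x$-derivatives of the oscillating composition, then integrate by parts to trade powers of $\eps$ for derivatives of $\varphi^\eps$, which the smallness hypothesis controls. The only cosmetic difference is that the paper chooses complex exponentials $b(x)e^{2\pi i(k,x/\eps)}$ as test functions (so the argument reads as ``each nonzero Fourier coefficient of $\varphi(x,\cdot)$ vanishes''), whereas you test against $\gamma(x)(\partial_{y_i}^k\eta)(\hat{x}/\eps)$ for arbitrary $\eta\in C^\infty(\mathcal{Y})$ and phrase the conclusion as the vanishing of the distributional $y_i$-derivatives; these are equivalent formulations of the same computation.
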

\begin{proof}
	We write 
	$$ \varphi(x,y)=\sum_{k \in \Z^n} a_k(x)\exp  \bigl(2\pi{\rm i}(  k,y)\bigr), \qquad a_k \in L^2(\Omega,\C^n),\quad  \sum_{k\in \Z^n} \int |a_k(x)|^2 < \infty. $$
	We want to show that for $k \neq 0$ we have that $a_k=0$. We take an arbitrary $b\in C_0^{\infty} (\Omega)$ and $i \in \{1,\dots,n\}$ such that $k_i \neq 0$ and calculate
	\begin{align*}
		\int_{\Omega} a_k(x)b(x) dx  &=\int_{\Omega \times Y} \varphi(x,y) b(x) \exp\bigl(2\pi {\rm i}( k,y)\bigr)dx dy\\[0.35em] 
		&= \lim_{\eps\to 0} \int_{\Omega} \varphi^{\eps}(x) b(x) \exp\biggl(2\pi {\rm i}\biggl(  k,\frac{x}{\varepsilon}\biggr)\biggr)dx \\[0.35em] 
		&= \lim_{\eps \to 0} \int_{\Omega} \frac{\varepsilon}{2\pi {\rm i} k_i} \varphi^\eps(x) b(x) \partial_{x_i} \left(\exp\biggl(2\pi {\rm i}\biggl(  k,\frac{x}{\varepsilon}\biggr)\biggr)\right)dx\\[0.35em]  &=-\lim_{\eps \to 0} \left\{\int_{\Omega} \frac{\varepsilon}{2 \pi {\rm i}k_i} \partial_{x_i} \varphi^{\eps} (x) b(x) \exp\biggl(2\pi {\rm i}\biggl(  k,\frac{x}{\varepsilon}\biggr)\biggr)+\int_{\Omega} \frac{\varepsilon}{2 \pi {\rm i}k_i}  \varphi^{\eps } \partial_{x_i} b(x)  \exp\biggl(2 \pi {\rm i}\biggl( k,\frac{x}{\varepsilon}\biggr)\biggr) \right\}=0.   
	\end{align*}
	From this we infer that for $k \neq 0$, $a_k=0$, and the claim follows. The proof of the second claim is similar. 
\end{proof}
The following claim can be proved directly by integration by parts.  
\begin{lemma} \label{lemmatwoscalecompact} 
	
	\begin{enumerate} 
		\item 
  Let $(\varphi^{\varepsilon})_{\eps>0} \subset H^2(\Omega) $ be a bounded sequence. Suppose that 
		$\varphi^{\eps}  \to \varphi_0$ strongly in $L^2(\Omega)$ and $\nabla^2 \varphi^{\eps} \drtwoscale \vect \psi$, where $\vect \psi \in L^2(\Omega \times Y;\R^{n \times n}) $. Then there exists $\varphi_1 \in L^2(\Omega; H^2(\mathcal{Y})) $ such that 
		$$\nabla^2 \varphi^{\eps} \drtwoscale \nabla^2 \varphi_0(x)+\nabla^2_y \varphi_1 (x,y). $$
		\item Suppose that $(\varphi^{\varepsilon_h})_{h>0} \subset H^2(\Omega) $ is a bounded sequence such that   $h^{-1}\varphi^{\epsh}\drtwoscale \varphi(x,y)$ and $\lim_{h\to 0} {\epsh}^{-2}h=\kappa\in[0,\infty).$ 
		Then $\nabla^2 \varphi^{\epsh} \drtwoscale \kappa \nabla^2_y \varphi(x,y).$ 
		\item 
		\begin{enumerate} 
			\item Let $(\varphi^{\varepsilon})_{\eps>0} \subset H^2(\omega) $ be such that the sequences $(\varphi^{\varepsilon})_{\eps>0}$, $(\varepsilon \nabla \varphi^{\varepsilon})_{\eps>0}$ are bounded in the corresponding $L^2$ spaces. Suppose that $\varphi^{\varepsilon} \drtwoscale \varphi(x,y) \in L^2(\Omega \times Y)$. Then $\varphi \in L^2(\Omega;H^1(\mathcal{Y}))$ and $\eps \nabla \varphi^\varepsilon \drtwoscale \nabla_y \varphi(x,y)$.
			\item 
			Let $(\varphi^{\varepsilon})_{\eps>0} \subset H^2(\omega) $ be such that the sequences $(\varphi^{\varepsilon})_{\eps>0}$, $(\varepsilon \nabla \varphi^{\varepsilon})_{\eps>0}$, $(\eps^2 \nabla^2 \varphi^{\varepsilon})_{\eps>0}$ are bounded in $L^2.$ Suppose that $\varphi^{\varepsilon} \drtwoscale \varphi(x,y) \in L^2(\Omega \times Y)$. Then $\varphi \in L^2(\Omega;H^2(\mathcal{Y}))$ and $\eps \nabla \varphi^{\varepsilon} \drtwoscale \nabla_y \varphi(x,y)$, $\eps^2 \nabla^2 \varphi^{\varepsilon} \drtwoscale \nabla^2_y \varphi(x,y)$. 
		\end{enumerate} 
	\end{enumerate} 
\end{lemma}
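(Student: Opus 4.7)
My plan for all three parts is to test each scaled derivative against an oscillating field $\Phi(x, x/\eps)$, shift derivatives onto $\Phi$ by integrating by parts in $x$, pass to the two-scale limit term by term, and read off the resulting distributional identity on $\Omega \times \mathcal{Y}$ to identify the limit. Weak two-scale compactness of the $L^2$-bounded sequences of scaled derivatives, together with uniqueness of the distributional identification in $L^2$, will then upgrade any subsequential limit to full-sequence convergence.

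For part 3(a), I would choose $\Phi \in C_{\rm c}^{\infty}(\Omega; C^{\infty}(\mathcal{Y}; \R^n))$ and write
$$
\int_{\Omega} \eps \nabla \varphi^{\eps} \cdot \Phi\bigl(x, x/\eps\bigr)\, dx
= -\eps \int_{\Omega} \varphi^{\eps}\,(\nabla_x\!\cdot\!\Phi)\bigl(x, x/\eps\bigr)\, dx
- \int_{\Omega} \varphi^{\eps}\,(\nabla_y\!\cdot\!\Phi)\bigl(x, x/\eps\bigr)\, dx.
$$
The first right-hand term vanishes as $\eps \to 0$; the second converges to $-\int \varphi\,\nabla_y\!\cdot\!\Phi$. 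On the left, $\eps \nabla \varphi^{\eps}$ admits a weak two-scale limit $\zeta$, and the resulting identity $\int \zeta \cdot \Phi = -\int \varphi\,\nabla_y\!\cdot\!\Phi$ will force $\zeta = \nabla_y \varphi$ distributionally, placing $\varphi$ in $L^2(\Omega; H^1(\mathcal{Y}))$. Part 3(b) will then follow by applying 3(a) to the $L^2$-bounded sequence $u^\eps := \eps \partial_i \varphi^\eps$ (which two-scale converges to $\partial_{y_i} \varphi$ by 3(a) and whose scaled gradient $\eps \nabla u^\eps = \eps^2 \nabla \partial_i \varphi^\eps$ is $L^2$-bounded by hypothesis), giving $\partial_{y_i} \varphi \in L^2(\Omega; H^1(\mathcal{Y}))$ and $\eps^2 \nabla^2 \varphi^{\eps} \drtwoscale \nabla_y^2 \varphi$ row by row. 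For part 2, two integrations by parts yield
$$
\int \nabla^2 \varphi^{\eps} : \Phi(x, x/\eps)\, dx = \int \varphi^{\eps}\Bigl[(\partial_x^2 \Phi) + 2\eps^{-1}(\partial_x \partial_y \Phi) + \eps^{-2}(\partial_y^2 \Phi)\Bigr]\bigl(x, x/\eps\bigr)\, dx,
$$
and I would rewrite $\varphi^{\eps} = h \cdot (h^{-1} \varphi^{\eps})$ so that the three terms carry prefactors $h$, $h/\eps = (h/\eps^2)\,\eps \to 0$, and $h/\eps^2 \to \kappa$. Only the last term survives, and any two-scale limit $\psi$ of the $L^2$-bounded sequence $\nabla^2 \varphi^{\eps}$ then satisfies $\int \psi : \Phi = \kappa \int \varphi\, \partial_{y_i}\partial_{y_j}\Phi_{ij}$, i.e. $\psi = \kappa \nabla_y^2 \varphi$ distributionally on $\Omega \times \mathcal{Y}$, which in turn forces $\varphi \in L^2(\Omega; H^2(\mathcal{Y}))$.

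Part 1 is the most delicate; the hard part will be extracting a single scalar corrector $\varphi_1$. First I would observe that each $\partial_i \varphi^{\eps}$ is $H^1$-bounded and converges weakly in $H^1$ to $\partial_i \varphi_0$ (by $H^2$-boundedness and the strong $L^2$-convergence $\varphi^{\eps} \to \varphi_0$), so the classical two-scale compactness for gradients (Theorem \ref{neukammresult}, or the Allaire--Zhikov result) supplies correctors $\chi_i \in L^2(\Omega; H^1(\mathcal{Y})/\R)$ with
$$\nabla(\partial_i \varphi^{\eps}) \drtwoscale \nabla(\partial_i \varphi_0) + \nabla_y \chi_i,$$
so any two-scale limit $\psi$ of $\nabla^2 \varphi^{\eps}$ has the form $\psi_{ij} = \partial_i \partial_j \varphi_0 + \partial_{y_j} \chi_i$. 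To combine the $\chi_i$'s into one potential I would use the symmetry $\psi_{ij} = \psi_{ji}$, which forces the closedness condition $\partial_{y_j} \chi_i = \partial_{y_i} \chi_j$ in $\mathcal{D}'(\mathcal{Y})$. Choosing the representatives with $\int_Y \chi_i(x, y)\, dy = 0$ makes all periods of the 1-form $\sum_i \chi_i\, dy_i$ on the flat torus $\mathcal{Y}$ vanish (the $k$-th period equals the mean of $\chi_k$), and by the de Rham structure of $T^n$ the form is then exact: $\chi_i = \partial_{y_i} \varphi_1$ with $\varphi_1 \in L^2(\Omega; H^2(\mathcal{Y}))/\R$. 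Substituting back yields $\psi = \nabla^2 \varphi_0 + \nabla_y^2 \varphi_1$, as required. The main obstacle is precisely this passage from a closed $L^2$-valued corrector vector field to an exact one, which relies on the mean-zero normalisation and the cohomology of the flat torus rather than on purely local integration by parts.
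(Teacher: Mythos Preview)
Your proposal is correct. The paper does not give a detailed proof of this lemma, stating only that it ``can be proved directly by integration by parts''; your arguments for parts~2 and~3 are exactly of this type, and your treatment of part~1---applying the classical gradient-corrector result to each $\partial_i\varphi^\eps$ and then combining the resulting correctors $\chi_i$ via the closedness relation $\partial_{y_j}\chi_i=\partial_{y_i}\chi_j$ and the exactness of mean-zero closed $1$-forms on the torus---is a natural and correct way to supply the details the paper omits. One minor remark on your recursion in part~3(b): the hypothesis of~3(a) as stated asks for $\varphi^\eps\in H^2$, whereas $u^\eps:=\eps\,\partial_i\varphi^\eps$ is a~priori only in $H^1$; however, your integration-by-parts proof of~3(a) uses nothing beyond $H^1$-regularity, so the bootstrap is legitimate (alternatively, one simply repeats the double integration by parts directly for $\eps^2\nabla^2\varphi^\eps$, which is presumably what the paper has in mind).
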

We will prove the following lemma.
\begin{lemma}\label{nada2}
	Let $\Omega=\omega \times I$, where $\omega\subset \R^2$ a bounded set with Lipschitz boundary and let $( \psi^{\eps_h})_{h>0} \subset H^1(\Omega)$ be such that there exists $C>0$ such that
	\begin{equation}\label{lemascaled1}
		\| \psi^{\eps_h}\|_{L^2(\Omega)}^2+ \eps_h^2\| \nabla_h  \psi^{\eps_h}\|_{L^2(\Omega;\R^3)}^2 \leq C. 
	\end{equation}
	\begin{enumerate} 
		\item If $h \ll \eps_h$ then  
		there exist $\psi_1\in H^1(\omega\times \mathcal{Y})$, $\psi_2\in L^2(\Omega\times \mathcal{Y})$ such that (on a subsequence)
		\begin{equation} \label{lemascaled2}
			\psi^{\eps_h} \drtwoscale  \psi_1,\quad \eps_h \nabla_h  \psi^{\eps_h} \drtwoscale (\partial_{y_1}  \psi_1, \partial_{y_2}  \psi_1, \psi_2). 
		\end{equation}
		The opposite claim is also valid, i.e., for every  $ \psi_1\in H^1(\omega\times \mathcal{Y})$, $ \psi_2\in H^1(\Omega\times \mathcal{Y})$ we have that 
		there exists  $( \psi^{\eps_h})_{h>0} \subset H^1(\Omega)$ such that \eqref{lemascaled1} and \eqref{lemascaled2} are satisfied.   
		\item If $\epsh \ll h$ then there exists $\psi\in L^2(\Omega,H^1(\mathcal{Y}))$ such that
		\begin{equation} \label{lemascaled3}
			\psi^{\eps_h} \drtwoscale  \psi,\quad \eps_h \nabla_h  \psi^{\eps_h} \drtwoscale (\partial_{y_1}  \psi, \partial_{y_2}  \psi, 0). 
		\end{equation}
		The opposite claim is also valid, i.e., for every  $ \psi\in L^2(\Omega; H^1(\mathcal{Y};\mathbb{R}^2))$ there exists $( \psi^{\eps_h})_{h>0} \subset H^1(\Omega)$ such that \eqref{lemascaled1} and \eqref{lemascaled3} hold.
	\end{enumerate}
\end{lemma}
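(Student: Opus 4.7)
The plan is to treat the two regimes by the same template: first extract limits via two-scale compactness from the uniform bound \eqref{lemascaled1}, then identify their structure by testing against oscillatory functions and integrating by parts, and finally construct explicit recovery sequences for the converse direction. For the forward part of both cases, the bound \eqref{lemascaled1} yields (up to a subsequence) weak two-scale convergence $\psi^{\eps_h}\drtwoscale\psi_\star$ in $L^2(\Omega\times Y)$ and componentwise weak two-scale convergence of $\eps_h\nabla_h\psi^{\eps_h}$ in $L^2(\Omega\times Y;\R^3)$. Writing $\eps_h\nabla_h=(\eps_h\nabla_{\hat x},(\eps_h/h)\partial_{x_3})$ and noting that for any $\phi\in C_c^\infty(\Omega;C^\infty(\mathcal{Y}))$ the standard identity
\[
\int_\Omega \eps_h\partial_{x_i}\psi^{\eps_h}\,\phi(x,\hat x/\eps_h)\,dx=-\int_\Omega\psi^{\eps_h}\bigl(\eps_h\partial_{x_i}\phi+\partial_{y_i}\phi\bigr)(x,\hat x/\eps_h)\,dx,\quad i=1,2,
\]
passes to the limit, one identifies the $\hat x$-components of the two-scale limit of $\eps_h\nabla_h\psi^{\eps_h}$ as $(\partial_{y_1}\psi_\star,\partial_{y_2}\psi_\star)$ in both cases, which in particular forces $\psi_\star(\cdot,\cdot)\in L^2(\Omega;H^1(\mathcal{Y}))$.

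The two cases then diverge in how the third component behaves and in whether $\psi_\star$ depends on $x_3$. In Case 1 ($h\ll\eps_h$) the identity $\partial_{x_3}\psi^{\eps_h}=(h/\eps_h)\cdot(\eps_h/h)\partial_{x_3}\psi^{\eps_h}$ together with \eqref{lemascaled1} gives $\|\partial_{x_3}\psi^{\eps_h}\|_{L^2}\leq C h/\eps_h\to 0$, so that the same integration-by-parts argument as above with $\phi=\partial_{x_3}\tilde\phi$ yields $\partial_{x_3}\psi_1=0$, i.e.\ $\psi_1\in L^2(\omega;H^1(\mathcal{Y}))$; the (a priori unconstrained) $L^2$-limit of $(\eps_h/h)\partial_{x_3}\psi^{\eps_h}$ is the desired $\psi_2$. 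In Case 2 ($\eps_h\ll h$), instead, testing $(\eps_h/h)\partial_{x_3}\psi^{\eps_h}$ against $\phi(x,\hat x/\eps_h)$ and integrating by parts in $x_3$ produces the factor $\eps_h/h\to 0$, forcing the third component of the two-scale limit of $\eps_h\nabla_h\psi^{\eps_h}$ to vanish, as claimed.

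For the converse direction, in Case 1 the natural ansatz is
\[
\psi^{\eps_h}(x):=\psi_1(\hat x,\hat x/\eps_h)+\frac{h}{\eps_h}\,\eta(x,\hat x/\eps_h),\qquad \eta(x,y):=\int_{-1/2}^{x_3}\psi_2(\hat x,t,y)\,dt,
\]
so that $(\eps_h/h)\partial_{x_3}\psi^{\eps_h}=\psi_2(x,\hat x/\eps_h)$ produces the correct third limit, while the factor $h/\eps_h\to 0$ absorbs all corrector contributions arising from $\nabla_{\hat x}$ hitting the corrector term; a direct expansion of $\eps_h\nabla_h\psi^{\eps_h}$ then shows that the horizontal components reduce to $(\nabla_y\psi_1)(\hat x,\hat x/\eps_h)$ modulo $o(1)$ in $L^2$. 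In Case 2 the corresponding construction is $\psi^{\eps_h}(x):=\psi(x,\hat x/\eps_h)$, for which $(\eps_h/h)\partial_{x_3}\psi^{\eps_h}=(\eps_h/h)\partial_{x_3}\psi(x,\hat x/\eps_h)\to 0$ strongly in $L^2$ because $\eps_h/h\to 0$, while $\eps_h\nabla_{\hat x}\psi^{\eps_h}$ recovers $(\nabla_y\psi)(x,\hat x/\eps_h)$ modulo a term of order $\eps_h$.

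The main obstacle is the admissibility of the limit pair in the recovery construction: the ansatz above only makes sense pointwise and is only easily verified to converge two-scale when $\psi_1$, $\psi_2$, and $\psi$ are admissible test functions in the Allaire sense (e.g.\ elements of $C_c^\infty(\Omega;C^\infty(\mathcal{Y}))$). Since under the hypotheses of the converse statements one only has the stated $H^1$-regularity, the proof will be completed by a diagonal/density argument: approximate $\psi_1,\psi_2$ (resp.\ $\psi$) in the norms of $L^2(\omega;H^1(\mathcal{Y}))$ and $H^1(\Omega\times\mathcal{Y})$ (resp.\ $L^2(\Omega;H^1(\mathcal{Y}))$) by smooth admissible functions, verify \eqref{lemascaled1} and \eqref{lemascaled2}--\eqref{lemascaled3} uniformly along the approximation, and extract a diagonal subsequence using the metrisability of two-scale convergence on bounded sets. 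The $H^1$-regularity required in the converse statements of both parts is precisely what guarantees that the correctors $\eta(x,\hat x/\eps_h)$ and $\psi(x,\hat x/\eps_h)$ lie in $H^1(\Omega)$, so that $\psi^{\eps_h}\in H^1(\Omega)$ as required.
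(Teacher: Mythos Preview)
Your proposal is correct and follows essentially the same route as the paper's proof: two-scale compactness for the forward direction, integration by parts to identify the in-plane components as $\nabla_y\psi_\star$ and to handle the third component (via $\|\partial_{x_3}\psi^{\eps_h}\|_{L^2}\le Ch/\eps_h\to 0$ in Case~1 and the factor $\eps_h/h\to 0$ in Case~2), and the very same recovery ansatz $\psi_1(\hat x,\hat x/\eps_h)+\tfrac{h}{\eps_h}\int_{-1/2}^{x_3}\psi_2$ in Case~1 and $\psi(x,\hat x/\eps_h)$ in Case~2. The paper states the recovery only for $C^1$ data and leaves the density step implicit, whereas you spell out the diagonal argument; otherwise the arguments coincide.
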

\begin{proof}
	To prove the first part of the lemma,
	we take $ \psi_1\in L^2(\Omega \times Y;\R^3)$ such that $\psi^{\eps_h} \drtwoscale  \psi_1$ on a subsequence. Since, by assumption,
	$$\|\partial_{x_3}  \psi^{\eps_h} \|_{L^2(\Omega)} \leq C \frac{h}{\eps_h}, $$
	we immediately conclude that $ \psi_1$
	does not depend on $x_3$. Denote the two-scale limit of 
	$h^{-1}\eps_h\partial_{x_3}\psi^{\eps_h}$ 
	by $\psi_2.$ Invoking integration by parts in a standard fashion, it is easy to check that 
	$$ \eps_h(\partial_1  \psi^{\eps_h}, \partial_2  \psi^{\epsh}) \drtwoscale (\partial_{y_1}  \psi_1, \partial_{y_2}  \psi_1). $$
	In order to prove the second claim of part 1, it suffices to consider the case $\psi_1\in C^1(\omega; C^{1} (\mathcal{Y}))$, $ \psi_2 \in C^1(\Omega; C^{1}(\mathcal{Y}))$. We can then take 
	$$  \psi^{\epsh}:= \psi_1 \biggl(x_1,x_2,\frac{x_1}{\eps_h},\frac{x_2}{\eps_h}\biggr)+\frac{h}{\epsh} \int_{-1/2}^{x_3}  \psi_2 \biggl(x_1,x_2,s,\frac{x_1}{\eps_h},\frac{x_2}{\eps_h}\biggr)\,ds. 
	$$
	This completes the proof of part 1.
	
	To prove part 2, we take $ \psi\in L^2(\Omega \times Y;\R^3)$ such that $\psi^{\eps_h} \drtwoscale  \psi$ on a subsequence. Again, using integration  by parts, we obtain 
	$$ \eps_h(\partial_1  \psi^{\eps_h}, \partial_2  \psi^{\epsh}) \drtwoscale (\partial_{y_1}  \psi, \partial_{y_2}  \psi). $$
	Next, for $b\in C_0^1(\Omega), v \in C^1(\mathcal{Y})$ we have
	$$
	\int_{\Omega} \frac{\epsh}{h} \partial_{x_3} \psi^{\epsh} b(x) v\biggl(\dfrac{\hat{x}}{\epsh}\biggr)\,dx=-\int_{\Omega} \frac{\epsh}{h} \psi^{\epsh} \partial_{x_3}b(x) v\biggl(\dfrac{\hat{x}}{\epsh}\biggr)\,dx \to 0. 
	$$
	It follows that $\partial_{x_3}\psi^{\eps_h} \drtwoscale 0$. To prove the last claim, we set
	\[
	\psi^{\epsh}:=\psi\biggl(x,\frac{x_1}{\epsh},\frac{x_2}{\epsh}\biggr)
	\]
	for $\psi \in C^1(\Omega,C^1(\mathcal{Y}))$ and pass to the limit as $h\to0.$
\end{proof}	

The definition of two-scale convergence (Definition \ref{withouttdef}) naturally extends to time dependent spaces. 
\begin{definition}
	\label{two_scale_def} 
	Let $( u^\varepsilon)_{\varepsilon>0}$ be a bounded sequence in $L^2([0,T];L^2(\Omega))$. We say that $ (u^\varepsilon)_{\eps>0}$ weakly two-scale converges to $ u \in L^2([0,T];L^2(\Omega \times Y)),$ and write 
	\[
	u^\varepsilon \drtwoscalet  u(t,x,y),
	\] 
	if
	\[
	\int\limits_0^T \int\limits_{\Omega}  u^\varepsilon (t, x)  \phi\biggl(x,\frac{\hat{x}}{\varepsilon}\biggr)\,\varphi(t)\,dxdt
	\,\xrightarrow[ ]\,
	\int\limits_0^T\int\limits_{\Omega}\int\limits_Y u(t,x, y) \phi\left(x,y\right)\varphi(t)\,dydxdt,
	\]	
	i.e., 
	\[
	\int\limits_0^T \int\limits_{\Omega}  u^\varepsilon(t,x) \phi\biggl(x,\frac{x}{\varepsilon}\biggr)\,\varphi(t)\,dxdt
	\,\xrightarrow[ ]\,
	\int\limits_0^T\int\limits_{\Omega}\int\limits_Y u(t,x, y) \phi\left(x,y\right)\varphi(t)\,dydxdt,
	\]
	for every $\phi \in C_{\rm c}^\infty(\Omega;C(\mathcal{Y}))$, $\varphi \in C(0,T)$. 
	If in addition one has 
	\begin{equation}
		u^{\eps}(t,x) \strongdrtwoscale u(t,x,y)\ \ \ \ \textrm{a.e.}\ \ t \in [0,T]
		\label{pointwiseconv}
	\end{equation}	
	and 
	\begin{equation*}
		\int_0^T \bigl\|u^{\eps}(t,\cdot)\bigr\|^2_{L^2(\Omega)}\, dt \to \int_0^T \bigl\|u(t,\cdot)\bigr\|^2_{L^2(\Omega)}\,dt, 
	\end{equation*} 
	then we will say that $(u^\eps)_{\eps>0}$ strongly two-scale converges to $u$ and write 
	\[
	u^\varepsilon \strongdrtwoscalet  u(t,x,y).
	\]
\end{definition} 	
Similarly, we define the notions of weak two-scale convergence and strong two-scale convergence of sequences in $L^p([0,T];L^2(\Omega))$, for any $1\leq p<\infty,$ denoted by $\xrightharpoonup{t,p,{\rm dr}-2}$ and $\xrightarrow{t,p,{\rm dr}-2},$ respectively. The convergence  $\xrightharpoonup{t,\infty,{\rm dr}-2}$ is understood in the weak* sense with respect to the time variable $t$, while $\xrightarrow{t,\infty,{\rm dr}-2}$ is understood in the sense of simultaneous pointwise convergence (\ref{pointwiseconv}) and boundedness of $\|u^{\eps}(t, \cdot)\|_{L^2(\Omega)},$ $t\in[0,T],$ in the space $L^{\infty}(0,T)$.
The following lemma is standard (see, e.g., the proof of \cite[Lemma 4.7]{pastukhova}. 
\begin{lemma} 
	If $(u^{\eps})_{\eps>0}$ is a bounded sequence in $L^p([0,T];L^2(\Omega))$, $p>1,$ then it has a subsequence that converges weakly two-scale in the sense of Definition \ref{two_scale_def}. 	
\end{lemma}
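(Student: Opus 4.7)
The plan is to adapt the classical diagonal-selection proof of two-scale compactness to the time-dependent $L^p$ setting, following the scheme indicated by the reference to \cite{pastukhova}. First I would fix a countable set $\mathcal{D}$ of test functions whose finite linear span is dense in $L^{p'}([0,T];L^2(\Omega\times Y))$; a convenient choice is finite products $\varphi(t)\phi(x,y)$ with $\varphi\in C^\infty_{\rm c}(0,T)$ and $\phi\in C^\infty_{\rm c}(\Omega;C(\mathcal{Y}))$ drawn from countable dense subsets of each factor. For each such product, the sequence of numbers
\[
I_\eps(\phi,\varphi):=\int_0^T\int_\Omega u^\eps(t,x)\,\phi\bigl(x,\hat{x}/\eps\bigr)\,\varphi(t)\,dx\,dt
\]
is bounded by H\"older's inequality and the uniform control of $\|\phi(\cdot,\cdot/\eps)\|_{L^2(\Omega)}$. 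A standard diagonal extraction then produces a subsequence (still labelled by $\eps$) along which $I_\eps(\phi,\varphi)$ converges to some real number $J(\phi,\varphi)$ for every element of $\mathcal{D}$.

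The key analytic step is to pass from pointwise convergence of $I_\eps$ on $\mathcal{D}$ to the identification of a limit in $L^p([0,T];L^2(\Omega\times Y))$. For any $\psi\in C^\infty_{\rm c}([0,T]\times\Omega;C(\mathcal{Y}))$ H\"older yields
\[
\bigl|I_\eps(\psi)\bigr|\leq\|u^\eps\|_{L^p([0,T];L^2(\Omega))}\left(\int_0^T\Bigl(\int_\Omega\bigl|\psi(t,x,\hat{x}/\eps)\bigr|^2dx\Bigr)^{p'/2}dt\right)^{1/p'},
\]
with exponent $p'=p/(p-1)\in(1,\infty)$ thanks to the hypothesis $p>1$. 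The classical oscillation lemma gives $\int_\Omega|\psi(t,x,\hat{x}/\eps)|^2dx\to\int_\Omega\int_Y|\psi(t,x,y)|^2dy\,dx$ uniformly in $t$ for smooth $\psi$, so by dominated convergence the right-hand side tends to $C\|\psi\|_{L^{p'}([0,T];L^2(\Omega\times Y))}$ as $\eps\to0$. Consequently the limit functional $J$ restricted to the span of $\mathcal{D}$ satisfies
\[
|J(\psi)|\leq C\,\|\psi\|_{L^{p'}([0,T];L^2(\Omega\times Y))}
\]
and therefore extends by density to a bounded linear functional on the whole reflexive space $L^{p'}([0,T];L^2(\Omega\times Y))$.

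Finally, the Riesz representation theorem produces a unique $u\in L^p([0,T];L^2(\Omega\times Y))$ such that $J(\varphi\phi)=\int_0^T\int_\Omega\int_Y u\,\phi\,\varphi\,dy\,dx\,dt$ for all $\varphi\phi$ in the chosen dense family, which is exactly the weak two-scale convergence along the extracted subsequence in the sense of Definition \ref{two_scale_def}. The main obstacle is ensuring that the $\eps\to0$ limit of $\int_\Omega|\psi(t,x,\hat{x}/\eps)|^2dx$ is sufficiently uniform in $t$ to permit passage to the limit inside the outer $t$-integral; for product test functions this is immediate, and the general case follows by approximating $\psi$ uniformly by finite sums of products. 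It is also worth noting the role of the assumption $p>1$: it guarantees reflexivity of $L^{p'}([0,T];L^2(\Omega\times Y))$ so that the bounded linear functional $J$ is represented by a genuine element of $L^p([0,T];L^2(\Omega\times Y))$ rather than only a vector-valued Radon measure, which would be the outcome in the borderline case $p=1$.
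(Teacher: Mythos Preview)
Your argument is correct and is precisely the standard diagonal-extraction plus Riesz-representation proof; the paper itself does not supply a proof but simply refers to \cite[Lemma~4.7]{pastukhova}, whose argument is the one you have reproduced. One minor remark: what you actually use is not reflexivity of $L^{p'}([0,T];L^2(\Omega\times Y))$ but the identification of its dual with $L^{p}([0,T];L^2(\Omega\times Y))$, which holds for every $p'\in[1,\infty)$ (i.e.\ every $p\in(1,\infty]$) because $L^2(\Omega\times Y)$ is separable and reflexive --- so the argument also covers the weak$^*$ case $p=\infty$ alluded to after Definition~\ref{two_scale_def}.
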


\subsection{Extension theorems} 
\label{extension_app}
\subsubsection{Asymptotic regime $h\sim \varepsilon_h$}

We use the extension theory   in order to decompose the sequence of displacements  into two functions for which we can get enough compactness for passing to the limit as $h\to0$ in the original equations for displacements. 
We use the following result, which can be found in \cite{OShY}:
\begin{theorem} 
	\label{thmextension} 
	For every $h>0$ there exists a linear extension operator $\tilde{\cdot} : H^1(\Omega_1^{\varepsilon_h};\R^3) \to H^1(\Omega;\R^3)$ such that $\tilde{ \vect u} =\vect u$ on $\Omega_1^{\varepsilon_h}$ and
	\begin{align*}
		\left\| \simgrad_h \tilde{\vect u} \right\|_{L^2(\Omega;\R^{3 \times 3})} &\leq C \left\| \simgrad_h \vect u \right\|_{L^2(\Omega_1^{\varepsilon_h};\R^{3 \times 3})}.
	\end{align*}
	
\end{theorem}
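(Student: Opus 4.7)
The plan is to construct $\tilde{\cdot}$ by a reference-cell extension together with cell-by-cell pasting. First, on the reference cell $Y \times I,$ I would build a linear extension operator $\mathcal{E}: H^1(Y_1 \times I; \R^3) \to H^1(Y \times I; \R^3)$ that (i) restricts to the identity on $Y_1 \times I,$ (ii) is $H^1$-continuous, and (iii) fixes the six-dimensional space $\mathcal{R}$ of infinitesimal rigid motions of $\R^3$ (affine fields $y \mapsto \vect A y + \vect b$ with $\vect A$ skew-symmetric). One takes $\mathcal{E}\vect U := E(\vect U - \Pi\vect U) + \Pi\vect U,$ where $E$ is the classical extension of \cite[Lemma 4.1]{OShY} applied componentwise in the $\hat{y}$-variable (slice by slice in $x_3$) and $\Pi$ is the $L^2$-orthogonal projection of $H^1(Y_1 \times I; \R^3)$ onto $\mathcal{R};$ properties (i)--(iii) are then immediate. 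Because $\sym\nabla \Pi\vect U = 0,$ combining the $H^1$-continuity of $E$ with the second Korn inequality \eqref{Kornwithantisym} on $Y_1 \times I$ gives
\[
\|\sym\nabla \mathcal{E}\vect U\|_{L^2(Y\times I)}  \leq  \|\nabla E(\vect U - \Pi\vect U)\|_{L^2(Y\times I)} \leq C\|\nabla(\vect U - \Pi\vect U)\|_{L^2(Y_1 \times I)}  \leq  C\|\sym\nabla \vect U\|_{L^2(Y_1 \times I)},
\]
with $C$ depending only on $Y_0.$

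Second, I would assemble the global extension on $\Omega^h$ (in physical coordinates $x^h,$ prior to the rescaling $x_3 = x_3^h/h$) by pasting rescaled copies of $\mathcal{E}.$ On each interior cell $\varepsilon_h(Y+z) \times (hI) \subset \Omega^h,$ transport the input via the anisotropic dilation $\hat{y} = (\hat{x}^h - \varepsilon_h z)/\varepsilon_h,$ $x_3 = x_3^h/h,$ apply $\mathcal{E},$ and pull back. Because $\mathcal{E}\vect U$ coincides with $\vect U$ on $\partial Y \times I \subset \overline{Y_1 \times I},$ the cell-wise extensions share traces on common cell interfaces and thus glue into an $H^1$-function $\tilde{\vect U}$ on the union of interior cells; in the boundary strip, which carries no holes by the definition of $\Omega_0^{h,\varepsilon_h},$ one simply sets $\tilde{\vect U} := \vect U.$ Summing the cell-wise estimates from Step~1 and translating back to the rescaled variables (which converts $\sym\nabla$ into $\sym\nabla_h$) produces the claimed bound.

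The main obstacle lies in tracking the aspect-ratio dependence of the Korn constant under the anisotropic rescaling in Step~2: pulling the reference inequality from $Y_1 \times I$ back to $\varepsilon_h(Y_1+z) \times (hI)$ introduces a factor governed by the ratio $h/\varepsilon_h,$ which measures how far the rescaled cell is from being isotropic. Since the present theorem sits within the regime $h \sim \varepsilon_h,$ this ratio is bounded and the constant remains uniform in $h;$ in the degenerate regimes $h \ll \varepsilon_h$ or $\varepsilon_h \ll h$ the Korn constant degenerates and an alternative, decomposition-based construction such as Theorem \ref{thmextreg1} is needed, consistent with the placement of Theorem \ref{thmextension} in the $h\sim\varepsilon_h$ subsection.
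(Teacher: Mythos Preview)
Your proposal is correct and follows essentially the same route as the paper: both work cell-by-cell on the physical domain $\Omega^h$, anisotropically rescale each cell to the reference $Y\times I$, apply a reference-cell extension with a symmetrised-gradient bound (the paper cites \cite[Lemma~4.1]{OShY} directly, while you reconstruct it via rigid-motion subtraction plus Korn), and then paste and rescale to $\Omega$. Your explicit discussion of the aspect-ratio dependence under the anisotropic rescaling is precisely the content of the remark following the theorem, which confirms that the constant depends only on bounds for $h/\varepsilon_h$.
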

\begin{proof}
	The way in which we introduce the extensions here is to look at every single cell inside the  thin domain $\Omega^h.$ The extension of the function $\vect u$ (defined on $\Omega^h$) is constructed as follows. First we ``inflate" the cell (with the scaling factors $\varepsilon_h^{-1}$ and $h^{-1}$ in the in-plane and out-of plane directions, respectively) and translate it to the reference cell $Y \times I$. On this reference cell we apply a linear extension operator $ E:H^1(Y_1 \times I)\to H^1(Y \times I)$ (see \cite[Lemma 4.1]{OShY} to extend $\vect u$ to the function $E\vect u=:\Tilde{\vect u}$. By passing to the original coordinates and concatenating the extensions we construct functions $\Tilde{\vect u} \in H^1(\Omega^h;\R^3)$ which is the extension of $\vect u$ from $\Omega^{h,\epsh}_1$ to $\Omega^h$ and satisfy the estimate
	\[
	\left\| \sym\nabla \Tilde{\vect u} \right\|_{L^2(\Omega^h;\R^{3 \times 3})} \leq C \left\| \sym\nabla\vect u \right\|_{L^2(\Omega^{h,\epsh}_1;\R^{3 \times 3})},
	\]
	where the constant $C$ does not depend on the thickness $h$. We finish the proof by rescaling the estimates back to $\Omega$.
\end{proof}
\begin{remark} 
 It is not difficult to see that if we have a sequence $\{(h,\epsh)\}$ such that $0<\alpha<h/\epsh<\beta<\infty$, where $\alpha,\beta$ do not depend of $h$, then the constant in Theorem \ref{thmextension} depends on $\alpha,\beta$ only. 	
\end{remark} 	
\begin{remark} \label{rucak30} 
	By inspecting the construction of the extension operator in \cite[Lemma 4.1]{OShY}, it can be easily seen that if $\vect u \in L^{2, \rm bend}(\Omega;\R^3)$ or $\vect u \in L^{2, \rm memb}(\Omega;\R^3)$, then the same inclusion holds for $\tilde{\vect u}$.  	
\end{remark} 	
Using the result above we conclude the following lemma.
\begin{lemma}
	\label{poincarekornonholes}
	For all $\mathring{\vect u}\in H^1(\Omega;\R^3)$ such that $\mathring{\vect u} |_{\Omega_1^{\varepsilon_h}}=0,$
	the following Poincar\'{e} and Korn inequalities hold:
	\begin{align}
		\label{rucak37} 
		\left\| \mathring{\vect u} \right\|_{L^2} &\leq C\epsh \left\| \nabla_h \mathring{\vect u} \right\|_{L^2}, \\[0.3em]
		  \label{rucak38}   
		\left\| \nabla_h \mathring{\vect u} \right\|_{L^2} &\leq  C \left\| \sym\nabla_h \mathring{\vect u} \right\|_{L^2},
	\end{align}
where the constant $C$ is independent of $h.$ 
\end{lemma}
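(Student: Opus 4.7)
The proof is a cell-wise rescaling argument that reduces both inequalities to classical Poincaré and Korn estimates on the fixed reference cell $Y_0 \times I$. The key observation is that since $\mathring{\vect u}$ vanishes on $\Omega_1^{\varepsilon_h}$, on each periodicity cell $\epsh(Y_0+z) \times I$ comprising $\Omega_0^{\varepsilon_h}$ its trace on the lateral boundary $\epsh \partial Y_0 \times I$ is zero, and $\mathring{\vect u}$ restricted to $\Omega_0^{\epsh}$ thus splits into independent contributions, one per cell.

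For \eqref{rucak37} I would apply the classical Poincaré inequality for $H^1_0(Y_0)$ slice-wise in $x_3$ to the rescaled function $\hat{\vect u}(y, x_3) := \mathring{\vect u}(\epsh z + \epsh y, x_3)$, which belongs to $H^1_0(Y_0)$ for a.e.\ $x_3 \in I$. Undoing the in-plane rescaling on $\epsh(Y_0+z)$ produces the factor $\epsh$, integration in $x_3$ gives the cell estimate, and summation over cells (together with the trivial bound $|\nabla_{\hat x} \mathring{\vect u}| \leq |\nabla_h \mathring{\vect u}|$) delivers the global inequality.

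For \eqref{rucak38} the same change of variables yields $\nabla_h \mathring{\vect u} = \epsh^{-1} \nabla_{h/\epsh} \hat{\vect u}$ (and likewise for the symmetrisation), so the cell-wise version reduces to a Korn inequality on the fixed domain $Y_0 \times I$ with the scaled gradient $\nabla_{h/\epsh}$, for functions in $H^1(Y_0 \times I;\R^3)$ vanishing on $\partial Y_0 \times I$. A further rescaling $x_3 \mapsto (\epsh/h)x_3$ converts $\nabla_{h/\epsh}$ into the ordinary gradient on the cylinder $Y_0 \times ((h/\epsh) I)$, and it then suffices to invoke a standard Korn inequality of the form \eqref{Kornwithbc} with Dirichlet condition on the lateral boundary.

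The main technical point, and the only one requiring care, is that the resulting Korn constant does not degenerate as $h \to 0$. This is where the ambient assumption $h \sim \epsh$ of the present subsection enters: by the remark following Theorem \ref{thmextension} one works under the restriction $\alpha \leq h/\epsh \leq \beta$ with $0 < \alpha < \beta < \infty$, so the length $h/\epsh$ of the rescaled cylinder stays in a compact subset of $(0,\infty)$. Continuity of the Korn constant in the height of the cylinder (for a fixed cross-section $Y_0$ with Dirichlet data on the lateral boundary) then yields a bound depending only on $Y_0$, $\alpha$, and $\beta$. Summing the cell-wise estimates and using $\mathring{\vect u} \equiv 0$ on $\Omega_1^{\varepsilon_h}$ completes the proof. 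Without the boundedness of $h/\epsh$, obtaining uniform control on cylinders of degenerating or unbounded height would require a more delicate partition-and-gluing argument, but that is not needed here.
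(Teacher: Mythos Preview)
Your proof is correct and takes essentially the same approach as the paper: cell-wise rescaling to a fixed reference domain, application of the standard Poincar\'e and Korn inequalities there (the latter via \eqref{Kornwithbc} with Dirichlet data on the lateral boundary), and summation over cells. The paper frames this by first passing to the physical domain $\Omega^h$ and rescaling each cell $\epsh(Y_0+z)\times hI$ to $Y\times I$, whereas you work directly on $\Omega$, use a slice-wise two-dimensional Poincar\'e, and make the dependence of the Korn constant on the bounded ratio $h/\epsh$ explicit; these are cosmetic differences.
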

\begin{proof}
	We again look the problem on the physical domain $\Omega^h$. The function $\vect u$ (scaled properly), when restricted to a single cell within the domain and then rescaled and translated to $Y\times I,$ satisfies the Poincare inequality, as well as Korn's inequality \eqref{Kornwithbc}  with a constant determined by the domain $Y\times I$. Scaling back to the physical domain $\Omega^h$ and summing up the norms over all cells, we obtain a version of the estimates \eqref{rucak37} and \eqref{rucak38} for $\Omega^h$. Finally, rescaling to $\Omega,$ we obtain \eqref{rucak37} and \eqref{rucak38}. 
\end{proof}
\begin{remark} \label{remivan102}  
	Using continuity of embeddings into spaces on the boundary, it follows immediately that
	$$ \| \mathring{\vect u}\|_{L^2(\Gamma \cap \overline{\Omega}_0)} \leq C\epsh\bigl\|\sym \nabla_h \mathring{\vect u}\bigr\|_{L^2(\Omega;\R^{3 \times 3})},	$$
	where $\Gamma:=\omega \times \{-{1/2},{1/2}\}$. 
\end{remark}	
\subsubsection{Asymptotic regime $h \ll \varepsilon_h$}
In this section we assume that $h \ll \varepsilon_h$. 
We will assume that $Y_0 \subset Y$ does not touch the boundary of $Y$ and is of class $C^{1,1}$.
First, we provide an extension property, in the spirit of Theorem \ref{thmextension}. We denote by $\Omega_{\alpha}^\epsh$, $\alpha=1,2$, the same sets as in Section \ref{notation}. 
We have the following theorem.
\begin{theorem}\label{thmextreg1}
 There exists a linear extension  $E^{\varepsilon_h}:H^1(\Omega_1^{\varepsilon_h};\R^3) \to H^1(\Omega;\R^3)$ such that for every $\vect u\in H^1(\Omega, \R^3) $, $E^{\varepsilon_h}\vect u= \vect u$ on $\Omega_1^{\varepsilon_h}$ and 
	\begin{equation} \label{rucak36} 
		\bigl\|\sym \nabla_h E^{\varepsilon_h}\vect u \bigr\|_{L^2(\Omega;\R^{3 \times 3})} \leq  C\bigl\|\sym\nabla_h \vect u\bigr\|_{L^2(\Omega_1^{\epsh},\mathbb{R}^{3 \times 3})}.
	\end{equation} 
	for some $C>0$ independent of $h$.
	Moreover, there exist $\mathring{v} \in H^2(\omega)$ and $\mathring {\vect \psi} \in H^1(\Omega;\R^3)$ such that $\mathring{v}=\mathring{\vect \psi}=0$ on $ \Omega_1^{\varepsilon_h}$ and 
	\begin{equation}\label{eqivan5} 
		\mathring{\vect u}:=  \vect u-E^{\varepsilon_h} \vect u=\left(\begin{array}{c} -\epsh x_3  \partial_1 \mathring{v}  \\[0.25em] 
			-\epsh x_3  \partial_2 \mathring{v} \\[0.25em]
			h^{-1}\epsh \mathring{v} \end{array} \right)+\mathring{\mat \psi},
	\end{equation}
	with the estimate 
	\begin{equation}\label{est1000}
		\epsh^{-2}\|\mathring{v}\|^2_{L^2}+\|\nabla \mathring{v}\|^2_{L^2}+ \varepsilon_h^2
		\bigl\| \nabla^2 \mathring{v}\bigr\|^2_{L^2}+\epsh^{-2}
		\bigl\| \mathring{\vect \psi}\bigr\|^2_{L^2}+ \bigl\|\nabla_h \mathring{\mat \psi}\bigr\|^2_{L^2} \leq C\bigl\|\sym \nabla_h \mathring{\vect u}\bigr\|^2_{L^2},    
	\end{equation}
	where $C$ depends on $Y_0$ only. 
\end{theorem}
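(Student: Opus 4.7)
The plan is to work cell by cell on a reference configuration. For each $z\in\mathbb Z^2$ with $\varepsilon_h(Y+z)\subset\omega$, the rescaling $y=(\hat x-\varepsilon_h z)/\varepsilon_h$ maps the cell $\varepsilon_h(Y+z)\times I$ onto $Y\times I$ and converts $\sym\nabla_h$ into $\varepsilon_h^{-1}\sym\nabla_{\tau_h}$ with $\tau_h:=h/\varepsilon_h\to 0$. The core task is to construct, uniformly in $\tau_h$, a linear extension $\hat E_{\tau_h}:H^1(Y_1\times I;\mathbb R^3)\to H^1(Y\times I;\mathbb R^3)$ that is the identity on $Y_1\times I$ and satisfies $\|\sym\nabla_{\tau_h}\hat E_{\tau_h}\vect u\|_{L^2(Y\times I)}\le C\|\sym\nabla_{\tau_h}\vect u\|_{L^2(Y_1\times I)}$; pulling this back and concatenating across cells is unambiguous because $\overline{Y_0}$ sits strictly in the interior of $Y$, so the inclusions do not touch cell boundaries, and summing the local bounds produces \eqref{rucak36}.

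To construct $\hat E_{\tau_h}$, I would first apply Griso's decomposition (Theorem \ref{aux:thm.griso}) on $Y_1\times I$ with thickness $\tau_h$, obtaining $\vect u(y,x_3)=\hat{\vect u}(y)+\vect r(y)\wedge x_3\vect e_3+\bar{\vect u}(y,x_3)$ with all three pieces controlled by $\|\sym\nabla_{\tau_h}\vect u\|_{L^2(Y_1\times I)}$ via \eqref{prvaKorn}. The two-dimensional pieces $\hat{\vect u}$ and $\vect r$ are extended from $Y_1$ to $Y$ by the standard operator of \cite[Lemma 4.1]{OShY}. The residual $\bar{\vect u}$ is extended from $Y_1\times I$ to $Y\times I$ by applying the same two-dimensional extension slice by slice in $x_3$; the key point is that this slice-by-slice procedure commutes with $\partial_{x_3}$, so $\|\partial_{x_3}E\bar{\vect u}\|_{L^2(Y\times I)}\le C\|\partial_{x_3}\bar{\vect u}\|_{L^2(Y_1\times I)}$, while the 2D derivatives satisfy $\|\nabla_y E\bar{\vect u}\|_{L^2(Y\times I)}\le C\|\nabla_y\bar{\vect u}\|_{L^2(Y_1\times I)}$. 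Combining these gives $\|\nabla_{\tau_h}E\bar{\vect u}\|_{L^2(Y\times I)}\le C\|\nabla_{\tau_h}\bar{\vect u}\|_{L^2(Y_1\times I)}$, which by \eqref{prvaKorn} is bounded by $C\|\sym\nabla_{\tau_h}\vect u\|_{L^2(Y_1\times I)}$. Reassembling the three extended pieces defines $\hat E_{\tau_h}\vect u$ and yields the claimed bound.

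Once $E^{\varepsilon_h}$ is in hand, the residual $\mathring{\vect u}:=\vect u-E^{\varepsilon_h}\vect u$ vanishes on $\Omega_1^{\varepsilon_h}$, so on each inclusion its rescaling $\tilde{\mathring{\vect u}}\in H^1(Y_0\times I;\mathbb R^3)$ vanishes on $\partial Y_0\times I$. Because $Y_0\in C^{1,1}$, Lemma \ref{keydecompose}(2) applied with thickness $\tau_h$ produces $\tilde v\in H^2(Y_0)$ with $\tilde v=\nabla\tilde v=0$ on $\partial Y_0$, and $\tilde{\vect\psi}\in H^1(Y_0\times I;\mathbb R^3)$ vanishing on $\partial Y_0\times I$, such that $\tilde{\mathring{\vect u}}=(-x_3\partial_{y_1}\tilde v,-x_3\partial_{y_2}\tilde v,\tau_h^{-1}\tilde v)^\top+\tilde{\vect\psi}$, together with $\|\tilde v\|_{H^2(Y_0)}^2+\|\tilde{\vect\psi}\|_{L^2}^2+\|\nabla_{\tau_h}\tilde{\vect\psi}\|_{L^2}^2\le C\|\sym\nabla_{\tau_h}\tilde{\mathring{\vect u}}\|_{L^2}^2$. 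Setting $\mathring v(\hat x):=\tilde v((\hat x-\varepsilon_h z)/\varepsilon_h)$ and $\mathring{\vect\psi}(\hat x,x_3):=\tilde{\vect\psi}((\hat x-\varepsilon_h z)/\varepsilon_h,x_3)$ on each inclusion, extended by zero outside (permissible because of the vanishing traces), the chain-rule identities $\partial_{\hat x_\alpha}\mathring v=\varepsilon_h^{-1}\partial_{y_\alpha}\tilde v$ and $\tau_h^{-1}=\varepsilon_h h^{-1}$ convert the reference-cell decomposition into exactly the form \eqref{eqivan5}; the change of variables in the norms, in which each horizontal derivative produces a factor of $\varepsilon_h^{-1}$ while the Jacobian contributes $\varepsilon_h^2$, yields the $\varepsilon_h$-weights on the left-hand side of \eqref{est1000} upon summing over cells.

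The main obstacle is the uniform-in-$\tau_h$ construction of $\hat E_{\tau_h}$. A naive three-dimensional application of \cite[Lemma 4.1]{OShY} to the whole of $\vect u$ gives an extension bound only in the isotropic symmetric gradient; translating this to the anisotropic $\sym\nabla_{\tau_h}$ would cost a factor of $\tau_h^{-1}$. This is precisely why the Griso splitting is indispensable here: it isolates the affine-in-$x_3$ part, which is extended by a purely two-dimensional operator that does not see the vertical direction, from the lower-order residual $\bar{\vect u}$, which through \eqref{prvaKorn} is already controlled in the full anisotropic norm and can therefore be extended slice by slice in $x_3$ without any $\tau_h^{-1}$ loss.
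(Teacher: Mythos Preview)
Your treatment of the residual $\mathring{\vect u}$ via Lemma \ref{keydecompose}(2) is correct and coincides with the paper's argument. The gap is in your construction of the extension $\hat E_{\tau_h}$.

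After the Griso decomposition, the quantity controlled by \eqref{prvaKorn} is $\|\sym\nabla_{\tau_h}(\hat{\vect u}+\vect r\wedge x_3\vect e_3)\|_{L^2}$, which by \eqref{teichmann} equals
\[
\|\sym\nabla_y(\hat u_1,\hat u_2)^\top\|^2+\tfrac{1}{12}\|\sym\nabla_y(r_2,-r_1)^\top\|^2+\tau_h^{-2}\|\partial_{y_1}(\tau_h\hat u_3)+r_2\|^2+\tau_h^{-2}\|\partial_{y_2}(\tau_h\hat u_3)-r_1\|^2.
\]
The first two terms are indeed handled by the OShY extension. But the last two terms impose a \emph{coupling} between $\hat u_3$ and $\vect r$: only the combination $\nabla_y(\tau_h\hat u_3)+(r_2,-r_1)^\top$ is small, while $\nabla_y\hat u_3$ and $\vect r$ are each of order $\tau_h^{-1}$ individually. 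If you extend $\hat u_3$ and $\vect r$ by separate $2$D operators, there is no mechanism forcing $\nabla_y(\tau_h E\hat u_3)+(Er_2,-Er_1)^\top$ to remain $O(\tau_h)$ on $Y_0$; generically it is $O(1)$, so after multiplication by $\tau_h^{-2}$ the reassembled $\|\sym\nabla_{\tau_h}\hat E_{\tau_h}\vect u\|$ blows up like $\tau_h^{-1}$. This is exactly the $\tau_h^{-1}$ loss you correctly anticipated for a naive $3$D extension, reappearing in disguise.

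The paper avoids this by replacing Griso's raw decomposition with Lemma \ref{keydecompose}(1), applied on a $C^{1,1}$ annulus $\tilde Y_0\setminus\overline{Y}_0$. That lemma first \emph{regularises} $\vect r$ into a gradient: it solves a Neumann problem to produce a single scalar $v\in H^2$ with $\nabla v\approx(-r_2,r_1)^\top$, and absorbs the mismatch into the residual $\tilde{\vect\psi}$ (which now carries a full $\nabla_{\tau_h}$ bound). The point is that the Kirchhoff--Love block $(-x_3\partial_1 v,\,-x_3\partial_2 v,\,\tau_h^{-1}v)^\top$ has $\sym\nabla_{\tau_h}$ equal to $-x_3\,\iota(\nabla^2 v)$, with \emph{no} $\tau_h$-dependence whatsoever. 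Hence a single $H^2$-bounded extension $E_1v$ (by reflection) controls this piece uniformly, while $\tilde{\vect\psi}$ is extended slice-by-slice exactly as you propose for $\bar{\vect u}$. The $C^{1,1}$ hypothesis on $Y_0$ is used precisely for the elliptic regularity step producing $v\in H^2$.
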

\begin{proof}
	We consider a domain $\tilde Y_0 \times (h/\varepsilon_h)I$, such that $\tilde Y_0$ has $C^{1,1}$ boundary, $\overline{Y}_0 \subset \tilde Y_0$, and $\tilde{Y}_0 \backslash \overline{Y}_0$ is connected.
		 For the extension $\vect \psi \in H^1((\tilde Y_0 \backslash \overline{Y}_0) \times I;\R^3),$ we apply the decomposition of Part 1 of Lemma \ref{keydecompose} to obtain
	\begin{equation} \label{moddek111} 
		\vect \psi=\vect a+\vect B (x_1,x_2,\varepsilon_h^{-1}hx_3)^\top+\left(\begin{array}{c} -x_3 \partial_1 v  \\[0.25em] 
			-x_3 \partial_2 v \\[0.25em] h^{-1}\varepsilon_h v \end{array} \right)+\tilde{\mat \psi},
	\end{equation}
	where $\vect a\in \R^3$, $\vect B\in \R^{3\times 3}_{\skeww}$,  $v \in H^2(\tilde Y_0 \backslash \overline{Y}_0)$, $\tilde{\mat\psi} \in H^1((\tilde Y_0 \backslash \overline{Y}_0) \times I;\R^3),$ and the following estimate holds:
	\begin{equation}\label{estmoddek111}
		\| v\|^2_{H^2(\tilde Y_0 \backslash \overline{Y}_0)}+\|\tilde{\vect \psi}\|_{L^2((\tilde Y_0 \backslash \overline{Y}_0) \times I;\R^3)}+ \|\nabla_{h/\varepsilon_h} \tilde{\mat \psi}\|^2_{L^2((\tilde Y_0 \backslash \overline{Y}_0) \times I;\R^{3\times 3})} \leq C \| \sym \nabla_{h/\varepsilon_h} \vect \psi\|^2_{L^2((\tilde Y_0 \backslash \overline{Y}_0) \times I;\R^{3\times 3})},      
	\end{equation}
	where $C$ depends on $Y_0$ only.
	It is not difficult to construct extension operators 
	\[
	E_1:H^2(\tilde Y_0 \backslash \overline{Y}_0) \to H^2(\tilde Y_0),\qquad E_2:H^1\bigl((\tilde Y_0 \backslash \overline{Y}_0) \times I\bigr) \to H^1(\tilde Y_0  \times I)
	\]
	such that
	$E_1 \varphi=\varphi $ on $\tilde Y_0 \backslash Y_0$ and $E_2  w= w $ on $(\tilde Y_0 \backslash Y_0) \times I$ and
	\begin{equation*}
		\|E_1\varphi\|_{L^2(\tilde Y_0)} \leq C \|\varphi\|_{L^2(\tilde Y_0 \backslash \overline{Y}_0)}, \quad  \|E_1 \varphi\|_{H^2(\tilde Y_0)}\leq C\|\varphi\|_{H^2(\tilde Y_0 \backslash \overline{Y}_0)}\qquad \forall \varphi \in H^2(\tilde Y_0 \backslash \overline{Y}_0),
	\end{equation*}
	\begin{equation}
		\label{svojstvoeks2}
		\begin{aligned}
			&\|E_2  w\|_{L^2(\tilde Y_0) \times I)} \leq C\| w\|_{L^2((\tilde Y_0 \backslash \overline{Y}_0)) \times I)},\\[0.35em] 
			&\hspace{80pt}\| \nabla_{h/\varepsilon_h} E_2  w\|_{L^2(\tilde Y_0) \times I;\R^{3})} \leq C \| \nabla_{h/\varepsilon_h}   w\|_{L^2((\tilde Y_0 \backslash \overline{Y}_0)) \times I;\R^{3})}\qquad
			\forall  w \in H^1(\tilde Y_0 \backslash \overline{Y}_0 \times I), 
		\end{aligned}
	\end{equation}
	for some $C>0$. Indeed, $E_1$ is constructed by using the standard reflection principle. Also using the reflection principle, we first construct  $ \tilde E_2: H^1(\tilde Y_0 \backslash \overline{Y}_0) \to H^1(\tilde Y_0)$ such that 
	\begin{eqnarray*}
		& &  \| \tilde  E_2 \varphi\|_{L^2(\tilde Y_0)} \leq C \| \varphi\|_{L^2(\tilde Y_0 \backslash \overline{Y}_0)},  \quad \ \| \tilde  E_2\varphi\|_{H^1(\tilde Y_0)} \leq C \| \varphi\|_{H^1(\tilde Y_0 \backslash \overline{Y}_0)}\qquad
		\forall \varphi \in H^1(\tilde Y_0 \backslash \overline{Y}_0),
	\end{eqnarray*}
	for some $C>0$.
	On the basis of $\tilde E_2,$ we construct $E_2$ as follows. For $w \in  C^2(\overline{\tilde Y_0 \backslash \overline{Y}_0 \times I})$ we set $E_2 w(\cdot,x_3)=\tilde E_2 w(\cdot, x_3)$ for all $x_3 \in I.$ It is easy to check that for  $w \in C^2(\overline{\tilde Y_0 \backslash \overline{Y}_0 \times I})$ one has
	$\partial _{x_3} E_2 w=E_2 (\partial_{x_3} w)$, from which we infer the property  \eqref{svojstvoeks2} for $w \in C^2(\overline{\tilde Y_0 \backslash \overline{Y}_0 \times I})$. We then extend $E_2$ to the whole of $H^1(\tilde Y_0 \backslash \overline{Y}_0 \times I)$ by density, which concludes the construction. 
	
	For $\vect \psi \in H^1(\tilde Y_0 \backslash \overline{Y}_0 \times I;\R^3),$ using the expression \eqref{moddek111},
	  we define $\tilde E^\epsh \vect \psi \in H^1(\tilde Y_0\times I;\R^3)$ as follows:
	\begin{equation} \label{missaglic10} 
		\tilde E^\epsh \vect \psi =\vect a+\vect B (x_1,x_2,\varepsilon_h^{-1}hx_3)^\top+\left(\begin{array}{c} -x_3 \partial_1 E_1 v  \\[0.25em]
			-x_3 \partial_2 E_1 v \\[0.25em] 
			\varepsilon_h h^{-1} E_1 v \end{array} \right)+E_2\tilde{\mat \psi}.
	\end{equation} 
Recalling \eqref{estmoddek111}, we obtain the estimate
	$$
	\|E_1 v\|^2_{H^2(\tilde{Y}_0)}+\|E_2\tilde{\vect \psi}\|^2_{L^2(\tilde{Y}_0;\R^3)}+ \|\nabla_{h/\varepsilon_h} E_2\tilde{\mat \psi}\|^2_{L^2(\tilde{Y}_0;\R^{3\times 3})} \leq C \| \sym \nabla_{h/\varepsilon_h} \vect \psi\|^2_{L^2((\tilde Y_0 \backslash \overline{Y}_0) \times I;\R^{3\times 3})}. $$
	We construct $E^{\varepsilon_h} \vect u$ by considering $z \in \Z^2$ such that $\varepsilon_h (Y+z) \subset \omega$ and  applying the extension $\tilde E^\epsh$ to the function $x\mapsto {\vect u} (\epsh \hat x+\epsh z,x_3)$. In this way we obtain
	\begin{equation}
		\label{rucak40} 
		E^{\varepsilon_h} \vect u\big|_{\varepsilon_h(\tilde Y_0+z)\times I}= \vect a^\epsh_z +\vect B^\epsh_z\left(x_1, x_2, hx_3\right)^\top
		+\left(\begin{array}{c} - x_3 \partial_1  v_z  \\[0.25em] - x_3 \partial_2  v_z \\[0.25em] h^{-1}v_z \end{array} \right)+\mat{ \psi}_z,
	\end{equation}
	with the estimate
	\begin{align*} 
		\varepsilon_h^{-4}\|v_z\|^2_{L^2(\epsh(\tilde Y_0+z))}&+\varepsilon_h^{-2}\|\nabla v_z\|^2_{L^2(\epsh(\tilde Y_0 +z);\R^2)}+
		\bigl\|\nabla^2 v_z\bigr\|^2_{L^2(\epsh(\tilde Y_0+z);\R^{2\times 2})}\\[0.4em]
		& 
		+{\epsh}^{-2}\|{\vect \psi}_z\|^2_{L^2(\epsh(\tilde Y_0+z) \times I;\R^3)}+\| \nabla_h{\vect \psi}_z\|^2_{L^2(\epsh(\tilde Y_0+z) \times I;\R^{3\times 3})} \leq C\|\sym \nabla_h \vect u \|^2_{L^2(\epsh(\tilde Y_0\backslash \bar{Y}_0+z) \times I;\R^{3 \times 3})}.  
	\end{align*}
	This concludes the proof of \eqref{rucak36}.
	To prove \eqref{eqivan5}, for each $z\in \Z^2$ consider the deformation
	\[
	\vect u-E^{\epsh} \vect u|_{\varepsilon(\tilde Y_0+z)\times I}
	\] 
	and apply the above rescaling as well as the second claim of Lemma \ref{keydecompose}. The linearity of $E^{\epsh}$ follows from the decompositions in Theorem \ref{aux:thm.griso} and Lemma \ref{keydecompose}. 
\end{proof}
\begin{remark} Notice that as a consequence of 
	Corollary \ref{kornthincor} and the estimate \eqref{rucak36}, if $\vect u=0$ on $\gamma_{\rm D} \times I$, where $\gamma_{\rm D} \subset \partial \omega$ is the set of positive measure, then 
	\begin{eqnarray*}
		\left\|\pi_{1/h}(E^{\epsh} \vect u)\right\|_{H^1(\Omega;\R^3)}^2 &\leq C\Bigl(\|\pi_{1/h}\vect u)\|^2_{L^2(\Gamma_D;\R^3)}+h^{-2}\|\sym\nabla_h \vect u \|_{L^2(\Omega_1^{\epsh},\mathbb{R}^{3 \times 3})}^2\Bigr),
	\end{eqnarray*}
where $C$ is obtained by combining $C_{\rm T}^\gamma$ in (\ref{CTgamma}) and the constant in the extension inequality (\ref{rucak36}). 
\end{remark}
\begin{remark} \label{remivan20} 
	We can assume, without loss of generality, that in the above proof $E_1$ maps affine expressions $a_1x_1+a_2x_2+a_3$, for $a_1,a_2,a_3 \in \R,$ to themselves. (Indeed, as in the proof of Proposition \ref{propvecer},  on the orthogonal complement of affine maps in $L^2$ the extension is constructed by reflection.)  
	From \eqref{rucak40}, recalling (\ref{moddek111}) and (\ref{missaglic10}), we also have the estimate  
	$$ 
	\|E^{\varepsilon_h} \vect u \|_{L^2(\Omega;\R^3)} \leq  C \left(\|\vect u \|_{L^2(\Omega_1^{\epsh};\R^3)}+\|\sym \nabla_h \vect u\|_{L^2(\Omega_1^{\epsh};\R^{3\times 3} )}\right),  
	$$
	for some $C>0$. From  \eqref{eqivan5}, \eqref{missaglic10} we then additionally obtain that
	$$\epsh h^{-1}\| \mathring{v} \|_{L^2(\omega)} \leq C \left(\|\vect u \|_{L^2(\Omega;\R^3)}+\|\sym \nabla_h \vect u\|_{L^2(\Omega_1^{\epsh};\R^{3\times 3} )}+\epsh\|\sym \nabla_h \vect u\|_{L^2(\Omega_0^{\epsh};\R^{3\times 3} )}\right). $$
\end{remark} 
\begin{remark} \label{remivan11} 
	Following Remark \ref{remivan} and Remark \ref{remivan10}  we infer that if $ u_{\alpha}$ is even in the variable $x_3$ for $\alpha=1,2$ and $u_3$ is odd in the variable $x_3,$ the extension $E^{\epsh} \vect u$ has the same properties.
	Noting that by Lemma \ref{keydecompose} and Theorem \ref{aux:thm.griso}
	 one has $a_{z,3}^\epsh= B_{z,13}^\epsh= B_{z,23}^\epsh=v_z^\epsh=\mathring{v}^\epsh= 0$ in \eqref{rucak40}, we also infer that 
		\begin{align*} 
			\bigl\| (E^{\epsh} \vect u)_3\bigr\|_{L^2(\Omega)} &\leq Ch \|\sym \nabla_h \vect u\|_{L^2(\Omega_1^{\epsh};\R^{3\times 3})}, \\[0.3em]
			\|\vect u_3 \|_{L^2(\Omega)} &\leq  \| \mathring{\vect \psi}_3\|_{L^2(\Omega;\R^3)} \leq C\epsh^{-1}h\epsh \|\sym  \nabla_h \vect u\|_{L^2(\Omega;\R^{3\times 3} )}=Ch\|\sym  \nabla_h \vect u\|_{L^2(\Omega;\R^{3\times 3} )}, 
		\end{align*}
	for some $C>0$.  
\end{remark}
\subsubsection{Asymptotic regime $\varepsilon_h \ll h$}
In this regime the extension theorem is analogous to Theorem \ref{thmextension}. 
\begin{theorem} \label{thmivann111} 
	For every $\varepsilon_h>0$ there exists a linear extension operator $u\mapsto\tilde{u}: H^1(\Omega_1^{\varepsilon_h};\R^3) \to H^1(\Omega;\R^3)$ such that $\tilde{\vect u} =\vect  u$ on $\Omega_1^{\varepsilon_h}$ and
	\begin{align*}
		\left\| \simgrad_h \tilde{\vect u } \right\|_{L^2(\Omega;\R^{3\times 3})} &\leq C \left\| \simgrad_h  \vect u \right\|_{L^2(\Omega_1^{\varepsilon_h};\R^{3\times 3})}.
	\end{align*}
	
\end{theorem}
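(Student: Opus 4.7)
The argument mirrors the proof of Theorem~\ref{thmextension}: I would work cell-by-cell, constructing $\tilde{\vect u}$ on each periodicity cell $\varepsilon_h(Y+z)\times I\subset \omega\times I$ by means of a reference extension pulled back via rescaling, and then concatenating the local extensions. The only new feature is that in the regime $\varepsilon_h\ll h$ the aspect ratio of a cell in the physical plate $\Omega^h$ is $\varepsilon_h/h\to 0,$ so one must exhibit a reference extension whose symmetric-gradient-preservation constant does not blow up as the vertical extent of the cell grows relative to its horizontal size.

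To remove any dependence on the aspect ratio, my plan is to pass to physical coordinates $\Omega^h=\omega\times hI$ and rescale only in-plane, by $\varepsilon_h^{-1}.$ This maps a single cell to the prismatic reference domain $Y\times hI$ with inclusion $Y_0\times hI,$ while the scaled symmetric gradient $\sym\nabla_h$ becomes the ordinary three-dimensional $\sym\nabla.$ It therefore suffices to construct, for every $L>0,$ a linear extension operator
\[
\mathcal{E}_L\colon H^1(Y_1\times (0,L);\R^3)\to H^1(Y\times (0,L);\R^3),\qquad \mathcal{E}_L V|_{Y_1\times(0,L)}=V,
\]
satisfying $\|\sym\nabla\mathcal{E}_L V\|_{L^2(Y\times(0,L))}\leq C\|\sym\nabla V\|_{L^2(Y_1\times(0,L))}$ with $C$ \emph{independent of $L.$} Taking $L=h,$ pulling back to the physical cell, concatenating over cells, and then undoing the $x_3$-rescaling yields the statement on $\Omega$ exactly as in the proof of Theorem~\ref{thmextension}.

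The key idea is to make $\mathcal{E}_L$ translation-invariant in $x_3.$ Since $\overline{Y_0}\subset\mathrm{int}\,Y$ and $\partial Y_0$ is $C^{1,1},$ the lateral boundary of the inclusion is the Lipschitz cylinder $\partial Y_0\times(0,L),$ and the OSY-type construction near this boundary (cf.~\cite[Lemma~4.1]{OShY}) can be built from a reflection-type diffeomorphism of the form $(\hat x,x_3)\mapsto(R_{\hat x}(\hat x),x_3),$ where $R_{\hat x}$ acts only in the in-plane variables, combined with a fixed matrix $A$ acting linearly on the three components of the vector field. Away from a tubular neighborhood of $\partial Y_0\times(0,L),$ one patches with the identity via a smooth $\hat x$-dependent partition of unity. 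By design, every ingredient of this construction is independent of $x_3,$ so $\mathcal{E}_L$ commutes with $\partial_3$ and is invariant under $x_3$-translations; consequently its norm inherits the scale of the purely in-plane OSY inequality on $Y_1$ and is uniform in $L.$

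The step that I expect to demand the most care is the verification that this reflection extension controls the full three-dimensional symmetric gradient, not merely its in-plane block. A purely slicewise two-dimensional extension is inadequate here, because the off-diagonal entries $\tfrac12(\partial_\alpha V_3+\partial_3 V_\alpha)$ couple horizontal and vertical derivatives and cannot be bounded by splitting them. The resolution lies precisely in the $x_3$-translation invariance: since $\mathcal{E}_L$ commutes with $\partial_3$ and the matrix $A$ is $x_3$-independent, each entry of $\sym\nabla(\mathcal{E}_L V)$ at a reflected point equals the corresponding entry of $\sym\nabla V$ at the pre-image, transformed by the \emph{same} linear action induced by $A$ and $\nabla R_{\hat x}.$ Choosing $A$ to match the in-plane reflection (the canonical model being $R_{\hat x}(\hat x)=-\hat x$ with $A=\mathrm{diag}(-1,-1,1),$ for which one checks pointwise that $|\sym\nabla\mathcal{E}_L V|^2(\hat x,x_3)=|\sym\nabla V|^2(-\hat x,x_3)$) makes $|\sym\nabla\mathcal{E}_L V|$ equal to $|\sym\nabla V|\circ R$ up to a bounded Jacobian factor determined solely by the $C^{1,1}$ regularity of $\partial Y_0.$ A standard change-of-variables argument on $Y_1$ then yields the required inequality with a constant depending only on $Y_0,$ and the rescaling-and-concatenation argument of Theorem~\ref{thmextension} goes through to conclude.
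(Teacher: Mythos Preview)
Your approach is genuinely different from the paper's, and the difference matters. The paper does \emph{not} build an $x_3$-translation-invariant extension. Instead, it works in the physical domain $\Omega^h$ and subdivides the thickness direction into $\varepsilon_h$-sized intervals, producing a three-dimensional lattice of cubes $\varepsilon_h(Y+z)\times[k\varepsilon_h,(k+1)\varepsilon_h]$ of \emph{unit aspect ratio} after rescaling by $\varepsilon_h^{-1}$. On each such reference cell the standard 3D OSY extension applies with a fixed constant; the resulting piecewise extension $E_1$ may fail to match across the interfaces $x_3=k\varepsilon_h$, so a second layer $E_2$ of OSY extensions is applied on half-shifted cubes with a smaller perforation (so that its input, which includes the values already produced by $E_1$ on the two adjacent cubes, is $H^1$), and further layers $E_3,E_4$ handle the boundary slabs near $x_3=\pm h/2$ where $h/\varepsilon_h$ need not be an integer. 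The multilayer gluing is the substance of the proof.

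Your slicewise reflection is more elegant in principle, but the sketch has a real gap at the curved-boundary step. The pointwise identity $|\sym\nabla(\mathcal E_L V)|=|\sym\nabla V|\circ R$ holds in your flat canonical model only because there $\nabla R$ and $A$ are equal constant orthogonal matrices. For curved $\partial Y_0$ this fails either way: with a constant $A$ the reflection no longer matches the normal direction, and already in the graph model $R(y_1,y_2)=(2\phi(y_2)-y_1,y_2)$, $A=\mathrm{diag}(-1,1,1)$, the $(2,3)$-entry of $\sym\nabla(\mathcal E_L V)$ picks up an extra $\phi'(y_2)\,\partial_1 V_3$, which is \emph{not} a symmetric-gradient entry; with the geometrically correct $A(\hat y)=I-2\nu\nu^\top$ one has $\nabla R-A=O(d)$ and $\nabla A=O(\kappa)$, contributing errors of order $d\,|\nabla V|$ and $\kappa\,|V|$ respectively. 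Controlling these requires Korn's inequality with a rigid-motion subtraction, but a 3D rigid motion depends on $x_3$, which destroys the translation invariance on which your $L$-uniformity rests (and the Korn--Poincar\'e constant on $Y_1\times(0,L)$ grows with $L$). So ``up to a bounded Jacobian factor'' does not go through; the paper's cube decomposition sidesteps the issue by never letting the reference cell become elongated.
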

\begin{proof}
	Since $\epsh \ll h,$ the way in which we introduce the extensions here is to look at every single cell inside the thin domain $\Omega^h$. 
	The cells are $\varepsilon_h$-cubes 
	$
	\epsh(Y+z)\times[k\epsh,(k+1)\epsh],
	$ 
	where $z \in \Z^2$, $k \in \Z,$	are chosen so that each cube is entirely inside $\Omega$. We use the extension operator on the cube $Y\times I$ (see \cite[Lemma 4.1]{OShY}), followed by a scaling argument.
	We label the resulting extension by $E_1.$
	The problem is that we can have a mismatch at the lines $x_3=k\epsh$, where $k \in \Z,$ and there are possible ``boundary layers" at the sides $x_3=\pm h/2$ where the extension is not defined (due to the fact that $h/\epsh$ is not necessarily an integer).  We deal with this by introducing another series of extensions to cubes 
	\[
	\varepsilon_h(Y+z)\times \biggl[\biggl(k+\frac{1}{2}\biggr)\epsh,\biggl(k+\frac{3}{2}\biggr)\epsh\biggr],\qquad z \in \Z^2,\quad k \in \Z,
	\] 
	from the complements of the corresponding ``perforations"
	\[
	\epsh(Y_0+z)\times \biggl[\biggl(k+\frac{3}{4}\biggr)\epsh,\biggl(k+\frac{5}{4}\biggr)\epsh\biggr].
	\] 
	(On the parts 
		\[
		\epsh(Y_0+z) \times \biggl[\biggl(k+\frac{1}{2}\biggr)\epsh,\biggl(k+\frac{3}{4}\biggr)\epsh\biggr],\quad 
		\epsh(Y_0+z) \times \biggl[\biggl(k+\frac{5}{4}\biggr)\epsh,\biggl(k+\frac{3}{2}\biggr)\epsh\biggr] 
		\] 
	we continue using the extension $E_1.$) In this way we correct the first extension and eliminate the mismatch.
	We denote the resulting extension by $E_2$.
	We deal with the upper layers at $x_3=\pm h/2$ in a different way, namely, we first consider the extension on the cubes 
	\[
	\epsh(Y+z)\times\left(\biggl[\frac{h}{2}-\epsh,\frac{h}{2}\biggr]\cup \biggl[-\frac{h}{2}\,-\frac{h}{2}+\epsh\biggr]\right)
	\]
	(referring to this as $E_3$), and then we correct the possible mismatch between $E_2$ and $E_3$ by performing extensions to the cubes 
	\[
	\epsh(Y+z)\times\left(\biggl[\frac{h}{2}-\frac{3}{2}\epsh,\frac{h}{2}-\frac{1}{2}\epsh\biggr]\cup \biggl[-\frac{h}{2}+\frac{1}{2}\epsh,-\frac{h}{2}+\frac{3}{2}\epsh\biggr]\right) 
	\]
	from the complements of the corresponding perforations
	\[
	\epsh(Y_0+z)\times\left(\biggl[\frac{h}{2}-\frac{5}{4}\epsh,\frac{h}{2}-\frac{3}{4}\epsh\biggr]\cup \biggl[-\frac{h}{2}+\frac{3}{4}\epsh,-\frac{h}{2}+\frac{5}{4}\epsh\biggr]\right) 
	\]
	(On the part 
	\[
	\eps(Y_0+z)\times\left(\biggl[\frac{h}{2}-\frac{3}{2}\epsh,\frac{h}{2}-\frac{5}{4}\epsh\biggr]\cup\biggl[-\frac{h}{2}+\frac{5}{4}\epsh,-\frac{h}{2}+\frac{3}{2}\epsh\biggr]\right) 
	\]
	we take the above extension $E_2$, while on the part 
	\[
	\epsh(Y_0+z)\times\left(\biggl[\frac{h}{2}-\frac{3}{4}\epsh,\frac{h}{2}-\frac{1}{2}\epsh\biggr]\cup \biggl[-\frac{h}{2}+\frac{1}{2}\epsh,-\frac{h}{2}+\frac{3}{4}\epsh\biggr]\right) 
	\]
	we take the extension $E_3$.) 
	We refer to this extension on the cube 
	\[
	\epsh(Y +z)\times\left(\biggl[\frac{h}{2}-\frac{3}{2}\epsh,\frac{h}{2}\biggr]\cup \biggl[-\frac{h}{2},-\frac{h}{2}+\frac{3}{2}\epsh\biggr]\right) 
	\] 
	as $E_4$.
	The final extension is given
	by $E_4$ on the layer 
	\[
	\Biggl\{(x_1, x_2, x_3)\in\Omega^h: x_3 \in\biggl[\frac{h}{2}-\frac{3}{2}\epsh,\frac{h}{2}\biggr]\cup \biggl[-\frac{h}{2},-\frac{h}{2}+\frac{3}{2}\epsh\biggr]\Biggr\}
	\]
	and by $E_2$ on the remaining part of $\Omega^h$. The required extension on $\Omega$ is now then by scaling in $x_3.$ (A procedure analogous to this has been described in \cite[Chapter 4]{OShY} for some specific domains.)
\end{proof}   
\begin{remark} \label{rucak31} 
	It is easy to see that if $\vect u \in L^{2, \rm bend}(\Omega;\R^3)$ or $\vect u \in L^{2, \rm memb}(\Omega;\R^3)$ then the same is valid for $\tilde{\vect u}$ (see the extension operator in \cite[Lemma 4.1]{OShY}).  	
\end{remark} 

The following statement is proved analogously to Theorem \ref{thmivann111}, see also the proof of Lemma \ref{poincarekornonholes}.
\begin{lemma}
	For $\mathring{\vect u}\in V(\Omega)$ be such that $\mathring{\vect u }^\epsh |_{\Omega_1^{\varepsilon_h}} = 0,$
	the following Poincar\'{e} and Korn inequalities hold:
	\begin{equation}
		\left\| \mathring{ \vect u} \right\|_{L^2} \leq \epsh C \left\| \nabla_h \mathring{ \vect u} \right\|_{L^2},\qquad\qquad 
		\left\| \nabla_h \mathring{ \vect u} \right\|_{L^2} \leq C \left\| \sym\nabla_h \mathring{\vect  u} \right\|_{L^2},
		\label{PK_ineq}
	\end{equation}
where the constant $C$ does not depend on $h.$
\end{lemma}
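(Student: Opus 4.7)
The strategy is to localise both inequalities to a single inclusion and then exploit a rescaling that turns each rod-shaped inclusion into a long cylinder with fixed cross-section $Y_0.$ Since $\mathring{\vect u}$ vanishes on $\Omega_1^{\eps_h},$ both $\|\mathring{\vect u}\|_{L^2(\Omega)}^2$ and $\|\nabla_h\mathring{\vect u}\|_{L^2(\Omega)}^2$ split as orthogonal sums over the disjoint rods $\eps_h(Y_0+z)\times I,$ so it suffices to prove each bound on a single rod with a constant independent of $z,$ $h,$ and $\eps_h,$ and then sum. On one such rod I would use the change of variables $\hat y=\hat x/\eps_h-z,$ $y_3=x_3,$ which maps it to the fixed reference domain $Y_0\times I$ and sends $\mathring{\vect u}$ to $\tilde{\vect v}(\hat y,y_3):=\mathring{\vect u}(\eps_h(\hat y+z),y_3),$ vanishing on $\partial Y_0\times I.$ A direct computation using the definition of $\nabla_h$ gives the identities $\nabla_h\mathring{\vect u}=\eps_h^{-1}\nabla_\gamma\tilde{\vect v}$ and $\sym\nabla_h\mathring{\vect u}=\eps_h^{-1}\sym\nabla_\gamma\tilde{\vect v},$ where $\gamma:=h/\eps_h\to\infty$ and $\nabla_\gamma:=(\partial_{\hat y_1},\partial_{\hat y_2},\gamma^{-1}\partial_{y_3}).$ After the Jacobian $\eps_h^2$ is accounted for, the rod-wise gradient and symmetrised gradient norms equal the corresponding $\nabla_\gamma$-norms on $Y_0\times I$ without prefactor, while $\|\mathring{\vect u}\|_{L^2}^2$ picks up an extra $\eps_h^2.$

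The Poincar\'e bound is then immediate: the two-dimensional Poincar\'e inequality on $Y_0$ applied slicewise in $y_3,$ together with the vanishing trace $\tilde{\vect v}(\cdot,y_3)=0$ on $\partial Y_0,$ yields $\|\tilde{\vect v}\|_{L^2(Y_0\times I)}\le C(Y_0)\|\nabla_{\hat y}\tilde{\vect v}\|_{L^2(Y_0\times I)}\le C(Y_0)\|\nabla_\gamma\tilde{\vect v}\|_{L^2(Y_0\times I)}.$ Unwinding the rescaling and summing over the admissible values of $z$ produces the first inequality in \eqref{PK_ineq} with the explicit prefactor $\eps_h.$

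For the Korn inequality the preceding reduction shows that it suffices to establish $\|\nabla_\gamma\tilde{\vect v}\|_{L^2(Y_0\times I)}\le C\|\sym\nabla_\gamma\tilde{\vect v}\|_{L^2(Y_0\times I)}$ with $C$ independent of $\gamma.$ The weighted gradient $\nabla_\gamma$ becomes a standard gradient under the dilation $z_3:=\gamma y_3$: setting $\vect w(\hat y,z_3):=\tilde{\vect v}(\hat y,z_3/\gamma)$ on the elongated cylinder $Y_0\times(-\gamma/2,\gamma/2),$ the chain rule gives $\nabla\vect w(\hat y,z_3)=\nabla_\gamma\tilde{\vect v}(\hat y,z_3/\gamma)$ and analogously for $\sym\nabla,$ while the change of variable contributes a Jacobian factor $\gamma$ that cancels on both sides. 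Consequently the required estimate is equivalent to the ordinary Korn inequality on $Y_0\times(-\gamma/2,\gamma/2)$ with zero Dirichlet data on $\partial Y_0\times(-\gamma/2,\gamma/2),$ with a constant uniform in the cylinder length.

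The main obstacle is precisely this uniformity for arbitrarily long cylinders. I would handle it by a cell-decomposition argument: partition the interval $(-\gamma/2,\gamma/2)$ into finitely many sub-intervals $J_k$ of length in $[1,2]$ (a standard trick to avoid integer-length hypotheses on $\gamma$) and apply Korn's inequality with partial Dirichlet data \eqref{Kornwithbc} on each cell $Y_0\times J_k.$ Because these cells are pairwise congruent up to translation and a bounded dilation of $J_k,$ their Korn constants admit a common upper bound depending only on $Y_0,$ and summing the cellwise inequalities yields Korn on $Y_0\times(-\gamma/2,\gamma/2)$ uniformly in $\gamma.$ Returning through the successive changes of variables and summing over the admissible values of $z$ now produces the second inequality in \eqref{PK_ineq}, with a constant depending only on $Y_0.$
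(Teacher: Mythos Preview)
Your proposal is correct and follows essentially the same approach as the paper: a cell decomposition combined with scaling and the standard Korn/Poincar\'e inequalities on a fixed reference cell, then summation. The only cosmetic difference is that the paper works on the physical domain $\Omega^h$ and partitions each rod $\epsh(Y_0+z)\times hI$ directly into $\epsh$-cubes before rescaling, whereas you first rescale to $Y_0\times(-\gamma/2,\gamma/2)$ and then partition into unit-height cells; these are the same argument under the dilation $x_3\mapsto z_3/\gamma$.
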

\begin{proof} 
	As in the case of Theorem \ref{thmivann111}, we work on the physical domain $\Omega^h$. 
	The restrictions of the inequalities (\ref{PK_ineq}) to those cylinders 
	$\epsh(Y_0+z)\times[-h/2+k\epsh,-h/2+(k+1)\epsh]$, $z \in \Z^2$, $k \in \N_0,$ that are contained in $\Omega$ are obtained by combining a scaling argument, the Korn inequality, and the Poincare inequality on the unit cube. 
	We similarly obtain the restrictions of the inequalities (\ref{PK_ineq}) to the cylinders $\epsh(Y_0+z)\times[h/2-\epsh, h/2]$, $z \in \Z^2$. The argument for the physical domain $\Omega^h$ is now completed by  summing up all above inequalities. To obtain the inequality on the canonical domain $\Omega$ (where gradients are replaced by scaled gradients), we simply perform the corresponding rescaling.   
\end{proof} 	
\subsection{Hyperbolic evolution problems of second order}\label{apsechyp}
Let $\mathcal{A}$ be a non-negative, self-adjoint operator with domain $\mathcal{D}(\mathcal{A})$ in a separable Hilbert space $H,$ and let $\mathcal{A}^{1/2}$ its unique non-negative self-adjoint root in $H$. We define the following norm on \RRR $V:=\mathcal{D}(\mathcal{A}^{1/2})$\BBB:
\begin{equation*}
	\|\vect u\|^2_{V}:=\bigl((\mathcal{A}+ \RRR \mathcal{I} \BBB)^{1/2}\vect u, (\mathcal{A}+ \RRR \mathcal{I} \BBB)^{1/2}\vect u\bigr)_{H}, \quad \vect u \in \mathcal{D}(\mathcal{A}^{1/2}).
\end{equation*}
\RRR  $V$ is a Hilbert space
 and the inclusion $V  \geq \mathcal{D}(\mathcal{A})$ \BBB is dense. 
Let $V^*$ be the dual of $V$ making $(V,H,V^*)$ the Gelfand triple \cite{Berezansky, dautraylions}. 
	Due to the density argument, the operators $\mathcal{A}$, $\mathcal{A}^{1/2}$ can be uniquely extended to bounded linear operators:
	\begin{equation*}
		\mathcal{A}: V \to V^*, \quad \mathcal{A}^{1/2}: H \to V^*.
	\end{equation*}
	Moreover, $\mathcal{A}+\mathcal{I}:V \to V^{*}$ as well as $(\mathcal{A}+\mathcal{I})^{1/2},$  $\mathcal{A}^{1/2}+\mathcal{I},$ viewed both as operators from $V$ to $H$ and form $H$ to $V^*,$ 
	are isomorphisms. 
	Consider also the following evolution problem:
	\begin{equation}
		\label{abstractevolutionproblem}
		\begin{split}
			&  \partial_{tt} \vect u(t) + \mathcal{A} \vect u(t) = \vect f(t), \\[0.2em]
			&  \vect u(0) = \vect u_0, \quad \partial_t \vect u(0) = \vect u_1, \\[0.1em]
			&   \vect u_0 \in V, \quad \vect u_1 \in H, \quad \vect f  \in L^2([0,T];V^*).
		\end{split}
	\end{equation}
\begin{definition}
	\label{weaksolutionofabstractevolutionproblem}
	We say that $\vect u \in L^2([0,T];V)$ is a weak solution of the problem \eqref{abstractevolutionproblem} if it satisfies: 
	\begin{equation}\label{ivan902} 
		\begin{split}
			& \vect u \in C([0,T];V), \quad \partial_t \vect u\in C([0,T];H), \\[0.35em]
			& \partial_{t}( \partial_t \vect u(t), \vect v )_{H} + a(\vect u(t), \vect v) = (\vect f(t), \vect v )_{V^*,V} \quad \forall \vect v \in V 
			\mbox{ in the sense of distributions on }  (0,T), \\[0.35em]
			&  \vect u(0) = \vect u_0, \quad \partial_t \vect u(0) = \vect u_1.
		\end{split}
	\end{equation}
\end{definition}
The problem \eqref{abstractevolutionproblem} can be restated as a first-order form, as follows. Consider the product space $E = V \times H$ endowed with the inner product 
\begin{equation*}
	( \Vec{\vect v}, \Vec{\vect w} )_E =  \bigl( (\vect v_1,  \vect v_2)^\top, (\vect w_1, \vect w_2)^\top\bigr)_E:= 
	\bigl((\mathcal{A} + \mathcal{I})^{1/2} \vect v_1,(\mathcal{A} + \mathcal{I})^{1/2}\vect w_1\bigr)_H + (\vect v_2, \vect w_2)_H,
\end{equation*}
and set
\begin{equation}
	\label{matrixoperator}
	\mathbb{A}:= \begin{bmatrix}0 & -\mathcal{I} \\[0.2em] \mathcal{A} & 0 \end{bmatrix}, \quad \mathcal{D}(\mathbb{A})= \mathcal{D}(\mathcal{A}) \times \mathcal{D}(\mathcal{A}^{1/2}),
\end{equation}
with the embedding $\mathcal{D}(\mathcal{\mathbb{A}}) \hookrightarrow E$ being dense. It is easily seen that
\begin{equation}\label{ivan701}
	( (\mathbb{A} + \mathbb{I})\Vec{\vect v}, \Vec{\vect v} )_E=(\mathcal{A} \vect v_1, \vect v_1 )+( \vect v_1,  \vect v_1)+( \vect v_2,  \vect v_2)-( \vect v_1,  \vect v_2) \geq (( \vect v_1, \vect v_1)+( \vect v_2, \vect v_2))/2,
\end{equation}
where by $\mathbb{I}$ we have denoted the identity operator on $E$. 
Moreover for $\lambda \in \R\backslash \{0\}$ one has 
\begin{equation} \label{ivan702} 
	(\mathbb{A} + \lambda\mathbb{I})\Vec{\vect v}=\Vec {\vect f}\quad\Longleftrightarrow\quad  \vect v_1=(\mathcal{A}+\lambda^2 \mathcal{I})^{-1}(\RRR \lambda \BBB \vect f_1+\vect f_2), \quad   \vect v_2=\lambda  \vect v_1-\vect f_1, 
\end{equation} 
which implies 
\begin{equation} \label{ivan103} 
	(\mathbb{A}+\lambda\mathbb{I})^{-1}=\begin{bmatrix} \lambda (\mathcal{A}+\lambda^2 \mathcal{I})^{-1} & (\mathcal{A}+\lambda^2 \mathcal{I})^{-1} \\[0.35em] \lambda^2(\mathcal{A}+\lambda^2 \mathcal{I})^{-1}-\mathcal{I} & \lambda(\mathcal{A}+\lambda^2 \mathcal{I})^{-1} \end{bmatrix}, \qquad\lambda \neq 0. 
\end{equation}
As a consequence of \eqref{ivan701} we can conclude that $-(\mathbb{A}+\mathbb{I})$ is a dissipative operator in the sense that 
\begin{equation}\label{ivan901} 
	\bigl\| (\lambda \mathbb{I}+\mathbb{I}+\mathbb{A})\Vec {\vect v}\bigr\|_{E} \geq \sqrt{\lambda^2+\lambda} \|\Vec {\vect v}\|_E \geq \lambda  \|\Vec {\vect v}\|_E, \qquad \forall\lambda>0. 
\end{equation}  
The property \eqref{ivan702} implies that $\mathbb{A}+\mathbb{I}$ is a closed operator. (Note that a dissipative  operator $\mathcal{S}$ is closed if there exists $\lambda>0$ such that the range of $\lambda \mathcal{I}-\mathcal{S}$ is closed.) 
From \eqref{ivan702} and \eqref{ivan901} we conclude that $(\lambda+1) \mathbb{I}+\mathbb{A}$ is a bijection
\begin{equation*} 
	\bigl\|((\lambda+1)\mathbb{I}+\mathbb{A})^{-1}\bigr\|\leq\lambda^{-1}, \quad \RRR \forall \BBB \lambda>0. 
\end{equation*}
It follows that $-(\mathbb{A}+\mathbb{I})$ generates a contraction semigroup (by Hille-Yosida Theorem) and 
\begin{equation} 
	\label{ivan910} 
	e^{-t\mathbb{A}}=e^t e^{-t(\mathbb{A}+\mathbb{I})},\quad \bigl\|e^{-t(\mathbb{A}+\mathbb{I})}\bigr\|\leq 1,\quad \bigl\|e^{-t\mathbb{A}}\bigr\|\leq e^{t}.
\end{equation}
The problem  \eqref{abstractevolutionproblem} can be formally written in the form 
\begin{equation}
	\label{abstractevolutionproblemrephrased}
	\partial_t \Vec{\vect u}(t) + \mathbb{A}\Vec{\vect u}(t) = \Vec{\vect f}(t),\qquad\ \ \Vec{\vect u}(0) = \Vec{\vect u}_0,
\end{equation}
with $\Vec{\vect u}=(\vect u, \partial_t \vect u)^\top$, $\Vec{\vect u}_0 := (\vect u_0, \vect u_1)^\top$, $\Vec{\vect f}=(0,\vect f)^\top \in E $. 
The following two theorems establish sufficient conditions for the problem (\ref{abstractevolutionproblem}) to be well posed. 
\begin{theorem}
	\label{existenceH}
	Under the additional assumption $\vect f \in L^2([0,T];H)$, there exists a unique weak solution of the problem \eqref{abstractevolutionproblem}, understood in the sense of Definition \ref{weaksolutionofabstractevolutionproblem}.
\end{theorem}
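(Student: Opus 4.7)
The plan is to reduce \eqref{abstractevolutionproblem} to the first-order system \eqref{abstractevolutionproblemrephrased} on the Hilbert space $E=V\times H$ and apply the semigroup theory already set up in the excerpt, where it was shown that $-(\mathbb{A}+\mathbb{I})$ is an $m$-dissipative operator (see \eqref{ivan701}--\eqref{ivan910}). Under the assumption $\vect f\in L^2([0,T];H)$ we have $\Vec{\vect f}=(0,\vect f)^\top\in L^2([0,T];E)\subset L^1([0,T];E)$, and the initial datum $\Vec{\vect u}_0=(\vect u_0,\vect u_1)^\top$ lies in $E$. Hence the mild solution is given by Duhamel's formula
\begin{equation*}
\Vec{\vect u}(t)=e^{-t\mathbb{A}}\Vec{\vect u}_0+\int_0^t e^{-(t-s)\mathbb{A}}\Vec{\vect f}(s)\,ds,\qquad t\in[0,T],
\end{equation*}
and belongs to $C([0,T];E)$. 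Writing $\Vec{\vect u}=(\vect u,\vect w)^\top$, the first component satisfies $\vect u\in C([0,T];V)$ and the second satisfies $\vect w\in C([0,T];H)$, which is the regularity required by Definition \ref{weaksolutionofabstractevolutionproblem} once we identify $\vect w=\partial_t\vect u$.

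Next I would establish the distributional identity in \eqref{ivan902} and the relation $\vect w=\partial_t\vect u$. The approach is a standard regularization argument: choose sequences $\Vec{\vect u}_0^n\in\mathcal{D}(\mathbb{A})$ with $\Vec{\vect u}_0^n\to\Vec{\vect u}_0$ in $E$ and $\vect f^n\in C^1([0,T];H)$ with $\vect f^n\to\vect f$ in $L^2([0,T];H)$ (so $\Vec{\vect f}^n\to\Vec{\vect f}$ in $L^1([0,T];E)$). By the standard theory of $C_0$-semigroups the corresponding mild solution $\Vec{\vect u}^n$ is in fact a classical solution, so $\Vec{\vect u}^n\in C^1([0,T];E)\cap C([0,T];\mathcal{D}(\mathbb{A}))$ and $\partial_t\Vec{\vect u}^n+\mathbb{A}\Vec{\vect u}^n=\Vec{\vect f}^n$ holds pointwise; in particular $\vect w^n=\partial_t\vect u^n$, $\vect u^n\in C^1([0,T];V)$, $\partial_t\vect u^n\in C^1([0,T];H)$, and
\begin{equation*}
\bigl(\partial_{tt}\vect u^n(t),\vect v\bigr)_H+a\bigl(\vect u^n(t),\vect v\bigr)=\bigl(\vect f^n(t),\vect v\bigr)_H\qquad\forall\vect v\in V,\ \ t\in[0,T].
\end{equation*}
Using the contraction estimate $\|e^{-t(\mathbb{A}+\mathbb{I})}\|\leq 1$ we pass to the limit $n\to\infty$ in $C([0,T];E)$, which gives $\vect u^n\to\vect u$ in $C([0,T];V)$ and $\partial_t\vect u^n\to\vect w$ in $C([0,T];H)$. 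Consequently $\vect w=\partial_t\vect u$ in the sense of distributions, and testing the limit equation against $\vect v\in V$ and then against $\varphi\in C^\infty_{\rm c}(0,T)$ yields the distributional identity in \eqref{ivan902}. The initial conditions are recovered from the continuity of $\Vec{\vect u}$ at $t=0$.

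Uniqueness is obtained by the standard energy identity. If $\vect u_0=0$, $\vect u_1=0$, $\vect f=0$, testing formally against $\partial_t\vect u$ yields $\tfrac{d}{dt}\bigl(\|\partial_t\vect u\|_H^2+a(\vect u,\vect u)\bigr)=0$, whence $\vect u\equiv 0$. To justify this for merely weak solutions, one either works again via the first-order formulation (where uniqueness follows directly from \eqref{ivan901} applied to the difference of two solutions) or uses the Ladyzhenskaya--Lions test-function regularization as in \cite{dautraylions}. The main obstacle in the proof is a purely technical one, namely verifying that the mild solution produced by Duhamel's formula actually satisfies the second-order distributional equation, together with the identification $\vect w=\partial_t\vect u$; this is handled cleanly by the approximation argument above, exploiting the exponential bound \eqref{ivan910} for the semigroup.
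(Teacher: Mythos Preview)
Your proposal is correct and follows essentially the same route as the paper: existence via the Duhamel/variation-of-constants formula \eqref{ivan903} for the first-order system generated by $-\mathbb{A}$, and uniqueness via the Dautray--Lions argument. The paper's proof is actually much terser than yours---it simply invokes \eqref{ivan903} for existence and cites \cite[Theorem~3, p.\,572]{dautraylions} for uniqueness---whereas you supply the intermediate regularization step showing that the mild solution indeed satisfies the distributional identity \eqref{ivan902}; this is a welcome elaboration rather than a different method.
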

\begin{proof} 
	The existence can be obtained by the variation of constants formula
	\begin{equation} \label{ivan903}
		\Vec{\vect u}(t) =  e^{-t\mathbb{A}}\Vec{\vect u}_0 + \int_0^T  e^{-(t-s)\mathbb{A}}\Vec{ \vect f}(s)ds. 
	\end{equation} 
	The uniqueness is given by parabolic regularisation and can be found in \cite[Theorem 3, p.\,572]{dautraylions} in  a more general setting, where also a proof of existence is obtained by the same method while working directly with the problem \eqref{abstractevolutionproblem}.
\end{proof} 
\begin{remark} 
	It can be easily seen from \eqref{ivan902} that in Theorem \ref{existenceH} one additionally has $\partial_{tt} \vect u \in L^2([0,T];V^*).$ Furthermore, if $\vect f \in L^{\infty}([0,T];H)$ then  $\partial_{tt} \vect u \in L^\infty([0,T];V^*)$. 		
\end{remark} 
\begin{remark} 
	It follows from \eqref{ivan910} and \eqref{ivan903}	that there exists $C>0$ such that 
	\begin{equation*} 
		\| \Vec{\vect u}\|_{L^{\infty}([0,T];E)} \leq Ce^T\left(\|\Vec{\vect u}_0\|+\|\Vec{\vect f}\|_{L^1([0,T];E)}\right),
	\end{equation*}
	from which one directly concludes that 
	\begin{equation} 
		\label{ivan55}  
		\| \vect u\|_{L^{\infty}([0,T];V)}+\|\partial_t \vect u\|_{L^{\infty}([0,T];H)} \leq Ce^{T}\left(\|\vect u_0\|_V+\|\vect u_1\|_H+ \|\vect f\|_{L^1([0,T];H)}\right).
	\end{equation} 
\end{remark} 
Note that as a consequence of \eqref{ivan702} the operator $\mathbb{A}+\lambda\mathbb{I}$ has bounded inverse for every $\lambda \neq 0$. 
\begin{theorem}
	\label{existenceV*}
	Assume that $\vect f, \partial_t \vect f\in L^2([0,T];V^*)$  Then, there exists a unique weak solution in the sense of Definition \ref{weaksolutionofabstractevolutionproblem} of the problem \eqref{abstractevolutionproblem}. 
\end{theorem}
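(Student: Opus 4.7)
The hypothesis $\vect f, \partial_t\vect f\in L^2([0,T];V^*)$ entails $\vect f\in C([0,T];V^*),$ so in particular $\vect f(0)$ is well defined. Because $\vect f$ is not $H$-valued, Theorem \ref{existenceH} does not apply directly, but the extra time regularity permits a lifting trick in the first-order form \eqref{abstractevolutionproblemrephrased}: the plan is to subtract off a suitable correction of $\Vec{\vect f}=(0,\vect f)^\top$ so that the residual problem has forcing in $L^2([0,T];E)$ and initial datum in $E,$ which is then handled by the contraction semigroup generated by $-(\mathbb{A}+\mathbb{I})$ established in \eqref{ivan910}.

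Define $\Vec{\vect w}(t):=(\mathbb{A}+\mathbb{I})^{-1}\Vec{\vect f}(t).$ Formula \eqref{ivan103} at $\lambda=1$ gives $\Vec{\vect w}(t)=\bigl((\mathcal{A}+\mathcal{I})^{-1}\vect f(t),\,(\mathcal{A}+\mathcal{I})^{-1}\vect f(t)\bigr)^\top,$ and since $(\mathcal{A}+\mathcal{I})^{-1}\colon V^*\to V$ is an isometric isomorphism, $\Vec{\vect w},\partial_t\Vec{\vect w}\in L^2([0,T];E)$ with $\|\Vec{\vect w}(t)\|_E\leq C\|\vect f(t)\|_{V^*}$ and analogously for $\partial_t\Vec{\vect w},$ hence $\Vec{\vect w}\in C([0,T];E).$ Writing $\Vec{\vect u}=\Vec{\vect z}+\Vec{\vect w}$ and using $\mathbb{A}\Vec{\vect w}=\Vec{\vect f}-\Vec{\vect w},$ the problem for $\Vec{\vect z}$ becomes
\begin{equation*}
\partial_t\Vec{\vect z}+\mathbb{A}\Vec{\vect z}=\Vec{\vect w}-\partial_t\Vec{\vect w},\qquad \Vec{\vect z}(0)=\Vec{\vect u}_0-\Vec{\vect w}(0)\in E,
\end{equation*}
whose right-hand side is in $L^2([0,T];E).$ The variation-of-constants formula \eqref{ivan903} then delivers a unique $\Vec{\vect z}\in C([0,T];E),$ and hence $\Vec{\vect u}:=\Vec{\vect z}+\Vec{\vect w}\in C([0,T];E).$ Propagating the semigroup bound through the decomposition yields the $V^*$-analogue of \eqref{ivan55},
\begin{equation*}
\|\vect u\|_{L^\infty([0,T];V)}+\|\partial_t\vect u\|_{L^\infty([0,T];H)}\leq Ce^T\Bigl(\|\vect u_0\|_V+\|\vect u_1\|_H+\|\vect f(0)\|_{V^*}+\|\partial_t\vect f\|_{L^1([0,T];V^*)}\Bigr),
\end{equation*}
which is the estimate needed for the evolution problems in Section \ref{limeveqsec}.

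A direct check confirms that $\Vec{\vect u}$ encodes a weak solution in the sense of Definition \ref{weaksolutionofabstractevolutionproblem}: the equality $w_1=w_2$ of the two components of the lift is precisely what ensures that the first row of the system gives $\partial_t u_1=u_2$ in $\mathcal{D}'((0,T);H),$ while the second row recovers $\partial_t u_2+\mathcal{A}u_1=\vect f,$ so that $\vect u=u_1\in C([0,T];V),$ $\partial_t\vect u=u_2\in C([0,T];H)$ and the initial conditions are matched by continuity of $\Vec{\vect u}$. For uniqueness, the difference of two weak solutions solves the homogeneous problem; a formal energy identity gives $(1/2)\,d/dt\bigl(\|\partial_t\vect u\|_H^2+a(\vect u,\vect u)\bigr)=0,$ whence $\vect u\equiv 0.$ The main obstacle is precisely the justification of this energy identity, since $\partial_t\vect u$ lies only in $H$ and is not an admissible test function in $V$; this is standardly handled by time-mollification (convolution against a Friedrichs mollifier in $t$), as in \cite[Chap.\,XVIII]{dautraylions}, which is also the natural framework for rigorously identifying the mild first-order solution with the weak second-order formulation.
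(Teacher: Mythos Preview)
Your proof is correct and follows essentially the same approach as the paper. The paper writes down the explicit formula \eqref{exponentialsemigroupsolution}, obtained ``formally from \eqref{ivan903} by using integration by parts,'' and observes via \eqref{ivan955} that $(\mathbb{A}+\mathbb{I})^{-1}(0,\vect f)^\top=\bigl((\mathcal{A}+\mathcal{I})^{-1}\vect f,(\mathcal{A}+\mathcal{I})^{-1}\vect f\bigr)$ lies in $L^2([0,T];E)$; your lifting $\Vec{\vect w}=(\mathbb{A}+\mathbb{I})^{-1}\Vec{\vect f}$ and the substitution $\Vec{\vect u}=\Vec{\vect z}+\Vec{\vect w}$ is exactly that integration by parts unpacked, and leads to the identical representation. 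For uniqueness the paper simply cites \cite[Theorem~3, p.\,572]{dautraylions}, which is the same source your mollification remark points to.
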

\begin{proof}
	Notice that we actually have $\vect f \in C([0,T];V^*)$. 
	The existence of solution follows from the formula 
	\begin{equation}
		\label{exponentialsemigroupsolution}
		\Vec{\vect u}(t) =  e^{-t\mathbb{A}} \Vec{\vect {u}}_0 - \int_0^T e^{(s-t)\mathbb{A}} (\mathbb{A} +\RRR  \mathbb{I} \BBB)^{-1}\Bigl(\partial_s \Vec{ \vect f}(s)- \Vec{ \vect f}(s)\Bigr)ds 
		+(\mathbb{A}+\mathbb{I})^{-1}\Vec{\vect f}(t) - e^{-t\mathbb{A}} (\mathbb{A} + \mathbb{I})^{-1}  \Vec{ \vect f}(0),
	\end{equation} 
	which can be obtained formally from \eqref{ivan903} by using integration by parts. Here we also use the fact that (see \eqref{ivan103}):
	\begin{equation} \label{ivan955} 
		(\mathbb{A}+\mathbb{I})^{-1}(0,\vect f)^\top=\bigl((\mathcal{A}+\mathcal{I})^{-1}\vect f,(\mathcal{A}+\mathcal{I})^{-1}\vect f\bigr), 
	\end{equation} 
	from which it follows that if $f, \partial_t \vect f \in L^2([0,T];V^*)$ then $(\mathbb{A}+\mathbb{I})^{-1}(0,\vect f)^\top$, i.e., $(\mathbb{A}+\mathbb{I})^{-1}(0,\partial_t \vect f)^\top$ is an element of $L^2([0,T];V)$. 
	As in the proof of Theorem \ref{existenceH}, the  uniqueness follows from \cite[Theorem 3, 
	p.\,572]{dautraylions}. (Notice that considering the difference of the solutions associated with two different load densities gives us a solution to the problem with zero load density, which is then necessarily zero by the cited theorem.)
\end{proof}
\begin{remark} 
	It can be easily seen that in Theorem \ref{existenceV*} one additionally has $\partial_{tt} \vect u \in L^{\infty}([0,T];V^*)$. This follows from \eqref{ivan902} and the fact that $\vect f \in L^{\infty} ([0,T];V^*)$. 		
\end{remark}
\begin{remark} 
	It follows from \eqref{ivan910} and \eqref{exponentialsemigroupsolution} that there exists $C>0$ such that 
	\begin{equation*}
		\| \Vec{\vect u}\|_{L^{\infty}([0,T];E)} \leq Ce^T\left(\|\Vec{\vect u}_0\|+\|\Vec{\vect f}(0)\|_{V^*}+\|\partial_t \Vec {\vect f} \|_{L^1([0,T];V^*)}\right),
	\end{equation*}
	from which one directly concludes that
	\begin{equation} \label{ivan992}  
		\| \vect u\|_{L^{\infty}([0,T];V)}+\|\partial_t \vect u\|_{L^{\infty}([0,T];H)} \leq Ce^{T}\left(\|\vect u_0\|_V+\|\vect u_1\|_H+\|\vect f(0)\|_{V^*}+ \|\partial_t \vect f\|_{L^1([0,T];V^*)}\right).
	\end{equation} 
\end{remark} 	
We will now give an overview of the results of \cite{pastukhova}, which we will then extend with the concept of solution discussed in Theorem \ref{existenceV*}. 
While this extension is not considered in \cite{pastukhova}, its validity follows from the formula (\ref{exponentialsemigroupsolution}).

Suppose that we are given a sequence of Hilbert spaces $(H_\varepsilon)_{\varepsilon >0}$ endowed with norms $||\cdot||_{H_\varepsilon}$ and some type of weak convergence $\vect u^\varepsilon \xrightharpoonup{H_\varepsilon} \vect u \in H$ of sequences $(\vect u^\varepsilon)\subset H_\varepsilon$. Our assumption on this type of weak convergence is that every weakly convergent sequence $({\vect u}^\varepsilon)_{\varepsilon>0}$ is bounded, i.e., $\limsup_{\varepsilon \to 0} ||\vect u^\varepsilon||_{H_\varepsilon} < \infty. $ We additionally assume that the ``limit space" $H$ is separable. 
\begin{definition}
	We say that a sequence $(\vect u^{\varepsilon})_{\varepsilon>0}\subset H_\varepsilon$ strongly converges to $\vect u \in H$ and write $\vect u^\varepsilon \xrightarrow{H_\varepsilon} \vect u$ if $\vect u^\eps \xrightharpoonup{H_{\eps}} \vect u$ and 
	\begin{equation}\label{ivan35} 
		\lim_{\varepsilon \to 0 } ( \vect u^\varepsilon, \vect v^\varepsilon )_{H_\varepsilon} = ( \vect u, \vect v )_H,
	\end{equation}
	for every weakly convergent sequence $\vect v^\varepsilon \xrightharpoonup{H_\varepsilon} \vect v \in H$, $\vect v^\varepsilon \in H_\varepsilon$. 
\end{definition}
Additionally, we assume the following properties of this abstract weak convergence: 
\begin{eqnarray*}
	&\mbox{(compactness principle)}&  \mbox{ Every bounded sequence contains a weakly convergent subsequence. } \\
	&\mbox{(approximation principle)}&  \mbox{ For every $\vect u \in H$, there exists a sequence $(\vect u^\varepsilon)_{\varepsilon>0}\subset H_\varepsilon$, such that $\vect u^\varepsilon \xrightarrow{H_\varepsilon} \vect u$. } \\
	&\mbox{(norm convergence)}& \mbox{If }\vect u^{\eps} \xrightarrow{H_\varepsilon} \vect u\mbox{, then }\lim_{\eps \to 0} \|\vect u^{\eps}\|_{H_{\eps}}=\|\vect u\|_{H}.  
\end{eqnarray*}
As a consequence of these properties, to guarantee strong convergence $\vect u^{\varepsilon} \xrightarrow{H_{\varepsilon}} \vect u$ it suffices to have the property \eqref{ivan35} or, alternatively, to have $\vect u^{\varepsilon} \xrightharpoonup{H_{\eps}} \vect u$ and norm convergence (see the proof of Lemma \cite[Lemma 6.3]{pastukhova}). 
The following kind of operator convergence is convenient in the analysis of parameter-dependent problems.
\begin{definition}\label{ivan933} 
	Let $(\mathcal{A}_\varepsilon)_{\varepsilon>0}$ be a sequence of non-negative self-adjoint operators acting on the respective spaces $H_\varepsilon.$ Suppose that $\mathcal{A}$ is a non-negative self-adjoint operator on some closed subspace $H_0$ of $H$, and consider the orthogonal projection $P:H\to H_0.$ We say that $\mathcal{A}_\varepsilon$ converge to $\mathcal{A}$ in the weak resolvent sense if
	\begin{equation}
		\label{weakres}
		\forall \lambda>0, \quad \left( \mathcal{A}_\varepsilon + \lambda \RRR \mathcal{I} \BBB \right)^{-1}\vect f^\varepsilon \xrightharpoonup{H_\varepsilon} \left( \mathcal{A} + \lambda \RRR \mathcal{I} \BBB \right)^{-1} P\vect f, \quad \forall (\vect f^\varepsilon)_{\varepsilon>0}, \quad \vect f^\varepsilon \in H_\varepsilon, \quad \vect f^\varepsilon \xrightharpoonup{H_\varepsilon} \vect  f \in H.
	\end{equation}
	Similarly, we say that $\mathcal{A}_\varepsilon$ converge to $\mathcal{A}$ in the strong resolvent sense if
	\begin{equation}
		\label{strongres}
		\forall \lambda>0, \quad \left( \mathcal{A}_\varepsilon + \lambda \mathcal{I} \right)^{-1}\vect f^\varepsilon \xrightarrow{H_\varepsilon} \left( \mathcal{A} + \lambda \mathcal{I} \right)^{-1} P\vect f, \quad \forall (\vect f^\varepsilon)_{\varepsilon>0}, \quad \vect f^\varepsilon \in H_\varepsilon, \quad \vect f^\varepsilon \xrightarrow{H_\varepsilon} \vect  f \in H.
	\end{equation}
\end{definition}
\begin{lemma}
	The convergence \eqref{weakres} is equivalent to the convergence \eqref{strongres}.
\end{lemma}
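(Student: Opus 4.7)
The plan is to prove the two directions separately, since neither is immediate from the definitions: both hypothesis and conclusion involve different classes of test sequences $\vect f^\varepsilon$.

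For the direction (\ref{strongres}) $\Rightarrow$ (\ref{weakres}), I would proceed by duality, exploiting the self-adjointness of $\mathcal{A}_\varepsilon$. Given $\vect f^\varepsilon \xrightharpoonup{H_\varepsilon} \vect f$, the a priori estimate $\|R_\varepsilon(\lambda) \vect f^\varepsilon\|_{H_\varepsilon} \leq \lambda^{-1} \|\vect f^\varepsilon\|_{H_\varepsilon}$ combined with the compactness principle extracts a subsequence with weak limit $\vect w \in H$. To identify $\vect w = R(\lambda) P \vect f$, pick an arbitrary $\vect g \in H$ and, by the approximation principle, a sequence $\vect g^\varepsilon \xrightarrow{H_\varepsilon} \vect g$. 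Self-adjointness of $R_\varepsilon(\lambda)$ gives $(R_\varepsilon(\lambda) \vect f^\varepsilon, \vect g^\varepsilon)_{H_\varepsilon} = (\vect f^\varepsilon, R_\varepsilon(\lambda) \vect g^\varepsilon)_{H_\varepsilon}$, and the strong resolvent hypothesis yields $R_\varepsilon(\lambda) \vect g^\varepsilon \xrightarrow{H_\varepsilon} R(\lambda) P \vect g$. Passing to the limit on both sides, using the weak-strong pairing, one gets $(\vect w, \vect g)_H = (\vect f, R(\lambda) P \vect g)_H = (R(\lambda) P \vect f, \vect g)_H$, where the last equality combines $R(\lambda) P \vect f \in H_0$ with the self-adjointness of $P$. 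Since $\vect g$ is arbitrary, $\vect w = R(\lambda) P \vect f$, and uniqueness of the weak limit transfers convergence to the whole sequence.

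For the direction (\ref{weakres}) $\Rightarrow$ (\ref{strongres}), let $\vect f^\varepsilon \xrightarrow{H_\varepsilon} \vect f$ and set $\vect u^\varepsilon := R_\varepsilon(\lambda) \vect f^\varepsilon$, $\vect u := R(\lambda) P \vect f$. Since $\vect f^\varepsilon \xrightharpoonup{H_\varepsilon} \vect f$ in particular, the weak resolvent hypothesis immediately gives $\vect u^\varepsilon \xrightharpoonup{H_\varepsilon} \vect u$, so by the abstract framework recalled just after Definition \ref{ivan933} it suffices to establish the norm convergence $\|\vect u^\varepsilon\|_{H_\varepsilon} \to \|\vect u\|_H$. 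The device I would use is the convexity of the scalar resolvent map. Define, for $\mu > 0$,
\[
\phi_\varepsilon(\mu) := (R_\varepsilon(\mu) \vect f^\varepsilon, \vect f^\varepsilon)_{H_\varepsilon}, \qquad \phi(\mu) := (R(\mu) P \vect f, P \vect f)_{H_0}.
\]
The spectral representation of non-negative self-adjoint operators gives representations of $\phi_\varepsilon$ and $\phi$ as Laplace–Stieltjes transforms of positive measures, which shows that both are smooth and convex on $(0, \infty)$, with $\phi_\varepsilon'(\mu) = -\|R_\varepsilon(\mu) \vect f^\varepsilon\|_{H_\varepsilon}^2$ and $\phi'(\mu) = -\|R(\mu) P \vect f\|_{H_0}^2$. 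Applying the weak resolvent hypothesis at each fixed $\mu$ to the (in particular weakly) convergent $\vect f^\varepsilon$, one obtains $R_\varepsilon(\mu) \vect f^\varepsilon \xrightharpoonup{H_\varepsilon} R(\mu) P \vect f$; combining this with the strong convergence of $\vect f^\varepsilon$ through property (\ref{ivan35}) and using $R(\mu) P \vect f \in H_0$ gives the pointwise convergence $\phi_\varepsilon(\mu) \to \phi(\mu)$ for every $\mu > 0$.

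The classical result on pointwise convergence of convex functions then applies: since $\phi$ is smooth (hence differentiable at $\lambda$), one has $\phi_\varepsilon'(\lambda) \to \phi'(\lambda)$, which is precisely $\|\vect u^\varepsilon\|_{H_\varepsilon}^2 \to \|\vect u\|_{H_0}^2$. The combination with the already established weak convergence $\vect u^\varepsilon \xrightharpoonup{H_\varepsilon} \vect u$ concludes the argument. The principal obstacle is this upgrade from weak to strong convergence: norm convergence cannot be read off from the weak resolvent hypothesis without some additional structure, and the essential ingredient is the convexity of $\mu \mapsto (R_\varepsilon(\mu) \vect f^\varepsilon, \vect f^\varepsilon)_{H_\varepsilon}$, which is a consequence of the non-negativity and self-adjointness of $\mathcal{A}_\varepsilon$.
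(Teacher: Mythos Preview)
Your proof is correct, and the direction \eqref{strongres} $\Rightarrow$ \eqref{weakres} is exactly the duality argument the paper uses. However, for \eqref{weakres} $\Rightarrow$ \eqref{strongres} you take a substantially longer route than the paper, which simply runs the \emph{same} duality argument with the roles reversed. Given $\vect g^\varepsilon \xrightarrow{H_\varepsilon} \vect g$, to show $R_\varepsilon(\lambda)\vect g^\varepsilon \xrightarrow{H_\varepsilon} R(\lambda)P\vect g$ one verifies property \eqref{ivan35} directly: for any weakly convergent $\vect f^\varepsilon \xrightharpoonup{H_\varepsilon} \vect f$, self-adjointness gives $(R_\varepsilon(\lambda)\vect g^\varepsilon, \vect f^\varepsilon)_{H_\varepsilon} = (\vect g^\varepsilon, R_\varepsilon(\lambda)\vect f^\varepsilon)_{H_\varepsilon}$, and since $R_\varepsilon(\lambda)\vect f^\varepsilon \xrightharpoonup{H_\varepsilon} R(\lambda)P\vect f$ by the weak-resolvent hypothesis while $\vect g^\varepsilon$ converges strongly, the pairing converges to $(\vect g, R(\lambda)P\vect f)_H = (R(\lambda)P\vect g, \vect f)_H$. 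Your remark that ``norm convergence cannot be read off from the weak resolvent hypothesis without some additional structure'' is true, but the point is that norm convergence is not needed at all: strong convergence in this abstract framework is \emph{defined} by \eqref{ivan35}, and that condition can be checked by the one-line duality you already used in the other direction. Your convexity argument via $\phi_\varepsilon(\mu) = (R_\varepsilon(\mu)\vect f^\varepsilon, \vect f^\varepsilon)_{H_\varepsilon}$ and convergence of derivatives of convex functions is valid and has independent interest (it gives norm convergence explicitly), but it is considerably heavier machinery than the problem requires.
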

\begin{proof} 
	The proof is based on a duality argument. Take $\lambda>0$ and consider $(\vect f^{\eps})_{\eps>0}$ such that $\vect f^{\eps}  \xrightharpoonup{H_\varepsilon} \vect f$ and $(\vect g^{\eps})_{\eps>0}$ such that $\vect g^{\eps}  \xrightarrow {H_\varepsilon} \vect g$ . Then one has
	\begin{equation*} 
		\lim_{\eps \to 0}  \left(\vect f^\varepsilon,\left( \mathcal{A}_\varepsilon + \lambda \mathcal{I} \right)^{-1}\vect g^{\eps}\right)_{H_{\eps}}=\lim_{\eps \to 0}  \left(\left( \mathcal{A}_\varepsilon + \lambda \mathcal{I} \right)^{-1}\vect f^\varepsilon,\vect g^{\eps}\right)_{H_{\eps}}=  \left(\left( \mathcal{A} + \lambda I \right)^{-1}P\vect f,\vect g\right)_{H}=\left( \vect f,\left(\mathcal{A} + \lambda \mathcal{I} \right)^{-1}P\vect g\right)_{H}.
	\end{equation*} 
	These equalities show that \eqref{weakres} implies \eqref{strongres}. In a similar fashion, one shows that \eqref{strongres} implies \eqref{weakres}. 
\end{proof} 	
Henceforth we work within the framework of Definition \ref{ivan933}. 
For the sequence  $(\mathcal{A}_\varepsilon)_{\varepsilon>0}$ we construct the associated Hilbert spaces $V_\varepsilon$ endowed with norms $||\cdot||_{V_\varepsilon},$ defined as follows:
\begin{equation*}
	\|\vect u^\varepsilon\|^2_{V_\varepsilon} := ( (\mathcal{A}_\varepsilon+ \mathcal{I})^{1/2}\vect u^\varepsilon,(\mathcal{A}_\varepsilon+ \mathcal{I})^{1/2} \vect u^\varepsilon )_{H_\varepsilon}.
\end{equation*}
Similarly, we define $\|\vect u\|_{V}$, where $V=\mathcal{D}(\mathcal{A}^{1/2})$.  It is easily seen that, since $H$ is separable, the spaces $H_0$ and $V$ are also separable. (Indeed, $H_0$ is a subspace of $H,$ and if $\{h_n\}_{n \in \N}$ is a dense subset of $H$, then $\{(\mathcal{A}+I)^{-1/2}h_n\}_{n \in \N}$ is a dense subset of $V.$)    
\begin{definition}
	Suppose that $(\vect u^\varepsilon)_{\varepsilon>0}\subset V_\varepsilon$,    $\vect u \in V$. We say that $\vect u^\varepsilon$ converge weakly to $\vect u$ in $V_\varepsilon,$ and write $\vect u^\varepsilon \xrightharpoonup{V_\varepsilon} \vect u,$  if 
	\begin{equation}\label{ivan34} 
		\vect u^\varepsilon \xrightharpoonup{H_\varepsilon} \vect u  \quad \mbox{ and } \quad  \limsup_{\varepsilon> 0} ||\vect u^\varepsilon||_{V_\varepsilon} < \infty.
	\end{equation}
	Additionally, we say that \RRR$(\vect u^\varepsilon)_{\eps>0}$  converges \BBB strongly to $\vect u$ in $V_\varepsilon,$ and write $\vect u^\varepsilon \xrightarrow{V_\varepsilon} \vect u,$  if 
	\begin{equation*}
		(\vect u^\varepsilon, \vect v^\varepsilon )_{V_\varepsilon} \to ( \vect u, \vect v)_{V}  \quad \mbox{ for all } \quad \vect v^\varepsilon \xrightharpoonup{V_\varepsilon} \vect v.
	\end{equation*}
\end{definition}

The following statement is \cite[Lemma 6.2, Lemma 6.3]{pastukhova}. 
\begin{lemma}
	If $\vect u^\varepsilon \xrightarrow{V_\varepsilon} \vect u$, then $\vect u^\varepsilon \xrightarrow{H_\varepsilon} \vect u$. Moreover, one has
	$$ \vect u^\varepsilon \xrightarrow{V_\varepsilon} \vect u\  \Longleftrightarrow\ \vect u^\varepsilon \xrightharpoonup{V_\varepsilon} \vect u\ \textrm{ and }\ \|\vect u^{\eps}\|_{V_{\eps}} \to \|\vect u\|_V.  $$
\end{lemma}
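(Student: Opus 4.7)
The plan is to prove the two statements in sequence, relying crucially on the strong resolvent convergence $(\mathcal{A}_\varepsilon + \mathcal{I})^{-1}\vect w^\varepsilon \xrightarrow{H_\varepsilon} (\mathcal{A} + \mathcal{I})^{-1}P\vect w$ that transfers information between the varying spaces $H_\varepsilon$ and $V_\varepsilon$.

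For the first assertion, that $\vect u^\varepsilon \xrightarrow{V_\varepsilon} \vect u$ implies $\vect u^\varepsilon \xrightarrow{H_\varepsilon} \vect u$, I would fix an arbitrary weakly convergent test sequence $\vect w^\varepsilon \xrightharpoonup{H_\varepsilon} \vect w$ and attempt to reduce $(\vect u^\varepsilon, \vect w^\varepsilon)_{H_\varepsilon}$ to a $V_\varepsilon$-inner product. The natural candidate is $\vect z^\varepsilon := (\mathcal{A}_\varepsilon + \mathcal{I})^{-1}\vect w^\varepsilon$, since then $(\vect u^\varepsilon, \vect w^\varepsilon)_{H_\varepsilon} = (\vect u^\varepsilon, \vect z^\varepsilon)_{V_\varepsilon}$ by the definition of $\|\cdot\|_{V_\varepsilon}$. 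Strong resolvent convergence gives $\vect z^\varepsilon \xrightarrow{H_\varepsilon} \vect z := (\mathcal{A} + \mathcal{I})^{-1}P\vect w$, and the Cauchy--Schwarz bound $\|\vect z^\varepsilon\|_{V_\varepsilon}^2 = (\vect z^\varepsilon, \vect w^\varepsilon)_{H_\varepsilon} \leq \|\vect z^\varepsilon\|_{H_\varepsilon}\|\vect w^\varepsilon\|_{H_\varepsilon}$ yields boundedness in $V_\varepsilon$, hence $\vect z^\varepsilon \xrightharpoonup{V_\varepsilon} \vect z$. Invoking the $V_\varepsilon$-strong convergence of $\vect u^\varepsilon$ against this test sequence produces $(\vect u^\varepsilon, \vect w^\varepsilon)_{H_\varepsilon} \to (\vect u, \vect z)_V = (\vect u, (\mathcal{A}+\mathcal{I})\vect z)_H = (\vect u, P\vect w)_H = (\vect u, \vect w)_H$, the last identity because $\vect u \in V \subset H_0$.

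For the equivalence in the second assertion, the direction $(\Rightarrow)$ is immediate: norm convergence follows from picking $\vect v^\varepsilon = \vect u^\varepsilon$ in the definition of strong $V_\varepsilon$-convergence, while the required $\vect u^\varepsilon \xrightharpoonup{V_\varepsilon} \vect u$ follows from Part~1 together with the automatic boundedness of $\|\vect u^\varepsilon\|_{V_\varepsilon}$. For the direction $(\Leftarrow)$, the main obstacle is that we have no built-in approximation principle at the $V$-level. My plan is to derive one: given $\vect u \in \mathcal{D}(\mathcal{A})$, set $\vect w := (\mathcal{A}+\mathcal{I})\vect u \in H_0$, pick an $H$-approximating sequence $\tilde{\vect w}^\varepsilon \xrightarrow{H_\varepsilon} \vect w$, and define $\tilde{\vect u}^\varepsilon := (\mathcal{A}_\varepsilon + \mathcal{I})^{-1}\tilde{\vect w}^\varepsilon$; then for any $\vect v^\varepsilon \xrightharpoonup{V_\varepsilon} \vect v$ one checks directly that $(\tilde{\vect u}^\varepsilon, \vect v^\varepsilon)_{V_\varepsilon} = (\tilde{\vect w}^\varepsilon, \vect v^\varepsilon)_{H_\varepsilon} \to (\vect w, \vect v)_H = (\vect u, \vect v)_V$, so $\tilde{\vect u}^\varepsilon \xrightarrow{V_\varepsilon} \vect u$. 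The case of general $\vect u \in V$ follows by density of $\mathcal{D}(\mathcal{A})$ in $V$ together with a diagonal extraction.

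With the $V$-approximation principle in hand, I would close the $(\Leftarrow)$ direction by the classical Hilbert-space ``weak$+$norm$=$strong'' argument adapted to varying spaces: expand
\begin{equation*}
\|\vect u^\varepsilon - \tilde{\vect u}^\varepsilon\|_{V_\varepsilon}^2 = \|\vect u^\varepsilon\|_{V_\varepsilon}^2 + \|\tilde{\vect u}^\varepsilon\|_{V_\varepsilon}^2 - 2(\vect u^\varepsilon, \tilde{\vect u}^\varepsilon)_{V_\varepsilon},
\end{equation*}
where the first term converges to $\|\vect u\|_V^2$ by hypothesis, the second by testing strong convergence of $\tilde{\vect u}^\varepsilon$ against itself, and the cross term by pairing the strongly convergent sequence $\tilde{\vect u}^\varepsilon$ against the weakly convergent $\vect u^\varepsilon$; all three limits equal $\|\vect u\|_V^2$, so $\|\vect u^\varepsilon - \tilde{\vect u}^\varepsilon\|_{V_\varepsilon} \to 0$. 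For any test sequence $\vect v^\varepsilon \xrightharpoonup{V_\varepsilon} \vect v$, which is automatically bounded in $V_\varepsilon$, Cauchy--Schwarz gives $|(\vect u^\varepsilon - \tilde{\vect u}^\varepsilon, \vect v^\varepsilon)_{V_\varepsilon}| \to 0$, and combining with $(\tilde{\vect u}^\varepsilon, \vect v^\varepsilon)_{V_\varepsilon} \to (\vect u, \vect v)_V$ yields the desired $(\vect u^\varepsilon, \vect v^\varepsilon)_{V_\varepsilon} \to (\vect u, \vect v)_V$. The genuine technical core of the argument is thus the construction of the $V$-approximant via the resolvent, which is where the strong resolvent convergence earns its keep.
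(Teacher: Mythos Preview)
The paper does not supply its own proof of this lemma; it merely attributes the result to \cite[Lemma 6.2, Lemma 6.3]{pastukhova}. Your argument is therefore not being compared against a proof in the paper but rather stands on its own, and it is essentially the standard route (and presumably close to Pastukhova's): pass between $H_\varepsilon$ and $V_\varepsilon$ via the resolvent $(\mathcal{A}_\varepsilon+\mathcal{I})^{-1}$, manufacture a $V$-level approximating sequence out of the $H$-level one, and finish with the ``weak\,$+$\,norm\,$=$\,strong'' expansion.

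Two small points to tighten. First, in Part~1 you write that strong resolvent convergence gives $\vect z^\varepsilon \xrightarrow{H_\varepsilon} \vect z$, but you only have $\vect w^\varepsilon \xrightharpoonup{H_\varepsilon} \vect w$, so what you actually get (via weak resolvent convergence, which is equivalent) is $\vect z^\varepsilon \xrightharpoonup{H_\varepsilon} \vect z$; this is harmless, since weak convergence plus the $V_\varepsilon$-bound you derive is exactly what $\vect z^\varepsilon \xrightharpoonup{V_\varepsilon} \vect z$ requires. Second, in the $(\Rightarrow)$ direction the phrase ``automatic boundedness of $\|\vect u^\varepsilon\|_{V_\varepsilon}$'' deserves a one-line justification: if $\|\vect u^{\varepsilon_n}\|_{V_{\varepsilon_n}}\to\infty$ along a subsequence, normalise to $\vect v^{\varepsilon_n}=\vect u^{\varepsilon_n}/\|\vect u^{\varepsilon_n}\|_{V_{\varepsilon_n}}$, observe (using Part~1 and norm convergence in $H_\varepsilon$) that $\vect v^{\varepsilon_n}\xrightharpoonup{V_{\varepsilon_n}}0$, and test strong $V_\varepsilon$-convergence against it to obtain $\|\vect u^{\varepsilon_n}\|_{V_{\varepsilon_n}}=(\vect u^{\varepsilon_n},\vect v^{\varepsilon_n})_{V_{\varepsilon_n}}\to 0$, a contradiction. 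With these two clarifications your proof is complete. The diagonal extraction for general $\vect u\in V$ can in fact be avoided: it suffices to approximate $\vect u$ by a fixed $\vect u'\in\mathcal{D}(\mathcal{A})$ to within~$\delta$, run your expansion with $\tilde{\vect u}^\varepsilon\xrightarrow{V_\varepsilon}\vect u'$, and let $\delta\to0$ at the end; this matches what the paper records just after the lemma (existence of $V$-approximants only on a dense set, citing \cite[Lemma 6.5]{pastukhova}).
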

It can be shown that there exists a dense subset $S \subset V$, such that for every $\vect z \in S$ there exists a subsequence $(\vect z^{\eps})_{\eps>0}$ such that $\vect z^{\eps} \xrightarrow{V_{\eps}} \vect z$ (see \cite[Lemma 6.5]{pastukhova}).

We also introduce convergence notions convenient for the analysis of time-dependent problems.   
\begin{definition}
	Suppose that a sequence $(\vect u^\varepsilon)_{\varepsilon>0}\subset L^2([0,T];H_\varepsilon)$ is bounded. We say that \RRR $(\vect u^\varepsilon)_{\eps>0}$ weakly converges \BBB to $\vect u \in L^2([0,T];H)$, and write $\vect u^\varepsilon \xrightharpoonup{t,H_\varepsilon} \vect u,$ if 
	\begin{equation*}
		\int_0^T ( \vect u^\varepsilon(t), \vect v^\varepsilon )_{H_\varepsilon} \varphi(t)dt \to \int_0^T ( \vect u(t), \vect v ) \varphi(t)dt, 
	\end{equation*}
	for all $\vect v^\varepsilon \xrightarrow{H_\varepsilon} \vect v$ and $\varphi \in L^2(0,T)$.
\end{definition}
\begin{definition}
	Suppose that $(\vect u^\varepsilon)_{\varepsilon>0}\subset L^2([0,T];V_\varepsilon)$. We say that \RRR$(\vect u^\varepsilon)_{\varepsilon>0} $ \BBB weakly converges to $\vect u \in L^2([0,T];V),$ and write $\vect u^\varepsilon \xrightharpoonup{t,V_\varepsilon} \vect u,$ if 
	\begin{equation*}
		\vect u^\varepsilon \xrightharpoonup{t,H_\varepsilon} \vect u  \quad \mbox{ and } \quad  \limsup_{\varepsilon\to 0} \int_0^T (\mathcal{A}_\varepsilon^{1/2} \vect u^\varepsilon, \mathcal{A}_\varepsilon^{1/2}\vect u^\varepsilon )_{H_\varepsilon}dt < \infty.
	\end{equation*}
\end{definition}
In the same way we can define the weak convergence in $L^p([0,T];H_{\eps})$ and $L^p([0,T];V_{\eps})$, $1\leq p \leq \infty$ (which denote by $\xrightharpoonup{t,p,H_{\eps}}$ and $\xrightharpoonup{t,p,V_{\eps}}$, respectively.)
The following lemma is stated in \cite[Lemma 4.7]{pastukhova}.
\begin{lemma} \label{ivan56} The spaces $L^2([0,T];H_{\varepsilon})$ and $L^2([0,T];V_{\varepsilon})$ satisfy the weak compactness principle. 
\end{lemma}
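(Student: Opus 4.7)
The plan is to prove the weak compactness principle for $L^2([0,T];H_\varepsilon)$ first, then transfer the argument to $L^2([0,T];V_\varepsilon)$. Fix a bounded sequence $(\vect u^\varepsilon)_{\varepsilon>0}$ in $L^2([0,T];H_\varepsilon)$ with $\|\vect u^\varepsilon\|_{L^2([0,T];H_\varepsilon)} \le M$. Since $H$ is separable, fix a countable dense set $\{\vect v_n\}_{n\in\mathbb{N}} \subset H$, and by the approximation principle choose for each $n$ a strongly convergent sequence $\vect v_n^\varepsilon \xrightarrow{H_\varepsilon} \vect v_n$; each such sequence is uniformly bounded in $H_\varepsilon$ thanks to the norm convergence property. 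The scalar functions $g_n^\varepsilon(t) := (\vect u^\varepsilon(t), \vect v_n^\varepsilon)_{H_\varepsilon}$ satisfy $\|g_n^\varepsilon\|_{L^2(0,T)} \le M \sup_\varepsilon \|\vect v_n^\varepsilon\|_{H_\varepsilon}$ by Cauchy--Schwarz, and so weak compactness in $L^2(0,T)$ combined with Cantor diagonalisation yields a subsequence (not relabelled) along which $g_n^\varepsilon \rightharpoonup g_n$ weakly in $L^2(0,T)$ for every $n \in \mathbb{N}$.

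Next I would construct the candidate limit $\vect u \in L^2([0,T];H)$ via Riesz representation. For a simple tensor of the form $\varphi \otimes \vect v_n$ with $\varphi \in L^2(0,T)$, define
$$L(\varphi \otimes \vect v_n) := \int_0^T \varphi(t)\, g_n(t) \, dt,$$
and extend by linearity to finite linear combinations. Cauchy--Schwarz together with the bound $M$ and the norm convergence $\|\vect v_n^\varepsilon\|_{H_\varepsilon} \to \|\vect v_n\|_H$ yields $|L(\varphi \otimes \vect v_n)| \le M \|\varphi \otimes \vect v_n\|_{L^2([0,T];H)}$. Since the simple tensors $\{\varphi \otimes \vect v_n\}$ span a dense subspace of $L^2([0,T];H)$, $L$ extends uniquely to a bounded linear functional of norm at most $M$, and the Riesz theorem delivers $\vect u \in L^2([0,T];H)$ with $L(\Phi) = \int_0^T (\vect u(t), \Phi(t))_H\, dt$. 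To verify the required weak convergence $\vect u^\varepsilon \xrightharpoonup{t,H_\varepsilon} \vect u$, one tests against an arbitrary $\vect v^\varepsilon \xrightarrow{H_\varepsilon} \vect v$ with $\varphi \in L^2(0,T)$, approximates $\vect v$ in $H$ by some $\vect v_n$, and controls the residual by $M \|\varphi\|_{L^2(0,T)} \|\vect v^\varepsilon - \vect v_n^\varepsilon\|_{H_\varepsilon}$, which can be made arbitrarily small by first choosing $n$ large (so that $\|\vect v - \vect v_n\|_H$ is small, hence $\limsup_\varepsilon \|\vect v^\varepsilon - \vect v_n^\varepsilon\|_{H_\varepsilon}$ is small by norm convergence) and then $\varepsilon$ small.

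The extension to $L^2([0,T];V_\varepsilon)$ is straightforward once the $H_\varepsilon$ case is in hand: a sequence bounded in $L^2([0,T];V_\varepsilon)$ is \emph{a fortiori} bounded in $L^2([0,T];H_\varepsilon)$, and so the preceding argument extracts a subsequence with $\vect u^\varepsilon \xrightharpoonup{t,H_\varepsilon} \vect u$ for some $\vect u \in L^2([0,T];H)$; the second defining property $\limsup_\varepsilon \int_0^T (\mathcal{A}_\varepsilon^{1/2} \vect u^\varepsilon, \mathcal{A}_\varepsilon^{1/2}\vect u^\varepsilon)_{H_\varepsilon}\, dt < \infty$ is automatic by the $V_\varepsilon$-bound, and membership $\vect u \in L^2([0,T];V)$ follows from the very definition of $\xrightharpoonup{t,V_\varepsilon}$. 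The main technical obstacle I expect is the well-definedness of the functional $L$ on the full space $L^2([0,T];H)$: specifically, checking that the limit along the diagonally extracted subsequence is independent of the particular approximating sequence $\vect v_n^\varepsilon \xrightarrow{H_\varepsilon} \vect v_n$, and that the extension by density from simple tensors to $L^2([0,T];H)$ is consistent with all possible strongly convergent test sequences $\vect v^\varepsilon \xrightarrow{H_\varepsilon} \vect v$. This amounts to interleaving the norm-convergence property of strong $H_\varepsilon$-convergence with the density of simple tensors in $L^2([0,T];H)$, and is the only step where the non-standard feature of the variable ambient Hilbert space $H_\varepsilon$ enters in an essential way.
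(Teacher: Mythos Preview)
The paper does not actually prove this lemma; it merely cites \cite[Lemma~4.7]{pastukhova}. Your sketch for the $L^2([0,T];H_\varepsilon)$ case is correct and is essentially the argument one finds in Pastukhova's paper: diagonalise over a countable dense subset of $H$ lifted via the approximation principle, build the limit by Riesz representation, and verify convergence against arbitrary strongly convergent test sequences by a density-plus-norm-convergence argument. The technical obstacle you flag (independence of $L$ from the choice of approximants and consistency on simple tensors) is handled exactly as you indicate.

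There is, however, a genuine gap in your treatment of the $L^2([0,T];V_\varepsilon)$ case. You write that ``membership $\vect u \in L^2([0,T];V)$ follows from the very definition of $\xrightharpoonup{t,V_\varepsilon}$'', but this is circular: the definition \emph{presupposes} $\vect u \in L^2([0,T];V)$; it does not deliver it. Extracting the $H_\varepsilon$-weak limit only gives $\vect u \in L^2([0,T];H)$, and the uniform bound on $\int_0^T \|\mathcal A_\varepsilon^{1/2}\vect u^\varepsilon\|_{H_\varepsilon}^2\,dt$ does not by itself force $\vect u(t)\in V=\mathcal D(\mathcal A^{1/2})\subset H_0$ for a.e.\ $t$. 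This step requires the resolvent convergence (Definition~\ref{ivan933}) in an essential way --- for instance, by testing against the dense subset $S\subset V$ that admits strong $V_\varepsilon$-approximants (the existence of which is recalled just after Lemma~\ref{lmivan401}, citing \cite[Lemma~6.5]{pastukhova}), running a second diagonal extraction over a countable subset of $S$, and then identifying the resulting limit functional on $V$ with the $H_\varepsilon$-limit already obtained. Without such an argument the $V_\varepsilon$-compactness claim is incomplete.
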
 
The next lemma can also be easily established, see \cite[ Lemma 2.3, Lemma 4.5]{pastukhova}.
\begin{lemma} \label{lmivan401} 
	If $\vect u^{\eps} \xrightharpoonup{H_{\eps}} \vect u$, then $\liminf_{\eps \to 0} \|\vect u^{\eps}\|_{H_{\eps}} \geq \| \vect u\|_{H}$. The same is valid for the weak convergence in $V_{\eps}$. If  $\vect u^{\eps} \xrightharpoonup{t,H_{\eps}} \vect u$, then $\liminf_{\eps \to 0} \|\vect u^{\eps}\|_{L^2([0,T];H_{\eps})} \geq \|\vect u\|_{L^2([0,T];H)}$. The same is valid for the weak convergence in $L^2([0,T];V_{\eps})$. 
\end{lemma}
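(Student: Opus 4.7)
The plan is to prove all four lower semicontinuity assertions by the same duality argument, based on the approximation principle, Cauchy--Schwarz, and the defining property of strong convergence.

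First I would handle the $H_\eps$ case. Given $\vect u^\eps \xrightharpoonup{H_\eps} \vect u$, I apply the approximation principle to obtain a sequence $(\vect w^\eps)_{\eps>0}$ with $\vect w^\eps \xrightarrow{H_\eps} \vect u$. Since strong convergence implies weak convergence (with the same limit), I can use $\vect w^\eps$ as the test sequence in the defining identity \eqref{ivan35}, which yields $(\vect u^\eps, \vect w^\eps)_{H_\eps} \to \|\vect u\|_H^2$. Combining the Cauchy--Schwarz inequality $|(\vect u^\eps, \vect w^\eps)_{H_\eps}| \leq \|\vect u^\eps\|_{H_\eps} \|\vect w^\eps\|_{H_\eps}$ with the norm convergence $\|\vect w^\eps\|_{H_\eps} \to \|\vect u\|_H$ (which holds by the ``norm convergence'' axiom of strong convergence) gives $\|\vect u\|_H^2 \leq (\liminf_{\eps \to 0} \|\vect u^\eps\|_{H_\eps}) \|\vect u\|_H$, from which the claim follows at once (the case $\vect u = 0$ being trivial).

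For the $V_\eps$ case the same strategy works, with a small caveat: the paper guarantees the existence of strongly convergent approximations only on a dense subset $S \subset V$ (see the remark preceding Lemma \ref{lmivan401}). I would therefore, for fixed $\delta > 0$, pick $\vect z \in S$ with $\|\vect u - \vect z\|_V < \delta$ and a corresponding approximating sequence $\vect z^\eps \xrightarrow{V_\eps} \vect z$; applying the duality argument above (now in the $V_\eps$ inner product) yields $(\vect u, \vect z)_V \leq \|\vect z\|_V \liminf_{\eps\to 0}\|\vect u^\eps\|_{V_\eps}$, and sending $\delta \to 0$ (so $\vect z \to \vect u$ in $V$) recovers $\|\vect u\|_V^2 \leq \|\vect u\|_V \liminf_{\eps \to 0}\|\vect u^\eps\|_{V_\eps}$.

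For the two time-dependent statements I would proceed analogously. The main point is to construct, for a given $\vect u \in L^2([0,T];H)$ (respectively $L^2([0,T];V)$), an approximating family $(\vect w^\eps)_{\eps>0}$ with $\vect w^\eps \in L^2([0,T];H_\eps)$ (resp.\ $L^2([0,T];V_\eps)$) such that the pairings $\int_0^T (\vect u^\eps(t), \vect w^\eps(t))_{H_\eps}\,dt$ converge to $\|\vect u\|^2_{L^2([0,T];H)}$ while $\|\vect w^\eps\|_{L^2([0,T];H_\eps)} \to \|\vect u\|_{L^2([0,T];H)}$. I would construct such approximations first for simple functions of the form $\vect u(t) = \sum_{i=1}^N \varphi_i(t) \vect u_i$ with $\varphi_i \in L^2(0,T)$ and $\vect u_i \in H$ (respectively $S \subset V$) by setting $\vect w^\eps(t) := \sum_i \varphi_i(t)\vect u_i^\eps$ for approximations $\vect u_i^\eps$ of each $\vect u_i$, and then pass to a general $\vect u$ by a standard density argument in $L^2([0,T];H)$ (respectively $L^2([0,T];V)$, combined with the $S$-density step from the time-independent case). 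The Cauchy--Schwarz step and the conclusion then proceed exactly as before.

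The only genuinely delicate step is the construction of strongly convergent approximating sequences in $V_\eps$ for arbitrary $\vect u \in V$ (and its time-dependent analogue), where one has only the density of $S$ at one's disposal; the rest is a routine combination of Cauchy--Schwarz and the axioms of strong convergence.
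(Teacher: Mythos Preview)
The paper does not give its own proof of this lemma; it simply refers to \cite[Lemma 2.3, Lemma 4.5]{pastukhova}. Your duality argument via the approximation principle and Cauchy--Schwarz is the standard one, and it is correct for the $H_\eps$ and $L^2([0,T];H_\eps)$ statements exactly as you wrote it. For the $V_\eps$ statement your use of the dense set $S\subset V$ (mentioned just before the lemma in the paper) is also correct.

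One point that deserves a sentence more than you give it: in the $L^2([0,T];V_\eps)$ case, the paper's definition of $\xrightharpoonup{t,V_\eps}$ involves only the $H_\eps$-pairing together with a uniform bound on $\int_0^T\|\vect u^\eps(t)\|_{V_\eps}^2\,dt$; it does \emph{not} directly give convergence of integrals of $V_\eps$-inner-products. So when you test against $\vect w^\eps(t)=\sum_i\varphi_i(t)\vect z_i^\eps$ with $\vect z_i^\eps\xrightarrow{V_\eps}\vect z_i$, the convergence $\int_0^T(\vect u^\eps(t),\vect w^\eps(t))_{V_\eps}\,dt\to\int_0^T(\vect u(t),\vect w(t))_V\,dt$ is not immediate from the definition. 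It does follow, but one must argue (for each fixed $\vect z\in S$ and $\varphi\in L^2(0,T)$) that $t\mapsto(\vect u^\eps(t),\vect z^\eps)_{V_\eps}$ converges weakly in $L^2(0,T)$ to $t\mapsto(\vect u(t),\vect z)_V$; this uses the uniform $L^2([0,T];V_\eps)$ bound together with the time-independent strong $V_\eps$-convergence of $\vect z^\eps$, and is presumably what \cite[Lemma 4.5]{pastukhova} supplies. Apart from this detail your plan is sound and matches the approach one expects in the cited reference.
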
 	
In the natural way, by components, we define the weak and strong convergence in $E_{\varepsilon}=V_\varepsilon\times H_\varepsilon$ as well as the weak convergence in $L^2([0,T];E_{\varepsilon})$. Also the space $E_0=V \times H_0$ and the projection $P$ onto $E_0$ are defined in a  natural way, the latter being given by $P\Vec{\vect v}=( v_1,P v_2)^\top$. In an obvious way we also define operators $\mathbb{A}_{\eps}$ and $\mathbb{A}$. 
The following theorem is a basic tool for proving weak or strong convergence of solutions to $\varepsilon$-parametrised evolution problems, understood in the sense of Definition \ref{weaksolutionofabstractevolutionproblem}. The theorem can be found in \cite[Theorem 5.2, Theorem 7.1]{pastukhova}. The first part is easily proved by combining the Laplace transform and a compactness result as in Theorem \ref{thmivan551}. 
\begin{theorem} \label{thmivan104} 
	\RRR Let $(\mathcal{A}_\varepsilon)_{\varepsilon>0}$ be a sequence of non-negative self-adjoint operators in $H_\varepsilon$ that converge to a non-negative self-adjoint operator $\mathcal{A}$ in some subspace $H_0 \leq H$ in the sense of weak resolvent convergence.
	\begin{enumerate} 
\item 	If $\Vec{\vect f}^{\varepsilon} \xrightharpoonup{E_{\eps}} \Vec{\vect f} $, then 
$$ (\mathbb{A}_{\eps}+\lambda\mathbb{I})^{-1} \Vec{\vect f}^{\varepsilon} \xrightharpoonup{E_{\eps}} (\mathbb{A}+\lambda\mathbb{I})^{-1} \Vec{\vect f}, \quad \forall \lambda>1.  $$

\item 	If $\Vec{\vect f}^{\varepsilon} \xrightharpoonup{E_{\eps}} \Vec{\vect f} $, then $e^{-t\mathbb{A}_{\eps}}\Vec{\vect f}^{\eps} \xrightharpoonup{t,E_{\eps}} e^{-t\mathbb{A}}P\vect f$ for every $T>0$. If $\Vec{\vect f}^{\eps} \xrightarrow{E_{\eps}}\Vec{ \vect f} \in E_0$, then $e^{-t\mathbb{A}_{\eps}}\Vec{ \vect f}^{\eps} \to e^{-t\mathbb{A}}\Vec{ \vect f} $ for every $t \geq 0$. 
	\end{enumerate} 
		\BBB
\end{theorem}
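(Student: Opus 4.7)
For Part 1, the plan is to exploit the explicit block representation \eqref{ivan103} of $(\mathbb{A}_{\eps}+\lambda\mathbb{I})^{-1}$ in terms of $(\mathcal{A}_{\eps}+\lambda^2\mathcal{I})^{-1}.$ Writing $\Vec{\vect f}^{\eps}=(\vect f_1^{\eps},\vect f_2^{\eps})^{\top}$ with $\vect f_1^{\eps}\xrightharpoonup{V_{\eps}}\vect f_1$ and $\vect f_2^{\eps}\xrightharpoonup{H_{\eps}}\vect f_2,$ I would apply the assumed weak resolvent convergence of $\mathcal{A}_{\eps}$ to conclude $(\mathcal{A}_{\eps}+\lambda^2\mathcal{I})^{-1}\vect f_i^{\eps}\xrightharpoonup{H_{\eps}}(\mathcal{A}+\lambda^2\mathcal{I})^{-1}P\vect f_i$ for $i=1,2,$ which gives $H_{\eps}$-weak convergence of both components of $(\mathbb{A}_{\eps}+\lambda\mathbb{I})^{-1}\Vec{\vect f}^{\eps}$ to the corresponding components of $(\mathbb{A}+\lambda\mathbb{I})^{-1}P\Vec{\vect f}$ as given by \eqref{ivan103}. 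To upgrade the $H_{\eps}$-weak convergence of the first component to $V_{\eps}$-weak convergence, I would use the dissipativity estimate \eqref{ivan901}, which for $\lambda>1$ yields $\|(\mathbb{A}_{\eps}+\lambda\mathbb{I})^{-1}\Vec{\vect f}^{\eps}\|_{E_{\eps}}\leq C_\lambda\|\Vec{\vect f}^{\eps}\|_{E_{\eps}},$ hence $\limsup_{\eps\to 0}\|\vect u_1^{\eps}\|_{V_{\eps}}<\infty,$ in line with \eqref{ivan34}.

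For the weak-convergence statement in Part 2, I would follow the strategy mentioned by the authors (Laplace transform plus compactness). Thanks to the exponential bound $\|e^{-t\mathbb{A}_{\eps}}\|\leq e^t$ from \eqref{ivan910}, the family $(e^{-t\mathbb{A}_{\eps}}\Vec{\vect f}^{\eps})_{\eps>0}$ is bounded in $L^2([0,T];E_{\eps}),$ so Lemma \ref{ivan56} furnishes a subsequence converging weakly to some $\Vec{\vect u}\in L^2([0,T];E).$ I would then compute, for $\lambda>1$ (and any $\vect g^{\eps}\xrightarrow{E_{\eps}}\vect g$, $\varphi\in C_{\rm c}^{\infty}(0,\infty)$), the identity $\int_0^\infty e^{-\lambda t}\bigl(e^{-t\mathbb{A}_{\eps}}\Vec{\vect f}^{\eps},\Vec{\vect g}^{\eps}\bigr)_{E_{\eps}}\,dt=\bigl((\mathbb{A}_{\eps}+\lambda\mathbb{I})^{-1}\Vec{\vect f}^{\eps},\Vec{\vect g}^{\eps}\bigr)_{E_{\eps}},$ and apply Part 1 to pass to the limit on the right-hand side. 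Injectivity of the Laplace transform on continuous functions (together with an extension to all $\lambda>1$) then identifies $\Vec{\vect u}(t)=e^{-t\mathbb{A}}P\Vec{\vect f}$ for a.e.\ $t,$ and uniqueness of the limit shows the full sequence converges.

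For the strong pointwise convergence assertion, assuming $\Vec{\vect f}^{\eps}\xrightarrow{E_{\eps}}\Vec{\vect f}\in E_0,$ I would argue by density. Fix $t\geq 0$ and let $S$ be a dense subset of $E_0$ of the form discussed after the statement of \cite[Lemma 6.5]{pastukhova}, so that for every $\Vec{\vect z}\in S$ there exists $\Vec{\vect z}^{\eps}\xrightarrow{E_{\eps}}\Vec{\vect z}.$ For $\Vec{\vect z}\in S$ corresponding to ``smooth'' data in $\mathcal{D}(\mathbb{A}),$ the function $s\mapsto e^{-s\mathbb{A}}\Vec{\vect z}$ is $C^1$ and one may use the Duhamel-type relation $e^{-t\mathbb{A}_{\eps}}\Vec{\vect z}^{\eps}-\Vec{\vect z}^{\eps}=-\int_0^t\mathbb{A}_{\eps}e^{-s\mathbb{A}_{\eps}}\Vec{\vect z}^{\eps}\,ds$ together with the strong resolvent convergence and the contraction bound $\|e^{-s(\mathbb{A}_{\eps}+\mathbb{I})}\|\leq 1$ from \eqref{ivan910} to show $e^{-t\mathbb{A}_{\eps}}\Vec{\vect z}^{\eps}\xrightarrow{E_{\eps}}e^{-t\mathbb{A}}\Vec{\vect z}.$ The approximation is then propagated to arbitrary $\Vec{\vect f}\in E_0$ via a three-$\varepsilon$ argument exploiting the uniform bound $\|e^{-t\mathbb{A}_{\eps}}\|\leq e^t,$ combined with norm convergence $\|\Vec{\vect f}^{\eps}\|_{E_{\eps}}\to\|\Vec{\vect f}\|_E$ (a consequence of the strong $E_{\eps}$-convergence) and Lemma \ref{lmivan401}.

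The main obstacle I anticipate lies in this last step: passing from the weak convergence in $L^2([0,T];E_{\eps})$ (which is the natural output of the Laplace-transform argument) to the pointwise strong $E_{\eps}$-convergence for all $t\geq 0.$ The subtlety is that the generator $\mathbb{A}_{\eps}$ is \emph{not} skew-adjoint on $E_{\eps}$ (the natural energy is only conserved up to the shift by $\mathbb{I}$), so one does not have a clean energy identity to convert weak into strong convergence by norm preservation. The density argument circumvents this, but requires genuine care in choosing the approximating sequence $\Vec{\vect z}^{\eps}$ in such a way that both components are accessible, i.e., using that $V_{\eps}$-strong convergence of the first slot is achievable on a dense set.
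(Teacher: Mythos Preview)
The paper does not give a self-contained proof of this theorem; it attributes the result to \cite[Theorem 5.2, Theorem 7.1]{pastukhova} and only remarks that ``the first part is easily proved by combining the Laplace transform and a compactness result as in Theorem \ref{thmivan551}.'' Your plan follows exactly this outline: Part 1 via the block formula \eqref{ivan103} together with the assumed resolvent convergence of $\mathcal{A}_{\eps}$ (this is precisely how the analogous step is handled inside the proof of Theorem \ref{thmivan551}, where the authors write ``Using \eqref{ivan103}, we infer that \ldots $(\mathbb{A}+\lambda\mathbb{I})^{-1}\Vec{\vect f}^{\eps}\xrightharpoonup{E_{\eps}}(\mathbb{A}+\lambda\mathbb{I})^{-1}P\Vec{\vect f}$''); the weak semigroup convergence via compactness (Lemma \ref{ivan56}) and Laplace-transform identification; and the pointwise strong convergence via a density argument backed by the uniform exponential bound \eqref{ivan910}. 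Your identification of this last step as the delicate one is accurate, and the paper simply defers it to Pastukhova.

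One small clarification worth noting: in Part 1 the limit should carry the projection, i.e., $(\mathbb{A}+\lambda\mathbb{I})^{-1}P\Vec{\vect f}$ rather than $(\mathbb{A}+\lambda\mathbb{I})^{-1}\Vec{\vect f}$, as you correctly wrote in your computation (the second component $\vect f_2$ need not lie in $H_0$); the paper's own proof of Theorem \ref{thmivan551} confirms this. This is consistent with your argument and merely a typographical imprecision in the statement.
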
 
\begin{remark} 
	The pointwise convergence $e^{-t\mathbb{A}_{\eps}}\Vec{\vect f}^{\eps} \xrightarrow{E_{\eps}} e^{-t\mathbb{A}}P\vect f$ in Theorem \ref{thmivan104} does not necessarily hold if we only assume that $\Vec{\vect f}^{\eps} \xrightarrow{E_{\eps}}\Vec{ \vect f} \in E,$ see \cite[p.\,2267]{pastukhova}. 
\end{remark} 		
A version of the following theorem can be found in \cite[Theorem 5.2, Theorem 5.3]{pastukhova}. 
\begin{theorem}\label{thmivan551} 
	Let $(\mathcal{A}_\varepsilon)_{\varepsilon>0}$ be a sequence of non-negative self-adjoint operators in $H_\varepsilon$ that converge to a non-negative self-adjoint operator $\mathcal{A}$ in some subspace $H_0 \leq H$ in the sense of weak resolvent convergence. Let $T>0$ and $(\vect u^\varepsilon)_{\varepsilon>0}$ be a sequence of weak solutions of the evolution problems \eqref{abstractevolutionproblem} where ${\mathcal A}$ is replaced by ${\mathcal A_\varepsilon},$ with initial data $\vect u_0^\varepsilon \in V_\varepsilon$, $\vect u_1^\varepsilon \in H_\varepsilon$ and right-hand sides $ \vect f^\varepsilon \in L^2([0,T];H_\varepsilon)$ such that
	\begin{equation} \label{ivan91111} 
		\vect u_0^\varepsilon \xrightharpoonup{V_\varepsilon} \vect u_0 \in V, \quad \vect u_1^\varepsilon \xrightharpoonup{H_\varepsilon} \vect u_1 \in H, \quad \vect f^\varepsilon \xrightharpoonup{t,H_\varepsilon} \vect f \in L^2([0,T];H).
	\end{equation}
	Then one has 
	\begin{equation}\label{ivan102}
		\vect u^\varepsilon \xrightharpoonup{t,V_\varepsilon} \vect u \in L^2([0,T];V), \quad \partial_t \vect u^\varepsilon \xrightharpoonup{t,H_\varepsilon} \partial_t \vect u \in L^2([0,T];H_0), 
	\end{equation}
	where $\vect u$ is the weak solution of the evolution problem \eqref{abstractevolutionproblem} for the limit operator $\mathcal{A}$, with the initial data $\vect u_0 \in V$, $P \vect u_1 \in H_0$ and the right-hand side $P \vect f \in L^2([0,T];H_0)$.
\end{theorem}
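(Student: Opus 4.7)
The plan is to follow the Laplace transform strategy that is invoked in the proof of Theorem \ref{connn1} and attributed to \cite{pastukhova}. The argument has three main phases: extracting weakly convergent subsequences via uniform a priori bounds, identifying the limit through the Laplace transform together with the (matrix) resolvent convergence from Theorem \ref{thmivan104}, and finally upgrading identification of one subsequence to convergence of the whole sequence by uniqueness of the limit problem.

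First, since each of the sequences in \eqref{ivan91111} is weakly convergent, the sequences $(\|\vect u_0^\varepsilon\|_{V_\varepsilon})$, $(\|\vect u_1^\varepsilon\|_{H_\varepsilon})$ and $(\|\vect f^\varepsilon\|_{L^2([0,T];H_\varepsilon)})$ are bounded. Applying the energy estimate \eqref{ivan55} (with $L^1$ replaced by $L^2$ via H\"older) to each $\vect u^\varepsilon$, one obtains
\[
\sup_\varepsilon\left(\|\vect u^\varepsilon\|_{L^\infty([0,T];V_\varepsilon)}+\|\partial_t\vect u^\varepsilon\|_{L^\infty([0,T];H_\varepsilon)}\right)<\infty.
\]
By Lemma \ref{ivan56}, up to a subsequence there exist $\tilde{\vect u}\in L^2([0,T];V)$ and $\tilde{\vect v}\in L^2([0,T];H)$ with
\[
\vect u^\varepsilon\xrightharpoonup{t,V_\varepsilon}\tilde{\vect u},\qquad \partial_t\vect u^\varepsilon\xrightharpoonup{t,H_\varepsilon}\tilde{\vect v}.
\]
A standard argument (passing to the limit in $\int_0^T\varphi(t)(\partial_t\vect u^\varepsilon(t),\vect w^\varepsilon)_{H_\varepsilon}dt=-\int_0^T\varphi'(t)(\vect u^\varepsilon(t),\vect w^\varepsilon)_{H_\varepsilon}dt$ with a strongly convergent test pair $\vect w^\varepsilon\xrightarrow{H_\varepsilon}\vect w$ and $\varphi\in C_{\rm c}^\infty(0,T)$) identifies $\tilde{\vect v}=\partial_t\tilde{\vect u}$.

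Next, rewriting the problem in the form \eqref{abstractevolutionproblemrephrased} with $\Vec{\vect u}^\varepsilon=(\vect u^\varepsilon,\partial_t\vect u^\varepsilon)^\top$, $\Vec{\vect u}_0^\varepsilon=(\vect u_0^\varepsilon,\vect u_1^\varepsilon)^\top$ and $\Vec{\vect f}^\varepsilon=(0,\vect f^\varepsilon)^\top$, the variation of constants formula \eqref{ivan903} yields, after taking the Laplace transform (justified by the $L^\infty$ bounds on $\Vec{\vect u}^\varepsilon$), for every $\lambda>1$,
\[
\mathcal{L}(\Vec{\vect u}^\varepsilon)(\lambda)=(\mathbb{A}_\varepsilon+\lambda\mathbb{I})^{-1}\Vec{\vect u}_0^\varepsilon+(\mathbb{A}_\varepsilon+\lambda\mathbb{I})^{-1}\mathcal{L}(\Vec{\vect f}^\varepsilon)(\lambda).
\]
From \eqref{ivan91111} one has $\Vec{\vect u}_0^\varepsilon\xrightharpoonup{E_\varepsilon}(\vect u_0,\vect u_1)^\top$ and $\mathcal{L}(\Vec{\vect f}^\varepsilon)(\lambda)\xrightharpoonup{E_\varepsilon}(0,\mathcal{L}(\vect f)(\lambda))^\top$. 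Theorem \ref{thmivan104}, part 1, applied to the matrix operators $\mathbb{A}_\varepsilon$ (whose weak resolvent convergence to $\mathbb{A}$ follows from the representation \eqref{ivan103} together with the assumed weak resolvent convergence of $\mathcal{A}_\varepsilon$ to $\mathcal{A}$), gives
\[
\mathcal{L}(\Vec{\vect u}^\varepsilon)(\lambda)\xrightharpoonup{E_\varepsilon}(\mathbb{A}+\lambda\mathbb{I})^{-1}\bigl(P\Vec{\vect u}_0+P(0,\mathcal{L}(\vect f)(\lambda))^\top\bigr)=\mathcal{L}(\Vec{\vect u})(\lambda),
\]
where $\vect u$ is the weak solution of the limit problem with initial data $\vect u_0$, $P\vect u_1$ and right-hand side $P\vect f$ (whose existence is ensured by Theorem \ref{existenceH}). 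On the other hand, by the definition of $\xrightharpoonup{t,V_\varepsilon}$ and $\xrightharpoonup{t,H_\varepsilon}$ and the fact that $t\mapsto e^{-\lambda t}\vect w^\varepsilon$ with $\vect w^\varepsilon\xrightarrow{E_\varepsilon}\vect w$ furnishes admissible test functions, one has $\mathcal{L}(\Vec{\vect u}^\varepsilon)(\lambda)\xrightharpoonup{E_\varepsilon}\mathcal{L}((\tilde{\vect u},\partial_t\tilde{\vect u})^\top)(\lambda)$. Hence $\mathcal{L}(\Vec{\vect u})(\lambda)=\mathcal{L}((\tilde{\vect u},\partial_t\tilde{\vect u})^\top)(\lambda)$ for every $\lambda>1$, and injectivity of the Laplace transform on $L^\infty([0,T];E)$ yields $\tilde{\vect u}=\vect u$ (in particular $\partial_t\tilde{\vect u}=\partial_t\vect u\in L^2([0,T];H_0)$, since $\vect u$ solves the limit problem in $H_0$).

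Because the limit identified in the previous step is independent of the extracted subsequence, the convergences \eqref{ivan102} hold along the whole sequence. The main delicate points are (i) verifying that $\mathbb{A}_\varepsilon$ converge in the weak resolvent sense to $\mathbb{A}$ from the assumed resolvent convergence of $\mathcal{A}_\varepsilon$ --- this is a direct consequence of the explicit $2\times 2$ block formula \eqref{ivan103} applied with $\lambda$ replaced by $\sqrt{\lambda}$ and the fact that $\xrightharpoonup{H_\varepsilon}$ on the first component upgrades to $\xrightharpoonup{V_\varepsilon}$ through the identity $(\mathcal{A}_\varepsilon+\lambda^2\mathcal{I})^{-1}$ --- and (ii) obtaining strongly convergent test pairs in $E_\varepsilon$, for which the existence of a dense set $S\subset V$ admitting strongly approximating sequences (\cite[Lemma 6.5]{pastukhova}) is used.
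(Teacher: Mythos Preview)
Your proof is correct and follows essentially the same route as the paper's own argument: extract subsequential limits via the energy bound \eqref{ivan55} and Lemma \ref{ivan56}, take the Laplace transform of the first-order system \eqref{abstractevolutionproblemrephrased}, and identify the limit by combining the matrix resolvent formula \eqref{ivan103} with the assumed weak resolvent convergence of $\mathcal{A}_\varepsilon$. Your write-up is slightly more explicit (the identification $\tilde{\vect v}=\partial_t\tilde{\vect u}$ and the closing remarks on the two delicate points), but there is no substantive difference in method; the only quibble is the phrase ``with $\lambda$ replaced by $\sqrt{\lambda}$'' in your point (i), which is unnecessary since \eqref{ivan103} is already stated for general $\lambda\neq 0$.
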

\begin{proof} 
	We write the problem \eqref{abstractevolutionproblem} in the form \eqref{abstractevolutionproblemrephrased}. As a consequence of \eqref{ivan91111}, \eqref{ivan55}, and Lemma \ref{ivan56}, there exist $\Vec {\vect u}_l=(\vect u_l, \partial_t \vect u_l) \in L^2([0,T];E_0)$ such that  the convergence \eqref{ivan102} holds. 
	Due to above mentioned bounds and weak convergence, we have 
	\begin{equation} \label{ivan110} 
		\mathcal{L}(\Vec{\vect u}^{\eps})(\lambda) \RRR \xrightharpoonup{E_{\eps}} \BBB \mathcal{L}(\Vec{\vect u}_l)(\lambda),\qquad \lambda>1, 
	\end{equation} 
	where $\mathcal{L}$ denotes the Laplace transform (where extend $\vect f^{\eps}$ and $\vect f$ by zero on $(T,\infty)$). 
	On the one hand, the Laplace transform $\mathcal{L}$ of the solution of the 
	$\eps$-parametrised equation is then given by  
	\begin{equation} \label{ivan111} 
		\mathcal{L}(\Vec{\vect u}^{\eps})(\lambda)=(\mathbb{A}_{\eps}+\lambda\mathbb{I})^{-1}\mathcal{L}(\Vec{\vect f}^{\eps})(\lambda)+(\mathbb{A}_{\eps}+\lambda\mathbb{I})^{-1} \Vec{\vect u}^{\eps}_0, \qquad\lambda>1.	
	\end{equation} 
	On the other hand, the Laplace transform of the solution of the limit problem is given by 
	\begin{equation} \label{ivan112} 
		\mathcal{L}(\Vec{\vect u})(\lambda)=(\mathbb{A}+\lambda\mathbb{I})^{-1}\mathcal{L}(P\Vec{\vect f})(\lambda)+(\mathbb{A}+\lambda\mathbb{I})^{-1}P \Vec{\vect u}_0,\qquad \lambda>1. 	
	\end{equation} 
Using \eqref{ivan103}, we infer that for every sequence $(\Vec{\vect f}^{\eps})_{\eps>0}\subset E_{\eps}$ such that \RRR$\Vec{\vect f}^{\eps} \xrightharpoonup{E_{\eps}} \Vec{\vect f} \in E$\BBB, and every $\lambda \neq 0$ we have (see \eqref{weakres})
	$$(\mathbb{A}+\lambda\mathbb{I})^{-1} \Vec{\vect f}^{\eps} \RRR \xrightharpoonup{E_{\eps}} \BBB (\mathbb{A}+\lambda\mathbb{I})^{-1} P \Vec{\vect f}. $$ 
	Using \eqref{ivan110}, \eqref{ivan111},  \eqref{ivan112}, and the fact that for $\lambda>1$ one has \RRR $\mathcal{L}(\Vec{\vect f}^{\eps})(\lambda) \xrightharpoonup{E_{\eps}} \mathcal{L}(\Vec{\vect f})(\lambda)$ \BBB and $\mathcal{L}(P\Vec{\vect f})(\lambda)=P\mathcal{L}(\Vec{\vect f})(\lambda),$ we infer that for every $\lambda>1$ one has \RRR $\mathcal{L}(\Vec{\vect u}_l)=\mathcal{L}(\Vec{\vect u}),$ \BBB and the claim follows. 
\end{proof} 

We proceed to the strong convergence analogue of Theorem \ref{thmivan551}, see \cite[Theorem 7.2]{pastukhova}. 
\begin{theorem}\label{thmivan552} 
	Let $(\mathcal{A}_\varepsilon)$ be a sequence of non-negative self-adjoint operators in $H_\varepsilon$ that converges to a non-negative self-adjoint operator $\mathcal{A}$ in some subspace $H_0 \leq H$ in the sense of strong resolvent convergence. Let $T>0$ and $(\vect u^\varepsilon)_{\varepsilon>0}$ be a sequence of weak solutions of the evolution problems \eqref{abstractevolutionproblem} where ${\mathcal A}$ is replaced by ${\mathcal A}_\varepsilon,$ with initial data $\vect u_0^\varepsilon \in V_\varepsilon$, $\vect u_1^\varepsilon \in H_\varepsilon$ and right-hand sides $ \vect f^\varepsilon \in L^2([0,T];H_\varepsilon)$ such that
	\begin{equation}
		\label{ivan912} 
		\begin{split}
			&\vect u_0^\varepsilon \xrightarrow{V_\varepsilon} \vect u_0 \in V, \quad \vect u_1^\varepsilon \xrightarrow{H_\varepsilon} \vect u_1 \in H_0, \\   &\vect f_\varepsilon(t) \xrightarrow{H_\varepsilon} \vect f(t) \in H_0\quad  \mbox{a.e. $t \in [0,T],$} \quad 
			 \int_0^T\bigl\|\vect f^\varepsilon(s)\bigr\|^2_{H_\varepsilon}ds \to \int_0^T \bigl\|\vect f(s)\bigr\|^2_{H}ds.
		\end{split}
	\end{equation}
	Then for every $t \in [0,T]$ one has 
	\begin{equation}\label{ivan913} 
		\vect u_\varepsilon(t) \xrightarrow{V_\varepsilon} \vect u(t) \in V, \quad \partial_t \vect u_\varepsilon(t) \xrightarrow{H_\varepsilon} \partial_t \vect u(t) \in H_0, 
	\end{equation}
	where $\vect u$ is the weak solution of the evolution problem of \eqref{abstractevolutionproblem} for the operator $\mathcal{A}$, with the initial data $\vect u_0 \in V$, $\vect u_1 \in H_0$ and the right-hand side $\vect f \in L^2([0,T];H_0)$.
\end{theorem}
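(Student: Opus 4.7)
The plan is to work with the first-order reformulation \eqref{abstractevolutionproblemrephrased} and pass to the limit directly in the variation-of-constants formula \eqref{ivan903}, using Theorem \ref{thmivan104}\,(2) as the main input. Set $\Vec{\vect u}^{\varepsilon}=(\vect u^{\varepsilon},\partial_{t}\vect u^{\varepsilon})^{\top}$, $\Vec{\vect u}_{0}^{\varepsilon}=(\vect u_{0}^{\varepsilon},\vect u_{1}^{\varepsilon})^{\top}$, $\Vec{\vect f}^{\varepsilon}=(0,\vect f^{\varepsilon})^{\top}$, and define $\Vec{\vect u}$, $\Vec{\vect u}_{0}$, $\Vec{\vect f}$ analogously in the limit. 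By \eqref{ivan912}, we have $\Vec{\vect u}_{0}^{\varepsilon}\xrightarrow{E_{\varepsilon}}\Vec{\vect u}_{0}\in E_{0}$ (recall that strong convergence in $V_{\varepsilon}$ implies strong convergence in $H_{\varepsilon}$). Similarly $\Vec{\vect f}^{\varepsilon}(s)\xrightarrow{E_{\varepsilon}}\Vec{\vect f}(s)\in E_{0}$ for a.e.\ $s\in[0,T]$, together with $\|\Vec{\vect f}^{\varepsilon}\|_{L^{2}([0,T];E_{\varepsilon})}\to\|\Vec{\vect f}\|_{L^{2}([0,T];E)}$. The desired conclusion \eqref{ivan913} will follow once we show that
\begin{equation}
\label{proofplan1}
\Vec{\vect u}^{\varepsilon}(t)\xrightarrow{E_{\varepsilon}}\Vec{\vect u}(t)\qquad\forall t\in[0,T].
\end{equation}

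The first step is to handle the initial-data contribution. By Theorem \ref{thmivan104}\,(2), applied to the strongly convergent sequence $\Vec{\vect u}_{0}^{\varepsilon}\xrightarrow{E_{\varepsilon}}\Vec{\vect u}_{0}\in E_{0}$, we obtain the pointwise strong semigroup convergence $e^{-t\mathbb{A}_{\varepsilon}}\Vec{\vect u}_{0}^{\varepsilon}\xrightarrow{E_{\varepsilon}}e^{-t\mathbb{A}}\Vec{\vect u}_{0}$ for every $t\in[0,T]$. The second step is to handle the Duhamel term
\[
\Vec{\vect v}^{\varepsilon}(t):=\int_{0}^{t}e^{-(t-s)\mathbb{A}_{\varepsilon}}\Vec{\vect f}^{\varepsilon}(s)\,ds.
\]
By Theorem \ref{thmivan104}\,(2) applied at each $s$ (using $\Vec{\vect f}^{\varepsilon}(s)\xrightarrow{E_{\varepsilon}}\Vec{\vect f}(s)$ for a.e.\ $s$), we have the pointwise strong convergence of the integrand $e^{-(t-s)\mathbb{A}_{\varepsilon}}\Vec{\vect f}^{\varepsilon}(s)\xrightarrow{E_{\varepsilon}}e^{-(t-s)\mathbb{A}}\Vec{\vect f}(s)$ for a.e.\ $s\in[0,t]$. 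Combined with the uniform contraction bound $\|e^{-(t-s)\mathbb{A}_{\varepsilon}}\|\le e^{t-s}\le e^{T}$ coming from \eqref{ivan910} and the norm convergence of $(\Vec{\vect f}^{\varepsilon})$ in $L^{2}([0,T];E_{\varepsilon})$, a Lebesgue dominated convergence argument (of Bochner type, adapted to the abstract notion of strong convergence in $E_{\varepsilon}$) yields $\Vec{\vect v}^{\varepsilon}(t)\xrightarrow{E_{\varepsilon}}\int_{0}^{t}e^{-(t-s)\mathbb{A}}\Vec{\vect f}(s)\,ds$ for each $t\in[0,T]$. Adding the two contributions and invoking \eqref{ivan903} establishes \eqref{proofplan1}.

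The main obstacle is the passage to the limit in the Duhamel integral, since the strong convergence $\xrightarrow{E_{\varepsilon}}$ is an abstract notion that does not immediately admit a classical dominated convergence theorem. To handle this, one tests against an arbitrary weakly convergent sequence $\Vec{\vect g}^{\varepsilon}\xrightharpoonup{E_{\varepsilon}}\Vec{\vect g}$ and writes
\[
\bigl(\Vec{\vect v}^{\varepsilon}(t),\Vec{\vect g}^{\varepsilon}\bigr)_{E_{\varepsilon}}=\int_{0}^{t}\bigl(\Vec{\vect f}^{\varepsilon}(s),e^{-(t-s)\mathbb{A}_{\varepsilon}^{*}}\Vec{\vect g}^{\varepsilon}\bigr)_{E_{\varepsilon}}ds,
\]
and then passes to the limit using the strong pointwise convergence of the adjoint semigroup (again from Theorem \ref{thmivan104}\,(2)), the norm convergence of $\Vec{\vect f}^{\varepsilon}$, and dominated convergence on the scalar level. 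To upgrade to strong convergence, one uses the strategy of showing norm convergence: combining the weak convergence $\Vec{\vect u}^{\varepsilon}(t)\xrightharpoonup{E_{\varepsilon}}\Vec{\vect u}(t)$ from Theorem \ref{thmivan551} with the energy identity
\[
\tfrac{1}{2}\|\Vec{\vect u}^{\varepsilon}(t)\|_{E_{\varepsilon}}^{2}+\tfrac{1}{2}\|\vect u^{\varepsilon}(t)\|_{H_{\varepsilon}}^{2}=\tfrac{1}{2}\|\Vec{\vect u}_{0}^{\varepsilon}\|_{E_{\varepsilon}}^{2}+\tfrac{1}{2}\|\vect u_{0}^{\varepsilon}\|_{H_{\varepsilon}}^{2}+\int_{0}^{t}\bigl(\vect f^{\varepsilon}(s),\partial_{s}\vect u^{\varepsilon}(s)\bigr)_{H_{\varepsilon}}ds,
\]
and the analogous identity in the limit, gives norm convergence $\|\Vec{\vect u}^{\varepsilon}(t)\|_{E_{\varepsilon}}\to\|\Vec{\vect u}(t)\|_{E}$, which together with weak convergence and the $E_{0}$-valuedness of the limit implies the strong convergence \eqref{proofplan1}. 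The right-hand side of the energy identity can be handled using the assumed $L^{2}$-norm convergence of $\vect f^{\varepsilon}$ in \eqref{ivan912} and the established (weak) convergence of $\partial_{t}\vect u^{\varepsilon}$; a Lemma \ref{lmivan401}-type lower semicontinuity argument controls the inequalities from both sides.
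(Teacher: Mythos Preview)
Your core approach is correct and coincides with the paper's: rewrite as a first-order system, use the variation-of-constants formula \eqref{ivan903}, apply Theorem \ref{thmivan104}\,(2) to get pointwise strong convergence of $e^{-t\mathbb{A}_{\varepsilon}}\Vec{\vect u}_{0}^{\varepsilon}$ and of the Duhamel integrand, and pass to the limit in the integral by dominated convergence. The paper's proof is literally this one sentence; the only point it singles out that you underplay is the role of Lemma \ref{lmivan401}: from the assumed convergence $\int_{0}^{T}\|\vect f^{\varepsilon}\|^{2}_{H_{\varepsilon}}\to\int_{0}^{T}\|\vect f\|^{2}_{H}$ and lower semicontinuity on both $[0,t]$ and $[t,T]$, one deduces $\int_{0}^{t}\|\vect f^{\varepsilon}\|^{2}_{H_{\varepsilon}}\to\int_{0}^{t}\|\vect f\|^{2}_{H}$ for \emph{every} $t\le T$, which is precisely what makes the scalar dominated-convergence argument go through on each interval $[0,t]$.

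Your third paragraph is an unnecessary detour. The paper does not use an energy identity, and the one you wrote is not correct as stated (the left-hand side should be $\tfrac{1}{2}\|\partial_{t}\vect u^{\varepsilon}(t)\|_{H_{\varepsilon}}^{2}+\tfrac{1}{2}\|\mathcal{A}_{\varepsilon}^{1/2}\vect u^{\varepsilon}(t)\|_{H_{\varepsilon}}^{2}$, not $\tfrac{1}{2}\|\Vec{\vect u}^{\varepsilon}(t)\|_{E_{\varepsilon}}^{2}+\tfrac{1}{2}\|\vect u^{\varepsilon}(t)\|_{H_{\varepsilon}}^{2}$; the discrepancy is an extra $\|\vect u^{\varepsilon}\|^{2}$ that would produce a cross term $(\vect u^{\varepsilon},\partial_{t}\vect u^{\varepsilon})$ on the right). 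Moreover, passing to the limit in $\int_{0}^{t}(\vect f^{\varepsilon},\partial_{s}\vect u^{\varepsilon})_{H_{\varepsilon}}ds$ from merely weak time-space convergence of $\partial_{t}\vect u^{\varepsilon}$ and pointwise strong convergence of $\vect f^{\varepsilon}$ is not a clean pairing in this abstract framework and would require essentially the same dominated-convergence/Lemma \ref{lmivan401} machinery you are trying to avoid. Drop that paragraph; your first two paragraphs, with the Lemma \ref{lmivan401} observation inserted to justify the dominated convergence on $[0,t]$, already constitute the paper's proof.
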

\begin{proof} 
	Again we use \eqref{abstractevolutionproblemrephrased}, formula
	\eqref{ivan903} and Theorem \ref{thmivan104}. The proof follows by using Lebesgue theorem on {\rm Dom}inated convergence and the fact that as a consequence of Lemma \ref{lmivan401} we have $\int_0^t \|\vect f^\varepsilon(s)\|^2_{H_\varepsilon}ds \to \int_0^t \|\vect f(s)\|^2_{H}ds$, for every $t\leq T$. 
\end{proof} 	
Theorem \ref{thmivan552} can be generalized as follows. 
\begin{theorem}
	\label{thmivan553} 
	Let $(\mathcal{A}_\varepsilon)_{\eps>0}$ be a sequence of non-negative self-adjoint operators in $H_\varepsilon$ that converge to a non-negative self-adjoint operator $\mathcal{A}$ in some subspace $H_0 \leq H$ in the sense of strong resolvent convergence. Let $T>0$ and $(\vect u^\varepsilon)_{\varepsilon>0}$ be a sequence of weak solutions of the evolution problems \eqref{abstractevolutionproblem} where ${\mathcal A}$ is replaced by ${\mathcal A}_\varepsilon,$ with initial data $\vect u_0^\varepsilon \in V_\varepsilon$, $\vect u_1^\varepsilon \in H_\varepsilon$ and right-hand sides $ \vect f^\varepsilon \in L^2([0,T];V_\varepsilon^*)$, $ \partial_t\vect f^\varepsilon \in L^2([0,T];V_\varepsilon^*)$ such that the sequences  $(\vect f^\varepsilon)_{\eps>0}$, $(\partial_t\vect f^\varepsilon)_{\eps>0}$ are bounded in $L^2([0,T];\RRR V^*_{\eps} \BBB)$ and
	\begin{equation*}
		\begin{aligned}
			&\vect u_0^\varepsilon \xrightarrow{V_\varepsilon} \vect u_0 \in V, \quad \vect u_1^\varepsilon \xrightarrow{H_\varepsilon} \vect u_1\in H , 
			\\[0.3em]
			&\left(\mathcal{A}_\varepsilon + \RRR \mathcal{I} \BBB \right)^{-1} \vect f^\varepsilon  \xrightharpoonup{t, V_\varepsilon } \left(\mathcal{A} + \RRR \mathcal{I} \BBB \right)^{-1} \vect f  \in L^2([0,T];V),  
		\end{aligned}
	\end{equation*}
	where $\vect f, \partial_t \vect f \in L^2(O,T;V^*)$. 
	Then one has
	\begin{equation*}
		\vect u^\varepsilon \xrightharpoonup{t,V_\varepsilon} \vect u \in L^2([0,T];V), \quad \partial_t \vect u^\varepsilon \xrightharpoonup{t,H_\varepsilon} \partial_t \vect u \in L^2([0,T];H_0), 
	\end{equation*}
	where $\vect u$ is the weak solution of the evolution problem \eqref{abstractevolutionproblem} for the operator $\mathcal{A}$, with the initial data $\vect u_0 \in V$, $P \vect u_1 \in H_0$ and the right-hand side $\vect f \in L^2([0,T];V^{*})$.
	Furthermore, if we assume that 
	\begin{equation}
		\label{ivan561} 
		\begin{aligned}
			&\vect u_0^\varepsilon \xrightarrow{V_\varepsilon} \vect u_0 \in V, \quad \vect u_1^\varepsilon \xrightarrow{H_\varepsilon} \vect u_1\in H_0 , \\[0.3em]
			&\left(\mathcal{A}_\varepsilon + \RRR \mathcal{I} \BBB \right)^{-1} \vect f^\varepsilon(0)  \xrightarrow{V_\varepsilon } \left(\mathcal{A} + \RRR \mathcal{I} \BBB \right)^{-1} \vect f(0) \in  V, \\[0.3em]
			&\left(\mathcal{A}_\varepsilon + \RRR \mathcal{I} \BBB \right)^{-1} \partial_t\vect f^\varepsilon (t) \xrightarrow{ V_\varepsilon } \left(\mathcal{A} + \RRR \mathcal{I} \BBB \right)^{-1} \partial_t\vect f(t) \in V, \quad \mbox{ for a.e. $t\in [0,T]$}, \\[0.3em]
			&\int_0^T \bigl\|\left(\mathcal{A}_{\eps} + \RRR \mathcal{I} \BBB \right)^{-1}\partial_s \vect f^{\varepsilon}(s)\bigr\|_{V_{\eps}}^2\,ds \to \int_0^T \bigl\|\left(\mathcal{A} + \RRR \mathcal{I} \BBB \right)^{-1}\partial_s \vect f(s)\bigr\|_V^2\,ds, 
		\end{aligned}
	\end{equation}
	where $\vect f,\ \partial_t \vect f \in L^2([0,T];V^*),$ 
	then we have 
	\begin{equation*}
		\vect u_\varepsilon(t) \xrightarrow{V_\varepsilon} \vect u(t) \in V, \quad \partial_t \vect u_\varepsilon(t) \xrightarrow{H_\varepsilon} \partial_t \vect u(t) \in H_0, \quad \RRR \forall t \in [0,T],\BBB
	\end{equation*}
	where $\vect u$ is the weak solution of the evolution problem \eqref{abstractevolutionproblem} for the operator $\mathcal{A}$, with the initial data $\vect u_0 \in V$, $ \vect u_1 \in H_0$ and the right hand side $\vect f \in L^2([0,T];V^*)$.
\end{theorem}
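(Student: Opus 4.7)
The plan is to reduce both claims to the abstract framework already developed in Theorem \ref{thmivan104} and Theorem \ref{thmivan552}, with the key observation being that the representation formula \eqref{exponentialsemigroupsolution} expresses the solution entirely in terms of quantities of the form $(\mathbb{A}_\varepsilon+\mathbb{I})^{-1}\Vec{\vect g}^{\varepsilon}$ with $\Vec{\vect g}^{\varepsilon}=(0,\vect g^{\varepsilon})^\top\in E_\varepsilon$, together with exponentials $e^{-t\mathbb{A}_\varepsilon}$. By virtue of \eqref{ivan955}, the assumption that $(\mathcal{A}_\varepsilon+\mathcal{I})^{-1}\vect f^{\varepsilon}$ converges (weakly or strongly) in $V_\varepsilon$ is precisely what is needed so that $(\mathbb{A}_\varepsilon+\mathbb{I})^{-1}(0,\vect f^{\varepsilon})^\top$ converges in $E_\varepsilon$ in the corresponding sense, and analogously for $\partial_t\vect f^\varepsilon$ and for $\vect f^\varepsilon(0)$.

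For the first (weak convergence) claim, I would first invoke the a priori bound \eqref{ivan992} together with the uniform bounds on $\vect f^\varepsilon(0)$ (which follow from the bounds on $\vect f^\varepsilon$ and $\partial_t \vect f^\varepsilon$ in $L^2([0,T];V_\varepsilon^*)$ via a trace-in-time estimate) to conclude that $(\vect u^\varepsilon)_{\varepsilon>0}$ is bounded in $L^\infty([0,T];V_\varepsilon)$ and $(\partial_t\vect u^\varepsilon)_{\varepsilon>0}$ is bounded in $L^\infty([0,T];H_\varepsilon)$. By Lemma \ref{ivan56} extract a subsequence with $\vect u^\varepsilon \xrightharpoonup{t,V_\varepsilon} \vect u_l$ and $\partial_t \vect u^\varepsilon \xrightharpoonup{t,H_\varepsilon}\partial_t \vect u_l$. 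Taking the Laplace transform of \eqref{abstractevolutionproblemrephrased} yields, for $\lambda>1$,
\begin{equation*}
\mathcal{L}(\Vec{\vect u}^\varepsilon)(\lambda)=(\mathbb{A}_\varepsilon+\lambda\mathbb{I})^{-1}\Vec{\vect u}_0^\varepsilon+(\mathbb{A}_\varepsilon+\lambda\mathbb{I})^{-1}\mathcal{L}(\Vec{\vect f}^\varepsilon)(\lambda),
\end{equation*}
and passing to the limit term by term via Theorem \ref{thmivan104}\,(1) --- after rewriting $(\mathbb{A}_\varepsilon+\lambda\mathbb{I})^{-1}\Vec{\vect f}^\varepsilon$ using \eqref{ivan103} and \eqref{ivan955} so that only $(\mathcal{A}_\varepsilon+\lambda^2\mathcal{I})^{-1}\vect f^\varepsilon$ appears, which one expresses in terms of $(\mathcal{A}_\varepsilon+\mathcal{I})^{-1}\vect f^\varepsilon$ via the second resolvent identity --- one identifies $\mathcal{L}(\Vec{\vect u}_l)(\lambda)=\mathcal{L}(\Vec{\vect u})(\lambda)$ for every $\lambda>1$, where $\Vec{\vect u}$ is the solution to the limit problem given by Theorem \ref{existenceV*}. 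Injectivity of the Laplace transform and uniqueness of the limit problem then yield the claim for the whole sequence.

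For the second (strong convergence) claim, I would substitute $\Vec{\vect f}^\varepsilon=(0,\vect f^\varepsilon)^\top$ into \eqref{exponentialsemigroupsolution} and treat each of the four terms in turn. Theorem \ref{thmivan552} (or its immediate analog via Theorem \ref{thmivan104}\,(2)) gives strong pointwise $E_\varepsilon$-convergence of $e^{-t\mathbb{A}_\varepsilon}\Vec{\vect u}_0^\varepsilon$ and $e^{-t\mathbb{A}_\varepsilon}(\mathbb{A}_\varepsilon+\mathbb{I})^{-1}\Vec{\vect f}^\varepsilon(0)$, the latter since by hypothesis and \eqref{ivan955} $(\mathbb{A}_\varepsilon+\mathbb{I})^{-1}\Vec{\vect f}^\varepsilon(0)\xrightarrow{E_\varepsilon}(\mathbb{A}+\mathbb{I})^{-1}P\Vec{\vect f}(0)$. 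The third (purely static) term $(\mathbb{A}_\varepsilon+\mathbb{I})^{-1}\Vec{\vect f}^\varepsilon(t)$ converges strongly in $E_\varepsilon$ for a.e.\ $t$ by \eqref{ivan561}. For the integral term, I will apply Theorem \ref{thmivan104}\,(2) to the integrand $e^{(s-t)\mathbb{A}_\varepsilon}(\mathbb{A}_\varepsilon+\mathbb{I})^{-1}(\partial_s\Vec{\vect f}^\varepsilon-\Vec{\vect f}^\varepsilon)(s)$ pointwise in $s\in[0,t]$, note the uniform $L^2$-in-$s$ bound from \eqref{ivan561} and the contraction estimate \eqref{ivan910}, and conclude strong convergence of the integral using Lebesgue's dominated convergence together with the norm-convergence statement in \eqref{ivan561} (which upgrades pointwise strong $V_\varepsilon$-convergence to strong convergence in $L^2([0,T];V_\varepsilon)$, exactly as in the proof of Theorem \ref{thmivan552}).

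The main obstacle I expect is the integral term, since one needs simultaneously pointwise strong convergence of the integrand and a uniform dominating bound compatible with strong (not merely weak) convergence. The hypothesis \eqref{ivan561} is tailored precisely for this: the pointwise strong $V_\varepsilon$-convergence of $(\mathcal{A}_\varepsilon+\mathcal{I})^{-1}\partial_t\vect f^\varepsilon$ feeds into Theorem \ref{thmivan104}\,(2) through \eqref{ivan955}, while the convergence of $L^2$-norms supplies equi-integrability. A subtle point is that Theorem \ref{thmivan104}\,(2) produces only pointwise-in-$t$ strong convergence of $e^{-t\mathbb{A}_\varepsilon}$ applied to strongly convergent data, and this is not uniform in $t$; however, combined with the uniform contraction $\|e^{-t\mathbb{A}_\varepsilon}\|\leq e^t$ and the norm convergence of $\|(\mathcal{A}_\varepsilon+\mathcal{I})^{-1}\partial_s\vect f^\varepsilon(s)\|_{V_\varepsilon}$ in $L^2(0,T)$, Lebesgue dominated convergence in $s$ suffices. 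Assembling the four pieces then yields strong $E_\varepsilon$-convergence of $\Vec{\vect u}^\varepsilon(t)$ for every $t\in[0,T]$, which is exactly \eqref{ivan913}.
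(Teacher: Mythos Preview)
Your proposal is correct and follows essentially the same approach as the paper's proof: reduce both claims to the machinery of Theorems \ref{thmivan551}, \ref{thmivan552}, and \ref{thmivan104} by replacing the variation-of-constants formula \eqref{ivan903} with the integrated-by-parts version \eqref{exponentialsemigroupsolution}, using \eqref{ivan955} to translate the $V_\varepsilon^*$ assumptions on $\vect f^\varepsilon$ into $E_\varepsilon$-convergence of $(\mathbb{A}_\varepsilon+\mathbb{I})^{-1}(0,\vect f^\varepsilon)^\top$, and then handling the integral term via dominated convergence and Lemma \ref{lmivan401}. The only cosmetic difference is that the paper justifies the Laplace-transform identity \eqref{ivan111} for $\vect f^\varepsilon\in V_\varepsilon^*$ by a density argument (approximating by $L^2([0,T];H_\varepsilon)$ data), whereas you instead pass from $(\mathcal{A}_\varepsilon+\lambda^2\mathcal{I})^{-1}\vect f^\varepsilon$ to the given $(\mathcal{A}_\varepsilon+\mathcal{I})^{-1}\vect f^\varepsilon$ via the second resolvent identity; both routes are valid and lead to the same conclusion.
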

\begin{proof}
	The argument follows the proofs of Theorem \ref{thmivan551} and Theorem \ref{thmivan552}, by using the formula \eqref{exponentialsemigroupsolution} instead of \eqref{ivan903} (see also \eqref{ivan955}). 
	
	To prove the first part, notice that the Laplace transform of the solution $\Vec{\vect u}^{\eps}$, respectively $\Vec{\vect u},$ is given by formula \eqref{ivan111}, respectively \eqref{ivan112}. (Note that in \eqref{ivan112} we replace $P\Vec{\vect f}$ with $\Vec{\vect f}$.) This is established by a density argument, using the fact that $L^2([0,T];H_{\eps})$ is dense in  $L^2([0,T];V^*_{\eps})$, respectively that $L^2([0,T];H)$ is dense in  $L^2([0,T];V^*)$. 
	
	To prove the second part, use the second part of Theorem \ref{thmivan104} and notice that \eqref{ivan561} implies that for all $t\in[0,T]$ one has
	 \begin{align*}
	 &\left(\mathcal{A}_{\eps} + \RRR \mathcal{I} \BBB \right)^{-1}\vect f^{\eps}(t)  \xrightarrow{V_\varepsilon } \left(\mathcal{A}_{\eps} + \RRR \mathcal{I} \BBB \right)^{-1}\vect f(t),\\[0.4em]
	&\int_0^t e^{(s-t)\mathbb{A}_{\eps}} (\mathbb{A}_{\eps}+\RRR \mathbb{I} \BBB)^{-1}\bigl(0, \vect f^{\eps}(s)\bigr)^\top ds \xrightarrow{E_{\eps}} \int_0^t e^{(s-t)\mathbb{A}} (\mathbb{A}+\RRR \mathbb{I} \BBB)^{-1}\bigl(0, \vect f(s)\bigr)^\top ds, \\[0.4em]
	&\int_0^t e^{(s-t)\mathbb{A}_{\eps}} (\mathbb{A}_{\eps}+\RRR \mathbb{I} \BBB)^{-1}\bigl(0,\partial_s \vect f^{\eps}(s)\bigr)^\top ds \xrightarrow{E_{\eps}}\int_0^t e^{(s-t)\mathbb{A}} (\mathbb{A}+\RRR \mathbb{I} \BBB)^{-1} \bigl(0,\partial_s \vect f(s)\bigr)^\top ds,
	\end{align*} 
	as a consequence of the dominated convergence theorem and Lemma \ref{lmivan401}. 
\end{proof}
\begin{remark} \label{ivan1013} 
	It is easy to see that in Theorem \ref{thmivan551}, Theorem \ref{thmivan552}, and Theorem \ref{thmivan553} the sequences $(\|\vect u^{\eps}\|_{V_{\eps}})_{\varepsilon>0}$ and $(\|\partial_t \vect u^{\eps}\|_{H_{\eps}})_{\varepsilon>0}$ are bounded in $L^{\infty}(0,T)$. 
\end{remark} 	
\begin{remark} \label{ivan71} 
	The claims of Theorem \ref{thmivan551}, Theorem \ref{thmivan552} and Theorem \ref{thmivan553} can be strengthened slightly. Namely, it suffices to require that $\vect f^{\eps} \xrightharpoonup{t,1,H_{\eps}} \vect f \in L^1\bigl([0,T]; H\bigr)$ in \eqref{ivan91111} to obtain the convergence  	
	\[
	\vect u^\varepsilon \xrightharpoonup{t,\infty,V_\varepsilon} \vect u \in L^\infty\bigl([0,T]; V\bigr), \qquad \partial_t \vect u^\varepsilon \xrightharpoonup{t,\infty,H_\varepsilon} \partial_t \vect u \in L^\infty\bigl([0,T]; H_0\bigr). 
	\]
	Similarly, it suffices to require $\int_0^T\|\vect f^\eps\|_{H_{\eps}} \,ds \to \int_0^T\|\vect f(s)\|\,ds$ in \eqref{ivan912} to obtain \eqref{ivan913}. The statement of Theorem \ref{thmivan553} can also be strengthened accordingly. 
\end{remark} 	
\subsection{Additional claims} 
In the membrane space, under no additional assumptions on the symmetries of the set $Y_0$, the function $\tilde{\beta}^{\rm memb}(\lambda)$ (see (\ref{betamemb1}) as well as the parts A of Sections \ref{hlleps}, \ref{epsllh}) is a symmetric matrix that is not necessarily diagonal. We will first prove two lemmata and then a proposition  concerning the problem \eqref{generalizedeigenvaluemembrane}, which involves the function \RRR $\tilde{\beta}_{\delta}^{\rm memb}(\lambda)$  defined in Section \ref{limreseq}. \BBB
\begin{lemma}\label{lmivanocjena1} There exists $C>0$ such that for every $\lambda > 0$ we have:
	\begin{equation*}
		\left( \bigl(\tilde{\beta}_{\delta}^{\rm memb}\bigr)'(\lambda) \xi, \xi \right) > C|\xi|^2, \quad \forall \xi \in \R^2, \xi \neq 0.
	\end{equation*}
\end{lemma}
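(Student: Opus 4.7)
The plan is to use the spectral representation~(\ref{razgovor1}) of $\tilde{\beta}^{\rm memb}_\delta$ together with a Parseval identity to produce a \emph{uniform} positive lower bound, with explicit constant $C=\langle\rho_1\chi_{Y_1}\rangle$, which is strictly positive thanks to $\rho_1>c_1$ on $Y_1$ and $|Y_1|>0$.

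Concretely, let $(\tilde\varphi_n)$ denote an $L^2_{\rho_0}$-orthonormal sequence in $L^{2,\rm memb}(I\times Y_0;\R^3)$ of eigenfunctions of $\tilde{\mathcal{A}}_{00,\delta}$ associated with the ``nonzero-mean'' eigenvalues $\tilde{\eta}_n$ (repeated according to multiplicity), and set $c_n(\xi):=\langle\rho_0\overline{(\tilde\varphi_n)_{*}}\rangle\cdot\xi$. Using the memb analogue of~(\ref{razgovor1}), I will write
\[
\bigl(\tilde{\beta}^{\rm memb}_\delta(\lambda)\xi,\xi\bigr)=\lambda\langle\rho\rangle|\xi|^2+\sum_{n}\frac{\lambda^2\, c_n(\xi)^2}{\tilde{\eta}_n-\lambda}.
\]
Since $\tilde{\eta}_n\to\infty$ and $\sum_n c_n(\xi)^2<\infty$ (by the Parseval identity below), the series is locally uniformly convergent on the resolvent set of $\tilde{\mathcal{A}}_{00,\delta}$ and may be differentiated termwise. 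Next I will use the elementary identity
\[
\frac{\lambda^2}{\tilde{\eta}_n-\lambda}=-\lambda-\tilde{\eta}_n+\frac{\tilde{\eta}_n^2}{\tilde{\eta}_n-\lambda},\qquad \frac{d}{d\lambda}\biggl(\frac{\lambda^2}{\tilde{\eta}_n-\lambda}\biggr)=\frac{\tilde{\eta}_n^2}{(\tilde{\eta}_n-\lambda)^2}-1,
\]
to obtain
\[
\bigl((\tilde{\beta}^{\rm memb}_\delta)'(\lambda)\xi,\xi\bigr)=\langle\rho\rangle|\xi|^2-\sum_{n}c_n(\xi)^2+\sum_{n}\frac{c_n(\xi)^2\,\tilde{\eta}_n^2}{(\tilde{\eta}_n-\lambda)^2}.
\]

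The key step is the Parseval identity applied to the constant vector field $\hat{\xi}(x_3,y):=(\xi_1,\xi_2,0)^\top$, viewed as an element of the $\rho_0$-weighted space $L^{2,\rm memb}(I\times Y_0;\R^3)$: note that $\hat\xi$ indeed belongs to the memb subspace, since its first two components are even in $x_3$ (being constant) and its third component vanishes. Since $\rho_0$ is independent of $x_3$ and $(\tilde\varphi_n)_3$ is odd in $x_3$, the $L^2_{\rho_0}$-inner product of $\hat\xi$ with $\tilde\varphi_n$ reduces to $\langle\rho_0\overline{(\tilde\varphi_n)_{*}}\rangle\cdot\xi=c_n(\xi)$; moreover, $\hat\xi$ is $L^2_{\rho_0}$-orthogonal to every eigenfunction of $\tilde{\mathcal{A}}_{00,\delta}$ whose $\rho_0$-mean vanishes in all components (i.e.\ those associated with the $\tilde\alpha_n$). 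Therefore
\[
\sum_{n}c_n(\xi)^2=\|\hat{\xi}\|^2_{L^2_{\rho_0}(I\times Y_0;\R^3)}=|\xi|^2\langle\rho_0\chi_{Y_0}\rangle.
\]
Combining this with the decomposition $\langle\rho\rangle=\langle\rho_0\chi_{Y_0}\rangle+\langle\rho_1\chi_{Y_1}\rangle$, the first two terms on the right-hand side of the expression for $((\tilde{\beta}^{\rm memb}_\delta)'(\lambda)\xi,\xi)$ collapse to $\langle\rho_1\chi_{Y_1}\rangle|\xi|^2$, while the remaining series is manifestly non-negative; this yields
\[
\bigl((\tilde{\beta}^{\rm memb}_\delta)'(\lambda)\xi,\xi\bigr)\geq\langle\rho_1\chi_{Y_1}\rangle|\xi|^2,
\]
uniformly for $\lambda$ in the domain of $\tilde{\beta}^{\rm memb}_\delta$, with $\langle\rho_1\chi_{Y_1}\rangle>0$.

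I expect the only real difficulty to lie in the careful bookkeeping around the memb subspace: one must verify that the $L^2_{\rho_0}$-orthogonality structure on $L^{2,\rm memb}(I\times Y_0;\R^3)$ used above is consistent with the invariance of this subspace under $\mathcal{A}_{00,\delta}$ (a consequence of Assumption~\ref{assumivan1}\,(1), {\it cf.} Remark~\ref{inv_remark}), and that the restricted spectral sum defining $\tilde{\beta}^{\rm memb}_\delta$ really runs over those eigenvalues of $\tilde{\mathcal{A}}_{00,\delta}$ whose eigenfunctions have nonzero $\rho_0$-mean in at least one of the first two components, consistently with the definition of $\sigma(\tilde{\mathcal{A}}_{00,\delta})'$.
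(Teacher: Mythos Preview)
Your proof is correct and follows essentially the same route as the paper's: both compute $\frac{d}{d\lambda}\bigl(\lambda^2/(\tilde\eta_n-\lambda)\bigr)=\tilde\eta_n^2/(\tilde\eta_n-\lambda)^2-1$, invoke the Parseval identity $\sum_n\langle\rho_0\overline{(\tilde{\vect\varphi}_n)_*}\rangle\otimes\langle\rho_0\overline{(\tilde{\vect\varphi}_n)_*}\rangle=\langle\rho_0\rangle\vect I_{2\times2}$ to cancel the $-1$ contributions against the $\langle\rho_0\rangle$ part of $\langle\rho\rangle$, and then drop the manifestly non-negative remaining series to obtain the lower bound $\langle\rho_1\rangle|\xi|^2$. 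Your only addition is the explicit justification of that Parseval identity via the constant test field $\hat\xi=(\xi_1,\xi_2,0)^\top\in L^{2,\rm memb}_{\rho_0}(I\times Y_0;\R^3)$, which the paper simply states without further comment.
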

\begin{proof}
	We will give the proof for $\delta \in (0,\infty),$ and the other cases can be treated analogously. Notice that for the function $\lambda \mapsto {(\tilde{\eta}_n - \lambda)}^{-1}{\lambda^2}$ one has
	\begin{equation*}
		\left(\frac{\lambda^2}{\tilde{\eta}_n - \lambda}\right)' = -1 + \frac{\tilde{\eta}_n^2}{(\tilde{\eta}_n - \lambda)^2}.
	\end{equation*}
		It follows that
		\begin{equation*}
			\begin{split}
				\bigl(\tilde{\beta}_{\delta}^{\rm memb}\bigr)'(\lambda)  &= \vect{I}_{2 \times 2}\langle \rho \rangle+ \sum_{n \in \N} \frac{\tilde{\eta}_n^2}{(\tilde{\eta}_n - \lambda)^2}\bigl\langle\rho_0\overline{(\tilde{\vect \varphi}_n)_* }\bigr\rangle \cdot \bigl\langle\rho_0\overline{ (\tilde{\vect \varphi}_n)_* }\bigr\rangle^\top - \sum_{n \in \N}\bigl\langle\rho_0\overline{(\tilde{\vect \varphi}_n)_* }\bigr\rangle \cdot \bigl\langle\rho_0\overline{ (\tilde{\vect \varphi}_n)_* }\bigr\rangle^\top \\[0.4em]
				&=
				\vect{I}_{2 \times 2} \langle \rho_1 \rangle + \sum_{n \in \N} \frac{\tilde{\eta}_n^2}{(\tilde{\eta}_n - \lambda)^2}\bigl\langle\rho_0 \overline{(\tilde{\vect \varphi}_n)_* }\bigr\rangle \cdot \bigl\langle\rho_0\overline{ (\tilde{\vect \varphi}_n)_* }\bigr\rangle^\top.
			\end{split}
		\end{equation*}
	Here $\tilde {\vect{\varphi}}_n,$ $n\in{\mathbb N},$ are those eigenfunctions of the operator $\tilde{\mathcal{A}}_{00,\delta}$ associated with the eigenvalues $\tilde{\eta}_n,$ $n\in{\mathbb N},$ that satisfy
	$$ \bigl\langle\rho_0 \overline{(\tilde{\vect \varphi}_n)_* }\bigl\rangle \neq 0,  $$
	and we have used the identity
	$$\vect{I}_{2 \times 2} \langle \rho_0 \rangle=\sum_{n \in \N}\bigl\langle\rho_0\overline{(\tilde{\vect \varphi}_n)_* }\bigr\rangle \cdot \bigl\langle\rho_0\overline{ (\tilde{\vect \varphi}_n)_* }\bigr\rangle^\top,\quad \alpha,\beta=1,2. $$
	
	The proof of the required estimate is concluded by noting that for $\xi \neq 0$ one has 
	\begin{equation*}
		\left(\bigl(\tilde{\beta}_{\delta}^{\rm memb}\bigr)'(\lambda) \xi, \xi \right) = \langle \rho_1 \rangle | \xi |^2 + \sum_{n \in \N} \frac{\tilde{\eta}_n^2}{(\tilde{\eta}_n - \lambda)^2} \left(\bigl\langle\rho_0\overline{ (\tilde{\vect \varphi}_n)_* }\bigr\rangle, \xi \right)^2 > \langle \rho_1\rangle |\xi|^2.
	\end{equation*}
\end{proof}
\begin{lemma}\label{lmivan2}
	Let $\lambda_0 >0$ be such that there exists a nontrivial solution $\vect \funcA \in H^1_{\gamma_D}(\omega;\R^2)$ of the problem \eqref{generalizedeigenvaluemembrane}. Then there exists $\eta>0$ such that for each  $\lambda \in ( \lambda_0, \lambda_0 + \eta)$ the problem \eqref{generalizedeigenvaluemembrane} has only the trivial solution.
\end{lemma}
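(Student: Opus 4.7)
The plan is to rewrite \eqref{generalizedeigenvaluemembrane} as an ordinary eigenvalue problem for a $\lambda$-dependent self-adjoint operator and then exploit the strict monotonicity of $\tilde{\beta}^{\rm memb}_\delta$ provided by Lemma \ref{lmivanocjena1}. For every $\lambda > 0$ outside the (countable) set of poles of $\tilde{\beta}^{\rm memb}_\delta$, the operator
\[
\mathcal{B}(\lambda) := \mathcal{A}^{\rm memb}_\delta - \tilde{\beta}^{\rm memb}_\delta(\lambda)
\]
on $L^2(\omega;\R^2)$, understood via the difference of the associated bilinear forms, is self-adjoint with compact resolvent (inherited from $\mathcal{A}^{\rm memb}_\delta$, since $\tilde{\beta}^{\rm memb}_\delta(\lambda)$ acts as a bounded symmetric multiplier). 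Its spectrum is therefore a non-decreasing sequence $\mu_1(\lambda) \leq \mu_2(\lambda) \leq \ldots \to +\infty$, and \eqref{generalizedeigenvaluemembrane} admits a nontrivial solution at $\lambda$ exactly when some $\mu_k(\lambda) = 0$.

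For $\lambda_1 < \lambda_2$ lying in a common pole-free interval, Lemma \ref{lmivanocjena1} together with the fundamental theorem of calculus yields
\[
\int_\omega \bigl(\tilde{\beta}^{\rm memb}_\delta(\lambda_2) - \tilde{\beta}^{\rm memb}_\delta(\lambda_1)\bigr) \vect\varphi \cdot \vect\varphi \,d\hat{x} \geq C(\lambda_2 - \lambda_1) \|\vect\varphi\|_{L^2}^2 \qquad \forall \vect\varphi \in H^1_{\gamma_{\rm D}}(\omega;\R^2),
\]
with $C > 0$ independent of $\lambda_1, \lambda_2$. The minimax characterisation of $\mu_k$ then gives the strict ordering $\mu_k(\lambda_2) \leq \mu_k(\lambda_1) - C(\lambda_2 - \lambda_1)$ for every $k \in \N$, so each $\mu_k$ is strictly decreasing in $\lambda$ with a uniform Lipschitz slope on any pole-free subinterval.

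Since the poles of $\tilde{\beta}^{\rm memb}_\delta$ are contained in the discrete set $\sigma(\tilde{\mathcal{A}}_{00,\delta}) \setminus \sigma(\tilde{\mathcal{A}}_{00,\delta})'$, one may choose $\eta > 0$ small enough that $(\lambda_0, \lambda_0 + \eta]$ contains no pole. Let $J \subset \N$ be the (non-empty, finite) set of indices with $\mu_k(\lambda_0) = 0$; for $k \in J$ the monotonicity implies $\mu_k(\lambda) \leq -C(\lambda - \lambda_0) < 0$ throughout $(\lambda_0, \lambda_0 + \eta]$. For $k \notin J$ with $\mu_k(\lambda_0) \neq 0$, continuity of $\mu_k$ on the pole-free interval, combined with a further shrinking of $\eta$, guarantees that $\mu_k(\lambda) \neq 0$ on $(\lambda_0, \lambda_0 + \eta]$ for every $k$ in any fixed bounded initial segment. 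For the remaining large indices, $\mu_k(\lambda_0) \to +\infty$ as $k \to \infty$ together with the uniform Lipschitz slope ensures $\mu_k(\lambda) > 0$ throughout the small interval. Hence no $\mu_k$ vanishes on $(\lambda_0, \lambda_0 + \eta)$, which is the asserted isolation.

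The main obstacle will be to ensure that the choice of $\eta$ is uniform across all $k$, i.e., that the ``large-$k$'' tail of eigenvalues does not accumulate near zero as $\lambda$ crosses $\lambda_0$; this is handled by the interplay between $\mu_k(\lambda_0) \to \infty$ and the uniform Lipschitz bound inherited from Lemma \ref{lmivanocjena1}. Once this uniformity is secured, the pole structure of $\tilde{\beta}^{\rm memb}_\delta$ causes no trouble, as the poles form a discrete set and are simply avoided by choosing $\eta$ sufficiently small.
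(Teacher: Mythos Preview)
Your approach is correct in spirit and closely parallels the paper's, but via a different reformulation. The paper substitutes $\vect v = (\mathcal{A}^{\rm memb}_\delta)^{1/2}\vect\funcA$ and studies the \emph{compact} self-adjoint operator $K(\lambda) = (\mathcal{A}^{\rm memb}_\delta)^{-1/2}\tilde{\beta}^{\rm memb}_\delta(\lambda)(\mathcal{A}^{\rm memb}_\delta)^{-1/2}$, asking when $1$ lies in its spectrum. The positive eigenvalues $\mu_k^\lambda$ of $K(\lambda)$ decrease to $0$ in $k$ and, by Lemma~\ref{lmivanocjena1} and the max--min characterisation, increase in $\lambda$; one then only needs that the block of eigenvalues equal to $1$ at $\lambda_0$ moves above $1$, while the next one $\mu_{k_2}^\lambda$ stays below $1$ by continuity. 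The tail $k>k_2$ is handled for free by the ordering $\mu_k^\lambda \le \mu_{k_2}^\lambda < 1$.

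Your formulation with $\mathcal{B}(\lambda)=\mathcal{A}^{\rm memb}_\delta-\tilde{\beta}^{\rm memb}_\delta(\lambda)$ is an equally valid route, but it forces a separate tail argument, and there is a small gap in that step. From Lemma~\ref{lmivanocjena1} you only derive the one-sided bound $\mu_k(\lambda_2)\le \mu_k(\lambda_1)-C(\lambda_2-\lambda_1)$, i.e., a \emph{lower} bound on the rate of decrease. Your large-$k$ conclusion ``$\mu_k(\lambda_0)\to+\infty$ together with the uniform Lipschitz slope ensures $\mu_k(\lambda)>0$'' actually needs the opposite inequality $\mu_k(\lambda)\ge \mu_k(\lambda_0)-L(\lambda-\lambda_0)$ with $L$ independent of $k$. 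This does hold, since $\tilde{\beta}^{\rm memb}_\delta$ is real-analytic away from its poles and hence $\|(\tilde{\beta}^{\rm memb}_\delta)'\|$ is bounded on the chosen pole-free compact interval; but you should state and use this explicitly rather than invoking Lemma~\ref{lmivanocjena1}, which gives only the other direction. The paper's compact-operator reformulation sidesteps this tail issue entirely, which is the main advantage it buys.
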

\begin{proof}
	The problem \eqref{generalizedeigenvaluemembrane} can be reformulated as follows: 
	\begin{equation*}
		\begin{split} 
			\left( (\mathcal{A}_{\delta}^{\rm memb})^{1/2} \vect \funcA, (\mathcal{A}_{\delta}^{\rm memb})^{1/2}\vect \varphi\right)  =  \left( \RRR \tilde{\beta}_{\delta}^{\rm memb}(\lambda)\BBB\vect \funcA, \vect \varphi\right), \quad \forall \vect \varphi \in H^1_{\gamma_D}(\omega;\R^2),   
		\end{split}
	\end{equation*}
	where $(\mathcal{A}_{\delta}^{\rm memb})^{1/2}$ is a self-adjoint positive square root of the operator $\mathcal{A}_{\delta}^{\rm memb},$ which has compact inverse. Thus, the above problem can be rewritten as
	\begin{equation*}
		\left( (\mathcal{A}_{\delta}^{\rm memb})^{1/2} \vect \funcA, (\mathcal{A}^{\rm memb}_{\delta})^{1/2}\vect \varphi\right)  = \left( (\mathcal{A}_{\delta}^{\rm memb})^{-1/2} \tilde{\beta}_{\delta}^{\rm memb} (\lambda) \vect \funcA,  (\mathcal{A}_{\delta}^{\rm memb})^{1/2} \vect\varphi\right), \quad \forall \vect \varphi \in H^1_{\gamma_D}(\omega;\R^2). 
	\end{equation*}
	By substituting $\vect v = (\mathcal{A}_{\delta}^{\rm memb})^{1/2} \vect \funcA$, we have reduced the problem \eqref{generalizedeigenvaluemembrane} to the following equivalent problem: find $\vect v \in L^2(\omega;\R^2)$ that is an eigenfunction for $(\mathcal{A}_{\delta}^{\rm memb})^{-1/2} \tilde{\beta}_{\delta}^{\rm memb}(\lambda) (\mathcal{A}_{\delta}^{\rm memb})^{-1/2}$ with eigenvalue $\mu^\lambda = 1$, i.e.,
	\begin{equation*}
		(\mathcal{A}_{\delta}^{\rm memb})^{-1/2} \tilde{\beta}_{\delta}^{\rm memb}(\lambda) (\mathcal{A}_{\delta}^{\rm memb})^{-1/2} \vect v = \vect v.
	\end{equation*}
	The operator $ (\mathcal{A}_{\delta}^{\rm memb})^{-1/2} \tilde{\beta}_{\delta}^{\rm memb}(\lambda) (\mathcal{A}_{\delta}^{\rm memb})^{-1/2}$ is compact and its positive eigenvalues, in decreasing order, are characterised by the variational principle
	 \begin{equation*}
		\mu_k^\lambda = \max_{V<L^2(\omega;\R^2),\ \dim V=k}\ \ \min_{x \in V,\  ||x||=1} \left( \mathcal{A}_{\rm memb}^{-1/2}\RRR \tilde{\beta}^{\rm memb}(\lambda) \BBB \mathcal{A}_{\rm memb}^{-1/2} x,x \right),\qquad k=1,2,\dots.
	\end{equation*}
	Denote by $k_1$ the index of the eigenvalue $1 = \mu_{k_1}^{\lambda_0},$ which can clearly be done  due to the assumption on $\lambda_0$. Next, denote by $k_2$ the index of the next smaller eigenvalue $\mu_{k_2}^{\lambda_0} < 1$.
	Furthermore, notice that for $\lambda > \lambda_0$ one has 
	\begin{equation*}
		\begin{split}
			(\mathcal{A}_{\delta}^{\rm memb})^{-1/2} \tilde{\beta}_{\delta}^{\rm memb}(\lambda) (\mathcal{A}_{\delta}^{\rm memb})^{-1/2} &= (\mathcal{A}_{\delta}^{\rm memb})^{-1/2} \tilde{\beta}_{\delta}^{\rm memb}(\lambda_0) (\mathcal{A}_{\delta}^{\rm memb})^{-1/2}\\[0.35em] 
			& + (\lambda - \lambda_0)(\mathcal{A}_{\delta}^{\rm memb})^{-1/2} (\tilde{\beta}_{\delta}^{\rm memb})'(\lambda_0) (\mathcal{A}_{\delta}^{\rm memb})^{-1/2} 
			+O(|\lambda-\lambda_0|^2),
		\end{split}
	\end{equation*}
	where $\|O(|\lambda-\lambda_0|^2\|\leq C|\lambda-\lambda_0|^2$ for some $C>0$. 
	For this reason, by virtue of Lemma \ref{lmivanocjena1}, one has $\mu_{k_1}^\lambda > 1$, \dots, $\mu_{k_2-1}^{\lambda}>1$, $\mu_{k_2}^{\lambda}>\mu_{k_2}^{\lambda_0}$ whenever $\lambda \in (\lambda_0,\lambda_0+\eta)$, for some $\eta>0$. Due to the continuity of $\mu_{k_2}^\lambda$ with respect to $\lambda$, we can redefine $\eta>0$ so that for every $\lambda \in (\lambda_0, \lambda_0 + \eta)$ we have  $\mu_{k_2}^{\lambda_0}<\mu_{k_2}^\lambda< 1$. Thus, for $\lambda \in  (\lambda_0, \lambda_0 + \eta)$, unity is not the eigenvalue of $\mathcal{A}_{\rm memb}^{-1/2} \RRR \tilde{\beta}^{\rm memb}(\lambda)\BBB \mathcal{A}_{\rm memb}^{-1/2}$.
\end{proof}
\begin{proposition}
	\label{generalizedbetatheorem}
	The set of all $\lambda>0$ for which the problem \eqref{generalizedeigenvaluemembrane} has a nontrivial solution $\funcA \in H^1_{\gamma_D}(\omega;\R^2)$ is at most countable.  
\end{proposition}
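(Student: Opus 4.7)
The plan is to deduce the countability claim directly from Lemma \ref{lmivan2}, which already encodes the essential spectral-isolation information: every solvability value $\lambda_0$ of the generalized eigenvalue problem is isolated on the right from all other solvability values. The task is therefore to convert this one-sided isolation property into a standard cardinality argument.

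First, I would denote by $S\subset(0,\infty)$ the set of all $\lambda>0$ for which \eqref{generalizedeigenvaluemembrane} admits a nontrivial solution $\vect\funcA\in H^1_{\gamma_{\rm D}}(\omega;\R^2)$. By Lemma \ref{lmivan2}, for every $\lambda_0\in S$ there exists $\eta(\lambda_0)>0$ with the property
\[
(\lambda_0,\lambda_0+\eta(\lambda_0))\cap S=\emptyset.
\]
So the intervals $J_{\lambda_0}:=(\lambda_0,\lambda_0+\eta(\lambda_0))$ attached to points of $S$ are nonempty open intervals of $\R$ that contain no other points of $S$.

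Next, using the density of $\Q$ in $\R$, I would select for each $\lambda_0\in S$ a rational number $q(\lambda_0)\in J_{\lambda_0}$. The resulting map $\lambda_0\mapsto q(\lambda_0)$ is injective: if $\lambda_1,\lambda_2\in S$ with $\lambda_1<\lambda_2$, then the defining property of $J_{\lambda_1}$ forces $\lambda_2\geq\lambda_1+\eta(\lambda_1)$, so that $q(\lambda_1)<\lambda_1+\eta(\lambda_1)\leq\lambda_2<q(\lambda_2)$. Thus $S$ injects into $\Q$ and is at most countable.

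The main obstacle in the overall argument has actually already been addressed by Lemma \ref{lmivan2}, whose proof rests on the variational characterization of the eigenvalues of the compact operator $(\mathcal{A}^{\rm memb}_\delta)^{-1/2}\tilde\beta^{\rm memb}_\delta(\lambda)(\mathcal{A}^{\rm memb}_\delta)^{-1/2}$ together with the strict monotonicity of $\tilde\beta^{\rm memb}_\delta$ established in Lemma \ref{lmivanocjena1}. Given this, the remainder of the proof is a purely topological reduction and no further analytical difficulty is expected.
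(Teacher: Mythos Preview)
Your proof is correct and takes essentially the same approach as the paper: both rely on Lemma \ref{lmivan2} to attach to each solvability value a right-adjacent interval containing no other solvability values, and then use the fact that such a family of disjoint intervals in $\R$ is at most countable. The paper phrases this as a disjoint family of half-open intervals $[\lambda,\lambda+\eta_\lambda)$, while you make the injection into $\Q$ explicit, but the arguments are equivalent.
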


\begin{proof}
	Using the preceding lemma, we can define the following family of disjoint intervals: $$\mathcal{F}=\{[ \lambda, \lambda + \eta_\lambda) \subset \R, \quad \mbox{ $\lambda> 0$ is such that the problem \eqref{generalizedeigenvaluemembrane} has a nontrivial solution}   \},
	$$ 
	where $\eta_\lambda$ is provided by Lemma \ref{lmivan2}.
	Such a family can be at most countable, from which the claim follows. 
\end{proof}

Finally, we prove a lemma whose variants we used on several occasions within Section \ref{proofs_sec}. We begin by introducing some notation. We define the following norm and seminorm on $H^1_{\Gamma_{\rm D}} (\Omega;\R^3):$ 
\begin{align*}
	\| \vect u\|_{h,\epsh}&= \|\vect u\|_{L^2(\R^3)}+  \| \sym \nabla_h  \tilde{\vect u} \|_{L^2(\Omega;\R^{3\times 3})}+\epsh\|\sym \nabla_h \mathring{\vect u} \|_{L^2(\Omega;\R^{3 \times 3})}, \\[0.4em]
	\| \vect u\|_{s,h,\varepsilon}&=   \| \sym \nabla_h  \tilde{\vect u} \|_{L^2(\Omega;\R^{3\times 3})}+\epsh\|\sym \nabla_h \mathring{\vect u} \|_{L^2(\Omega;\R^{3 \times 3})}, 
\end{align*}
where we have for every $\vect u \in  H^1_{\Gamma_{\rm D}}$ a decomposition $\vect u=\tilde{\vect u}+\mathring{\vect u}$ is employed, with both $\tilde{\vect u}$ and $\mathring{\vect u}$ depending on $\vect u$ in a linear manner. We also assume that $\mathcal{A}$  is a non-negative self-adjoint  operator whose domain is a subset of  $L^2(\Omega;\R^3)$ such that there exist $c_1,c_2>0$ such that
\begin{equation}\label{matteo1}  	c_1\| \vect u\|^2_{s,h,\epsh}\leq (\mathcal{A} \vect u, \vect u) \leq c_2 \| \vect u\|^2_{s,h,\epsh} \quad \forall {\vect u} \in \mathcal{D} (\mathcal{A}).  
\end{equation} 

\begin{lemma} \label{nenad100} 
	Suppose that $\mathcal{A}$ is as above and let $\lambda \notin \sigma(\mathcal{A})$. Assume that $\vect u=\tilde{\vect u}+\mathring{\vect u} \in H^1_{\Gamma_{\rm D}}(\Omega;\mathbb{R}^3)$ satisfies
	\begin{equation*}
		\bigl(\mathcal{A}^{1/2} \vect u, \mathcal{A}^{1/2}\vect \xi\bigr)  -\lambda (\vect u,\vect \xi) =\int_{\Omega} \left(\vect f_1:\sym \nabla_h \tilde{\vect \xi}+ \epsh \vect f_2:\sym \nabla_h \mathring{\vect \xi}+\vect f_3\cdot\vect  \xi\right)\, dx \qquad  \forall \vect \xi=\tilde{\vect \xi}+\mathring{\vect \xi} \in  H^1_{\Gamma_{\rm D}}(\Omega;\R^3).  
	\end{equation*}
	where $\vect f_1 \in L^2(\Omega, \mathbb{R}^{3 \times 3})$, $\vect f_2 \in L^2(\Omega, \mathbb{R}^{3 \times 3})$ and $\vect f_3 \in L^2(\Omega;\R^3)$. Then one has
	\begin{equation*} 
	\| \vect u\|_{h,\epsh} \leq \frac{C(\lambda)}{\dist\bigl(\lambda, \sigma (\mathcal{A})\bigr)} \left(\| \vect f_1\|_{L^2(\Omega;\R^{3 \times 3})}+\|\vect f_2\|_{L^2(\Omega;\R^{3 \times 3})}+\|\vect f_3\|_{L^2(\Omega;\R^3)}\right), 
\end{equation*}
	for some $C(\lambda)$ that is bounded on bounded intervals.  
\end{lemma}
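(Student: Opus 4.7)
The natural framework is to view the identity satisfied by $\vect u$ as a weak form of $(\mathcal{A}-\lambda\mathcal{I})\vect u=\vect F$ where $\vect F$ is the right-hand side functional, and then use the spectral theorem. Setting $V:=\mathcal{D}(\mathcal{A}^{1/2})$ with the graph norm $\|\vect v\|_{V}^{2}=((\mathcal{A}+\mathcal{I})\vect v,\vect v)$, the first step is to observe that by Cauchy--Schwarz and the decomposition $\vect\xi=\tilde{\vect\xi}+\mathring{\vect\xi}$,
$$|\vect F(\vect\xi)|\leq \bigl(\|\vect f_{1}\|_{L^{2}}+\|\vect f_{2}\|_{L^{2}}\bigr)\,\|\vect\xi\|_{s,h,\epsh}+\|\vect f_{3}\|_{L^{2}}\|\vect\xi\|_{L^{2}}.$$
Invoking (\ref{matteo1}) to bound $\|\vect\xi\|_{s,h,\epsh}\leq c_{1}^{-1/2}\|\mathcal{A}^{1/2}\vect\xi\|_{L^{2}}\leq c_{1}^{-1/2}\|\vect\xi\|_{V}$, together with $\|\vect\xi\|_{L^{2}}\leq\|\vect\xi\|_{V}$, shows $\vect F\in V^{*}$ with $\|\vect F\|_{V^{*}}\leq C\bigl(\|\vect f_{1}\|_{L^{2}}+\|\vect f_{2}\|_{L^{2}}+\|\vect f_{3}\|_{L^{2}}\bigr)$.

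Next, I would apply the Riesz representation theorem in $V$ to produce a unique $\vect w\in V$ such that $\vect F(\vect\xi)=((\mathcal{A}+\mathcal{I})^{1/2}\vect w,(\mathcal{A}+\mathcal{I})^{1/2}\vect\xi)_{L^{2}}$ for all $\vect\xi\in V$, with $\|\vect w\|_{V}=\|\vect F\|_{V^{*}}$. Substituting into the identity satisfied by $\vect u$, the problem can be rewritten as $(\mathcal{A}-\lambda\mathcal{I})\vect u=(\mathcal{A}+\mathcal{I})\vect w$ in $V^{*}$. Since $\lambda\notin\sigma(\mathcal{A})$, the spectral resolution $E_{\mu}$ of $\mathcal{A}$ yields the representation
$$\vect u=\int_{0}^{\infty}\frac{\mu+1}{\mu-\lambda}\,dE_{\mu}\,\vect w,$$
which is meaningful because the multiplier is bounded uniformly on $\sigma(\mathcal{A})$ by $1+(1+\lambda)/\dist(\lambda,\sigma(\mathcal{A}))$, using the elementary identity $(\mu+1)/(\mu-\lambda)=1+(1+\lambda)/(\mu-\lambda)$.

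Applying the functional calculus, one then estimates
$$\|\vect u\|_{L^{2}}^{2}+\|\mathcal{A}^{1/2}\vect u\|_{L^{2}}^{2}=\int_{0}^{\infty}(1+\mu)\left|\frac{\mu+1}{\mu-\lambda}\right|^{2}\,d(E_{\mu}\vect w,\vect w)\leq\left(1+\frac{1+\lambda}{\dist(\lambda,\sigma(\mathcal{A}))}\right)^{2}\|\vect w\|_{V}^{2}.$$
Finally, the lower bound in (\ref{matteo1}) gives $\|\vect u\|_{s,h,\epsh}\leq c_{1}^{-1/2}\|\mathcal{A}^{1/2}\vect u\|_{L^{2}}$, and combining this with the previous display and the estimate on $\|\vect w\|_{V}$ from the first step, the claim follows with $C(\lambda)=C\bigl(\dist(\lambda,\sigma(\mathcal{A}))+1+\lambda\bigr)$, which is manifestly bounded on bounded $\lambda$-intervals.

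The main point to get right is the choice of the ``reference norm'' $V$ in the Riesz representation: one needs $\vect w$ to possess $\mathcal{A}^{1/2}$-regularity (not merely $L^{2}$-regularity), because otherwise the spectral integrand $\mu|(\mu+1)/(\mu-\lambda)|^{2}$, which behaves like $\mu$ for large $\mu$, cannot be controlled against the measure $d(E_{\mu}\vect w,\vect w)$ when estimating $\|\mathcal{A}^{1/2}\vect u\|_{L^{2}}$; this is what ties the estimate for the gradient part of $\|\vect u\|_{h,\epsh}$ to the hypothesis (\ref{matteo1}). The rest is routine functional calculus.
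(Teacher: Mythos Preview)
Your proof is correct and follows essentially the same route as the paper: represent the right-hand side as an element of the dual of the form domain $V=\mathcal{D}(\mathcal{A}^{1/2})$ via Riesz, then use the spectral theorem to control the multiplier $(\mu+1)/(\mu-\lambda)$. The only cosmetic differences are that the paper uses the multiplication-operator form of the spectral theorem (a unitary $\mathcal{U}$ onto $L^2(M,\mu)$ with multiplier $a$) and writes the Riesz representative as $\vect f\in L^2$ acting through $(\mathcal{A}^{1/2}+\mathcal{I})$ rather than $\vect w\in V$ acting through $(\mathcal{A}+\mathcal{I})^{1/2}$; since $(\mathcal{A}+\mathcal{I})^{1/2}$ and $\mathcal{A}^{1/2}+\mathcal{I}$ define equivalent norms, the two representations are interchangeable.
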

\begin{proof}
	By virtue of the Riesz representation theorem and \eqref{matteo1}, we know that there exists $\vect f \in L^2 (\mathbb{R}^3)$ and $C>0,$ which depends on $c_1$, $c_2$ only, such that
	\begin{align}
		&\nonumber\| \vect f\|_{L^2(\Omega;\R^3)}\leq C\left(\| \vect f_1\|_{L^2(\Omega;\R^{3 \times 3})}+\|\vect f_2\|_{L^2(\Omega;\R^{3 \times 3})}+\|\vect f_3\|_{L^2(\Omega;\R^3)}\right),
		\\[0.4em]
		\label{nenad3} 
		&(\mathcal{A} \vect u,\vect \xi )  -\lambda (\vect u,\vect \xi)= \int_{\Omega} \vect f\cdot\bigl(\mathcal{A}^{1/2} \vect \xi+\vect \xi\bigr)\quad\ \forall \vect \xi \in H^1_{\Gamma_{\rm D}} (\Omega;\R^3).  
	\end{align}
	We can now use the spectral theorem (see, e.g., \cite{reedsimon}): there exists a measurable space $(M, \mu)$ with a finite measure $\mu$ and a unitary operator
	$$ \mathcal{U}: L^2 (\Omega;\R^3) \to L^2(M)$$ 
	and a non-negative real-valued function $a,$ which is an element of $L^p(M)$, $ p \in [1, \infty)$,
	such that 
	\begin{itemize}
		\item  $\RRR \vect{\psi} \BBB \in \RRR \mathcal{D} \BBB(\mathcal{A}) $ if and only if $a(\cdot) \mathcal{U}\RRR \vect{\psi} \BBB (\cdot) \in L^2 (M) $; 
		\item $\RRR \vect{\psi} \BBB \in \mathcal{U}\,\RRR \mathcal{D} \BBB (\mathcal{A}) $,\ \   $\mathcal{U} \mathcal{A} \mathcal{U}^{-1} \RRR \vect{\psi} \BBB   (\cdot) = a(\cdot) \RRR \vect{\psi} \BBB (\cdot). $  
	\end{itemize}
	Notice that the second claim implies 
	\begin{itemize}
		\item $\mathcal{U} \mathcal{A}^{1/2} \mathcal{U}^{-1} \RRR \vect{\psi} \BBB  (\cdot) = \sqrt{a(\cdot)} \RRR \vect{\psi} \BBB  (\cdot). $ 
		\item $\sigma(\mathcal{A}) ={\rm EssRan} \, a:= \bigl\{ r: \forall \varepsilon>0, \ \mu\{ m \in M: r-\varepsilon \leq a(m) \leq r+\varepsilon\}>0   \bigr\}. $
	\end{itemize}
	Furthermore, \eqref{nenad3} implies 
	$$ \bigl(a(\cdot)-\lambda\bigr) \mathcal{U}\RRR \vect u \BBB (\cdot)=\mathcal{U}\RRR \vect f \BBB(\cdot) \bigl(\sqrt{a(\cdot)}+1\bigr),   $$
	from which, by virtue of 
	$\lambda \notin {\rm EssRan} \, a,$ 
	one has 
	$$  
	\bigl(\sqrt{a(\cdot)}+1\bigr)
	\mathcal{U}\RRR \vect u\BBB (\cdot) 
	=\mathcal{U}\RRR \vect f\BBB (\cdot) \frac{\bigl(\sqrt{a(\cdot)}+1\bigr)^2}
	{\bigl(a(\cdot)-\lambda\bigr)}.
	$$
Therefore, there exists $C(\lambda)>0,$ which is uniformly bounded on compact intervals of $\lambda,$ such that 
	\begin{equation}
		\bigl\Vert(\mathcal{A}^{1/2}+\RRR \mathcal{I} \BBB)\RRR \vect{u} \BBB \bigr\Vert_{L_2}\leq \frac{C(\lambda)}{\dist\bigl(\lambda, \sigma (\mathcal{A})\bigr)}\| \vect f\|_{L^2},
		\label{refest}
	\end{equation} 	
	from which the claim follows immediately.
\end{proof}
	\begin{remark} 
		\label{ivan501} 
		Throughout the paper, we also use some variants of the above lemma, see the discussions around (\ref{ref1}), (\ref{ref2}), (\ref{ref3}), (\ref{ref4}). They generically apply to setups that can be put the form (\ref{nenad3}), and they result in estimates of the type (\ref{refest}).   The key ingredient for their validity is the fact that the right-hand side of the equation is in the dual of $\mathcal{D}(\mathcal{A}^{1/2}+\RRR \mathcal{I} \BBB)$ with respect to the graph norm. 
		
	\end{remark}


\begin{thebibliography}{11}
\bibitem{Allaire-92} G.~Allaire. Homogenisation and two-scale convergence.
 {\it SIAM J. Math. Anal.} 23(6), 1482--1518, 1992.
\bibitem{AllaireConca98} G.~Allaire and C.~Conca. Bloch wave homogenisation and spectral asymptotic analysis.
{\it J. Math. Pures Appl.} 77(2), 153--208, 1998. 
\bibitem{avila08} A. Avila, G. Griso, B. Miara, and E. Rohan. Multiscale modeling of elastic waves: theoretical justification and numerical simulation of band gaps. {\it Multiscale Model. Simul.} 7(1), 1--21, 2008.
\bibitem{Berezansky} Y. M. Berezansky. {\it Expansions in Eigenfunctions of Self-Adjoint Operators,} American Mathematical Society, 1968.
\bibitem{bouchitte} G. Bouchitt\'{e} and D. Felbacq. Homogenisation near resonances and
artificial magnetism from dielectrics. {\it C. R. Math. Acad. Sci. Paris} 339(5), 377--382, 2004.  
\bibitem{BukalVel} M.~Bukal and I.~Vel\v{c}i\'{c}. On the simultaneous homogenisation and dimension reduction in elasticity and locality of $\Gamma$-closure. {\it Calc. Var. Partial Differential Equations} 56(3), 1--41, 2017.
\bibitem{Phys_book} Capolino, F., 2009. {\it Theory and Phenomena of Metamaterials.} Taylor \& Francis. 
 \bibitem{Caill} D.~Caillerie, J.~C.~Nedelec, 1984. Thin periodic and elastic plates. {\it Math. Method. Appl. Sci.}  6(1), 159--191. 
\bibitem{casado} J.~Casado-Diaz, M.~Luna-Laynez, and F.~J.~ Su\'arez-Grau. A decomposition result for the pressure of a fluid in a thin domain and extensions to elasticity problems. {\it SIAM J. Math. Anal.} 52(3), 2201--2236, 2020. 
\bibitem{castro1} C. Castro and E. Zuazua. Une remarque sur l’analyse asymptotique spectrale en
homog\'en\'eisation. {\it C.R. Acad. Sci. Paris S\'{e}r. I} 322, 1043--1047, 1996.
\bibitem{chered4} M.~Cherdantsev, K. Cherednichenko, and S. Neukamm. High contrast homogenisation in nonlinear elasticity under small loads. {\it Asymptotic Anal.}, 104(1--2), 67--102, 2017. 
\bibitem{cherd1}  M.~Cherdantsev, K. Cherednichenko, and I. Vel\v{c}i\'{c}. Stochastic homogenisation of high-contrast media. {\it Appl. Anal.} 98(1-2), 91--117, 2019. 
\bibitem{cherd2} M.~Cherdantsev, K. Cherednichenko, and I. Vel\v{c}i\'{c}. High-contrast random composites: homogenisation framework and new spectral phenomena. {\it arXiv:2110.00395,} 2021. 
\bibitem{chered2} K. Cherednichenko and S. Cooper. Resolvent estimates for high-contrast homogenisation problems. {\it Arch. Ration. Mech. Anal.} 219(3), 1061--1086, 2016.
\bibitem{chered3} K. Cherednichenko and J. Evans. Homogenisation of thin periodic frameworks with high-contrast inclusions.  {\it J. Math. Anal. Appl.} 473(2), 658--679, 2019.
\bibitem{chered1} K. Cherednichenko,  Y. Ershova, and A. Kiselev. Effective behaviour of critical-contrast PDEs: micro-resonances, frequency conversion, and time dispersive properties. I. {\it Comm. Math. Phys.} 375, 1833--1884, 2020. 
\bibitem{chered5} K. Cherednichenko and  I. Vel\v{c}i\'{c}.  Sharp operator-norm asymptotics for linearised elastic plates with rapidly oscillating periodic properties, {\it J. London Math. Soc.} 105(3), 1634--1680, 2022.
\bibitem{ciarlet2}  P~.G~.~Ciarlet. {\it Mathematical Elasticity. Vol. II.
Theory of Plates.} North-Holland, 1997.
\bibitem{ciarlet1} P~.G~.~Ciarlet. {\it Mathematical Elasticity. Vol. III. Theory of Shells.} North-Holland, 1997
\bibitem{cooper} S.~Cooper. Homogenisation and spectral
convergence of a periodic elastic composite with weakly compressible inclusions. {\it Appl. Anal.} 93(7), 1401--1430, 2014. 
\bibitem{dautraylions} R. Dautray, J. Lions. {\it Mathematical Analysis and Numerical Methods for Science and Technology. Vol. 5: Evolution Problems} Springer-Verlag, 2000.
\bibitem{Gri05} G.~Griso. Asymptotic behavior of structures made of plates. {\em Anal.~Appl.} 3, 325--356, 2005.  
\bibitem{Gri20} G.~Griso, L.~Khilkova, J.~Orlik, O.~Sivak. Homogenisation of perforated elastic structures. {\em J. Elast.},\,141 (2020), 181--225.
\bibitem{kamotski} I. V. Kamotski and V. P. Smyshlyaev. Two-scale homogenisation for a general class of high contrast
PDE systems with periodic coefficients. {\it Appl. Anal.} 98(1-2), 64--90, 2019.
\bibitem{krejcirik} D. Krej\v{c}irik, N. Raymond,
J. Royer and P. Siegl. Reduction of dimension as a consequence of norm-resolvent convergence and applications. {\em Mathematika}, 64(2), 406--429, 2018.
\bibitem{Neukamm10} S.~Neukamm. Homogenisation, linearization and dimension reduction in elasticity with variational methods. {\em Phd thesis}. TU M\"unichen. 2010. 
\bibitem{Neukamm13} S.~Neukamm, I.~Vel\v{c}i\'{c}. Derivation of a homogenized von-K\'{a}rm\'{a}n plate theory from 3D nonlinear elasticity. {\it Math. Models Methods Appl. Sci.} 23(14), 2701–2748, 2013.
\bibitem{OShY}
	O.~A.~Oleinik, A.~S.~Shamaev, G.~A.~Yosifian. {\it Mathematical Problems in Elasticity and Homogenisation,} North-Holland, 1992. 
\bibitem{Hor95} C.~O.~Horgan. Korn's Inequalities and Their Applications in Continuum Mechanics. {\em SIAM Rev.} 37, 491--511, 1995.
\bibitem{pastukhova} S. E. Pastukhova.
On the convergence of hyperbolic semigroups in variable Hilbert spaces. {\it J. Math. Sci.} 127(5), 2263--2283, 2005.
\bibitem{Raoult}
A. Raoult. Construction d'un modele d'\'evolution de plaques avec termes d'inertie de rotation, {\it Ann. Mat. Pura Appl.} 139, 361--400, 1985.
\bibitem{reedsimon} M.~Reed and B.~Simon. {\it Methods of Modern Mathematical Physics}. Academic Press, 1980.
\bibitem{rohan2} E. Rohan and B. Miara, Band gaps and vibration of strongly heterogeneous Reissner-Mindlin elastic plates, {\it  C. R. Math. Acad. Sci. Paris} 349, 777–781, 2011. 
\bibitem{rohan1} E. Rohan and B. Miara. Elastodynamics of strongly heterogeneous periodic plates using
Reissner-Mindlin and Kirchhoff-Love models, {\it Z. Angew. Math. Mech.} 96(3), 304–326, 2016. 
\bibitem{Schmudgen} K.~Schm\"udgen. {\it Unbounded Self-Adjoint Operators on Hilbert Spaces.} Springer. 2012.
\bibitem{smysh09} V. P. Smyshlyaev. Propagation and localization of elastic waves in highly anisotropic
periodic composites via two-scale homogenisation. {\it Mech. Mater.} 41, 434--447, 2009.
\bibitem{Vel14a} I.~Vel\v ci\'c. On the general homogenisation of von K\'arm\'an plate equations from 
3D nonlinear elasticity. {\it Anal. Appl. (Singap.)} 15(1), 1--49, 2017.

    \bibitem{zhikov2005}	V. V. Zhikov, Gaps in the spectrum of some elliptic operators in divergent form with periodic coefficients. 
    {\it St. Petersburg Math. J.},\,16(5), 773--790, 2004.
     
    \bibitem{Zhikov2000}
    V. V. Zhikov, On an extension of the method of two-scale
    convergence and its applications.
    	{\it Sb. Math.} 191(7-8), 973--1014, 2000. 
    
  \bibitem{zhikov2012} V. V. Zhikov and S. E. Pastukhova. On gaps in the spectrum of the operator of elasticity theory on a high contrast periodic structure. {\it J. Math. Sci.} 188 (3),  227--240, 2013. 
  
\end{thebibliography}
\end{document}